\documentclass{article}
\usepackage{geometry}
\usepackage{latexsym}
\usepackage{indentfirst}
\usepackage{graphicx}
\usepackage{amsmath,amsfonts}
\usepackage{amssymb}
\usepackage{amsthm}
\usepackage{bm}
\usepackage{accents}
\usepackage{verbatim}
\usepackage{changepage}
\usepackage{color}
\usepackage{bigints}
\usepackage{enumerate}
\usepackage{amsmath, nccmath}
\usepackage{yhmath}
\usepackage{dirtytalk}
\usepackage{hyperref}
\usepackage{cleveref}
\usepackage{upgreek}
\usepackage{abraces}

\newcommand{\ca}{{\mathcal A}}
\newcommand{\cu}{{\mathcal U}}
\newcommand{\cb}{{\mathcal B}}
\newcommand{\crr}{{\mathcal R}}
\newcommand{\cf}{{\mathcal F}}
\newcommand{\ce}{{\mathcal E}}

\newcommand{\cg}{{\mathcal G}}

\newtheorem{prop}{Proposition}[section]
\newtheorem*{defi*}{Definition}
\newtheorem{defi}[prop]{Definition}

\newtheorem{lem}[prop]{Lemma}
\newtheorem{rem}[prop]{Remark}
\newtheorem{thm}[prop]{Theorem}
\newtheorem{coro}[prop]{Corollary}

\numberwithin{equation}{section}

\title{Regularization of the superposition principle: Potential theory meets Fokker-Planck equations}

\author{Lucian Beznea\footnote{Simion Stoilow Institute of Mathematics  of the Romanian Academy,
 Research unit No. 2, 
P.O. Box \mbox{1-764,} RO-014700 Bucharest, Romania, and 
University POLITEHNICA Bucharest, CAMPUS Institute
(e-mail: lucian.beznea@imar.ro)},
Iulian C\^{i}mpean\footnote{Department of Mathematics, Faculty of Mathematics and Computer Science, University of Bucharest, 14 Academiei, 010014~Bucharest, Romania and Simion Stoilow Institute of Mathematics of the Romanian Academy, 21~Calea Grivi\c{t}ei, 010702~Bucharest, Romania
(email: iulian.cimpean@unibuc.ro; iulian.cimpean@imar.ro)},
Michael R\"ockner\footnote{Fakult\"at f\"ur Mathematik, Universit\"at Bielefeld,
Postfach 100 131, D-33501 Bielefeld, Germany, Academy for Mathematics and Systems Science, CAS, Beijing, and School of Data Science, The Chinese University of Hong Kong, Shenzhen
(CUHK-Shenzhen), China 
(e-mail: roeckner@mathematik.uni-bielefeld.de)}}

\date{}

\begin{document}

\maketitle

\tableofcontents

\begin{abstract}
For a (probability measure valued) solution to a (possibly nonlinear) Fokker-Planck equation (FPE) the powerful superposition principle renders a probability measure on path space with one dimensional time marginals equal to this solution, and additionally solving the martingale problem for the (possibly nonlinear) Kolmogorov operator given by the FPE. The superposition principle thus reveals that such parabolic PDEs have a probabilistic counter part. The aim of this work is to go a substantial further step and, by exploiting the superposition principle, construct a full fledged Markov process, i.e. a family of path space measures for a large set of space time starting points connected by the Markov property, associated to the (linearized) FPE in the above way. In fact, under very general (merely measurability) conditions on the coefficients of the FPE
this is achieved in this paper in such a way that the resulting process is a right process, which is a particularly useful class of Markov processes, enjoying among other regularity properties the strong Markov property, which is fundamental for the analysis of the underlying FPE as a (nonlinear) parabolic PDE by probabilistic tools. For example, as two main applications we construct fundamental flow solutions for the FPE and we prove a well-posedeness result for the parabolic Dirichlet problem through 
probabilistic means for more general coefficients than could be treated in the existing literature. 
Furthermore, we introduce a Choquet capacity for such FPEs using the corresponding right process. 
The validity of the strong Markov property in the context of the superposition principle was an open problem even in the linear case. In this paper we solve this also in the nonlinear case, i.e. for path laws of solutions to McKean-Vlasov SDEs with Nemytskii type coefficients. A main application here is the FPE given by the generalized porous media equation and its corresponding McKean-Vlasov SDE. For the reader's convenience, 
an introduction to the theory of right process and its potential theory is given in the last section of the paper.
\end{abstract}

\noindent
{\bf Keywords:} 
Nonlinear Fokker–Planck–Kolmogorov equation;
McKean–Vlasov stochastic differential equation;
Superposition principle;
Right process;
Nonlinear Markov process;
Porous media equation
\\

\noindent
{\bf Mathematics Subject Classification (2020):} 
60J45,  	
60J40,  	
60J25,      
35Q84, 
35K55, 
60J60; 
60J35,    	

\section{Introduction}

Let $(\mu_t)_{t\in [0,T]}\subset \mathcal{P}(\mathbb{R}^d)$ be a weakly continuous solution to the time-dependent (non)linear Fokker-Planck equation
\begin{equation}\label{eq:FP_intro}
    \frac{d}{dt} \mu_t = {\sf L}^\ast_{t,\mu_t} \mu_t, \quad t\in (0,T),
\end{equation}
with $L_{t,\mu_t}$ given by
\begin{equation*}
    {\sf L}_{t,\mu_t}f(t,x):=\sum\limits_{i=1}^d b_i(t,x,\mu_t)\frac{\partial}{\partial x_i} f(t,x) + \frac{1}{2}\sum\limits_{i,j=1}^d a_{i,j}(t,x,\mu_t)\frac{\partial^2}{\partial x_i \partial x_j} f(t,x),  \quad t\in (0,T), x\in \mathbb{R}^d,
\end{equation*}
where $f\in C_c^\infty((0,\infty)\times \mathbb{R}^d)$.
Then, by \cite{BaRo20}, under mild assumptions on the coefficients $a$ and $b$, the well-known superposition principle ensures the existence of a probability measure $\eta$ on the paths-space $C([0,T];\mathbb{R}^d)$ which is a solution to the (non)linear martingale problem associated to $L_{t,\mu_t}$ with one-dimensional time marginals given by $\mu_t, t\geq 0$, or equivalently, which renders a weak solution to the McKean-Vlasov SDE
\begin{equation}\label{eq:MV_intro}
    dX(t)=b\left(t,x,\mathcal{L}_{X(t)}\right)dt+\sigma\left(t,x,\mathcal{L}_{X(t)}\right)dW(t), \quad \mathcal{L}_{X(t)}= \mu_t, t\in [0,T],
\end{equation}
where $W$ is a $d$-dimensional standard Brownian motion whilst $\mathcal{L}_{X(t)}$ denotes the law of $X(t)$, $t\geq 0$.
The converse implication, namely that the one-dimensional time marginals of a weak solution to the McKean-Vlasov SDE \eqref{eq:MV_intro} solves the (non)linear Fokker-Planck equation \eqref{eq:FP_intro}, is just a simple consequence of Ito's formula.
Thus, under mild assumptions on the coefficients, there is a well-understood correspondence between \eqref{eq:FP_intro} and \eqref{eq:MV_intro}.
The hard implication is, of course, the one that comes from the superposition principle, which was first obtained in the linear case (i.e. the coefficients do not depend on $\mu_t$) by \cite{Am04} and \cite{Fi08}, followed up by \cite{Tr16}, \cite{BoRoSh21}, as well as by other contributions.
The extension of the superposition principle to the nonlinear setting and the corresponding McKean-Vlasov SDEs was realized in \cite{BaRo18}, \cite{BaRo20}.
We refer to \cite{BoKrRoSh22} for a comprehensive study of the theory of linear Fokker-Planck-Kolmogorov equations, to \cite{CaDe18} for a detailed presentation of McKean-Vlasov SDEs and their applications, and to \cite{BaRo24} for the analysis of nonlinear Fokker-Planck equations and their probabilistic counterpart which are nonlinear Markov processes in the sense of McKean; see also the seminal paper \cite{McK66}, as well as the very recent work \cite{ReRo25} in which a general recipe is developed how to prove that the family of laws of solutions to a McKean-Vlasov stochastic differential equation with merely measurable coefficients (also only measurable in the measure variable) indexed by a large enough set of initial distributions form a nonlinear Markov process in the sense of McKean. This applies to a large number of examples (see \cite{BaRo24} and the references therein), in particular, to also those in \cite{BaReRo24}, \cite{BaGrReRo25}, where the underlying nonlinear Fokker-Planck equation is the parabolic $p$-Laplace equation and the Leibenson equation respectively, with corresponding non-linear Markov process being the $p$-Brownian motion and the Leibenson process.
See also \cite{AmNiGi05} for a gradient flow theory in metric spaces and the superposition principle for the continuity equation.

Another fundamental feature of the superposition principle is that, under a weak uniqueness assumption, the martingale solution denoted above by $\eta$ enjoys the "simple" Markov property; see \Cref{ss:superposition_simple_markov}, more precisely \eqref{eq:simpleM}.
Thus, under very mild assumptions on the coefficients, the correspondence of (non)linear Fokker-Planck equations and McKean Vlasov SDEs could be analyzed through the lens of Markov processes.
However, it is well known that the "simple" Markov property \eqref{eq:simpleM} is typically not enough in order to benefit from the rich theory of Markov processes, since additional regularity properties of the process, like the strong Markov property, are necessary.
This leads us to the formulation of the first main goal which we achieve in this paper:

\medskip
$\bullet\quad$ {\it We show that, under reasonable conditions (see \Cref{thm:main0}, \Cref{thm:main1}), one can construct a homogenized {\it right (hence strong Markov) process} $X$ on a subset $E\subset [0,\infty)\times \mathbb{R}^d$ with $\mu_t(\{x:(t,x)\in E\})=1, t\geq 0$, which is associated to a given weakly continuous solution $(\mu_t)_{t\geq 0}$ to the linear Fokker-Planck equation \eqref{eq:FP_intro} in such a way that the family of one-dimensional time marginals of the spatial component of $X$ coincides with the given solution $(\mu_t)_{t\geq 0}$.
}

\medskip
As indicated above, a right process automatically has the strong Markov property, in particular we answer the question on the validity of this property for the superposed measure $\eta$ mentioned above, and which was left open in \cite[Remark 2.10]{Tr16}.
Moreover, we deduce the strong Markov property for the nonlinear Markov processes associated with nonlinear Fokker-Planck equation from \cite{BaRo24}.

Furthermore, a right process has associated a transition semigroup with regularity properties that substantially generalize the usual ones valid for Feller transition semigroups. 
However, the class of right processes is a much larger class than the well studied Feller processes.
For a concise introduction to the theory of right processes, we refer to \Cref{Appendix}.

We here recall that right processes can be immediately constructed starting from a Feller transition semigroup on a locally compact space with countable base, that is a Markov transition semigroup which is also a $C_0$-semigroup on the space of continuous functions vanishing at infinity; see e.g. \cite[Theorem 4.4]{BlGe68}.
However, the right processes we shall construct will not have a Feller transition semigroup.

As already mentioned, the interest in constructing right processes is motivated by the fact that they enjoy plenty of useful properties which are otherwise not ensured if a process satisfies merely the simple Markov property \eqref{eq:simpleM}.
Some of these key properties that a right process enjoys are briefly listed below (for details we refer to \Cref{Appendix} and to the classical work \cite{BlGe68} and \cite{Sh88}):

\medskip
$\circ\quad$ The underlying filtration satisfies the usual hypotheses.

\medskip
$\circ\quad$ The hitting times of Borel sets are stopping times, and they can be approximated from above by hitting times of compact sets.

\medskip
$\circ\quad$ The strong Markov property holds.

\medskip
$\circ\quad$ The Blumenthal zero-one law is satisfied.

\medskip
$\circ\quad$ The Dirichlet problem can be  analyzed by means of hitting distributions.

\medskip
$\circ\quad$ Analytic concepts such as polar sets and Choquet capacities have a full probabilistic interpretations.

\medskip
$\circ\quad$ Many useful transformations such as killing, time change, subordination, Girsanov transforms, concatenation, perturbation by kernels, are possible for right processes and leave this class invariant.

\medskip
Concerning the methods and techniques of proof we would like to indicate the following:

\medskip 
$\bullet\quad$ {\it In order to construct the desired right process we first treat linear time-dependent Fokker-Planck equations, and the approach we adopt mixes fundamental tools from the theory of generalized Dirichlet forms, the theory of Fokker-Planck-Kolmogorov equations, and potential theory. 
It consists of three main parts which are systematically treated in \Cref{ss:more regularity}, \Cref{ss:main results}, and \Cref{ss:proof}, and which can be summarized as follows:}
\begin{enumerate}
    \item[I] (see \Cref{ss:more regularity}) Starting from a solution $(\mu_t)_{t\geq 0}$ to the linear Fokker-Planck equation \eqref{eq:LFPshort}, for each $N>0$ we use the theory of generalized Dirichlet forms for time-dependent operators on $L^1$, as developed in \cite{St99a} (see also \cite{St99b} and \cite{HaStTr22}, as well as the classical work \cite{FuOsTa11} and \cite{MaRo92}), in order to construct a time-homogenized right process on a subset of the product space $(0,N)\times \mathbb{R}^d$, which is of full $\mu_t(dx)dt$-measure.
    \item[II] (see \Cref{ss:proof of main0}) We use a convenient restricted uniqueness assumption, the standard superposition principle from \Cref{rem:Trevisan_superposition}, and potential theoretic tools, in order to construct a right process on a subset of $(0,\infty)\times \mathbb{R}^d$ of full $\mu_t(dx)dt$-measure, by taking the projective limit of the processes constructed in I for each $N>0$ using \cite{St99a}.
    \item[III] (see \Cref{ss: proof of main1}) Finally, we use again the superposition principle together with potential theoretic tools developed in \cite{BeCiRo18} in order to extend the process constructed in II to a conservative right process with continuous paths defined on a larger set $E \subset [0,\infty)\times \mathbb{R}^d$ such that $\mu_t(E)=1, t\in [0,\infty)$, and such that, if we start from $\delta_0\otimes\mu_0 $ then the family of one-dimensional time marginals of the spatial component of the right process coincides with the given $(\mu_t)_{t\geq 0}$. 
\end{enumerate}

Among the main consequences of our results \Cref{thm:main0} and \Cref{thm:main1}, we would like to mention the following:

\medskip
$\bullet\quad$ {\it We deduce several new results in the theory of time-dependent PDEs with only  measurable coefficients, namely:} 

\medskip
$\circ\quad$ We show the existence of a {\it fundamental flow solution} to the time-dependent linear Fokker-Planck equation \eqref{eq:LFP}; see \Cref{coro:fundamental}.

\medskip
$\circ\quad$ We derive maximal tail estimates for the path-space law obtained by the superposition of the fundamental flow solution to the time-dependent linear Fokker-Planck equation \eqref{eq:LFP} by means of Lyapunov functions; see \Cref{prop:LV}.

\medskip
$\circ\quad$ We solve the backward Kolmogorov PDE associated to the time-dependent linear operators \eqref{eq:L_t} by solving the parabolic Dirichlet problem through the hitting distribution of the homogenized right process obtained in \Cref{thm:main0}-\Cref{thm:main1}; see \Cref{coro:BKE}.

\medskip
$\circ\quad$ We solve the time-dependent linear Fokker-Planck equation perturbed by a nonlocal operator, and simultaneously the corresponding martingale problem, through a method of adding bounded jumps to a right process; see \Cref{ss:jumps}.

\medskip
As already mentioned, the above results are valid for solutions to linear Fokker-Planck equations.
By the linearization technique (in the spirit of \cite{BaRo20}), we then lift these results to non-linear Fokker-Planck equations.
More precisely, we achieve the second fundamental goal of this paper:

\medskip
$\bullet\quad$ {\it In \Cref{s:nonlinear_superposition}, see \Cref{thm:MVE}, we construct a right processes
$X$ on a subset $E\subset [0,\infty)\times \mathbb{R}^d$ with $\mu_t(\{x:(t,x)\in E\})=1, t\geq 0$, in such a way that the family of one-dimensional time marginals of the path-law $\eta$ of the spatial component of $X$ started at $\delta_0\otimes\mu_0$, coincides with the given solution $(\mu_t)_{t\geq 0}$ of the nonlinear Fokker-Planck equation  (thus realizing McKean vision from 
\cite{McK66}),  
or equivalently, $\eta$ coincides with the path-law of the weak solution to the McKean-Vlasov SDE \eqref{eq:MV SDE}.
In particular, we derive the strong Markov property for the weak solution to the McKean-Vlasov SDE, which, as mentioned before, was an open problem even for the linear case.}

\medskip
In the same \Cref{s:nonlinear_superposition} we achieve two more goals:

\medskip
$\circ\quad$ We construct a Choquet capacity for the solution to the nonlinear Fokker-Planck equation through the corresponding hitting distribution of the solution to the McKean-Vlasov SDE \eqref{eq:MV SDE}; see \Cref{coro: capacity}.

\medskip
$\circ\quad$ As our main application in the nonlinear case, we investigate sufficient explicit conditions on the coefficients of a class of nonlinear Fokker-Planck equations, more precisely, generalized porous media equations, for which all the hypotheses in \Cref{thm:MVE} are fulfilled and hence our theory fully applies; see \Cref{ss:example_nonlinear}.

\medskip
Many techniques in constructing and manipulating the right processes mentioned above are of potential theoretic nature.
Thus, to make this work as self-contained as possible:

\medskip
$\bullet\quad$ {\it In \Cref{Appendix} we give a concise introduction to the theory of probabilistic potential theory, with special emphasis on those tools that are fundamentally used in order to derive the main results from \Cref{s:reg_superposition_linear} and \Cref{s:consequences}. 
A reader who is interested in the technical details of this work is encouraged to have a preliminary look at \Cref{Appendix} before diving into the details of the main results.}

\medskip
$\circ\quad$ We would like to point out that, besides well known results, in \Cref{Appendix} we also include new results on right processes that are of general nature and which are, as well, needed in the main part concerning Fokker-Planck equations, namely \Cref{s:reg_superposition_linear} and \Cref{s:consequences}; see e.g. \Cref{lem:B_is_finely_feller}, \Cref{prop:B-boundary}, \Cref{prop:continuous_modification_special}, \Cref{Borel-measurab}, \Cref{prop:martingale_absorbing}, as well as those from \Cref{ss:adding jumps}.

\medskip
The paper is structured as follows:

In \Cref{s:reg_superposition_linear} we deal with time-dependent linear Fokker-Planck equations. 
In \Cref{ss:superposition_simple_markov} we briefly recall the well-known superposition principle, a restricted uniqueness assumption and its relation with the "simple" Markov property.
In \Cref{ss:more regularity} we place our problem of constructing more regular Markov processes in the context given by generalized Dirichlet forms, recall some fundamental notions and results, and discuss two problems which have remained unsolved up to now (OP1 and OP2), and which are the main obstacles that we overcome in this work.
In \Cref{ss:main results} we present the main results of \Cref{s:reg_superposition_linear}.
\Cref{ss:proof} is entirely devoted to the proof of these main results, namely \Cref{thm:main0} and \Cref{thm:main0}, for which we allocate separate subsections. 

In \Cref{s:consequences} we use the right processes constructed in \Cref{s:reg_superposition_linear} in order to deduce new results about time-dependent linear PDEs: Construction of fundamental flow solutions, solving parabolic Dirichlet problems by probabilistic means, deriving maximal tail estimates for the path-space law obtained by the superposition of the fundamental flow solution to the time-dependent linear Fokker-Planck equation \eqref{eq:LFP} by means of Lyapunov functions, and solving Fokker-Planck equations perturbed by nonlocal operators.

In \Cref{s:construction} we present several results of independent interest establishing that the square root of the density of solutions to Fokker–Planck equations with non-degenerate diffusion coefficients belongs locally to $H^1$. 
This regularity result plays a central role in the present work, as it shows that one of our main assumptions, namely \eqref{eq:H_sqrt}, is in fact satisfied under very mild conditions; it significantly generalizes \cite[Theorem 7.4.1]{BoKrRoSh22}.

In \Cref{s:nonlinear_superposition} we extend to nonlinear Fokker-Planck equations the results presented above for the linear case. 
We construct Choquet capacities for such nonlinear equations by probabilistic means and we investigate explicit sufficient conditions on the coefficients for a class of generalized porous media equations to which our theory applies.

In \Cref{Appendix} we offer a concise introduction to the theory of right processes and their potential theory, and we also include here some new results with full proofs, which we need in the previous sections.

\section[Regularization of the superposition principle for linear FPE]{Regularization of the superposition principle for linear time-dependent Fokker-Planck equations}
\label{s:reg_superposition_linear}

Let us consider the time-dependent diffusion operator
\begin{equation}\label{eq:L_t}
    {\sf L}_tf(t,x):=\sum\limits_{i=1}^d b_i(t,x)\frac{\partial}{\partial x_i} f(t,x) + \frac{1}{2}\sum\limits_{i,j=1}^d a_{i,j}(t,x)\frac{\partial^2}{\partial x_i \partial x_j} f(t,x),  
\end{equation}
for $(t,x) \in (0,\infty)\times \mathbb{R}^d$, which acts on Borel measurable functions $f:(0,\infty)\times \mathbb{R}^d\rightarrow \mathbb{R}$ which are twice continuously differentiable in the second argument, whilst
\begin{equation}\label{eq:b_sigma}
    b=(b_i)_{1\leq i \leq d}:[0,\infty)\times \mathbb{R}^d \rightarrow \mathbb{R}^d, \quad a=(a_{ij})_{1\leq i,j\leq d}:[0,\infty)\times\mathbb{R}^d\rightarrow \mathbb{R}^{d\times d}
\end{equation}
are Borel measurable coefficients, such that $a(t,x)$ is symmetric and non-negative definite for all $(t,x)\in [0,\infty)\times \mathbb{R}^d$.

Throughout, for a measurable space $(E,\mathcal{B})$ we denote by $\mathcal{M}(E)$ the set of all finite (non-negative) measures on $E$, whilst by $\mathcal{P}(E)$ we denote the subset of $\mathcal{M}(E)$ containing all probability measures on $E$.
Furthermore, by $p\mathcal{B}$ (resp. $b\mathcal{B}$) we denote the space of all non-negative (resp. bounded), real valued, and $\mathcal{B}$-measurable functions defined on $E$; $bp\mathcal{B}:=p\mathcal{B}\cap b\mathcal{B}$.
If $E$ is a topological spaces then by $\mathcal{B}(E)$ we denote the Borel $\sigma$-algebra, whilst by $C_b(E)$ we denote the space of all real-valued, bounded, and continuous functions defined on $E$.
Also, by $C_c(E)$ we denote the subset of $C_b(E)$ consisting of functions which are compactly supported.

For $\mu \in \mathcal{M}(\mathbb{R}^d)$ and $f\in p\mathcal{B}$ we often use the notation
\begin{equation*}
    \mu(f):=\int_E f \;d\mu.
\end{equation*}

Let $(\mu_n)_{n}\subset \mathcal{M}(\mathbb{R}^d)$ and $\mu \in \mathcal{M}(\mathbb{R}^d)$. 
We say that $(\mu_n)_{n\geq 1}$ converges weakly (resp. vaguely) to $\mu$ if
\begin{equation*}
    \lim_n\mu_n(f)=\mu(f) \quad \mbox{for all } f\in C_b(\mathbb{R}^d) \quad (\mbox{resp. } C_c(\mathbb{R}^d)).
\end{equation*}
Throughout, we use $B(a,r)$ to denote the Euclidean ball in $\mathbb{R}^d$ centered at $a\in \mathbb{R}^d$ and with radius $r>0$.

\begin{defi} \label{def:solution}
Let $0\leq s< T<\infty$. 
\begin{enumerate}
    \item[(i)] A family $\left(\mu_t\right)_{t\in (s,T)}\subset \mathcal{M}(\mathbb{R}^d)$ is called a (weak, or distributional) solution on $(s,T)$ to the linear Fokker-Planck equation  
    \begin{equation}\label{eq:LFPshort}
        \frac{d}{dt} \mu_t = {\sf L}^\ast_t \mu_t, \quad t\in (s,T),
    \end{equation}
    if $\left(\mu_t\right)_{ t\in (s,T)}$ is a Borel curve in $\mathcal{M}(\mathbb{R}^d)$ endowed with the Borel $\sigma$-algebra $\mathcal{B}\left(\mathcal{M}(\mathbb{R}^d)\right)$ associated to the topology of weak convergence,
    \begin{equation}\label{eq:a,b}
        \int_s^T\int_{B(0,R)} \|b(t,x)\|+\|a(t,x)\| \; \mu_t(dx) dt <\infty, \quad R>0,
    \end{equation}
    and
    \begin{equation}\label{eq:LFPmu}
        \int_s^T\int_{\mathbb{R}^d} \left[\frac{d}{dt}f(t,x)+ {\sf L}_t f(t,x)  \right] \;\mu_t(dx) dt = 0, \quad f\in C_c^\infty \left((s,T)\times \mathbb{R}^d\right).
    \end{equation} 
\item[(ii)] We say that $\left(\mu_t\right)_{t\in [s,T)}\subset \mathcal{M}(\mathbb{R}^d)$ is a solution to the linear Fokker-Planck equation \eqref{eq:LFPshort} (with initial condition $\mu_s$) if $\left(\mu_t\right)_{t\in (s,T)}$ is a solution on $(s,T)$ as in (i), and $\lim\limits_{t\searrow s} \mu_t=\mu_s$ weakly.
\item[(iii)] We say that $\left(\mu_t\right)_{t\in [s,T)}\subset \mathcal{M}(\mathbb{R}^d)$ is a weakly (resp. vaguely) continuous solution to the linear Fokker-Planck equation \eqref{eq:LFPshort} if it is a solution on $(s,T)$ in the sense of (i), and the curve of measures $[s,T)\ni t\mapsto \mu_t\in \mathcal{M}(\mathbb{R}^d)$ is (resp. vaguely) continuous; this definition is obviously extended when $(\mu_t)_{t\in [s,T)]}$ is merely right-continuous. 
\end{enumerate}
\end{defi}

\begin{rem}\label{rem:equiv}
Let $\left(\mu_t\right)_{t\in [s,T)}\subset \mathcal{M}(\mathbb{R}^d)$ be a vaguely (right-)continuous solution to the linear Fokker-Planck equation \eqref{eq:LFPshort}. 
For $0\leq s<T<\infty$, if we take $f$ of the form $f(r,x)=g(r)h(x)$ with $g\in C_c^\infty((s,T))$ and $h\in C_c^\infty(\mathbb{R^d})$,
then using first Fubini theorem and then integration by parts, \eqref{eq:LFPmu} gives
\begin{equation}\label{eq:integration_by_parts}
-\int_s^T g'(r) \int_{\mathbb{R}^d}h \;d\mu_r dr  =   \int_s^T g(r) \int_{\mathbb{R}^d}{\sf L}_t h \;d\mu_rdr.
\end{equation}
Let $s<s'<t'<T$, $g_n\in C_c^\infty(s,T),n\geq 1$ such that $\lim\limits_n g_n=1_{[s',t']}$ a.e., and $\lim\limits_n g_n'=\delta_{t'}-\delta_{s'}$ in the sense of weak convergence of measures; note that this can be done e.g. by convoluting $1_{[s',t']}$ with smooth and compactly supported mollifiers $\rho_n$, namely taking $g_n:=\rho_n\ast 1_{[s',t']}, n\geq 1$. 
Thus, replacing $g$ with $g_n$ in \eqref{eq:integration_by_parts}, using that $r\mapsto\int_{\mathbb{R}^d} h \;dm_r$ is bounded and continuous, and letting $n$ tend to infinity, we obtain
\begin{equation} \label{eq:cauchy}
\int_{\mathbb{R}^d} h \;d\mu_{t'}-\int_{\mathbb{R}^d} h \;d\mu_{s'}=\int_{s'}^{t'} \int_{\mathbb{R}^d}{\sf L}_rh\; d\mu_rdr, \quad s<s'<t'<T, h\in C_c^\infty(\mathbb{R}^d).    
\end{equation}
It is easy to see that one can also pass from \eqref{eq:cauchy} to \eqref{eq:LFPmu}, hence the two relations are in fact equivalent.    
\end{rem}

\subsection{The superposition principle and the "simple" Markov property}\label{ss:superposition_simple_markov}
For $0\leq s<T<\infty$ we define
\begin{equation*}
    \Pi_s(t,\cdot):C([s,T];\mathbb{R}^d)\rightarrow \mathbb{R}^d, \quad \Pi_s(t,\omega):=\omega(t), \quad \omega\in C([s,T];\mathbb{R}^d), t\in [s,T].
\end{equation*}

As usual, we endow $C([s,T];\mathbb{R}^d)$ with the Borel $\sigma$-algebra rendered by the topology of uniform convergence, which coincides with the $\sigma$-algebra generated by $\left(\Pi_s(t,\cdot)\right)_{t\in[s,T]}$.

If $\eta\in \mathcal{P}(C([s,T];\mathbb{R}^d))$, then by $\eta_t,t\in [s,T]$ we denote the one-dimensional time marginals, namely
\begin{equation*}
    \eta_t:=\eta\circ \left(\Pi_s(t,\cdot)\right)^{-1}\in \mathcal{P}(\mathbb{R}^d), \quad t\in [s,T].
\end{equation*}

\begin{defi}\label{defi:canonicalMP}
    Let $0\leq s<T<\infty$. 
    A probability $\eta\in \mathcal{P}\left(C([s,T];\mathbb{R}^d))\right)$ is called a solution to the canonical martingale problem (canonical MP) associated to $\left({\sf L}_t\right)_{t\in [s,T]}$, with initial distribution $\nu_s\in \mathcal{P}(\mathbb{R}^d)$ if
    \begin{equation*}
        \int_s^T\int_{B(0,R)} \|b(t,x)\|+\|a(t,x)\| \; \eta_t(dx) dt <\infty, \quad R>0,
    \end{equation*}
    and for all $f\in C_c^{1,2}((s,T)\times \mathbb{R}^d)$ the process $(M(t))_{t\in [s,T]}$ given by
    \begin{equation*}
        M(t):=f(t,\Pi_s(t))-f(s,\Pi_s(s))-\int_s^t\overline{{\sf L}}f(r,\Pi_s(r))\;dr, \quad t\in [s,T]
    \end{equation*}
    is an $\left(\mathcal{F}_s(t)\right)$-martingale with respect to $\eta$, where $\mathcal{F}_s(t):=\sigma\left(\Pi_s(r),s\leq r\leq t\right), t\in [s,T]$.
\end{defi}

\begin{rem}\label{rem:mp_LFP} 
Just by taking expectations, it follows immediately that if $\eta$ is a solution to the martingale problem as in \Cref{defi:canonicalMP}, then $(\eta_t)_{t\in [s,T]}$ is a weakly continuous solution to the linear Fokker-Planck equation \eqref{eq:LFP}. 
The following highly nontrivial result, known as the superposition principle, gives the converse implication.
\end{rem}

\begin{thm}[The superposition principle; \cite{BoRoSh21}, \cite{Tr16}]\label[thm]{rem:Trevisan_superposition}
Let $\left(\mu_t\right)_{t\in [s,T]}$ be a weakly continuous solution to \eqref{eq:LFPshort}, and suppose further that
\begin{equation}\label{eq:Bo_stronger}
    \int_s^T\int_{B(0,R)} \frac{|\langle b(t,x),x\rangle|+\|a(t,x)\|}{(1+\|x\|)^2} \; \mu_t(dx) dt <\infty.
\end{equation}
Then, according to \cite[Theorem 1.1]{BoRoSh21}, there exists $\eta\in \mathcal{P}\left(C\left([s, T];\mathbb{R}^d\right)\right)$ a solution to the canonical martingale problem associated to $({\sf L}_t)_{t\in [s,T]}$ on $[s,T]$ such that the one-dimensional time marginals $(\eta_t)_{t\in[s,T]}$ satisfy $\eta_t=\mu_t,t\in[s,T]$.
Moreover, by \cite[Proposition 2.8]{Tr16}, for each $x\in \mathbb{R}^d$ there exists a probability $\eta^{x}\in \mathcal{P}\left(C\left([s, T];\mathbb{R}^d\right)\right)$ such that
\begin{enumerate}[(i)]
    \item For every $A\in \mathcal{B}\left(C\left([s,T];\mathbb{R}^d\right)\right)$
    \begin{equation*}
       \mbox{ the mapping } \mathbb{R}^d\ni x\mapsto \eta^{x}(A) \mbox{ is Borel measurable and } \eta(A)=\int_{\mathbb{R}^d}\eta^{x}(A) \;\mu_s(dx).  
    \end{equation*}
    \item For $\mu_s$-a.e. $x\in \mathbb{R}^d$, $\eta^{x}$ is a solution to the martingale problem associated to $({\sf L}_t)_{t\in [s,T]}$ on $[s,T]$, with initial distribution $\delta_x$. 
\end{enumerate}
As a consequence, if $\nu_s\in \mathcal{P}(\mathbb{R}^d)$ satisfies $\nu_s\leq c\mu_s$ for some constant $c\in (0,\infty)$, then 
\begin{equation*}
    \eta^{\nu_s}(\cdot):=\int_{\mathbb{R}^d}\eta^{x}(\cdot)\;\nu_s(dx)
\end{equation*}
is also a solution to the martingale problem associated to $({\sf L}_t)_{t\in [s,T]}$ on $[s,T]$, with initial distribution $\nu_s$. 
\end{thm}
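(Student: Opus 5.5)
The first two parts are quoted from \cite[Theorem 1.1]{BoRoSh21} and \cite[Proposition 2.8]{Tr16}, so the plan is to take them as given and prove only the final ``as a consequence'' assertion. Two things must be checked for $\eta^{\nu_s}$: it is a well-defined probability measure with initial distribution $\nu_s$, and it satisfies the integrability condition and the martingale property in \Cref{defi:canonicalMP}.

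\textbf{Well-definedness and initial law.} By (i), $x\mapsto\eta^x(A)$ is Borel and $[0,1]$-valued, so $\eta^{\nu_s}(A)$ is defined for every Borel set $A$. Countable additivity follows from monotone convergence. Since $\nu_s\ll\mu_s$, property (ii) holds for $\nu_s$-a.e. $x$. For such $x$ we have $\eta^x\circ\Pi_s(s)^{-1}=\delta_x$, and integrating gives $\eta^{\nu_s}\circ\Pi_s(s)^{-1}=\nu_s$.

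\textbf{Integrability.} The marginals satisfy $\eta^{\nu_s}_t=\int\eta^x_t\,\nu_s(dx)\le c\int\eta^x_t\,\mu_s(dx)=c\,\eta_t=c\,\mu_t$, where the middle equality uses (i) with $A=\{\Pi_s(t)\in B\}$. Hence
\[
\int_s^T\int_{B(0,R)}\|b\|+\|a\|\,d\eta^{\nu_s}_t\,dt\le c\int_s^T\int_{B(0,R)}\|b\|+\|a\|\,d\mu_t\,dt<\infty .
\]
The right-hand side is finite by \eqref{eq:a,b}, since $\eta$ solves the martingale problem.

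\textbf{Martingale property.} Fix $f\in C_c^{1,2}$ and $s\le r<t\le T$, and let $G$ be bounded and $\mathcal F_s(r)$-measurable. We must show $E^{\eta^{\nu_s}}[(M(t)-M(r))G]=0$. The only step needing care is that $M(t)$ is integrable, because the integrand $\overline{\sf L}f$ involves $b$ and $a$, which may be unbounded. On the compact support of $f$, however, $|\overline{\sf L}f|\le C(1+\|b\|+\|a\|)$. The integrability bound above then gives $E^{\eta^{\nu_s}}\int_s^T|\overline{\sf L}f(u,\Pi_s(u))|\,du<\infty$, so $M(t)\in L^1(\eta^{\nu_s})$. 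Applying Fubini, we can write
\[
E^{\eta^{\nu_s}}[(M(t)-M(r))G]=\int E^{\eta^x}[(M(t)-M(r))G]\,\nu_s(dx).
\]
For $\nu_s$-a.e. $x$ the inner expectation vanishes by (ii), so the whole expression is $0$.
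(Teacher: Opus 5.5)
Your proposal is correct and follows the route the paper implicitly takes: the first two parts are imported from \cite{BoRoSh21} and \cite{Tr16}, and the final claim is the standard mixture argument. The only nontrivial ingredient is the domination $\eta^{\nu_s}_t\le c\,\mu_t$ (the same bound the paper invokes later, e.g.\ in Step OP2.2), which you correctly use to obtain the integrability clause of \Cref{defi:canonicalMP}, to justify the Fubini step, and, via $\nu_s\ll\mu_s$, to transfer (ii) from $\mu_s$-a.e.\ to $\nu_s$-a.e.\ $x$.
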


\begin{rem}
We point out that in \cite{Tr16}, the superposition principle is derived with \eqref{eq:Bo_stronger} replaced by the stronger condition
\begin{equation}\label{eq:Tr_stronger}
    \int_{(s,T)\times \mathbb{R}^d} \|b(t,x)\|+\|a(t,x)\| \; \mu_t(dx) dt <\infty.
\end{equation}
However, there are relevant examples for which \eqref{eq:Tr_stronger} is not fulfilled, but for which the relaxed condition \eqref{eq:Bo_stronger} is valid.
Other key results in \cite{Tr16} remain valid if we assume \eqref{eq:Bo_stronger} instead of \eqref{eq:Tr_stronger}, this being the case of \cite[Lemma 2.12]{Tr16} which we shall use below in the proof of \Cref{prop:uniqueness_MP}.
\end{rem}

Let $N>0$ and $(\mu_t)_{t\in [0,N]}\subset \mathcal{P}(\mathbb{R}^d)$ be a given weakly continuous solution to the linear Fokker-Planck equation \eqref{eq:LFPshort}. 
With respect to such a fixed solution, we consider the following uniqueness hypothesis:

\medskip
\noindent{$\mathbf H_{!
\mu}$}
For every $0\leq s<N<\infty$ and $c>0$ we have that whenever $\left(\nu_t\right)_{t\in [s,N]},\left(\tilde{\nu}_t\right)_{t\in [s,N]}\subset \mathcal{P}(\mathbb{R}^d)$ are two weakly continuous solutions to the linear Fokker-Planck equation \eqref{eq:LFPshort}, 
we have:
\begin{equation*}
     \mbox{If}\quad \nu_t,\tilde{\nu}_t\leq c\mu_t, t\in [s,N],
    \mbox{ and } \nu_s=\tilde{\nu}_s, \quad \mbox{then} \quad \left(\nu_t\right)_{t\in [s,N]}=\left(\tilde{\nu}_t\right)_{t\in [s,N]}.
\end{equation*}

\begin{prop}[Well-posedeness of the canonical MP] \label{prop:uniqueness_MP}
    Let $N>0$ and $(\mu_t)_{t\in [0,N]}\subset \mathcal{P}(\mathbb{R}^d)$ be a given weakly continuous solution to the linear Fokker-Planck equation \eqref{eq:LFPshort}.
    If $\mathbf H_{!\mu}$ is satisfied, $0\leq s<N$, $c\in (0,\infty)$, and $(\nu_t)_{t\in [s,N]}\subset \mathcal{P}(\mathbb{R}^d)$ is a weakly continuous solution to the linear Fokker-Planck equation \eqref{eq:LFPshort} on $[s,N)$ satisfying $\nu_t\leq c\mu_t, t\in [s,N]$, then there exists a unique $\eta\in \mathcal{P}(C([s,N];\mathbb{R}^d))$ which is a solution to the canonical MP associated to $\left({\sf L}_t\right)_{t\in [s,N]}$ and such that $\eta_t=\nu_t$ for every $t\in [s,N]$.
\end{prop}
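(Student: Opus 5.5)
Existence comes from \Cref{rem:Trevisan_superposition}, and uniqueness from a standard reduction of the finite-dimensional distributions to one-dimensional marginals, combined with $\mathbf H_{!\mu}$.

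\textbf{Existence.} Since $\nu_t\leq c\mu_t$ and $(\mu_t)$ satisfies the integrability condition \eqref{eq:a,b} on $(0,N)$, the family $(\nu_t)_{t\in[s,N]}$ satisfies \eqref{eq:a,b} on $(s,N)$. The superposition principle also needs \eqref{eq:Bo_stronger}. For $x\in B(0,R)$ the weight $(1+\|x\|)^{-2}$ is bounded, and $|\langle b,x\rangle|\leq R\|b\|$. Hence \eqref{eq:Bo_stronger}, with the integral over $B(0,R)$ as written, follows from \eqref{eq:a,b}. (If the integral is meant to be global, one would additionally assume it for $\mu$ and transfer it to $\nu$ via $\nu_t\le c\mu_t$.) Then \Cref{rem:Trevisan_superposition} yields a solution $\eta$ of the canonical MP on $[s,N]$ with $\eta_t=\nu_t$.

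\textbf{Uniqueness.} Let $\eta,\tilde\eta$ be two such solutions. Show by induction on $n$ that their finite-dimensional marginals at $s\le t_1<\dots<t_n\le N$ coincide; the case $n=1$ is the hypothesis $\eta_t=\tilde\eta_t=\nu_t$. For the inductive step, fix $t_1<\dots<t_n$ and bounded nonnegative Borel $g_1,\dots,g_n$ with $G:=\prod_i g_i(\Pi_s(t_i))$ and $\eta(G)=\tilde\eta(G)=:m>0$. Define the conditioned measures $\eta^G:=G\eta/m$ and $\tilde\eta^G:=G\tilde\eta/m$, restricted to times $t\ge t_n$. By the martingale property, which is preserved under multiplication by an $\mathcal F_s(t_n)$-measurable bounded density, both are solutions of the canonical MP on $[t_n,N]$. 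Their time marginals $\nu^G_t:=(\eta^G)_t$ satisfy $\nu^G_t\le (\|G\|_\infty/m)\,\nu_t\le c'\mu_t$, and they are weakly continuous solutions of \eqref{eq:LFPshort} by \Cref{rem:mp_LFP}.

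Their initial laws at $t_n$ coincide by the induction hypothesis, since $\eta\bigl(G\,h(\Pi_s(t_n))\bigr)$ involves only the first $n$ times. So $\mathbf H_{!\mu}$, with $s$ replaced by $t_n$, gives $(\eta^G)_t=(\tilde\eta^G)_t$ for all $t\in[t_n,N]$. This says $\eta\bigl(G\,h(\Pi_s(t_{n+1}))\bigr)=\tilde\eta\bigl(G\,h(\Pi_s(t_{n+1}))\bigr)$, which is the $(n+1)$-dimensional statement after a monotone class argument. Finite-dimensional distributions determine laws on $C([s,N];\mathbb R^d)$, so $\eta=\tilde\eta$.

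\textbf{Main obstacle.} The hard step is checking that the conditioned measure $\eta^G$ is genuinely a solution of the canonical MP on $[t_n,N]$ in the sense of \Cref{defi:canonicalMP}, including the integrability condition and the requirement that the test functions lie in $C_c^{1,2}((t_n,N)\times\mathbb R^d)$. It is also necessary that its marginals be weakly continuous up to $t_n$. This is essentially the content of \cite[Lemma 2.12]{Tr16}, which the remark says remains valid under \eqref{eq:Bo_stronger}; I would invoke it directly. Otherwise I would verify it by hand: the martingale increments over $[u,v]\subset[t_n,N]$ are orthogonal to $\mathcal F_s(u)$-measurable bounded variables, and $G$ is such a variable.
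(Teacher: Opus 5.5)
Your proposal is correct and is essentially the paper's argument. The paper proves this proposition by invoking \cite[Lemma 2.12]{Tr16} with $T=N$ and the class $\mathcal{R}_{[s,T]}$ of weakly continuous probability solutions dominated by some multiple of $\mu_t$; your induction on finite-dimensional marginals through the conditioned measures $G\eta/\eta(G)$, with existence from the superposition principle, is exactly the proof of that lemma. You also identified the key structural point: the class is stable under conditioning because the domination constant only becomes $c\|G\|_\infty/\eta(G)$, which is why $\mathbf H_{!\mu}$ is stated for arbitrary $c>0$.
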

\begin{proof}
    The statement follows by \cite[Lemma 2.12]{Tr16}, choosing $T=N$ and
    \begin{equation*}
        \begin{split}
            \mathcal{R}_{[s,T]}:=\bigg\{\nu=\left(\nu_t\right)_{t\in [s,T]}\subset \mathcal{P}(\mathbb{R}^d) : \nu \mbox{ is a weakly continuous solution to } \eqref{eq:LFPshort} \mbox{ and }\\
            \mbox{there exists } c<\infty \mbox{ such that } \nu_t\leq c\mu_t, t\in [s,T] \bigg\}, \quad 0\leq s<T.
        \end{split}
    \end{equation*}
\end{proof}

Another result from \cite{Tr16} that remains valid if we assume \eqref{eq:Bo_stronger} instead of \eqref{eq:Tr_stronger} is the following:
\begin{prop}[{\cite[Remark 2.3]{Tr16}}]\label{rem:Trevisan_narrow}
Let $(\mu_t)_{t\in (s,T)}$ be a solution to the linear Fokker-Planck equation as in \Cref{def:solution}, which in addition satisfies 
\begin{equation*}
    (\mu_t)_{t\in (s,T)}\subset \mathcal{P}(\mathbb{R}^d) \quad \mbox{as well as} \quad   \eqref{eq:Bo_stronger}.
\end{equation*}
Then $(\mu_t)_{t\in (s,T)}$ admits a unique $dt$-version $(\tilde{\mu}_t)_{t\in [s,T]}$ which is weakly continuous.
\end{prop}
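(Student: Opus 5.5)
The strategy is to show that for each test function the scalar curve $t\mapsto\mu_t(h)$ agrees $dt$-a.e. with a continuous function, and then to assemble these continuous scalar versions into a single curve of probability measures. Fix a countable family $\{h_k\}_{k\ge1}\subset C_c^\infty(\mathbb{R}^d)$ that is dense in $C_c(\mathbb{R}^d)$ in the sup norm. By \eqref{eq:a,b}, the function $r\mapsto \int {\sf L}_r h_k\,d\mu_r$ lies in $L^1(s,T)$, since it is bounded by $C_{h_k}\int_{B(0,R_k)}(\|b\|+\|a\|)\,d\mu_r$, where $\mathrm{supp}\,h_k\subset B(0,R_k)$. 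Testing \eqref{eq:LFPmu} with $f=g(r)h_k(x)$ as in \eqref{eq:integration_by_parts} shows that the distributional derivative of $r\mapsto\mu_r(h_k)$ on $(s,T)$ is this $L^1$ function. It follows that there is a null set $Z_k\subset(s,T)$ and an absolutely continuous function $\phi_k$ on $[s,T]$ with $\phi_k(t)=\mu_t(h_k)$ for $t\notin Z_k$. Set $Z=\bigcup_k Z_k$.

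Next I would obtain a candidate curve. For $t\in[s,T]$, choose $t_n\to t$ with $t_n\notin Z$. Then $\mu_{t_n}(h_k)\to\phi_k(t)$ for every $k$. Because the $\mu_{t_n}$ are probability measures, vague compactness lets us extract a vague limit $\tilde\mu_t$, a sub-probability measure. On the dense family it satisfies $\tilde\mu_t(h_k)=\phi_k(t)$, so the limit does not depend on the subsequence or on the approximating sequence. Hence $\tilde\mu_t$ is well defined, $t\mapsto\tilde\mu_t$ is vaguely continuous on $[s,T]$, and $\tilde\mu_t=\mu_t$ for $t\notin Z$. In particular $(\tilde\mu_t)$ is a $dt$-version of $(\mu_t)$, and it still solves \eqref{eq:LFPmu}.

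The main obstacle is upgrading from vague to weak continuity, which means ruling out loss of mass so that $\tilde\mu_t(\mathbb{R}^d)=1$ for every $t$. This is exactly where \eqref{eq:Bo_stronger} enters. I would use cutoffs $h_R(x)=\psi(\|x\|^2/R^2)$, with $\psi$ smooth, equal to $1$ on $[0,1]$ and to $0$ on $[2,\infty)$. A direct computation gives
\[
|{\sf L}_r h_R(x)|\le C\,\frac{|\langle b(r,x),x\rangle|+\|a(r,x)\|}{(1+\|x\|)^2}\,1_{\{R\le\|x\|\le\sqrt2 R\}},
\]
with $C$ independent of $R\ge1$, because the first derivatives are of order $|\langle b,x\rangle|/R^2$ and the second derivatives are of order $\|a\|/R^2$. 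Here I read \eqref{eq:Bo_stronger} as an integral over all of $\mathbb{R}^d$. Applying the identity \eqref{eq:cauchy} to the vaguely continuous version gives, for $s\le s'<t\le T$,
\[
\tilde\mu_t(h_R)\ge \tilde\mu_{s'}(h_R)-C\int_s^T\!\!\int_{\|x\|\ge R}\frac{|\langle b,x\rangle|+\|a\|}{(1+\|x\|)^2}\,d\mu_r\,dr .
\]
The last integral tends to $0$ as $R\to\infty$, uniformly in $t$. Taking $s'\notin Z$, so that $\tilde\mu_{s'}$ is a probability measure, and letting $R\to\infty$ gives $\tilde\mu_t(\mathbb{R}^d)\ge1$ for $t>s'$. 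Points near $s$ are handled by running the same estimate backward in time, or symmetrically with $t\notin Z$ and $s'$ arbitrary. Thus $\tilde\mu_t\in\mathcal P(\mathbb{R}^d)$ for every $t$, and the estimate shows the family is uniformly tight. Vague continuity combined with conservation of mass then yields weak continuity.

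Uniqueness is straightforward. Two weakly continuous versions agree for a.e. $t$, hence on a dense set of times, and therefore everywhere by continuity.
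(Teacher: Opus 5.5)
Your proof is correct. Its skeleton is essentially the argument of \cite[Lemma 8.1.2]{AmNiGi05}, to which the paper reduces the statement through \cite[Remark 2.3]{Tr16}: a countable sup-dense family $\{h_k\}\subset C_c^\infty(\mathbb{R}^d)$, absolutely continuous representatives $\phi_k$ of $t\mapsto\mu_t(h_k)$ obtained from the $L^1$ distributional derivative $\mu_t({\sf L}_th_k)$, a limit curve built from these representatives, and uniqueness by continuity on a set of full measure.

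Where you genuinely differ is in ruling out loss of mass. The paper does not prove this step itself. It imports tightness of $(\mu_t)$ from the Lyapunov-function estimate in \cite[Proposition 2.2 and proof of Theorem 1.1]{BoRoSh21}. It then has to add that the Gronwall step there still works for merely measurable $t\mapsto\mu_t(V)$, because the curve is not yet known to be continuous.

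You instead test directly with the cutoffs $h_R$. You use that $|{\sf L}_rh_R|$ is dominated, uniformly in $R\ge1$, by the weight in \eqref{eq:Bo_stronger} restricted to the annulus $\{R\le\|x\|\le\sqrt2R\}$. Since that weight is $\mu_r(dx)\,dr$-integrable on $(s,T)\times\mathbb{R}^d$, the two-sided bound $|\tilde\mu_t(h_R)-\tilde\mu_{s'}(h_R)|\le\varepsilon_R\to0$ holds uniformly in $t,s'$. There is no feedback term, no Gronwall lemma and no auxiliary Lyapunov function. This gives uniform tightness and $\tilde\mu_t(\mathbb{R}^d)=1$ at once.

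So your route is more elementary and self-contained. The Lyapunov route gives quantitative moment bounds along the curve, which matter elsewhere in the superposition theory but are not needed for this statement.

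You are right to read \eqref{eq:Bo_stronger} as an integral over all of $\mathbb{R}^d$, which is the condition of \cite{BoRoSh21}. With $B(0,R)$ it would already follow from \eqref{eq:a,b}, and the statement would fail. For example, take $d=1$, $a=0$, $b(t,x)=-x^3$. Then $\mu_t=\delta_{(2(t-s))^{-1/2}}$ is a probability solution on $(s,T)$ that has no weak limit as $t\searrow s$.
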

However, let us point that the proof of \Cref{rem:Trevisan_narrow} requires a light adaptation. 
More precisely, recall first that \cite[Remark 2.3]{Tr16} is fundamentally \cite[Lemma 8.1.2]{AmNiGi05}. 
The latter result, remains nevertheless valid if we replace the assumption \eqref{eq:Tr_stronger} from \cite{Tr16} with the relaxed one \eqref{eq:Bo_stronger} from \cite{BoRoSh21}, with the mention that, under \eqref{eq:Bo_stronger}, the tightness of $(\mu_t)_{t\in [s,T]}$ required in \cite{AmNiGi05} is ensured by \cite[Proposition 2.2 and Proof of Theorem 1.1]{BoRoSh21}; also, note that \cite[Proposition 2.2]{BoRoSh21} is valid without the assumption that $(\mu_t)_t$ is weakly continuous, since the latter was used only for applying the Gronwall's lemma for continuous functions, but the Gronwall's lemma is valid for merely measurable functions as well, see e.g. \cite[Theorem 5.1]{EtKu86}, and which can be employed in \cite[Proposition 2.2]{BoRoSh21}. 

\paragraph{The "simple" Markov property}
Let $(\mu_t)_{t\in [0,T]}$ be a weakly continuous solution to \eqref{eq:LFPshort} such that \eqref{eq:Bo_stronger} and $\mathbf H_{!\mu}$ hold.
Then, one can easily deduce: 

{\it The solution $\eta\in \mathcal{P}(C([0,T];\mathbb{R}^d))$ provided by \Cref{rem:Trevisan_superposition} and \Cref{prop:uniqueness_MP} has the simple Markov property, namely for every $f\in b\mathcal{B}(\mathbb{R}^d)$ we have
\begin{equation}\label{eq:simpleM}
    \mathbb{E}_\eta\left\{f(\Pi_{0}(t+s)) \;|\; \mathcal{F}_0(t)\right\}=\mathbb{E}_\eta\left\{f(\Pi_0(t+s)) \;|\; \Pi_0(t)\right\},\; \eta\mbox{-a.s.} \quad t\in [0,T], s\in[0,T-t],
\end{equation}
where $\mathbb{E}_\eta$ denotes the expectation with respect to the probability $\eta$ on $C([0,T];\mathbb{R}^d)$.}

Recall that the filtration $\left(\mathcal{F}_0(t)\right)_{t\in [0,T]}$ is not right continuous, and there is a priori no guarantee that the simple Markov property \eqref{eq:simpleM} is preserved relative to $\left(\mathcal{F}_0(t+)\right)_{t\in [0,T]}$.

For the reader convenience let us rapidly prove \eqref{eq:simpleM}, following a standard argument from e.g. \cite[Theorem 4.2]{EtKu86}:
Let $t\in[0,T)$ and $F\in \mathcal{F}_0(t)$ with $\eta(F)>0$ be fixed and consider the following probabilities
\begin{equation*}
    \eta^{(1)}(A)=\frac{\mathbb{E}_\eta\left\{1_F \mathbb{E}[A \;|\; \mathcal{F}_0(t)]\right\}}{\eta(F)}, \quad  \eta^{(2)}(A)=\frac{\mathbb{E}_\eta\left\{1_F \mathbb{E}[A \;|\; \Pi_0(t)]\right\}}{\eta(F)},\quad A\in \mathcal{F}_0(T).
\end{equation*}
Then, it follows precisely as in \cite[Theorem 4.2]{EtKu86} that the probability measures 
\begin{equation*}
    \eta^{t,(i)}:=\eta^{(i)}\circ \left(\Pi_0(t+\cdot)\right)^{-1}\in \mathcal{P}(C([t,T];\mathbb{R}^d)), \quad i\in \{1,2\},
\end{equation*}
are both solutions to the martingale problem associated to $({\sf L}_r)_{r\in[t,T]}$, having the distribution at time $t$
\begin{equation*}
    \eta^{t,(1)}_t(B)=\frac{\eta(\Pi_0(t)^{-1}(B) \cap F)}{\eta(F)}=\eta^{t,(2)}_t(B), \quad B\in \mathcal{B}(\mathbb{R}^d).
\end{equation*}
By \Cref{rem:mp_LFP} we get that $(\eta^{t,(i)}_{s})_{s\in [t,T]}$ is weakly continuous solution to the linear Fokker-Planck equation \eqref{eq:LFPshort}, $i\in \{1,2\}$. 
Moreover, we have
\begin{equation*}
    \eta^{t,(i)}_{s}:=\eta^{(i)}\circ \left({\Pi_0(s)}\right)^{-1}\leq \frac{\eta\circ \left({\Pi_0(s)}\right)^{-1}}{\eta(F)}=\frac{1}{\eta(F)}\mu_{s}, \quad r\in [t,T], \quad i\in \{1,2\}.
\end{equation*}
Therefore, by $\mathbf H_{!\mu}$ we deduce that $\eta_{s}^{t,(1)}
    =\eta_{s}^{t,(2)}$, hence for $B\in\mathcal{B}(\mathbb{R}^d)$
\begin{align*}
    \mathbb{E}_\eta\left\{1_F \mathbb{E}[\Pi_{t+r}\in B \;|\; \mathcal{F}_0(t)]\right\}
    &=\eta(F)\eta_{t+r}^{t,(1)}(B)
    =\eta(F)\eta_{t+r}^{t,(2)}(B)\\
    &=\mathbb{E}_\eta\left\{1_F \mathbb{E}[\Pi_{t+r}\in B \;|\; \Pi_t]\right\}, \;r\in [0,T-t],
\end{align*}
from which the simple Markov property follows.

\begin{rem}Note that in \cite[Theorem 4.2]{EtKu86}, the simple Markov property is deduced under the much stronger assumption requiring that the martingale problem has at most one solution for every given initial distribution which even implies that the strong Markov property holds.
We stress that above we proved the simple Markov property \eqref{eq:simpleM} under assumption $\mathbf H_{!\mu}$ which is much weaker than the mentioned uniqueness assumption in \cite[Theorem 4.2]{EtKu86}.
However, the strong Markov property can no longer be deduced  under this weaker uniqueness assumption by the arguments in \cite[Theorem 4.2]{EtKu86}.
In this paper we are fundamentally interested in proving the strong Markov property as well as other useful regularity properties for the solution the martingale problem associated to the linear Fokker-Planck equation \eqref{eq:LFPshort}, not assuming we have uniqueness for the martingale problem under any initial condition. 
Moreover, we extend these results to the non-linear case as well. 
\end{rem} 

\subsection{Towards more regularity through generalized Dirichlet forms}\label{ss:more regularity} 

In this section we approach the superposition principle from the perspective of generalized Dirichlet forms for time-dependent operators on $L^1$, as developed in \cite{St99a}.
As we shall point out, this approach has the key advantage of providing a "full" Hunt (in particular strong) Markov process which is associated to the solution of the linear Fokker-Planck equation. However, two important open problems show up, which we explain below in detail, and systematically solve afterwards.

Throughout we are given a solution $\mu:=(\mu_t)_{t\geq 0}$ to the linear Fokker-Planck equation on $[0, \infty)$, i.e. on each $[0,N), N>0$, which consists of absolutely continuous probability measures on $\mathbb{R}^d$, namely: 
There exists $u:[0,\infty)\times \mathbb{R}^d\rightarrow \mathbb{R}_+$ jointly measurable, such that
\begin{equation}\label{eq:exists_u}
  \mu_t:= u(t,x) dx, \quad \mu_t(\mathbb{R}^d)=1,\quad t\geq 0,  
\end{equation}
\begin{equation}\label{eq:bau}
    \int_0^N\int_{B(0,R)} \left(\|b(t,x)\|+\|a(t,x)\|\right) u(t,x) \; dx dt <\infty, \quad N>0,R>0,
\end{equation}
and
\begin{equation}\label{eq:LFP}
    \int_0^\infty\int_{ \mathbb{R}^d} \left[\frac{d}{dt}f(t,x)+ {\sf L}_t f(t,x)  \right] u(t,x) \;dx dt = 0, \quad f\in C_c^\infty \left((0,\infty)\times \mathbb{R}^d\right).
\end{equation}

Further, we consider the measures
\begin{equation*}
    \overline{\mu}:=u(t,x)\; dt\otimes dx \mbox{ on } [0,\infty)\times\mathbb{R}^d, \quad  \overline{\mu}_N:=u(t,x)\; dt \otimes dx \mbox{ on } [0,N)\times\mathbb{R}^d, N>0,
\end{equation*}
so that, by setting 
\begin{equation*}
\overline{\sf L}f(t,x):=\frac{d}{dt}f(t,x)+ {\sf L}_tf(t,x), \quad f\in C_c^{\infty}((0,\infty)\times \mathbb{R}^d), 
\end{equation*}
\eqref{eq:LFP} rewrites as
\begin{equation*}
    \int_0^\infty\int_{ \mathbb{R}^d} \overline{\sf L}f \; d\overline{\mu}=0, \quad f\in C_c^{\infty}((0,\infty)\times \mathbb{R}^d). 
\end{equation*}
That is, $\overline{\mu}$ is an {\it infinitesimally invariant} $\sigma$-finite measure for $\overline{\sf L}$.

Below, we will fix such a measure $\overline{\mu}$ having certain properties, and call it a {\it reference measure} and its density a {\it reference density}.

Thus, we now perfectly fit in the framework of \cite{St99a}, which we further follow and from which we take over the next  additional assumptions on the coefficients and on $\overline{\mu}$:

\begin{enumerate}
    \item[$\mathbf{H_a}$] For every $N,r>0$ we have
    \begin{equation*}
        \partial_{x_j} a_{i,j}\in L^2\left([0,N)\times B(0,r); \overline{\mu}_N\right) , \quad 1\leq i,j\leq d,
    \end{equation*} 
and there exists a constant $C=C(r,N)>0$ such that
    \begin{equation*}
        C^{-1}\|\xi\|^2\leq \langle a(t,x) \xi, \xi\rangle \leq C \|\xi\|^2, \quad \xi\in \mathbb{R}^d, (t,x)\in [0,N)\times B(0,r).
    \end{equation*}
    \item[$\mathbf{H_b}$] For every $N,r>0$ we have
    \begin{equation*}
        b\in L^2\left([0,N)\times B(0,r); \overline{\mu}_N\right).
    \end{equation*}
    \item[$\mathbf{H^{\sqrt{u}}}$]
    For every $h\in C_c^\infty(\mathbb{R}^d)$ and $N>0$ we have
    \begin{equation}\label{eq:H_sqrt}
        uh\in L^\infty([0,N)\times\mathbb{R}^d), \quad \mbox{and}\quad \sqrt{u}h\in L^2\left([0,N); H^1(\mathbb{R}^d)\right).
    \end{equation}
\end{enumerate}
{\it We say that condition $\mathbf{H_{a,b}^{\sqrt{u}}}$ holds if all the above three conditions hold.}

According to \cite[Theorem 1.7, Remark 1.18, Theorem 1.9, Proposition 1.10, Remark 1.11, and Remark 1.12]{St99a} we have the following collection of fundamental results:
\begin{thm}[cf. \cite{St99a}]\label{thm:Stannat}
    Let $u$ be a reference density (i.e. satisfying \eqref{eq:exists_u}-\eqref{eq:LFP}) such that condition $\mathbf{H_{a,b}^{\sqrt{u}}}$ is fulfilled.
    Then the following assertions hold for each $N>0$:
    \begin{enumerate}
        \item[(i)] There exists a maximal extension $\left(\overline{\sf L}_1^N,{\sf{D}(\overline{\sf{L}}_1^N)}\right)$ of $\left(\overline{\sf L},C_c^{\infty}((0,N)\times \mathbb{R}^d)\right)$ on $L^1\left([0,N)\times \mathbb{R}^d;\overline{\mu}_N\right)$ satisfying the following properties:
        \begin{enumerate}
            \item[(i.1)] It generates a $C_0$-semigroup $\left(\overline{T}_t^N\right)_{t\geq 0}$ on $L^1\left([0,N)\times \mathbb{R}^d;\overline{\mu}_N\right)$, which can be extended (keeping the same notation) as a $C_0$-semigroup of contractions on every $L^p\left([0,N)\times \mathbb{R}^d;\overline{\mu}_N\right)$, $p\in[1,\infty)$, whose corresponding generator is further denoted by $\left(\overline{\sf L}_p^N,{\sf{D}(\overline{\sf{L}}_p^N)}\right)$. 
            \item[(i.2)] $\left(\overline{T}_t^N\right)_{t\geq 0}$ is of evolution type, namely for every $t\geq 0$
            \begin{equation}\label{eq:evol}
                \overline{T}_t^Nf(s,x)=1_{[0,N-t)}(s)\overline{T}_t^N \left(f(s+t,\cdot)\right)(s,x), 
            \end{equation}
            a.e. $(s,x)\in [0,N)\times \mathbb{R}^d$ and $f\in L^p\left([0,N)\times \mathbb{R}^d; \overline{\mu}_N\right)$, $1\leq p\leq \infty$.
        \end{enumerate}
        \item[(ii)] There exists a right (hence strong Markov) process (see \Cref{defi 4.4})
        \begin{equation*}
        X^N=\left(\Omega^N,\mathcal{F}^N,\left(\mathcal{F}^N(t)\right)_{t\geq 0},\left(X^N(t)\right)_{t\geq 0},\mathbb{P}_{s,x}^N, (s,x)\in [0,N)\times \mathbb{R}^d\right),
        \end{equation*} 
        with lifetime $\zeta^N$ and shift operators  $\left(\theta^N(t)\right)_{t}$, such that the following are satisfied: 
        \begin{enumerate}
            \item[(ii.1)] Its transition function $\left(\overline{P}_t^N\right)_{t\geq 0}$ and corresponding resolvent $\overline{\mathcal{U}}^N:=\left(\overline{U}_\alpha^N\right)_{\alpha>0}$ are such that for each $f\in b\mathcal{B}([0,N)\times \mathbb{R}^d)$, $\overline{U}_\alpha^Nf$ is a $\overline{\mu}_N$-version of $\left(\alpha-\overline{L}^N_1\right)^{-1}f$, $\alpha>0$. 
            \item[(ii.2)] For $(s,x)\in [0,N)\times \mathbb{R}^d \quad \overline{\mu}_N\mbox{-a.e.}$ we have
            \begin{equation}\label{eq:diffusion}
                \overline{\mathbb{P}}^N_{s,x}\left( [0,\zeta^N)\ni t\mapsto X^N(t) \in [0,N)\times \mathbb{R}^d \mbox{ is continuous }  \right)=1.
            \end{equation}
            \item[(ii.3)] If we set $X^N(t)=(X^N_1(t),X^N_2(t)), t\geq 0$, then for $(s,x)\in [0,N)\times \mathbb{R}^d \quad \overline{\mu}_N\mbox{-a.e.}$ we have
            \begin{equation}\label{eq:uniform_motion_stannat}
            \overline{\mathbb{P}}^N_{s,x}\left( X^N_1(t)\neq t+s; t<\zeta^N  \right)=0.
            \end{equation}
            \item[(ii.4)] For every $N>0$, and every $\overline{\nu}\in \mathcal{P}([0,N)\times \mathbb{R}^d)$ which satisfies $\overline{\nu}\leq c \overline{\mu}_N$ for some constant $c>0$, the process $X^N$ solves the $\mathbb{P}^N_{\overline{\nu}}$-martingale problem associated with  $\left(\overline{\sf L}, C_c^{\infty}((0,\infty)\times \mathbb{R}^d))\right)$ in the sense that
            \begin{equation}\label{eq:mpStannat}
                f(X^N(t\wedge \zeta^N))-f(X^N(0))-\int_0^{t\wedge \zeta^N} \overline{\sf L}f(X^N(s))\; ds, \quad t\geq 0
            \end{equation}
            is an $\left(\mathcal{F}^N(t)\right)$-martingale under $\mathbb{P}^N_{\overline{\nu}}:=\int_{[0,N)\times \mathbb{R}^d} \mathbb{P}^N_{s,x} \; \overline{\nu}(ds,dx)$, for all $f\in C_c^{\infty}((0,\infty)\times \mathbb{R}^d)$.
        \end{enumerate}
    \end{enumerate} 
\end{thm}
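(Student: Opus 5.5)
The theorem is essentially a compilation of results from \cite{St99a}, so the plan is to verify that our setting fits Stannat's framework and then read off each assertion. The first step is to place the space--time operator $\overline{\sf L}=\partial_t+{\sf L}_t$ on $E_N:=[0,N)\times\mathbb{R}^d$ in the generalized Dirichlet form setting of \cite{St99a}. The reference measure is $\overline{\mu}_N=u\,dt\,dx$. By \eqref{eq:LFP}, $\overline{\mu}_N$ is infinitesimally invariant for $(\overline{\sf L},C_c^\infty((0,N)\times\mathbb{R}^d))$.

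Using $\mathbf{H_a}$ and $\mathbf{H^{\sqrt{u}}}$, we write ${\sf L}_t$ in divergence form with respect to $\overline{\mu}_N$:
\[
{\sf L}_tf=\frac{1}{2u}\,\mathrm{div}(u\,a\nabla f)+\langle \beta,\nabla f\rangle ,
\]
where the drift part
\[
\beta=b-\tfrac12\Big(\mathrm{div}\,a+a\,\tfrac{\nabla u}{u}\Big)
\]
lies in $L^2_{loc}(\overline{\mu}_N)$. This follows from $\mathbf{H_b}$, from $\partial_j a_{ij}\in L^2_{loc}(\overline{\mu}_N)$, and from $\nabla u/\sqrt{u}=2\nabla\sqrt{u}\in L^2_{loc}(dt\,dx)$, the latter being exactly what $\sqrt{u}h\in L^2([0,N);H^1)$ provides. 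Local uniform ellipticity then makes the symmetric part $\int\langle a\nabla f,\nabla g\rangle\,d\overline{\mu}_N$ a closable, locally elliptic form on $L^2(\overline{\mu}_N)$. The condition $uh\in L^\infty$ is used here, so that $L^2(\overline{\mu}_N)$-convergence and $H^1$-estimates are compatible on compacts.

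The antisymmetric part is $\partial_t+\langle\beta-\beta^*,\nabla\rangle$, and infinitesimal invariance shows it is $\overline{\mu}_N$-divergence free. These are precisely the structural hypotheses of \cite[Theorem 1.7 and Remark 1.18]{St99a}. That result yields the maximal extension $\overline{\sf L}_1^N$ generating a sub-Markovian $C_0$-semigroup on $L^1(\overline{\mu}_N)$. By invariance the semigroup is $L^1$-contractive, and by sub-Markovianity it is $L^\infty$-contractive; Riesz--Thorin interpolation then gives the $L^p$ extensions in (i.1).

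For the evolution property (i.2) we use \cite[Remark 1.11]{St99a}. The time component of $\overline{\sf L}$ is pure translation $\partial_t$, so the resolvent of $\overline{\sf L}_1^N$ commutes with time-localization. Concretely, for $f$ vanishing on $[0,N)\setminus[0,N-t)$ one checks the identity on the generator level for product functions $g(s)h(x)$ and then extends by density; the killing at time $N$ produces the indicator $1_{[0,N-t)}(s)$ in \eqref{eq:evol}.

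For part (ii), \cite[Theorem 1.9]{St99a} gives quasi-regularity of the associated generalized Dirichlet form, using $C_c^\infty$ cores and the local ellipticity. Quasi-regularity yields a right (indeed special standard, hence strong Markov) process properly associated with the resolvent, which is (ii.1). For (ii.2), the form is local, so \cite[Proposition 1.10]{St99a} gives continuity of paths up to $\zeta^N$ for quasi-every starting point, in particular $\overline{\mu}_N$-a.e. For (ii.3), we apply the martingale property to functions $f(t,x)=\phi(t)\psi(x)$ with $\psi\equiv1$ on large balls. This gives $\phi(X_1^N(t))-\phi(X_1^N(0))-\int_0^t\phi'(X_1^N(r))\,dr=0$ up to localization, which forces $X^N_1(t)=s+t$ before $\zeta^N$. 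For (ii.4), \cite[Remark 1.12]{St99a} gives that $f(X^N_t)-\int_0^t\overline{\sf L}f(X_r)\,dr$ is a martingale under $\mathbb{P}^N_{\overline{\nu}}$. The reason is that $\overline{\nu}\le c\overline{\mu}_N$ lets us use $\overline{P}_t^N g=\overline{T}_t^N g$ $\overline{\mu}_N$-a.e., together with Dynkin's formula for $f\in D(\overline{\sf L}_1^N)$ applied to bounded $\overline{\sf L}f$.

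The main obstacle is the second step, namely that $\mathbf{H^{\sqrt{u}}}$ gives exactly the integrability of the logarithmic derivative $\nabla u/u$ needed for $\beta\in L^2_{loc}(\overline{\mu}_N)$, and thus that Stannat's hypotheses are met. In particular the cross term $a\nabla u/u$ must be controlled only through $\nabla\sqrt{u}$. I would handle this by writing $\int |a\nabla u/u|^2 u\,dt\,dx\le 4C\int|\nabla\sqrt{u}|^2\,dt\,dx$ on compacts.
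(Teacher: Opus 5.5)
Your overall route is exactly what the paper does: check that $\overline\mu_N$, $a$, $b$ fit Stannat's framework (in particular that $\mathbf{H^{\sqrt u}}$ controls $a\nabla u/u$ in $L^2_{\rm loc}(\overline\mu_N)$ through $\nabla\sqrt u$), then read the assertions off \cite{St99a}. The paper gives no argument beyond the list of cited results. However, the two steps you argue yourself do not work as written.

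\textbf{Step (ii.3) has a genuine gap.} Apply the martingale problem to $f=\phi(t)\psi(x)$ with $\phi\in C_c^\infty((0,N))$ and $\psi\equiv1$ on a ball. This only shows that $M^\phi_t:=\phi(X^N_1(t))-\phi(X^N_1(0))-\int_0^t\phi'(X^N_1(r))\,dr$ is a (localized) martingale under $\mathbb{P}^N_{\overline\nu}$. It does not give $M^\phi\equiv0$, and without that nothing forces $X^N_1(t)=s+t$. The missing ingredient is that $M^\phi$ has zero quadratic variation. Apply the martingale problem to $f^2$ as well, and note that $\overline{\sf L}(f^2)-2f\,\overline{\sf L}f=\langle a\nabla_xf,\nabla_xf\rangle$ vanishes where $\psi\equiv1$. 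This gives $\mathbb{E}^N_{\overline\nu}[(M^\phi_{t\wedge\sigma})^2]=0$, where $\sigma$ is the exit time from that ball.

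A more direct fix is to read (ii.3) off the evolution property \eqref{eq:evol}, using that $\overline P^N_t$ is a $\overline\mu_N$-version of $\overline T^N_t$. For $g=g(r)$ depending only on time, it gives $\overline P^N_t g(s,x)=1_{[0,N-t)}(s)\,g(s+t)\,\overline P^N_t 1(s,x)$ for $\overline\mu_N$-a.e.\ $(s,x)$. Combining $g(r)=r^2$, $g(r)=r$ and $g\equiv1$ yields $\mathbb{E}^N_{s,x}[(X^N_1(t)-s-t)^2;\,t<\zeta^N]=0$. Taking rational $t$ and using path continuity (ii.2) finishes the argument.

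\textbf{Step (ii.4) rests on a false claim.} $\overline{\sf L}f$ is not bounded here. $\mathbf{H_b}$ only puts $b$ in $L^2_{\rm loc}(\overline\mu_N)$, so $\overline{\sf L}f$ is an $L^2(\overline\mu_N)$ function with compact support. A Dynkin formula for bounded $\overline{\sf L}f$, as in \Cref{prop:connection}, is therefore unavailable. This is precisely why (ii.4) is asserted only under $\mathbb{P}^N_{\overline\nu}$ with $\overline\nu\le c\,\overline\mu_N$.

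The argument that works is the $L^1$ one, in three steps:
\begin{enumerate}
\item Sub-invariance of $\overline\mu_N$ gives $\mathbb{E}^N_{\overline\nu}\int_0^t|\overline{\sf L}f|(X^N(r))\,dr\le c\,t\,\|\overline{\sf L}f\|_{L^1(\overline\mu_N)}<\infty$.
\item Since $f\in{\sf D}(\overline{\sf L}^N_1)$ with $\overline{\sf L}^N_1f=\overline{\sf L}f$, the identity $\overline P^N_tf-f=\int_0^t\overline P^N_r\overline{\sf L}f\,dr$ holds $\overline\mu_N$-a.e.
\item The law of $X^N(r)$ under $\mathbb{P}^N_{\overline\nu}$ is $\overline\nu\circ\overline P^N_r\le c\,\overline\mu_N$. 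So the Markov property (additivity of the functional) turns this a.e.\ identity into the martingale property.
\end{enumerate}

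This argument covers exactly the $f$ supported in $(0,N)\times\mathbb{R}^d$, which is also the only class for which (ii.4) can hold. With the convention $f(\Delta)=0$, a test function not vanishing at time $N$ makes the stopped process jump at $\zeta^N$. Its expectation then fails to stay zero, already for space--time Brownian motion. So (ii.4) should be read with $C_c^\infty((0,N)\times\mathbb{R}^d)$.
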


\begin{rem}\label{rem:subprob}
Recall that $\left(\overline{P}_t\right)_{t\geq 0}$ is represented by $X^N$ through 
\begin{equation*}
    \overline{P}_tf(s,x)=\mathbb{E}^N_{s,x}\left\{f(X^N(t));t<\zeta^N\right\}, \quad t\geq 0, f\in b\mathcal{B}([0,N)\times \mathbb{R}^d).
\end{equation*}
In particular, $\left(\overline{P}_t\right)_{t\geq 0}$ is a sub-Markovian transition function which is not Markovian, as $\overline{\mathbb{P}}^N_{\overline{\mu}_N}\left(\zeta^N\leq N\right)=1$ by \eqref{eq:uniform_motion_stannat}.
Throughout the paper we shall often consider the {\it distribution} $\mathbb{P}^N_{\overline{\nu}}\circ \left(X(t)\right)^{-1}:=\overline{\nu}\circ \overline{P}_t$ on $[0,N)\times \mathbb{R}^d$, $t\geq 0$, for some $\overline{\nu}\in \mathcal{P}\left([0,N)\times\mathbb{R}^d\right)$. 
However, since $\zeta^N\leq N$, $\mathbb{P}^N_{\overline{\nu}}\circ \left(X(t)\right)^{-1}$ is merely as a sub-probability on $[0,N)\times \mathbb{R}^d$.
We shall later on prove that under some conditions, we have $\zeta^N=N-s$ $\mathbb{P}_{s,\mu_s}$-a.s., which will be crucial in order to show that the second components $X_2^N,N>0$ admit a projective limit with infinite lifetime.
\end{rem}

There are {\it two unsolved problems} that concern the processes $X^N, N>0$ constructed in \Cref{thm:Stannat}, (ii), which we explain below and solve afterwards (see \Cref{thm:main0} and \Cref{thm:main1} for the solutions to OP1 and OP2, respectively):

\medskip
\noindent{\bf Open problem 1 (OP1):} {\it Is there a global Markov process $X$ on $[0,\infty)\times \mathbb{R}^d$ which is obtained as the projective limit of the processes $X^N,N>0$, constructed in \Cref{thm:Stannat}, (ii)?} And if yes, is it conservative, right, or Hunt? This issue is highly nontrivial since it is  related to the uniqueness property of the martingale problems \eqref{eq:mpStannat}, or equivalently, to the corresponding linear Fokker-Planck equation \eqref{eq:LFP}. 
This is a sensitive issue since, in general, uniqueness is expected to hold only in a {\it restrictive} sense, as it will be discussed later on.

\medskip
\noindent{\bf Open problem 2 (OP2):} The second question is technically much more delicate than the first one, and it concerns both the {\it locally} constructed process $X^N$ in \Cref{thm:Stannat}, (ii), i.e. on each time interval $[0,N)$, and the global one $X$ which is the projective limit of $X^N,N>0$, and which exists if {\bf Q1} has an affirmative answer.
It goes as follows: Fix some $N>0$, take $f(t,x)=h(x), (t,x)\in [0,N)\times \mathbb{R}^d$ with $h\in C_c^\infty(\mathbb{R}^d)$, and $\overline{\nu}\in \mathcal{P}([0,N)\times \mathbb{R}^d)$ with $\overline{\nu}\leq c \overline{\mu}_N$ for some $c>0$.
Then, using \Cref{thm:Stannat}, (ii.3) and \eqref{eq:mpStannat}, it follows that
\begin{equation*}
        h(X^N_2(t\wedge \zeta^N))-h(X^N_2(0))-\int_0^{t\wedge \zeta^N} \overline{\sf L}_{s+X^N_1(0)}h(X^N_2(s))\; ds, \quad t\geq 0
\end{equation*}
is a $\left(\mathcal{F}^N(t)\right)$-martingale under $\mathbb{P}^N_{\overline{\nu}}$ which is continuous on $[0,\zeta^N)$.
Thus, if we are allowed to consider $\mathbb{P}^N_{\overline{\nu}}\circ \left(X^N(0)\right)^{-1}= \delta_0\otimes \nu_0$ for some $\nu_0\in \mathcal{P}(\mathbb{R}^d)$, then the {\it spatial} component $X^N_2$ solves the martingale problem associated with the time-dependent linear operator ${\sf L}_t$ on $0\leq t<\zeta^N$, starting at $t=0$ from $\nu_0$. 
But this is not possible since the initial distribution of $\mathbb{P}^N_{\overline{\nu}}\circ \left(X_1^N(0)\right)^{-1}$ is assumed to be absolutely continuous with respect to $dt$. 
This issue is a deep one and in fact ubiquitous in the theory of (generalized) Dirichlet forms. 
It essentially arises from the fact that (the resolvent of the) Markov process $X^N$ is in fact constructed on the state space $[0,N)\times \mathbb{R}^d$, uniquely except on some abstract set $\mathcal{N}$ which is $\overline{\mu}_N$-{\it inessential} (or $\overline{\mu}_N$-negligible and $\overline{\mu}_N$-polar), and on $\mathcal{N}$ it is trivially constructed so that if it starts from $x\in \mathcal{N}$ then it remains in $x$ forever. 
As a consequence, such a Markov process set to be stuck at any point in $\mathcal{N}$ will not solve the martingale problem starting from an initial distribution that charges $\mathcal{N}$. 
In the case of the process $X^N$ furnished by \Cref{thm:Stannat}, due to the fact that the first component is the uniform motion to the right, one can easily see that the set $\{0\}\times \mathbb{R}^d$ is never hit by the process, namely it is polar; hence, the process $X^N$ given by \Cref{thm:Stannat}, uniquely in law on $[0,T)\times \mathbb{R}^d$ except some abstract set $\mathcal{N}$ as above, is in particular not guaranteed to solve the martingale problem \eqref{eq:mpStannat} with initial distribution $\delta_0\times \mu_0$. 
As a conclusion, {\it the second question is if, and under which conditions, one can construct the process $X$ so that it can solve the martingale problem \eqref{eq:mpStannat} starting at $t=0$ from generic distributions of the type $\delta_0\times \nu_0$.} 
By the same reason, this question concerns also the global process $X$ from {\bf Q1}, once we show it can indeed be constructed.

\subsection{The main results: The corresponding right (in particular strong) Markov process}\label{ss:main results}
This subsection is devoted to answering the two fundamental questions raised above.
Throughout we assume that the basic settings \eqref{eq:exists_u}-\eqref{eq:LFP} are in force.
Further, we introduce the following notation: 
\begin{align}\label{eq:A}
    \mathcal{A}_{\leq u}^{s,N}
    &:=\left\{v:[s,N)\times \mathbb{R}^d\rightarrow [0,\infty) \mbox{ measurable } : v\leq u \; dt\otimes dx\mbox{-a.e.} \right\}\nonumber\\
    \mathcal{A}_{\lesssim u}^{s,N}
    &:=\left\{v:[s,N)\times \mathbb{R}^d\rightarrow [0,\infty) \mbox{ measurable } : \exists\; c>0 \mbox{ such that } v\leq c u \; dt\otimes dx\mbox{-a.e.} \right\}
\end{align}

We introduce the following key hypothesis that concerns the given solution $\left(\mu_t:=u(t,x)dx\right)_{t\geq 0}$ of the linear Fokker-Planck equation \eqref{eq:LFPshort}:

\medskip
\noindent{$\mathbf{H_{\sf \leq u}}$} $\left(\mu_t=u(t,x)dx\right)_{t\in [0,\infty)}$ is weakly continuous and for every $0\leq s<N<\infty$ we have that whenever $v\in\mathcal{A}_{\leq u}^{s,N}$ induces a weakly right-continuous and vaguely left-continuous solution $\left(v(t,x)dx\right)_{t\in [s,N)}$ (of sub-probabilities) to the linear Fokker-Planck equation \eqref{eq:LFPshort} on $[s,N)$ with initial condition $v(s,x)dx=\mu_s$, then we have the identification $\left(v(t,x)dx\right)_{t\in [s,N)}=\left(\mu_t\right)_{t\in [s,N)}$.


\medskip
We introduce now a uniqueness hypothesis which is stronger than $\mathbf{H_{\sf \leq u}}$:

\medskip
\noindent{$\mathbf{H_{\sf \lesssim u}}$} 
We have that $\mathbf{H_{\sf \leq u}}$ holds and for every $0\leq s<N<\infty$ and $c>0$ we have that whenever $\left(v(t,x)dx\right)_{t\in [s,N)}$ and $\left(\tilde{v}(t,x)dx\right)_{t\in [s,N)}$ are two weakly continuous curves with values in $\mathcal{P}(\mathbb{R}^d)$ which are solutions to the linear Fokker-Planck equation \eqref{eq:LFPshort} on $[s,N)$ such that
$\tilde{v},v\in\mathcal{A}_{\lesssim u}^{s,N}$ and $v(s,x)dx=\tilde{v}(s,x)dx$, then the two solutions must coincide, namely $\left(v(t,x)dx\right)_{t\in [s,N)}=\left(\tilde{v}(t,x)dx\right)_{t\in [s,N)}$.

\medskip
We have now all the ingredients so give a positive answer to the open question OP1:

\begin{thm}[Answer to OP1] \label{thm:main0}
Let $u$ be a reference density such that that $\mathbf{H_{a,b}^{\sqrt{u}}}$ and $\mathbf{H_{\sf \lesssim u}}$ hold. 
Then the following assertions hold:
    \begin{enumerate}
        \item[(i)] There exists a maximal extension $\left(\overline{\sf L}_1,{\sf{D}(\overline{\sf{L}}_1)}\right)$ of $\left(\overline{\sf L},C_c^{\infty}((0,\infty)\times \mathbb{R}^d))\right)$ on $L^1\left([0,\infty)\times \mathbb{R}^d;\overline{\mu}\right)$. 
        It generates a $C_0$-resolvent $\overline{\mathcal{U}}=\left(\overline{U}_\alpha\right)_{\alpha> 0}$ on $L^1\left([0,\infty)\times \mathbb{R}^d;\overline{\mu}\right)$, which, keeping the same notation, induces a $C_0$-resolvent of Markov contractions $\overline{\mathcal{U}}$ on every $L^p\left([0,\infty)\times \mathbb{R}^d;\overline{\mu}\right)$, and whose corresponding generator is further denoted by $\left(\overline{\sf L}_p,{\sf{D}(\overline{\sf{L}}_p)}\right)$, $p\in[1,\infty)$.
        Moreover, for every $N>0$ we have 
        \begin{equation*}
        \overline{U}_\alpha \left(f1_{[0,N)\times \mathbb{R}^d}\right)|_{[0,N)\times \mathbb{R}^d}=\overline{U}_\alpha^N \left(f|_{[0,N)\times \mathbb{R}^d}\right), \quad f \in L^p([0,\infty)\times \mathbb{R}^d;\overline{\mu}),
        \end{equation*}
        where $\overline{U}_\alpha^N$ is given in \Cref{thm:Stannat}.
        \item[(ii)] There exists a set $E\in \mathcal{B}([0,\infty)\times \mathbb{R}^d)$ and a conservative right (hence strong Markov) process  on $E$
        \begin{equation*}
        X=\left(\Omega,\mathcal{F},\mathcal{F}_t,X(t),\mathbb{P}_{s,x}, (s,x)\in E, t\geq 0\right) \mbox{ such that:}
        \end{equation*} 
        \begin{enumerate}
            \item[(ii.1)] $\overline{\mu}([0,\infty)\times \mathbb{R}^d\setminus E)=0$ and $\mu_s\left(\left\{x\in \mathbb{R}^d, (s,x)\in E\right\}\right)=1$, $s>0$.
            \item[(ii.2)] The transition function of $X$ denoted by $\left(P_t\right)_{t\geq 0}$, and the corresponding resolvent $\mathcal{U}:=\left(U_\alpha\right)_{\alpha>0}$, are such that 
            \begin{enumerate}
                \item[\scalebox{0.7}{$\bullet$}] $P_t\left(f|_E\right)$ is finely continuous for every $f\in C_b([0,\infty)\times \mathbb{R}^d)$
                \item[\scalebox{0.7}{$\bullet$}] $U_\alpha \left(f|_E\right)$ is a finely continuous $\overline{\mu}$-version of $\left(\alpha-\overline{\sf{L}}_p\right)^{-1}f$, $\alpha>0$, $f\in p\mathcal{B}([0,\infty)\times \mathbb{R}^d)\cap L^p([0,\infty)\times\mathbb{R}^d;\overline{\mu})$. 
            \end{enumerate}
            \item[(ii.3)] For $(s,x)\in E$ we have
            \begin{equation*}
                \mathbb{P}_{s,x}\left( [0,\infty)\ni t\mapsto X(t) \in E\subset [0,\infty)\times \mathbb{R}^d \mbox{ is continuous }  \right)=1.
            \end{equation*}
            \item[(ii.4)] If we set $X(t)=(X_1(t),X_2(t)), t\geq 0$, then for $(s,x)\in E$ we have
            \begin{equation*}
            \mathbb{P}_{s,x}\left( X_1(t)= t+s; 0\leq t<\infty  \right)=1.
            \end{equation*}
            Moreover, we have 
            \begin{equation*}
            \mathbb{P}_{s,\mu_s}\circ \left(X_2(t)\right)^{-1}=\mu_{t+s} \quad \mbox{ for every } s> 0 \mbox{ and } t\geq 0.
            \end{equation*}
            \item[(ii.5)]
            For every $(s,x)\in E$ and every $f\in C_c^{2}([0,\infty)\times \mathbb{R}^d)$ we have that
            \begin{equation*}
                \mathbb{E}_{(s,x)}\left\{\int_0^{t} \left|\overline{\sf L}f(X(r))\right|\; dr
               \right\}<\infty, \quad t\geq 0,
            \end{equation*}
            and
            \begin{equation*}
                f(X(t))-f(X(0))-\int_0^{t} \overline{\sf L}f(X(r))\; dr, \quad t\geq 0
            \end{equation*}
            is a continuous $\left(\mathcal{F}(t)\right)$-martingale under $\mathbb{P}_{(s,x)}$.
            \item[(ii.6)]
            For every $s\in (0,\infty)$, and every $\nu_s\in \mathcal{P}(\mathbb{R}^d)$ which satisfies $\nu_s\leq c \mu_s$ for some constant $c>0$, we have that
            \begin{equation*}
            \nu_{t+s}:=\mathbb{P}_{\delta_s\otimes\nu_s}\circ \left(X_2(t)\right)^{-1}, \quad t\geq 0,
            \end{equation*}
            is the unique weakly continuous solution to the LFPE \eqref{eq:LFPshort} on $[s,N)$ which consists of probability measures, belongs to $\mathcal{A}_{\lesssim u}^{s,N}$, and whose initial condition (at time $s$) is $\nu_s$, for all $N>s$.
            Moreover, the (strong Markov) process $X$ is the unique solution to the martingale problem on $[s,N)$ associated with $\left(\overline{\sf L}, C_c^{\infty}((0,\infty)\times \mathbb{R}^d)\right)$, with initial distribution $\delta_s\otimes\nu_s$ and with $(X_2(t))_{t\geq 0}$ having the one-dimensional marginals in $\mathcal{A}_{\lesssim u}^{s,N}$, for every $N>s$.
        \end{enumerate}
    \end{enumerate} 
\end{thm}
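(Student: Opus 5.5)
The plan is to build $X$ as a projective limit of the processes $X^N$ from \Cref{thm:Stannat}, using $\mathbf{H_{\sf \lesssim u}}$ to make the finite-horizon objects consistent.

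\textbf{Consistency of resolvents.} For $N<M$, I would show that the restriction of $\overline{U}^M_\alpha$ to functions supported in $[0,N)\times\mathbb{R}^d$, observed on $[0,N)\times\mathbb{R}^d$, coincides $\overline{\mu}_N$-a.e.\ with $\overline{U}^N_\alpha$. By the evolution property \eqref{eq:evol}, it suffices to compare $\overline{T}^M_t$ and $\overline{T}^N_t$ on such functions. For this, fix $s$ and a density $\nu_s\le c\mu_s$, and let $\overline{\nu}$ be a smoothed version of $\delta_s\otimes\nu_s$ in time. Then the spatial marginals of $\mathbb{P}^M_{\overline\nu}$ and $\mathbb{P}^N_{\overline\nu}$ are solutions of \eqref{eq:LFPshort} lying in $\mathcal{A}^{s,N}_{\lesssim u}$. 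Domination by $cu$ should follow from sub-Markovianity and the fact that $\overline\mu$ is sub-invariant for $\overline T^N$, obtained by duality from \eqref{eq:LFP}.

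\textbf{Conservativeness, the main obstacle.} Here I must show $\zeta^N=N-s$ a.s.\ under $\mathbb{P}^N_{s,\mu_s}$. The sub-probability marginals $v(t,\cdot)$ of $\mathbb{P}^N_{\delta_s\otimes\mu_s}$ (again via time-smoothing) are dominated by $u$. They are weakly right-continuous and vaguely left-continuous solutions with $v(s)=\mu_s$, so $\mathbf{H_{\sf \le u}}$ forces $v=u$. Hence there is no mass loss before $N$. I expect the regularity of $t\mapsto v(t)$ to be the delicate part. I would obtain it from the martingale property \eqref{eq:mpStannat} together with path continuity \eqref{eq:diffusion}, after passing to a Lebesgue-a.e.\ $s$ and using the right-continuity of the process. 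Once there is no mass loss, $\mathbf{H_{\sf \lesssim u}}$ identifies the marginals for $M$ and $N$. Combined with uniqueness for the martingale problem (\Cref{prop:uniqueness_MP}), this gives the equality of laws, hence of semigroups, hence of resolvents.

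\textbf{Construction of $X$ and its properties.} I would then define $\overline{U}_\alpha$ on $L^p(\overline\mu)$ by patching the $\overline{U}^N_\alpha$ together. This is legitimate because evolution type means that values on $[0,N)$ only see $f$ on later times up to $N$, via the Laplace transform of the consistent semigroups. The maximal extension statement in (i) then follows from that of each $\overline{\sf L}^N_1$.

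Next I would take the measurable Markov resolvent obtained from these versions and apply the appendix machinery, as in \cite{BeCiRo18}. The plan is to select a Ray cone and a $\overline\mu$-inessential exceptional set, and to restrict to a Borel set $E$ that is finely closed under the process. This yields a right process whose resolvent is finely continuous, which gives (ii.2). Properties (ii.3) and (ii.4) (first half), as well as (ii.5), are inherited from (ii.2)--(ii.4) of \Cref{thm:Stannat} for each $N$. I would remove the $\overline\mu$-a.e.\ restrictions by shrinking $E$ with the standard inessential-set argument, and upgrade from a.e.\ to every point in $E$ using fine continuity of $P_t$.

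To obtain $\mu_s(E_s)=1$ for every $s$ rather than a.e.\ $s$, I would use the conservativeness step above: it gives $\mathbb{P}_{\delta_r\otimes\mu_r}(X(t)\in E)=1$ with marginals $\delta_{r+t}\otimes\mu_{r+t}$. Since $E$ is absorbing and $r$ ranges over a full-measure set, this covers every $s>0$. The second half of (ii.4) and the uniqueness in (ii.6) then follow from $\mathbf{H_{\sf \lesssim u}}$ and \Cref{prop:uniqueness_MP}.
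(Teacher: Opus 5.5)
Your skeleton matches the paper's: dual-semigroup densities dominated by $cu$ as in \Cref{prop:densities}, $\mathbf{H_{\sf \leq u}}$ to get $\zeta^N=N-s$, $\mathbf{H_{\sf \lesssim u}}$ for consistency of the $\overline P^N_t$, patching $\overline U_\alpha f=\sup_N 1_{[0,N)\times\mathbb R^d}\overline U^N_\alpha(f|_{[0,N)\times\mathbb R^d})$, and restriction to complements of inessential sets. However, the step that actually answers OP1 is missing.

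You claim that from the patched Markov resolvent one can ``select a Ray cone and a $\overline\mu$-inessential exceptional set'' and thereby get a right process on a Borel $E\subset[0,\infty)\times\mathbb R^d$. This does not follow. A Ray cone, i.e. $\mathbf{(H_{\mathcal C})}$, only yields a right process on the saturation $E_1$ (\Cref{thm:saturation}). A right process on $E$ exists iff $E_1\setminus E$ is polar, and the appendix notes that $\mathbf{(H_{\mathcal C})}$ does not suffice. Before discarding an inessential set, you must prove that $E_1\setminus E$ is $\overline\mu$-polar. That is, the saturated process started $\overline\mu$-a.e. must never leave $[0,\infty)\times\mathbb R^d$ and must have norm-continuous paths for all times. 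The same missing link undermines your claim that (ii.3) and (ii.5) are ``inherited'' from \Cref{thm:Stannat}. Agreement of resolvent kernels off a $\overline\mu$-null set does not by itself identify the global process with $X^N$ on path space up to time $N-s$, nor does it exclude exits through points of $E_1\setminus E$.

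The paper supplies this via the function $V(s,x)=e^{-s}$, which is excessive because the first component is the uniform motion.
\begin{enumerate}
\item Its fine extension $V_1$ to $E_1$ is used to kill the saturated process $\tilde X$ on leaving $[V_1>e^{-N}]$.
\item A balayage computation shows that the killed resolvent extends, in the sense of \Cref{defi:natural}, the restriction of $\overline{\mathcal U}^N$ to $E^N$.
\item Maximality of the saturation (\Cref{rem:maximal}) and \Cref{prop:tau-identification} then give equality of laws with $X^N$ on $C([0,\infty);\check E^N)$.
\item This is where $\zeta^N=N-s$ (\Cref{coro:identification1}) is really needed. It yields $T_{[V_1\le e^{-N}]}=N-s$, hence $\sup_N T_{[V_1\le e^{-N}]}=\infty$ and norm-continuous paths in $\check E$, $\tilde{\mathbb P}_{\overline\mu}$-a.s.
\item So $E_1\setminus\check E$ is $\overline\mu$-polar and $\overline\mu$-negligible, hence contained in a $\overline\mu$-inessential set by \Cref{rem 2.2}, (iii). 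Only then does \Cref{prop:continuous_modification} produce the conservative, path-continuous process.
\end{enumerate}
This use of $\zeta^N=N-s$ does not appear in your plan.

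Two smaller points. First, a time-smoothing of $\delta_s\otimes\nu_s$ is generally not dominated by $c\overline\mu_N$, and its time-mixed spatial marginals are not FPE solutions; use initial laws $hu\,ds\,dx$ with $h$ bounded and disintegrate in $s$. Second, applying $\mathbf{H_{\sf \lesssim u}}$ requires weak continuity, which comes from \Cref{rem:Trevisan_narrow} once mass loss is excluded.
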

\begin{proof}
    The proof is given in \Cref{ss:proof of main0}.
\end{proof}

\begin{rem}\label{rem:MP-D(L)}
    Note that the martingale problem in \Cref{thm:main0}, (ii.5), may be solved for every $f\in b\mathcal{B}(E)$ such that $f\in {\sf D(\overline{L}_1)}$, in the following sense: Let $\tilde{f}$ be a finely continuous $\overline{\mu}$-version of $f$, and $E'\in \mathcal{B}(E)$ such that $E\setminus E'$ is $\overline{\mu}$-inessential and $\tilde{f}$ is bounded on $E'$; $\tilde{f}$ always exists. By considering the restriction of $X$ to $E'$, taking $g$ to be any $\mathcal{B}(E)$-measurable $\overline{\mu}$-version of $\overline{\sf L}_1f$, and using \Cref{lem:martingale 1} and \Cref{prop:martingale_absorbing}, it follows that there exists a second set $E''\in \mathcal{B}(E)$ such that $E\setminus E''$ is $\overline{\mu}$-inessential and
    \begin{equation*}
                \tilde{f}(X(t))-\tilde{f}(X(0))-\int_0^{t} g(X(r))\; dr, \quad t\geq 0
    \end{equation*}
    is a c\`adl\`ag $\left(\mathcal{F}_t\right)$-martingale under $\mathbb{P}_{(s,x)}$ for every $(s,x)\in E''$.
\end{rem}

\medskip
In order to answer positively to the open question OP2, we need additional regularity properties for the resolvent $\mathcal{U}$ constructed in \Cref{thm:main0}, at points of the type $(0,x)$, namely at $s=0$. 
To this end, the key needed assumption turns out to be the following:

\medskip
\noindent{$\mathbf{H^{\sf TV}_0}$} The following hold:
\begin{enumerate}[(i)]
    \item We have
    \begin{equation}\label{eq:TV}
    \lim\limits_{t\to 0} \mu_t=\mu_0 \quad \mbox{ in total variation, or equivalently, } \lim\limits_{t\to 0} u(t,\cdot)=u(0,\cdot) \mbox{ in } L^1(dx).
    \end{equation}
    \item There exists a family $\mathcal{A}_0$ of probability densities which separates the measures on ${\sf supp}(\mu_0)$, with the additional properties that if $v_0\in \mathcal{A}_0$ then there exists a constant $c$ such that $\nu_0:=v_0dx\leq c u(0,x)dx$, and the corresponding solution $\left(\eta^{1,\nu_0}_t\right)_{0\leq t\leq 1}$, given by \Cref{rem:Trevisan_superposition}, (iii), is continuous at $t=0$ in total variation, or equivalently
    \begin{equation*}
        \lim\limits_{t\to 0}\frac{d\eta^{1,\nu_0}_t}{dx}=v_0 \quad \mbox{in } L^1(dx); 
    \end{equation*}
    see also \Cref{eq:uniqueness eta}, below.
\end{enumerate}

\begin{rem}\label{eq:uniqueness eta}
    Assume that $\mathbf{H_{\sf \lesssim u}}$ holds.
    Then any weakly continuous $(\nu_t)_{0\leq t\leq 1}\subset \mathcal{P}(\mathbb{R}^d)$ which is a solution to the linear Fokker-Planck equation \eqref{eq:LFPshort} such that $\nu_t\leq c\mu_t,0\leq t\leq 1$ for some constant $c$, satisfies
    \begin{equation*}
        \nu_t=\eta_t^{1,\nu_0}, \quad 0\leq t\leq 1.
    \end{equation*}
    Thus, under $\mathbf{H^{\sf TV}_0}$ and if $\nu_0\in \mathcal{A}_0$, we have
    \begin{equation*}
        \lim\limits_{t\to 0}\frac{d\nu_t}{dx}=\frac{d\nu_0}{dx} \quad \mbox{in } L^1(dx). 
    \end{equation*}
\end{rem}

\begin{thm}[Answer to OP2] \label{thm:main1}
Let $u$ be a reference density such that $\mathbf{H_{a,b}^{\sqrt{u}}}$, $\mathbf{H_{\sf \lesssim u}}$ hold.
If, in addition, $\mathbf{H^{\sf TV}_0}$ holds, then in assertions (ii.1), (ii.4), and (ii.6) from \Cref{thm:main0} we can include the case $s=0$ as well; more precisely, the three assertions can be respectively replaced with the following stronger ones:
        \begin{enumerate}
            \item[(ii.1')] $\overline{\mu}([0,\infty)\times \mathbb{R}^d)\setminus E=0$ and $\mu_s\left(\left\{x\in \mathbb{R}^d, (s,x)\in E\right\}\right)=1$, $s\geq 0$.
            \item[(ii.4')] If we set $X(t)=(X_1(t),X_2(t)), t\geq 0$, then for $(s,x)\in E$ we have
            \begin{equation*}
            \mathbb{P}_{s,x}\left( X_1(t)= t+s; 0\leq t<\infty  \right)=1.
            \end{equation*}
            Moreover, we have 
            \begin{equation*}
            \mathbb{P}_{s,\mu_s}\circ \left(X_2(t)\right)^{-1}=\mu_{t+s} \quad \mbox{ for every } s\geq 0 \mbox{ and } t\geq 0.
            \end{equation*}
            \item[(ii.6')]
            For every $s\in [0,\infty)$, and every $\nu_s\in \mathcal{P}(\mathbb{R}^d)$ which satisfies $\nu_s\leq c \mu_s$ for some constant $c>0$, we have that
            \begin{equation*}
            \nu_{t+s}:=\mathbb{P}_{\delta_s\otimes\nu_s}\circ \left(X_2(t)\right)^{-1}, \quad t\geq 0,
            \end{equation*}
            is the unique weakly continuous solution to the LFPE \eqref{eq:LFPshort} on $[s,N)$ which consists of probability measures, belongs to $\mathcal{A}_{\lesssim u}^{s,N}$, and whose initial condition (at time $s$) is $\nu_s$, for all $N>s$.
            Moreover, the (strong Markov) process $X$ is the unique solution to the martingale problem on $[s,N)$ associated with $\left(\overline{\sf L}, C_c^{\infty}((0,\infty)\times \mathbb{R}^d)\right)$, with initial distribution $\delta_s\otimes\nu_s$ and with $(X_2(t))_{t\geq 0}$ having the one-dimensional marginals in $\mathcal{A}_{\lesssim u}^{s,N}$, for every $N>s$.
        \end{enumerate}
\end{thm}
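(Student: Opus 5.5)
We plan to start from the right process $X$ on $E$ built in \Cref{thm:main0} and enlarge its state space by suitable points of the form $(0,x)$. The potential-theoretic device will be that of \cite{BeCiRo18}, as recalled in \Cref{Appendix}: a right process can be extended to additional points $z$ by prescribing, at each such $z$, an entrance law, equivalently a family of excessive measures $\xi_z$ with $\xi_z\circ U_\alpha$ behaving like a resolvent kernel $U_\alpha(z,\cdot)$. The extended resolvent must still satisfy the resolvent equation and must be compatible with the fine topology.

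For $x$ in a suitable set $E_0\subset\mathbb{R}^d$ with $\mu_0(E_0)=1$, we will define the kernel at $(0,x)$ through the superposition principle. Take the disintegration $\eta^{x}$ of \Cref{rem:Trevisan_superposition} on $[0,1]$. Its law at a small time $r>0$ is $\eta^x_r$, and $\eta^x_r\ll\mu_r$ for $\mu_0$-a.e. $x$, since $\int\eta^x_r\,\mu_0(dx)=\mu_r$. Set
\begin{equation*}
U_\alpha f(0,x):=\mathbb{E}_{\eta^x}\Big[\int_0^r e^{-\alpha t}f(t,\Pi_0(t))\,dt\Big]+e^{-\alpha r}\int \eta^x_r(dy)\,U_\alpha f(r,y).
\end{equation*}
By the Markov property of $X$ and the uniqueness in (ii.6) of \Cref{thm:main0} at time $r>0$, this definition should be independent of $r$. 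To see that the second term is meaningful we need $\eta^x_r(\{y:(r,y)\notin E\})=0$; we obtain it by integrating in $x$ against $\mu_0$ and using (ii.1) of \Cref{thm:main0}. Consistency of the whole family (the resolvent equation and the Markov property) then comes from $\mathbf H_{\lesssim u}$: for $\nu_0\le c\mu_0$, the curve $\eta^{\nu_0}_t$ is the unique solution in $\mathcal A_{\lesssim u}^{0,N}$, so the laws glue correctly. This gives (ii.6') and, with $\nu_0=\mu_0$, the identity in (ii.4').

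The main obstacle is showing that the extended kernel is a resolvent of a \emph{right} process. For this the new points must not be branch points, and the functions $U_\alpha f$ must be finely continuous at $(0,x)$. In particular we need $U_\alpha f(r,\cdot)\to U_\alpha f(0,\cdot)$ along the process as $r\to0$ in an appropriate sense. This is exactly where $\mathbf{H^{\sf TV}_0}$ enters. For $v_0\in\mathcal A_0$, \Cref{eq:uniqueness eta} gives $L^1$-convergence of $d\nu_t/dx$ to $v_0$, and (i) gives $L^1$-convergence of $u(t,\cdot)$ to $u(0,\cdot)$. Together these yield $\int U_\alpha f(t,y)\,v_0(y)\,dy\to\int U_\alpha f(0,y)\,v_0(y)\,dy$, uniformly for bounded $f$, since $U_\alpha f$ is bounded and total-variation convergence suffices. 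Because $\mathcal A_0$ separates measures on ${\sf supp}(\mu_0)$, we can pass from these averaged statements to a pointwise $\mu_0$-a.e. statement. The tool for this is \Cref{lem:B_is_finely_feller} and \Cref{prop:B-boundary}, which control the boundary $B=\{0\}\times\mathbb{R}^d$. After that we discard a $\mu_0$-null exceptional set of $x$ by \Cref{Borel-measurab}, which yields (ii.1').

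Finally, path continuity, the uniform motion $X_1(t)=t+s$ for $s=0$, and the martingale property will be obtained as follows. The process started at $(0,x)$ runs on $[0,r]$ as the superposed $\eta^x$, and after time $r$ it is $X$ started at $(r,\cdot)$. Continuity is then inherited, using \Cref{prop:continuous_modification_special} to choose a continuous modification. The martingale property at $s=0$ follows because $\eta^x$ solves the martingale problem for $\mu_0$-a.e. $x$ (\Cref{rem:Trevisan_superposition}(ii)). Uniqueness in (ii.6') for $s=0$ is then a direct consequence of $\mathbf H_{\lesssim u}$ combined with \Cref{prop:uniqueness_MP}.
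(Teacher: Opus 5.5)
\textbf{Gap: the no-branch-point step at the points $(0,x)$.} Your overall architecture matches the paper's Steps OP2.1--OP2.9: define the kernel at $(0,x)$ through the disintegration $\eta^x$, obtain consistency from $\mathbf{H_{\lesssim u}}$, invoke the natural-extension theorem, and restrict to starting points with continuous paths. The gap is at the step you yourself flag as the main obstacle, namely showing that the new points $(0,x)$ are not branch points. With $\phi:=U_1f\wedge U_1g$, what must be shown (after the monotonicity reduction) is $\lim_n\mu_{t_n}(\phi(t_n,\cdot))=\mu_0\big(U_1f(0,\cdot)\wedge U_1g(0,\cdot)\big)$ along some $t_n\searrow0$.

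Because of the minimum this is a nonlinear statement. It needs \emph{strong} convergence of $U_1f(t,\cdot)$ to $U_1f(0,\cdot)$, for instance $\mu_0$-a.e. along a subsequence. Your averaged convergence $\int U_\alpha f(t,y)v_0(y)\,dy\to\int U_\alpha f(0,y)v_0(y)\,dy$, $v_0\in\mathcal A_0$, is correct. But it is only a weak-type convergence tested against a separating family, and it implies neither a.e. nor $L^1$ convergence along any subsequence; oscillating functions are counterexamples. With weak convergence alone, concavity of $\wedge$ gives only $\limsup\le$, which you already have from supermedianity. So the sentence ``because $\mathcal A_0$ separates measures we can pass from averaged to pointwise $\mu_0$-a.e.'' fails. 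Moreover, \Cref{lem:B_is_finely_feller}, \Cref{prop:B-boundary} and \Cref{Borel-measurab} are statements about an \emph{already existing} right process. Using them to produce the right process at the new points is circular.

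\textbf{Missing idea: compactness in time, where $\mathbf{H_{a,b}^{\sqrt{u}}}$ re-enters.} In the paper, Stannat's results give that $q=hU_1f$ lies in the form domain of $\mathcal A_0$ and satisfies \eqref{eq:A_0}. Together with $\mathbf{H_a}$, $\mathbf{H_b}$ and $\mathbf{H^{\sqrt{u}}}$ this yields $w=u\,hU_1f\in L^2([0,1];H^1(\mathbb{R}^d))$ with $\partial_tw\in L^2([0,1];H^{-1}(\mathbb{R}^d))$. Hence $w\in C([0,1];L^2(\mathbb{R}^d))$ by Lions--Magenes. Patching over cut-offs $h$ gives a version $F$ of $uU_1f$ with $F(t_n,\cdot)\to F(0,\cdot)$ a.e. along a subsequence. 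Using $u(t,\cdot)\to u(0,\cdot)$ in $L^1$, this gives $1_{E^{\dagger,0}(t_n)}U_1f(t_n,\cdot)\to F_0$ a.e.

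Only at this point is $\mathcal A_0$ used, through exactly your averaged identity, to \emph{identify} the strong limit $F_0$ with $U_1f(0,\cdot)$ $\mu_0$-a.e. Dominated convergence then gives the required limit for $\phi$. The Borel set of non-branch points, and the fact that its complement is $U_1$-null, come from \cite[Theorem 1.2.11]{BeBo04a}.

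\textbf{Smaller points.}
\begin{itemize}
\item Your $r$-dependent formula can only be justified for a.e.\ $r$ and $\mu_0$-a.e.\ $x$, with $f$-dependent null sets, because $\eta^x_r$ is not itself $\lesssim\mu_r$. The paper proves the flow identity only for $ds\otimes dt$-a.e.\ $(s,t)$, by testing with $\nu_0\le c\mu_0$. It then sets $U_\alpha f(0,x)=\int_0^\infty e^{-\alpha t}\eta^x_t(f(t,\cdot))\,dt$ directly.
\item The process given by the natural-extension theorem is known only through its resolvent, so ``it runs on $[0,r]$ as $\eta^x$'' is not automatic. The paper proves continuity at $t=0$ only under $\mathbb{P}_{\delta_0\otimes\mu_0}$: it identifies the law on $(0,N)$ with $\eta^N$ and shows the limit $Z$ equals $X_2(0)$ via $\lim_\alpha\alpha U_\alpha F_z=F_z$. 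It then applies \Cref{prop:continuous_modification_special}.
\item The martingale property at $s=0$ is obtained in the paper from the additive-functional argument, not from $\eta^x$ directly.
\end{itemize}
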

\begin{proof}
    The proof is given in \Cref{ss: proof of main1}.
\end{proof}

\begin{rem}
Let $\tilde{X}$ be given by the trivial extension (see \Cref{defi:trivial}) of $X$ given in \Cref{thm:main0} or in \Cref{thm:main1}, from $E$ to $[0,\infty)\times \mathbb{R}^d$.
Then $\tilde{X}$ is a right process by \Cref{prop-trivial}, and note that $X$ is the restriction (see \Cref{defi:restr-process}) of $X$ from $[0,\infty)\times \mathbb{R}^d$ to $E$, whilst all points in $[0,\infty)\times \mathbb{R}^d\setminus E$ are trap points for $X$.
Consequently, $\tilde{X}$ is a conservative norm-continuous right process on $[0,\infty)\times \mathbb{R}^d$.
\end{rem}

\subsection{Proofs of the main results}\label{ss:proof}
This subsection has three parts. 
In the first part we prepare some technical results that shall be used in the proofs of \Cref{thm:main0} and \Cref{thm:main1}, which are given in great detail in the last two parts, respectively.

\subsubsection{Some technical lemmas needed in the proofs}

\begin{lem}\label{prop:densities}
    Assume that $\mathbf{H_{a,b}^{\sqrt{u}}}$ holds, let $N>0$ and $v^N\in \mathcal{A}_{\lesssim u}^{0,N}$ with $c>0$ as in \eqref{eq:A}, such that $v^N(t,x)dx\in \mathcal{P}(\mathbb{R}^d)$ for every $t\in [0,N)$. 
    Then there exists a measurable set $S\subset [0,N)$ of full Lebesgue measure such that for every $s\in S$ there exists $v_s^N:[0,\infty)\times \mathbb{R}^d\rightarrow [0,\infty)$ measurable satisfying the following:
    \begin{enumerate}
        \item[(i)] For every $t\in [0,\infty)$ we have
        \begin{equation*}
        \mathbb{P}^N_{\delta_s\otimes v^N(s,x)dx}\circ\left(X_2^N(t)\right)^{-1}=v^N_s(t,x) dx,    
        \end{equation*}
        in particular, $v_s^N(t,x)dx$ is a sub-probability on $\mathbb{R}^d$.
        \item[(ii)] We have $dt$-a.e. that
        \begin{equation*}
            v^N_s(t,x)dx\leq c 1_{[0,N)}(s+t)u(s+t,x)dx, \quad \mbox{hence} \quad v^N_s(\cdot-s,\cdot)\in \mathcal{A}_{\lesssim u}^{s,N}.
        \end{equation*}
        \item[(iii)] The family of of sub-probability measures $\left(v^N_s(t-s,x) dx\right)_{t\in [s,N)}$ is a weakly right-continuous solution to the linear Fokker-Planck equation \eqref{eq:LFPshort} on $[s,N)$ with $v_s^N(0,x)dx=v^N(s,x)dx$.
        Moreover, $(s,N)\ni t\mapsto v^N_s(t-s,x) dx$ is also  vaguely left-continuous.
    \end{enumerate}
\end{lem}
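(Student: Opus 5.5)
Let me set $\overline{\nu}:=v^N(t,x)\,dt\otimes dx$ on $[0,N)\times\mathbb{R}^d$. Since $v^N\le cu$, we have $\overline{\nu}\le c\,\overline{\mu}_N$, but $\overline{\nu}$ has total mass $N$, so I normalize it, $\overline{\nu}/N\in\mathcal{P}$. I will disintegrate $\mathbb{P}^N_{\overline{\nu}}$ along the initial time $X_1^N(0)$ and define
\begin{equation*}
\mathbb{P}^N_{\delta_s\otimes v^N(s,x)dx}:=\int_{\mathbb{R}^d}\mathbb{P}^N_{s,x}\,v^N(s,x)\,dx .
\end{equation*}
By \Cref{thm:Stannat}, (ii.2)--(ii.3), the set of bad starting points $(s,x)$ is $\overline{\mu}_N$-null, hence $\overline{\nu}$-null. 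Fubini then gives a full-measure set $S_0\subset[0,N)$ on which, for every $s\in S_0$, these properties hold for $v^N(s,x)dx$-a.e. $x$.

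For (i) and (ii), I use the resolvent identity \Cref{thm:Stannat}, (ii.1), together with the $L^1$-duality of $\overline{T}^N_t$ against $\overline{\mu}_N$. The measure $\overline{\nu}\circ\overline{P}_t$ is dominated by $c\,\overline{\mu}_N\circ\overline{P}_t$. Sub-invariance of $\overline{\mu}_N$ (the semigroup is a contraction on $L^1$ and the dual acts sub-Markovianly, which follows from the infinitesimal invariance and maximality in \cite{St99a}) together with the uniform motion (ii.3) gives
\begin{equation*}
\int_0^N\left(\delta_s\otimes v^N(s,\cdot)dx\right)\circ\overline{P}_t\,ds\le c\,1_{[0,N)}(s+t)\,u(s+t,x)\,ds\,dx .
\end{equation*}
From this I deduce, for a.e. $s$ and a.e. $t$, absolute continuity in $x$ with density $v^N_s(t,\cdot)\le c\,u(s+t,\cdot)$. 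To get measurability jointly in $(s,t,x)$, I take Radon--Nikodym derivatives on the product space, testing with a countable family of $f$'s. To upgrade ``a.e. $t$'' to ``every $t$'' in (i), I use right-continuity of paths: for $t$ in the exceptional set I define $v_s^N(t,\cdot)$ directly as the density of the marginal. That marginal is still absolutely continuous, because it is a weak limit of measures bounded by $c\,u(s+t_n,\cdot)dx$ and $u$ is weakly continuous with the uniform bound $uh\in L^\infty$ from $\mathbf{H^{\sqrt{u}}}$. This limiting step is where I expect the main difficulty: I must make sure the domination survives the limit, using $uh\in L^\infty$ and the lower semicontinuity of $\nu(G)$ for open $G$.

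For (iii), I integrate the martingale problem \eqref{eq:mpStannat} with $f(r,y)=g(r)h(y)$ against $\mathbb{P}^N_{\overline{\nu}}$ and disintegrate in $s$. I need the martingale property for a.e. fixed initial time $s$, not only after averaging. To obtain it, I multiply by test functions $\phi(s)$ of the initial time (these are $\mathcal{F}^N(0)$-measurable) and use a countable dense set of $g,h,\phi$. This yields \eqref{eq:integration_by_parts} for $v^N_s$ on $[s,N)$ for all $s$ in a full-measure set $S\subset S_0$. Integrability \eqref{eq:bau} follows from the domination in (ii).

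The initial condition $v_s^N(0,\cdot)=v^N(s,\cdot)$ is immediate from $X^N(0)=(s,x)$. Weak right-continuity follows from path continuity on $[0,\zeta^N)$ and from $\zeta^N>0$ a.s. by right-continuity, via dominated convergence; mass can only be lost through $\zeta^N$, which is right-continuous in $t$ in the indicator $1_{t<\zeta^N}$. Vague left-continuity at $t<N$ also follows from path continuity on $[0,\zeta^N)$: for $h\in C_c$, the map $t\mapsto\mathbb{E}\{h(X_2^N(t));t<\zeta^N\}$ is left-continuous. For this I use that the process reaches the cemetery only by continuous approach to infinity or at time $N-s$, since by (ii.2) it is continuous up to the lifetime and leaves compacts. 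This quasi-left-continuity-type point, that the lifetime is not a jump from inside a compact set, is the delicate ingredient for left-continuity, and I would justify it from the diffusion property \eqref{eq:diffusion}.
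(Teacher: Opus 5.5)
The step that fails is the vague left-continuity in (iii). Your path argument needs that on $\{\zeta^N<N-s\}$ one has $\|X^N_2(t)\|\to\infty$ as $t\nearrow\zeta^N$, i.e.\ that $X^N$ is never killed while it stays in a compact set. This is not contained in \eqref{eq:diffusion}. Part (ii.2) of \Cref{thm:Stannat} only gives continuity on $[0,\zeta^N)$; it says nothing about how the path behaves as $t\nearrow\zeta^N$, and in particular not that it ``leaves compacts''. Suppose $\zeta^N$ had an atom at some $t_0<N-s$ on an event where $X^N_2(t)$ converges to a point of $\{h\neq 0\}$ as $t\nearrow t_0$. Then $t\mapsto\mathbb{E}\{h(X^N_2(t));t<\zeta^N\}$ would jump at $t_0$, even though every path is continuous on $[0,\zeta^N)$. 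The remark right after the lemma in the paper says exactly that such atoms cannot be ruled out from \cite{St99a}. That is why the paper does not prove left-continuity at the level of paths.

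You already have what you need to close this. Take $s$ in your good set, and write expectations under $\mathbb{P}^N_{\delta_s\otimes v^N(s,x)dx}$. Your step (iii), applied to \eqref{eq:mpStannat} with rational $t$ and a countable class of $f=g\otimes h$, gives the Dynkin identity
\begin{equation*}
\mathbb{E}\bigl\{f(X^N(t));\,t<\zeta^N\bigr\}=\int_{\mathbb{R}^d}f(s,x)\,v^N(s,x)\,dx+\mathbb{E}\int_0^{t\wedge\zeta^N}\overline{\sf L}f(X^N(r))\,dr .
\end{equation*}
It extends to all $t$ by right-continuity of both sides. Its right-hand side is continuous in $t$ by dominated convergence, with integrability from (ii) and \eqref{eq:bau}. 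Choose $g\equiv 1$ on $[s,s+T']$ with $T'<N-s$. This gives continuity of $t\mapsto\int h(x)\,v^N_s(t,x)\,dx$ on $[0,T']$ for a dense set of $h$, hence vague continuity. Equivalently: a right-continuous function of $t$ whose distributional derivative lies in $L^1$ coincides everywhere with its absolutely continuous version.

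This is the paper's argument, except that the paper uses the $L^1$-Cauchy equation $\overline{P}^N_tf-f=\int_0^t\overline{P}^N_r\overline{\sf L}f\,dr$ instead of the martingale problem. The rest of your proposal is sound and parallels the paper. Your domination via sub-invariance plus uniform motion replaces the paper's explicit dual-semigroup density $1_{[0,N)}(s+t)\,\overline{P}^{N,\ast}_t(v^N/u)(s+t,x)\,u(s+t,x)$. The limit $t_n\searrow t$ in the domination is easier than you expect. For $0\le\varphi\in C_c(\mathbb{R}^d)$ one has $\int\varphi\,dm\le c\lim_n\mu_{s+t_n}(\varphi)=c\,\mu_{s+t}(\varphi)$ by continuity of $(\mu_t)$, so no $L^\infty$ bound on $uh$ is needed there.
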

\begin{proof}
First of all, recall that $\overline{\mu}_N$ is $\left(\overline{P}_t^N\right)$-sub-invariant, so that the dual (w.r.t. $\overline{\mu}_N$) semigroup of kernels $\overline{P}_t^{N,\ast},t\geq 0$ is sub-Markovian, namely $\overline{P}_t^{N,\ast}1\leq 1, t\geq 0$.
Moreover, property \eqref{eq:evol} transfers by duality as
\begin{equation*}
   \overline{P}^{N,\ast}_t f(s,x)= \overline{P}^{N,\ast}_t \left(f(s-t,\cdot)\right)(s,x) 1_{[t, N)}(s) \quad (s,x)\in [0,N)\times \mathbb{R}^d \mbox{ 
 a.s. }
\end{equation*}
Let $g,h\in C([0,N];\mathbb{R})$ and $f\in C_b(\mathbb{R}^d)$.
Then
\begin{align*}
    \int_0^N g(t)&\int_0^N h(s)\mathbb{E}^N_{\delta_s\otimes v^N(s,x)dx}f\left(X_2^N(t)\right)\;dsdt\\
    &=\int_0^N g(t) \int_{[0,N]\times\mathbb{R}^d}h(s)\overline{P}^N_tf(s,x) v^N(s,x)\;dxdsdt\\
    &=\int_0^N g(t)\int_{[0,N]\times\mathbb{R}^d} h(s)\frac{v^N(s,x)}{u(s,x)}\overline{P}^N_tf(s,x) \overline{\mu}_N(ds,dx)dt\\
    &=\int_0^N g(t)\int_{[0,N]\times\mathbb{R}^d} f(x)\overline{P}^{N,\ast}_t\left(h(\cdot)\frac{v^N(\cdot,\cdot)}{u(\cdot,\cdot)}\right)(s,x) \overline{\mu}_N(ds,dx)dt\\
    &=\int_0^N g(t)\int_{[0,N]\times\mathbb{R}^d}f(x)h(s-t)1_{[t,N)}(s)\overline{P}^{N,\ast}_t\left(\frac{v^N}{u}\right)(s,x) \overline{\mu}_N(ds,dx)dt\\
    &=\int_0^N g(t)\int_0^N h(s)\int_{\mathbb{R}^d}f(x)1_{[0,N-t)}(s)\overline{P}^{N,\ast}_t\left(\frac{v^N}{u}\right)(s+t,x) u(s+t,x)\;dxdsdt.
\end{align*}
Consequently, by setting
\begin{equation*}
    \tilde{v}^N_s(t,x):=1_{[0,N)}(s+t)\overline{P}^{N,\ast}_t\left(\frac{v^N}{u}\right)(s+t,x) u(s+t,x), \quad (s,t,x)\in [0,N)\times [0,\infty)\times \mathbb{R}^d,
\end{equation*}
and since $f,g,h$ are arbitrarily chosen,
it follows that
\begin{equation}\label{eq:density_tilde}
    \mathbb{P}^N_{\delta_s\otimes v^N(s,x)dx}\circ\left(X_2(t)\right)^{-1}=\tilde{v}^N_s(t,x) dx \quad \mbox{ for }(s,t)\in[0,N)\times [0,\infty) \; dsdt\mbox{-a.e.}   
\end{equation}
Also, notice that because $v^N\in \mathcal{A}_{\lesssim u}^{0,N}$ and because $\overline{P}^{N,\ast}_t1\leq 1$, we get that
\begin{equation}\label{eq:v_tilde_u}
   \tilde{v}^N_s(t,x) dx\leq c 1_{[0,N)}(s+t)u(s+t,x) dx \quad dsdt\mbox{-a.e.} 
\end{equation}
Now we shall construct a modification of $\tilde{v}^N_s(t,x) dx$ which is weakly continuous with respect to $t\geq 0$, $ds$-a.e. 
To this end, let $D\subset [0,\infty)$ be dense (from the right) and countable, and $\tilde{S}\subset [0,N)$ be measurable and of full Lebesgue measure such that for every $(s,t)\in \tilde{S}\times D$ we have
\begin{equation}\label{eq:SxD}
    \mathbb{P}^N_{\delta_s\otimes v(s,x)dx}\circ\left(X_2(t)\right)^{-1}=\tilde{v}^N_s(t,x) dx \quad  \mbox{and} \quad \tilde{v}^N_s(t,x) dx\leq c 1_{[0,N)}(s+t)u(s+t,x) dx,
\end{equation}
which is possible due to \eqref{eq:density_tilde} and \eqref{eq:v_tilde_u}.
Let $s\in \tilde{S}$ be fixed.
From the last inequality we deduce that $\left(\tilde{v}^N_s(t,x) dx\right)_{t\in D}$ is a family of sub-probability measures which is tight, hence by Prohorov's theorem, it is sequentially relatively compact with respect to the weak convergence.
Further, let $t\in [0,\infty)$, and $(t_n)_{n\geq 1}\subset D$ such that $\lim_n t_n=t$ decreasingly.
By passing to a subsequence, we may also assume that there exists a sub-probability measure $m_s^N(t)$ on $\mathbb{R}^d$ such that
\begin{equation*}
    \lim\limits_n \tilde{v}^N_s(t_n,x) dx = m_s^N(t),\quad \mbox{weakly}.
\end{equation*}
But with $s\in S$ fixed as above, $t\mapsto\mathbb{P}^N_{\delta_s\otimes v(s,x)dx}\circ\left(X_2(t)\right)^{-1}$ is (right) weakly continuous, hence \eqref{eq:SxD} and the above equality entail
\begin{equation*}
    \mathbb{P}^N_{\delta_s\otimes v^N(s,x)dx}\circ\left(X_2(t)\right)^{-1}=m_s^N(t)=\lim\limits_{D\ni r \searrow t} \tilde{v}^N_s(r,x)dx, \quad \mbox{weakly}.
\end{equation*}
Moreover, by the inequality in \eqref{eq:SxD} we also have that for every $s\in \tilde{S}$ there exists $v_s^N(t,x), (t,x)\in [0,\infty)\times \mathbb{R}^d$ such that
\begin{equation} \label{eq:m<u}
    m_s^N(t)=v_s^N(t,x) dx \leq c 1_{[0,N)}(s+t)u(s+t,x) dx, \quad t\in [0,\infty).
\end{equation}
It is then straightforward to see that for each $s\in \tilde{S}$ we have that $v^N_s(\cdot,\cdot)$ is jointly measurable on $[0,\infty)\times \mathbb{R}^d$.
Thus, we have proved (i) and (ii).

To prove (iii), let us note that by \Cref{rem:subprob} and the above construction we have for all $s\in \tilde{S}$ that
\begin{equation*}
    \int_{\mathbb{R}^d}f(x)v_s^N(t,x) dx= \int_{\mathbb{R}^d}v^N(s,x)\mathbb{E}^N_{s,x}\left\{f(X^N_2(t));t<\zeta^N\right\} \;dx, \quad f\in C_b(\mathbb{R}^d),  
\end{equation*}
hence using the path-continuity property \eqref{eq:diffusion} we get that
\begin{equation*}
    [0,\infty)\ni t\mapsto v_s^N(t,x) dx \quad \mbox{is weakly right continuous}.
\end{equation*}

Further, let us show that one can choose $S\subset \tilde{S}$ of full measure on $(0,N)$ such that for every $s\in \tilde{S}$ we have that $\left(v_s^N(t,x) dx\right)_{t\in (0,N-s)}$ is a solution to the linear Fokker-Planck equation \eqref{eq:LFPshort}. 
We proceed as follows: First of all, notice that since $\overline{P}_t^N$ and $\overline{T}_t^N$ coincide as operators on $L^p([0,N)\times \mathbb{R}^d;\overline{\mu}_N)$ for all $t\geq 0$, we get that $\left(\overline{P}_t^N\right)_{t\geq 0}$ solves the Cauchy equation
\begin{equation}\label{eq:CauchyP}
    \overline{P}_{t}^Nf(s,x)-f(s,x)=\int_{0}^{t}\overline{P}_{r}^N \overline{\sf L} f(s,x) \; dr \; \overline{\mu}_N\mbox{-a.e.}, \quad 0<t<\infty, f\in C_c^\infty(\mathbb{R}^d).
\end{equation}
Let $g,h\in C_c^\infty((0,N);\mathbb{R})$ and $f\in C_c^\infty(\mathbb{R}^d)$, so that using the evolution character of $\left(\overline{T}_t^N\right)$ given by \eqref{eq:evol}, we have
\begin{align*}
    &\int_0^N g(s) \int_s^N h'(t)\int_{\mathbb{R}^d} f(x) v_s^N(t-s,x) \; dxdtds
    =\int_0^N g(s) \int_s^N h'(t) \int_{\mathbb{R}^d} \overline{P}_{t-s}f(s,x) v(s,x) \; dxdtds\\
    &=\int_0^N g(s) \int_{\mathbb{R}^d} \int_s^N h'(t) \overline{P}_{t-s}f(s,x) v(s,x) \; dxdtds\\
    &=\int_0^N g(s) \int_{\mathbb{R}^d} h(s)f(x) v(s,x)\;dxds
    -\int_0^N g(s) \int_s^N h(t) \int_{\mathbb{R}^d} \overline{P}_{t-s} \overline{\sf L} f(s,x) v^N(s,x) \; dxdtds\\
    &=\int_0^N g(s)h(s) \int_{\mathbb{R}^d}f(x) v(s,x)\;dxds
    -\int_0^N g(s) \int_s^N h(t) \int_{\mathbb{R}^d} {\sf L}_t f(x) v^N_s(t-s,x) \; dxdtds.
\end{align*}
Thus
\begin{align*}
     &\int_0^N g(s) \int_s^N\int_{\mathbb{R}^d} \left(h'(t) f(x) +h(t) {\sf L}_t f(x) v_s^N(t-s,x)\right) \; dxdtds=\int_0^N g(s)h(s) \int_{\mathbb{R}^d}f(x) v(s,x)\;dxds.
\end{align*}
Since $g$ is arbitrarily chosen and using a density argument, it is easy to see that one can find a measurable subset $S'\subset\tilde{S}$ of full Lebesgue measure on $(0,N)$ such that
\begin{equation*}
    \int_s^N\int_{\mathbb{R}^d} \left(\frac{d}{dt}f(t,x) + {\sf L}_t f(t,x) v_s^N(t-s,x)\right) \; dxdt= \int_{\mathbb{R}^d}f(s,x) v(s,x)\;dx
\end{equation*}
for all $s\in S'$ and $f\in C_c^\infty((0,N)\times \mathbb{R}^d)$.
Thus, for every $s\in S'$ and $f\in C_c^\infty((s,N)\times \mathbb{R}^d)$ we obtain that
\begin{equation*}
    \int_s^N\int_{\mathbb{R}^d} \left(\frac{d}{dt}f(t,x) + {\sf L}_t f(t,x) v_s^N(t-s,x)\right) \; dxdt= 0,
\end{equation*}
which means that $\left(v_s^N(t-s,x)dx\right)_{t\in [s,N)}$ is a weakly right-continuous solution to the linear Fokker-Planck equation \eqref{eq:LFPshort}.

To finish the proof of (iii) it remains to show that $\left(v_s^N(t-s,x)dx\right)_{t\in (s,N)}$ is vaguely left-continuous.
To this end, note that if $\mathcal{C}\subset C_c^\infty((0,N)\times \mathbb{R}^d)$ be countable and dense in $C_c((0,N)\times \mathbb{R}^d)$ with respect to the uniform convergence, so that by \eqref{eq:CauchyP} we get for every $t>0$, $ds$-a.e. on $(0,N)$ that
\begin{equation*}
\int_{\mathbb{R}^d} \overline{P}_tf(s,x) v^N(s,x)\;dx=\int_{\mathbb{R}^d} 1_{[0,N)}(t+s)f(s+t,x) v^N_s(t,x)\;dx,\quad f\in \mathcal{C}.  
\end{equation*}
and
\begin{equation*}
    \int_{\mathbb{R}^d} \overline{P}_tf(s,x) v^N(s,x)\;dx=\int_{\mathbb{R}^d} f(s,x) v^N(s,x)\;dx + \int_{\mathbb{R}^d} v^N(s,x) \int_0^t \overline{P}_r\overline{\sf L}f(s,x)\;drdx, \quad f\in \mathcal{C}.
\end{equation*}
Thus, on the one hand, there exists $S\subset S'$ of full Lebesgue measure on $(0,N)$ such that for every $s\in S'$
\begin{equation*}
    \int_{\mathbb{R}^d} \overline{P}_tf(s,x) v^N(s,x)\;dx=\int_{\mathbb{R}^d} f(s,x) v^N(s,x)\;dx + \int_{\mathbb{R}^d} v^N(s,x) \int_0^t \overline{P}_r\overline{\sf L}f(s,x)\;drdx, \quad f\in \mathcal{C}, t\in \mathbb{Q}_+,
\end{equation*}
in particular the above integrals are well defined.
On the other hand,
\begin{equation*}
    [0,\infty)\ni t\mapsto\int_{\mathbb{R}^d} \overline{P}_tf(s,x) v^N(s,x)\;dx=\int_{\mathbb{R}^d} f(s,x) v^N(s,x)\;dx + \int_{\mathbb{R}^d} v^N(s,x) \int_0^t \overline{P}_r\overline{\sf L}f(s,x)\;drdx
\end{equation*}
is right continuous for every $f\in C_c^\infty((0,N)\times\mathbb{R}^d)$ and $s\in \mathcal{S}$, hence we must have
\begin{equation*}
    \int_{\mathbb{R}^d} \overline{P}_tf(s,x) v^N(s,x)\;dx=\int_{\mathbb{R}^d} f(s,x) v^N(s,x)\;dx + \int_{\mathbb{R}^d} v^N(s,x) \int_0^t \overline{P}_r\overline{\sf L}f(s,x)\;drdx, \quad f\in \mathcal{C}, t\in [0,\infty).
\end{equation*}
This means that for every $s\in S$ and $f\in \mathcal{C}$ we have 
\begin{equation*}
    [0,\infty)\ni t \mapsto \int_{\mathbb{R}^d} \overline{P}_tf(s,x) v^N(s,x)\;dx=\int_{\mathbb{R}^d} 1_{[0,N)}(t+s)f(s+t,x) v^N_s(t,x)\;dx \in \mathbb{R} \quad \mbox{ is continuos}.
\end{equation*}
By a density argument we can further assume that $\mathcal{C}$ consists of functions with separate variables, so taking $f\in \mathcal{C}$ of the form $f(r,x)=g(r)h(x), (r,x)\in (0,N)\times \mathbb{R}^d$,
we get for every $s\in S$ that
\begin{equation*}
    [0,N-s)\ni t \mapsto\int_{\mathbb{R}^d} h(x) v^N_s(t,x)\;dx \in \mathbb{R} \quad \mbox{ is continuos}.
\end{equation*}
Since we assumed that the class of functions $h$ for which the above holds is dense in $C_c(\mathbb{R}^d)$ with respect to the uniform convergence, we get that
\begin{equation*}
    [0,N-s)\ni t \mapsto v^N_s(t,x)\;dx \in \mathcal{M}\left(\mathbb{R}^d\right) \quad \mbox{ is vaguely continuos}.
\end{equation*}
\end{proof}

\begin{rem}
We emphasize that although $X^N$ has continuous paths a.s. on $[0, \zeta^N)$ as stated in \eqref{eq:diffusion}, it does not mean that the the curve of sub-probability measures $\mathbb{P}_{s,x}\circ \left(X_2^N(t)\right)^{-1}$ on $\mathbb{R}^d$ is weakly left-continuous on $(s,N-s)$ because the process $t\mapsto 1_{[0,\zeta^N)}(t)$ is not guaranteed to be stochastically left continuous in $t$. 
In other words, based only on the results obtained in \cite{St99a}, it is not clear that $\zeta^N$ is a continuous random variable, at least on the event $\zeta^N<N-s$ for $s$ fixed; such a property would have solved the left-continuity issue in $t\in(s,N-s)$, since in that case, the jump event $\zeta^N=t$ would have zero probability. 
\end{rem}

\begin{lem}\label{coro:identification1}
    Assume that conditions $\mathbf{H_{a,b}^{\sqrt{u}}}$ and $\mathbf{H_{\sf \leq u}}$ are fulfilled.
    Then, the following assertions hold for every $0<s<N<\infty$.
    \begin{enumerate}
        \item[(i)] We have the identification
        \begin{equation}\label{eq:identification}
        \mathbb{P}^N_{\delta_s\otimes \mu_s}\circ \left(X_2^N(t)\right)^{-1}= \mu_{t+s}, \quad 0\leq t<N-s. \end{equation}
         \item[(ii)] We have the following refinement of \Cref{thm:Stannat}, (ii.3):
         \begin{equation}\label{eq:uniform_motion}
            \mathbb{P}^N_{\delta_s\otimes \mu_s}\left( X^N_1(t)\neq t+s; t<\zeta^N  \right)=0.
        \end{equation}
        \item[(iii)] The lifetime $\zeta^N$ satisfies
        \begin{equation}\label{eq:zeta}
         \mathbb{P}_{s,x}^N\left(\zeta^N=N-s\right)=1 \quad \mu_s\mbox{-a.e. } x\in \mathbb{R}^d,
    \end{equation}
    \end{enumerate}    
\end{lem}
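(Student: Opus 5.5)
The plan is to obtain (i) by feeding the curve produced in \Cref{prop:densities} with $v^N:=u$ into the uniqueness hypothesis $\mathbf{H_{\sf \leq u}}$, and then to read off (ii) and (iii) from (i) via mass conservation.

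\textbf{Step 1: apply \Cref{prop:densities} with $c=1$.} Take $v^N:=u|_{[0,N)\times\mathbb{R}^d}\in\mathcal{A}_{\leq u}^{0,N}$, so $c=1$. \Cref{prop:densities} gives a full-measure set $S\subset[0,N)$ such that for every $s\in S$ the curve $\bigl(v^N_s(t-s,x)dx\bigr)_{t\in[s,N)}$ has the following properties:
\begin{itemize}
\item it consists of sub-probabilities;
\item it is a weakly right-continuous and vaguely left-continuous solution of \eqref{eq:LFPshort} on $[s,N)$;
\item it satisfies $v^N_s(t-s,\cdot)\le u(t,\cdot)$ a.e., so it lies in $\mathcal{A}_{\leq u}^{s,N}$;
\item it has initial datum $\mu_s$.
\end{itemize}
Hypothesis $\mathbf{H_{\sf \leq u}}$ therefore applies and yields $v^N_s(t-s,x)dx=\mu_t$ for $t\in[s,N)$. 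Combined with \Cref{prop:densities}(i), this proves \eqref{eq:identification} for every $s\in S$.

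\textbf{Step 2 (main obstacle): pass from $s\in S$ to every $s\in(0,N)$.} I would argue by approximation from the right.
\begin{itemize}
\item Fix $s$, and choose $s_n\in S$ with $s_n\searrow s$ such that $\mu_{s_n}\to\mu_s$ weakly (by weak continuity of $\mu$).
\item Consider the measures $\mathbb{P}^N_{\delta_s\otimes\mu_s}\circ (X^N(r))^{-1}$ for $0<r<N-s$. They are obtained by running the process for time $r$ from $\delta_s\otimes\mu_s$, so the evolution property and the Markov property let one rewrite them as images of sub-probabilities on $(s,N)\times\mathbb{R}^d$.
\item The key input is the regularization identity $\int\!\!\int \alpha\, \overline{U}^N_\alpha\text{-type}$ relations, which give $\mathbb{P}^N_{\delta_s\otimes\mu_s}\circ X^N(r)^{-1}\le c\,\overline{\mu}_N$-type bounds. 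These come from $\overline{\mu}_N$ being sub-invariant together with \Cref{prop:densities}(ii).
\end{itemize}
Concretely, for fixed $s$ I would repeat the construction of Step 1 directly, using that $(v_s^N)$ is defined through the finely continuous resolvent. The measurable-in-$s$ exceptional set arises only from Fubini. For arbitrary $s$, use right-continuity in $t$ of the marginals together with the Markov property at small times $\epsilon$: $\mathbb{P}^N_{\delta_s\otimes\mu_s}\circ X^N(\epsilon)^{-1}$ is absolutely continuous with density at most $u(s+\epsilon,\cdot)$ for a.e. $\epsilon$. Then apply the $S$-result at time $s+\epsilon$ and let $\epsilon\downarrow0$. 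Making this rigorous, and in particular controlling the mass lost through killing at small times, is where I expect most of the work.

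\textbf{Step 3: derive (ii) and (iii).}
\begin{itemize}
\item For (ii), \eqref{eq:uniform_motion} follows by integrating \eqref{eq:uniform_motion_stannat} against $\delta_s\otimes\mu_s$, once one knows that the law of $X^N(\epsilon)$ is absolutely continuous with respect to $\overline{\mu}_N$. This allows the Markov property to transfer the $\overline{\mu}_N$-a.e. statement, and right-continuity of paths then lets $\epsilon\to0$.
\item For (iii), (i) gives $\mathbb{P}^N_{\delta_s\otimes\mu_s}(t<\zeta^N)=\mu_{t+s}(\mathbb{R}^d)=1$ for all $t<N-s$, hence $\zeta^N\ge N-s$ a.s. On the other hand, (ii) together with the fact that the state space is $[0,N)\times\mathbb{R}^d$ forces $\zeta^N\le N-s$. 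Disintegrating over $x$ then yields \eqref{eq:zeta} for $\mu_s$-a.e. $x$.
\end{itemize}
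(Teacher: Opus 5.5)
Your Step 1 and your derivation of (iii) in Step 3 coincide with the paper. Step 2, however, contains a genuine gap, and your Step 3(ii) inherits it.

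\textbf{Why Step 2 fails.} All the information available about $X^N$ concerns $\overline{\mu}_N$-a.e.\ starting points or a.e.\ $s$; this covers \eqref{eq:diffusion}, \eqref{eq:uniform_motion_stannat} and \Cref{prop:densities}. But $\{s\}\times\mathbb{R}^d$ is $\overline{\mu}_N$-null, and there is no continuity of $s\mapsto \mathbb{P}^N_{\delta_s\otimes\mu_s}$ to exploit. So knowing the marginals for $s_n\in S$ with $s_n\searrow s$ says nothing about the process started from $\delta_s\otimes\mu_s$.
\begin{itemize}
\item Your small-time variant is circular. The assertion that $\mathbb{P}^N_{\delta_s\otimes\mu_s}\circ X^N(\epsilon)^{-1}$ has density at most $u(s+\epsilon,\cdot)$ is exactly what \Cref{prop:densities} delivers only for $s$ in the full-measure set $S$, via a Fubini argument in $s$. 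It is not available for a prescribed $s$.
\item Even such a bound would not suffice. To invoke the $S$-result (or $\mathbf{H_{\sf\leq u}}$) at time $s+\epsilon$, you need the time-$\epsilon$ law to be \emph{exactly} $\delta_{s+\epsilon}\otimes\mu_{s+\epsilon}$, with no mass killed. Once you know that, no limit $\epsilon\downarrow 0$ is needed at all.
\item Likewise, in Step 3(ii), \eqref{eq:uniform_motion_stannat} cannot be ``integrated against $\delta_s\otimes\mu_s$'', since this measure is singular to $\overline{\mu}_N$.
\end{itemize}

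\textbf{The missing idea.} Bootstrap from the \emph{left}, using only the Markov (Chapman--Kolmogorov) property; this is what the paper does.
\begin{itemize}
\item First integrate \eqref{eq:uniform_motion_stannat} against $\overline{\mu}_N=\int_0^N\delta_{s'}\otimes\mu_{s'}\,ds'$. Shrink $S$ so that, for every $s'\in S$, both \eqref{eq:identification} and \eqref{eq:uniform_motion} hold.
\item Given $s\in(0,N)$, choose $s'\in S$ with $s'<s$. This is exactly where $s>0$ is used.
\item Then $\mathbb{P}^N_{\delta_{s'}\otimes\mu_{s'}}\circ X^N(s-s')^{-1}=\delta_s\otimes\mu_s$ exactly: there is full mass, and $X^N_1(s-s')=s$ a.s.
\item Hence $\mathbb{P}^N_{\delta_s\otimes\mu_s}\circ X^N(t)^{-1}=\mathbb{P}^N_{\delta_{s'}\otimes\mu_{s'}}\circ X^N(s-s'+t)^{-1}=\delta_{s+t}\otimes\mu_{s+t}$ for $0\le t<N-s$, which gives (i).
\item Applying the Markov property with shifts to the event in \eqref{eq:uniform_motion} transfers (ii) from $s'$ to $s$ in the same way.
\end{itemize}
No limiting procedure and no control of killing at small times is required. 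Statement (iii) then follows exactly as you describe.
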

\begin{proof}
(i)-(ii). Since $\mathbf{H_{a,b}^{\sqrt{u}}}$ is in force we apply \Cref{prop:densities} for $v^N(s,x)=u(s,x), (s,x)\in [0,\infty)\times \mathbb{R}^d$, noticing that we trivially have $u\in \mathcal{A}_{\leq u}^{0,N}$ for all $N>0$.   
Thus, for every $N>0$ there exists $S\subset [0, N)$ of full Lebesgue measure such that for every $s\in S$ there exists a family $\left(v_s^N(t,x)dx\right)_{t\geq 0}$ having all the properties derived in \Cref{prop:densities}.
In particular, $\left(v_s^N(t-s,x)dx\right)_{t\in [s,N)}$ is a weakly right-continuous and vaguely left-continuous solution to the linear Fokker-Planck equation \eqref{eq:LFPshort} on $[s,N)$ with initial condition $u(s,x)dx=\mu_s$.
Consequently, by $\mathbf{H_{\sf \leq u}}$ we have
\begin{equation*}
    \mathbb{P}^N_{\delta_s\otimes u(s,x)dx}\circ\left(X_2^N(t-s)\right)^{-1}=v_s^N(t-s,x)dx=u(t,x)dx, \quad s\in S,t\in [s,N), 
\end{equation*}
or equivalently, that
\begin{equation}\label{eq:s_in_S}
    \mathbb{P}^N_{\delta_s\otimes \mu_s}\circ\left(X_2^N(t)\right)^{-1}=v_s^N(t,x)dx=u(t+s,x)dx=\mu_{t+s}, \quad s\in S,t\in [0,N-s). 
\end{equation}
Furthermore, by \eqref{thm:Stannat}, (ii.2), and integrating \eqref{eq:diffusion} with respect to $\overline{\mu}_N$, we get
\begin{equation*}
    \int_0^N \overline{\mathbb{P}}^N_{\delta_s\otimes \mu_s}\left( X^N_1(t)\neq t+s; t<\zeta^N  \right) \;ds=0,
\end{equation*}
which means that by passing to a subset of $S$, we may assume without loss of generality that
\begin{equation}\label{eq:uniform_S}
 \overline{\mathbb{P}}^N_{\delta_s\otimes \mu_s}\left( X^N_1(t)\neq t+s; t<\zeta^N  \right)=0,\quad s\in S.
\end{equation}

To prove that \eqref{eq:s_in_S} and \eqref{eq:uniform_S} from above hold for all $0<s<N$, we use a bootstrap argument: Let $0<s<N$ and $s'\in S$ such that $s'<s$.
Then
\begin{align*}
    \delta_s\otimes u(s,x)dx 
    &= \delta_{s'+s-s'}\otimes u(s'+s-s',x)dx=\delta_{s'+s-s'}\otimes v_{s'}(s-s',x)dx\\
    &=\mathbb{P}^N_{\delta_{s'}\otimes \mu_{s'}}\circ\left(X_1^N(s-s'),X_2^N(s-s')\right)^{-1},
\end{align*}
where the last equality follows from \eqref{eq:s_in_S} and \eqref{eq:uniform_S}.
Hence, by the Markov property of $X^N$ we get
\begin{align*}
    \mathbb{P}^N_{\delta_{s}\otimes \mu_s}\circ\left(X_1^N(t),X_2^N(t)\right)^{-1}
    &=\mathbb{P}^N_{\mathbb{P}^N_{\delta_{s'}\otimes \mu_{s'}}\circ\left(X_1^N(s-s'),X_2^N(s-s')\right)^{-1}}\circ\left(X_1^N(t),X_2^N(t)\right)^{-1}\\
    &=\mathbb{P}^N_{\delta_{s'}\otimes \mu_{s'}}\circ\left(X_1^N(s-s'+t),X_2^N(s-s'+t)\right)^{-1},
\end{align*}
and thus for all $t<N-s$ we have that $t+s-s'<N-s'$, so by \eqref{eq:s_in_S} we obtain
\begin{align*}
    \mathbb{P}^N_{\delta_s\otimes u(s,x)dx}\circ\left(X_2^N(t)\right)^{-1}
    &=\mathbb{P}^N_{\delta_{s'}\otimes u(s',x)dx}\circ\left(X_2^N(s-s'+t)\right)^{-1}\\
    &=u(s+t,x)dx,
\end{align*}
so (i)-(ii) are completely proved.

To prove (iii) we make use of (i)-(ii) to deduce that for every $0<s<N<\infty$ we have
\begin{equation*}
    \mathbb{P}^N_{\delta_s\otimes \mu_s}\circ \left(X_1^N(t),X_2^N(t)\right)^{-1}= \delta_{s+t}\otimes\mu_{t+s}, \quad 0\leq t<N-s,
\end{equation*}
hence for $0\leq t<N-s$ 
\begin{equation*}
    \mathbb{E}_{\delta_s\otimes\mu_s}\left\{ 1_{[0,N-s)\times \mathbb{R}^d}(X^N(t)); t<\zeta^N\right\}=\int_{[0,N-s)\times\mathbb{R}^d} 1 \; d \mathbb{P}^N_{\delta_s\otimes \mu_s}\circ \left(X_1^N(t),X_2^N(t)\right)^{-1}=1,
\end{equation*}
in other words
$\mathbb{P}_{\delta_s\otimes\mu_s}\left\{ t<\zeta^N\right\}=1, \; 0\leq t<N-s$, or $\mathbb{P}_{\delta_s\otimes\mu_s}\left\{ N-s\leq\zeta^N\right\}=1$.
At the same time, by \eqref{eq:uniform_motion} we clearly have that $\mathbb{P}^N_{\delta_s\otimes \mu_s}\left\{N-s<\zeta^N\right\}=0$, since $\mathbb{P}^N_{\delta_s\otimes \mu_s}$-a.s. on the event $N-s<\zeta^N$ we have $X_1^N(N-s)=N$ which entails the contradiction $X^N(N-s)\notin [0,N)\times \mathbb{R}^d$.
In conclusion, we must have \eqref{eq:zeta}.
\end{proof}

\begin{lem}\label{coro:identification2}
Assume that conditions $\mathbf{H_{a,b}^{\sqrt{u}}}$ and $\mathbf{H_{\sf \lesssim u}}$ are fulfilled. 
Also, let $\left(\nu_t:=v(t,x)dx\right)_{t\in [0,N)}\subset \mathcal{P}(\mathbb{R}^d)$ be a weakly continuous solution to the linear Fokker-Planck equation \eqref{eq:LFPshort} on $[0,N)$ such that  $v\in\mathcal{A}_{\lesssim u}^{0,N}$.
Then for every $0<s<N<\infty$ we have the identification
    \begin{equation}\label{eq:identification_nu}
        \mathbb{P}^N_{\delta_s\otimes \nu_s}\circ \left(X_2^N(t)\right)^{-1}= \nu_{t+s}, \quad 0\leq t<N-s.
    \end{equation}
Moreover, there exists a unique solution $\eta^N\in\mathcal{P}\left(C([0,N];\mathbb{R}^d)\right)$ to the canonical MP associated to $\left({\sf L}_t\right)_{t\in [0,T]}$ such that for every $\varepsilon\in (0,N)$
\begin{equation*}
    \eta^N|_{C([\varepsilon,N);\mathbb{R}^d)}=\mathbb{P}^N_{\delta_\varepsilon\otimes \nu_\varepsilon}\circ \left[\left(X_2^N(t-\varepsilon)\right)_{t\in[\varepsilon, N)}\right]^{-1} \quad \mbox{as distributions on } C([\varepsilon,N);\mathbb{R}^d).
\end{equation*} 
\end{lem}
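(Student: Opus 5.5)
The plan is to repeat the argument of \Cref{coro:identification1}, with $u$ replaced by $v$ and with $\mathbf{H_{\sf \lesssim u}}$ used in place of $\mathbf{H_{\sf \leq u}}$.

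\textbf{Step 1: identification for a.e.\ starting time.} Apply \Cref{prop:densities} to $v^N:=v\in\mathcal{A}_{\lesssim u}^{0,N}$, with constant $c$. This gives a full-measure set $S\subset[0,N)$ and, for each $s\in S$, a curve $(v^N_s(t-s,x)dx)_{t\in[s,N)}$ with three properties: it is a weakly right-continuous and vaguely left-continuous solution of \eqref{eq:LFPshort} on $[s,N)$; it starts from $\nu_s$; and it is bounded by $c\,u$.

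The obstacle is that $\mathbf{H_{\sf \lesssim u}}$ compares only weakly continuous curves of \emph{probability} measures. The curve $v^N_s$ is a priori only a sub-probability curve with weak right- and vague left-continuity. To remove this I would first prove the analogue of \Cref{coro:identification1}(iii), namely that $\zeta^N=N-s$ holds $\mathbb{P}^N_{\delta_s\otimes\nu_s}$-a.s. This follows from \eqref{eq:zeta}, because $\nu_s\leq c\mu_s$ implies that the $\mu_s$-null exceptional set is also $\nu_s$-null. With $\zeta^N\geq N-s$ almost surely and \eqref{eq:diffusion}, the paths $t\mapsto X^N_2(t)$ are continuous on $[0,N-s)$ almost surely. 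Consequently the marginals $v^N_s(t,x)dx$ are probability measures and are weakly continuous in $t$, by dominated convergence.

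Once this holds, $\mathbf{H_{\sf \lesssim u}}$ applied to the two curves $(v^N_s(\cdot-s,\cdot))$ and $(\nu_t)_{t\in[s,N)}$ gives \eqref{eq:identification_nu} for every $s\in S$. Both curves lie in $\mathcal{A}_{\lesssim u}^{s,N}$ by \Cref{prop:densities}(ii) and have the same initial value $\nu_s$. The uniform motion property $X_1^N(t)=t+s$ also holds for $\mathbb{P}^N_{\delta_s\otimes\nu_s}$, for all $s\in(0,N)$: it is a null event under $\mathbb{P}^N_{\delta_s\otimes\mu_s}$ by \eqref{eq:uniform_motion}, hence also under the dominated measure.

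\textbf{Step 2: bootstrap to every $s\in(0,N)$.} Exactly as in the proof of \Cref{coro:identification1}, pick $s'\in S$ with $s'<s$. By Step 1, $\mathbb{P}^N_{\delta_{s'}\otimes\nu_{s'}}\circ X^N(s-s')^{-1}=\delta_s\otimes\nu_s$. The Markov property of $X^N$ then yields
\[
\mathbb{P}^N_{\delta_s\otimes\nu_s}\circ X_2^N(t)^{-1}=\mathbb{P}^N_{\delta_{s'}\otimes\nu_{s'}}\circ X_2^N(s-s'+t)^{-1}=\nu_{s+t}.
\]

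\textbf{Step 3: the path measure $\eta^N$.} For each $\varepsilon\in(0,N)$, let $\eta^{N,\varepsilon}$ be the law of $(X^N_2(t-\varepsilon))_{t\in[\varepsilon,N)}$ under $\mathbb{P}^N_{\delta_\varepsilon\otimes\nu_\varepsilon}$. By Step 1 this is a law on continuous paths with marginals $\nu_t$.

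Take $f(t,x)=g(t)h(x)$ in \eqref{eq:mpStannat} and use $X^N_1(t)=t+\varepsilon$ together with $\zeta^N=N-\varepsilon$. This shows that $\eta^{N,\varepsilon}$ solves the canonical MP for $({\sf L}_t)$ on $[\varepsilon,N)$. It is therefore the unique such solution with marginals $\nu_t\leq c\mu_t$, by \Cref{prop:uniqueness_MP}, applied after noting that $\mathbf{H_{\sf \lesssim u}}$ yields $\mathbf H_{!\mu}$ on densities bounded by $c u$.

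The Markov property and Step 2 show that the family $(\eta^{N,\varepsilon})_\varepsilon$ is consistent under restriction. Moreover, \Cref{rem:Trevisan_superposition} supplies some solution $\eta$ of the MP on $[0,N]$ with marginals $\nu_t$. By uniqueness, the restriction of $\eta$ to each $[\varepsilon,N)$ equals $\eta^{N,\varepsilon}$, so setting $\eta^N:=\eta$ gives existence.

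Uniqueness of $\eta^N$ follows because any two candidates agree on $\mathcal{F}_\varepsilon(N)$ for every $\varepsilon>0$. These $\sigma$-algebras generate the Borel $\sigma$-algebra on $C([0,N];\mathbb{R}^d)$, since $\omega(0)=\lim_{\varepsilon\to0}\omega(\varepsilon)$ by path continuity.
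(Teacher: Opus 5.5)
Your Steps 1 and 2 follow the paper's argument. The paper also uses \eqref{eq:zeta} together with $\nu_s\le c\mu_s$ to make the curves from \Cref{prop:densities} probability-valued, applies $\mathbf{H_{\sf \lesssim u}}$ for a.e.\ $s$, and then bootstraps with the Markov property. The only difference is how weak continuity is obtained: you take it directly from the path continuity \eqref{eq:diffusion}, while the paper upgrades right-continuity via \Cref{rem:Trevisan_narrow}. Either works.

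The gap is in Step 3, where you get the martingale problem for $\eta^{N,\varepsilon}$ by applying \eqref{eq:mpStannat} under $\mathbb{P}^N_{\delta_\varepsilon\otimes\nu_\varepsilon}$. \Cref{thm:Stannat}(ii.4) asserts \eqref{eq:mpStannat} only for initial laws $\overline{\nu}\le c\,\overline{\mu}_N$. The law $\delta_\varepsilon\otimes\nu_\varepsilon$ is singular with respect to $\overline{\mu}_N=u\,dt\otimes dx$, since it sits on the $\overline{\mu}_N$-null slice $\{\varepsilon\}\times\mathbb{R}^d$. This is exactly the obstruction described in OP2. Everything after it in your Step 3 (uniqueness, consistency, identification with the superposition solution) depends on this unproved claim.

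The paper does not use \eqref{eq:mpStannat} at this point. For $f\in C_c^\infty([\varepsilon,N]\times\mathbb{R}^d)$, it notes that $M(t)=f(X^N(t))-f(X^N(0))-\int_0^t\overline{\sf L}f(X^N(r))\,dr$ is an integrable continuous additive functional of $X^N$. Integrability comes from \eqref{eq:identification_nu}, the uniform motion, $\nu\le c\mu$ and \eqref{eq:bau}. It then computes, from the Fokker--Planck equation for $\nu$ (cf.\ \eqref{eq:cauchy}),
\begin{equation*}
\mathbb{E}^N_{\delta_\varepsilon\otimes\nu_\varepsilon}\{M(t)\}=\nu_{t+\varepsilon}\bigl(f(t+\varepsilon,\cdot)\bigr)-\nu_\varepsilon\bigl(f(\varepsilon,\cdot)\bigr)-\int_\varepsilon^{t+\varepsilon}\nu_r\bigl(\overline{\sf L}f(r,\cdot)\bigr)\,dr=0.
\end{equation*}
So the already established identification \eqref{eq:identification_nu}, not Stannat's martingale problem, is what handles the time-singular initial law. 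Strictly speaking, to turn the additive functional into a martingale you also need this vanishing for the conditional laws $\delta_r\otimes\tilde\nu$, $\tilde\nu\le C\mu_r$, of $X^N$ at later times.

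A repair closer to your own plan:
\begin{itemize}
\item Disintegrate \eqref{eq:mpStannat} over a countable family of test functions, rational times and events. This gives the martingale property under $\mathbb{P}^N_{s,x}$ for $\overline{\mu}_N$-a.e.\ $(s,x)$, hence under $\mathbb{P}^N_{\delta_{s'}\otimes\nu_{s'}}$ for a.e.\ $s'$.
\item Transport it to every $\varepsilon$ by the Markov shift from some good $s'<\varepsilon$, as in your Step 2.
\end{itemize}
The same shift is also needed to justify that $\eta^{N,\varepsilon}$ lives on continuous paths for \emph{every} $\varepsilon$, because \eqref{eq:diffusion} only covers a.e.\ starting time. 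With that in place, the rest of your Step 3 concludes as the paper does, via \Cref{prop:uniqueness_MP}.
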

\begin{proof}
First of all, since $\mathbf{H_{\sf \lesssim u}}$ is stronger than $\mathbf{H_{\sf \leq u}}$ we have that the conclusions of \Cref{coro:identification1} are valid.
In particular, using \Cref{coro:identification1}, (iii), apllied to $v^N=v$, we have that the solutions $\left(v_s^N(t,x)dx\right)_{0\leq t<N-s}$ for $s\in S$ provided by \Cref{prop:densities} are weakly right-continuous curves in $\mathcal{P}(\mathbb{R}^d)$. 
Thus, by \Cref{rem:Trevisan_narrow}, they are weakly continuous, and the uniqueness condition $\mathbf{H_{\sf \lesssim u}}$ can be used to prove \eqref{eq:identification_nu} by following the same lines as in the proof of (i)-(ii) in \Cref{coro:identification1}.

Now, let us show the second part of the statement.
To this end, let $\varepsilon\in (0,T)$, $f\in C_c^{\infty}([\varepsilon,N]\times \mathbb{R}^d)$, and define the process
\begin{equation*}
    M(t):=f(X^N(t))-f(X^N(0))-\int_0^t\overline{\sf L}f(X^N(r))\;dr, \quad t\in [0,N-\varepsilon).
\end{equation*}
Recall that $h(\Delta)=0$ for any function $h:[0,N]\times \mathbb{R}^d \rightarrow \mathbb{R}$, where $\Delta$ is an abstract cemetery points; see the paragraph on right processes from \Cref{Appendix}.
Also, note that by \Cref{coro:identification1} we have
\begin{equation*}
    \mathbb{P}^N_{\delta_\varepsilon\otimes \nu_\varepsilon}\left(\zeta^N=N-\varepsilon\right)=1.
\end{equation*}
Furthermore, by \Cref{coro:identification1}, (ii),
\eqref{eq:identification_nu}, and \eqref{eq:bau}, we have
\begin{equation*}
    \mathbb{E}^N_{\delta_\varepsilon\otimes \nu_\varepsilon}\int_0^t\left|\overline{\sf L}f\right|(X^N(r))\;dr
    =\int_0^t\nu_{r+\varepsilon}\left(\left|\overline{\sf L}f\right|(r+\varepsilon,\cdot)\right)\;dr
    \leq c\int_0^N\mu_r\left(\left|\overline{\sf L}f\right|(r,\cdot)\right)\;dr<\infty, \quad t<N-\varepsilon.
\end{equation*}
Consequently, under $\mathbb{P}^N_{\delta_\varepsilon\otimes \nu_\varepsilon}$, the process $M(t),$ $\varepsilon\leq t\leq N$, is well-defined and integrable.
Moreover, since $X^N$ is a right process, it is a standard fact that $M$ is a continuous additive functional, namely
\begin{equation*}
    M(t+s)=M(s)+M(t)\circ \theta^N(s), \quad 0\leq s\leq N-\varepsilon-t, \quad \mathbb{P}^N_{\delta_\varepsilon\otimes \nu_\varepsilon}\mbox{-a.e.}
\end{equation*}
Consequently, $\left(M(t)\right)_{t\in [0,N-\varepsilon)}$ is a $\mathbb{P}^N_{\delta_\varepsilon\otimes \nu_\varepsilon}$-martingale as soon as we show that
\begin{equation*}
    \mathbb{E}^N_{\delta_\varepsilon\otimes \nu_\varepsilon} \left\{M(t)\right\}=0, \quad t\in [0,N-\varepsilon).
\end{equation*}
But this can be easily verified since again by \eqref{eq:identification_nu} and \Cref{coro:identification1} we have
\begin{align*}
    \mathbb{E}^N_{\delta_\varepsilon\otimes \nu_\varepsilon} \left\{M(t)\right\}
    &=\nu_{t+\varepsilon}(f(t+\varepsilon,\cdot))-\nu_{\varepsilon}(f(\varepsilon,\cdot))-\int_0^t\nu_{r+\varepsilon}(\overline{\sf L}f(r+\varepsilon,\cdot))\;dr\\
    &=\nu_{t+\varepsilon}(f(t+\varepsilon,\cdot))-\nu_{\varepsilon}(f(\varepsilon,\cdot))-\int_\varepsilon^{t+\varepsilon}\nu_{r}(\overline{\sf L}f(r,\cdot))\;dr\\
    &=0, \quad t< N-\varepsilon,
\end{align*}
where the last equality follows from the fact that $(\nu_t)_{t\in [0,N)}$ is assumed to be a solution to the linear Fokker-Planck equation \eqref{eq:LFPshort}.

Now, let us notice that $(M(t))_{t\in[0,N-\varepsilon)}$ being a $\mathbb{P}^N_{\delta_\varepsilon\otimes \nu_\varepsilon}$-martingale actually means that
\begin{equation*}
    f(t+\varepsilon,X^N_2(t))-f(\varepsilon,X^N_2(0))-\int_0^t\overline{\sf L}f(r+\varepsilon,X^N_2(r))\;dr, \quad t\in [0,N-\varepsilon),
\end{equation*}
is a $\mathbb{P}^N_{\delta_\varepsilon\otimes \nu_\varepsilon}$-martingale, 
hence, by a change o variable, that
\begin{equation*}
    f(t,X^N_2(t-\varepsilon))-f(\varepsilon,X^N_2(0))-\int_\varepsilon^{t}\overline{\sf L}f(r,X^N_2(r-\varepsilon))\;dr, \quad t\in [\varepsilon,N),
\end{equation*}
is a $\mathbb{P}^N_{\delta_\varepsilon\otimes \nu_\varepsilon}$-martingale.
Consequently,
\begin{equation*}
    \mathbb{P}^N_{\delta_\varepsilon\otimes \nu_\varepsilon}\circ\left[\left(X_2^N(t-\varepsilon)\right)_{t\in [\varepsilon,N)}\right]^{-1}\in \mathcal{P}\left(C([\varepsilon,N);\mathbb{R}^d)\right)
\end{equation*}
is a solution to the canonical MP associated to $\left({\sf L}_t\right)_{t\in [\varepsilon,N)}$, with initial distribution $\nu_\varepsilon$.
Finally, the second part of \Cref{coro:identification2} follows by \Cref{prop:uniqueness_MP}.
\end{proof}

\subsubsection{Proof of Theorem 2.12}\label{ss:proof of main0}
Let 
 \begin{equation*}
        X^N=\left(\Omega^N,\mathcal{F}^N,\left(\mathcal{F}_t^N\right)_{t\geq 0},\mathbb{P}_{s,x}^N, (s,x)\in [0,N)\times \mathbb{R}^d\right) \mbox{ with lifetime } \zeta^N, \quad N>0.
        \end{equation*}
be given by \Cref{thm:Stannat}.
In order to construct the projective limit of the above family of right processes, the idea is to first construct the projective limit of the analytic objects $\left(\overline{P}_t^N,t\geq 0\right)_{N>0}$ and $\left(\overline{\mathcal{U}}^N\right)_{N>0}$.
One this first part is achieved, we proceed to the construction of the corresponding stochastic process which shall stand as the projective limit of $(X^N)_{N>0}$.
The proof is a bit involved, so let us split it in several steps.

\medskip
\noindent{\bf Step OP1.1}:
{\it We claim that for all $N>0$ we have} 
\begin{equation*}
    \overline{P}_t^N((s,x),\cdot)=\overline{P}_t^{N+1}((s,x),\cdot)\big|_{[0,N)\times \mathbb{R}^d} \quad \overline{\mu}_N\mbox{-a.e. on } [0,N)\times \mathbb{R}^d, t\geq 0,
\end{equation*}
and thus, by taking the Laplace transform with respect to the $t$ variable,
\begin{equation*}
    \overline{U}_\alpha^N\left((s,x),\cdot\right)=\overline{U}_\alpha^{N+1}\left((s,x),\cdot\right)\big|_{[0,N)\times \mathbb{R}^d} \quad \overline{\mu}_N\mbox{-a.e. on } [0,N)\times \mathbb{R}^d, \alpha>0.
\end{equation*}
To prove the claim, let $h\in pb\mathcal{B}([0,\infty)\times \mathbb{R}^d)$, so that $hu \in \mathcal{A}_{\lesssim u}^{0,N}, N>0$ for $c:=\|h\|_\infty$. 
Further, set
\begin{equation*}
    v(t,x):=\frac{h(t,x)u(t,x)}{\int_{\mathbb{R}^d} h(t,x)u(t,x)dx}, \quad t\geq 0, x\in \mathbb{R}^d,
\end{equation*}
so that \Cref{prop:densities} as well as \Cref{coro:identification1} can be applied to deduce: For each $N>0$ there exists a set $S\subset (0,N+1)$ of full Lebesgue measure such that for every $s\in S$ one can construct the family of probability measures $\left(v_s^{N+1}(t-s,x)dx\right)_{t\in [s,N+1)}$ which is a weakly continuous solution to the linear Fokker-Planck equation \eqref{eq:LFPshort} on $[s,N+1)$ with $v_s^{N+1}(0,x)dx=v(s,x)dx$.
Thus, for $s\in S, t< N-s$, by \Cref{coro:identification1} and then \Cref{coro:identification2} we have
\begin{align*}
    \left(\delta_s\otimes v(s,x)dx\right) \circ \overline{P}_t^N 
    &= \mathbb{P}^N_{\delta_s\otimes  v(s,x)dx}\circ \left(X_1^N(t),X_2^N(t)\right)^{-1}\\
    &=\delta_{t+s}\otimes\mathbb{P}^N_{\delta_s\otimes  v(s,x)dx}\circ \left(X_2^N(t)\right)^{-1}\\
    &=\delta_{t+s}\otimes v_s^{N+1}(t-s,x)dx,\\
    \intertext{as well as}
    \left(\delta_s\otimes v(s,x)dx\right) \circ \overline{P}_t^{N+1} 
    &= \mathbb{P}^{N+1}_{\delta_s\otimes  v(s,x)dx}\circ \left(X_1^{N+1}(t),X_2^{N+1}(t)\right)^{-1}\\
    &=\delta_{t+s}\otimes\mathbb{P}^{N+1}_{\delta_s\otimes  v(s,x)dx}\circ \left(X_2^{N+1}(t)\right)^{-1}\\
    &=\delta_{t+s}\otimes v_s^{N+1}(t-s,x)dx.
\end{align*}
Consequently, for every $s\in S$ and $t<N-s$ we have
\begin{equation*}
    \left(\delta_s\otimes v(s,x)dx\right) \circ \overline{P}_t^N=\left(\delta_s\otimes v(s,x)dx\right) \circ \overline{P}_t^{N+1},
\end{equation*}
or equivalently,
\begin{equation*}
     \int_{\mathbb{R}^d} h(s,x)u(s,x)\overline{P}_t^N((s,x),\cdot) \;dx=\int_{\mathbb{R}^d} h(s,x)u(s,x)\overline{P}_t^{N+1}((s,x),\cdot)\;dx,
\end{equation*}
where the dot indicates that the equality is for the corresponding measures on $[0,N)\times \mathbb{R}^d$.
As a matter of fact, the above equality holds for any $t\geq 0$ since by \Cref{coro:identification1} it is easy to see that as measures on $[0,N)\times \mathbb{R}^d$,
\begin{equation*}
     \left(\delta_s\otimes v(s,x)dx\right) \circ \overline{P}_t^N=0=\left(\delta_s\otimes v(s,x)dx\right) \circ \overline{P}_t^{N+1}, \quad t\geq N-s.
\end{equation*}
Finally, integrating also with respect to $s$ we obtain
\begin{equation*}
    \int_{[0,N)\times \mathbb{R}^d} h(s,x)\overline{P}_t^N((s,x),\cdot) \;\overline{\mu}_N(ds,dx)=\int_{[0,N)\times \mathbb{R}^d} h(s,x)\overline{P}_t^{N+1}((s,x),\cdot)\;\overline{\mu}_N(ds,dx).
\end{equation*}
The claim now follows since $h$ was arbitrarily chosen.

\medskip
\noindent{\bf Step OP1.2}: {\it For $\alpha>0$, $0\leq f\in L^p\left([0,\infty)\times \mathbb{R}^d ; \overline{\mu}\right)$, $1\leq p\leq \infty$, set
\begin{equation*}
    \overline{U}_\alpha f:= \sup_{N} \left(1_{[0,N)\times \mathbb{R}^d} \overline{U}_\alpha^N (f|_{[0,N)\times \mathbb{R}^d})\right).
\end{equation*}
We claim that $\overline{\mathcal{U}}:=\left(\overline{U}_\alpha\right)_{\alpha>0}$ is a $C_0$-resolvent of Markov operators on $L^p\left([0,\infty)\times \mathbb{R}^d ; \overline{\mu}\right)$, $1\leq p< \infty$, and $\overline{\mu}$ is sub-invariant with respect to $\overline{\mathcal{U}}$.
}

\noindent{To prove this claim}, let $0\leq f\in L^p\left([0,\infty)\times \mathbb{R}^d ; \overline{\mu}\right)$, $1\leq p< \infty$ and note first that by {\bf Step Q1.1} from above we have 
\begin{equation*}
    1_{[0,N)\times \mathbb{R}^d} \overline{U}_\alpha^N (f|_{[0,N)\times \mathbb{R}^d}) \mbox{ is increasing with respect to } N>0.
\end{equation*}
Therefore, by monotone convergence
\begin{equation*}
    \overline{\mu}\left(\left(\alpha \overline{U}_\alpha f\right)^p\right)= \sup_N \overline{\mu}_N\left(\left(\alpha \overline{U}_\alpha^N \left(f|_{[0,N)\times \mathbb{R}^d}\right)\right)^p\right)\leq \sup_N \overline{\mu}_N\left(\left(f|_{[0,N)\times \mathbb{R}^d}\right)^p\right)=\overline{\mu}\left(f^p\right), \quad \alpha>0.
\end{equation*}
Furthermore, since by \Cref{coro:identification1}, (iii) we get $\overline{P}_t^N 1(s,x)=1_{[0,N)}(s+t)$ $\overline{\mu}_N$-a.e., $t\geq 0$, we deduce that $\alpha \overline{U}_\alpha 1 = 1$. 
Also, $\alpha \overline{U}_\alpha$ is clearly positivity preserving, consequently it is a Markov operator on $L^p(\overline{\mu})$ for every $1\leq p\leq \infty$, and it is straightforward to check that $\left(\overline{U}_\alpha\right)_\alpha$ satisfies the resolvent equation. 
To conclude this step it remains to show that 
\begin{equation*}
    \lim_{\alpha \to \infty}\alpha \overline{U}_\alpha = {\rm Id} \quad \mbox{ in } L^p(\overline{\mu}), 
\end{equation*}
which by a density argument boils down to testing the above convergence on smooth functions $f$ with compact support in $[0, N)\times \mathbb{R}^d$, for some arbitrarily fixed $N>0$. 
But for such $f$, by {\bf Step Q1.1} and \Cref{coro:identification1}, (ii), (iii), we have
\begin{equation}\label{eq:U=U^N}
    \alpha \overline{U}_\alpha f= \alpha \overline{U}_\alpha^N \left(f|_{[0,N)\times \mathbb{R}^d}\right)1_{[0,N)\times \mathbb{R}^d},
\end{equation}
which converges to $f$ in $L^p(\overline{\mu})$ as $\alpha \to \infty$ by \Cref{thm:Stannat}.

\medskip
\noindent{\bf Step OP1.3: }{\bf Proof of \Cref{thm:main0},(i)}
    Let us denote by $\left(\overline{\sf L}_p,\sf{D}(\overline{\sf{L}}_p)\right)$ the infinitesimal generators of the $C_0$-resolvent $\overline{\mathcal{U}}$ on $L^p\left([0,\infty)\times \mathbb{R}^d ; \overline{\mu}\right)$, that is
\begin{equation*}
    {\sf{D}}(\overline{\sf{L}}_p):=\overline{U}_1\left(L^p\left([0,\infty)\times \mathbb{R}^d ; \overline{\mu}\right)\right), \quad {\overline{\sf{L}}}_p \left(\overline{U}_1 f\right):= \overline{U}_1 f-f, \quad f\in L^p\left([0,\infty)\times \mathbb{R}^d ; \overline{\mu}\right), 1\leq p<\infty.
\end{equation*}
Since $\alpha\overline{U}_\alpha$ is a contraction on $L^p\left([0,\infty)\times \mathbb{R}^d ; \overline{\mu}\right)$ for every $\alpha>0$, we have that $\left(\overline{\sf L}_p,{\sf{D}(\overline{\sf{L}}_p)}\right)$ is a maximal dissipative operator on $L^p\left([0,\infty)\times \mathbb{R}^d ; \overline{\mu}\right)$, $1\leq p<\infty$.
Let us show that $\left(\overline{\sf L}_1,\sf{D}(\overline{\sf{L}}_1)\right)$ is an extension of $\left(\overline{\sf L},C_c^{\infty}((0,\infty)\times \mathbb{R}^d))\right)$ on $L^1\left([0,\infty)\times \mathbb{R}^d;\overline{\mu}\right)$: 
Let $f\in C_c^{\infty}((0,\infty)\times \mathbb{R}^d))$ so that $f\in C_c^{\infty}((0,N)\times \mathbb{R}^d))$ for some $N>0$ that we fix.
Thus, by \Cref{thm:Stannat}, $f\in {\sf D}\left(\overline{\sf{L}}_1^N\right)$ and there exists $f_0\in L^1\left([0,N)\times \mathbb{R}^d ; \overline{\mu}_N\right)$ such that
\begin{equation*}
    f=\overline{U}_1^Nf_0 \quad \mbox{and} \quad \overline{\sf L} f = \overline{U}_1^Nf_0-f_0 \quad \overline{\mu}_N\mbox{-a.e.}
\end{equation*}
Extending $f_0$ by zero on $[N,\infty)\times \mathbb{R}^d$ and using \eqref{eq:U=U^N}, we get
\begin{equation*}
    \overline{\sf L} f = \overline{U}_1^Nf_0-f_0=\overline{U}_1f_0-f_0=\overline{\sf L}_1\overline{U}_1f_0=\overline{\sf L}_1f \quad \overline{\mu}\mbox{-a.e.},
\end{equation*}
hence $\left(\overline{\sf L}_1,\sf{D}(\overline{\sf{L}}_1)\right)$ is an extension of $\left(\overline{\sf L},C_c^{\infty}((0,\infty)\times \mathbb{R}^d))\right)$ on $L^1\left([0,\infty)\times \mathbb{R}^d;\overline{\mu}\right)$.

\medskip
\noindent{\bf Step OP1.4}: {\it We claim that there exists a resolvent of Markov kernels $\overline{\mathcal{U}}^0:=\left(\overline{U}^0_\alpha\right)_{\alpha>0}$ on $[0,\infty)\times \mathbb{R}^d$ such that $\mathcal{E}(\mathcal{U}^0_\beta)$ is min stable, contains the constant functions, and generates $\mathcal{B}([0,\infty)\times \mathbb{R}^d))$ for one (hence all) $\beta >0$; that is, the equivalent statements in \Cref{prop 2.5} from \nameref{Appendix} hold.}

\noindent{This} claim follows mostly from \cite{BeBoRo06}.
More precisely, by \cite[Theorem 2.2 and Remark 2.3]{BeBoRo06}, we can directly deduce that a resolvent of sub-Markov kernels $\tilde{\mathcal{U}}$ exists having all the other claimed properties satisfied. 
Thus, we only need to prove the Markov property pointwise on $[0,\infty)\times\mathbb{R}^d$. 
To this end we use \Cref{lem:kernel_trick} which is also taken from \cite{BeBoRo06}, as follows: Let
\begin{align*}
    E_0
    &:= \left\{x\in [0,\infty)\times\mathbb{R}^d : \alpha \tilde{U}_\alpha 1(x)=1, \alpha \in \mathbb{Q}_+\right\}\\
    &\ =\left\{x\in [0,\infty)\times\mathbb{R}^d : \alpha \tilde{U}_\alpha 1(x)=1, \alpha >0\right\},
\end{align*}
where the (second) equality holds due to the resolvent equation and the fact that $\tilde{\mathcal{U}}$ is already sub-Markovian.
Since $\alpha \tilde{U}_\alpha 1=1$ $\overline{\mu}$-a.e. for every $\alpha>0$, we get that $\overline{\mu}([0,\infty)\times \mathbb{R}^d\setminus E_0)=0$. 
But now we can apply \Cref{lem:kernel_trick} simultaneously for the kernels $\alpha \tilde{U}_\alpha$, $0<\alpha \in \mathbb{Q}_+$ to find $F\in \mathcal{B}([0,\infty)\times \mathbb{R}^d)$ such that $F\subset E_0$,
\begin{equation*}
    \overline{\mu}([0,\infty)\times \mathbb{R}^d\setminus F)=0, \quad \mbox{and} \quad \tilde{U}_\alpha 1_{[0,\infty)\times \mathbb{R}^d\setminus F}=0 \mbox{ on } F, \quad 0<\alpha \in \mathbb{Q}_+, \mbox{ hence for all } \alpha>0.
\end{equation*}
Therefore, by \Cref{defi:restriction} and \Cref{defi:trivial modification} from \nameref{Appendix}, we can consider the resolvent of Markovian kernels $\overline{\mathcal{U}}^0:=\left(\overline{U}^0_\alpha\right)_{\alpha>0}$ on $[0,\infty)\times \mathbb{R}^d$ obtained by restricting first $\tilde{\mathcal{U}}$ to $F$, and then by taking its trivial extension to $[0,\infty)\times \mathbb{R}^d$.
$\overline{\mathcal{U}}^0$ enjoys all the properties required in the claim. 

\medskip
\noindent{\bf Step OP1.5}: {\it There exists $\mathcal{N}\in \mathcal{B}([0,\infty)\times \mathbb{R}^d)$ with 
\begin{equation*}
    \overline{\mu}(\mathcal{N})=0 \quad \mbox{and} \quad \overline{U}^0_\alpha 1_\mathcal{N}=\overline{U}^N_\alpha 1_{\mathcal{N}\cap [0,N)\times \mathbb{R}^d}=0 \mbox{ on } \left([0,N)\times \mathbb{R}^d\right)\setminus \mathcal{N} \mbox{ for all } \alpha, N>0,
\end{equation*}
such that if we set 
\begin{equation*}
    E:=\left([0,\infty)\times \mathbb{R}^d\right)\setminus \mathcal{N}, \quad E^N:=\left([0,N)\times \mathbb{R}^d\right)\setminus \mathcal{N}, \quad N>0,
\end{equation*}
then for every $N>0$ and $f\in b\mathcal{B}([0,\infty))$ we have
\begin{equation}\label{eq:uniform_U}
\overline{U}^0_\alpha\left((s,x),\cdot\right)|_{E^N}=\overline{U}^N_\alpha\left((s,x),\cdot\right), (s,x)\in E^N, \quad \mbox{ and } \quad \overline{U}^0_\alpha f(s,x)=\int_0^\infty e^{-\alpha t} f(s+t)\;dt, \; (s,x)\in E.
\end{equation}
}

\noindent{To} prove the above assertion, let $(f_k)_{k\geq 1}\in b\mathcal{B}([0,\infty)\times \mathbb{R}^d])$ and $(g_k)_{k\geq 1}\in b\mathcal{B}([0,\infty)\times \mathbb{R}^d])$ be such that the two families of functions separate the space of finite measures on the corresponding spaces.
Thus, using \Cref{coro:identification1},
\begin{equation*}
    \overline{U}_\alpha^0\left(f_k 1_{[0,N)\times \mathbb{R}^d}\right)=  \overline{U}_\alpha^N\left(f_k|_{[0,N)\times \mathbb{R}^d}\right)1_{[0,N)\times \mathbb{R}^d} \quad \overline{\mu}\mbox{-a.e. for all } \alpha, N>0, k\geq 1,
\end{equation*}
so if we set
\begin{equation*}
    \mathcal{N}^0:=\bigcup_{k\geq 1, N,\alpha \in \mathbb{Q}_+^\ast} \left[\overline{U}_\alpha^0\left(f_k 1_{[0,N)\times \mathbb{R}^d}\right)\neq \overline{U}_\alpha^N\left(f_k|_{[0,N)\times \mathbb{R}^d}\right)1_{[0,N)\times \mathbb{R}^d}\right],
\end{equation*}
then $\overline{\mu}(\mathcal{N}^0)=0$; throughout, for $f\in \mathcal{B}([0,N)\times \mathbb{R}^d)$, by $f1_{[0,N)\times \mathbb{R}^d}$ we denote the extension of $f$ to $[0,\infty)\times \mathbb{R}^d$ by setting it to be zero on $[N,\infty)\times \mathbb{R}^d$.
By the same \Cref{coro:identification1}, we get $\overline{\mu}$-a.e. for all  $\alpha>0, k\geq 1$ that
\begin{align*}
    \overline{U}_\alpha^0\left(g_k\right)
    &=  \sup_N\overline{U}_\alpha^N\left(g_k |_{[0,N)}\right)1_{[0,N)\times \mathbb{R}^d}=\sup_{N}\int_0^\infty e^{-\alpha t}g_k|_{[0,N)}(s+t) \;dt \;1_{[0,N)\times \mathbb{R}^d}\\
    &=\int_0^\infty e^{-\alpha t} g_k(s+t) \;dt.
\end{align*}
Thus, if we set 
\begin{equation*}
    \mathcal{N}^{00}:=\bigcup_{k\geq 1,\alpha \in \mathbb{Q}_+^\ast} \left[\overline{U}_\alpha^0 g_k(s,x)\neq \int_0^\infty e^{-\alpha t} g_k(s+t) \;dt\right],
\end{equation*}
then  $\overline{\mu}(\mathcal{N}^{00})=0$.
Now we set $\mathcal{N}':=\mathcal{N}^{0}\cup \mathcal{N}^{00}$ which satisfies $\overline{\mu}(\mathcal{N}')=0$, so we can apply \Cref{lem:kernel_trick} simultaneously for the kernels $\overline{U}^0\alpha, \alpha\in \mathbb{Q}^\ast_+$ to find $\mathcal{N}\in \mathcal{B}([0,\infty)\times \mathbb{R}^d)$ such that 
\begin{equation*}
    \mathcal{N}'\subset \mathcal{N} \quad \mbox{and} \quad \overline{U}^0_\alpha 1_{\mathcal{N}}=0 \mbox{ on } E \mbox{ for all } \alpha \in \mathbb{Q}^\ast_+, \mbox{ hence for all } \alpha>0.
\end{equation*}
Moreover, for every $(s,x)\in E^N, k\geq 1, \alpha\in \mathbb{Q}^\ast_+$ we have
\begin{equation*}
    \overline{U}_\alpha^0\left(f_k 1_{[0,N)\times \mathbb{R}^d}\right)(s,x)=  \overline{U}_\alpha^N\left(f_k|_{[0,N)\times \mathbb{R}^d}\right)(s,x)1_{[0,N)\times \mathbb{R}^d}(s,x),
\end{equation*}
and since $f_k,k\geq 1$ is measure separating the above equality holds with $f_k$ replaced by any $f\in b\mathcal{B}([0,\infty)\times \mathbb{R}^d)$.
Furthermore, by the resolvent equation, it is easy to see that the equality holds for all $\alpha>0$.
Thus, we obtain
\begin{equation*}
    \overline{U}^0_\alpha\left((s,x),\cdot\right)|_{E^N}=\overline{U}^N_\alpha\left((s,x),\cdot\right), \quad (s,x)\in E^N, \alpha>0.
\end{equation*}
By a similar reasoning we also have
\begin{equation*}
    \overline{U}^0_\alpha f(s,x)=\int_0^\infty e^{-\alpha t} f(s+t)\;dt, \quad f\in b\mathcal{B}([0,\infty)), (s,x)\in E, \alpha>0.
\end{equation*}

\medskip
\noindent{\bf Step OP1.6}: {\it Keeping the notations from above, let $\tilde{\mathcal{U}}^0:=\left(\tilde{U}^0_\alpha\right)_{\alpha>0}$ and $\tilde{\mathcal{U}}^N:=\left(\tilde{U}^N_\alpha\right)_{\alpha>0}$ be the restrictions of $\overline{\mathcal{U}}^0$ and  $\overline{\mathcal{U}}^N$ to $E$ and $E^N$, respectively, given by \Cref{defi:restriction}.
Furthermore, set
\begin{equation*}
    V:E\rightarrow [0,1], \quad V(s,x)=e^{-s}, (s,x)\in E.
\end{equation*}
Then $V$ is $\tilde{\mathcal{U}}^0$-excessive.}

\noindent Indeed, for all $(s,x)\in E$, by \eqref{eq:uniform_U} we have
\begin{equation*}
    \alpha \tilde{U}^0_\alpha V(s,x)=\alpha \int_0^\infty e^{-\alpha t} e^{-(s+t)}\;dt=\frac{\alpha}{\alpha+1}e^{-s}\leq e^{-s}, \quad \alpha>0,
\end{equation*}
and we clearly also have $\lim\limits_{\alpha \to \infty} \alpha \tilde{U}^0_\alpha V(s,x)=V(s,x)$. 

\medskip
\noindent{\bf Step OP1.7}: Let $(E_1,\tilde{\mathcal{U}}^1)$ be the saturation of $(E,\tilde{\mathcal{U}}^0)$ (with respect to $\tilde{\mathcal{U}}^0_{\beta}$ for some $\beta>0$) given by \Cref{defi:saturation}.
Consequently, by \Cref{rem:saturation} the function $V$ has a unique extension $V_1$ by fine continuity from $E$ to $E_1$; $V_1$ is $\tilde{\mathcal{U}}^1$-excessive. 
Furthermore, by \Cref{thm:saturation}, there exits a right process 
\begin{equation}\label{eq:X_tilde}
    \tilde{X}=(\tilde{\Omega}, \tilde{\mathcal{F}}, \tilde{\mathcal{F}}_t , \tilde{X}(t), \tilde{\theta(t)} , \tilde{\mathbb{P}}_z,z\in E_1)
\end{equation}
on $E_1$ with resolvent $\tilde{\mathcal{U}}^1$.

Let $\tilde{\mathcal{U}}^{1,N}$ be the resolvent of the process $\tilde{X}$ killed upon leaving the finely open set $[V_1>e^{-N}]$, $N>0$.
That is
\begin{equation}\label{eq:killed_U_tilde}
    \tilde{\mathcal{U}}^{1,N}=\tilde{\mathcal{U}}^1-B_{\alpha}^{[V_1\leq e^{-n}]}\tilde{\mathcal{U}}^1 \quad \mbox{on } [V_1>e^{-N}], \alpha>0,
\end{equation}
where recall that the balayage operator $B_{\alpha}^A$ is defined in \Cref{defi:balayage}; see also \eqref{Hunt-th}.

\medskip
\noindent{\it We claim that for every $N>0$, the resolvent $\tilde{\mathcal{U}}^{1,N}$ is an extension of $\tilde{\mathcal{U}}^{N}$ from $E^N$ to $[V_1>e^{-N}]$, in the sense of \Cref{defi:natural}.}

\medskip
\noindent To prove the claim, we need to check conditions $(i.1)-(i.2)$ from \Cref{defi:natural}, namely that 

\medskip
$\tilde{U}_{\alpha}^{1,N}\left(z,[V_1>e^{-N}]\setminus E^N\right)=0$, $z\in [V_1>e^{-N}]$,  
and that

\medskip 
$\tilde{U}_\alpha^{1,N}((s,x),\cdot)|_{E^N}=\tilde{U}_\alpha^{N}((s,x),\cdot)$ as measures on $E^N$, for every $(s,x)\in E^N$ and $N>0$.

\medskip
\noindent{}To check the first condition, note that
\begin{equation*}
    E\cap \left([V_1>e^{-N}]\setminus E^N\right)=\mathcal{N}\cap [0,N)\times \mathbb{R}^d, \quad \mbox{where } \mathcal{N} \mbox{ is given in Step OP1.5.}
\end{equation*}
Thus, $\tilde{U}_{\alpha}^{1,N}\left((s,x),E\cap\left([V_1>e^{-N}]\setminus E^N\right)\right)\leq \tilde{U}_{\alpha}^{1}\left((s,x),\mathcal{N}\right)=0$, $(s,x)\in E^N$, hence
\begin{equation*}
    \tilde{U}_{\alpha}^{1,N}\left([V_1>e^{-N}]\setminus E^N\right)\leq  \tilde{U}_{\alpha}^{1,N}\left(E_1\setminus E\right)\leq \tilde{U}_{\alpha}^{1}\left(E_1\setminus E\right)=0. 
\end{equation*}

Checking the second condition is a little bit more involved: Let $N>0$ and $(s,x)\in E^N$, so that $s<N$.
Note that
\begin{equation*}
\tilde{U}_\alpha^1\left((s,x),\cdot\right)|_{E^N}=\tilde{U}_\alpha^0\left((s,x),\cdot\right)|_{E^N}=\tilde{U}_\alpha^N\left((s,x),\cdot\right), \quad \alpha>0,
\end{equation*}
where the last equality has been obtained in \eqref{eq:uniform_motion}.
Moreover, since 
\begin{equation*}
\tilde{U}_\alpha^{1,N}=\tilde{U}_\alpha^{1}-B_\alpha^{[V_1\leq e^{-N}]}\tilde{U}_\alpha^{1},    
\end{equation*}
it is sufficient to show that
\begin{equation}\label{eq:BU=0}
    B_\alpha^{[V_1\leq e^{-N}]}\tilde{U}_\alpha^{1}\left((s,x), \cdot\right)|_{E_N}=0, \quad N>0.
\end{equation}
To this end, let $F\in b\mathcal{B}(E_N)$ with compact support in $E_N$, in particular
\begin{equation*}
    {\rm supp}(F)\subset [0,N-\varepsilon)\times \mathbb{R}^d \quad \mbox{ for some } \varepsilon>0.
\end{equation*}
We claim that
\begin{equation}\label{eq:UF=0}
    \tilde{U}_\alpha^1F=0 \quad \mbox{ on } [V_1\leq e^{-N}], \quad N>0,
\end{equation}
hence $B_\alpha^{[V_1\leq e^{-N}]}\tilde{U}_\alpha^{1}F(s,x)\leq \tilde{U}_\alpha^{1}F(s,x)=0$; from this, by a straightforward density argument, we get \eqref{eq:BU=0}.
To prove \eqref{eq:UF=0}, let $z\in [V_1\leq e^{-N}]$, so that by the fine density of $E$ in $E_1$, there exists a sequence $\left(z_k=(s_k,x_k)\right)_{k\geq 1}\subset E$ such that
\begin{equation*}
    e^{-s_k}=V(s_k,x_k)=V_1(s_k,x_k)\rightarrow_{k} V_1(z)\leq e^{-N}.
\end{equation*}
Also,
\begin{equation*}
    \tilde{U}^1_\alpha F(z)=\lim_k \tilde{U}^1_\alpha F(z_k)=\lim_k \tilde{U}^0_\alpha F(z_k).
\end{equation*}
Thus, if we set
\begin{equation*}
    f(s):=\sup_{x \in \mathbb{R}^d}F(s,x), \quad s\in [0, N),
\end{equation*}
and take into account that $\lim_k s_k\geq N$ and that $f(r)=0, r\geq N-\varepsilon$, we get
\begin{equation*}
    \tilde{U}^1_\alpha F(z) \leq \limsup_k \tilde{U}_\alpha^0 f(s_k,x_k)= \limsup_k \int_0^\infty e^{-\alpha t} f(s_k+t) \;dt = 0,
\end{equation*}
which concludes the claim.

\medskip
\noindent{\bf Step Q1.8}: Let $\left(\tilde{\mathcal{U}}^{N,1},E^{N,1}\right)$ be the saturation of $\left(\tilde{\mathcal{U}}^{N},E^{N}\right)$, $N\geq 1$.
Then, by {\bf Step OP1.7} and \Cref{rem:maximal}, we have that
$\left(\tilde{\mathcal{U}}^{N,1},E^{N,1}\right)$ is an extension of  $\left(\tilde{\mathcal{U}}^{1,N},[V_1>e^{-N}]\right)$, and that the latter is an extension of $\left(\tilde{\mathcal{U}}^{N},E^{N}\right)$, in the sense of \Cref{defi:natural}. 
Or, using the notation introduced in \Cref{defi:natural}, we have
\begin{equation}\label{eq:sub1}
\left(\tilde{\mathcal{U}}^{N},E^{N}\right)\subset  \left(\tilde{\mathcal{U}}^{1,N},\left[V_1>e^{-N}\right]\right)\subset   \left(\tilde{\mathcal{U}}^{N,1},E^{N,1}\right).
\end{equation}

Further, set
\begin{equation*}
    \hat{E}^N:=\left\{ (s,x)\in [0,N)\times \mathbb{R}^d : \overline{U}_0^N\left((s,x), [0,N)\times \mathbb{R}^d\setminus E^N\right)=0\right\}.
\end{equation*}
Note that $\hat{E}^N\supset E^N$ and $\hat{E}^N\in \mathcal{A}\left(\overline{\mathcal{U}}^N\right)$, $N\geq 1$; see \Cref{ss:restriction_process}.
Let
\begin{equation*}
\hat{\mathcal{U}}^N=\left(\hat{U}_\alpha^N\right)_{\alpha >0}:=\overline{\mathcal{U}}^N|_{\hat{E}^N}, \quad N\geq 1,
\end{equation*}
the restriction of $\overline{\mathcal{U}}^N$ to $\hat{E}^N$ given by \Cref{defi:restriction}.
Thus, we also have
\begin{equation}\label{eq:sub2}
\left(\tilde{\mathcal{U}}^{N},E^{N}\right)\subset  \left(\hat{\mathcal{U}}^{N},\hat{E}^N\right)\subset
\left(\tilde{\mathcal{U}}^{N,1},E^{N,1}\right).
\end{equation}
Further, set 
\begin{equation*}
\check{E}^N:=\hat{E}^N\cap [V_1>e^{-N}] \quad \mbox{in } E^{N,1},\quad \check{\mathcal{U}}^N:=\tilde{\mathcal{U}}^{N,1}|_{\check{E}^N}, \quad N>0.
\end{equation*}
Thus, we now have
\begin{align}\label{eq:sub22}
&\left(\tilde{\mathcal{U}}^{N},E^{N}\right)\subset  \left(\check{\mathcal{U}}^{N},\check{E}^N\right)\subset   
\left(\tilde{\mathcal{U}}^{1,N},\left[V_1>e^{-N}\right]\right)\subset
\left(\tilde{\mathcal{U}}^{N,1},E^{N,1}\right),\\
&\left(\tilde{\mathcal{U}}^{N},E^{N}\right)
\subset \left(\check{\mathcal{U}}^{N},\check{E}^N\right)
\subset \left(\hat{\mathcal{U}}^{N},\hat{E}^N\right)
\subset \left(\tilde{\mathcal{U}}^{N,1},E^{N,1}\right).
\end{align}
Moreover, note that $E^{N,1}\setminus [V_1>e^{-N}]$ and $E^{N,1}\setminus \hat{E}^N$ are polar sets in $E^{N,1}$; this follows by \Cref{thm:saturation}, since both resolvents $\hat{\mathcal{U}}^{N}$ and $\tilde{\mathcal{U}}^{1,N}$ are the resolvents of two right processes on the corresponding spaces.
Consequently, for every $N\geq 1$,
\begin{equation*}
E^{N,1}\setminus\check{E}^N \mbox{ is polar for } \tilde{\mathcal{U}}^{N,1},\quad \left[V_1>e^{-N}\right]\setminus \hat{E}^N \mbox{ is polar for } \tilde{\mathcal{U}}^{1,N}, \quad \mbox{and} \quad \hat{E}^N\setminus \check{E}^N \mbox{ is polar for } \hat{\mathcal{U}}^{N}. 
\end{equation*}

\medskip
\noindent{\bf Step OP1.9}: Recall that $X^N$ is a right process associated to $\overline{\mathcal{U}}^N$ on $[0,N)\times \mathbb{R}^d$, as in \Cref{thm:Stannat}, \Cref{coro:identification1}, and \Cref{coro:identification2}.
Also, since $\check{E}^N\subset [0,N)\times \mathbb{R}^d$ and $\check{E}^N\in \mathcal{A}\left(\overline{\mathcal{U}}^N\right)$, we can consider 
\begin{equation}\label{eq:restrictoin_hat}
    \check{X}^N:=X^N|_{\check{E}^N},
\end{equation}
the restriction of $X^N$ from $[0,N)\times \mathbb{R}^d$ to $\check{E}^N$, which is a right process on $\check{E}^N$ with lifetime $\zeta^N$ and (norm-)continuous paths, as it follows from \Cref{prop:restriction} and \Cref{defi:restr-process}.
Clearly, the resolvent of $\check{X}^N$ is $\tilde{\mathcal{U}}^{N}$, $N\geq 1$.

Also, let 
$\tilde{X}^{1,N}$ be the right process on $\check{E}^N$
obtained by first killing the right process $\tilde{X}$ upon leaving $\left[V_1>e^{-N}\right]$ and then taking the restriction to $\check{E}^N$, $N\geq 1$; this is done following the procedures in \Cref{ss:restriction_process}.
Thus, the resolvent of $\tilde{X}^{1,N}$ is precisely $\tilde{\mathcal{U}}^{1,N}|_{\check{
E}^N}=\hat{\mathcal{U}}^N, N\geq 1$.
Now, since the norm topology on $\check{E}^N$ is a natural topology with respect to $\check{\mathcal{U}}^N$, the latter being the resolvent of both processes $\check{X}^{N}$ and $\tilde{X}^{1,N}$, by \Cref{prop:tau-identification} we get
\begin{equation}\label{eq:same_law}
    \tilde{X}^{1,N} \mbox{ and } \check{X}^N \mbox{ have the same laws on the path space } C\left([0,\infty); \check{E}^N\right), 
\end{equation}
with $\check{E}^N$ endowed with the norm-topology, and for all starting points $(s,x)\in \check{E}^N, N\geq 1$.

Consequently, relying also on \Cref{coro:identification1}, 
{\it the right process $\tilde{X}$ given by \eqref{eq:X_tilde} satisfies for all $N>0$
\begin{align}
    &\tilde{\mathbb{P}}_{\delta_s\otimes \mu_s}\left( T_{\left[V_1\leq e^{-N}\right]}=N-s\right)
    =\mathbb{P}_{\delta_s\otimes \mu_s}\left( \zeta^N=N-s \right)=1, \quad s\in (0,N),\\
    &\tilde{\mathbb{P}}_{\delta_s\otimes \mu_s}\left( [0, N-s)\ni t\mapsto\tilde{X}(t)\ni \check{E}^N \mbox{ is well defined and norm-continuous }\right) =1, \quad s\in (0,N).\label{eq:norm-continuous}\\
    &\tilde{\mathbb{P}}_{(s,x)}\circ \left(\tilde{X}(t)\right)^{-1}
    =\mathbb{P}_{(s,x)}\circ \left(X^N(t)\right)^{-1},\; s\in(0,N),t\in (0,N-s), \mu_s\mbox{-a.e. } (s,x)\in \check{E}^N.\label{eq:ident_tilde_N}
\end{align}
}

\medskip
\noindent{\bf Step OP1.10}: Define 
\begin{equation*}
   \check{E}:=\mathop{\bigcup}\limits_{N\geq 1} \check{E}^N, \quad \mbox{ so that we clearly have } \overline{\mu}\left([0,\infty)\times \mathbb{R}^d\setminus \check{E}\right)=0.
\end{equation*}
By {\bf Step OP1.9}, we have
\begin{align}
    &\tilde{\mathbb{P}}_{\overline{\mu}}\left( \sup_{N}T_{\left[V_1\leq e^{-N}\right]}=\infty \right)=1,\\
    &\tilde{\mathbb{P}}_{\overline{\mu}}\left( [0, \infty)\ni t\mapsto\tilde{X}(t)\ni \check{E} \mbox{ is well defined and norm-continuous }\right) =1.\label{eq:norm-continuous2}
\end{align}
In particular, if we extend $\overline{\mu}$ from $E$ to $E_1$ by setting $\overline{\mu}(E_1\setminus E):=0$, then $E_1\setminus \check{E}$ is $\overline{\mu}$-polar and $\overline{\mu}$-negligible (with respect to $\mathcal{U}^1$), hence by \Cref{rem 2.2}, (iii), there exists a further $\mathcal{B}([0,\infty)\times \mathbb{R}^d)$-measurable set $\check{E}_0\subset \check{E}$ such that
$E_1\setminus \check{E}_0$ is $\overline{\mu}$-inessential (see \Cref{defi:smallsets}).

By \Cref{prop:restriction}, $\tilde{X}$ can be restricted to $\check{E}_0$, the restricted process being denoted by $\check{X}^0$.

Furthermore, note that each $\check{E}^N$ is finely open in $E_1$, since it is finely open in $[V_1>e^{-N}]$, and the latter is finely open in $E_1$.
Consequently, by \eqref{eq:same_law}, if $f\in C_b([0,\infty)\times \mathbb{R}^d)$ then 
\begin{equation*}
    [0,\infty)\ni t\mapsto f(\check{X}^0(t))\in \mathbb{R} \mbox{ is (right) continuous at } t=0,\quad  \check{\mathbb{P}}_{(s,x)}\mbox{-a.s. for all } (s,x)\in \check{E}_0,
\end{equation*}
hence by \Cref{rem:fine_continuity_0} we easily deduce that the norm topology on $\check{E}^0$ is a natural topology with respect to the restriction $\mathcal{U}^1|_{\check{E}^0}$.
We can now apply \eqref{eq:norm-continuous2} and \Cref{prop:continuous_modification} to deduce that there exists another subset $E
^\dagger\subset\check{E}_0$ set with $\check{E}_0\setminus E
^\dagger$ $\overline{\mu}$-inessential such that if we restrict the process $\check{X}_0$ to $E
^\dagger$, then it has $\check{\mathbb{P}}^{(s,x)}$-a.s. norm-continuous paths in $E
^\dagger$ for every $(s,x)\in E
^\dagger$.
We denote this last right process with by 
\begin{equation}\label{eq:X^dagger}
    X^\dagger=\left(\mathbb{P}_{(s,x)}^\dagger, (s,x)\in E^\dagger, X^\dagger(t)=(X^\dagger_1(t),X^\dagger_2(t),t\geq 0)\right), \quad \mbox{with resolvent } \mathcal{U}^\dagger.
\end{equation}
Note that $X^\dagger$ has $\mathbb{P}_{(s,x)}^\dagger$-a.s. norm-continuous paths for every $(s,x)\in E^\dagger$, and it is conservative since its resolvent $\mathcal{U}^\dagger$ is the restriction of $\tilde{\mathcal{U}}^1$ from $E^1$ to $E^\dagger$, hence
\begin{equation}\label{eq:X_dagger_conservative}
    \alpha U^\dagger_\alpha 1(x)=\alpha \tilde{U}^1_\alpha 1_{E^\dagger}(x)=\alpha \tilde{U}^1_\alpha 1(x)=1, \quad x\in E^\dagger.
\end{equation}

\medskip
\noindent{\bf Step OP1.11}: 
{\it The right process $X^\dagger$ satisfies
\begin{equation}\label{eq:uniform_dagger}
    \mathbb{P}_{(s,x)}^\dagger\left( X^\dagger_1(t)=t+s, t\geq 0\right) =1, \quad (s,x)\in E^\dagger.
\end{equation}
}
Indeed, we can argue as follows: First, note that by \eqref{eq:uniform_U} and the fact that $\tilde{\mathcal{U}}^0$ is the restriction of $\overline{\mathcal{U}}^0$ from $[0,\infty)\times \mathbb{R}^d$ to $E$, we have 
\begin{equation*}
    \tilde{U}^0_\alpha f|_E(s,x)=\int_0^\infty e^{-\alpha t} f(s+t)\;dt, \; (s,x)\in E, f\in b\mathcal{B}([0,\infty)).
\end{equation*}
Since $E$ is finely dense in $E_1$, it is straightforward that, for all $(s,x)\in E^1$,
    \begin{equation*}
        \tilde{U}^1_\alpha \tilde{f}(s,x)=\int_0^\infty e^{-\alpha t} f(s+t)\;dt, \quad f\in b\mathcal{B}([0,\infty)), 
    \end{equation*}
    where $\tilde{f}$ is any bounded and $\mathcal{B}_1$-measurable extension of $f|_{E}$ to $E_1$.
    Consequently, since $\mathcal{U}^\dagger$ is the restriction of $\mathcal{U}^1$ to $E^\dagger$, and $E^\dagger\in \mathcal{B}\left([0,\infty)\times \mathbb{R}^d\right)$, we get
    \begin{equation*}
        U^\dagger_\alpha f|_{E^\dagger}(s,x)=\int_0^\infty e^{-\alpha t} f(s+t)\;dt, \quad f\in b\mathcal{B}([0,\infty)), (s,x)\in E^\dagger.
    \end{equation*}
It is now straightforward to deduce \eqref{eq:uniform_dagger}.

\medskip
\noindent{\bf Step OP1.12}: 
{\it For every $N\geq 1$, $s\in (0,N)$ and $t\in [0,N-s)$ we have the identification
    \begin{equation}\label{eq:identification_hat}
        \mathbb{P}_{(s,x)}^\dagger\circ \left(X^\dagger(t)\right)^{-1}=\mathbb{P}^N_{(s,x)}\circ \left(X^N(t)\right)^{-1}, \quad \mu_s\mbox{-a.e. } (s,x)\in E^\dagger\cap \check{E}^N.
    \end{equation}
    }
Indeed, note first that by the previous steps, 
\begin{equation*}
    E^\dagger\subset \check{E}
    =\bigcup\limits_{N\geq 1}\check{E}^N\subset [0,\infty)\times \mathbb{R}^d, \quad E^\dagger\in \mathcal{A}(\mathcal{U}^1), \quad X^\dagger=\tilde{
    X}|_{E^\dagger}.
\end{equation*}
Hence, using also \eqref{eq:ident_tilde_N} we get the identification
\begin{equation*}
    \mathbb{P}_{(s,x)}^\dagger\circ \left(X^\dagger(t)\right)^{-1}
    =\tilde{\mathbb{P}}_{(s,x)}\circ \left(\tilde{X}(t)\right)^{-1}
    =\mathbb{P}^N_{(s,x)}\circ \left(X^N(t)\right)^{-1}, \quad \mu_s\mbox{-a.e. } (s,x)\in E^\dagger\cap \check{E}^N.
\end{equation*}
    
\medskip
\noindent{\bf Step OP1.13}: 
{\it We have $\overline{\mu}([0,\infty)\times \mathbb{R}^d \setminus E^\dagger)=0$. 
    Furthermore, for every $s>0$ let us set 
    \begin{equation}\label{eq:hat(s)}
        E^\dagger(s):=\left\{ x\in \mathbb{R}^d : (s,x)\in E^\dagger\right\}.
    \end{equation}
    Then
    \begin{equation}\label{eq:indentification_hat_mu}
         \mu_s(E^\dagger(s))=1 \mbox{ for all } s\in (0,\infty) \quad \mbox{and}\quad \mathbb{P}^\dagger_{\delta_s\otimes\mu_s}\circ \left(X^\dagger(t)\right)^{-1}=\delta_{t+s}\otimes\mu_{t+s} \quad \mbox{ for every } s> 0 \mbox{ and } t\geq 0.
    \end{equation}
    }
    Indeed, using {\bf Step OP1.10},
    \begin{equation*}
        \overline{\mu}([0,\infty)\times \mathbb{R}^d \setminus E^\dagger)
        = \overline{\mu}([0,\infty)\times \mathbb{R}^d \setminus\check E)
        =0.
    \end{equation*}
    Moreover, from \eqref{eq:identification} and  \eqref{eq:identification_hat} we obtain that
    \begin{equation*}
         \mathbb{P}^\dagger_{\delta_s\otimes\mu_s}\circ \left(X^\dagger(t)\right)^{-1}=\delta_{t+s}\otimes\mu_{t+s} \quad \mbox{ for every } s>0, t\geq 0.
    \end{equation*}

\medskip
\noindent{\bf Step OP1.14}: 
By construction, it is clear that $\overline{\mu}$ is sub-invariant for the transition function $\left(P^\dagger_t\right)_{t\geq 0}$ of the process $X^\dagger$, hence $\left(P^\dagger_t\right)_{t\geq 0}$ can be extended to a $C_0$-semigroup $\left(T_t^\dagger\right)_{t\geq 0}$ on every $L^p\left([0,\infty)\times \mathbb{R}^d;\overline{\mu}\right)$, $1\leq p<\infty$.
Moreover, the resolvent of $\left(T^\dagger_t\right)_{t\geq 0}$ is precisely $\overline{\mathcal{U}}$ constructed at {\bf Step OP1.2}.
Consequently,
\begin{equation*}
    P^\dagger_tf=f+\int_0^t P^\dagger_s \overline{{\sf L}}f \;ds \quad \overline{\mu}\mbox{-a.e. for all } t\geq 0 \mbox{ and } f\in C_c^\infty\left([0,\infty)\times \mathbb{R}^d\right). 
\end{equation*}

\medskip
\noindent{\bf Step OP1.15}: 
Let $s>0$ and $\mathcal{P}(\mathbb{R}^d)\ni\nu_s\leq c\mu_s$ for some constant $c\in (0,\infty)$, and set
    \begin{equation*}
        \nu_{t+s}:=\mathbb{P}^\dagger_{\delta_s\otimes\nu_s}\circ \left(X^\dagger_2(t)\right)^{-1}, \quad t\geq 0.
    \end{equation*}
    {\it We claim  that for all $N>s$, the family $\left(\nu_{t+s}\right)_{t\in [0,N-s)}$ is the unique weakly continuous solution to the LFPE \eqref{eq:LFPshort} on $[s,N)$ which consists of probability measures, belongs to $\mathcal{A}_{\lesssim u}^{s,N}$, and whose initial condition (at time $s$) is $\nu_s$.}

    Indeed, the fact that $\left(\nu_{t+s}\right)_{t\in [0,N-s)}$ is a weakly continuous curve in $\mathcal{P}(\mathbb{R}^d)$ is now clear since $\left(X^\dagger_2(t)\right)_{t\geq 0}$ is conservative and has a.s. continuous paths.
    
    The fact that $\left(\nu_{t+s}\right)_{t\in [0,N-s)}$ is a solution to the LFPE \eqref{eq:LFPshort} on $[s,N)$ follows e.g. similarly to the proof of \Cref{prop:densities}, (iii).
     Finally,
    \begin{equation*}
    \nu_{t+s}=\int_{\mathbb{R}^d}\mathbb{P}^\dagger_{s,x}\circ \left(X^\dagger_2(t)\right)^{-1} \frac{d\nu_s}{d\mu_s} \mu_s(dx)\leq c\mathbb{P}^\dagger_{\delta_s\otimes\nu_s}\circ \left(X^\dagger_2(t)\right)^{-1}= c\mu_{t+s}, \quad t\geq 0.
    \end{equation*}
Consequently, the uniqueness part also follows since condition $\mathbf{H_{\sf \lesssim u}}$ is in force.

\medskip
\noindent{\bf Step OP1.16: }{\bf Proof of \Cref{thm:main0},(ii).} 
Let $E^\dagger$ and $X^\dagger$ defined in \eqref{eq:X^dagger}.
Note that $X^\dagger$ is a conservative norm-continuous right process on $E$, so it satisfies (ii.1) from \Cref{thm:main0} due to \eqref{eq:indentification_hat_mu}.

\medskip
Concerning assertion (ii.2), note that by \Cref{thm 4.6}, (i), we have that the (trace of the) norm topology on $E^\dagger$ is a natural topology, hence for any $f\in C_b([0,\infty)\times\mathbb{R}^d)$ we have that $f|_{E^\dagger}$ is finely continuous, hence $P^\dagger_tf|_{E^\dagger}$ is finely continuous due to \Cref{rem:finely feller}.
Furthermore, recall that $\mathcal{U}^\dagger$ is a $\overline{\mu}$-version of $\overline{\mathcal{U}}$, hence by {\bf Step OP1.3}, (ii.2) is as well satisfied by $X^\dagger$.

\medskip
Assertion (ii.3) is also fulfilled by $X^\dagger$ due to the conclusion in {\bf Step OP1.10}.

\medskip
Assertion (ii.4) with $X^\dagger$ instead of $X$ follows by \eqref{eq:uniform_dagger} and \eqref{eq:indentification_hat_mu}.

\medskip
Regarding assertion (ii.5), let us notice first that by a straightforward density argument we have that $C_c^2([0,\infty)\times \mathbb{R}^d)\in {\sf D(\overline{L}_1)}$.
Let $f\in C_c^2([0,\infty)\times \mathbb{R}^d)$ and set
\begin{equation}\label{eq:Mf}
    M_f(t):=f(X^\dagger(t))-f(X^\dagger(0))-\int_0^{t} \overline{\sf L}f(X^\dagger(r))\; dr, \quad t\geq 0.
\end{equation}
Let us define $\overline{\mu}_1\in \mathcal{P}([0,\infty)\times \mathbb{R}^d)$ by
\begin{equation*}
    \overline{\mu}_1:=e^{-t}u(t,x)\;dt\otimes dx \quad \mbox{on } [0,\infty)\times \mathbb{R}^d.
\end{equation*}
Notice that by \Cref{thm:main0}, (i)-(ii.2), proved above, we have
\begin{equation}\label{eq:Ufinite}
    \overline{\mu}_1\left(U^\dagger_\alpha |\overline{\sf L}f|\right)<\infty \quad \mbox{for every } \alpha> 0,
\end{equation}
and also that $(P^\dagger_tf)_{t\geq 0}$ solves the Cauchy problem for $\left(\overline{\sf L}_1,\sf{D}(\overline{\sf{L}}_1)\right)$ on $L^p\left([0,\infty)\times \mathbb{R}^d ; \overline{\mu}\right)$.
Consequently, using also the continuity of $t\mapsto M(t)$ we easily get
\begin{equation*}
    \mathbb{E}^\dagger_{s,x}\left\{M_f(t)\right\}=0,\quad t\geq 0,\quad (s,x)\in E^\dagger, \; \mathbb{P}^\dagger_{\overline{\mu}_1}\mbox{-a.e.}, 
\end{equation*}
and thus,
\begin{equation*}
    \mathbb{E}^\dagger_{\tilde{\nu}}\left\{M_f(t)\right\}=0,\quad t\geq 0,\quad \mbox{for every } \tilde{\nu}\in \mathcal{P}(E^{\dagger}) \mbox{ for which there exists } c>0 \mbox{ such that } \tilde{\nu}\leq c\overline{\mu}_1.
\end{equation*}
Now, by \Cref{prop:martingale_absorbing}, there exists a subset $E_f\subset E^\dagger$ which is $\mu$-inessential and such that
\begin{equation}\label{eq:M_f}
    \left(M_f(t)\right)_{t\geq 0}\quad  \mbox{is an } \left(\mathcal{F}_t^{\dagger}\right)\mbox{-martingale under } \mathbb{P}^\dagger_{\delta_{(s,x)}} \mbox{ for every } (s,x)\in E_f.
\end{equation}

Furthermore, for a countable subset $\mathcal{T}\subset C_c^\infty((0,\infty)\times \mathbb{R}^d)$ which is dense with respect to the uniform convergence up to (and including) the second derivatives,
set
\begin{equation*}
    E:=\left(\bigcap\limits_{f\in \mathcal{T}} E_f\right)  \cap \left(\bigcap\limits_{n}\left[U^\dagger_1 |\overline{\sf L}\psi_n|<\infty\right]\right),
\end{equation*}
where $(\psi_n)_{n\geq 1}\subset C_c^\infty((0,\infty)\times\mathbb{R}^d)$ is such that 
\begin{equation*}
   \psi_n\geq 0 \quad \mbox{and} \quad \psi_n=1 \mbox{ on }[1/n,n]\times B(0,n), \quad n\geq 1.
\end{equation*}
Note that by \eqref{eq:Ufinite}, \Cref{rem:exc_absorbing}, and \Cref{rem:absorbing_intersection}, we have that $\left[U^\dagger_1 |\overline{\sf L}\psi_n|=\infty\right]$ is $\tilde{\mu}$-inessential for every $f\in \mathcal{T}$,
as well as $E^\dagger\setminus E$.

Now, we take $X$ to be the restriction of $X^\dagger$ from $E^\dagger$ to $E$ in the sense of \Cref{defi:restr-process}.
It is clear that (ii.1)-(ii.4) are still fulfilled, whilst (ii.5) follows easily from \eqref{eq:M_f} and the above mentioned density of $\mathcal{T}$ in $C_c^\infty((0,\infty)\times \mathbb{R}^d)$.

\medskip
\noindent{(ii.6)} Let $s>0$ and $\nu_s\in \mathcal{P}(\mathbb{R}^d)$ such that $\nu_s\leq c \mu_s$ for some finite $c>0$.
By \eqref{eq:bau} and \eqref{eq:indentification_hat_mu} we easily get that
\begin{equation*}
    \mathbb{E}^\dagger_{\delta_s\otimes\mu_s}\left\{|M(t)|\right\}<\infty, \quad t\geq 0.
\end{equation*}
Now, by assertion (ii.5) and the notation $M_f$ introduced in \eqref{eq:M_f}, we deduce that 
\begin{equation*}
    M_f \mbox{ is an } (\mathcal{F}(t))\mbox{-martingale under } \mathbb{P}_{\delta_s\otimes\nu_s}, \quad f\in C_c^\infty((0,\infty)\times \mathbb{R}^d).
\end{equation*}
Now, assertion (ii.6) follows from {\bf Step OP1.15}.

\subsubsection{Proof of Theorem 2.15}\label{ss: proof of main1}
Because the process ensured by \Cref{thm:main0} requires some modifications, let us use the notation 
\begin{equation*}
X^\dagger=\left(\Omega^\dagger,\mathcal{F}^\dagger,\mathcal{F}^\dagger(t),X^\dagger(t),\mathbb{P}^\dagger_{s,x}, (s,x)\in E^\dagger, t\geq 0\right)    
\end{equation*}
for the process constructed in \Cref{thm:main0}, with transition function denoted by $\left(P^\dagger_t\right)_{t\geq 0}$ and resolvent $\mathcal{U}^\dagger=\left(U^\dagger_\alpha\right)_{\alpha>0}$.
Note that we have also used the notation $X^\dagger$ to define the process from \eqref{eq:X^dagger} in {\bf Step OP1.10}; it is useful to note that the process we now denote by $X^\dagger$, namely the one furnished by \Cref{thm:main0}, is in fact the restriction (in the sense of \Cref{defi:restr-process}) of $X^\dagger$ from \eqref{eq:X^dagger}, so it inherits all the properties deduced for $X^\dagger$ in all steps that follows after (including) {\bf Step OP1.10}.

Proving that in the conclusions (ii.1), (ii.4) and (ii.7) from \Cref{thm:main0} we can allow $s=0$, if condition $\mathbf{H_0^{TV}}$ is in force, is highly nontrivial. 
We show this systematically, in what follows.

\medskip
\noindent{\bf Step OP2.1}:  
Firs of all, since the uniqueness assumption $\mathbf{H_{\sf \lesssim u}}$ holds, one can apply \cite[Lemma 2.12]{Tr16} to deduce that the family $\left(\eta^N\right)_{N>0}$ given by \Cref{rem:Trevisan_superposition} is a consistent family (see \cite{Pa67} for the notion), hence by the general theory (see again \cite[Theorem 4.1]{Pa67}) admits a unique projective limit $\eta \in \mathcal{P}\left(C([0,\infty);\mathbb{R}^d)\right)$, with one dimensional marginals $(\mu_t)_{t\in [0,\infty)}$.
Let 
\begin{equation*}
(\eta^x)_{x\in \mathbb{R}^d} \subset \mathcal{P}\left(C([0,\infty);\mathbb{R}^d)\right)    
\end{equation*}
be the Borel family of probability measures obtained by disintegrating the law $\eta \in \mathcal{P}\left(C([0,\infty);\mathbb{R}^d)\right)$  with respect to $\mu_0$, in the same spirit of \Cref{rem:Trevisan_superposition}. 
Also, let 
\begin{equation*}
    (\eta^x_t)_{t\in [0,\infty)} \mbox{ be the one dimensional marginals of } \eta^x \mbox{ for every } x\in \mathbb{R}^d.
\end{equation*}

\medskip
\noindent{\bf Step OP2.2}: {\it We claim that there exists $M\in \mathcal{B}(\mathbb{R}^d)$ such that $\mu_0(M)=1$ and for all $x\in M$ and $f\in b\mathcal{B}\left([0,\infty)\times \mathbb{R}^d\right)$
\begin{equation}\label{eq:eta_P}
    \eta_s^x\left(P^\dagger_{t}f|_{E^\dagger}(s,\cdot)\right)=\eta_{s+t}^x\left(f(s+t,\cdot)\right),\quad \; ds\otimes dt\mbox{-a.e. } (s,t)\in (0,\infty)^2.
\end{equation}
}
To prove the claim, note first that by a monotone class argument, it is sufficient to fix $f\in b\mathcal{B}\left([0,\infty)\times \mathbb{R}^d\right)$ and prove \eqref{eq:eta_P} with $M$ possibly depending on $f$.
Further, let $\nu_0 \in \mathcal{P}(\mathbb{R}^d)$ such that $\nu_0\leq c  \mu_0$ for some finite constant $c$, and define $\eta^{\nu} \in\mathcal{P}\left(C([0,\infty];\mathbb{R}^d)\right)$ by
\begin{equation}\label{eq:eta_nu}
    \eta^{\nu_0}(A):=\int_A \eta^x(A) \nu_0(dx), \quad A\in \mathcal{B}\left(C([0,\infty];\mathbb{R}^d)\right).
\end{equation}
Consequently, for every $s>0$ we have $\eta^{\nu_0}_s\leq c\mu_s$, and hence by \Cref{thm:main0}, (ii.7) we have that
\begin{equation*}
    \nu_{t+s}:=\mathbb{P}^\dagger_{\delta_s\otimes\eta_s^{\nu_0}}\circ \left(X^\dagger_2(t)\right)^{-1}, \quad t\geq 0,
\end{equation*}
is the unique weakly continuous solution to the LFPE \eqref{eq:LFPshort} on $[s,N)$ which consists of probability measures, belongs to $\mathcal{A}_{\lesssim u}^{s,N}$, and whose initial condition (at time $s$) is $\eta_s^{\nu_0}$, for all $N>s$.
But  $\left(\eta_{t+s}^{\nu_0}\right)_{t\geq 0}$ is also such a solution (see \Cref{rem:Trevisan_superposition}, in fact \cite[Proposition 2.7]{Tr16}), so we must have
\begin{equation}\label{eq:nu=eta}
    \nu_{s+t}=\eta_{s+t}^{\nu_0}, \quad t\geq 0,
\end{equation}
hence, using also \Cref{thm:main0}, (ii.4),
\begin{equation*}
\eta_s^{\nu_0}\left(P^\dagger_{t}f|_{E^\dagger}(s,\cdot)\right)= \mathbb{E}^{\dagger,\delta_s\otimes \eta^{\nu_0}_s}\left\{f|_{E^\dagger}(X^\dagger(t))\right\}=\eta_{s+t}^{\nu_0}\left(f(s+t,\cdot)\right), \quad s>0,t\geq 0.
\end{equation*}
Since $\nu_0$ is arbitrarily chosen such that $\nu_0\leq c  \mu_0$, the claim follows.

\begin{rem}
    It is worth to point out that relation \eqref{eq:eta_P} is not obtained for every $s,t>0$ because we do not know whether the left hand side of the equality from \eqref{eq:eta_P} is continuous in both arguments $(s,t)$, the challenging continuity being the one in the $s$ variable.
\end{rem}

\medskip
\noindent{\bf Step OP2.3}: {\it We claim that there exists $M'\in\mathcal{B}(\mathbb{R}^d)$ such that
\begin{equation*}
    \mu_0(M')=1, \quad \eta^x_0=\delta_x, \quad \mbox{and} \quad \eta_t^x(E^\dagger(t))=1, \quad x\in M',\; dt\mbox{-a.e. } t > 0,
\end{equation*}
where recall that $E^\dagger(t)$ is given by \eqref{eq:hat(s)}.
}

Indeed, using \Cref{thm:main0}, (ii.4), we get
\begin{equation*}
\mu_0(\cdot)=\int_{\mathbb{R}^d} \eta^x_0(\cdot) \mu_0(dx), \quad \mbox{and} \quad \int_{\mathbb{R}^d}\eta^x_t(E^\dagger(t))\mu_0(dx)=\eta_t(E^\dagger(t))=\mu_t(E^\dagger(t))=1, \quad t>0, 
\end{equation*}
from which the claim can be easily deduced.

\medskip
\noindent{\bf Step Q2.4}: {\it
Let us set
\begin{equation*}
    M_0:=M\cap M', \quad E^{\dagger,0}:=\left(\{0\}\times M_0\right) \cup \left(E^\dagger\setminus \{0\}\times E^{\dagger}(0)\right),
\end{equation*}
and
\begin{equation}\label{eq:Pdagger0}
    P_t^{\dagger,0}f(s,x):=
    \begin{cases}
        \eta^x_t\left(f(t,\cdot)\right), \quad &\mbox{ if } s=0\\
        P_t^{\dagger}\left(f|_{E^\dagger}\right)(s,x), \quad &\mbox{ if } s>0
    \end{cases}\;,
    \quad f\in b\mathcal{B}(E^\dagger_0),\; t\geq 0,\; (s,x)\in E^{\dagger,0}.
\end{equation}
We claim that
\begin{enumerate}[(1)]
    \item $P^{\dagger, 0}_0={\sf Id}$ and $P^{\dagger, 0}_tP^{\dagger, 0}_{t'}=P^{\dagger, 0}_{t+t'}$ as kernels on $E^{\dagger,0}$, $dt\otimes dt'$-a.e. $(t,t')\in (0,\infty)^2$.
    \item $\lim\limits_{t\to 0}P^{\dagger, 0}_tf=f \quad$ pointwise on $E^\dagger_0$ for all $f\in C_b(E^{\dagger,0})$. 
    \item If we denote by $\mathcal{U}^{\dagger,0}:=\left(U^{\dagger,0}_\alpha=\int_0^\infty e^{-\alpha t} P^{\dagger,0}_t\;dt\right)_{\alpha>0}$ the resolvent of $\left(P^{\dagger,0}_t\right)_{t\geq 0}$, then $\mathcal{U}^{\dagger,0}$ is a resolvent family of kernels on $E^{\dagger,0}$ such that 
    \begin{equation}\label{eq:alphaU_alpha}
        \alpha U^{\dagger,0}_\alpha 1=1, \;\alpha>0, \quad \mbox{and} \quad \lim\limits_{\alpha\to \infty}\alpha U^{\dagger,0}_\alpha f=f, \;  f\in C_b(E^{\dagger,0}).  
    \end{equation}
\end{enumerate}
}
To prove the claim, first of all notice that
\begin{equation*}
    \mbox{the mapping } [0,\infty)\times E^{\dagger,0}\ni(t,s,x)\mapsto P^{\dagger,0}_tf(s,x)\in \mathbb{R} \mbox{ is measurable for every } f\in b\mathcal{B}(E^{\dagger,0}).
\end{equation*}
Let us prove (1), by noticing first that $P^{\dagger, 0}_0={\sf Id}$ follows from the fact that $\eta_0^x=\delta_x, x\in M_0$ and $\delta_{(s,x)}\circ P^{\dagger, 0}_0=\delta_{(s,x)}, (s,x)\in E^{\dagger, 0}$.
To show the semigroup property for a.e. $t,t'>0$, note that taking first $s=0$ we have by \eqref{eq:eta_P} that
\begin{align*}
    P^{\dagger, 0}_t P^{\dagger, 0}_{t'}f(0,x)
    &=\eta_t^x\left(P^{\dagger, 0}_{t'}f(t,\cdot)\right)
    =\eta_t^x\left(P^{\dagger}_{t'}\left(f|_{E^{\dagger}}\right)(t,\cdot)\right)
    =\eta^x_{t+t'}\left(f(t+t',\cdot)\right)\\
    &=P^{\dagger, 0}_{t+t'}f(0,x), \quad dt\otimes dt'\mbox{-a.e.}
\end{align*}
The case $s>0$ is immediate since $P^{\dagger}_tP^{\dagger}_{t'}=P^{\dagger}_{t+t'}$ on $E^{\dagger}$.

To prove (2), it is sufficient to show the desired convergence for every $f$ which is bounded and Lipschitz on $E^{\dagger,0}$.
With this reduction, if $(s,x)\in E^\dagger$ then by using the fact that $\left(P^\dagger_t\right)_{t\geq 0}$ is the transition function of a right process with a.s. continuous paths in $E^\dagger$, we get
\begin{equation*}
    \lim\limits_{t\to 0}P^{\dagger,0}_tf(s,x)=\lim\limits_{t\to 0}P^\dagger_t\left(f|_{E^\dagger}\right)(s,x)=f(s,x). 
\end{equation*}
If $s=0$ and $x\in M_0$, then
\begin{align*}
    \lim\limits_{t\to 0}\left|P^{\dagger,0}_tf(0,x)-f(0,x)\right|
    &=\lim\limits_{t\to 0}\left|\eta^x_t\left(f(t,\cdot)\right)-f(0,x)\right|\\
    &=\limsup\limits_{t\to 0}\left[\eta^x_t\left(\min\left(\|\cdot-x\|,2\|f\|_{\infty}\right)\right)+|f|_{\sf Lip}t\right]=0,
\end{align*}
where $|f|_{\sf Lip}$ denotes the Lipschitz norm of $f$.

The proof of (3) is now straightforward, using (1) and (2) from above, as well as {\bf Step OP2.3} and the fact that $P^\dagger_t 1=1$ on $E^\dagger$.

\medskip
\noindent{\bf Step OP2.5}: {\it We claim that there exists $M_{00}\in \mathcal{B}(\mathbb{R}^d)$, $M_{00}\subset M_0$ such that
$\mu_0(M_{00})=1$, and if we set
\begin{equation*}
E^{\dagger,00}:=\left(\{0\}\times M_{00}\right)\cup E^\dagger, 
\end{equation*}
then 
\begin{equation*}
    U^{\dagger,0}_1\left((s,x),E^{\dagger,0}\setminus E^{\dagger,00}\right)=0, \quad (s,x)\in E^{\dagger,00},
\end{equation*}
and the restricted resolvent $\mathcal{U}^{\dagger,00}:=\mathcal{U}^{\dagger,0}|_{E^{\dagger,00}}$ is a Markovian resolvent of kernels with no branch points, that is for all $(s,x)\in E^{\dagger,00}, f,g\in b\mathcal{B}(E^{\dagger,00})$ we have
\begin{equation}\label{eq:branch}
    \lim\limits_{\alpha\to \infty}\alpha U^{\dagger,00}_{\alpha+1}\left(U^{\dagger,00}_1f\wedge U^{\dagger,00}_1g\right)(s,x)=U^{\dagger,00}_1f(s,x)\wedge U^{\dagger,00}_1g(s,x).
\end{equation}
}

The proof of this step is involved, so let us split it in several sub-steps:
\begin{enumerate}[(1)]
    \item Let us set
    \begin{equation*}
        E^{\dagger,00}:=\left\{(s,x)\in E^{\dagger,0}:\lim\limits_{\alpha\to \infty}\alpha U^{\dagger,0}_{\alpha+1}\left(U^{\dagger,0}_1f\wedge U^{\dagger,0}_1g\right)(s,x)= U^{\dagger,0}_1f(s,x)\wedge U^{\dagger,0}_1g(s,x), f,g\in b\mathcal{B}(E^{\dagger,0})\right\}
    \end{equation*}
    Then by \cite[Theorem 1.2.11 and assertion 1 of Remark at page 23]{BeBo04a}
    we have that $E^{\dagger,00}\in \mathcal{B}(E^{\dagger,0})$ and $U_1^{\dagger,0}(\cdot,E^{\dagger,0}\setminus E^{\dagger,00})=0$ on $E^{\dagger,0}$. 
    Moreover, notice that if we set $E':=E^\dagger\setminus \{0\}\times E^\dagger(0)$ then $E'\in \mathcal{A}(\mathcal{U}^\dagger)$ (use e.g. \Cref{thm:main0}, (ii.4)), $\mathcal{U}^{\dagger,0}|_{E'}=\mathcal{U}^{\dagger}|_{E'}$, and $\mathcal{U}^{\dagger}$ has no branch points since it is the resolvent of a right process on $E^\dagger$; see \Cref{prop 2.5} and the remark after it.
    Thus,
    \begin{align*}
        E^{\dagger,0}\setminus E^{\dagger,00}\subset \{0\}\times M_0\subset \{0\}\times E^{\dagger,0}(0),
    \end{align*}
    and it remains to show that 
    \begin{equation*}
        \mu_0\left(E^{\dagger,00}(0)\right)=1.
    \end{equation*}
    \item The next step is to notice that by the monotone class argument \cite[Lemma 1.2.10]{BeBo04a}, it is sufficient to fix $f,g\in b{\sf Lip}([0,\infty)\times \mathbb{R}^d)$ with compact support, where $b{\sf Lip}([0,\infty)\times \mathbb{R}^d)$ denotes the space of bounded and Lipschitz functions on $[0,\infty)\times \mathbb{R}^d$, and show that
    \begin{equation*}
    \mu_0\left(\left\{x\in E^{\dagger,0}(0): \lim\limits_{\alpha\to \infty}\alpha U^{\dagger,0}_{\alpha+1}\left(U^{\dagger,0}_1f\wedge U^{\dagger,0}_1g\right)(0,x)=U^{\dagger,0}_1f(0,x)\wedge U^{\dagger,0}_1g(0,x)\right\}\right)=1.
    \end{equation*}
    \item In light of (2) from above, let us fix $f,g\in b{\sf Lip}([0,\infty)\times \mathbb{R}^d)$ and define for $x\in M_0$ and $\alpha>0$
    \begin{align*}
        &\psi_\alpha(x)
        :=\alpha U^{\dagger,0}_{\alpha+1}\left(U^{\dagger,0}_1f\wedge U^{\dagger,0}_1g\right)(0,x)
        =\int_0^\infty \alpha e^{-(\alpha+1) t} \eta_t^x\left(\left(U^{\dagger}_1f\wedge U^{\dagger}_1g\right)(t,\cdot)\right) \;dt,\\
        &\psi(x):=U^{\dagger,0}_1f(0,x)\wedge U^{\dagger,0}_1g(0,x)\\
        \intertext{Note that $U^{\dagger,0}_1f\wedge U^{\dagger,0}_1g$ is $\mathcal{U}^{\dagger,0}_1$-supermedian, hence $\psi_\alpha\leq \psi_{\beta}\leq \psi$ for every $0<\alpha\leq \beta$, and we can define}
        &\psi_\infty:=\lim\limits_{\alpha \to \infty} \psi_\alpha=\sup\limits_{\alpha>0} \psi_\alpha\leq \psi, \quad \mbox{pointwise on } M_0.
    \end{align*}
    Consequently, to achieve {\bf Step OP2.5} it is sufficient to show that
    \begin{equation}\label{eq:h}
        \left(\mu_0\left(\psi_\infty\right)=\right)\lim\limits_{\alpha\to \infty} \mu_0\left(\psi_\alpha\right)=\mu_0\left(\psi\right).
    \end{equation}
    \item Proceeding to prove \eqref{eq:h}, let us notice first that using e.g. \Cref{thm:main0}, (ii.4), and the fact that $P^\dagger_{t}\left(U^{\dagger}_1f\wedge U^{\dagger}_1g\right)\leq e^{t}\left(U^{\dagger}_1f\wedge U^{\dagger}_1g\right), t\geq 0$, we get
    \begin{align*}
        \mu_0\left(\psi_\alpha\right)
        &=\int_0^\infty \alpha e^{-(\alpha+1) t}\mu_t\left(\left(U^{\dagger}_1f\wedge U^{\dagger}_1g\right)(t,\cdot)\right) \;dt\\
        e^{-t}\mu_t\left(\left(U^{\dagger}_1f\wedge U^{\dagger}_1g\right)(t,\cdot)\right)
        &=e^{-t}\mu_s\left(\left(P^\dagger_{t-s}\left(U^{\dagger}_1f\wedge U^{\dagger}_1g\right)\right)(s,\cdot)\right)\\ 
        &\leq e^{-s}\mu_s\left(\left(U^{\dagger}_1f\wedge U^{\dagger}_1g\right)(s,\cdot)\right) \;dt, \quad s\leq t,
    \end{align*}
    we obtain that $t\mapsto e^{-t}e^{-t}\mu_t\left(\left(U^{\dagger}_1f\wedge U^{\dagger}_1g\right)(t,\cdot)\right)$ is non-decreasing.
    This easily implies that \eqref{eq:h} is satisfied if we show that
    \begin{equation}\label{eq:t_n}
        \exists (t_n)_n\searrow 0: \quad\lim_n\mu_{t_n}\left(\left(U^{\dagger}_1f\wedge U^{\dagger}_1g\right)(t_n,\cdot)\right)=\mu_{0}\left(\psi\right).
    \end{equation}
    \item We focus now on proving \eqref{eq:t_n}. 
    To this end, we write for $t>0$
    \begin{align*}
    &\mu_t\left(\left(U^{\dagger}_1f\wedge U^{\dagger}_1g\right)(t,\cdot)\right) = \mu_t\left(1_{E^{\dagger,0}(0)}\psi\right) + e_t,\\
    &|e_t|
    \leq \mu_t\left(\left|1_{E^{\dagger,0}(t)}(\cdot)U^{\dagger,0}_1f(t,\cdot)-1_{E^{\dagger,0}(0)}(\cdot)U^{\dagger,0}_1f(0,\cdot)\right|\right.\\
    &\quad\quad \quad\quad \left.+\left|1_{E^{\dagger,0}(t)}(\cdot)U^{\dagger,0}_1g(t,\cdot)-1_{E^{\dagger,0}(0)}(\cdot)U^{\dagger,0}_1g(0,\cdot)\right|\right).
    \end{align*}
    Hence, using $\mathbf{H^{\sf TV}_0}$ we have
    \begin{align*}
        \limsup\limits_{t\to 0} \left|\mu_t\left(1_{E^{\dagger,0}(0)}\psi\right)-\mu_0\left(\psi\right)\right|
        \leq
        \|\psi\|_\infty\limsup\limits_{t\to 0}\int_{\mathbb{R}^d}\left|u(t,x)-u(0,x)\right|dx=0,
    \end{align*}
    hence it remains to prove that $\exists (t_n)_n\searrow0: \quad \lim_n e_{t_n} =0$, which boils down to show that for every $f\in b{\sf Lip}([0,\infty)\times \mathbb{R}^d)$ with compact support we have
    \begin{equation*}
       \exists (t_n)_n\searrow 0: \quad \lim\limits_n\mu_{t_n}\left(\left|1_{E^{\dagger,0}(t_n)}(\cdot)U^{\dagger,0}_1f(t_n,\cdot)-1_{E^{\dagger,0}(0)}(\cdot)U^{\dagger,0}_1f(0,\cdot)\right|\right)=0.
    \end{equation*}
    \begin{rem}
    Here and in general, if $v:F\rightarrow \mathbb{R}$ and $w:A\rightarrow \mathbb{R}$ with $A\subset F$, then by $vw$ we understand wither $v|_Aw$ or $v(1_A w)$, where recall that $1_Aw$ stands for the extension of $w$ from $A$ to $F$ by setting $w=0$ on $F\setminus A$.
    \end{rem}
    Furthermore, by $\mathbf{H^{\sf TV}_0}$, we deduce that {\bf Step OP2.5} is achieved if we show that for every $f\in b{\sf Lip}([0,\infty)\times \mathbb{R}^d)$ with compact support,
    \begin{equation}\label{eq:t_n_n}
        \exists (t_n)_n\searrow 0: \; \lim\limits_n\int_{\mathbb{R}^d}\left|u(t_n,x)U^{\dagger,0}_1f(t_n,x)-u(0,x)U^{\dagger,0}_1f(0,x)\right| \;dx =0.
    \end{equation}
    \item\label{it:6} To prove \eqref{eq:t_n_n} we need first a compactness result.
    More precisely, {\it fixing $f\in b{\sf Lip}([0,\infty)\times \mathbb{R}^d)$ with support (for simplicity) in $[0,1)\times \mathbb{R}^d$ and $h\in C_c^\infty(\mathbb{R}^d)$, we claim that by setting
    \begin{equation*}
        w(t,x):=u(t,x)U^{\dagger}_1f(t,x)h(x), \quad x\in \mathbb{R}^d,t\in (0,1],
    \end{equation*}
    then there exists a $dt\otimes dx$-version $\tilde{w}$ such that
    \begin{equation*}
        \tilde{w}\in C\left([0,1];L^2(\mathbb{R}^d)\right).
    \end{equation*}
    }
    In fact, the claim follows by Lions-Magenes interpolation lemma \cite[Chapter 3, Theorem 3.1]{LiMa72}, once we show that
    \begin{equation}\label{eq:Lions-Magenes}
        w\in L^2\left([0,1];H^1(\mathbb{R}^d)\right) \quad \mbox{ and } \quad \frac{dw}{dt}\in L^2\left([0,1];H^{-1}(\mathbb{R}^d)\right).
    \end{equation}
    To this end, following \cite{St99a} let us consider $\left(\mathcal{A}_0,{\sf D}\left(\mathcal{A}_0\right)\right)$ the closure in $L^2\left([0,1]\times \mathbb{R}^d; \overline{\mu}_1\right)$ of the symmetric bilinear form
    \begin{equation*}
        \mathcal{A}_0(\varphi,\psi):= \int_{[0,1]\times \mathbb{R}^d} \langle A(t,x)\nabla \varphi(t,x),\nabla \psi(t,x) \rangle \; \overline{\mu}_1(dt,dx), \quad \varphi,\psi\in C_0^\infty\left((0,1)\times \mathbb{R}^d\right).
    \end{equation*}
    Also, note that by \Cref{thm:main0}, (i), on $L^p([0,1]\times \mathbb{R}^d;\overline{\mu}_1)$ we can identify $U^{\dagger}_1$ with $\overline{U}_1^1$, where $\overline{U}_1^1$ is given in \Cref{thm:Stannat}.
    Consequently, by \cite[Theorem 1.7]{St99a}, b) and c), we have that
    \begin{equation}\label{eq:stannat_domains}
       U^{\dagger}_1f \mbox{ and } q(\cdot,\cdot):=h(\cdot)U^{\dagger}_1f(\cdot,\cdot) \mbox{ belong to } b{\sf{D}(\overline{\sf{L}}_1^1)}\subset {\sf D}\left(\mathcal{A}_0\right),
    \end{equation}
    where ${\sf{D}(\overline{\sf{L}}_1^1)}$ is given in \Cref{thm:Stannat}, 
    and also that
    \begin{equation}\label{eq:A_0}
        -\int_{[0,1]\times \mathbb{R}^d} \frac{d\varphi}{dt}q\;d\overline{\mu}_1=\mathcal{A}_0(q,\varphi)+\int_{[0,1]\times \mathbb{R}^d}\langle b-b^0, \nabla \varphi \rangle q \; d\overline{\mu}_1 + \int_{[0,1]\times \mathbb{R}^d} \varphi \overline{\sf{L}}_1^1 q\; d\overline{\mu}_1,
    \end{equation}
    where 
    \begin{equation*}
        b_i^0:=\sum\limits_{i=1}^d \left(\partial_{x_j} a_{ij} + 2a_{ij}\frac{\partial_{x_j}\sqrt{u}}{\sqrt{u}}\right), \quad 1\leq i\leq d.
    \end{equation*}

    Now, let us show that 
    $w\in L^2\left([0,1];H^1(\mathbb{R}^d)\right)$.
    To this end, note that it is not a priori clear that we can consider the classical weak gradient in the $x$-variable, $\nabla w$.
    Thus, to make the computations rigorous, let us consider
    \begin{equation*}
        \left(v_k\right)_k \subset C_0^\infty((0,1)\times \mathbb{R}^d)\subset {\sf D}\left(\mathcal{A}_0\right) \mbox{ uniformly bounded},
    \end{equation*}
    and such that
    \begin{equation*}
        \lim_k\mathcal{A}_0\left(v_k-U^{\dagger}_1f,v_k-U^{\dagger}_1f\right)=0,\quad
        q_k:=hv_k, \quad 
        w_k:=u q_k, \quad k\geq 1.
    \end{equation*}
    For later use, note that it is straightforward to see that $\lim_k\mathcal{A}_0\left(q_k-q,q_k-q\right)=0$.
    Furthermore, it is easy to see that for our current goal it is sufficient to show that
    \begin{equation}\label{eq:w_k_bounded}
        (w_k)_{k} \mbox{ is bounded in } L^2\left([0,1];H^1(\mathbb{R}^d)\right).
    \end{equation}
    By the product rule,
    \begin{equation*}
        \nabla w_k
        =q_k\nabla u+u\nabla q_k
        =2q_k\sqrt{u}\nabla \sqrt{u}+u\nabla q_k, 
    \end{equation*}
    so that by $\mathbf{H^{\sqrt{u}}}$ and using that $q_k$ is uniformly bounded with respect to $k,t,x$, and has compact support uniformly with respect to $k$, the first summand $2q_k\sqrt{u}\nabla \sqrt{u}$ can be easily bounded in $L^2([0,1];L^2(\mathbb{R}^d))$. Hence, it remains to show that
    \begin{equation*}
        \left(u\nabla q_k\right)_k \mbox{ is bounded in } L^2\left([0,1]\times \mathbb{R}^d; dtdx\right).
    \end{equation*}
    But, according to \eqref{eq:stannat_domains},
    \begin{equation*}
        \int_{[0,1]\times \mathbb{R}^d} \left|u\nabla q_k\right|^2 \;dtdx=\int_{[0,1]\times \mathbb{R}^d} u\left|\nabla q_k\right|^2 d\overline{\mu}_1 \leq c  \mathcal{A}_0\left(q_k,q_k\right),
    \end{equation*}
    where $c$ is a constant depending on the ellipticity of $A$ and the $L^\infty$-norm of $u$ on $[0,1]\times {\sf supp}(h)$, where ${\sf supp}(h)$ is the support of $h$ in $\mathbb{R}^d$.
    Now, \eqref{eq:w_k_bounded} follows since it is easy to see that $\sup\limits_k \mathcal{A}_0\left(q_k,q_k\right)<\infty$ due to the fact that  $\sup\limits_k \mathcal{A}_0\left(v_k,v_k\right)<\infty$.
    
    \noindent{Finally}, let us show that 
    $\frac{dw}{dt}\in L^2\left([0,1];H^{-1}(\mathbb{R}^d)\right)$, namely that there exists
    $\dot{w}\in L^2\left([0,1];H^{-1}(\mathbb{R}^d)\right)$
and a constant $C$ such that
    \begin{equation*}
        \left|\int_{[0,1]\times \mathbb{R}^d}\frac{d\varphi}{dt} w\; dtdx\right| \leq C \|\varphi\|_{L^2((0,1)\times H^1(\mathbb{R}^d))}, \quad \mbox{for all } \varphi \in C_0^\infty\left((0,1)\times \mathbb{R}^d\right).
    \end{equation*}
    To do so, note that by \eqref{eq:A_0}
    \begin{align*}
        \left|\int_{[0,1]\times \mathbb{R}^d}\frac{d\varphi}{dt} w\; dtdx \right|
        &=\left|\int_{[0,1]\times \mathbb{R}^d}\frac{d\varphi}{dt} qu\; dtdx \right|\\ 
        &\leq \left|\mathcal{A}_0(q,\varphi)\right|+\left|\int_{[0,1]\times \mathbb{R}^d}\langle b-b^0, \nabla \varphi \rangle q\; d\overline{\mu}_1\right| + \left|\int_{[0,1]\times \mathbb{R}^d} \varphi \overline{\sf{L}}_1^1 q\; d\overline{\mu}_1\right|
    \end{align*}
    We treat now each of the three terms in the last sum:
    \begin{align*}
        \left|\mathcal{A}_0(q,\varphi)\right|
        &=\lim_k\left|\mathcal{A}_0(q_k,\varphi)\right|
        \leq\sup_k\int_{[0,1]\times {\sf supp}(h)} \|A^{1/2}\nabla q_k\|\|A^{1/2}\varphi\|\; d\overline{\mu}_1\\
        &\leq\sup_k\mathcal{A}_0(q_k,q_k)^{1/2}
        \left(\int_{[0,1]\times {\sf supp}(h)} \|A^{1/2}\nabla\varphi\|^2\; d\overline{\mu}_1\right)^{1/2}\\ &\leq C\sup_k\mathcal{A}_0(q_k,q_k)^{1/2}\|\varphi\|_{L^2([0,1]; H^1(\mathbb{R}^d))},
    \end{align*}
    where the constant $C$ is independent of $k$ and $\varphi$, and its existence is ensured by $\mathbf{H_a}$ and $\mathbf{H^{\sqrt{u}}}$.
    also, note that $\sup_k\mathcal{A}_0(q_k,q_k)<\infty$.

    Let us now treat the second term, for which we have
    \begin{align*}
        \left|\int_{[0,1]\times \mathbb{R}^d}\langle b-b^0, \nabla \varphi \rangle q \; d\overline{\mu}_1\right|
        &\leq \|\varphi\|_{L^2((0,1)\times H^1(\mathbb{R}^d))}\left(\int_{[0,1]\times \mathbb{R}^d} \|b-b^0\|^2 q^2u \; d\overline{\mu}_1\right)^{1/2}\\
        &\leq C\|\varphi\|_{L^2([0,1]; H^1(\mathbb{R}^d))},
    \end{align*}
    where the constant $C$ is independent of $\varphi$, and its existence is ensured by $\mathbf{H_{a,b}^{\sqrt{u}}}$. 
    
    We now treat the last term, as follows:
    \begin{equation*}
        \left|\int_{[0,1]\times \mathbb{R}^d} \varphi \overline{\sf{L}}_1^1 q\; d\overline{\mu}_1\right|
        \leq \|\varphi\|_{L^2((0,1)\times \mathbb{R}^d)}\|u\overline{\sf{L}}_1^1 q\|_{L^2((0,1)\times \mathbb{R}^d;\overline{\mu}_1)},
    \end{equation*}
    whilst $\|u\overline{\sf{L}}_1^1 q\|_{L^2((0,1)\times \mathbb{R}^d;\overline{\mu}_1)}$ is finite since $\overline{\sf{L}}_1^1 q$ is bounded with compact support by \cite[Theorem 1.7, (c)]{St99a}, whilst $u$ is bounded on compact sets by $\mathbf{H^{\sqrt{u}}}$.

    Thus, the claim in item (6) is proved.
    \item\label{it:7} {\it We now claim that there exists $F:[0,1]\times \mathbb{R}^d\rightarrow\mathbb{R}$ measurable, which is a $dt\otimes dx$-version of $u(\cdot,\cdot)U^{\dagger}_1f(\cdot,\cdot)$ on $[0,1]\times \mathbb{R}^d$ such that
    \begin{equation*}
        hF\in C\left([0,1]; L^2(\mathbb{R}^d)\right), \quad \mbox{ for every } h\in C_c^\infty(\mathbb{R}^d).
    \end{equation*}
    }
    To prove this, for each $n\geq 0$ let $h_n\in C_c^\infty(\mathbb{R}^d)$ such that $h_n=1$ on $B(0,n)$.
    Then, by \eqref{it:6}, let $\tilde{w}_n\in C([0,1];L^2(\mathbb{R}^d))$ be a $dt\otimes dx$-version of $h_n(\cdot)u(\cdot,\cdot)U^{\dagger}_1f(\cdot,\cdot)$, and for every $t\in [0,1]$
    define
    \begin{equation*}
        F(t,\cdot):=\sum_{n\geq 0}\tilde{w}_{n+1}(t,\cdot)1_{B(0,n+1)\setminus B(0,n)}(\cdot). 
    \end{equation*}
    It is now easy to see that $F$ satisfies the requirements from \eqref{it:7}.
    \item Let $F$ be as in item \eqref{it:7}. 
    {\it We claim that 
    \begin{equation*}
        \forall \;(t_n)_n\searrow 0 \mbox{ there exists a subsequence } (t_{n_k})_k \mbox{ such that} \quad \lim_k F(t_{n_k},x)=F(0,x)\quad dx\mbox{-a.e.}
    \end{equation*} 
    }
    To show this, let $V(x):=e^{-\|x\|}, x\in \mathbb{R}^d$, and $(t_k)_k\subset (0,1]$ such that $\lim_k t_k=0$.
    Then
    \begin{align*}
        \|V(\cdot)F(t_k,\cdot)-V(\cdot)F(0,\cdot)\|_{L^1(\mathbb{R}^d)}
        &\leq \sum_{n\geq 0}\int_{B(0,n+1)\setminus B(0,n)}V(x)|\tilde{w}_{n+1}(t_k,x)-\tilde{w}_{n+1}(0,x)|\;dx\\
        &\leq \sum_{n\geq 0}e^{-n}\int_{B(0,n+1)\setminus B(0,n)}|\tilde{w}_{n+1}(t_k,x)-\tilde{w}_{n+1}(0,x)|\;dx\\
        &=\sum_{n\geq 0}e^{-n}a_{k,n},
    \end{align*}
    where
    \begin{equation*}
        a_{k,n}:=\int_{B(0,n+1)\setminus B(0,n)}|\tilde{w}_{n+1}(t_k,x)-\tilde{w}_{n+1}(0,x)|\;dx, \quad k\geq 1,n\geq 0,
    \end{equation*}
    whilst $\tilde{w}_n,n\geq 0$ are given in \eqref{it:7}.
    Now, on the one hand we have that $\tilde{w}_{n+1}\in C([0,1];L^2(\mathbb{R}^d))$, hence
    $
        \lim_k a_{k,n}=0.
    $
    On the other hand,
    \begin{align*}
        \sup_ka_{k,n}
        &\leq 2 \sup_{t\in [0,1]}\int_{B(0,n+1)}|\tilde{w}_{n+1}(t,x)|\;dx
        =2 \|\tilde{w}_{n+1}(t,x)\|_{L^\infty([0,1];L^1(B(0,n+1)))}\\
        &\leq 2 \|f\|_{L^\infty([0,1]\times \mathbb{R}^d)}<\infty.
    \end{align*}
    Therefore, by dominated convergence, we obtain 
    \begin{equation*}
        \lim_{k} V(\cdot)F(t_k,\cdot)=V(\cdot)F(0,\cdot) \quad \mbox{in } L^1(\mathbb{R}^d),
    \end{equation*}
    hence, taking also into account that $V>0$, there exists a subsequence $(t_{k_n})_{n\geq 1}$ such that 
    \begin{equation*}
        \lim_n F(t_{k_n},\cdot)=F(0,\cdot)\quad dx\mbox{-a.e.}
    \end{equation*}
    \item\label{it:9} {\it We claim that
    \begin{equation}\label{eq:exists_F_0}
        \exists\; F_0:\mathbb{R}^d\rightarrow\mathbb{R} \mbox{ and } (t_n)_{n}\searrow 0 \mbox{ such that } \lim_n 1_{E^{\dagger,0}(t_n)}(\cdot)U^{\dagger, 0}_1 f(t_n,\cdot)=F_0(\cdot) \quad dx\mbox{-a.e.}
    \end{equation}
    }
    Indeed, first of all, since $F$ given in \eqref{it:7} is a $dt\otimes dx$-version of $u(\cdot,\cdot)U_1^{\dagger}f(\cdot,\cdot)$ on $[0,1]\times \mathbb{R}^d$, there exists $(t_n)_{n\geq 1}\searrow 0$ such that 
    \begin{equation*}
        F(t_n,\cdot)=u(t_n,\cdot)U_1^{\dagger}(t_n,\cdot),\quad n\geq 1, dx\mbox{-a.e.}
    \end{equation*}
    Therefore, using also that $U_1^{\dagger,0}(t,\cdot)=U_1^{\dagger}(t,\cdot)$ for $t>0$, we have
    \begin{equation*}
        \exists\; (t_{n_k})_{k}: \quad\lim_k u(t_{n_k},\cdot)U_1^{\dagger,0}(t_{n_k},\cdot)=F(0,\cdot) \quad dx\mbox{-a.e}
    \end{equation*}
    Now, since $\lim_k u(t_{n_k},\cdot)=u(0,\cdot)$ in $L^1(\mathbb{R}^d)$ and $\int_{E^{\dagger,0}(t)}u(t,x)dx=1, t\in [0,1]$, we can easily deduce that, by passing to a further subsequence,
    \begin{equation*}
        \exists\; (t_{n_{k_l}})_{l}: \quad\lim_l 1_{E^{\dagger,0}(t_{n_{k_l}})}(\cdot)U_1^{\dagger,0}(t_{n_{k_l}},\cdot)=1_{E^{\dagger,0}(0)}(\cdot)\frac{F(0,\cdot)}{u(0,\cdot)} \quad dx\mbox{-a.e.},
    \end{equation*}
    which proves the claim.
    \item Let us fix $s\geq 0$, $f\in b{\sf Lip}([0,\infty)\times \mathbb{R}^d)$ with compact support, and define
    \begin{equation*}
        \varphi_t(x)=\varphi_t^s(x)=1_{E^{\dagger,0}(t)}(x)P^{\dagger, 0}_sf(t,x), \quad t\in [0,1], x\in \mathbb{R}^d,
    \end{equation*}
    where $1_{E^{\dagger,0}(t)}(x)g(x),x\in \mathbb{R}^d$ denotes the extension by zero (from $E^{\dagger,0}(t)$ to $\mathbb{R}^d$) of any given function $g:E^{\dagger,0}(t)\rightarrow\mathbb{R}$.
    
    {\it We claim that
    \begin{equation}\label{eq:0_identification}
        \lim\limits_{t\to 0}\mu_0\left(v_0\varphi_t\right)=\mu_0\left(v_0\varphi_0\right) \quad \mbox{for all } v_0\in \mathcal{A}_0,
    \end{equation}
    where $\mathcal{A}_0$ is the class of probability densities employed in $\mathbf{H^{\sf TV}_0}$.
    }
    
    \noindent{}To prove the above assertion, let $v_0\in \mathcal{A}_0$, 
    \begin{equation*}
        \nu_0:=v_0(x)u(0,x) dx \leq \|v_0\|_{\infty}\mu_0, \quad \mbox{and } \eta_t^{\nu_0}, t\geq 0 \mbox{ be given by } \eqref{eq:eta_nu}.
    \end{equation*}
    Then,
    \begin{align*}
        \mu_0\left(v_0\varphi_t\right)
        &=\int_{\mathbb{R}^d}v_0(x)\varphi_t(x)\mu_0(dx)
        = \int_{\mathbb{R}^d}v_0(x)u(0,x)\varphi_t(x)\; dx\\
        &= \int_{\mathbb{R}^d}\left(v_0(x)u(0,x)-\frac{d\eta_t^{\nu_0}}{dx}(x)\right)\varphi_t(x)\; dx+ \int_{\mathbb{R}^d}\varphi_t(x)\eta_t^{\nu_0}(dx)\\
        &=\int_{\mathbb{R}^d}\left(v_0(x)u(0,x)-\frac{d\eta_t^{\nu_0}}{dx}(x)\right)\varphi_t(x)\; dx+\eta_t^{\nu_0}\left(P^{\dagger, 0}_sf(t,\cdot)\right)\\
        \intertext{hence by \eqref{eq:eta_P} we can continue with}
        &=\int_{\mathbb{R}^d}\left(v_0(x)u(0,x)-\frac{d\eta_t^{\nu_0}}{dx}(x)\right)\varphi_t(x)\; dx+\eta_{s+t}^{\nu_0}\left(f(s+t,\cdot)\right).
    \end{align*}
    Now, on the one hand, since $\mathbf{H^{\sf TV}_0}$ is in force we have that the first term in the right-hand side of the last equality converges to zero when $t\to 0$.
    On the other hand, by the weak continuity of $\left(\eta_{t}^{\nu_0}\right)_{t\geq 0}$ and the fact that $f\in b{\sf Lip}([0,\infty)\times \mathbb{R}^d)$, we easily get
    \begin{equation*}
        \lim\limits_{t\to 0}\eta_{s+t}^{\nu_0}\left(f(s+t,\cdot)\right)=\eta_{s}^{\nu_0}\left(f(s,\cdot)\right)=\int_{\mathbb{R}^d}v_0(x)\varphi_0(x)\mu_0(dx).
    \end{equation*}
    \item Let us now finish the proof of the claim in {\bf Step OP2.5}.
    First, notice that
    \begin{equation*}
        \int_0^\infty e^{- s} \varphi_t^s(x)\;ds
        =1_{E^{\dagger,0}(t)}(x)U^{\dagger, 0}_1 f(t,x),\quad t\geq 0, x\in \mathbb{R}^d.
    \end{equation*}
    Consequently, on the one hand we have
    \begin{equation*}
        \lim\limits_{t\to 0}\mu_0\left(v_0(\cdot)1_{E^{\dagger,0}(t)}(x)U^{\dagger, 0}_1 f(t,\cdot)\right)=\mu_0\left(v_0(\cdot)1_{E^{\dagger,0}(0)}(\cdot)U^{\dagger, 0}_1 f(0,\cdot)\right) \quad \mbox{for all } v_0\in \mathcal{A}_0,
    \end{equation*}
    On the other hand, by item (9) and using dominated convergence, we deduce that
    \begin{equation*}
        \exists (t_n)_n\searrow0: \lim\limits_{t\to 0}\mu_0\left(v_0(\cdot)1_{E^{\dagger,0}(t)}(x)U^{\dagger, 0}_1 f(t,\cdot)\right)=\mu_0\left(v_0(\cdot)F_0(\cdot)\right) \quad \mbox{for all } v_0\in \mathcal{A}_0,
    \end{equation*}
    where $F_0$ is the function entailed in \eqref{it:9}.
    Since $\mathcal{A}_0$ is assumed to have the separation property from $\mathbf{H^{\sf TV}_0}$, we deduce that
    \begin{equation}\label{eq:F_0=U}
        F_0=1_{E^{\dagger,0}(0)}(\cdot)U^{\dagger, 0}_1 f(0,\cdot) \quad \mu_0\mbox{-a.e.}
    \end{equation}
    Finally, by \eqref{eq:exists_F_0} and  \eqref{eq:F_0=U} we can ensure \eqref{eq:t_n_n}, hence {\bf Step OP2.5} is accomplished.
\end{enumerate}

\medskip
\noindent{\bf Step OP2.6}: Let $\mathcal{U}^{\dagger,00}=\mathcal{U}^{\dagger,0}|_{E^{\dagger,00}}$ be the resolvent of Markov kernels constructed in {\bf Step OP2.5}.
Also, recall that $\mathcal{U}^{\dagger}$ is the resolvent of the right process $X^\dagger$ on $E^\dagger$, as in \eqref{eq:X^dagger}, and note that using {\bf Step OP1.10} we have
\begin{equation*}
    U_1^{\dagger}(\cdot,\{0\}\times E^{\dagger}(0))=0 \quad \mbox{ and } E^\dagger\setminus\{0\}\times E^{\dagger}(0)\in \mathcal{A}(\mathcal{U}^\dagger).
\end{equation*}
Hence, we can consider the restricted resolvent and process $(X',\mathcal{U}')$ of $(X^\dagger,\mathcal{U}^\dagger)$ from $E^\dagger$ to $E':=E^\dagger\setminus\{0\}\times E^{\dagger}(0)$.
Now, let us notice that we have the natural extension (see \Cref{defi:natural})
\begin{equation*}
    \left(E',\mathcal{U}'\right)\subset\left(E^{\dagger,00},\mathcal{U}^{\dagger,00}\right).
\end{equation*}
Moreover, by {\bf Step OP2.4} and {\bf Step OP2.5}, one can esily see that $\mathcal{E}(\mathcal{U}^{\dagger,00}_\beta)$ is min stable, contains the constant functions, and generates $\mathcal{B}(E^{\dagger,00})$ for one (hence all) $\beta >0$; see \Cref{Appendix} for details.
Consequently, by \Cref{prop 2.5} and \Cref{thm:natural}, we deduce that there exists a right process on $E^{\dagger,00}$ denoted by $X^{\dagger,00}$
\begin{equation*}
X^{\dagger,00}
:=
\left(\Omega^{\dagger,00}, \mathcal{F}^{\dagger,00}, \mathcal{F}^{\dagger,00}_t , X^{\dagger,00}(t), \theta^{\dagger,00}(t) , \mathbb{P}^{\dagger,00}_{(s,x)}, (s,x)\in E^{\dagger,00}\right),
\end{equation*}
which is a natural extension of $X'$ from $E'$ to $E^{\dagger,00}$; using the notation in \Cref{defi:natural}, this means that
\begin{equation}\label{eq:naturalX'}
    \left(E',X'\right)\subset\left(E^{\dagger,00},X^{\dagger,00}\right).
\end{equation}
In particular, 
\begin{equation}\label{eq:polar_00}
    \{0\}\times E^{\dagger,00}(0) \mbox{ is polar with respect to }  \mathcal{U}^{\dagger,00}.
\end{equation}
Note that $X^{\dagger,00}$ is also conservative, namely
\begin{equation*}
    \mathbb{P}^{\dagger,00}_{s,x}\left(X^{\dagger,00}(t)\in E^{\dagger,00} \mbox{ for all } t\geq 0\right)=1, \quad \mbox{ for all } (s,x)\in E^{\dagger,00}.
\end{equation*}
{\it We claim that for all $(s,x)\in E^{\dagger,00}, s>0,$ we have
\begin{equation} \label{eq:continuous s>0}
\mathbb{P}^{\dagger,00}_{s,x}\left(X^{\dagger,00} \mbox{ has norm-continuous paths in } E^{\dagger,00}\subset[0,\infty)\times \mathbb{R}^d\right)=1.
\end{equation}
}
Indeed, if $(s,x)\in E^{\dagger,00}$ with $s>0$ then
\begin{equation*}
    \mathbb{P}^{\dagger,00}_{s,x}\circ \left(X^{\dagger,00}\right)^{-1}=\mathbb{P}^{\dagger}_{s,x}\circ \left(X^{\dagger}\right)^{-1},
\end{equation*}
and in this case the claim is clear since $X^\dagger$ has norm-continuous paths a.s. for all starting points.

\medskip
\noindent{\bf Step OP2.7}:
{\it We claim that 
\begin{equation} \label{eq:continuous s=0}\mathbb{P}^{\dagger,00}_{\delta_0\otimes\mu_0}\left([0,\infty)\ni t\mapsto X^{\dagger,00}(t) \mbox{ is norm-continuous in } E^{\dagger,00}\subset[0,\infty)\times \mathbb{R}^d\right)=1.
\end{equation}
}
To show this end, note first that by \eqref{eq:naturalX'}- \eqref{eq:polar_00}, and the fact that $X'$ has a.s. (for every starting point) norm-continuous paths in $E'$, we deduce that it is enough to show
\begin{equation} \label{eq:continuous_on_0N}
\mathbb{P}^{\dagger,00}_{\delta_0\otimes\mu_0}\left([0,N)\ni t\mapsto X^{\dagger,00}(t) \mbox{ is norm-continuous in } E^{\dagger,00}\right)=1, \mbox{ for some } N>0.
\end{equation}
As a matter of fact, by the same \eqref{eq:naturalX'}-\eqref{eq:polar_00} we clearly have
\begin{equation*} \mathbb{P}^{\dagger,00}_{\delta_0\otimes\mu_0}\left((0,N)\ni t\mapsto X^{\dagger,00}(t) \mbox{ is norm-continuous in } E^{\dagger,00}\right)=1, \mbox{ for every } N>0,
\end{equation*}
so that showing \eqref{eq:continuous_on_0N} boils down to showing
\begin{equation} \label{eq:continuous_at_0}
\mathbb{P}^{\dagger,00}_{\delta_0\otimes\mu_0} \left(\lim_{t\to 0}X^{\dagger,00}(t)=X^{\dagger,00}(0) \quad \mbox{with respect to the norm-topology}\right)=1.
\end{equation}
To show this, recall that by the above and {\bf Step OP1.9} we have
\begin{equation*}
    X^{\dagger,00}|_{E'}=X',\;  E^{\dagger,00}\setminus E' \mbox{ is polar}, \quad X'=X^\dagger|_{E'},\quad X^\dagger=\check{X}^0|_{E^\dagger},\quad \check{X}^0=\tilde{X}|_{\check{E}_0},
\end{equation*}
whilst by {\bf Step OP1.8} we have
\begin{equation*}
    \tilde{\mathbb{P}}_{\delta_s\otimes \mu_s}\circ\left[\left(\tilde{X}(t)\right)_{t\in[0,N-s)}\right]^{-1}
    =
    \mathbb{P}^N_{\delta_s\otimes \mu_s}\circ\left[\left(X^N(t)\right)_{t\in[0,N-s)}\right]^{-1},\quad 0<s<N.
\end{equation*}
Consequently, we obtain
\begin{equation*}
    \mathbb{P}^{\dagger,00}_{\delta_s\otimes \mu_s}\circ\left[\left(X^{\dagger,00}(t)\right)_{t\in[0,N-s)}\right]^{-1}
    =
    \mathbb{P}^N_{\delta_s\otimes \mu_s}\circ\left[\left(X^N(t)\right)_{t\in[0,N-s)}\right]^{-1},\quad 0<s<N,
\end{equation*}
hence, by \Cref{coro:identification2}, 
there exists $\eta^N\in\mathcal{P}\left(C([0,N];\mathbb{R}^d)\right)$ such that
\begin{equation*}
    \eta^N|_{C([\varepsilon,N);\mathbb{R}^d)}
    =\mathbb{P}^{\dagger,00}_{\delta_\varepsilon\otimes \mu_\varepsilon}\circ\left[\left(X^{\dagger,00}(t-\varepsilon)\right)_{t\in[\varepsilon,N)}\right]^{-1} \quad \mbox{as probabilities on } C([\varepsilon,N);\mathbb{R}^d), \quad \mbox{for every } \varepsilon\in (0,N).
\end{equation*}
Furthermore, using \eqref{eq:Pdagger0} we get
\begin{equation}\label{eq:Udagger0=eta}
    U_\alpha^{\dagger,00}f(0,x)=\int_0^\infty e^{-\alpha t} \eta_t^x(f(t,\cdot))\;dt,\quad \alpha>0,f\in b\mathcal{B}(E^{\dagger,00}).
\end{equation}
Consequently,
\begin{equation*}
    \delta_0\otimes \mu_0\left(U_\alpha^{\dagger,00}f\right)=\int_0^\infty e^{-\alpha t} \delta_t\otimes\mu_t(f)\;dt, \quad \alpha>0,
\end{equation*}
which means that
\begin{equation*}
    \mathbb{P}^{\dagger,00}_{\delta_0\otimes\mu_0}\circ \left[X^{\dagger,00}(t)\right]^{-1}=\delta_t\otimes\mu_t,\quad t\geq 0.
\end{equation*}
Consequently, using also that $X^{\dagger,00}$ is a (right) Markov process, we get
\begin{align*}
    \mathbb{P}^{\dagger,00}_{\delta_0\otimes \mu_0}\circ\left[\left(X_2^{\dagger,00}(t)\right)_{t\in[\varepsilon,N)}\right]^{-1}
    &=\mathbb{P}^{\dagger,00}_{\delta_\varepsilon\otimes \mu_\varepsilon}\circ\left[\left(X_2^{\dagger,00}(t-\varepsilon)\right)_{t\in[\varepsilon,N)}\right]^{-1}\\
    &=\eta^N|_{C([\varepsilon,N);\mathbb{R}^d)}
    \quad \mbox{on } C([\varepsilon,N);\mathbb{R}^d) \quad 0<\varepsilon<N,
\end{align*}
Getting rid of $\varepsilon$, the above can be rewritten as
\begin{equation}\label{eq:dagg00_eta_identification}
    \mathbb{P}^{\dagger,00}_{\delta_0\otimes \mu_0}\circ\left[\left(X_2^{\dagger,00}(t)\right)_{t\in(0,N)}\right]^{-1}
    =\eta^N|_{C((0,N);\mathbb{R}^d)}, \quad \mbox{ for some (in fact all) } N>0.
\end{equation}
Since $\eta^N\in \mathcal{P}(C([0,N);\mathbb{R}^d))$, the above \eqref{eq:dagg00_eta_identification} shows that
\begin{equation}\label{eq:exists_Z}
    \exists\; Z:\Omega\rightarrow\mathbb{R}^d \mbox{ which is } \mathcal{F}^{\dagger,00}(0)\mbox{-measurable such that } \lim_{t\to 0} \|X_2^{\dagger,00}(t)-Z\|=0\quad   \mathbb{P}^{\dagger,00}_{\delta_0\otimes \mu_0}\mbox{-a.e.} 
\end{equation}
Moreover, by \eqref{eq:Udagger0=eta} it is easy do deduce that
\begin{equation*}
    X_1^{\dagger,00}(t)=t, \quad t\geq 0, \quad \mathbb{P}^{\dagger,00}_{(0,x)}\mbox{-a.e. for every } (0,x)\in E^{\dagger,00}. 
\end{equation*}
Hence, in order to finish {\bf Step OP2.7}, we need to prove the identification
\begin{equation}\label{eq:Z_identification}
    X_2^{\dagger,00}(0)=Z \quad \mathbb{P}^{\dagger,00}_{\delta_0\otimes \mu_0}\mbox{-a.e.} 
\end{equation}
To prove this, let us set for each $z\in \mathbb{R}^d$
\begin{equation*}
    F_z\in C_b(E^{\dagger,00}),\quad F_z(t,x):=\|z-x\|\wedge 1, \quad (t,x)\in\mathbb{R}^d,
\end{equation*}
and note that by \eqref{eq:alphaU_alpha} we have
\begin{equation*}
    \lim_{\alpha\to \infty}\alpha U_\alpha^{\dagger,00}F_z(0,x)=F_z(0,x), \quad (0,x)\in E^{\dagger,00}.
\end{equation*}
Hence, using that $X^{\dagger,00}$ is a Markov process with transition function $\left(P_t^{\dagger,00}\right)_{t\geq 0}$ and resolvent $\mathcal{U}^{\dagger,00}$, we have
\begin{align}\label{eq:lim_identification}
    \lim_{\alpha\to \infty}\int_0^\infty e^{-\alpha t}
    \mathbb{E}^{\dagger,00}_{\delta_0\otimes\mu_0}&\left\{\left\|X_2^{\dagger,00}(t)-X_2^{\dagger,00}(0)\right\|\wedge 1\right\}\;dt\\
    &=\lim_{\alpha\to\infty}\int_0^\infty e^{-\alpha t}\mathbb{E}^{\dagger,00}_{\delta_0\otimes\mu_0}\left\{F_{X_2^{\dagger,00}(0)}(X^{\dagger,00}(t))\right\}\; dt\\
    &=\lim_{\alpha\to\infty}\int_0^\infty e^{-\alpha t}\int_{E^{\dagger,00}(0)}\left[P_t^{\dagger,00}F_{x}\right]((0,x))\;\mu_0(dx)\\
    &=\lim_{\alpha\to\infty}\int_{E^{\dagger,00}(0)}\alpha \left[U_\alpha^{\dagger,00}F_x\right]((0,x))\;\mu_0(dx)\\
    &=\int_{E^{\dagger,00}(0)}\alpha F_x((0,x))\;\mu_0(dx)=0.
\end{align}
The above convergence entails that
\begin{equation*}
    \exists (t_n)_n\searrow 0 \quad\mbox{ such that }\quad \lim_n\mathbb{E}^{\dagger,00}_{\delta_0\otimes\mu_0}\left\{\left\|X_2^{\dagger,00}(t_n)-X_2^{\dagger,00}(0)\right\|\wedge 1\right\}=0,
\end{equation*}
hence by passing to a subsequence, we get
\begin{equation*}\label{eq:Z_identification_subsequnce}
    \exists (t_n)_n\searrow 0 \quad\mbox{ such that }\quad \lim_n\left\|X_2^{\dagger,00}(t_n)-X_2^{\dagger,00}(0)\right\|\wedge 1=0, \quad \mathbb{P}^{\dagger,00}_{\delta_0\otimes \mu_0}\mbox{-a.e.}.
\end{equation*}
Thus, using \eqref{eq:exists_Z} and \eqref{eq:Z_identification_subsequnce}, we get \eqref{eq:Z_identification}, hence {\bf Step OP2.7} is finished.

\medskip
\noindent{\bf Step OP2.8:} Let now
\begin{equation*}
     E^{\dagger,00}_0:=\left\{(s,x)\in  E^{\dagger,00} : \mathbb{P}_{(s,x)}^{\dagger,00}\left([0,\infty)\ni t\mapsto X^{\dagger,00}(t)\in E^{\dagger,00} \mbox{ is norm-continuous} \right)=1\right\},
\end{equation*}
and notice that from the previous steps we have that
\begin{equation*}
    E^{\dagger,00}_0(t)=E^{\dagger}(t), \quad \mbox{for every } t>0.
\end{equation*}
Using now \eqref{eq:continuous s>0} and \eqref{eq:continuous s=0}, we can apply \Cref{prop:continuous_modification_special} to deduce that $E_0^{\dagger,00}$ is $\delta_0\otimes\mu_0$-inessential and satisfies
\begin{equation*}
    E_0^{\dagger,00}(0)\in \mathcal{B}(\mathbb{R}^d),\quad E_0^{\dagger,00}(0)\subset E^{\dagger,00}(0),\quad \mu_0(E_0^{\dagger,00}(0))=1.
\end{equation*}

\medskip
\noindent{\bf Step OP2.9: Finalizing the proof of \Cref{thm:main1}.} Based on the previous step, let us consider the right process
\begin{equation*}
    X=\left(\Omega, \mathcal{F}, \mathcal{F}(t) , X(t), \theta(t) , \mathbb{P}_{(s,x)}, (s,x)\in E\right),  
\end{equation*}
with trasition semigroup $(P_t)_{t\geq 0}$ and resolvent $\mathcal{U}=(U_\alpha)_{\alpha>0}$, obtained by taking the restriction of $X^{\dagger,00}$ from $E^{\dagger,00}$ to $E:=E^{\dagger,00}_0$ as in \Cref{prop:restriction}-\Cref{defi:restr-process}.
Then clearly $X$ has all the desired properties in \Cref{thm:main0}, with the mention that assertion (ii.5) from \Cref{thm:main0} is at the moment satisfied for every $(s,x)\in E$ with $s>0$.
To extend (ii.5) to $s=0$ we first notice that
\begin{equation}\label{eq:delta0mu0}
    \delta_0\otimes\mu_0\left(U_\alpha |\overline{L}f|\right)<\infty, \quad f\in C_c^\infty((0,\infty)\times\mathbb{R}^d),\quad \alpha>0.
\end{equation}
Then, by the same arguments as in {\bf Step OP1.3} that follow after \eqref{eq:M_f}, we can find a further set $\tilde{E}\subset E$ such that $E\setminus E_0$ is $\delta_0\otimes \mu_0$-inessential, and $M_f$ given by \eqref{eq:Mf} with $X^\dagger$ replaced by $X$, is well defined and integrable under $\mathbb{P}_{(0,x)}$ for every $(0,x)\in \tilde{E}$ and $f\in C_c^\infty((0,\infty)\times\mathbb{R}^d)$.
Moreover, $[0,1]\ni t\mapsto M_f(t)$ is right continuous in $L^1(\mathbb{P}_{\delta_{(0,x)})})$.
Consequently, using the fact that $X(s)\in E^{\dagger,00}, s>0$, for every $t>s>0$
\begin{align*}
    \mathbb{E}_{(0,x)}( M_f(t))
    &=\mathbb{E}_{(0,x)}\left\{ M_f(s)+M_f(t)\circ \theta(s)\right\}
    =\mathbb{E}_{(0,x)}\left\{ M_f(s)\right\}+\mathbb{E}_{(0,x)}\left\{\mathbb{E}_{X(s)}\left\{M_f(t)\right\}\right\}\\
    &=\mathbb{E}_{(0,x)}\left\{ M_f(s)\right\}.
\end{align*}
By the above mentioned right continuity in $L^1(\mathbb{P}_{\delta_{(0,x)})})$, and using that we clearly have $\mathbb{E}_{(0,x)}\left\{ M_f(0)\right\}=0$, it follows by \Cref{lem:martingale 1} that $M_f$ is a martingale under $\mathbb{P}_{\delta_{(0,x)})}$ for every $(0,x)\in \tilde{E}$.
Taking the restriction of the process $X$ from $E$ to $\tilde{E}$ we can thus ensure (ii.5) from \Cref{thm:main0}.

To conclude, (ii.1') and (ii.4') in \Cref{thm:main1} are clearly satisfied, whilst (ii.6') follows similarly to {\bf Step OP1.16}, (ii.6).

\section{Consequences of the main results}\label{s:consequences}

In this part we use the right processes constructed in \Cref{s:reg_superposition_linear}, \Cref{thm:main0} and \Cref{thm:main1}, in order to deduce new results about time-dependent linear PDEs.
More precisely: We construct fundamental flow solutions; we solve parabolic Dirichlet problems by probabilistic means; by means of Lyapunov functions we derive maximal tail estimates for the path-space law obtained by the superposition of the fundamental flow solution to the time-dependent linear Fokker-Planck equation \eqref{eq:LFP}; we solve Fokker-Planck equations perturbed by nonlocal operators.

\subsection{Existence of fundamental flow solutions to linear Fokker-Planck equations}

Let $({\sf L}_t)_{t\geq 0}$ be given by \eqref{eq:L_t}-\eqref{eq:b_sigma} and $(\mu_t)_{t\geq 0}\subset \mathcal{P}(\mathbb{R}^d)$ be a weakly continuous solution to the linear Fokker-Planck equation \eqref{eq:LFP} on $[0,\infty)$.
\begin{defi}\label{eq:sol_flow}
A pair $(E,\Gamma_\mu)$, where $E\in \mathcal{B} ([0,\infty)\times \mathbb{R}^d)$ and $\Gamma_\mu$ is a mapping
\begin{equation*}
    [0,\infty)\times E\times\mathcal{B}(\mathbb{R}^d) \ni(t,s,x,A)\overset{\Gamma_\mu}{\longmapsto}\Gamma_{\mu,t}^{s,x}(A)\in [0,1],
\end{equation*}
is called a $\mu$-restricted fundamental solution flow to the linear Fokker-Plank equation \eqref{eq:LFPshort} if the following properties are fulfilled:
\begin{enumerate}
    \item[(i)] We have
    $
        \mu_s(E(s))=1=\Gamma_{\mu,t}^{s,x}(E(s+t)) \quad \mbox{for every } t\geq 0, (s,x)\in E.
    $
    \item[(ii)] $\Gamma_\mu$ is a probability kernel from  $[0,\infty)\times E$  to  $\mathbb{R}^d$
    \item[(iii)] For every $t\geq 0, (s,x)\in E$
    it holds that
    $\left(\Gamma_{\mu,t}^{s,x}\right)_{t\geq 0}$ is a weakly continuous solution to the linear Fokker-Planck equation \eqref{eq:LFPshort} on $[s,\infty)$, with initial condition $\Gamma_{\mu,0}^{s,\nu_s}=\delta_x$,
    \item[(iv)] Let us set
    \begin{equation*}
        \Gamma_{\mu,t}^{s,\xi}:=\int_{E(s)}\Gamma_{\mu,t}^{s,x} \; \xi(dx), \quad t,s\geq 0, \xi\in \mathcal{P}(E(s)).
    \end{equation*}
    For every $s\geq 0$ and $\nu_s\in \mathcal{P}(E(s))$, the following flow property holds:
    \begin{equation*}
        \Gamma_{\mu,t+r}^{s,\nu_s}
        =\Gamma_{\mu,t+r}^{s+r,\Gamma_{\mu,r}^{s,\nu_s}}, \quad r,t\geq 0.
    \end{equation*}
    \item[(v)] For every $s\geq 0$ and $\nu_s\in \mathcal{P}(E(s))$ for which
    \begin{equation*}
        \int_s^N\int_{B(0,N)} \left(\|b(t,x)\|+\|a(t,x)\| \right)\; \Gamma_{\mu,t}^{s,\nu_s}(dx) dt <\infty, \quad N>0,
    \end{equation*}
    it holds that $\left(\Gamma_{\mu,t}^{s,\nu_s}\right)_{t\geq 0}$ is a weakly continuous solution to the linear Fokker-Planck equation \eqref{eq:LFPshort} on $[s,\infty)$.
\end{enumerate}
\end{defi}

\begin{coro}\label{coro:fundamental}
Let $\left(\mu_t:=u(t,x)dx\right)_{t\geq 0}$ be a given solution to the linear Fokker-Planck equation \eqref{eq:LFPshort}, and assume that $\mathbf{H_{a,b}^{\sqrt{u}}}$, $\mathbf{H_{\sf \lesssim u}}$, and $\mathbf{H^{\sf TV}_0}$ hold.
Let
\begin{equation*}
X=\left(\Omega,\mathcal{F},\mathcal{F}(t),X(t)=(X_1(t),X_2(t)),\mathbb{P}_{s,x}, (s,x)\in E, t\geq 0\right)
\end{equation*} 
be the right process provided by both \Cref{thm:main0} and \Cref{thm:main1}.
Further, set
\begin{equation*}
    \Gamma_{\mu}:=\left(\Gamma_{\mu,t}^{s,x}\right)_{t\geq 0, (s,x)\in E},\quad \Gamma_{\mu,t}^{s,x}(A)
    :=\mathbb{P}_{s,x}\left(X_2(t)\in A\cap E(t+s)\right)
    ,\quad A\in \mathcal{B}(\mathbb{R}^d), s,t\geq 0.
\end{equation*}
Then $(E,\Gamma_{\mu})$ is a $\mu$-restricted fundamental solution flow to the linear Fokker-Planck equation \eqref{eq:LFPshort}.
\end{coro}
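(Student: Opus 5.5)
The plan is to read off each item of Definition \ref{eq:sol_flow} from the properties of $X$ in Theorem \ref{thm:main0} and Theorem \ref{thm:main1}. We use mainly (ii.1'), (ii.3), (ii.4'), (ii.5), together with the Markov property and conservativeness. Here $E(s)=\{x:(s,x)\in E\}$.

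For item (i), the identity $\mu_s(E(s))=1$ for all $s\geq 0$ is exactly (ii.1'). Conservativeness and path continuity give $X(t)\in E$ for all $t$, $\mathbb{P}_{s,x}$-a.s. By (ii.4') we have $X_1(t)=s+t$. Hence $X_2(t)\in E(s+t)$ a.s., so $\Gamma_{\mu,t}^{s,x}(E(s+t))=1$.

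For item (ii), measurability of $(t,s,x)\mapsto \mathbb{P}_{s,x}(X_2(t)\in A\cap E(t+s))$ follows from the measurability of the transition function of a right process. One writes $\Gamma_{\mu,t}^{s,x}(A)=P_t(1_{[0,\infty)\times A}|_E)(s,x)$, using $X_1(t)=s+t$ and the fact that $E$ is Borel. Joint measurability in $t$ comes from right continuity of paths.

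For item (iii), fix $(s,x)\in E$. Weak continuity of $t\mapsto\Gamma_{\mu,t}^{s,x}$ follows from (ii.3) and dominated convergence. The initial condition $\Gamma_{\mu,0}^{s,x}=\delta_x$ holds because $X(0)=(s,x)$ a.s. For the equation, take $f(r,y)=g(r)h(y)$ in (ii.5), with $g\in C_c^\infty((s,\infty))$ and $h\in C_c^\infty(\mathbb{R}^d)$, and shift time by $s$. Taking expectations under $\mathbb{P}_{s,x}$ then yields \eqref{eq:LFPmu} on $(s,\infty)$. The integrability \eqref{eq:a,b} for $\Gamma^{s,x}$ is the delicate point. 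Since (ii.5) only gives $\mathbb{E}_{s,x}\int_0^t|\overline{\sf L}f(X(r))|dr<\infty$, I would apply it to a family of test functions $f$ built from coordinate functions $y_i$ and $y_iy_j$ localized by cutoffs. Equivalently, one can use that $E$ was intersected with sets $[U_1|\overline{\sf L}\psi_n|<\infty]$ in Step OP1.16. Summing over such families controls $\int\int_{B(0,R)}(\|b\|+\|a\|)\,d\Gamma\,dt$ locally. \emph{I expect this integrability to be the main obstacle.} If it cannot be extracted directly, I would enlarge the countable family $\mathcal{T}$ in Step OP1.16 so that the required finiteness is built into $E$ through another $\overline{\mu}$-inessential restriction. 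This uses \Cref{rem:exc_absorbing} and the bound $\mathbb{E}_{\delta_s\otimes\mu_s}\int|b|+|a|\,dr<\infty$ obtained from \eqref{eq:bau} and (ii.4').

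For item (iv), the flow property is the Markov property of $X$ combined with $X_1(r)=s+r$. For $\nu_s\in\mathcal{P}(E(s))$ we have
\begin{equation*}
\mathbb{P}_{\delta_s\otimes\nu_s}(X_2(t+r)\in A)=\mathbb{E}_{\delta_s\otimes\nu_s}\left[\mathbb{P}_{X(r)}(X_2(t)\in A)\right],
\end{equation*}
and the law of $X(r)$ equals $\delta_{s+r}\otimes\Gamma_{\mu,r}^{s,\nu_s}$. Item (v) follows by integrating the identity of (iii) in $\nu_s(dx)$ via Fubini. Here weak continuity comes from dominated convergence, and the assumed integrability is exactly \eqref{eq:a,b} for the mixture.
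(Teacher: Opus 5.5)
Your proposal is correct, and for items (i)--(iv) it follows the paper's argument. Item (i) comes from (ii.1'), (ii.3), (ii.4') and conservativeness, and item (ii) from the measurability of $(P_t)_{t\geq0}$. Item (iii) is obtained by taking expectations in the martingale problem (ii.5), and item (iv) from the Markov property together with $X_1(r)=s+r$. There are two differences worth noting.

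\textbf{Item (iii).} The paper only cites (ii.5) ``and a similar observation as in \Cref{rem:mp_LFP}'', which leaves the local integrability \eqref{eq:a,b} implicit. Your test-function extraction is what makes this rigorous, and it suffices on its own. Take $g\equiv1$ on $[0,s+T]$ and a cutoff $\chi\equiv1$ on $B(0,R)$. Then $\overline{\sf L}(g\,y_i\chi)=b_i$ and $\overline{\sf L}(g\,y_iy_j\chi)=b_iy_j+b_jy_i+a_{ij}$ on $[0,s+T]\times B(0,R)$. Hence (ii.5) gives $\mathbb{E}_{s,x}\int_0^T(\|b\|+\|a\|)(X(r))\,1_{B(0,R)}(X_2(r))\,dr<\infty$ directly.

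You should drop the two alternatives. The sets $[U_1|\overline{\sf L}\psi_n|<\infty]$ give no control of $b$ and $a$, since $\overline{\sf L}\psi_n=0$ wherever $\psi_n\equiv1$. Enlarging $\mathcal{T}$ would change $E$, but the statement concerns the process already fixed by \Cref{thm:main0} and \Cref{thm:main1}.

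\textbf{Item (v).} Here your route differs from the paper's. The paper invokes (ii.6'), but that result only concerns $\nu_s\le c\mu_s$, where it also gives uniqueness in $\mathcal{A}_{\lesssim u}^{s,N}$. Item (v) is claimed for every $\nu_s\in\mathcal{P}(E(s))$ satisfying the integrability condition, including measures singular with respect to $\mu_s$. You superpose the pointwise identities of (iii) over $\nu_s(dx)$. The hypothesis of (v) is exactly what justifies Tonelli/Fubini, because $|\partial_tf+{\sf L}_tf|\le C_f(1+\|b\|+\|a\|)$ on the support of $f$. Weak continuity then follows by dominated convergence. This covers the full statement and is the more appropriate argument. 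The paper's citation yields uniqueness as well, but only in the restricted class.
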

\begin{proof}
Note that (i) is automatically satisfied by \Cref{thm:main1}, whilst (ii) follows immediately from the properties of $(P_t)_{t\geq 0}$. 

Assertion (iii) can be deduced directly from \Cref{thm:main0}-\Cref{thm:main1}, (ii.5), and a similar observation as in \Cref{rem:mp_LFP}.

Assertion (iv) follows from the semigroup property of $(P_t)_{t\geq 0}$ and the fact that $X_1$ is the uniform motion to the right.

Finally, (v) follows by \Cref{thm:main1}, (ii.6').
\end{proof}

\begin{rem} Under the hypotheses of \Cref{coro:fundamental}, it follows that for every $s\geq 0$ and $\nu_s\in \mathcal{P}(E(s))$ such that there exists a constant $c\in (0,\infty)$ with $\nu_s\leq \mu_s$, it holds that it holds that
$\left(\Gamma_{\mu,t}^{s,\nu_s}\right)_{t\geq 0}$ is the unique weakly continuous solution to the linear Fokker-Planck equation \eqref{eq:LFP} on $[s,\infty)$ which satisfies $\Gamma_{\mu,t}^{s,\nu_s}\leq c \mu_{t+s}$ for every $t\geq 0$.
\end{rem}

\subsection{Backward Kolmogorov equations and the parabolic Dirichlet problem}\label{ss:parabolic Dirichlet}

Let $\left(\mu_t:=u(t,x)dx\right)_{t\geq 0}$ be a given solution to the linear Fokker-Planck equation \eqref{eq:LFPshort}, and assume that $\mathbf{H_{a,b}^{\sqrt{u}}}$, $\mathbf{H_{\sf \lesssim u}}$, and $\mathbf{H^{\sf TV}_0}$ hold.
Also, recall that ${\sf L}_t$ is given by \eqref{eq:L_t}.

In this part, given an open subset $D\subset \mathbb{R}^d$ and a time horizon $0<T<\infty$, we are interested in giving a meaning of and constructing a solution to the following final value problem
\begin{equation}\label{eq:BKE}
\left\{\begin{aligned}
    \frac{d}{dt}\rho(t,x)+{\sf L}_t\rho(t,x)&=0, \quad &(t,x)\in (0,T)\times D, \;dt\otimes \mu_t(dx)\mbox{-a.e.}\\
    \phantom{ \frac{d}{dt}\rho(t,x)+{\sf L}_t}\rho(t,x) &=F(t,x), \quad &(t,x)\in (0,T)\times \partial D\\
    \phantom{ \frac{d}{dt}\rho(t,x)+{\sf L}_t}\rho(T,x) &=F(T,x), \quad &\quad \;\;x\in D,
\end{aligned}\right.\;,
\end{equation}
where $F\in C^2_c([0,\infty)\times \mathbb{R}^d)$ is given.

In order to solve \eqref{eq:BKE}, we adopt the strategy developed by E.B. Dynkin in \cite{Dy65II} in order to treat the so called {\it stochastic Dirichlet problem}; see also \cite[Section 9]{Ok03}, or \cite{Be11}. 
Crucially, we shall rely on the regularity properties of the balayage operator, as derived in \Cref{lem:B_is_finely_feller} and \Cref{prop:B-boundary} in \Cref{Appendix}, which essentially allow us to rigorously employ Doob's maximal inequality for continuous-time martingales, by ensuring the right-continuity of the trajectories. 
Let us point out that such a regularity argument seems to have been overlooked in the proof of \cite[Theorem 9.2.5]{Ok03} in order to guarantee that, using the notation from \cite{Ok03}, the martingale $(N_t)_t$ is right-continuous.

\begin{prop}\label{coro:BKE}
Let $\left(\mu_t:=u(t,x)dx\right)_{t\geq 0}$ be a given solution to the linear Fokker-Planck equation \eqref{eq:LFPshort}, and assume that $\mathbf{H_{a,b}^{\sqrt{u}}}$, $\mathbf{H_{\sf \lesssim u}}$, and $\mathbf{H^{\sf TV}_0}$ hold.
Further, let
\begin{equation*}
X=\left(\Omega,\mathcal{F},\mathcal{F}(t),X(t)=(X_1(t),X_2(t)),\mathbb{P}_{s,x}, (s,x)\in E, t\geq 0\right)
\end{equation*} 
be the right process provided by both \Cref{thm:main0} and \Cref{thm:main1}, $D\subset \mathbb{R}^d$ be open, $T\in (0,\infty)$, and $F\in b\mathcal{B}([0,\infty)\times \mathbb{R}^d)$ for which there exists $N>0$ such that $F(s,\cdot)=0, s\geq N$.
Furthermore, let us set
\begin{align*}
    \tau
    &:=\inf\left\{t>0 : X(t)\in [0,\infty)\times \mathbb{R}^d\setminus(0,T)\times D\right\}, \mbox{ and note that } \tau \leq T-s \;\mathbb{P}_{s,x}\mbox{-a.s.} \mbox{ for } s\in [0,T],\\
    \rho(t,x)&
    :=\mathbb{E}_{t,x}\left\{F(X(\tau))\right\},\quad t\in[0,\infty), x\in E(t).
\end{align*}
The following assertions hold.
\begin{enumerate}
    \item[(i)] $\rho$ is a solution to problem \eqref{eq:BKE} in the following sense:
    \begin{enumerate}
        \item[(i.1)] $\rho$ is $\mathcal{B}(E)$-measurable, finely continuous on $[[0,T)\times D]\cap E$ (it is finely continuous on $E$ if $F$ is finely continuous), and under $\mathbb{P}_{s,x}, \;(s,x)\in \left[[0,T)\times D\right]\cap E$,
        \begin{equation*}
            \left(\rho(X(t\wedge\tau))_{t\geq 0}\right) \mbox{ is a c\`adl\`ag } (\mathcal{F}(t\wedge \tau))\mbox{-martingale}\quad \mbox{and}\quad \lim\limits_{t\nearrow \tau }\rho(X(t))=F(X(\tau))\; \mbox{a.s.}
        \end{equation*}
        Moreover, we have
        $
            \rho(T,x)=F(T,x), \quad x\in D \;\mu_T\mbox{-a.e.}
        $
        \item[(i.2)] If $F$ is finely continuous and $F\in {\sf D({\overline L}_p)}$ for some $p\in (1,\infty)$ then $\rho \in {\sf D({\overline L}_q)}$ and
        \begin{equation*}
            {\sf L_q}\rho=0 \quad \mbox{on } [0,T)\times D \;\overline{\mu}\mbox{-a.e.} \quad \mbox{for every } 1\leq q\leq p.
        \end{equation*}
    \end{enumerate}
    \item[(ii)] $\rho$ is the unique solution to problem \eqref{eq:BKE} in both of the following meanings:
    \begin{enumerate}
        \item[(ii.1)] Let $v\in b\mathcal{B}(E)$ be such that under $\mathbb{P}_{s,x}, \;(s,x)\in \left[[0,T)\times D\right]\cap E$,
        \begin{equation*}
            \left(v(X(t\wedge\tau))_{t\geq 0}\right) \mbox{ is a c\`adl\`ag } (\mathcal{F}(t\wedge \tau))\mbox{-martingale}\quad \mbox{and}\quad \lim\limits_{t\nearrow \tau }v(X(t))=F(X(\tau))\; \mbox{a.s.}
        \end{equation*}
        Then $v(s,x)=\rho(s,x)$ for every $(s,x)\in [[0,T)\times D]\cap E$.
        \item[(ii.2)] Let $v\in b\mathcal{B}(E)$ be finely continuous such that
        \begin{equation}\label{eq:v_parabolic}
            v \in {\sf D({\overline L}_1)},\quad    {\sf L_1}v=0 \quad \mbox{on } [0,T)\times D \;\overline{\mu}\mbox{-a.e.}, \quad \mbox{and } \lim\limits_{t\nearrow \tau }v(X(t))=F(X(\tau)) \quad \mathbb{P}_{\overline{\mu}|_{(0,T)\times D}}\mbox{-a.e.}
        \end{equation} 
        Then there exists a set $E'\in \mathcal{B}(E)$ such that $E\setminus E'$ is $\overline{\mu}$-inessential and $v(t,x)=\rho(t,x)$ for every $(t,x)\in [[0,T)\times D]\cap E'$.
    \end{enumerate}
\end{enumerate}
\end{prop}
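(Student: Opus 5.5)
Set $G:=\big((0,T)\times D\big)\cap E$ and $A:=E\setminus G$. The plan is to read $\rho$ as the $0$-order balayage of $F$ on $A$ for the right process $X$, namely $\rho=B^{A}F$ with $B^AF(z)=\mathbb{E}_z\{F(X(T_A))\}$, where $T_A=\tau$. Since $X_1(t)=s+t$ by \Cref{thm:main0}, (ii.4) and \Cref{thm:main1}, (ii.4'), we get $\tau\le T-s$ and the killing is automatic; this is what makes $0$-order balayage well defined for the bounded $F$.

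\textbf{(i.1).} Measurability of $\rho$ follows because $B^A$ is a kernel, $A$ being Borel and hitting times of Borel sets being stopping times. Fine continuity of $\rho$ on $G$, and on $E$ whenever $F$ is finely continuous, would be taken from \Cref{lem:B_is_finely_feller} and \Cref{prop:B-boundary}.

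For the martingale claim, the strong Markov property at $t\wedge\tau$ together with $\tau=t\wedge\tau+\tau\circ\theta(t\wedge\tau)$ on $\{t<\tau\}$ gives
\[
\rho(X(t\wedge\tau))=\mathbb{E}_{s,x}\{F(X(\tau))\mid\mathcal{F}(t\wedge\tau)\}.
\]
So $\rho(X(t\wedge\tau))$ is a closed, uniformly integrable martingale. Fine continuity of $\rho$ and the right continuity of $t\mapsto\rho(X(t))$ for finely continuous functions of a right process give the càdlàg property. The martingale convergence theorem then yields $\lim_{t\nearrow\tau}\rho(X(t))=\mathbb{E}\{F(X(\tau))\mid\mathcal{F}(\tau-)\}$. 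This equals $F(X(\tau))$ because $X$ has continuous paths, which makes $X(\tau)$ $\mathcal{F}(\tau-)$-measurable (quasi-left continuity along the continuous path). Finally, $\rho(T,x)=F(T,x)$ because $\tau=0$ a.s. at points $(T,x)$.

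\textbf{(i.2).} Work in $L^p(\overline\mu)$ and use Dynkin's formula,
\[
B^A_\alpha F=F-U_\alpha\big((\alpha-\overline{\sf L}_p)F\big)+B^A_\alpha U_\alpha\big((\alpha-\overline{\sf L}_p)F\big),
\]
which shows that $B^A_\alpha F$ lies in the domain with $(\alpha-\overline{\sf L}_p)B^A_\alpha F=0$ on $G$. Let $\alpha\to0$; this is legitimate because $\tau\le T$ makes $e^{-\alpha\tau}\to1$ boundedly. The restriction $q\le p$ comes from the local finiteness of $\overline\mu$ on $[0,T)\times\mathbb{R}^d$ together with the uniform boundedness of the time horizon.

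\textbf{(ii.1).} Optional stopping for the bounded càdlàg martingale $v(X(t\wedge\tau))$ gives
\[
v(s,x)=\lim_{t\nearrow\tau}\mathbb{E}_{s,x}\{v(X(t))\}=\mathbb{E}_{s,x}\{F(X(\tau))\}=\rho(s,x),
\]
using bounded convergence and the boundary condition.

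\textbf{(ii.2).} This reduces to (ii.1). By \Cref{rem:MP-D(L)}, applied via \Cref{lem:martingale 1} and \Cref{prop:martingale_absorbing}, there is a Borel $E'$ with $E\setminus E'$ $\overline\mu$-inessential such that for all starting points in $E'$ the process $v(X(t))-v(X(0))-\int_0^t\overline{\sf L}_1v(X(r))\,dr$ is a càdlàg martingale. Stopped at $\tau$, the integrand vanishes before $\tau$ because $X(r)\in G$ and $\overline{\sf L}_1v=0$ there $\overline\mu$-a.e. This in turn holds since the occupation measure of $X$ started in $E'$ is $\overline\mu$-absolutely continuous, resolvents being $\overline\mu$-versions. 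The boundary condition holds $\mathbb{P}_{\overline\mu}$-a.e., and shrinking $E'$ once more via \Cref{prop:martingale_absorbing} makes it hold for every starting point in $E'$. Then (ii.1) applies on $E'$.

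\textbf{Main obstacle.} I expect the càdlàg property in (i.1) to be the main obstacle, that is, the right continuity of $\rho(X(t))$. This is precisely what \Cref{lem:B_is_finely_feller} and \Cref{prop:B-boundary} are designed to supply; without them one has only a martingale with no regular version. The step in (ii.2) that passes from $\overline\mu$-a.e. statements to statements for every starting point in $E'$ is the second delicate point.
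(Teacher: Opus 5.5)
Your (i.1), (ii.1) and (ii.2) follow the paper's route: identify $\rho=B^AF$, then use \Cref{lem:B_is_finely_feller}, \Cref{prop:B-boundary}, optional stopping, and the $L^1$-martingale problem via \Cref{prop:martingale_absorbing}. Your (i.2), however, has a genuine gap.

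With $g:=(\alpha-\overline{\sf L}_p)F$, Dynkin's identity only says $B^A_\alpha F=F-U^G_\alpha g$, where $U^G_\alpha g(z)=\mathbb{E}_z\int_0^{\tau}e^{-\alpha r}g(X(r))\,dr=U_\alpha g-B^A_\alpha U_\alpha g$ is the resolvent of $X$ killed at $\tau$. That places $B^A_\alpha F-F$ in the range of the \emph{killed} resolvent. It says nothing about $B^A_\alpha F\in{\sf D}(\overline{\sf L}_p)$. So ``$(\alpha-\overline{\sf L}_p)B^A_\alpha F=0$ on $G$'' is asserted rather than derived, and the $\alpha\to0$ passage has nothing to act on.

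The step also cannot be repaired, because global membership fails in general. Take $d=1$, $a\equiv1$, $b\equiv0$, a Gaussian $u_0$ (all hypotheses hold), $D=(0,\infty)$, $T=2$, and $F(s,x)=\phi(s)\psi(x)$ with $0\le\phi\in C_c^\infty((1/2,3/2))$, $\phi\not\equiv0$, $\psi\in C_c^\infty(\mathbb{R})$, $\psi=1$ on $[-1,1]$. For $s<1/2$ you get $\rho(s,x)=F(s,x)=0$ for $-1<x<0$ (there $\tau=0$). For $x>0$, $\rho(s,x)=\int_0^{2-s}\phi(s+r)\,x(2\pi r^3)^{-1/2}e^{-x^2/(2r)}\,dr$. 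Hence $\partial_x\rho(s,0+)=\int_0^{2-s}\phi(s+r)(2\pi r^3)^{-1/2}\,dr>0=\partial_x\rho(s,0-)$, so $\partial_s\rho+\frac12\partial_{xx}\rho$ carries the singular part $\frac12\partial_x\rho(s,0+)\,ds\otimes\delta_0(dx)$ on $(0,1/2)\times(-1,1)$. Since $u$ is smooth and positive, $C_c^\infty$ functions lie in the domain of the $\overline\mu$-adjoint generator. So any element of ${\sf D}(\overline{\sf L}_q)$ has $\partial_s\rho+\frac12\partial_{xx}\rho=\overline{\sf L}_q\rho\in L^q_{\sf loc}$ in $\mathcal{D}'$, and therefore $\rho\notin{\sf D}(\overline{\sf L}_q)$ for any $q$.

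What your Dynkin argument does give, at $\alpha=0$ (legitimate since $\tau\le T-s$) and off a $\overline\mu$-inessential set, is a local statement: $\rho=F+U^G_0(\overline{\sf L}_pF)$. That is, $\rho-F$ lies in the domain of the killed generator $L^G$ with $L^G(\rho-F)=-\overline{\sf L}_pF$ on $G$. Global membership does hold e.g.\ for $D=\mathbb{R}^d$, where $\tau$ exits only through $\{T\}\times\mathbb{R}^d$.

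The paper argues differently. It bounds $(P_t\rho-\rho)/t$ uniformly in $L^p(\overline\mu)$ using the strong Markov property at $t$ and the martingale problem for $F$, then uses reflexivity, then H\"older with $\mathbb{P}_{s,x}(\tau\le t)\to0$. That bound, however, hinges on replacing $t+\tau\circ\theta(t)$ by $t+(\tau\wedge t)\circ\theta(t)$ on $\{\tau\le t\}$. This is valid only when $\tau\circ\theta(t)\le t$, which fails as soon as the path re-enters $G$ after $\tau$. In the example above those quotients cannot stay bounded in $L^p(\overline\mu)$, $p>1$. So (i.2) needs either $D=\mathbb{R}^d$ or a local reformulation.

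Smaller points:
\begin{itemize}
\item In (i.1), ``hitting times of Borel sets are stopping times'' only makes $\rho$ universally measurable. $\mathcal{B}(E)$-measurability comes from $A$ being closed in the norm topology, which is natural, via \Cref{Borel-measurab}.
\item Your identity $\lim_{t\nearrow\tau}\rho(X(t))=\mathbb{E}_{s,x}\{F(X(\tau))\mid\mathcal{F}(\tau-)\}$ is L\'evy's upward theorem along an announcing sequence, so you must state that $\tau$ is predictable. The paper uses the exit times $\tau_k$ of $D_k\nearrow[0,T)\times D$ with $\overline{D_k}\subset D_{k+1}$. The same sequence is what gives ``$\lim_{t\nearrow\tau}$'' in (ii.1) a meaning.
\item In (ii.2), the resolvents being $\overline\mu$-versions does not make the occupation measure from \emph{every} point of $E'$ absolutely continuous. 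Instead choose, as the paper does, a Borel version $g$ of $\overline{\sf L}_1v$ vanishing identically on $G$, and apply \Cref{prop:martingale_absorbing} to $M_v(t)=v(X(t))-v(X(0))-\int_0^tg(X(r))\,dr$.
\item That proposition also cannot upgrade the $\mathbb{P}_{\overline\mu}$-a.e.\ boundary condition to every starting point, since this is not a martingale property. A correct route:
  \begin{itemize}
  \item Optional stopping gives $v(s,x)=\mathbb{E}_{s,x}\{v(X(\tau))\}$ for all $(s,x)\in E'\cap G$.
  \item Quasi-left continuity ($M_v(\tau-)=M_v(\tau)$) identifies $v(X(\tau))$ with $\lim_{t\nearrow\tau}v(X(t))$, so the hypothesis yields $v=\rho$ $\overline\mu$-a.e.\ on $G$.
  \item $\{v\ne\rho\}\cap G$ is finely open (both functions are finely continuous on the finely open set $G$) and $\overline\mu$-null. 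Hence it is $\overline\mu$-polar and contained in a $\overline\mu$-inessential set, which you remove from $E'$.
  \item Points with $s=0$ then follow from the Markov property at a small time.
  \end{itemize}
\end{itemize}
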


\begin{proof}
(i.1). Note first that in terms of the balayage operator $B^A$ given by \Cref{defi:balayage} and \Cref{defi:balayage_kernel}, we have the identification
\begin{equation*}
    \rho(t,x)=\left(B^{A}F\right)(t,x), \quad t\in [0,T],x\in E(t), \quad \mbox{where } A=E\setminus (0,T)\times D.
\end{equation*}
The fact that $\rho$ is $\mathcal{B}$-measurable and finely continuous on $[[0,T)\times D]\cap E$ (and finely continuous on $E$ if $F$ is finely continuous) follow from \Cref{Borel-measurab} and \Cref{lem:B_is_finely_feller}.
Further, note that $X$ is clearly a Hunt process (being a conservative right process with continuous paths), $F$ is finely continuous (since the norm topology is a natural topology), whilst $\tau$ is a predictable stopping time under $\mathbb{P}_{s,x}$ for $(s,x)\in \left[[0,T)\times D\right]\cap E$ since if we define
\begin{equation*}
    \tau_k:=\inf\left\{t>0 : X(t)\in [0,\infty)\times\mathbb{R}^d\setminus D_k\right\}, \quad \mbox{with } \overline{D_k}\subset D_{k+1}, D_k\nearrow [0,T)\times D, k\geq 1
\end{equation*}
then, under $\mathbb{P}_{s,x}$ for $(s,x)\in \left[[0,T)\times D\right]\cap E$, and using the path-continuity of $X$, there exists $k_0$ such that $\tau_k<\tau$ for $k\geq k_0$ and $\tau_k\nearrow \tau$ a.s.
Consequently, the first part of assertion (i.1) follows from \Cref{prop:B-boundary}, (v).

To prove the second part, recall that $\mu_T(E(T))=1$. 
By the path-continuity of $X$ and the fact that $X_1$ is the uniform motion to the right, we get
\begin{equation*}
    \tau=0 \quad \mathbb{P}_{T,x}\mbox{-a.s.} \quad x\in E(T)\cap D.
\end{equation*}
(i.1) is now completely proved.

\medskip
(i.2).
First of all,  since there exists $N>0$ such that $F(s,\cdot)=0, s\geq N$, by the Markov property we easily get 
\begin{align*}
    \overline{\mu}\left(|\rho|^q\right)
    =\int_0^\infty \delta_0\otimes\mu_0\left(P_t |\rho|^q\right)\;dt
    &\leq\int_0^\infty \delta_0\otimes\mu_0\left(\mathbb{E}_{(\cdot,\cdot)}\left\{|F|^q(t+\tau\circ\theta(t),X_2(t+\tau\circ\theta(t)))\right\} \right)\;dt\\
    &=\int_0^N \delta_0\otimes\mu_0\left(\mathbb{E}_{(\cdot,\cdot)}\left\{|F|^q(t+\tau\circ\theta(t),X_2(t+\tau\circ\theta(t)))\right\} \right)\;dt\\
    &\leq N\|F\|_\infty^q, \quad 1\leq q<\infty.
\end{align*}
Thus, $\rho\in L^q(\overline{\mu})$ for every $q\geq 1$.

Furthermore, by the strong Markov property and using that $\tau$ is a {\it terminal} time (\cite[p.65]{Sh88}), hence $\tau=t+\tau\circ \theta(t)$ on $\tau>t$ a.s. for every $t\geq 0$, we have
\begin{align*}
    P_t\rho(s,x)
    &=\mathbb{E}_{s,x}\left\{\mathbb{E}_{X(t)}\left\{F(X(\tau))\right\}\right\}
    =\mathbb{E}_{s,x}\left\{F(X(t+\tau\circ\theta(t)))\right\}\\
    &=\mathbb{E}_{s,x}\left\{F(X(\tau);\tau>t\right\}+\mathbb{E}_{s,x}\left\{F(X(t+\tau\circ\theta(t)));\tau\leq t\right\}\\
    &=\rho(s,x)+\mathbb{E}_{s,x}\left\{F(X(t+\tau\circ\theta(t)))-F(X(\tau));\tau\leq t\right\}, \quad (s,x)\in E.
\end{align*}
Since $t+(\tau\wedge t)\circ\theta(t))$ is also a stopping time (see e.g. \cite[Theorem (8.6)]{BlGe68}), and using that $X$ solves the martingale problem (see \Cref{thm:main0} and \Cref{thm:main1}), we deduce that 
\begin{align*}
    \mathbb{E}_{s,x}
    &\left\{F(X(t+\tau\circ\theta(t)))-F(X(\tau));\tau\leq t\right\}\\
    &=\mathbb{E}_{s,x}\left\{F(X(t+(\tau\wedge t)\circ\theta(t)))-F(X(\tau\wedge t));\tau\leq t\right\}\\
    &=\mathbb{E}_{s,x}\left\{\mathbb{E}_{s,x}\left\{F(X(t+(\tau\wedge t)\circ\theta(t)))-F(X(\tau\wedge t)) | \mathcal{F}(\tau\wedge t)\right\};\tau\leq t\right\}\\
    &=\mathbb{E}_{s,x}\left\{\mathbb{E}_{s,x}\left\{\int_{\tau\wedge t}^{t+(\tau\wedge t)\circ\theta(t)}\overline{\sf L} F(X(r))\;dr\;\bigg|\; \mathcal{F}(\tau\wedge t)\right\};\tau\leq t\right\}\\
    &=\mathbb{E}_{s,x}\left\{\int_{\tau\wedge t}^{t+(\tau\wedge t)\circ\theta(t)}\overline{\sf L} F(X(r))\;dr\;;\;\tau\leq t\right\}, \quad (s,x)\in E.
\end{align*}
Thus, corroborating the above we get
\begin{align*}
\left|\frac{P_t\rho(s,x)-\rho(s,x)}{t}\right|
    &\leq\mathbb{E}_{s,x}\left\{\frac{1}{t}\int_0^{2t}\left|\overline{\sf L} F\right|(X(r))\;dr\;;\;\tau\leq t\right\}\\
    &\leq\frac{1}{t}\int_0^{2t}P_s\left(\left|\overline{\sf L} F\right|\right)\;dr,
\end{align*}
hence, by e.g. Jensen's inequality,
\begin{align*}
\overline{\mu}\left(\left|\frac{P_t\rho(s,x)-\rho(s,x)}{t}\right|^p\right)
    &\leq2^{p-1}\frac{1}{t}\int_0^{2t}\overline{\mu}\left(P_r\left(\left|\overline{\sf L} F\right|^p\right)\right)\;dr\\
    &\leq2^p \overline{\mu}\left(\left|\overline{\sf L} F\right|^p\right).
\end{align*}
Since by \Cref{thm:main0} we have ${\overline{\sf L}}F\in L^p(\overline{\mu})$, we deduce that
\begin{equation*}
    \sup_{t\in [0,1]} \left\|\frac{P_t\rho-\rho}{t}\right\|_{L^p(\overline{\mu})}<\infty.
\end{equation*}
But now it is a standard fact in the theory of strongly continuous semigroups that $\rho\in {\sf D}({\sf \overline{L}_p})$; see e.g. the discussion before Proposition 4.4 in \cite{BeCi18}.
Furthermore, note that ${\sf \overline{L}_p} \rho =0$ on $(N,\infty)\times \mathbb{R}^d$ $\overline{\mu}$-a.e., where $N$ has been in the beginning of the proof of (iii).
Thus, since $\overline{\mu}([0,N]\times \mathbb{R}^d)=N<\infty$, we deduce that ${\sf \overline{L}_p} \rho \in L^1(\overline{\mu})$.
Since we also have $\rho \in L^1(\overline{\mu})$, we get that $\rho\in {\sf D}({\sf \overline{L}_1})$ as well. 

Moreover, using H\"older's inequality,
\begin{align*}
\left|\frac{P_t\rho(s,x)-\rho(s,x)}{t}\right|
    &\leq\mathbb{E}_{s,x}\left\{\frac{1}{t}\int_0^{2t}\left|\overline{\sf L} F\right|(X(r))\;dr\;;\;\tau\leq t\right\}\\
    &\leq2^{1/p}\left[\mathbb{E}_{s,x}\left\{\frac{1}{t}\int_0^{2t}\left|\overline{\sf L} F\right|^p(X(r))\;dr\right\}\right]^{1/p}\left[\mathbb{P}_{s,x}\left(\tau\leq t\right)\right]^{(p-1)/p}\\
    &=2^{1/p}\left[\frac{1}{t}\int_0^{2t}P_s\left(\left|\overline{\sf L} F\right|^p\right)\;dr\right]^{1/p}\left[\mathbb{P}_{s,x}\left(\tau\leq t\right)\right]^{(p-1)/p}.
\end{align*}
Consequently,
\begin{align*}
    &\int_{(0,T)\times D}\left|\frac{P_t\rho(s,x)-\rho(s,x)}{t}\right|\;\overline{\mu}(dt,dx)\\
    &\leq 2^{1/p}\overline{\mu}\left(\left[\frac{1}{t}\int_0^{2t}P_s\left(\left|\overline{\sf L} F\right|^p\right)\;dr\right]\right)^{1/p}\left(\int_{(0,T)\times D}\mathbb{P}_{s,x}\left(\tau\leq t\right)\;\overline{\mu}(dt,dx)\right)^{(p-1)/p}\\
    &\leq 2^{2/p}\overline{\mu}\left(\left|\overline{\sf L} F\right|^p\right)^{1/p}\left(\int_{[(0,T)\times D]\cap E}\mathbb{P}_{s,x}\left(\tau\leq t\right)\;\overline{\mu}(dt,dx)\right)^{(p-1)/p}
\end{align*}
which converges to $0$ by dominated convergence since due to \Cref{rem:finely open prob} we have
\begin{equation*}
    \mathbb{P}_{s,x}\left(\tau>0\right)=1, \quad (s,x)\in [(0,T)\times D]\cap E.
\end{equation*}
This clearly completes the proof of (i.2).

\medskip
(ii.1). Let $(\tau_k)_{k\geq 1}$ be as in the proof of (i.1) and $\tau=0$, so that from the hypothesis of (ii.2) we deduce that, under $\mathbb{P}_{s,x}, (s,x)\in [[0,T)\times D]\cap E$,
$(v(X(\tau_k)))_{k\geq 0}$ is a $(\mathcal{F}(\tau_k))$-martingale which is closed by $F(X(\tau))$. 
Therefore, 
\begin{equation*}
    v(s,x)=\mathbb{E}_{s,x}\left\{v(X(0))\right\}=\mathbb{E}_{s,x}\left\{F(X(\tau))\right\}, \quad (s,x)\in [[0,T)\times D]\cap E,
\end{equation*}
which proves the claim.

\medskip
(ii.2). 
Let $v:E\rightarrow \mathbb{R}$ be $\mathcal{B}(E)$-measurable, bounded, and finely continuous such that \eqref{eq:v_parabolic} is satisfied, and consider the process
\begin{equation*}
    M(t):=v(X(t))-v(X(0))-\int_0^t {\sf \overline{L}_1}v(X(s))\;ds,\quad t\geq 0, 
\end{equation*}
which, under $\mathbb{P}_{\overline{\mu}}$, is well defined; in particular it does not depend on the $\overline{\mu}$-version of ${\sf \overline{L}_1}v$.
Let us take $g:E\rightarrow\mathbb{R}$ $\mathcal{B}(E)$-measurable such that $g(x)=0, x\in (0,T)\times D$ and $g={\sf \overline{L}_1}v$ $\overline{\mu}$-a.e. and consider the process
\begin{equation*}
    \tilde{M}(t):=v(X(t))-v(X(0))-\int_0^t g(X(s))\;ds,\quad t\geq 0. 
\end{equation*}
Since $P_tv=v+\int_0^t P_s g \;ds$ $\overline{\mu}$-a.e., it follows by \Cref{lem:martingale 1} and \Cref{prop:martingale_absorbing} that there exists a set $E'\in \mathcal{B}(E)$ such that $E\setminus E'$ is $\overline{\mu}$-inessential and $(\tilde{M}(t))_{t\geq 0}$ is a well-defined right-continuous $\mathcal{F}(t)$-martingale under $\mathbb{P}_{s,x}$ for every $(s,x)\in E'$.
Moreover, since $g(x)=0, x\in (0,T)\times D$, we have $\mathbb{P}_{s,x}$-a.s. for $(s,x)\in [(0,T)\times D]\cap E'$ that
\begin{equation*}
    v(s,x)=v(X(t))-\tilde{M}(t), \quad t\in [0,\tau],
\end{equation*}
hence by \eqref{eq:v_parabolic} we get
\begin{equation*}
    v(s,x)=F(X(\tau))-\tilde{M}(\tau-).
\end{equation*}
Note that since $X$ is in fact a Hunt process, it follows from \cite[Theorem (47.6) and Exercise (47.7)]{Sh88}, that $(\mathcal{F}(t))_{t\geq 0}$ is {\it quasi-left continuous} (see \cite{Sh88} for details), hence by e.g. \cite[pp. 191-192]{Pr12}, no $(\mathcal{F}(t))$-martingale can jump at predictable times.
Thus, $\mathbb{P}_{s,x}$-a.s. for $(s,x)\in [(0,T)\times D]\cap E'$ we have 
\begin{equation*}
    \tilde{M}(\tau-)=\tilde{M}(\tau) \quad\mbox{and}\quad  F(X(\tau))=v(X(\tau)).
\end{equation*}
Since $\tau$ is bounded a.s. we get $\mathbb{P}_{s,x}$-a.s. for $(s,x)\in [(0,T)\times D]\cap E'$ that $\mathbb{E}_{s,x}\left\{M(\tau)\right\}=0$, hence finally
\begin{equation*}
    v(s,x)=\mathbb{E}_{s,x}\left\{F(X(\tau))\right\}\left(=\rho(s,x)\right).
\end{equation*}
\end{proof}

\begin{rem}
Let $\left(\mu_t:=u(t,x)dx\right)_{t\geq 0}$ be a given solution to the linear Fokker-Planck equation \eqref{eq:LFPshort}, and assume that $\mathbf{H_{a,b}^{\sqrt{u}}}$, $\mathbf{H_{\sf \lesssim u}}$, and $\mathbf{H^{\sf TV}_0}$ hold.
Further, let $T>0$, $F\in C^2_c(\mathbb{R}^d)$, and $\rho$ be the solution from \Cref{coro:BKE} for $D=\mathbb{R}^d$.
Then
\begin{equation}\label{eq:representation}
    \int_{\mathbb{R}^d} F(x)\; \mu_T(dx)=\int_{\mathbb{R}^d} \rho(0,x)\; \mu_0(dx).
\end{equation}
To see this, first of all note that since $D=\mathbb{R}^d$ whilst $\left(X_1(t)\right)_{t\geq 0}$ is the uniform motion to the right as ensured by \Cref{thm:main0}, it follows that $\tau$ given in \Cref{coro:BKE} satisfies
\begin{equation*}
    \tau=T-t \quad \mathbb{P}_{t,x}\mbox{-a.s.}\quad t\in [0,T], x\in E(t)
\end{equation*}
Thus,
\begin{equation*}
    \rho(t,x)
    :=\mathbb{E}_{t,x}\left\{F(X_2(T-t))\right\},\quad t\in[0,T], x\in E(t),
\end{equation*}
and the claim follows by \Cref{thm:main1}, (ii.4').

From the numerical standpoint, \eqref{eq:representation} indicates that in order to compute the integral $\mu_T(F)$, given $\mu_0,T,$ and $F$, one could attempt to solve backward in time equation \eqref{eq:BKE} in order to compute $\rho(0,\cdot)$, and then use \eqref{eq:representation}.
However, solving \eqref{eq:BKE} should be carefully understood in the $\mu$-restricted sense given by \Cref{coro:BKE}; from a probabilistic numerical perspective, one should be able to reverse in time the diffusion $(X_2(t))_{t\in [0,T]}$.
\end{rem}

\subsection{Lyapunov functions}
Let $\left(\mu_t:=u(t,x)dx\right)_{t\geq 0}$ be a given solution to the linear Fokker-Planck equation \eqref{eq:LFPshort}, and assume that $\mathbf{H_{a,b}^{\sqrt{u}}}$, $\mathbf{H_{\sf \lesssim u}}$, and $\mathbf{H^{\sf TV}_0}$ hold.
Further, let $X$ be the process provided by \Cref{thm:main0} and \Cref{thm:main1}, with transition function $(P_t)_{t\geq 0}$ and resolvent $\mathcal{U}=(U_\alpha)_{\alpha>0}$. 
In this part we are interested in deriving general maximal tail estimates of the type
\begin{equation*}
    \mathbb{P}_{(s,x)}\left( \sup_{t\in [0,T]} V(X_2(t)) \geq \varepsilon \right)\leq \frac{e^{c T}}{\varepsilon}V(x),
\end{equation*}
where $V$ is a suitable Lyapunov function.
Such estimates have been obtained in \cite{BoRoSh21} with $\mathbb{P}_{(s,x)}$ replaced by $\mathbb{P}_{\delta_0\otimes\mu_0}$. 
Here, we are interested in obtaining such estimates for every $(s,x)\in E$; see \Cref{prop:LV} below.

Throughout, for a function $V\in p\mathcal{B}(\mathbb{R}\times \mathbb{R}^d)$, by $P_tV$ and $U_\alpha V$ we mean $P_t(V|_E)$ and $U_\alpha (V|_E)$, respectively.

We also introduce the following measures
\begin{equation*}
    \overline{\mu}_\alpha(dx,dt):= e^{-\alpha t}\mu_t(dx)dt, \quad \alpha>0.
\end{equation*}

\begin{prop}\label{prop:mu-V}
Assume that there exists a Borel measurable function $V:[0,\infty)\times\mathbb{R}^d\rightarrow [0,\infty]$ and a constant $\delta_V\in (0,\infty)$ 
such that
\begin{equation}\label{eq:Lyapunov}
    \mu_0(V(0,\cdot))<\infty \quad \mbox{and} \quad \mu_t(V(t,\cdot))\leq e^{\delta_Vt}\mu_0(V(0,\cdot)), \quad t\geq 0.
\end{equation}
Then the following hold:
\begin{enumerate}
    \item[(i)] 
    $\overline{\mu}_\beta\left(U_{\alpha+\beta} F\right) \leq   \overline{\mu}(F)\wedge\frac{\overline{\mu}_\beta(F)}{\alpha}, \quad \beta,\alpha>0, \quad F\in p\mathcal{B}([0,\infty)\times \mathbb{R}^d).   
    $
    \item[(ii)]$\delta_0\otimes\mu_0\left(U_{\alpha} F\right) \leq   \overline{\mu}_\alpha(F)\quad F\in p\mathcal{B}([0,\infty)\times \mathbb{R}^d)$.
    \item[(iii)] $\overline{\mu}_\beta\left(U_\alpha V\right)\leq\frac{\mu_0(V(0,\cdot))}{(\alpha-\delta_V)(\beta-\delta_V)}, \quad \alpha,\beta>\delta_V$.
    \item[(iv)]
    $\delta_0\otimes\mu_0\left(U_\alpha V\right)\leq \frac{\mu_0(V(0,\cdot))}{\alpha-\delta_V}, \quad \alpha>\delta_V.$
\end{enumerate}
\end{prop}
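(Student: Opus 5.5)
The four items rest on two facts already established. The first is sub-invariance of $\overline{\mu}$ under $(P_t)_{t\geq 0}$, which follows from Step OP1.14 and the construction of $\overline{\mathcal{U}}$ as a resolvent of Markov contractions, with $\overline{\mu}$ sub-invariant. The second is the identification $\mathbb{P}_{\delta_s\otimes\mu_s}\circ (X(t))^{-1}=\delta_{s+t}\otimes\mu_{s+t}$ for all $s,t\geq 0$, from \Cref{thm:main1}, (ii.4'). The plan is to write every quantity as an iterated integral over $s$ and $t$ via Fubini, insert one of these two facts, and compute the resulting exponential integrals explicitly.

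For (ii), Fubini and (ii.4') with $s=0$ give
\[
\delta_0\otimes\mu_0(U_\alpha F)=\int_0^\infty e^{-\alpha t}\,\mu_0(P_tF(0,\cdot))\,dt=\int_0^\infty e^{-\alpha t}\mu_t(F(t,\cdot))\,dt=\overline{\mu}_\alpha(F).
\]
This is in fact an equality; for nonnegative $F$ we may pass to the limit by monotone convergence from bounded $F$.

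For (i), write
\[
\overline{\mu}_\beta(U_{\alpha+\beta}F)=\int_0^\infty\int_0^\infty e^{-\beta s}e^{-(\alpha+\beta)t}\,\mu_s(P_tF(s,\cdot))\,dt\,ds .
\]
By (ii.4') for $s>0$ together with the uniform motion of $X_1$, we have $\mu_s(P_tF(s,\cdot))=\mu_{s+t}(F(s+t,\cdot))$. Substituting $r=s+t$ turns the double integral into
\[
\int_0^\infty \mu_r(F(r,\cdot))\,e^{-\beta r}\int_0^r e^{-\alpha t}\,dt\,dr .
\]
The inner integral is bounded by $1/\alpha$, which gives the bound $\overline{\mu}_\beta(F)/\alpha$. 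It is also bounded by $r$, and combined with $e^{-\beta r}$ we have $re^{-\beta r}\leq 1$ only up to a constant, so that route does not directly give $\overline{\mu}(F)$.

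For the bound by $\overline{\mu}(F)$ I will argue differently. Since $e^{-\beta s}\leq 1$, we get $\overline{\mu}_\beta(U_{\alpha+\beta}F)\leq \overline{\mu}(U_{\alpha+\beta}F)$. Sub-invariance of $\overline{\mu}$ for $(\alpha+\beta)U_{\alpha+\beta}$ then gives at most $\overline{\mu}(F)/(\alpha+\beta)$. To obtain exactly $\overline{\mu}(F)$ I will use $\overline{\mu}(P_tF)\leq \overline{\mu}(F)$ and $\int_0^\infty e^{-(\alpha+\beta)t}\,dt\leq 1$ whenever $\alpha+\beta\geq 1$. If that normalization turns out to be needed, I will instead aim for the sharper form $\overline{\mu}(F)/(\alpha+\beta)$. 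This bookkeeping of constants in the first alternative of (i) is the main point to check; everything else is mechanical.

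For (iii) and (iv), apply the identities above with $F=V$ and use \eqref{eq:Lyapunov}, namely $\mu_r(V(r,\cdot))\leq e^{\delta_V r}\mu_0(V(0,\cdot))$. For (iv), this gives
\[
\delta_0\otimes\mu_0(U_\alpha V)\leq \mu_0(V(0,\cdot))\int_0^\infty e^{-(\alpha-\delta_V)t}\,dt=\frac{\mu_0(V(0,\cdot))}{\alpha-\delta_V}.
\]
For (iii), use the double-integral representation above with $\alpha$ in place of $\alpha+\beta$ and $\mu_{s+t}(V(s+t,\cdot))\leq e^{\delta_V(s+t)}\mu_0(V(0,\cdot))$. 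The $s$- and $t$-integrals then factor into $\int_0^\infty e^{-(\beta-\delta_V)s}\,ds\int_0^\infty e^{-(\alpha-\delta_V)t}\,dt$, which yields the claimed product $\frac{\mu_0(V(0,\cdot))}{(\alpha-\delta_V)(\beta-\delta_V)}$.
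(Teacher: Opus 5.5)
Your arguments for (ii), (iii), (iv) and for the bound $\overline{\mu}_\beta(F)/\alpha$ in (i) are correct. They are the paper's own computation: Fubini, the identity $\mu_s(P_tF(s,\cdot))=\mu_{s+t}(F(s+t,\cdot))$ coming from the uniform motion and (ii.4'), and explicit exponential integrals.

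What you do not establish is the other alternative in (i), $\overline{\mu}_\beta(U_{\alpha+\beta}F)\le\overline{\mu}(F)$, when $\alpha+\beta<1$. That gap cannot be closed, because the inequality is false there. Take $F(t,x)=1_{[r_0,r_0+1]}(t)$. Since each $\mu_r$ is a probability with $\mu_r(E(r))=1$, your substitution $r=s+t$ gives
\[
\overline{\mu}_\beta(U_{\alpha+\beta}F)=\int_{r_0}^{r_0+1}e^{-\beta r}\,\frac{1-e^{-\alpha r}}{\alpha}\,dr,\qquad \overline{\mu}(F)=1 .
\]
Choose $\alpha=\beta=\varepsilon$ and $r_0=\varepsilon^{-1}\ln 2$. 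On $[r_0,r_0+1]$ the integrand is at least $e^{-\varepsilon}/(4\varepsilon)$, which exceeds $1$ for, e.g., $\varepsilon\le 1/8$. In general, the best constant in front of $\overline{\mu}(F)$ is
\[
\sup_{r\ge 0}e^{-\beta r}\,\frac{1-e^{-\alpha r}}{\alpha}=\frac{1}{\alpha+\beta}\Bigl(\frac{\beta}{\alpha+\beta}\Bigr)^{\beta/\alpha},
\]
which equals $1/(4\alpha)$ when $\alpha=\beta$.

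The paper's proof reaches the $\overline{\mu}(F)$ alternative through a slip. It first writes the quantity as $\int_0^\infty e^{-\alpha t}\int_0^\infty e^{-\beta(t+s)}\mu_{t+s}(F(t+s,\cdot))\,ds\,dt$. It then bounds the inner integral by $\overline{\mu}_\beta(F)\le\overline{\mu}(F)$, but attaches the outer factor $\int_0^\infty e^{-\alpha t}\,dt=1/\alpha$ only to the second alternative. Done correctly, that chain yields only $\overline{\mu}_\beta(F)/\alpha$.

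The statement that actually holds is the one your sub-invariance argument proves:
\[
\overline{\mu}_\beta(U_{\alpha+\beta}F)\le\frac{\overline{\mu}(F)}{\alpha+\beta}\wedge\frac{\overline{\mu}_\beta(F)}{\alpha}.
\]
This implies the claimed form only when $\alpha+\beta\ge 1$. So rather than looking for a ``normalization'', state (i) in this corrected form. Note also that for $\alpha+\beta<1$ your fallback $\overline{\mu}(F)/(\alpha+\beta)$ is weaker than the claimed bound, not sharper. The only later quantitative use of this proposition in the paper, the $\delta_0\otimes\mu_0$ estimate for the perturbed resolvent, relies on (ii) and sub-invariance alone, so nothing downstream is affected.
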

\begin{proof}
\begin{align*}
    \overline{\mu}_\beta\left(U_{\alpha+\beta} F\right)
    &=\int_0^\infty e^{-(\alpha+\beta) t} \overline{\mu}_\beta\left(P_t F(\cdot,\cdot)\right)\;dt
    =\int_0^\infty e^{-(\alpha+\beta) t} \int_0^\infty e^{-\beta s}\mu_s\left(P_t F(s,\cdot)\right)\;ds\;dt\\
    &=\int_0^\infty e^{-(\alpha+\beta) t} \int_0^\infty e^{-\beta s}\mu_{t+s}\left( F(t+s,\cdot)\right)\;ds\;dt\\
    &=\int_0^\infty e^{-\alpha t} \int_0^\infty e^{-\beta (t+s)}\mu_{t+s}\left( F(t+s,\cdot)\right)\;ds\;dt\\
    &\leq\overline{\mu}(F)\wedge\left( \overline{\mu}_\beta(F)\int_0^\infty e^{-\alpha t}\;dt\right)\\
    &=\overline{\mu}(F)\wedge\frac{\overline{\mu}_\beta(F)}{\alpha}, \quad \beta,\alpha>0, F\in p\mathcal{B}([0,\infty)\times \mathbb{R}^d).\\
    \delta_0\otimes\mu_0\left(U_\alpha F\right)
    &=\int_0^\infty e^{-\alpha t} \mu_0\left(P_t F(0,\cdot)\right)\;dt
    =\int_0^\infty e^{-\alpha t} \mu_t\left(F(t,\cdot)\right)\;dt=\overline{\mu}_\alpha(F),\quad F\in p\mathcal{B}([0,\infty)\times \mathbb{R}^d)\\
    \overline{\mu}_\beta\left(U_\alpha V\right)
    &=\int_0^\infty e^{-\alpha t} \overline{\mu}_\beta\left(P_t V(\cdot,\cdot)\right)\;dt
    =\int_0^\infty e^{-\alpha t} \int_0^\infty e^{-\beta s}\mu_s\left(P_t V(s,\cdot)\right)\;ds\;dt\\
    &=\int_0^\infty e^{-\alpha t} \int_0^\infty e^{-\beta s}\mu_{t+s}\left( V(t+s,\cdot)\right)\;ds\;dt\\
    &\leq \mu_0(V(0,\cdot))\int_0^\infty e^{-(\alpha-\delta_V) t} \int_0^\infty e^{-(\beta-\delta_V)s}\;ds\;dt \\
    &=\frac{\mu_0(V(0,\cdot))}{(\alpha-\delta_V)(\beta-\delta_V)}, \quad \alpha,\beta >\delta_V.\\
    \delta_0\otimes\mu_0\left(U_\alpha V\right)
    &=\int_0^\infty e^{-\alpha t} \mu_0\left(P_t V(0,\cdot)\right)\;dt
    =\int_0^\infty e^{-\alpha t} \mu_t\left(V(t,\cdot)\right)\;dt\\
    &\leq \mu_0(V(0,\cdot))\int_0^\infty e^{-(\alpha-\delta_V)t} \;dt\\
    &=\frac{\mu_0(V(0,\cdot))}{\alpha-\delta_V}, \quad \alpha>\delta_V.
    \end{align*}
\end{proof}

\begin{prop}\label{prop:liminfV_n}
Assume that there exists $0\leq V_n\in C_b^2([0,\infty)\times \mathbb{R}^d),n\geq 1$, and $\delta_V\in(0,\infty)$ such that
\begin{equation}\label{eq:V_n}
\overline{\sf L}V_n(s,x)\leq \delta_V V(s,x),\quad V(s,x):=\liminf_n V_n(s,x), \quad (s,x)\in E.
\end{equation}
Then
\begin{equation}\label{eq:PtV<V}
    P_tV\leq e^{\delta_V t}V, \quad t\geq 0.
\end{equation}
Furthermore, if 
$\liminf_n \left({\sf \overline{L}}V_n\right)^-\geq {\sf \overline{L}}V$, then
\begin{equation}\label{eq:|LV|}
    \int_0^{t} P_r|\overline{\sf L}V|\; dr\leq V(2e^{\delta_Vt}-1), \quad t\geq 0.
\end{equation}
\end{prop}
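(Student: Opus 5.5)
The plan is to apply, for each fixed $n$, the martingale problem of \Cref{thm:main0}, (ii.5) (extended to $s=0$ in \Cref{thm:main1}) to $V_n$, and then pass to the limit $n\to\infty$ with Fatou's lemma. Since $V_n\in C_b^2$ is not compactly supported, I first have to justify the Dynkin formula for it. I would use localization: take cut-offs $\chi_k\in C_c^\infty$ with $\chi_k=1$ on $[0,k]\times B(0,k)$, apply (ii.5) to $V_n\chi_k$, and stop at the exit time $\sigma_k$ of $X$ from $[0,k)\times B(0,k)$. Because $X$ is conservative with continuous paths, $\sigma_k\nearrow\infty$ $\mathbb{P}_{s,x}$-a.s.

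On each stopped interval $V_n\chi_k=V_n$. Hence
\begin{equation*}
e^{-\delta_V(t\wedge\sigma_k)}V_n(X(t\wedge\sigma_k))-\int_0^{t\wedge\sigma_k}e^{-\delta_V r}\left(\overline{\sf L}V_n-\delta_V V_n\right)(X(r))\,dr
\end{equation*}
is a martingale, by the product (integration by parts) formula for the deterministic factor $e^{-\delta_V r}$. Taking expectations gives
\begin{equation*}
\mathbb{E}_{s,x}\left\{e^{-\delta_V(t\wedge\sigma_k)}V_n(X(t\wedge\sigma_k))\right\}=V_n(s,x)+\mathbb{E}_{s,x}\int_0^{t\wedge\sigma_k}e^{-\delta_V r}\left(\overline{\sf L}V_n-\delta_V V_n\right)(X(r))\,dr .
\end{equation*}
Using \eqref{eq:V_n}, the integrand is at most $\delta_V(V-V_n)(X(r))$. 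This is the first obstacle: this quantity is not $\le 0$ for fixed $n$.

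To handle it, I would treat the term $\delta_V(V-V_n)$ with $\liminf$ arguments. Alternatively, I would reorganize the estimate as a Gronwall-type inequality for $\phi_n(t):=P_tV_n(s,x)$, namely
\begin{equation*}
\phi_n(t)\le V_n(s,x)+\delta_V\int_0^tP_rV(s,x)\,dr,
\end{equation*}
obtained after letting $k\to\infty$. Here $V_n$ is bounded, so dominated convergence applies on the left. On the right, the positive part of $\overline{\sf L}V_n$ is bounded by $\delta_V V$, so monotone convergence or Fatou applies. Taking $\liminf_n$ and using Fatou, $P_tV\le\liminf_nP_tV_n$, I get
\begin{equation*}
P_tV\le V+\delta_V\int_0^tP_rV\,dr .
\end{equation*}
Gronwall's lemma for measurable functions (cf. \cite[Theorem 5.1]{EtKu86}) then yields \eqref{eq:PtV<V}, provided $r\mapsto P_rV(s,x)$ is locally integrable. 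If $V(s,x)=\infty$ there is nothing to prove. Otherwise, I would first truncate or iterate the inequality to get finiteness: iterating the integral inequality gives the exponential series bound directly, by monotone iteration, without any a priori finiteness. The main obstacle is making this Gronwall step rigorous when $P_rV$ may be infinite, and I would use the iterated-kernel (Neumann series) argument for that.

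For \eqref{eq:|LV|}, write $|\overline{\sf L}V|=2(\overline{\sf L}V)^+-\overline{\sf L}V$. From the Dynkin identity for $V_n$ I expect
\begin{equation*}
\int_0^tP_r(\overline{\sf L}V_n)^-dr=V_n+\int_0^tP_r(\overline{\sf L}V_n)^+dr-P_tV_n\le V_n+\delta_V\int_0^tP_rV\,dr .
\end{equation*}
Taking $\liminf_n$ with Fatou and the hypothesis $\liminf_n(\overline{\sf L}V_n)^-\ge\overline{\sf L}V$ (interpreted as controlling $(\overline{\sf L}V)^-$), and bounding $(\overline{\sf L}V)^+\le\delta_VV$, gives
\begin{equation*}
\int_0^tP_r|\overline{\sf L}V|\,dr\le V+2\delta_V\int_0^tP_rV\,dr .
\end{equation*}
Inserting \eqref{eq:PtV<V} gives
\begin{equation*}
V+2V\left(e^{\delta_Vt}-1\right)=V\left(2e^{\delta_Vt}-1\right),
\end{equation*}
which is exactly \eqref{eq:|LV|}.
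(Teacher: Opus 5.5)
The second half of your proposal (the bound \eqref{eq:|LV|} via the Dynkin inequality for $V_n$, Fatou in $n$, $(\overline{\sf L}V)^+\le\delta_V V$, and then \eqref{eq:PtV<V}) matches the paper and is fine. There is, however, a genuine gap in the first half. By letting $k\to\infty$ \emph{before} applying Gronwall, you discard the only source of finiteness. The function $\phi(r)=P_rV(s,x)$ is not known to be locally integrable; that is essentially what \eqref{eq:PtV<V} asserts. Your proposed remedy does not work either. Iterating $\phi(t)\le V+\delta_V\int_0^t\phi(r)\,dr$ $N$ times gives
\[
\phi(t)\le V\sum_{j=0}^{N-1}\frac{(\delta_V t)^j}{j!}+\delta_V^N\int_0^t\frac{(t-u)^{N-1}}{(N-1)!}\,\phi(u)\,du ,
\]
and the remainder is $+\infty$ whenever $\int_0^t\phi=\infty$. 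Indeed, $\phi\equiv+\infty$ on $(0,t]$ satisfies the integral inequality trivially, so no iteration can produce finiteness. The resolvent form $(\alpha-\delta_V)U_\alpha V\le V$ of the same Neumann-series idea has the same defect, since it needs $U_\alpha V<\infty$. The argument as written is therefore circular.

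The repair is to keep the localization through the Gronwall step, which is what the paper does. Set $\psi_k(t):=\mathbb{E}_{s,x}\{V(X(t\wedge\sigma_k))\}$. The stopped Dynkin identity, the bound $\overline{\sf L}V_n\le\delta_V V$, and the inequality $\int_0^{t\wedge\sigma_k}V(X(r))\,dr\le\int_0^t V(X(r\wedge\sigma_k))\,dr$ give
\[
\mathbb{E}_{s,x}\{V_n(X(t\wedge\sigma_k))\}\le V_n(s,x)+\delta_V\int_0^t\psi_k(r)\,dr .
\]
Fatou in $n$ then yields $\psi_k(t)\le V(s,x)+\delta_V\int_0^t\psi_k(r)\,dr$.

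Now $\psi_k$ is bounded as soon as $V$ is bounded on $[0,k]\times\overline{B(0,k)}$. This holds, for example, for continuous $V$ as in \Cref{prop:LV}; the paper uses this tacitly. Gronwall then gives $\psi_k(t)\le e^{\delta_V t}V(s,x)$. Only afterwards do you let $k\to\infty$, by Fatou, using that $X(t\wedge\sigma_k)=X(t)$ for large $k$.
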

\begin{proof}
Let us consider
\begin{equation*}
    \tau_k:=\inf\{t>0:X(t)\in E\setminus [0,k)\times B(0,k)\}, \quad k\geq 1.
\end{equation*}
Since $X$ has continuous paths and it is conservative, we deduce that
\begin{equation}\label{eq:tau_localization}
    \tau_k\nearrow_k \infty \quad \mathbb{P}_{s,x}\mbox{-a.s.}, \quad (s,x)\in E.
\end{equation}
Indeed, if we set $\tau_\infty:=\sup_k \tau_k$, then, since $X_1$ is the uniform motion to the right,
\begin{equation*}
    \lim_k\|X_2(\tau_k)\|=\infty \quad \mbox{on } \tau_\infty<\infty \quad \mbox{a.s.}
\end{equation*}
But $\sup\limits_{t\in [0,\tau_\infty]}\|X_2(t)\|<\infty $ on $\tau_\infty<\infty$ a.s., hence we must have \eqref{eq:tau_localization}.

Further, for each $n,k\geq 1$ there exists $0\leq V_{n,k}\in C_c^2([0,\infty)\times \mathbb{R}^d)$ such that
\begin{equation}\label{eq:Vn=Vnk}
    V_{n,k-1}=V_n \quad \mbox{on } (-1/k,k)\times B(0,k).
\end{equation}
On the one hand, by \Cref{thm:main0} and \Cref{thm:main1}, for every $(s,x)\in E$ and $k\geq 1$ we have that the stopped process
\begin{equation}\label{eq:mpVnk}
   V_{n,k}(X(t\wedge\tau_k))-V_{n,k}(X(0))-\int_0^{t\wedge\tau_k} \overline{\sf L}V_{n,k}(X(r))\; dr, \quad t\geq 0
\end{equation}
is a continuous $\left(\mathcal{F}_t\right)$-martingale under $\mathbb{P}_{(s,x)}$.
Consequently, using \eqref{eq:Vn=Vnk}, taking expectations in \eqref{eq:mpVnk}, and using also \eqref{eq:V_n}, we have for every $(s,x)\in E$
\begin{align*}
    \mathbb{E}_{(s,x)}\left\{V_{n,k}(X(t\wedge\tau_k))\right\}
    &= V_{n,k}(s,x)+\mathbb{E}\left\{\int_0^{t\wedge\tau_k} \overline{\sf L}V_{n,k}(X(r))\; dr\right\}\\
    &= V_{n,k}(s,x)+\mathbb{E}\left\{\int_0^{t\wedge\tau_k} \overline{\sf L}V_{n}(X(r))\; dr\right\}\\
    &\leq V_{n,k}(s,x)+\delta_V\mathbb{E}\left\{\int_0^{t\wedge\tau_k} V(X(r))\; dr\right\}\\
    &\leq V_{n,k}(s,x)+\delta_V\int_0^{t} \mathbb{E}\left\{V(X(r\wedge\tau_k))\; dr\right\}, \quad t\geq0, n,k\geq 1.
\end{align*}
On the other hand, note that
\begin{equation*}
    \liminf_nV_{n,k}=V \quad \mbox{on } [0,k)\times B(0,k), \quad k\geq 1.
\end{equation*}
Therefore, by Fatou's lemma we get for $k\geq 1$ and $(s,x)\in E\cap [0,k)\times B(0,k)$ that
\begin{equation*}
\mathbb{E}_{(s,x)}\left\{V(X(t\wedge\tau_k))\right\}\leq V(s,x)+\delta_V\int_0^{t} \mathbb{E}\left\{V(X(r\wedge\tau_k))\; dr\right\}, \quad t\geq 0.
\end{equation*}
By Gronwall's lemma we thus have for $k\geq 1$ and $(s,x)\in (s,x)\in E\cap [0,k)\times B(0,k)$ that
\begin{equation*}
    \mathbb{E}_{(s,x)}\left\{V(X(t\wedge\tau_k))\right\}\leq e^{\delta_V t}V(s,x), \quad t\geq 0.
\end{equation*}
Now, \eqref{eq:PtV<V} follows by letting $k$ go to infinity.

To prove \eqref{eq:|LV|}, we use the second equality after \eqref{eq:mpVnk} to get
\begin{align*}
    \mathbb{E}\left\{\int_0^{t\wedge\tau_k} \left(\overline{\sf L}V_{n}\right)^-(X(r))\; dr\right\}
    &\leq V_{n,k}(s,x)+\mathbb{E}\left\{\int_0^{t\wedge\tau_k} \left(\overline{\sf L}V_{n}\right)^+(X(r))\; dr\right\}\\
    &\leq V_{n,k}(s,x)+\delta_V\mathbb{E}\left\{\int_0^{t\wedge\tau_k} V(X(r))\; dr\right\}, \quad t\geq0, n,k\geq 1.
\end{align*}
Letting $k$ go to infinity we thus get
\begin{align*}
    \mathbb{E}_{s,x}\left\{\int_0^{t} \left(\overline{\sf L}V_{n}\right)^-(X(r))\; dr\right\}
    &\leq V_{n}(s,x)+\delta_V\mathbb{E}\left\{\int_0^{t} V(X(r))\; dr\right\}\\
    &\leq V_{n}(s,x)+\delta_V V\int_0^t e^{\delta_Vs}\;ds\\
    &=V_{n}(s,x)+V(e^{\delta_Vt}-1), \quad t\geq 0,n\geq 1.
\end{align*}
Letting now $n$ go to infinity and using Fattou's lemma we get
\begin{equation*}
    \int_0^{t} P_r\left(\overline{\sf L}V\right)^-\; dr\leq e^{\delta_Vt}V, \quad t\geq 0.
\end{equation*}
Since we clearly have
\begin{equation*}
    \int_0^{t} P_r\left(\overline{\sf L}V\right)^+\; dr
    \leq (e^{\delta_Vt}-1)V, \quad t\geq 0,
\end{equation*}
relation \eqref{eq:|LV|} follows.
\end{proof}

\begin{prop}
Let $V\in p\mathcal{B}([0,\infty)\times \mathbb{R}^d)$ and $\delta_V\in(0,\infty)$ such that
\begin{equation*}
    \mu_0(V(0,\cdot))<\infty, \quad P_tV\leq e^{\delta_V t}V,\quad t\geq 0.
\end{equation*}
Then
$
    \mu_t(V(t,\cdot))\leq e^{\delta_Vt}\mu_0(V(0,\cdot)), \quad t\geq 0,
$
and hence assertions (i)-(iii) from \Cref{prop:mu-V} hold.
\end{prop}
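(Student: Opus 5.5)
The argument is essentially one line: integrate the pointwise inequality $P_tV\leq e^{\delta_V t}V$ against the initial law $\delta_0\otimes\mu_0$, and identify the left-hand side through the marginal identification of \Cref{thm:main1}, (ii.4').

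First, by \Cref{thm:main1}, (ii.1'), we have $\mu_0(E(0))=1$. Hence $\delta_0\otimes\mu_0$ is a probability measure on $E$, and $\mathbb{P}_{\delta_0\otimes\mu_0}$ is well defined.

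Next, by \Cref{thm:main0}, (ii.4), as strengthened in \Cref{thm:main1}, (ii.4'), we have $X_1(t)=t$ almost surely under $\mathbb{P}_{0,x}$ for $(0,x)\in E$, and $\mathbb{P}_{\delta_0\otimes\mu_0}\circ (X_2(t))^{-1}=\mu_t$. Combining the two gives
\begin{equation*}
\int_{E(0)} P_tV(0,x)\,\mu_0(dx)=\mathbb{E}_{\delta_0\otimes\mu_0}\left\{V(t,X_2(t))\right\}=\int_{\mathbb{R}^d}V(t,y)\,\mu_t(dy),
\end{equation*}
valid for nonnegative Borel $V$, where $V$ is understood as $V|_E$. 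Since $\mu_t(E(t))=1$, restricting $V$ to $E$ does not affect the right-hand side.

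Now integrate the hypothesis $P_tV\leq e^{\delta_Vt}V$ at the points $(0,x)\in E$ with respect to $\mu_0(dx)$. This yields
\begin{equation*}
\mu_t(V(t,\cdot))\leq e^{\delta_Vt}\mu_0(V(0,\cdot)).
\end{equation*}
This is exactly \eqref{eq:Lyapunov}, and finiteness $\mu_0(V(0,\cdot))<\infty$ is assumed. \Cref{prop:mu-V} therefore applies, and assertions (i)–(iii) (indeed also (iv)) follow.

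The only delicate points are bookkeeping. One is making sure the inequality $P_tV\leq e^{\delta_Vt}V$ is used on all of $E$, including the slice $s=0$, which is where \Cref{thm:main1} rather than \Cref{thm:main0} is needed. The other is the handling of possibly infinite values of $V$, which is harmless because all quantities involved are nonnegative and Tonelli applies.
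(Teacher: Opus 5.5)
Your proof is correct and is the same argument as the paper's: integrate the pointwise bound $P_tV\leq e^{\delta_Vt}V$ on the slice $s=0$ against $\mu_0$, then identify the left-hand side with $\mu_t(V(t,\cdot))$ via the marginal identity of \Cref{thm:main1}, (ii.4'). The only difference is cosmetic: the paper first truncates to $V\wedge n$, using $e^{-\delta_Vt}P_t(V\wedge n)\leq V\wedge n$, and then lets $n\to\infty$ by monotone convergence, whereas you apply Tonelli directly to nonnegative integrands, which makes the truncation unnecessary.
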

\begin{proof}
First of all notice that     
\begin{equation*}
    e^{-\delta_V t}P_t(V\wedge n)\leq V\wedge n,\quad t\geq 0,n\geq 1.
\end{equation*}
Consequently,
\begin{equation*}
    e^{-\delta_V t}\mu_0\left(P_t(V\wedge n)(0,\cdot)\right)\leq \mu_0\left(V(0,\cdot)\wedge n\right),
\end{equation*}
hence
\begin{equation*}
    e^{-\delta_V t}\mu_{t}\left(V(t,\cdot)\wedge n\right)\leq \mu_0\left(V(0,\cdot)\wedge n\right), \quad n\geq 1,
\end{equation*}
from which the statement follows.
\end{proof}

The main result of this subsection is the following.
\begin{prop}\label{prop:LV}
Let $V$ be such that
\begin{equation*}
    0\leq V\in C^2(\mathbb{R}^d), \quad \overline{\sf L}V\leq \delta_V V \quad \mbox{for some } \delta_V\in(0,\infty).
\end{equation*}    
Then the following assertions hold:
\begin{enumerate}
    \item[(i)] $P_tV\leq e^{\delta_V t}V, t\geq 0$.
    \item[(ii)] $U_{\alpha+\delta_V} |{\sf \overline{L}}V|\leq 2\frac{(\alpha+\delta_V)}{\alpha}V, \quad \alpha>0$.
    \item[(iii)] For every $\varepsilon>0$ we have
    \begin{equation}\label{eq:doob}
        \mathbb{P}_{(s,x)}\left( \sup_{t\in [0,T]} V(X_2(t)) \geq \varepsilon \right)\leq \frac{e^{\delta_V T}}{\varepsilon}V(x), \quad (s,x)\in E.
    \end{equation}
\end{enumerate}
\end{prop}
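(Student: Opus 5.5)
Here $V$ is viewed as a function on $[0,\infty)\times\mathbb{R}^d$ that does not depend on time, so $\overline{\sf L}V={\sf L}_tV$. The three assertions will follow from \Cref{prop:liminfV_n} combined with a supermartingale argument.

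\textbf{(i).} We apply \Cref{prop:liminfV_n} with $V_n:=V\wedge$ (a smooth truncation) chosen so that $V_n\in C^2_b$ and $V_n\nearrow V$. The localisation in that proof uses only $V_n=V$ on $[0,k)\times B(0,k)$ for $k$ large. The cleanest route is to redo that proof with $V$ itself. Take $V_{k}\in C_c^2$ with $V_k=V$ on $[0,k)\times B(0,k)$ and stop at $\tau_k$. By \Cref{thm:main0}, (ii.5), and \Cref{thm:main1}, the stopped process
\[
V(X(t\wedge\tau_k))-V(X(0))-\int_0^{t\wedge\tau_k}\overline{\sf L}V(X(r))\,dr
\]
is a martingale under every $\mathbb{P}_{(s,x)}$, $(s,x)\in E$. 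Taking expectations, using $\overline{\sf L}V\le\delta_VV$, applying Gronwall, and letting $k\to\infty$ with Fatou and \eqref{eq:tau_localization} gives $P_tV\le e^{\delta_Vt}V$.

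\textbf{(ii).} From the same stopped identity we get
\[
\mathbb{E}\int_0^{t\wedge\tau_k}(\overline{\sf L}V)^-\le V(s,x)+\mathbb{E}\int_0^{t\wedge\tau_k}(\overline{\sf L}V)^+ .
\]
Since $(\overline{\sf L}V)^+\le\delta_VV$, this yields \eqref{eq:|LV|}, namely $\int_0^tP_r|\overline{\sf L}V|\,dr\le(2e^{\delta_Vt}-1)V$. Integrating by parts in the Laplace transform then gives
\[
U_{\alpha+\delta_V}|\overline{\sf L}V|=(\alpha+\delta_V)\int_0^\infty e^{-(\alpha+\delta_V)t}\int_0^tP_r|\overline{\sf L}V|\,dr\,dt\le(\alpha+\delta_V)V\Big(\frac{2}{\alpha}-\frac{1}{\alpha+\delta_V}\Big)\le\frac{2(\alpha+\delta_V)}{\alpha}V.
\]

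\textbf{(iii).} Set
\[
Y(t):=e^{-\delta_Vt}V(X_2(t)).
\]
By Itô's product rule on the martingale problem (applied to $e^{-\delta_V t}V_k(x)$, which is again in the admissible class of time-dependent test functions), the stopped process $Y(t\wedge\tau_k)$ equals a martingale plus $\int_0^{t\wedge\tau_k}e^{-\delta_Vr}(\overline{\sf L}V-\delta_VV)(X(r))\,dr$. The drift term is nonpositive, so $Y(\cdot\wedge\tau_k)$ is a nonnegative continuous supermartingale. Letting $k\to\infty$ via Fatou, $Y$ is a nonnegative continuous supermartingale under $\mathbb{P}_{(s,x)}$. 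Doob's maximal inequality then gives
\[
\mathbb{P}\Big(\sup_{[0,T]}Y\ge e^{-\delta_VT}\varepsilon\Big)\le\frac{e^{\delta_VT}}{\varepsilon}V(x).
\]
Since $\sup_{[0,T]}V(X_2)\ge\varepsilon$ implies $\sup_{[0,T]}Y\ge e^{-\delta_VT}\varepsilon$, we obtain \eqref{eq:doob}.

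The main obstacle is justifying the martingale statements for the unbounded, non-compactly supported $V$ and the time-weighted test functions at \emph{every} starting point $(s,x)\in E$, including $s=0$. This requires the exact localisation $\tau_k\nearrow\infty$ from \eqref{eq:tau_localization}, which relies on path continuity and conservativeness. It also requires checking that (ii.5) allows $C^2_c$ functions on $[0,\infty)\times\mathbb{R}^d$, which it does as stated.
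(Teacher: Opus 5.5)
Your proposal is correct. For (ii) it is the paper's argument, and your Laplace computation even gives the slightly sharper bound $\left(\frac{2(\alpha+\delta_V)}{\alpha}-1\right)V$. The route differs in (i) and (iii).

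\textbf{Part (i).} The paper does not localize $V$ directly. It passes to the concave truncations $V_N=\psi_N(V)$, with $0\le\psi_N'\le 1$ and $\psi_N''\le 0$, so that $\overline{\sf L}V_N\le\psi_N'(V)\overline{\sf L}V\le\delta_V V$. It then feeds these $C^2_b$ functions into \Cref{prop:liminfV_n}.

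Your opening sentence, which applies that proposition with an unspecified smooth truncation, would need exactly this concavity to preserve the Lyapunov inequality. However, your actual argument bypasses the truncation. Since $V$ is continuous, a localizer $V_k\in C^2_c$ with $V_k=V$ on $[0,k)\times B(0,k)$ also agrees with $V$ on the closure. Hence $V(X(t\wedge\tau_k))$ stays bounded, and the stopped-martingale, Gronwall and Fatou argument from the proof of \Cref{prop:liminfV_n} runs verbatim with $V_n\equiv V$. So the truncation in the paper serves only to match the $C^2_b$ hypothesis of that proposition.

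\textbf{Part (iii).} The paper reads off from (i) that $V$ is $\mathcal{U}_{\delta_V}$-excessive. It then invokes \Cref{thm 4.6}(ii) to obtain the right-continuous supermartingale $e^{-\delta_V t}V(X_2(t))$, and applies Doob.

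You instead build the supermartingale directly from (ii.5), using the time-dependent test function $e^{-\delta_V t}V_k$ stopped at $\tau_k$, and pass to the limit by conditional Fatou. No It\^o product rule is needed here: (ii.5) already covers that test function.

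Your route is more elementary and gives path continuity of $Y$ directly. It also avoids a step the paper leaves implicit: that $V$ is genuinely excessive and not merely supermedian, i.e.\ $e^{-\delta_V t}P_tV\to V$ as $t\to 0$. That step holds by continuity of $V$ and of the paths. The paper's route is shorter once the potential theory is available, and it needs only the conclusion of (i).

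One small bookkeeping point: under $\mathbb{P}_{(s,x)}$, your test function evaluated at $X(t)=(s+t,X_2(t))$ yields $e^{-\delta_V(s+t)}V(X_2(t))=e^{-\delta_V s}\,Y(t)$. This is a positive constant multiple of $Y$, so the supermartingale property and the Doob bound are unaffected.
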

\begin{proof}
(i). For each $N\geq 1$ let $\psi_N\in C_b^\infty(\mathbb{R})$ such that
\begin{align*}
     \psi_N(t)=t, t\leq N-1,\quad \psi_N(t)=N, t>N+1, \quad 0\leq \psi_N'\leq 1, \quad\psi_N''\leq 0.
\end{align*}
Now, as in \cite[Proof of Proposition 2.2]{BoRoSh21}, we have
\begin{equation*}
\overline{\sf L}\psi_N(V)=\psi'_N(V)\overline{\sf L}V +\psi''_N(V)\|\sqrt{a}\nabla V\|^2\leq \psi'_N(V)\overline{\sf L}V,
\end{equation*}    
from which we deduce, by setting $V_N:=\psi_N(V)\in C_b^2(\mathbb{R}^d), N\geq 1$, that
\begin{equation*}
  \overline{\sf L}V_N\leq \delta_V V,\quad N\geq 1.  
\end{equation*}
By applying \Cref{prop:liminfV_n} we conclude that $P_tV\leq e^{\delta_V t}V, t\geq 0$.

\medskip
(ii). We first note that 
\begin{equation*}
    \lim_N\overline{\sf L} V_N = \overline{\sf L} V \quad \mbox{pointwise on } [0,\infty)\times \mathbb{R}^d.
\end{equation*}
Therefore, we can apply \Cref{prop:liminfV_n} to get \eqref{eq:|LV|}, namely
\begin{equation*}
    \int_0^{t} P_r|\overline{\sf L}V|\; dr\leq V(2e^{\delta_Vt}-1), \quad t\geq 0.
\end{equation*}
Consequently, integrating by parts
\begin{align*}
    \int_0^{t} e^{-(\alpha+\delta_V)r} P_r|\overline{\sf L}V|\; dr
    &=e^{-(\alpha+\delta_V)t}\int_0^{t} P_r|\overline{\sf L}V|\; dr
    +(\alpha+\delta_V)\int_0^{t} e^{-(\alpha+\delta_V)s}\int_0^s P_r|\overline{\sf L}V|\; dr\\
    &
    \leq e^{-\alpha t}V+2(\alpha+\delta_V)V\int_0^{t} e^{-\alpha s}\;ds, \\
    &=e^{-\alpha t}V+2\frac{(\alpha+\delta_V)}{\alpha}V (1-e^{-\alpha t}), \quad t\geq 0.
\end{align*}
Letting $t$ go to infinity we get \eqref{eq:|LV|}.

\medskip
(iii). To prove \eqref{eq:doob}, note that by the first part we have that $V$ is $\delta_V$-excessive, hence (see e.g. \Cref{thm 4.6}) $\left(e^{-\delta_Vt}V(X_2(t))\right)_{t\geq 0}$ is a right-continuous $\mathcal{F}(t)$-supermartingale under $\mathbb{P}_{(s,x)}$ for every $(s,x)\in E$.
\end{proof}
Thus, by Doob's maximal inequality we get
\begin{align*}
     \mathbb{P}_{(s,x)}\left( \sup_{t\in [0,T]} V(X_2(t)) \geq \varepsilon \right)
     &\leq\mathbb{P}_{(s,x)}\left( \sup_{t\in [0,T]} e^{-\delta_Vt}V(X_2(t)) \geq e^{-\delta_VT}\varepsilon \right)\\
    &\leq \frac{e^{\delta_V T}}{\varepsilon}V(x), \quad (s,x)\in E.
\end{align*}

\medskip
\noindent{\bf Example.} We follow \cite[Example 2.3]{BoRoSh21} and take
\begin{equation*}
    V(s,x):=V(x)=\log(1+\|x\|^2), \quad (s,x)\in \mathbb{R}\times \mathbb{R}^d.
\end{equation*}
Assume further that there exists $C\in (0,\infty)$ such that
\begin{equation*}
    \|a(t,x)\|\leq C \|x\|^2V(x)+C, \quad \langle b(t,x),x\rangle\leq C\|x\|^2V(x)+C.
\end{equation*}
It follows that there exists some constant $\delta_V\in (0,\infty)$ such that
\begin{equation*}
    \overline{\sf L}V\leq \delta_V V+\delta_V, \quad \mbox{or equivalently,}\quad \overline{\sf L}(V+1)\leq \delta_V (V+1)
\end{equation*}
Thus, by considering $V+1$ instead of $V$, the assumptions (and hence the conclusion) in \Cref{prop:LV} are verified.

\subsection{Adding jumps to time-dependent linear Fokker-Planck equations}\label{ss:jumps}
Let $\left(\mu_t:=u(t,x)dx\right)_{t\geq 0}$ be a given solution to the linear Fokker-Planck equation \eqref{eq:LFPshort}, and assume that $\mathbf{H_{a,b}^{\sqrt{u}}}$, $\mathbf{H_{\sf \lesssim u}}$, and $\mathbf{H^{\sf TV}_0}$ hold.
Also, let $X$ be the process on $E$ provided by \Cref{thm:main0} and \Cref{thm:main1}, with transition function $(P_t)_{t\geq 0}$ and resolvent $\mathcal{U}=(U_\alpha)_{\alpha>0}$. 

Let $\left(\mathcal{L},\mathcal{D}_b(\mathcal{L})\right)$ be the generator of $X$ defined through the resolvent as in \eqref{eq:D(L)_weak}.

\begin{rem}
    It is not hard to deduce from \Cref{thm:main0}, (ii.5), (see e.g. \Cref{lem:b locally bounded} from below),  that 
    \begin{equation*}
        {\sf \overline{L}}f(t,x)=\mathcal{L}f(t,x), \quad (t,x)\in E, \quad f\in C_c^2([0,\infty)\times \mathbb{R}^d)\cap \mathcal{D}_b(\mathcal{L}).
    \end{equation*}
    However, despite that both spaces $C_c^2([0,\infty)\times \mathbb{R}^d)$ and $\mathcal{D}_b(\mathcal{L})$ are rich enough in the sense that they both separate the set of finite measures on $E$, their intersection can be small if $b$ merely satisfies $\bf H_b$ from the beginning of \Cref{ss:more regularity}.
    For this reason, in what follows we pay special attention to the two operators, distinctly. 
\end{rem}

Let
$
    K:[0,\infty)\times \mathbb{R}^d\times \mathcal{B}(\mathbb{R}^d)\rightarrow [0,\infty)
$
be a bounded kernel from $[0,\infty)\times \mathbb{R}^d$ to $\mathbb{R}^d$, that is $K$ is measurable in the first two arguments, and a measure in the third argument, and
\begin{equation*}
   \sup_{t\geq 0, x\in \mathbb{R}^d}K(t,x,\mathbb{R}^d)<\infty. 
\end{equation*}

Let us consider the perturbed operators
\begin{align}
\mathcal{L}^Kf(t,x)
&:= \mathcal{L}f(t,x) +\int_{\mathbb{R}^d} \left[f(t,y)-f(t,x)\right]\; K(t,x,dy), \quad f\in \mathcal{D}_b(\mathcal{L}), \quad (t,x)\in E\nonumber\\
{\sf \overline{L}^K}f(t,x)
&:= {\sf \overline{L}}f(t,x) +\int_{\mathbb{R}^d} \left[f(t,y)-f(t,x)\right]\; K(t,x,dy), \quad f\in C^2_b([0,\infty)\times \mathbb{R}^d), \quad (t,x)\in [0,\infty)\times \mathbb{R}^d.\label{eq:L^K}
\end{align}

\begin{prop}\label{prop:X^K}
Let us keep the context fixed in the beginning of this section. 
Then there exists a right process $X^K$ on $E$,
\begin{equation*}
X^K=\left(\Omega^K,\mathcal{F}^K,\mathcal{F}^K(t),X^K(t)=(X^K_1(t),X^K_2(t)),\mathbb{P}^K_{s,x}, (s,x)\in E, t\geq 0\right)
\end{equation*}
which has a.s. c\`adl\`ag trajectories with respect to the (trace of the) norm topology on $E$, and such that the following properties hold:
\begin{enumerate}
    \item[(i)] For $(s,x)\in E$ we have
            \begin{equation*}
            \mathbb{P}^K_{s,x}\left( X_1^K(t)= t+s; 0\leq t<\infty  \right)=1.
            \end{equation*}
    \item[(ii)] For every $\psi\in pb\left(\mathcal{B}(\mathbb{R}^d)\otimes\mathcal{B}(\mathbb{R}^d)\right)$, $ \psi(y,y)=0, y\in \mathbb{R}^d$, we have
    \begin{equation*}
    \mathbb{E}^K_{s,x}
    \left\{\sum_{r\leq t}
    \psi(X_2^K(r-), X_2^K(r))\right\}
    =
    \mathbb{E}^K_{s,x}
    \left\{\int_0^t \int_{\mathbb{R}^d} 
    \psi(X_2^K(r), y) K(s+r,X_2^K(r), dy) dr\right\}, \quad t\geq 0.
    \end{equation*}
    \item[(iii)] If there exists $\delta_V \in (0,\infty)$ and $0\leq V\in C^2(\mathbb{R}^d)$ such that
    \begin{equation*}
        \overline{\sf L}V\leq \delta_V V \quad\mbox{and} \quad K(t,x,V(\cdot))\leq \delta_V V(x), \;t\geq 0, x\in \mathbb{R}^d,
    \end{equation*}
    then the transition semigroup $(P^K_t)_{t\geq 0}$ of $X^K$ satisfies
    \begin{equation*}
        P^K_t V\leq e^{2\delta_V t} V, \quad t\geq 0.
    \end{equation*}
    Moreover, the bound \eqref{eq:doob} holds for $X_2^K$ instead of $X_2$ and $2\delta_V$ instead of $\delta_V$.
    \item[(iv)]  We have that $\mathcal{L}^K$ coincides with the generator on $b\mathcal{B}(E^K)$ associated to $X^K$ in the sense given by \eqref{eq:D(L)_weak}, $\mathcal{D}_b(\mathcal{L})=\mathcal{D}_b(\mathcal{L}^K)$, and for every $f\in \mathcal{D}_b(\mathcal{L})=\mathcal{D}_b(\mathcal{L}^K)$ and every $(s,x)\in E$, the process
    \begin{equation}\label{eq:martingale_K}
        f(X^K(t))-  f(X^K(0)) -\int_0^t \mathcal{L}^Kf\left(X^K(s)\right) ds,\quad t\geq 0,
    \end{equation}
    is a c\`adl\`ag $(\mathcal{F}^K(t))$-martingale under $\mathbb{P}^K_{s,x}$.
\end{enumerate}
\end{prop}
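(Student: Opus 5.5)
The construction is the classical one for adding bounded jumps to a right process, via concatenation or, equivalently, via perturbation of the resolvent by a bounded kernel. The appendix (\Cref{ss:adding jumps}) is where the general statements should be invoked.

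\textbf{Construction.} Let $c:=\sup_{t,x}K(t,x,\mathbb{R}^d)<\infty$. Define the Markov kernel
\[
\hat K((t,x),\cdot):=c^{-1}\Big(K(t,x,\cdot)\circ(y\mapsto(t,y))^{-1}+(c-K(t,x,\mathbb{R}^d))\,\delta_{(t,x)}\Big)
\]
on $[0,\infty)\times\mathbb{R}^d$. Since jumps are only in space, the time coordinate is unchanged by $\hat K$.

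There is one subtlety: $\hat K$ must map $E$ into $E$. I would first restrict $E$ to an absorbing subset on which $K(t,x,\mathbb{R}^d\setminus E(t))=0$, which requires $\overline{\mu}$-a.e. control. If this is not available, I would instead use the trivial extension of $X$ to $[0,\infty)\times\mathbb{R}^d$ given by \Cref{prop-trivial}, and then restrict back.

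Kill $X$ at rate $c$, i.e. apply the multiplicative functional $e^{-ct}$, which preserves the right property. Then revive at the killing time according to $\hat K$, and iterate (Ikeda--Nagasawa--Watanabe piecing out). The results of \Cref{ss:adding jumps} give that the resulting $X^K$ is a right process. Its resolvent is
\[
U^K_\alpha=\sum_{n\ge0}(U_{\alpha+c}\,c\hat K)^nU_{\alpha+c},
\]
and this series converges because $\|U_{\alpha+c}c\hat K\|\le c/(\alpha+c)$. Between revival times the paths are continuous, and only finitely many jumps occur on bounded intervals. Hence the paths are c\`adl\`ag in the norm topology.

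\textbf{Properties.}

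(i) The time coordinate moves uniformly on each piece, and revivals do not change it.

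(ii) This is the L\'evy system formula. Revival epochs form a Poisson clock of rate $c$, with jump law $\hat K$. Fictitious jumps $y\to y$ do not count because $\psi(y,y)=0$. A compensator computation over the epochs, using the strong Markov property at each revival, gives the stated identity.

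(iv) First I would show $\mathcal{D}_b(\mathcal{L})=\mathcal{D}_b(\mathcal{L}^K)$ and $\mathcal{L}^K=\mathcal{L}+c(\hat K-I)$. The resolvent identity $U^K_\alpha=U_{\alpha+c}+U_{\alpha+c}c\hat KU^K_\alpha$ gives $U^K_\alpha(b\mathcal{B})\subset U_{\alpha+c}(b\mathcal{B})=\mathcal{D}_b(\mathcal{L})$. Conversely, bounded perturbations leave domains defined through resolvents unchanged. The martingale property of \eqref{eq:martingale_K} then follows from the general fact (\Cref{lem:martingale 1}) that for $f=U^K_\alpha g$ the Dynkin process is a c\`adl\`ag martingale, since the paths are right continuous and $f$ is finely continuous.

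(iii) I would compare with \Cref{prop:LV}. Since $\mathcal{L}^KV\le\overline{\sf L}V+K(V)\le2\delta_VV$, I would apply the localisation and Fatou/Gronwall argument of \Cref{prop:liminfV_n} to the martingale problem for $X^K$ with the truncations $\psi_N(V)$. This needs \eqref{eq:martingale_K} for $C^2$ functions, which I would obtain from (ii.5) for $X$ between jumps plus the L\'evy system (ii). Then $e^{-2\delta_Vt}V(X_2^K(t))$ is a right-continuous supermartingale, and Doob's inequality gives the analogue of \eqref{eq:doob}.

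\textbf{Main obstacle.} The main difficulty is in (iii): handling the non-bounded Lyapunov function $V$, which need not lie in $\mathcal{D}_b(\mathcal{L})$. This requires combining the localisation by exit times $\tau_k$ with the jump part. The truncation $\psi_N$ is concave, so $K(\psi_N(V))\le\psi_N(K\text{-mean})$ needs care; I would instead bound the jump term directly by $K(t,x,V)\le\delta_VV(x)$ before truncating.
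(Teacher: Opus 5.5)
Your uniformisation (kill at the constant rate $c$, revive with $\hat K$) has generator $\mathcal{L}-c+c\hat K=\mathcal{L}-K1+K$. It therefore produces the same process as \Cref{prop:perturbation}, which the paper applies with the state-dependent killing rate $K1$. Your treatment of (i), (ii) and (iv) agrees in substance with the paper's. Your direct compensator computation for (ii) even avoids the local compactness assumed in \Cref{prop:adding jumps diffusion}.

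\textbf{The gap: jumps that leave $E$.} You rightly flag this issue, but neither of your fixes works.
\begin{itemize}
\item Passing to an absorbing subset on which $K(t,x,\mathbb{R}^d\setminus E(t))=0$ needs a hypothesis you do not have. It also yields a process on a set smaller than $E$, which is not what is claimed.
\item The fallback fails outright. In the trivial extension of \Cref{prop-trivial}, every point of $([0,\infty)\times\mathbb{R}^d)\setminus E$ is a trap. After a jump there, the time coordinate stops advancing (jumps never move it), which destroys (i). Moreover, $E$ is then not absorbing for the perturbed process, so you cannot restrict back to $E$.
\end{itemize}
The paper sidesteps this by regarding $K$ as a kernel on $E$: $Kf(t,x):=K(t,x,f(t,\cdot))$, with $f(t,\cdot)$ extended by zero off $E(t)$. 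The mass $K(t,x,\mathbb{R}^d\setminus E(t))$ is simply discarded, so the killing rate in \Cref{prop:perturbation} is $K(t,x,E(t))$ and the process is conservative on $E$. You should build $\hat K$ from $1_{E(t)}(y)K(t,x,dy)$ in the same way. Correspondingly, (ii) and (iv) must be read with $K(t,x,dy)$ replaced by $1_{E(t)}(y)K(t,x,dy)$. This changes nothing when $K$ does not charge the complement of $E(t)$.

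\textbf{Part (iii): a different and much heavier route.} Your argument needs a martingale problem for $X^K$ with $C^2$ test functions. The paper proves this only for locally bounded $b$ (\Cref{prop: b locally bounded}), or quasi-everywhere under $\mathbf{H_u(K)}$ (\Cref{prop: HuK}). Assembling it from (ii.5) of \Cref{thm:main0} and the L\'evy system is plausible, but two extra steps are needed.
\begin{itemize}
\item The integrability part of (ii.5) does not survive the jumps: $(t,y)\mapsto\mathbb{E}_{(t,y)}\int_0^T|\overline{\sf L}f(X(r))|\,dr$ is finite but need not be bounded. You would need an extra localisation, e.g. at $\inf\{r:\int_0^r|\overline{\sf L}f(X^K(u))|\,du\geq m\}$.
\item Your spatial cut-offs must be dominated by $V$, so that the jump term stays below $K(t,x,V)$ when the process jumps out of the ball.
\end{itemize}
The paper avoids all of this. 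It uses only $P_tV\le e^{\delta_Vt}V$ for the unperturbed $X$ (\Cref{prop:LV}, (i)) and the Neumann series, which you already have. In your normalisation:
\begin{itemize}
\item $U_{\beta+c}V\le(\beta+c-\delta_V)^{-1}V$ and $c\hat KV\le(c+\delta_V)V$;
\item together these give $U^K_{\alpha+2\delta_V}V\le\alpha^{-1}V$ for every $\alpha>0$;
\item $V$ is finely continuous, and the fine topologies of $X$ and $X^K$ coincide, so $V$ is $\mathcal{U}^K_{2\delta_V}$-excessive;
\item hence $P^K_tV\le e^{2\delta_Vt}V$, and your Doob argument concludes.
\end{itemize}
Your concavity concern is moot in either route: $0\le\psi_N(r)\le r$ bounds the jump term by $K(t,x,V)$ directly.
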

\begin{proof}
The idea is to regard $K$ as a bounded kernel on $E$,
\begin{equation*}
    Kf(t,x):=K(t,x,f(t,\cdot)), \quad f\in b\mathcal{B}(E), \quad (t,x)\in E,
\end{equation*}
where $f(t,\cdot)$ is extended by zero on the complement of $E(t)=\{x\in \mathbb{R}^d : (t,x)\in E\}$.
Thus, by \Cref{prop:perturbation} there exists a c\`adl\`ag right Markov process 
\begin{equation*}
X^K=\left(\Omega^K,\mathcal{F}^K,\mathcal{F}^K(t),X^K(t)=(X^K_1(t),X^K_2(t)),\mathbb{P}^K_{s,x}, (s,x)\in E, t\geq 0\right)
\end{equation*}
with transition semigroup $(P^K_t)_{t\geq 0}$ and resolvent $\mathcal{U}^K:=(U^K_\alpha)_{\alpha>0}$ such that
\begin{equation}\label{eq:resolvent_formula}
    P_t^K=P_t'+\int_0^tP_s'KP^K_s \;ds \quad \mbox{and} \quad U_\alpha^K=U_\alpha'+U_\alpha'KU_\alpha^K, \quad t,\alpha>0,
\end{equation}
where $(P_t')_{t\geq 0}$ and $(U_\alpha')_{\alpha>0}$ de denote the transition semigroup and the resolvent of the process $X$ killed by the multiplicative functional induced by the function $K1$, as in \Cref{ss:adding jumps}.

Now, since $K$ in fact acts only on the spatial variable, it is straightforward to check, using \eqref{eq:resolvent_formula}, that $X^K_1$ is the uniform motion to the right, i.e. (i) holds. 

Assertions (ii) and (iv) are now just consequences of \Cref{prop:perturbation} and \Cref{prop:adding jumps diffusion}.

Let us prove (iii). 
First of all, notice that from \Cref{prop:LV} and the fact that $\mathcal{U}'$ is obtained from $\mathcal{U}$ by killing, we have
\begin{equation*}
    U_{\alpha+\delta_V}'V\leq U_{\alpha+\delta_V} V\leq\frac{1}{\alpha} V, \quad \alpha>0. 
\end{equation*}
Thus, by \eqref{eq:resolvent_formula} we get
\begin{align*}
    U_{\alpha+2\delta_V}^KV
    &=U_{\alpha+2\delta_V}'V+U_{\alpha+2\delta_V}'\sum\limits_{n\leq 1}(KU_{\alpha+2\delta_V}')^nV\\
    &\leq \frac{1}{\alpha+\delta_V}V+\frac{1}{\alpha+\delta_V}V\sum\limits_{n\geq 1}\left(\frac{\delta_V}{\alpha+\delta_V}\right)^n\\
    &=\frac{1}{\alpha}V, \quad \alpha>0.
\end{align*}
Now (iii) follows from by the inverse Laplace transform the proof of \Cref{prop:LV}, (iii).
\end{proof}

We now go further and present two cases when in \eqref{eq:martingale_K} we can take $f\in C_c^2([0,\infty)\times \mathbb{R}^d)$ and replace $\mathcal{L}^K$ by ${\sf \overline{L}^K}$. 

In the first case we consider locally bounded coefficients, and we start with the following lemma:

\begin{lem}\label{lem:b locally bounded}
 If $b$ is locally bounded then   \begin{equation*}
        C_c^2([0,\infty)\times \mathbb{R}^d)\subset \mathcal{D}(\mathcal{L})\quad \mbox{and}\quad {\sf \overline{L}}f=\mathcal{L}f, \quad f\in C_c^2([0,\infty)\times \mathbb{R}^d).
    \end{equation*} 
\end{lem}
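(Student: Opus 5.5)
The plan is to use the martingale problem from \Cref{thm:main0}, (ii.5) (extended to $s=0$ by \Cref{thm:main1}) together with the definition of $\mathcal{D}_b(\mathcal{L})$ through the resolvent as in \eqref{eq:D(L)_weak}. Concretely, I expect the weak generator to be characterized as follows: $f\in b\mathcal{B}(E)$ lies in $\mathcal{D}_b(\mathcal{L})$ with $\mathcal{L}f=g$ whenever $g\in b\mathcal{B}(E)$ and $f=U_\alpha(\alpha f-g)$ on $E$ for some (hence all) $\alpha>0$. So, for $f\in C_c^2([0,\infty)\times\mathbb{R}^d)$, the candidate is $g:=\overline{\sf L}f|_E$, and two facts must be verified: that $g$ is bounded, and that the resolvent identity holds pointwise on $E$.

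\emph{Boundedness of $g$.} Since $f$ has compact support $K\subset[0,N]\times B(0,R)$, $\overline{\sf L}f=\partial_tf+\langle b,\nabla f\rangle+\frac12\mathrm{tr}(a\nabla^2f)$ vanishes off $K$. On $K$, $b$ is bounded by the assumption of the lemma, and $a$ is bounded by the upper ellipticity bound in $\mathbf{H_a}$. Hence $g\in b\mathcal{B}(E)$. This is exactly where local boundedness of $b$ enters; without it, $\overline{\sf L}f$ is merely in $L^2_{loc}(\overline{\mu})$ and need not be a bounded function.

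\emph{Resolvent identity.} Fix $(s,x)\in E$. By (ii.5), $M_f(t)=f(X(t))-f(X(0))-\int_0^t\overline{\sf L}f(X(r))\,dr$ is a $\mathbb{P}_{s,x}$-martingale, so $P_tf(s,x)-f(s,x)=\int_0^tP_rg(s,x)\,dr$ for every $t\ge0$, with integrability guaranteed by the boundedness of $g$. Multiplying by $\alpha e^{-\alpha t}$, integrating over $t$, and applying Fubini gives $\alpha U_\alpha f-f=U_\alpha g$ pointwise on $E$, that is, $f=U_\alpha(\alpha f-g)$. Since $f$ is continuous and bounded, and the norm topology on $E$ is natural (\Cref{thm:main0}, (ii.2)), $f|_E$ is finely continuous and bounded, which meets whatever fine-continuity requirement \eqref{eq:D(L)_weak} may impose.

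The main obstacle is matching the exact form of \eqref{eq:D(L)_weak}. If it requires $\mathcal{L}f$ to be specified pointwise rather than up to a $\mathcal{U}$-null set, the pointwise identity above already delivers this, because the martingale property holds for every starting point in $E$ and not merely $\overline{\mu}$-a.e. Uniqueness of $\mathcal{L}f$ then follows from $\lim_{\alpha\to\infty}\alpha U_\alpha g=g$ for finely continuous $g$, or directly from the injectivity of $U_\alpha$ on the relevant class. This yields $\overline{\sf L}f=\mathcal{L}f$ on $E$.
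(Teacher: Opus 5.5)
Your proposal is correct and essentially reproduces the paper's proof. The paper likewise combines the martingale property of \Cref{thm:main0}, (ii.5), which holds for every starting point in $E$, with the boundedness of ${\sf \overline{L}}f$ coming from local boundedness of $b$ and $\mathbf{H_a}$, and then invokes \Cref{prop:connection}; the implication (ii)$\Rightarrow$(i) there is exactly the Laplace-transform computation you carry out inline.

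Your closing uniqueness aside is unnecessary and not justified as stated. First, ${\sf \overline{L}}f$ need not be finely continuous. Second, $U_\alpha$ is not injective on $b\mathcal{B}(E)$: single points of $E$ are $\mathcal{U}$-negligible because $X_1$ is the uniform motion to the right. This does not affect the proof, since the lemma only requires that ${\sf \overline{L}}f$ be an admissible choice of $\mathcal{L}f$ in \eqref{eq:D(L)_weak}, and your identity $f=U_\alpha(\alpha f-{\sf \overline{L}}f)$ establishes that.
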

\begin{proof}
    Recall that by \Cref{thm:main0}, (ii.5) we have that for every $(s,x)\in E$ and every $f\in C_c^{2}([0,\infty)\times \mathbb{R}^d)$ we have that
    \begin{equation*}
        f(X(t))-f(X(0))-\int_0^{t} \overline{\sf L}f(X(r))\; dr, \quad t\geq 0
    \end{equation*}
    is a continuous $\left(\mathcal{F}_t\right)$-martingale under $\mathbb{P}_{(s,x)}$.
    Moreover, since $b$ is locally bounded by assumption, whilst $a$ is locally bounded since $\bf H_a$ is fulfilled, it follows that 
    \begin{equation*}
        {\sf \overline{L}}f\in b\mathcal{B}(E), \quad f\in C_c^{2}([0,\infty)\times \mathbb{R}^d).
    \end{equation*}
    Now the claim follows from \Cref{prop:connection}.
\end{proof}

\begin{prop}\label{prop: b locally bounded}
    If $b$ is locally bounded then
    \begin{equation}\label{eq:ident L=cal{L}}
        C_c^2([0,\infty)\times \mathbb{R}^d)\subset \mathcal{D}(\mathcal{L}^K)\quad \mbox{and}\quad {\sf \overline{L}^K}f=\mathcal{L}^Kf, \quad f\in C_c^2([0,\infty)\times \mathbb{R}^d).
    \end{equation} 
    Consequently, if $X^K$ is the right process constructed in \Cref{prop:X^K}, then for every $(s,x)\in E$ and every $f\in C_c^2([0,\infty)\times \mathbb{R}^d)$ we have that
    \begin{equation*}
        \mathbb{E}^K_{(s,x)}\left\{\int_0^{t} \left|{\sf \overline{L}^K}f(X^K(r))\right|\; dr
               \right\}<\infty, \quad t\geq 0,
    \end{equation*}
    and 
    \begin{equation*}
        \left(f(X^K(t))-f(X^K(0))-\int_0^{t} {\sf \overline{L}^K}f(X^K(r))\; dr\right)_{t\geq 0} \; \mbox{ is a c\`adl\`ag } \left(\mathcal{F}^K_t\right)\mbox{-martingale under } \mathbb{P}^K_{(s,x)}.
    \end{equation*}
\end{prop}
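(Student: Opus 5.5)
The plan is to reduce everything to \Cref{lem:b locally bounded} together with the structure of $\mathcal{L}^K$ established in \Cref{prop:X^K}, (iv).

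\emph{Step 1: the generator identity \eqref{eq:ident L=cal{L}}.} Fix $f\in C_c^2([0,\infty)\times \mathbb{R}^d)$. By \Cref{lem:b locally bounded}, $f\in \mathcal{D}_b(\mathcal{L})$ and $\mathcal{L}f={\sf \overline{L}}f$ on $E$. By \Cref{prop:X^K}, (iv), we have $\mathcal{D}_b(\mathcal{L})=\mathcal{D}_b(\mathcal{L}^K)$, so $f\in\mathcal{D}_b(\mathcal{L}^K)$. Comparing the definition of $\mathcal{L}^K$ with \eqref{eq:L^K} gives, for $(t,x)\in E$,
\begin{equation*}
\mathcal{L}^Kf(t,x)=\mathcal{L}f(t,x)+\int_{\mathbb{R}^d}\left[f(t,y)-f(t,x)\right]K(t,x,dy)={\sf \overline{L}^K}f(t,x).
\end{equation*}
One point needs care. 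In the proof of \Cref{prop:X^K}, $K$ was regarded as a kernel on $E$, with $f(t,\cdot)$ extended by zero off $E(t)$. Here $f$ is a genuine function on $[0,\infty)\times\mathbb{R}^d$, and the two readings agree exactly when $K(t,x,\mathbb{R}^d\setminus E(t))=0$. Otherwise I will define the jump term via $Kf(t,x)=K(t,x,f(t,\cdot))$ with the actual $f$, which is the convention in \eqref{eq:L^K}. I will check that \Cref{prop:perturbation} allows this, possibly by passing to the trivial extension of $X$ to $[0,\infty)\times\mathbb{R}^d$. This bookkeeping is the main subtle point.

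\emph{Step 2: integrability.} Since $b$ is locally bounded and $a$ is locally bounded by $\mathbf{H_a}$, ${\sf \overline{L}}f$ is bounded with compact support. Since $K$ is a bounded kernel, $|\int[f(t,y)-f(t,x)]K(t,x,dy)|\le 2\|f\|_\infty\sup K(\cdot,\cdot,\mathbb{R}^d)$. Hence ${\sf \overline{L}^K}f\in b\mathcal{B}(E)$, and
\begin{equation*}
\mathbb{E}^K_{(s,x)}\left\{\int_0^t\left|{\sf \overline{L}^K}f(X^K(r))\right|dr\right\}\le t\,\|{\sf \overline{L}^K}f\|_\infty<\infty.
\end{equation*}

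\emph{Step 3: the martingale property.} By \Cref{prop:X^K}, (iv), for $f\in\mathcal{D}_b(\mathcal{L}^K)$ the process \eqref{eq:martingale_K} is a c\`adl\`ag $(\mathcal{F}^K(t))$-martingale under $\mathbb{P}^K_{(s,x)}$ for every $(s,x)\in E$. Substituting $\mathcal{L}^Kf={\sf \overline{L}^K}f$ on $E$ from Step 1, and using that $X^K$ lives in $E$, gives the claimed martingale.

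The only step I expect to need real checking is Step 1. It reduces to the consistency of the jump term on $E$ versus on the full space, and to the fact that the domain identification in \Cref{prop:X^K}, (iv), holds pointwise rather than merely up to a negligible set.
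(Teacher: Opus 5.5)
Your argument is essentially the paper's: \Cref{lem:b locally bounded} gives $C_c^2([0,\infty)\times\mathbb{R}^d)\subset\mathcal{D}_b(\mathcal{L})$ with $\mathcal{L}f={\sf \overline{L}}f$ on $E$. Then \Cref{prop:X^K}, (iv), which gives $\mathcal{D}_b(\mathcal{L}^K)=\mathcal{D}_b(\mathcal{L})$ with $\mathcal{L}^K$ given by \eqref{eq:L^K}, yields the generator identity, the integrability (as in your Step 2, this is just boundedness of ${\sf \overline{L}^K}f$) and the martingale property.

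The convention issue you flag is a genuine imprecision upstream, in how \Cref{prop:X^K}, (iv) is stated relative to its proof. That proof extends $f(t,\cdot)$ by zero off $E(t)$, whereas \eqref{eq:L^K} integrates over all of $\mathbb{R}^d$, and the two agree for all such $f$ only if $K(t,x,\mathbb{R}^d\setminus E(t))=0$ for $(t,x)\in E$. The paper does not address this either; it simply takes \Cref{prop:X^K}, (iv) at face value.

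Your suggested remedy via the trivial extension would not repair it. At the added trap points the time component stops moving, and the generator of the extended process vanishes there instead of equalling ${\sf \overline{L}}f$. A jump into such a point therefore breaks the martingale property for ${\sf \overline{L}^K}$. The clean fix is to assume that $K(t,x,\cdot)$ is carried by $E(t)$, or to restrict the jump integral in ${\sf \overline{L}^K}$ to $E(t)$.
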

\begin{proof}
Clearly, if we prove \eqref{eq:ident L=cal{L}}, then the second part of the statement is just a direct consequence of \Cref{prop:X^K}, (iv).

Let us prove \eqref{eq:ident L=cal{L}}. 
Recall first that by \Cref{prop:perturbation}, (iv), it follows that  the generator $(\mathcal{L}^K, \mathcal{D}_b(\mathcal{L}^K))$ associated to $X^K$ in the sense of \eqref{eq:D(L)_weak} satisfies $\mathcal{D}_b(\mathcal{L})=\mathcal{D}_b(\mathcal{L}^K)$, and $\mathcal{L}^K$ is given by \eqref{eq:L^K}.
Now, \eqref{eq:ident L=cal{L}} follows by \Cref{lem:b locally bounded}.
\end{proof}

In the second case the idea is to regard ${\sf \overline{L}^K}$ as a bounded perturbation of ${\sf \overline{L}}$ in $L^1(\overline{\mu})$, where recall that $\overline{\mu}$ denotes the measure $\mu_t(dx)dt$ on $[0,\infty)\times \mathbb{R}^d$.
To this end we introduce the following condition:

\medskip
\noindent{$\bf H_u(K)$} There exists $\delta \in (0,\infty)$
\begin{equation}\label{eq:KisL1}
    \int_0^\infty \int_{\mathbb{R}^d} K(t,x,f(t,\cdot))\; \mu_t(dx)\;dt\leq \delta \int_0^\infty \int_{\mathbb{R}^d} f(t,x) \;\mu_t(dx)\;dt, \quad f\in L^1(\overline{\mu}).
\end{equation}

\begin{rem}
    If the kernel $K$ admits a density $K(t,x,dy)=k(t,x,y)dy$ then \eqref{eq:KisL1} is equivalent to 
    \begin{equation*}
        \int_{\mathbb{R}^d}k(t,x,y)u(t,x)\;dx\leq \delta u(t,y) \quad dy\otimes dt\mbox{-a.e.}
    \end{equation*}
\end{rem}

\begin{lem}\label{lem:0mu0}
    Let $X^K$ be the right process given by \Cref{prop:X^K}, with resolvent $\mathcal{U}^K:=(U^K_\alpha)_{\alpha>0}$.
    If $\bf H_u(K)$ is fulfilled, then
    \begin{equation*}
        \|U^K_\alpha f\|_{L^1(E;\delta_0\otimes\mu_0)}\leq  \left(1+\frac{\delta}{\alpha}\right)\| f\|_{L^1(E;\overline{\mu})}, \quad f\in L^1(E;\overline{\mu}).
    \end{equation*}
\end{lem}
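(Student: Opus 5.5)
The plan is to work with $0\le f\in L^1(E;\overline{\mu})$, since $|U^K_\alpha f|\le U^K_\alpha|f|$ reduces the general case to this one. I will combine the perturbation formula \eqref{eq:resolvent_formula}, $U^K_\alpha=U'_\alpha+U'_\alpha K U^K_\alpha$, with the estimates of \Cref{prop:mu-V} for the unperturbed process $X$. Here $\mathcal{U}'$ is the resolvent of $X$ killed at rate $K1$, so $U'_\alpha g\le U_\alpha g$ for $g\ge 0$.

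\emph{Step 1 (the unperturbed term).} By \Cref{prop:mu-V}, (ii), together with $U'_\alpha\le U_\alpha$,
\begin{equation*}
\delta_0\otimes\mu_0\left(U'_\alpha g\right)\le \delta_0\otimes\mu_0\left(U_\alpha g\right)=\overline{\mu}_\alpha(g)\le \overline{\mu}(g),\quad g\in p\mathcal{B}(E).
\end{equation*}
This uses only \Cref{thm:main1}, (ii.4'), namely that the time marginals of $X_2$ under $\mathbb{P}_{\delta_0\otimes\mu_0}$ are exactly $\mu_t$. Taking $g=f$ bounds the first summand of \eqref{eq:resolvent_formula} by $\|f\|_{L^1(\overline{\mu})}$.

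\emph{Step 2 (the jump term).} Apply Step 1 with $g=KU^K_\alpha f$, and then $\mathbf{H_u(K)}$, i.e.\ \eqref{eq:KisL1}:
\begin{equation*}
\delta_0\otimes\mu_0\left(U'_\alpha K U^K_\alpha f\right)\le \overline{\mu}\left(KU^K_\alpha f\right)\le \delta\,\overline{\mu}\left(U^K_\alpha f\right).
\end{equation*}
The kernel $K$ acts only in the spatial variable at fixed time, so \eqref{eq:KisL1} applies slice-wise exactly as stated. The lemma therefore follows once we establish the sub-invariance bound
\begin{equation*}
\alpha\,\overline{\mu}\left(U^K_\alpha f\right)\le \overline{\mu}(f).
\end{equation*}

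\emph{Step 3 (sub-invariance of $\overline{\mu}$ for $X^K$; the main obstacle).} A Neumann-series iteration of \eqref{eq:resolvent_formula} that uses only $\alpha\overline{\mu}(U'_\alpha g)\le\overline{\mu}(g)$ and \eqref{eq:KisL1} loses the constant and gives only $1/(\alpha-\delta)$. To avoid this I will keep track of the mass removed by the killing.

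For $X$ one has $\overline{\mu}(P_tg)=\int_0^\infty\mu_{s+t}(g(s+t,\cdot))\,ds\le\overline{\mu}(g)$, because $X_1$ is uniform motion and \Cref{thm:main0}, (ii.4) holds. For the killed semigroup one has $\frac{d}{dt}P'_tg=P'_t(\overline{\sf L}g-(K1)g)$, which suggests the balance inequality
\begin{equation*}
\alpha\,\overline{\mu}\left(U'_\alpha g\right)+\overline{\mu}\left((K1)\,U'_\alpha g\right)\le\overline{\mu}(g).
\end{equation*}
I plan to prove this by writing $P'_t g=\mathbb{E}\{g(X(t))e^{-\int_0^tK1(X(r))dr}\}$ and differentiating the multiplicative functional under $\mathbb{P}_{\overline{\mu}}$.

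Next I set $h:=U^K_\alpha f$ and $m:=\overline{\mu}(h)$. Inserting $g=f+Kh$ into the balance inequality, the aim is to compare $\overline{\mu}(Kh)$ with $\overline{\mu}((K1)h)$, so that the jump-in mass is compensated by the killed mass and $\alpha m\le\overline{\mu}(f)$ follows. This compensation is exactly the point where care is needed. If \eqref{eq:KisL1} alone does not give it, I will first try to obtain the bound directly from the martingale problem \Cref{prop:X^K}, (iv), by testing against $\overline{\mu}$ and a truncation $\psi_n$ as in \Cref{prop:liminfV_n}. The goal there is an inequality $\overline{\mu}(P^K_t f)\le\overline{\mu}(f)$, from which $\alpha m\le\overline{\mu}(f)$ follows by the Laplace transform.

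Combining Steps 1--3 gives $\delta_0\otimes\mu_0(U^K_\alpha f)\le\|f\|_{L^1(\overline{\mu})}+\frac{\delta}{\alpha}\|f\|_{L^1(\overline{\mu})}$, which is the statement of the lemma.
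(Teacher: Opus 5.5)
Your Steps 1 and 2 are fine. Step 3 cannot be carried out, because the sub-invariance $\alpha\,\overline{\mu}(U^K_\alpha f)\le\overline{\mu}(f)$ is false in general under $\mathbf{H_u(K)}$. Your balance inequality reduces it to $\overline{\mu}(Kh)\le\overline{\mu}((K1)h)$ for $h\ge 0$, i.e.\ $\mu_tK\le (K1)\mu_t$, which is excessivity of $\overline{\mu}$ for the jump part. $\mathbf{H_u(K)}$ only gives $\mu_tK\le\delta\mu_t$. A counterexample: let $A,B\subset\mathbb{R}^d$ be disjoint open sets with $0<\mu_t(A)\le\mu_t(B)$, and set $K(t,x,dy):=\delta\,1_A(x)1_B(y)\mu_t(dy)/\mu_t(B)$. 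Then $\mathbf{H_u(K)}$ holds. For $0\le g\in C_c^\infty((0,\infty)\times B)$ with $g\not\equiv 0$, we have $\overline{\mu}({\sf \overline{L}}g)=0$ by \eqref{eq:LFP}, $(K1)g=0$, and $\overline{\mu}(Kg)>0$. By \Cref{prop: HuK}, $U^K_\alpha$ is the resolvent of the $C_0$-semigroup generated by ${\sf \overline{L}^K_1}$ on $L^1(\overline{\mu})$. Hence $\alpha\bigl(\alpha\,\overline{\mu}(U^K_\alpha g)-\overline{\mu}(g)\bigr)=\overline{\mu}\bigl(\alpha U^K_\alpha {\sf \overline{L}^K_1}g\bigr)\to\overline{\mu}(Kg)>0$ as $\alpha\to\infty$. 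So neither truncation nor the martingale problem can produce $\overline{\mu}(P^K_tf)\le\overline{\mu}(f)$. With $\mathbf{H_u(K)}$, your balance inequality yields only $(\alpha-\delta)\,\overline{\mu}(U^K_\alpha f)\le\overline{\mu}(f)$, which is the same loss as the Neumann series.

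The paper never uses sub-invariance for $X^K$. It expands $U^K_\beta=U'_\beta\sum_{n\ge 0}(KU'_\beta)^n$ and bounds the outer $U'_\beta$ via \Cref{prop:mu-V}, (ii), which is your Step 1. It bounds each factor by $\overline{\mu}(KU'_\beta g)\le\delta\,\overline{\mu}(U_\beta g)\le\frac{\delta}{\beta}\overline{\mu}(g)$, which needs only sub-invariance of $\overline{\mu}$ for the unperturbed $\mathcal{U}$. Summing at $\beta=\alpha+\delta$, its final display reads $\delta_0\otimes\mu_0(U^K_{\alpha+\delta}f)\le(1+\frac{\delta}{\alpha})\overline{\mu}(f)$. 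So the constant $1+\delta/\alpha$ is obtained at the shifted index $\alpha+\delta$, which is exactly the loss you were trying to avoid. Your Steps 1--2 combined with $(\beta-\delta)\overline{\mu}(U^K_\beta f)\le\overline{\mu}(f)$ give the same result: $\delta_0\otimes\mu_0(U^K_\beta f)\le\frac{\beta}{\beta-\delta}\overline{\mu}(f)$ for $\beta>\delta$. There is good reason not to try to remove the shift. If the bound held with $U^K_\alpha$ for every $\alpha>0$, taking $\alpha=1/t$ would force $\nu_t\le e(1+\delta t)\mu_t$ for a.e.\ $t$, where $\nu_t$ is the law of $X^K_2(t)$ under $\mathbb{P}^K_{\delta_0\otimes\mu_0}$. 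The natural consequence of $\mathbf{H_u(K)}$ is only $\nu_t\le e^{\delta t}\mu_t$.
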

\begin{proof}
First of all note that by \Cref{prop:mu-V} we have 
\begin{equation*}
    \|U_\alpha f\|_{L^1(E;\delta_0\otimes\mu_0)}\leq  \| f\|_{L^1(E;\overline{\mu}_\alpha)}, \quad f\in L^1(E;\overline{\mu}_\alpha), \alpha>0.
\end{equation*}
By the fact that $U_\alpha\leq U_\alpha'$ as kernels, the fact that $\overline{\mu}$ is sub-invariant for $\mathcal{U}$, and using also $\bf H_u(K)$, we have
\begin{equation*}
    \overline{\mu}\left(KU'_\alpha f\right)\leq \delta \overline{\mu}\left(U'_\alpha f\right)\leq\delta \overline{\mu}\left(U_\alpha f\right)\leq\frac{\delta}{\alpha}\overline{\mu}(f), \quad f\in p\mathcal{B}(E).    
\end{equation*}
Now we note that by employing the resolvent formula \eqref{eq:resolvent_formula} we get
\begin{equation*}
U_\alpha^K f=U_\alpha'f+U_\alpha'\sum\limits_{n\geq 1}\left(KU'_\alpha\right)^nf, \quad f\in p\mathcal{B}(E),\alpha>0.
\end{equation*}
Consequently, for $f\in p\mathcal{B}(E)$ we have
\begin{align*}
\delta_0\otimes\mu_0\left(U_{\alpha+\delta}^K f\right)
&\leq \overline{\mu}(f)+\overline{\mu}\left(\sum\limits_{n\geq 1}\left(KU'_{\alpha+\delta}\right)^nf\right)\\
&\leq \overline{\mu}(f)+\overline{\mu}(f)\sum\limits_{n\geq 1} \left(\frac{\delta}{\alpha+\delta}\right)^n\\
&=\left(1+\frac{\delta}{\alpha}\right)\overline{\mu}(f), \quad f\in p\mathcal{B}(E).
\end{align*}
\end{proof}

\begin{prop}\label{prop: HuK}
Assume that $\bf H_u(K)$ hold.
Furthermore, let $({\sf \overline{L}_1}, {\sf D}({\sf \overline{L}_1}))$ be the generator on $L^1([0,\infty)\times \mathbb{R}^d ; \overline{\mu})$ provided by \Cref{thm:main0}.

Then there exists a maximal operator ${\sf \overline{L}^K_1}$ on $L^1([0,\infty)\times \mathbb{R}^d ; \overline{\mu})$ which extends $\sf \overline{L}^K$ given in \eqref{eq:L^K}, namely
\begin{align*}
    {\sf D}({\sf \overline{L}_1})&\ni f\mapsto {\sf \overline{L}^K_1}f \in L^1([0,\infty)\times \mathbb{R}^d ; \overline{\mu}) \\
    {\sf \overline{L}^K_1}f(t,x)&:= {\sf \overline{L}_1}f(t,x) +\int_{\mathbb{R}^d} \left[f(t,y)-f(t,x)\right]\; K(t,x,dy), \quad (t,x)\in E \;\overline{\mu}\mbox{-a.e.}, \quad f\in {\sf D}({\sf \overline{L}_1}),
\end{align*}
which is well defined and generates a $C_0$-semigroup $(T_t^K)_{t\geq 0}$ on $L^1([0,\infty)\times \mathbb{R}^d ; \overline{\mu})$.
Moreover, if $X^K$ is the right process constructed in \Cref{prop:X^K}, then the following properties hold:
\begin{enumerate}
    \item[(i)] The transition semigroup $(P_t^K)_{t\geq 0}$ of $X^K$ coincides with $(T_t^K)_{t\geq 0}$ as family of operators on $L^1([0,\infty)\times \mathbb{R}^d ; \overline{\mu})$
    \item[(ii)] There exists $E^K\in \mathcal{B}(E)$ such that $E\setminus E^K$ is $\overline{\mu}$-inessential  with respect to $\mathcal{U}^K$, $\mu_t\left(E^K(t)\right)=1$, $t\geq 0$, where recall that for a set $A\in [0,\infty)\times \mathbb{R}^d$ we define $A(t):=\left\{x\in \mathbb{R}^d : (t,x)\in A\right\}$,
    and for every $(s,x)\in E^K$ and every $f\in C_c^{\infty}((0,\infty)\times \mathbb{R}^d)$ we have that
            \begin{equation}\label{eq:LbarK finite}
                \mathbb{E}^K_{(s,x)}\left\{\int_0^{t} \left|{\sf \overline{L}^K}f(X^K(r))\right|\; dr
               \right\}<\infty, \quad t\geq 0,
            \end{equation}
            and 
            \begin{equation}\label{eq:LbarK mp}
                \left(f(X^K(t))-f(X^K(0))-\int_0^{t} {\sf \overline{L}^K}f(X^K(r))\; dr\right)_{t\geq 0} \; \mbox{ is a c\`adl\`ag } \left(\mathcal{F}^K_t\right)\mbox{-martingale under } \mathbb{P}^K_{(s,x)}.
            \end{equation}
\end{enumerate}
\end{prop}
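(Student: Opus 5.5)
The plan is to treat ${\sf \overline{L}^K_1}$ as a bounded perturbation of ${\sf \overline{L}_1}$ in $L^1(\overline{\mu})$, and then identify the perturbed semigroup with $(P^K_t)_{t\geq0}$ through the resolvent identity \eqref{eq:resolvent_formula}. The first step is to check that the jump part
\begin{equation*}
Bf(t,x):=\int_{\mathbb{R}^d}\left[f(t,y)-f(t,x)\right]K(t,x,dy)=Kf(t,x)-(K1)(t,x)f(t,x)
\end{equation*}
is a bounded operator on $L^1(\overline{\mu})$. For the part $Kf$, condition $\bf H_u(K)$ applied to $|f|$ gives $\overline{\mu}(|Kf|)\leq\delta\,\overline{\mu}(|f|)$. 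For the part $(K1)f$, it suffices that $\sup K1<\infty$. So $\|B\|_{L^1\to L^1}\leq \delta+\sup K1$, and $B$ is well defined on $\overline{\mu}$-classes. By the bounded perturbation theorem (Phillips), ${\sf \overline{L}_1}+B$ with domain ${\sf D}({\sf \overline{L}_1})$ generates a $C_0$-semigroup $(T^K_t)_{t\geq0}$ on $L^1(\overline{\mu})$. It is maximal, since it is a generator, and by \Cref{thm:main0}, (i) it extends ${\sf \overline{L}^K}$ on $C_c^\infty((0,\infty)\times\mathbb{R}^d)$.

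For (i), I would compare resolvents. Write ${\sf \overline{L}_1}+B=({\sf \overline{L}_1}-K1)+K$. The operator ${\sf \overline{L}_1}-K1$ generates the semigroup of $X$ killed by the multiplicative functional $\exp(-\int_0^t K1(X(r))dr)$, whose resolvent is $\mathcal{U}'$. Indeed, $U'_\alpha=U_\alpha-U_\alpha(K1\,U'_\alpha)$ pointwise, and this identity passes to $L^1(\overline{\mu})$ because $\overline{\mu}$ is sub-invariant for $\mathcal{U}$ and $U_\alpha$ is a version of $(\alpha-{\sf \overline{L}_1})^{-1}$ by \Cref{thm:main0}, (ii.2). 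Then the perturbation series $U'_\alpha\sum_{n\geq0}(KU'_\alpha)^n$ converges in $L^1(\overline{\mu})$ for $\alpha>\delta$, because $\overline{\mu}(KU'_\alpha f)\leq(\delta/\alpha)\overline{\mu}(f)$ as in the proof of \Cref{lem:0mu0}. This series is $(\alpha-{\sf \overline{L}_1}-B)^{-1}$, and by \eqref{eq:resolvent_formula} it equals $U^K_\alpha$ pointwise. Hence $U^K_\alpha$ is a version of the resolvent of $T^K$ for large $\alpha$, and by the resolvent equation for all $\alpha>0$. Uniqueness of the Laplace transform, together with the right continuity of $t\mapsto P^K_tf$, then gives $P^K_t=T^K_t$ on $L^1(\overline{\mu})$. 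Along the way this also shows that $\overline{\mu}$ is $\mathcal{U}^K$-sub-invariant up to the factor $(1+\delta/\alpha)$, which is what is needed below.

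For (ii), I would follow the scheme of Step OP1.16, assertion (ii.5). Fix $f\in C_c^\infty((0,\infty)\times\mathbb{R}^d)$. Then ${\sf \overline{L}^K}f={\sf \overline{L}^K_1}f\in L^1(\overline{\mu})$, and by (i),
\begin{equation*}
P^K_tf=f+\int_0^tP^K_r{\sf \overline{L}^K}f\,dr\quad \overline{\mu}\mbox{-a.e.}
\end{equation*}
Next, the bound $\overline{\mu}_1(U^K_1|{\sf \overline{L}^K}f|)<\infty$ together with \Cref{rem:exc_absorbing} shows that the set $[U^K_1|{\sf \overline{L}^K}f|=\infty]$ is $\overline{\mu}$-inessential, which yields \eqref{eq:LbarK finite} off that set. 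Then \Cref{lem:martingale 1} and \Cref{prop:martingale_absorbing} give a $\overline{\mu}$-inessential exceptional set off which the process in \eqref{eq:LbarK mp} is a càdlàg martingale. Intersecting over a countable family $\mathcal{T}$ that is dense in $C^2$ norm, and using \Cref{rem:absorbing_intersection}, produces $E^K$. Passing from $\mathcal{T}$ to all $f$ uses the approximation $\overline{\mu}$-a.e. together with the finiteness of $U^K_1|{\sf \overline{L}^K}\psi_n|$ for the cutoff functions $\psi_n$.

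The main obstacle is the claim $\mu_t(E^K(t))=1$ for \emph{every} $t$, including $t=0$, rather than merely $\overline{\mu}(E\setminus E^K)=0$. To get it, I would make the exceptional sets inessential also for $\delta_s\otimes\mu_s$, for each $s$. For $s=0$, \Cref{lem:0mu0} gives $\delta_0\otimes\mu_0(U^K_\alpha g)\leq(1+\delta/\alpha)\overline{\mu}(g)$, so $\overline{\mu}$-negligible sets are $U^K_\alpha$-null from $\delta_0\otimes\mu_0$, and the martingale identity holds under $\mathbb{P}^K_{\delta_0\otimes\mu_0}$; the proof of \Cref{lem:0mu0} adapts verbatim to $\delta_s\otimes\mu_s$. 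Since $\overline{\mu}$-inessential sets are absorbing and are not hit from such starting measures, I expect $\mu_t(E^K(t))=1$ to follow.
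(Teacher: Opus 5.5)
The gap is exactly at the step you flag as the main obstacle: your argument does not give $\mu_t(E^K(t))=1$ for \emph{every} $t\geq0$. The earlier parts are fine. The bounded perturbation and the identification of $U^K_\alpha$ with the resolvent of $(T^K_t)$ via $U'_\alpha\sum_n(KU'_\alpha)^n$ and \eqref{eq:resolvent_formula} amount to \Cref{prop:L1_approach}. So does the construction, via \Cref{prop:martingale_absorbing} and countable intersections, of an absorbing set $E^K_0$ with $\overline{\mu}$-inessential complement on which \eqref{eq:LbarK finite}--\eqref{eq:LbarK mp} hold.

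Here is why the last step fails. $\overline{\mu}$-inessentiality says nothing about a single time slice. Since $X^K_1$ is the uniform motion to the right, $\{0\}\times\mathbb{R}^d$ is itself $\overline{\mu}$-inessential, so $E^K_0$ may miss the whole $0$-slice. Your claim that inessential sets are ``not hit from such starting measures'' is circular: it presupposes that $\delta_s\otimes\mu_s$ does not charge the exceptional set. Building, for each $s$, a set inessential for $\overline{\mu}+\delta_s\otimes\mu_s$ produces uncountably many sets, which you cannot intersect. Even knowing that $X^K$ started from $\delta_s\otimes\mu_s$ stays in $E^K$ tells you nothing about $\mu_t$, because the time marginals of $X^K$ are not $(\mu_t)$; only those of the unperturbed $X$ are. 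Finally, the Cauchy identity from (i) holds only $\overline{\mu}$-a.e., so it says nothing on the $\overline{\mu}$-null slice $\{0\}\times\mathbb{R}^d$. The martingale identity you assert under $\mathbb{P}^K_{\delta_0\otimes\mu_0}$ therefore needs a separate argument.

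Two ingredients are missing.

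\emph{Comparison of null sets with the unperturbed process.} By \eqref{eq:resolvent_formula}, $P^K_t\geq P'_t\geq e^{-t\sup K1}P_t$, so $P^K_t1_A(s,x)=0$ forces $P_t1_A(s,x)=0$. Choose $t_n\searrow0$ with $\mu_{t_n}(E^K_0(t_n))=1$. Absorption plus the uniform motion give $P^K_{t-t_n}1_{\{t\}\times(E(t)\setminus E^K_0(t))}(t_n,\cdot)=0$ on $E^K_0(t_n)$, hence the same for $P_{t-t_n}$. Since $X$ transports $\mu_{t_n}$ to $\mu_t$ (\Cref{thm:main0}, (ii.4)), this yields $\mu_t(E^K_0(t))=1$ for all $t>0$.

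\emph{Adding the slice $t=0$ back by hand.} Set $E^K_{00}:=E^K_0\setminus(\{0\}\times E^K_0(0))$, which is still absorbing. Use \Cref{lem:0mu0} to pick $t_n\searrow0$ with $\delta_0\otimes\mu_0(P^K_{t_n}1_{E^K_{00}})=1$. Put $Z_0:=\{x:P^K_{t_n}1_{E^K_{00}}(0,x)=1\mbox{ for all }n\}$, so that $\mu_0(Z_0)=1$. Intersect $(\{0\}\times Z_0)\cup E^K_{00}$ with the integrability set obtained from \Cref{lem:0mu0}. Then verify the martingale property at each $(0,x)$ directly. Write $M_f$ for the process in \eqref{eq:LbarK mp}. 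The Markov property and $X^K(s)\in E^K_{00}$ for $s>0$ give $\mathbb{E}^K_{(0,x)}M_f(t)=\mathbb{E}^K_{(0,x)}M_f(s)$ for $0<s<t$. Right continuity in $L^1$ as $s\to0$ and \Cref{lem:martingale 1} then conclude.
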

\begin{proof}
From $\bf H_u(K)$ we deduce that $K$ induces a bounded operator on $L^1([0,\infty)\times \mathbb{R}^d; \overline{\mu})$. 
Therefore, the existence of the operator ${\sf \overline{L}^K_1}$ with domain ${\sf D(\overline{L}_1)}$ and generating a $C_0$-semigroup on $L^1([0,\infty)\times \mathbb{R}^d;\overline{\mu})$ follows by \Cref{prop:L1_approach}.

Moreover, assertion (i) follows from the same \Cref{prop:L1_approach}.

Let us prove (ii). 
To this end, 
first of all note that by \eqref{eq:bau} we deduce that
\begin{equation*}
    1_{[0,N]\times B(0,N)}\left(\|a\|+\|b\|\right)\in L^1([0,\infty)\times \mathbb{R}^d; \overline{\mu}), \quad N \geq 1,
\end{equation*}
hence, by (i) and \Cref{lem:0mu0},
\begin{equation*}
    U^K_1\left(1_{[0,N]\times B(0,N)}\left(\|a\|+\|b\|\right)\right)\in L^1([0,\infty)\times \mathbb{R}^d; \overline{\mu}+\delta_0\otimes\mu_0), \quad N\geq 1.
\end{equation*}
Consequently, by setting
\begin{equation*}
    \tilde{E}^K:=\bigcap\limits_{N\geq 1} \left[U^K_1\left(1_{[0,N]\times B(0,N)}\left(\|a\|+\|b\|\right)\right)<\infty\right]
\end{equation*}
we get $\tilde{E}^K\in \mathcal{A}(\mathcal{U}^K)$, see \Cref{rem:absorbing_intersection}, and $\overline{\mu}(E\setminus \tilde{E}^K)+\delta_0\otimes\mu_0(E\setminus \tilde{E}^K)=0$, hence $E\setminus \tilde{E}^K$ is $(\overline{\mu}+\delta_0\otimes\mu_0)$-inessential with respect to $\mathcal{U}^K$.

Note that \eqref{eq:LbarK finite} holds for every $(s,x)\in \tilde{E}^K$. 

Let us fix $f\in C_c^\infty((0,\infty)\times \mathbb{R}^d)$, and apply once again  \Cref{prop:L1_approach} to deduce that there exists a set $E_f\in \mathcal{B}(E)$ such that $E\setminus E_f$ is $\overline{\mu}$-inessential and for every $(s,x)\in E_f$ we have
that
\begin{equation*}
f(X^K(t))-f(X^K(0))-\int_0^{t} {\sf \overline{L}_1^K}f(X^K(r))\; dr, \quad t\geq 0
\end{equation*}
is a c\`adl\`ag $\left(\mathcal{F}^K_t\right)$-martingale under $\mathbb{P}^K_{(s,x)}$.
Now let us note that by (i) we have
\begin{equation*}
    \overline{\mu}\left(U_1^K\left|{\sf \overline{L}_1^K}f-{\sf \overline{L}^K}f\right|\right)=0,
\end{equation*}
hence, by \Cref{rem:exc_absorbing}, by passing to a further subset, we can assume that $E_f$ is such that
\begin{equation*}
    U_1^K\left|{\sf \overline{L}_1^K}f-{\sf \overline{L}^K}f\right|(s,x)=0, \quad (s,x)\in E_f,
\end{equation*}
and thus, for every $(s,x)\in E_f$,
\begin{equation*}
    \mathbb{E}^K_{(s,x)}\left\{\int_0^{t} \left|{\sf \overline{L}_1^K}f-{\sf \overline{L}^K}f\right|(X^K(r))\; dr
\right\}=0, \quad t\geq 0.
\end{equation*}

Let now $\mathcal{C}\subset C_c^2((0,\infty)\times \mathbb{R}^d)$ be dense (with respect to the uniform convergence up to the second derivatives) and also countable, and consider
\begin{equation*}
    E^K_0:=\tilde{E}^K\cap\bigcap\limits_{f\in \mathcal{C}} E_f.
\end{equation*}
Then $E^K_0\in \mathcal{A}(\mathcal{U}^K)$, see \Cref{rem:absorbing_intersection}, and $\overline{\mu}(E\setminus E^K_0)=0$, hence $E\setminus E^K_0$ is $\overline{\mu}$-inessential with respect to $\mathcal{U}^K$.
Moreover, for every $f\in \mathcal{C}$ we have that \eqref{eq:LbarK mp} is fulfilled.
Finally, by dominated convergence, it is easy to see that \eqref{eq:LbarK mp} holds for every $f\in C_c^\infty((0,\infty)\times \mathbb{R}^d)$.

Let us show that $\mu_t\left(E^K_0(t)\right)=1$ for $t> 0$ as well.
To this end, note that $\overline{\mu}(E\setminus E^K_0)=0$, there exists $t_n\searrow0$ such that $\mu_{t_n}(E^K_0(t_n))=1, n\geq 1$.
Moreover, from \eqref{eq:resolvent_formula} we have that for every $(s,x)\in E^K_0$
\begin{equation*}
    P_t^K (A)(s,x)=0 \quad \mbox{implies} \quad P_t (A)(s,x)=0, \quad A\in \mathcal{B}(E).
\end{equation*}
Since $X_1^K$ is the uniform motion to the right and $E^K_0\in \mathcal{A}(\mathcal{U}^K)$ (see \Cref{rem8.25}), we have
\begin{equation*}
  P_{t-t_n}^K([0,\infty)\times (E(t)\setminus E^K_0(t)))(t_n,x)=0,\quad   t>t_n>0, (t_n,x)\in E^K_0.
\end{equation*}
Consequently,
\begin{equation*}
  P_{t-t_n}([0,\infty)\times (E(t)\setminus E^K_0(t)))(t_n,x)=0,\quad   t>t_n>0, (t_n,x)\in E^K_0,
\end{equation*}
and thus
\begin{equation*}
  \mu_t(E(t)\setminus E_0^K(t)))=\mu_{t_n}\left(P_{t-t_n}([0,\infty)\times (E(t)\setminus E^K_0(t)))(t_n,\cdot)\right)=0,\quad   t>t_n>0.
\end{equation*}
This means that 
\begin{equation*}
    \mu_t(E^K_0(t))=1,\quad t>0.
\end{equation*}

Now, let us define
\begin{equation*}
    E^K_{00}:=E^K_0\setminus\left(\{0\}\times E^K_0(0)\right) ,
\end{equation*}
and note that since $X^K_1$ is the uniform motion to the right, we have $E^K_{00}\in \mathcal{A}(\mathcal{U}^K)$ and in fact that $E^K\setminus E^K_{00}$ is $\overline{\mu}$-inessential.

By \Cref{lem:0mu0} we have that
\begin{equation*}
    \delta_0\otimes\mu_0\left(U^K_11_{E\setminus E_{00}^K}\right)\leq \overline{\mu}\left(E\setminus E_{00}^K\right)=0,
\end{equation*}
from which we deduce that there exist $0<t_n\searrow 0$ such that
\begin{equation}\label{eq:t_n2}
    \delta_0\otimes\mu_0\left(P_{t_n}^K\left(1_{E^K_{00}}\right)\right)=1, \quad n\geq 1.
\end{equation}
Let us consider
\begin{equation*}
    Z_0:=\left\{x\in E^K(0):P_{t_n}^K\left(1_{E^K_{00}}\right)(0,x)=1,\; n\geq 1\right\},
\end{equation*}
so that, using \eqref{eq:t_n2}, we have $\mu_0(Z_0)=1$.
Now let us set
\begin{equation*}
    \quad E^K:=\left[\left(\{0\}\times Z_0\right)\cup E^K_{00}\right]\cap \tilde{E}^K.
\end{equation*}
Note that by \eqref{eq:t_n2}, the fact that $X^K_1$ is the uniform motion to the right, and using also \Cref{rem:absorbing_intersection}, we have that $E^K\in \mathcal{A}(\mathcal{U}^K)$, and in fact that $E\setminus E^K$ is $\overline{\mu}$-inessential.
Moreover, from the above discussion we also have 
\begin{equation*}
    \mu_t\left(E^K(t)\right)=1, \quad t\geq 0.
\end{equation*}

Furthermore, notice that if $(s,x)\in E^K$ with $s>0$, we have already obtained that \eqref{eq:LbarK mp} holds for every $f\in C_c^\infty((0,\infty)\times \mathbb{R}^d)$.
It thus remains to show that \eqref{eq:LbarK mp} holds for every $f\in C_c^\infty((0,\infty)\times \mathbb{R}^d)$ and for every $(0,x)\in E^K$.
To this end, note that $X^K(s)\in E^K_{00}, s>0$, hence if we denote by $\left(M_f(t)\right)_{t\geq 0}$ the process in \eqref{eq:LbarK mp}, then for every $t>s>0$ 
\begin{align*}
    \mathbb{E}^K_{(0,x)}( M_f(t))
    &=\mathbb{E}^K_{(0,x)}\left\{ M_f(s)+M_f(t)\circ \theta^K(s)\right\}
    =\mathbb{E}^K_{(0,x)}\left\{ M_f(s)\right\}+\mathbb{E}^K_{(0,x)}\left\{\mathbb{E}_{X^K(s)}^K\left\{M_f(t)\right\}\right\}\\
    &=\mathbb{E}^K_{(0,x)}\left\{ M_f(s)\right\}.
\end{align*}
Since $[0,\infty)\ni t\mapsto M_f(t)$ is right continuous in $L^1(\mathbb{P}^K_{\delta_{(0,x)})})$, and using that we clearly have $\mathbb{E}^K_{(0,x)}\left\{ M_f(0)\right\}=0$, it follows by \Cref{lem:martingale 1} that $M_f$ is a martingale under $\mathbb{P}^K_{\delta_{(0,x)})}$ for every $(0,x)\in E^K$.
\end{proof}

\begin{defi}
Let $0\leq s< T<\infty$. 
Let $\left(\nu_t\right)_{t\in (s,T)}\subset \mathcal{M}(\mathbb{R}^d)$ be a Borel curve in $\mathcal{M}(\mathbb{R}^d)$.

\begin{enumerate}
    \item[(i)] $\left(\nu_t\right)_{t\in (s,T)}$ is called a (weak, or distributional) solution on $(s,T)$ to the linear Fokker-Planck equation  
    \begin{equation}\label{eq:LFPshortKbar}
        \frac{d}{dt} \nu_t = \left({\sf L}_t^K\right)^\ast \nu_t, \quad t\in (s,T),
    \end{equation}
    if
    \begin{equation*}
        \int_s^T\int_{B(0,R)} \|b(t,x)\|+\|a(t,x)\| \; \nu_t(dx) dt <\infty, \quad R>0,
    \end{equation*}
    and
    \begin{equation*}
        \int_s^T\int_{\mathbb{R}^d} {\sf \overline{L}^K}f(t,x) \;\nu_t(dx) dt = 0, \quad f\in C_c^\infty \left((s,T)\times \mathbb{R}^d\right).
    \end{equation*}  
    \item[(ii)] $\left(\nu_t\right)_{t\in [s,T]}$ is called a (weak, or distributional) solution on $(s,T)$ to the linear Fokker-Planck equation  
    \begin{equation}\label{eq:LFPshortKcal}
        \frac{d}{dt} \nu_t = \left(\mathcal{L}_t^K\right)^\ast \nu_t, \quad t\in (s,T),
    \end{equation}
    if it is weakly continuous and
    \begin{equation*}
        \nu_T(f(T,\cdot))=\nu_s(f(s,\cdot))+\int_s^T\int_{\mathbb{R}^d} {\mathcal{L}^K}f(t,x) \;\nu_t(dx) dt , \quad f\in \mathcal{D}_b(\mathcal{L}),
    \end{equation*}  
    where recall that $\mathcal{D}_b(\mathcal{L})$ is given by \eqref{eq:D(L)_weak}, whilst $\mathcal{L}^K$ is given by \eqref{eq:L^K}; see also \Cref{prop:X^K}, (iv).
\end{enumerate}
If $(\nu_t)_{t\in[s, T]}\subset \mathcal{M}(\mathbb{R}^d)$ is weakly continuous and is a solution to \eqref{eq:LFPshortKcal} or \eqref{eq:LFPshortKbar} from above, then $\nu_s$ is considered to be the initial condition of the corresponding Fokker-Planck equation.
\end{defi}

\begin{defi}\label{eq:sol_flow_K}
A pair $(E,\Gamma_\mu)$, where $E\in \mathcal{B} ([0,\infty)\times \mathbb{R}^d)$ and $\Gamma_\mu$ is a mapping
\begin{equation*}
    [0,\infty)\times E\times\mathcal{B}(\mathbb{R}^d) \ni(t,s,x,A)\overset{\Gamma_\mu}{\longmapsto}\Gamma_{\mu,t}^{s,x}(A)\in [0,1],
\end{equation*}
is called a $\mu$-restricted fundamental solution flow to the linear Fokker-Plank equation \eqref{eq:LFPshortKcal} (respectively \eqref{eq:LFPshortKbar}) associated to $\mathcal{L}^K$ (respectively $\sf \overline{L}^K$)
if (i)-(v) from \Cref{eq:sol_flow} are fulfilled with the linear Fokker-Planck equation \eqref{eq:LFP} replaced by \eqref{eq:LFPshortKcal} (respectively \eqref{eq:LFPshortKbar}).
\end{defi}

\begin{coro}
Let $X^K$ be the right process provided by \Cref{prop:X^K}.
Further, for every $A\in \mathcal{B}(\mathbb{R}^d), s,t\geq 0, (s,x)\in E$, set
\begin{equation*}
    \Gamma_{\mu}^K:=\left(\Gamma_{\mu,t}^{K,s,x}\right)_{t\geq 0, (s,x)\in E},\quad \Gamma_{\mu,t}^{K,s,x}(A)
    :=\mathbb{P}^{K}_{s,x}\left(X_2^K(t)\in A\cap E(t+s)\right).
\end{equation*}
Then the following assertions hold
\begin{enumerate}
    \item[(i)] $(E,\Gamma_{\mu})$ is a $\mu$-restricted fundamental solution flow to the linear Fokker-Planck equation \eqref{eq:LFPshortKcal}.
    Moreover, if $b$ is locally bounded then $(E,\Gamma_{\mu})$ is a $\mu$-restricted fundamental solution flow to the linear Fokker-Planck equation \eqref{eq:LFPshortKbar}.
    \item[(ii)] If $\bf H_u(K)$ is fulfilled and $E^K$ is given by \Cref{prop: HuK}, then $\left(E^K, \left(\Gamma_{\mu,t}^{K,s,x}\right)_{t\geq 0, (s,x)\in E^K}\right)$ is a $\mu$-restricted fundamental solution flow to the linear Fokker-Planck equation \eqref{eq:LFPshortKbar}.
\end{enumerate}
\end{coro}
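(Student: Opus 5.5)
We verify conditions (i)--(v) of \Cref{eq:sol_flow} for $\Gamma^K_\mu$, following the scheme of \Cref{coro:fundamental} with $X$ replaced by $X^K$ and using \Cref{prop:X^K}, \Cref{prop: b locally bounded} and \Cref{prop: HuK} in place of \Cref{thm:main0}--\Cref{thm:main1}.

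\textbf{Conditions (i), (ii) and (iv).} Condition (ii) is immediate, since $(s,x,t)\mapsto P^K_t(\cdot)(s,x)$ is a measurable sub-Markovian kernel. Conservativity follows from \eqref{eq:resolvent_formula}: a bounded jump perturbation of a conservative process is conservative.

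For (i):
\begin{itemize}
\item The identity $\mu_s(E(s))=1$ is inherited from \Cref{thm:main1}, (ii.1').
\item The identity $\Gamma^{K,s,x}_{\mu,t}(E(s+t))=1$ follows from \Cref{prop:X^K}, (i): $X^K_1$ is uniform motion to the right, $X^K$ lives in $E$, and hence $X^K_2(t)\in E(s+t)$ a.s.
\end{itemize}

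Condition (iv), the flow property, comes from the Markov property of $X^K$ (semigroup property of $P^K_t$) together with the deterministic time component.

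\textbf{Conditions (iii) and (v) for \eqref{eq:LFPshortKcal}.} For (iii), take $f\in\mathcal{D}_b(\mathcal{L})$. Take expectations in the martingale \eqref{eq:martingale_K} under $\mathbb{P}^K_{s,x}$, which is allowed because $\mathcal{L}^Kf$ is bounded. This gives exactly the integrated identity defining a solution of \eqref{eq:LFPshortKcal}, with initial condition $\delta_x$.

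Weak continuity of $t\mapsto\Gamma^{K,s,x}_{\mu,t}$ is more delicate, since the paths are only c\`adl\`ag:
\begin{itemize}
\item Right continuity follows from right continuity of the paths and bounded convergence.
\item Left continuity uses the compensator formula of \Cref{prop:X^K}, (ii) with $\psi=1_{\{y\ne y'\}}$ and the boundedness of $K$. Together these give $\mathbb{P}^K_{s,x}(\text{jump at time } r)=0$ for each fixed $r$. Hence $X^K_2(r-)=X^K_2(r)$ a.s., and left continuity of the marginals follows by dominated convergence.
\end{itemize}

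Condition (v) follows by integrating (iii) against $\nu_s$ via Fubini. All test functions are bounded, so no integrability issue arises there beyond the stated hypothesis. The local integrability hypothesis on $a,b$ in (i) of the definition of \eqref{eq:LFPshortKbar} is exactly the assumption in (v).

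\textbf{The $\sf \overline{L}^K$ cases.} If $b$ is locally bounded, we repeat the argument with $f\in C_c^\infty((s,T)\times\mathbb{R}^d)$. By \Cref{prop: b locally bounded}, the process $f(X^K(t))-f(X^K(0))-\int_0^t{\sf \overline{L}^K}f(X^K(r))\,dr$ is a martingale with integrable drift under every $\mathbb{P}^K_{s,x}$. Taking expectations, as in \Cref{rem:mp_LFP}, gives $\int\int {\sf \overline{L}^K}f\,d\Gamma^{K,s,x}_{\mu,t}\,dt=0$. The local integrability of $\|a\|+\|b\|$ against $\Gamma^{K,s,x}_{\mu,t}\,dt$ holds because $a,b$ are locally bounded ($a$ by $\mathbf{H_a}$).

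For (ii) under $\bf H_u(K)$ we work on $E^K$ from \Cref{prop: HuK}:
\begin{itemize}
\item Condition (i) uses $\mu_t(E^K(t))=1$, which holds by \Cref{prop: HuK}, (ii).
\item The absorbing property $E^K\in\mathcal{A}(\mathcal{U}^K)$, combined with uniform motion, gives $X^K_2(t)\in E^K(s+t)$ a.s.
\item The integrability condition is provided by \eqref{eq:LbarK finite}. It is obtained there through the finiteness of $U^K_1\bigl(1_{[0,N]\times B(0,N)}(\|a\|+\|b\|)\bigr)$ on $\tilde E^K$; to turn this into the finite-time bound required by \Cref{def:solution}, multiply by $e^{t}$.
\item The equation itself comes from taking expectations in \eqref{eq:LbarK mp}.
\item Continuity and the flow property are as above.
\end{itemize}

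\textbf{Main obstacle.} The step I expect to be hardest is weak left continuity of the marginals in the presence of jumps, that is, ruling out fixed-time jumps. The compensator identity of \Cref{prop:X^K}, (ii) is the tool I would use first. A second point to check is that (i) requires $\Gamma^{K,s,x}_{\mu,t}$ to be a probability measure rather than a sub-probability, and this relies on the conservativity argument above.
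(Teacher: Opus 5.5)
Your proposal is correct and follows the paper's route. The paper's proof simply refers back to that of \Cref{coro:fundamental}, with \Cref{prop:X^K}, \Cref{prop: b locally bounded} and \Cref{prop: HuK} in the role of \Cref{thm:main0}--\Cref{thm:main1}, which is exactly your scheme. The extra points you supply are precisely where the jump case departs from the continuous-path case, and you handle them correctly: conservativity of $X^K$, ruling out fixed-time jumps via the compensator identity in \Cref{prop:X^K}(ii) to get weak left continuity of the c\`adl\`ag marginals, obtaining (v) by integrating (iii) against $\nu_s$, and converting finiteness of $U^K_1\bigl(1_{[0,N]\times B(0,N)}(\|a\|+\|b\|)\bigr)$ on $\tilde E^K$ into the finite-horizon integrability bound.
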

\begin{proof}
    The proof is very similar to that of \Cref{coro:fundamental}, so we omit it.
\end{proof}

\section{Construction of reference densities satisfying \texorpdfstring{${\bf H_{a,b}^{\sqrt{u}}}$, ${\bf H_{\lesssim u}}$ and ${\bf H_0^{TV}}$}{}}\label{s:construction}

In this section we first show that condition $\bf H^{\sqrt{u}}$ from the beginning of \Cref{s:reg_superposition_linear} is valid under very mild additional assumptions.
We mention that when the diffusion coefficients $a$ satisfy a Lipschitz condition in the space variable and a $1/2$-H\"older in the time variable, then $\bf H^{\sqrt{u}}$ can be ensured through \cite[Theorem 7.4.1]{BoKrRoSh22}.
However, it turns out (see \Cref{coro: sqrt}) that $\bf H^{\sqrt{u}}$ is valid under the much weaker conditions \eqref{eq:b da in L2 loc duplicate}, \eqref{eq:ellipticity_duplicate}, and \eqref{eq:hu in H-1} from below. 

Then, we show that the Trevisan's well-posendess conditions from \cite{Tr16} in the nondegenerate case are explicit sufficient conditions for the main general assumptions $\bf H_{a,b}^{\sqrt{u}}$, ${\bf H_{\lesssim u}}$, and ${\bf H_0^{TV}}$ from \Cref{s:reg_superposition_linear} to be fulfilled.

\subsection{The square root \texorpdfstring{$H^1$}{}-regularity for Fokker-Planck equations}
Let $N>0$, $\left(u(t,x)dx\right)_{t\in [0,N]}\subset \mathcal{P}(\mathbb{R}^d)$ be a weakly-continuous solution on $[0,N]$ to the linear Fokker-Planck equation
\begin{equation}\label{eq:LFP_duplicate}
    \int_{(0,N)\times \mathbb{R}^d} \left[\frac{d}{dt}f(t,x)+ {\sf L}_t f(t,x)  \right] u(t,x) \;dx dt = 0, \quad f\in C_c^\infty \left((0,N)\times \mathbb{R}^d\right),
\end{equation}
where
\begin{equation*}
    b:[0,\infty)\times \mathbb{R}^d \rightarrow \mathbb{R}^d \quad \mbox{ and } \quad a:[0,\infty)\times\mathbb{R}^d\rightarrow \mathbb{R}^{d\times d}
\end{equation*}
are Borel measurable coefficients, such that $A(t,x):=\left(a_{i,j}(t,x)\right)_{1\leq i,j\leq d}$ is a symmetric and non-negative definite real matrix for all $(t,x)\in [0,\infty)\times \mathbb{R}^d$.

Furthermore, we assume that for every $r>0$ we have
    \begin{equation}\label{eq:b da in L2 loc duplicate}
        b_i \mbox{ and } \partial_{x_j} a_{i,j} \mbox{ belong to }L^2\left([0,N)\times B(0,r)\right) , \quad 1\leq i,j\leq d,
    \end{equation} 
and there exists a constant $C=C(r,N)>0$ such that
    \begin{equation}\label{eq:ellipticity_duplicate}
        C^{-1}\|\xi\|^2\leq \langle A(t,x) \xi, \xi\rangle \leq C \|\xi\|^2, \quad \xi\in \mathbb{R}^d, (t,x)\in [0,N)\times B(0,r).
    \end{equation}

Let
\begin{equation}\label{eq:b_tilde}
    \tilde{b}_i(t,x):=\frac{1}{2}\sum_{i=1}^d\partial_{x_j}a_{i,j}(t,x),\quad (t,x)\in [0,N]\times \mathbb{R}^d,
\end{equation}
so that the operator $L_t$ can be rewritten as
\begin{equation*}
    L_tf=\langle b-\tilde{b},\nabla f\rangle+\frac{1}{2}{\sf div}\left(A\nabla f\right),\quad f\in C_c^\infty \left((0,N)\times \mathbb{R}^d\right).
\end{equation*}

Further, we assume that
\begin{equation}\label{eq:hu in H-1}
    hu\in L^\infty([0,N]\times \mathbb{R}^d; \mathbb{R}) \quad \mbox{and} \quad h\nabla u\in L^2([0,N)\times \mathbb{R}^d;\mathbb{R}^d), \quad \mbox{for all } h\in C_c^\infty(\mathbb{R}^d). 
\end{equation}
\begin{lem}\label[lem]{lem:hu in H-1}
If \eqref{eq:b da in L2 loc duplicate}, \eqref{eq:ellipticity_duplicate}, and \eqref{eq:hu in H-1} hold, then
\begin{equation*}
    hu\in W^{1,2}([0,N];H^{-1}(\mathbb{R}^d)), \mbox{ hence } hu\in C([0,T];L^2(\mathbb{R}^d)),\quad h\in C_c^\infty(\mathbb{R}^d).
\end{equation*}  
\end{lem}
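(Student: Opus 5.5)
Fix $h\in C_c^\infty(\mathbb{R}^d)$ and set $w:=hu$. The plan is to verify the two hypotheses of the Lions--Magenes lemma \cite[Chapter 3, Theorem 3.1]{LiMa72}, namely
\begin{equation*}
w\in L^2([0,N];H^1(\mathbb{R}^d)) \quad \mbox{and} \quad \partial_t w\in L^2([0,N];H^{-1}(\mathbb{R}^d)).
\end{equation*}
These give $w\in W^{1,2}([0,N];H^{-1}(\mathbb{R}^d))$ and a $dt$-version in $C([0,N];L^2(\mathbb{R}^d))$. Since $(u(t,x)dx)_t$ is weakly continuous, the continuous version coincides with $hu(t,\cdot)$ for every $t$ when tested against $C_c^\infty$ functions, so no change of version is needed.

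The first hypothesis follows from \eqref{eq:hu in H-1}. We have $w\in L^\infty$ with compact support in $x$, hence $w\in L^2$. Moreover $\nabla w=u\nabla h+h\nabla u$: the first term is bounded with compact support, since $\tilde h u\in L^\infty$ for some $\tilde h=1$ on ${\sf supp}(h)$, and the second term is in $L^2$ by assumption.

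For the time derivative, test \eqref{eq:LFP_duplicate} with $f(t,x)=h(x)\varphi(t,x)$, where $\varphi\in C_c^\infty((0,N)\times\mathbb{R}^d)$. Using the divergence form $L_tf=\langle b-\tilde b,\nabla f\rangle+\frac12{\sf div}(A\nabla f)$, we integrate the second order term by parts once in $x$. This is legitimate because $u\in L^2_tH^1_{loc}$ and $A$ is bounded on compacts by \eqref{eq:ellipticity_duplicate}; one can do it after mollifying $u$ in space, or directly with the weak gradient. The result is
\begin{equation*}
-\int \partial_t\varphi\, w\,dxdt=\int \Big(u\langle b-\tilde b,\nabla(h\varphi)\rangle-\tfrac12\langle A\nabla(h\varphi),\nabla u\rangle\Big)dxdt .
\end{equation*}
Next expand $\nabla(h\varphi)=h\nabla\varphi+\varphi\nabla h$ and bound each term by $C\|\varphi\|_{L^2_tH^1}$. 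The drift term is controlled by $\|\tilde h u\|_\infty\,\|(b-\tilde b)1_{[0,N)\times{\sf supp}(h)}\|_{L^2}\,\|\varphi\|_{L^2_tH^1}$; this is finite by \eqref{eq:b da in L2 loc duplicate}, since $\tilde b$ is built from $\partial_{x_j}a_{ij}$. The diffusion term is controlled by $C\|\tilde h\nabla u\|_{L^2}\|\varphi\|_{L^2_tH^1}$. It follows that the distribution $\partial_t w$ extends to a bounded functional on $L^2([0,N];H^1)$, i.e. it lies in $(L^2_tH^1)^*=L^2_tH^{-1}$.

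The main obstacle is rigorously justifying the integration by parts in the diffusion term, because the coefficients $a$ are only weakly differentiable. I would first use \eqref{eq:LFP_duplicate} in its given nondivergence form with smooth $f$. Writing $\frac12\sum a_{ij}\partial_{ij}f=\frac12{\sf div}(A\nabla f)-\langle\tilde b,\nabla f\rangle$ is valid pointwise a.e. since $\partial_ja_{ij}\in L^2_{loc}$. I would then move one derivative onto $u$ via the weak gradient, using that $uA\nabla f\in W^{1,1}_{loc}$ in $x$, which holds because $A$ is bounded and $\nabla A$, $\nabla u$ are $L^2_{loc}$, together with $u\in L^\infty_{loc}$. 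The remaining steps are bounded by routine Cauchy--Schwarz estimates.
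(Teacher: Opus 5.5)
Your proposal is correct and follows essentially the same route as the paper: test the equation with $h\varphi$, move one derivative of the divergence-form diffusion term onto $u$, bound every term by $C\|\varphi\|_{L^2([0,N];H^1(\mathbb{R}^d))}$ via Cauchy--Schwarz, and conclude with the Lions--Magenes lemma using $hu\in L^2([0,N];H^1(\mathbb{R}^d))$. Your use of weak continuity to identify the continuous representative with $hu(t,\cdot)$ for every $t$ is a worthwhile detail that the paper leaves implicit. One correction: the integration by parts needs only the weak divergence of the rows of $A$, which is what the hypothesis gives, not a full $\nabla A\in L^2_{\mathrm{loc}}$, so justify that step by mollifying $u$ in space.
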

\begin{proof}
Let $h\in C_c^\infty(\mathbb{R}^d)$ and $f\in C_c^\infty \left((0,N)\times \mathbb{R}^d\right)$, so that
\begin{equation*}
    \int_{(0,N)\times \mathbb{R}^d} \left[\frac{d}{dt}(hf)+ \langle b-\tilde{b},\nabla (hf)\rangle+\frac{1}{2}{\sf div}\left(A\nabla (hf)\right)  \right] u \;dx dt = 0.
\end{equation*}
Consequently,
\begin{align*}
    \int_{(0,N)\times \mathbb{R}^d} hu\frac{d}{dt}f\;dxdt
    &= -\int_{(0,N)\times \mathbb{R}^d}u\langle b-\tilde{b},\nabla (hf)\rangle\;dxdt
    - \int_{(0,N)\times \mathbb{R}^d} u{\sf div}\left(A\nabla (hf)\right)\;dxdt\\
    &=-\int_{(0,N)\times \mathbb{R}^d} u\langle b-\tilde{b},\nabla (hf)\rangle\;dxdt
    - \int_{(0,N)\times \mathbb{R}^d} \langle A\nabla u,\nabla (hf)\rangle\;dxdt.
\end{align*}
Thus, by the Cauch-Schwartz inequality
\begin{align*}
    \left|\int_{(0,N)\times \mathbb{R}^d} hu\frac{d}{dt}f\;dxdt\right|
    &\leq\int_{(0,N)\times \mathbb{R}^d} \left(\|u (b-\tilde{b})\|+\|A\nabla u\|\right)\left(\|\nabla h\|+|h|\right)\left(|f|+\|\nabla f\|\right)\;dxdt\\
    &\leq\left(\int_{(0,N)\times \mathbb{R}^d} \left(\|u (b-\tilde{b})\|+\|A\nabla u\|\right)^2\left(\|\nabla h\|+|h|\right)^2\;dxdt\right)^{1/2}\\
    &\quad \times \left(\int_{(0,N)\times \mathbb{R}^d} \left(|f|+\|\nabla f\|\right)^2\;dxdt\right)^{1/2}.
\end{align*}
Now, using \eqref{eq:b da in L2 loc duplicate}, \eqref{eq:ellipticity_duplicate}, and \eqref{eq:hu in H-1}, it is straightforward to get that there exists a constant $C<\infty$, independent of $f$, such that
\begin{equation*}
    \left|\int_{(0,N)\times \mathbb{R}^d} hu\frac{d}{dt}f\;dxdt\right|\leq  C\|f\|_{L^2([0,N];H^{1}(\mathbb{R}^d))},
\end{equation*}
which proves the first part of the statement.
The second part of the statement follows by the first one, \eqref{eq:hu in H-1}, and Lions-Magenes lemma.
\end{proof}

\begin{rem}\label{rem: implies TV}
    Note that by \Cref{lem:hu in H-1} and the fact that $(\mu_t)_{t\in[0,N]}$ is weakly continuous, hence tight, it is straightforward to deduce that 
    \begin{equation*}
        \lim_{t\to 0} u(t,\cdot)=u(0,\cdot) \quad \mbox{in } L^1(\mathbb{R}^d).
    \end{equation*}
\end{rem}

The following regularity result is the first main step towards the main result of this subsection, namely \Cref{coro: sqrt} from below. 
For its proof we got inspired from \cite{BoDaRo96}, where the uniform ellipticity is employed to prove the regularity of invariant measures for time-independent operators.

\begin{prop}\label{prop:u gamma}
If \eqref{eq:b da in L2 loc duplicate}, \eqref{eq:ellipticity_duplicate}, and \eqref{eq:hu in H-1} hold, then for every $\gamma\in(0,1)$ we have 
    \begin{equation*}
        \int_0^N\int_{\mathbb{R}^d}h\frac{\|\nabla (hu)\|^2}{(hu)^\gamma}\;dxdt<\infty,\quad 0\leq h\in C_c^\infty((0,N)\times \mathbb{R}^d).
    \end{equation*}
\end{prop}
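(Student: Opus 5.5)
The plan is to test the Fokker--Planck equation \eqref{eq:LFP_duplicate}, written in the divergence form $L_tf=\langle b-\tilde b,\nabla f\rangle+\frac12{\sf div}(A\nabla f)$, against a nonlinear function of the density. This is the standard renormalization argument of \cite{BoDaRo96}, now done in space--time. Fix $0\le h\in C_c^\infty((0,N)\times\mathbb{R}^d)$ and $\gamma\in(0,1)$. Set $w:=hu$ and, for $\varepsilon>0$, $\phi_\varepsilon(r):=(r+\varepsilon)^{1-\gamma}$, whose derivative is $\phi_\varepsilon'(r)=(1-\gamma)(r+\varepsilon)^{-\gamma}$. The aim is to obtain
\begin{equation*}
(1-\gamma)\gamma\int_0^N\!\!\int_{\mathbb{R}^d} h\,\frac{\langle A\nabla w,\nabla w\rangle}{(w+\varepsilon)^{1+\gamma}}\ldots
\end{equation*}
or, more conveniently, to use the test function $f=h\,(w+\varepsilon)^{-\gamma}$, multiplied by a further cutoff if necessary. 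This choice is natural: pairing $u$ with $h\nabla f$ produces the term $-\gamma h(w+\varepsilon)^{-\gamma-1}\langle A\nabla u,\nabla w\rangle$, which up to harmless lower-order terms is the desired quantity $\int h\|\nabla w\|^2/(w+\varepsilon)^{\gamma}$ with the correct power after rescaling. It may be cleaner to test with $h(w+\varepsilon)^{1-\gamma}$ instead and work with $\langle A\nabla w,\nabla w\rangle(w+\varepsilon)^{-\gamma}$. I will choose the exponent so that the quadratic gradient term carries exactly $(w+\varepsilon)^{-\gamma}$.

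The first step is admissibility. By \eqref{eq:hu in H-1} and \Cref{lem:hu in H-1}, $w\in L^2(H^1)\cap W^{1,2}(H^{-1})\cap L^\infty$, and $w$ is compactly supported in $x$. Consequently $(w+\varepsilon)^{1-\gamma}-\varepsilon^{1-\gamma}$ lies in $L^2(0,N;H^1)$ and is bounded, since $\phi_\varepsilon$ is Lipschitz on $[0,\infty)$. The weak formulation extends from $C_c^\infty$ to such test functions by mollification. The lower-order terms $u\langle b-\tilde b,\nabla f\rangle$ are integrable because $b,\partial_ja_{ij}\in L^2_{loc}$ and $u$ is locally bounded. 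The time-derivative term is handled by the chain rule for $W^{1,2}(H^{-1})\cap L^2(H^1)$ functions, in the Lions--Magenes form $\int \langle\partial_t w,\Phi_\varepsilon'(w)\rangle=\Phi_\varepsilon(w)\big|_0^N$ with $\Phi_\varepsilon'=\phi_\varepsilon$. Since $h$ vanishes near $t=0$ and $t=N$, this boundary term disappears, and what remains is the contribution from $\partial_t h$, which is bounded by $C\|\partial_th\|_\infty\|w\|_\infty^{2-\gamma}$ on a compact set.

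The second step is the estimate itself. After expanding, the equation reads
\begin{equation*}
\frac{\gamma(1-\gamma)}{2}\int h\,\frac{\langle A\nabla w,\nabla w\rangle}{(w+\varepsilon)^{\gamma}}\,\frac{w}{w+\varepsilon}\ \le\ C+\Big|\int \frac{u\,\langle b-\tilde b,\nabla w\rangle}{(w+\varepsilon)^{\gamma}}\cdots\Big|+\text{terms with }\nabla h .
\end{equation*}
I will bound the drift term by Young's inequality,
\begin{equation*}
\big|u\langle b-\tilde b,\nabla w\rangle\big|(w+\varepsilon)^{-\gamma}\le \delta\,\|\nabla w\|^2(w+\varepsilon)^{-\gamma}+C_\delta\,\|b-\tilde b\|^2u^2(w+\varepsilon)^{-\gamma}.
\end{equation*}
The second piece is locally integrable because $u^{2-\gamma}\in L^\infty_{loc}$ and $b-\tilde b\in L^2_{loc}$; the first piece is absorbed into the left-hand side using \eqref{eq:ellipticity_duplicate}. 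The terms involving $\nabla h$ are handled the same way, after writing $\nabla u$ on ${\rm supp}\,h$ in terms of $\nabla w$ and $u\nabla h$. The factor $h$ versus $h^2$ needs some care: I will work with $h^2$ as the cutoff and then note that any $h\ge0$ is dominated by the square of a smooth cutoff that equals $1$ on ${\rm supp}\,h$.

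The main obstacle is twofold. First, the absorption requires $\delta$-smallness uniform in $\varepsilon$. Second, the factor $w/(w+\varepsilon)$ degenerates on $\{w=0\}$. On that set $\nabla w=0$ a.e., so the term is harmless. The remaining steps are to let $\varepsilon\downarrow0$ by Fatou's lemma, obtaining $\int h\|\nabla w\|^2w^{-\gamma}<\infty$, and to justify rigorously the chain rule in time for the nonsmooth $\phi_\varepsilon$. For the latter I would first try Steklov time-averages of $w$, and fall back on mollification in $t$ if that does not work.
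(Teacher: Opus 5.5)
Your plan is in substance the paper's proof. Test the divergence-form equation with $h$ times $(hu+\varepsilon)^{1-\gamma}$, use ellipticity to produce $\|\nabla(hu)\|^2(hu+\varepsilon)^{-\gamma}$, bound the remaining terms uniformly in $\varepsilon$, and let $\varepsilon\to0$ by Fatou. You differ from the paper in how you justify the time derivative and in the bookkeeping.

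The paper never inserts $h(hu+c)^{1-\gamma}$ directly. It smooths in space, plugging $\Gamma_\alpha(h v_{\alpha,c})$ into the rearranged weak formulation, with $\Gamma_\alpha=\alpha(\alpha-\Delta)^{-1}$, $u_c=hu+c$ and $v_{\alpha,c}=(\Gamma_\alpha u_c)^{1-\gamma}$. Then $\partial_t\Gamma_\alpha u_c=\Gamma_\alpha\partial_t(hu)\in L^2(0,N;H^1)$, so ordinary calculus applies, and the paper lets $\alpha\to\infty$ before $c\to0$.

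You instead extend the weak formulation by density to compactly supported test functions in $L^2(0,N;H^1)$. You then use the Lions--Magenes chain rule for $w\in L^2(H^1)\cap W^{1,2}(H^{-1})$. Since $h\,\partial_t u=\partial_t w-u\,\partial_t h$ and $w$ vanishes near $t=0,N$, the time term reduces to $\int u\,\partial_t h\,\phi_\varepsilon(w)$. This is cleaner and avoids the double limit.

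The paper, in turn, carries one more factor of $h$. Its left side is $\int h\|\nabla u_c\|^2u_c^{-\gamma}$, split as $\langle\nabla(hv_c),A\nabla u_c\rangle-v_c\langle\nabla h,A\nabla u_c\rangle$, so the effective test function is $h^2(hu+c)^{1-\gamma}$. As a result every remainder term is bounded directly, with no absorption: the negative power only ever appears multiplied by $hu$, and $hu\,(hu+c)^{-\gamma}\le(hu)^{1-\gamma}$.

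Two points in your write-up need correcting.

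First, the displayed inequality with constant $\gamma(1-\gamma)/2$ and factor $w/(w+\varepsilon)$ does not come from the test function you settle on. With $f=h\phi_\varepsilon(w)$ and $h\nabla u=\nabla w-u\nabla h$, the principal term is $\frac{1-\gamma}{2}\int(w+\varepsilon)^{-\gamma}\langle A\nabla w,\nabla w\rangle$, with no degenerate factor. This is not cosmetic. If the factor $w/(w+\varepsilon)$ were really there, absorbing $\delta\int h\|\nabla w\|^2(w+\varepsilon)^{-\gamma}$ would fail on all of $\{0<w\lesssim\varepsilon\}$. Your remark that $\nabla w=0$ a.e. on $\{w=0\}$ would not rescue it. Drop your first candidate $h(w+\varepsilon)^{-\gamma}$ altogether: it gives the weight $(w+\varepsilon)^{-1-\gamma}$ and remainders that are not bounded uniformly in $\varepsilon$.

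Second, the Young bound as written is not uniform in $\varepsilon$. The quantity $u^2(hu+\varepsilon)^{-\gamma}$ is only bounded by $u^{2-\gamma}h^{-\gamma}$, which blows up at the boundary of ${\rm supp}\,h$. You must keep the factor $h$ that $\nabla(h\phi_\varepsilon(w))$ actually produces. That gives $hu^2(hu+\varepsilon)^{-\gamma}\le h^{1-\gamma}u^{2-\gamma}$, and then Young is not even needed. The cross term $u(w+\varepsilon)^{-\gamma}\langle A\nabla w,\nabla h\rangle$ likewise requires $|\nabla h|^2\lesssim h$. This holds for nonnegative $C^2_c$ functions, or you can take $h=\chi^2$ as you suggest.

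With these repairs your argument yields the stronger unweighted bound $\sup_{\varepsilon}\int\|\nabla w\|^2(w+\varepsilon)^{-\gamma}<\infty$. This implies the claim because $h$ is bounded.
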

\begin{proof}
Let $0\leq h\in C_c^\infty((0,N)\times \mathbb{R}^d)$ and set
\begin{equation*}
    u_{c}:= hu+c,\quad v_c:=(hu+c)^{1-\gamma},\quad c>0.
\end{equation*}
By Fatou's lemma we have
\begin{equation*}
    \int_0^N\int_{\mathbb{R}^d}h\frac{\|\nabla (hu)\|^2}{(hu)^\gamma}\;dxdt\leq \liminf_{c\to 0}\int_0^N\int_{\mathbb{R}^d}h\frac{\|\nabla u_c\|^2}{u_c^\gamma}\;dxdt.
\end{equation*}
We proceed with the key fact that
\begin{align*}
    \int_0^N\int_{\mathbb{R}^d}h\frac{\|\nabla u_c\|^2}{u_c^\gamma}\;dxdt
    &\leq C\int_0^N\int_{\mathbb{R}^d}\frac{h}{u_{c}^\gamma}\langle A\nabla u_{c},\nabla u_{c}\rangle\;dxdt\\
    &=\frac{C}{1-\gamma}\int_0^N\int_{\mathbb{R}^d}h\langle \nabla v_{c},A\nabla u_{c}\rangle\;dxdt\\
    &=\frac{C}{1-\gamma}\int_0^N\int_{\mathbb{R}^d}\langle \nabla (hv_{c}),A\nabla u_{c}\rangle\;dxdt
    -\frac{C}{1-\gamma}\int_0^N\int_{\mathbb{R}^d}\langle v_{c}\nabla h,A\nabla u_{c}\rangle\;dxdt.
\end{align*}
Further, note that for $0<c\leq 1$
\begin{align*}
    \left|\int_0^N\int_{\mathbb{R}^d}\langle v_{c}\nabla h,A\nabla u_{c}\rangle\;dxdt\right|
    &= \left|\int_0^N\int_{\mathbb{R}^d}\langle v_{c}A\nabla h,\nabla (hu)\rangle\;dxdt\right|\\
    &\leq\int_0^N\int_{\mathbb{R}^d} v_{c}\|A\nabla h\| \|\nabla (hu)\|\;dxdt\\
    &\leq \left(\int_0^N\int_{\mathbb{R}^d} (hu+1)^{2(1-\gamma)}\|A\nabla h\|^2 \;dxdt\right)^{1/2} \left(\int_0^N\int_{\mathbb{R}^d} \|\nabla (hu)\|^2\;dxdt\right)^{1/2}.
\end{align*}
The last two integrals are finite by \eqref{eq:ellipticity_duplicate}, \eqref{eq:hu in H-1}, and the fact that $h\in C_c^{\infty}((0,N)\times \mathbb{R}^d)$.

It remains to show that
\begin{equation}\label{eq:lim_inf_c is finte gamma}
    \liminf_{c\to 0}\int_0^N\int_{\mathbb{R}^d}\langle \nabla (hv_{c}),A\nabla u_{c}\rangle\;dxdt<\infty.
\end{equation}
To this end, note that the equation \eqref{eq:LFP_duplicate} satisfied by $u$ can be rewritten as
\begin{align}\label{eq:LFP divergence}
    0=\int_0^N\int_{\mathbb{R}^d}\left(\frac{d}{dt}f\right)u+f\langle b-\tilde{b},\nabla u\rangle
    -\langle\nabla f,A\nabla u\rangle\;dxdt,
\end{align}
hence, for every $f\in C_c^\infty((0,\infty)\times\mathbb{R}^d)$ we have
\begin{align}
    \int_0^N\int_{\mathbb{R}^d}\langle\nabla f,A\nabla (hu)\rangle\;dxdt
    &=\int_0^N\int_{\mathbb{R}^d}\langle h\nabla f,A\nabla u\rangle\;dxdt+\langle \nabla f,uA\nabla h\rangle\;dxdt\\
    &=\int_0^N\int_{\mathbb{R}^d}\langle \nabla (hf),A\nabla u\rangle\;dxdt
    -\langle f\nabla h,A\nabla u\rangle
    +\langle \nabla f,uA\nabla h\rangle\;dxdt\\
    &=\int_0^N\int_{\mathbb{R}^d}u\frac{d}{dt}(hf)+hf\langle b-\tilde{b},\nabla u\rangle
    -\langle f\nabla h,A\nabla u\rangle
    +\langle \nabla f,uA\nabla h\rangle\;dxdt,
    \label{eq:LFP-H^-1 gamma}
\end{align}
where for the last equality we have employed \eqref{eq:LFP divergence}.
Also, note that 
by a density argument, \eqref{eq:LFP-H^-1 gamma} remains valid for 
\begin{equation*}
f\in W_0^{1,2}((0,N); L^2(\mathbb{R}^d))\cap L^2((0,N);H^1(\mathbb{R}^d))\cap L^\infty((0,N)\times \mathbb{R}^d).
\end{equation*}

Let us now consider the following (smoothing) bounded linear operators
\begin{equation*}
    \Gamma_\alpha:=\alpha(\alpha-\Delta)^{-1}:L^2(\mathbb{R}^d)\rightarrow H^2(\mathbb{R}^d), \quad \alpha >0,
\end{equation*}
and recall that $\Gamma_\alpha$ extend to linear bounded operators 
\begin{equation*}
    \Gamma_\alpha : H^{-1}(\mathbb{R}^d)\rightarrow H^1(\mathbb{R}^d), \quad \Gamma_\alpha : L^p(\mathbb{R}^d)\rightarrow L^p(\mathbb{R}^d), \quad p\in [1,\infty].
\end{equation*}
Moreover, we have that $\Gamma_\alpha$ is a Markov operator on $L^p(\mathbb{R}^d)$ and
\begin{equation}\label{eq:Gamma-convergence}
   \lim_{\alpha\to \infty}\Gamma_\alpha u=u \;\mbox{ for } u\in H^1(\mathbb{R}^d),\quad \lim_{\alpha\to \infty}\Gamma_\alpha u=u\; \mbox{ for } u\in L^p(\mathbb{R}^d), p\in [1,\infty)
\end{equation}
Let us define
\begin{equation*}
u_{\alpha,c}:=\Gamma_\alpha u_c, \;v_{\alpha,c}:= u_{\alpha,c}^{1-\gamma},\quad \alpha,c>0,
\end{equation*}
so that, by \Cref{lem:hu in H-1} and \eqref{eq:hu in H-1},
\begin{equation*}
\Gamma_\alpha (hv_{\alpha,c})\in W_0^{1,2}((0,N); L^2(\mathbb{R}^d))\cap L^2((0,N);H^1(\mathbb{R}^d))\cap L^\infty((0,N)\times \mathbb{R}^d), \quad \alpha, c>0.
\end{equation*}
Plugging $\Gamma_\alpha (hv_{\alpha,c})$ instead of $f$ in \eqref{eq:LFP-H^-1 gamma} we obtain
\begin{align*}
&\int_0^N\int_{\mathbb{R}^d}
    \langle \nabla \Gamma_\alpha (hv_{\alpha,c}), A\nabla (hu)\rangle\;dxdt\\
    &=\int_0^N\int_{\mathbb{R}^d}u\frac{d}{dt}(h\Gamma_\alpha (hv_{\alpha,c}))
    +h\Gamma_\alpha (hv_{\alpha,c})\langle b-\tilde{b},\nabla u\rangle\\
    &\phantom{=\int_0^N\int_{\mathbb{R}^d}u\frac{d}{dt}(h\Gamma_\alpha (hv_{\alpha,c}))} 
    -\langle \Gamma_\alpha (hv_{\alpha,c})\nabla h,A\nabla u\rangle
    +\langle \nabla \Gamma_\alpha (hv_{\alpha,c}),uA\nabla h\rangle\;dxdt.
\end{align*}
Note that by \eqref{eq:Gamma-convergence}, \eqref{eq:hu in H-1}, and the fact that $\nabla$ and $\Gamma_\alpha$ commute, and $\Gamma_\alpha$ is $L^2(\mathbb{R}^d)$-symmetric, we have
\begin{align*}
\lim_{\alpha\to\infty}\int_0^N\int_{\mathbb{R}^d}
    \langle \nabla (\Gamma_\alpha (hv_{\alpha,c})), A\nabla (hu)\rangle\;dxdt
    &=\lim_{\alpha\to\infty}\int_0^N\int_{\mathbb{R}^d}
    \langle \nabla ( hv_{\alpha,c}), \Gamma_\alpha(A\nabla (hu)\rangle)\;dxdt\\
    &=\int_0^N\int_{\mathbb{R}^d}
    \langle \nabla (hv_{c}), A\nabla (hu)\rangle\;dxdt,
\end{align*}
hence showing \eqref{eq:lim_inf_c is finte gamma} boils down to showing
\begin{align}\label{eq:lim_inf_c 3 gamma}
    \liminf_{c\to 0} \lim_{\alpha\to\infty}
    &\left|\int_0^N\int_{\mathbb{R}^d}u\frac{d}{dt}(h\Gamma_\alpha (hv_{\alpha,c}))
    +h(\Gamma_\alpha (hv_{\alpha,c}))\langle b-\tilde{b},\nabla u\rangle\right.\\
    &\left.\phantom{\left(\frac{d}{dt}(h\Gamma_\alpha (hv_{\alpha,c}))\right)u} 
    -\langle (\Gamma_\alpha (hv_{\alpha,c}))\nabla h,A\nabla u\rangle
    +\langle \nabla (\Gamma_\alpha (hv_{\alpha,c})),uA\nabla h\rangle\;dxdt\right|<\infty.
\end{align}
In what follows we treat all the four terms in the last integral, separately:
For the first term, since $\frac{d}{dt}$ and $\Gamma_\alpha$ commute, and $\Gamma_\alpha$ is $L^2(\mathbb{R}^d)$-symmetric, we have
\begin{align*}
    \int_0^N\int_{\mathbb{R}^d}
    &u\frac{d}{dt}\left(h\Gamma_\alpha (hv_{\alpha,c}) \right)\;dxdt\\
    &=\int_0^N\int_{\mathbb{R}^d}u\Gamma_\alpha (hv_{\alpha,c})\frac{d}{dt}h
    +u_{\alpha,c}\frac{d}{dt}\left( hv_{\alpha,c} \right)\;dxdt\\
    &=\int_0^N\int_{\mathbb{R}^d}hv_{\alpha,c}\Gamma_\alpha\left(u\frac{d}{dt}h\right) 
    +\left(\frac{d}{dt}h\right)u_{\alpha,c}^{2-\gamma}
    +h(1-\gamma)u_{\alpha,c}^{1-\gamma}\frac{d}{dt}u_{\alpha,c}\;dxdt\\
    &=\int_0^N\int_{\mathbb{R}^d}hv_{\alpha,c}\Gamma_\alpha\left(u\frac{d}{dt}h\right)
    +\left(\frac{d}{dt}h\right)u_{\alpha,c}^{2-\gamma}
    +h\frac{1-\gamma}{2-\gamma}\frac{d}{dt}u_{\alpha,c}^{2-\gamma}\;dxdt\\
    &=\int_0^N\int_{\mathbb{R}^d}hv_{\alpha,c}\Gamma_\alpha\left(u\frac{d}{dt}h\right)
    +\left(\frac{d}{dt}h\right)u_{\alpha,c}^{2-\gamma}
    -\left(\frac{d}{dt}h\right)\frac{1-\gamma}{2-\gamma}u_{\alpha,c}^{2-\gamma}\;dxdt\\
    &\mathop{\longrightarrow}_{\alpha\to\infty}\int_0^N\int_{\mathbb{R}^d}hv_{c}u\frac{d}{dt}h
    +\left(\frac{d}{dt}h\right)u_{c}^{2-\gamma}
    -\left(\frac{d}{dt}h\right)\frac{1-\gamma}{2-\gamma}u_{c}^{2-\gamma}\;dxdt\\
    &\mathop{\longrightarrow}_{c\to 0}\int_0^N\int_{\mathbb{R}^d}(hu)^{2-\gamma}\frac{d}{dt}h
    +\left(\frac{d}{dt}h\right)u^{2-\gamma}
    -\left(\frac{d}{dt}h\right)\frac{1-\gamma}{2-\gamma}u^{2-\gamma}\;dxdt.
\end{align*}

Proceeding to the second term in \eqref{eq:lim_inf_c 3 gamma}, we have
\begin{align*} 
\lim_{\alpha\to\infty}
\int_0^N\int_{\mathbb{R}^d}h\Gamma_\alpha (hv_{\alpha,c})\langle b-\tilde{b},\nabla u\rangle\;dxdt
&=\lim_{\alpha\to\infty}
\int_0^N\int_{\mathbb{R}^d} hv_{\alpha,c}\Gamma_\alpha(h\langle b-\tilde{b},\nabla u)\rangle\;dxdt\\
&=
\int_0^N\int_{\mathbb{R}^d}  h^2(hu+c)^{1-\gamma}\langle b-\tilde{b},\nabla u\rangle\;dxdt\\
&\mathop{\longrightarrow}_{c\to 0}\int_0^N\int_{\mathbb{R}^d}  h^2(hu)^{1-\gamma}\langle b-\tilde{b},\nabla u\rangle\;dxdt
<\infty \mbox{ by } \eqref{eq:b da in L2 loc duplicate},\eqref{eq:hu in H-1}. 
\end{align*}

For the third term in \eqref{eq:lim_inf_c 3 gamma}, we have
\begin{align*}
\int_0^N\int_{\mathbb{R}^d}\langle \Gamma_\alpha (hv_{\alpha,c})\nabla h,A\nabla u\rangle\;dxdt
&=\int_0^N\int_{\mathbb{R}^d}\Gamma_\alpha (hv_{\alpha,c})\langle \nabla h,A\nabla u\rangle\;dxdt\\
&=\int_0^N\int_{\mathbb{R}^d}hv_{\alpha,c}\Gamma_\alpha \left(\langle \nabla h,A\nabla u\rangle\right)\;dxdt\\
&\mathop{\longrightarrow}_{\alpha\to \infty}\int_0^N\int_{\mathbb{R}^d}h(hu+c)^{1-\gamma}\langle \nabla h,A\nabla u\rangle\;dxdt\\
&\mathop{\longrightarrow}_{c\to 0}\int_0^N\int_{\mathbb{R}^d}h(hu)^{1-\gamma}\langle \nabla h,A\nabla u\rangle\;dxdt.
\end{align*}

For the last term in \eqref{eq:lim_inf_c 3 gamma}, we have
\begin{align*}
    \int_0^N\int_{\mathbb{R}^d}
    \langle \nabla \Gamma_\alpha (hv_{\alpha,c}),uA\nabla h\rangle\;dxdt
    &=\int_0^N\int_{\mathbb{R}^d}
    \langle \nabla  (hv_{\alpha,c}),\Gamma_\alpha(uA\nabla h)\rangle\;dxdt\\
    &=-\int_0^N\int_{\mathbb{R}^d}
    \langle  hv_{\alpha,c},\nabla\Gamma_\alpha(uA\nabla h)\rangle\;dxdt\\
    &\mathop{\longrightarrow}_{\alpha\to \infty}-\int_0^N\int_{\mathbb{R}^d}
    \langle  h(hu+c)^{1-\gamma},\nabla(uA\nabla h)\rangle\;dxdt\\
    &\mathop{\longrightarrow}_{c\to 0}-\int_0^N\int_{\mathbb{R}^d}
    \langle  h(hu)^{1-\gamma},\nabla(uA\nabla h)\rangle\;dxdt\\
    &<\infty \mbox{ by } \eqref{eq:b da in L2 loc duplicate}, \eqref{eq:ellipticity_duplicate}, \eqref{eq:hu in H-1}.
\end{align*}
Thus, the proof is finished.
\end{proof}

We are now able to improve \Cref{prop:u gamma} by allowing $\gamma=1$:
\begin{prop}\label{coro: sqrt}
If \eqref{eq:b da in L2 loc duplicate}, \eqref{eq:ellipticity_duplicate}, and \eqref{eq:hu in H-1} hold, then condition $\mathbf{H^{\sqrt{u}}}$ is fulfilled.
\end{prop}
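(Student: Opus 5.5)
The first half of $\mathbf{H^{\sqrt{u}}}$, namely $uh\in L^\infty([0,N)\times\mathbb{R}^d)$, is already part of \eqref{eq:hu in H-1}. So the task is the $H^1$-regularity of $\sqrt{u}$ localized in space. The plan is to rerun the argument of \Cref{prop:u gamma} with $\gamma=1$, that is, with $v_c:=\log(hu+c)$ in place of $(hu+c)^{1-\gamma}$. The quantity to control is
\begin{equation*}
\int_0^N\int_{\mathbb{R}^d} h\frac{\|\nabla(hu)\|^2}{hu}\;dxdt=4\int_0^N\int_{\mathbb{R}^d} h\,\|\nabla\sqrt{hu}\|^2\;dxdt .
\end{equation*}
Once this is finite for $0\le h\in C_c^\infty((0,N)\times\mathbb{R}^d)$, we have $\sqrt{hu}\in L^2(H^1)$ locally. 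Choosing $h=\tilde h^2$ with $\tilde h$ a space cutoff, we get $\nabla(\tilde h\sqrt u)=\nabla\sqrt{\tilde h^2u}$ on $\{\tilde h>0\}$, and this gives $\tilde h\sqrt u\in L^2([0,N);H^1)$.

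The key steps run in the same order as in \Cref{prop:u gamma}.

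First, apply Fatou and ellipticity \eqref{eq:ellipticity_duplicate} to reduce the problem to bounding
\begin{equation*}
\int h\,\frac{\langle A\nabla u_c,\nabla u_c\rangle}{u_c}=\int h\langle\nabla v_c,A\nabla u_c\rangle ,
\end{equation*}
uniformly in $c\in(0,1]$.

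Second, split this as $\int\langle\nabla(hv_c),A\nabla u_c\rangle-\int v_c\langle\nabla h,A\nabla(hu)\rangle$. The second piece is bounded by Cauchy–Schwarz, since $|v_c|\le|\log c|+\log(1+\|hu\|_\infty)$. Here the logarithm is the difficulty, because $|\log c|$ blows up. I would absorb this instead: write $|v_c|\,\|\nabla h\|\,\|\nabla(hu)\|\le \varepsilon h\|\nabla u_c\|^2/u_c+C_\varepsilon u_c v_c^2\|\nabla h\|^2/h$. For this, take $h=\tilde h^2$ so that $\|\nabla h\|^2/h$ is bounded, and note that $u_c\log^2u_c$ is bounded on compact sets uniformly in $c$.

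Third, for the main term, test \eqref{eq:LFP-H^-1 gamma} with $f=\Gamma_\alpha(hv_{\alpha,c})$, where $v_{\alpha,c}=\log u_{\alpha,c}$, which is admissible by \Cref{lem:hu in H-1}. Then let $\alpha\to\infty$ exactly as before. The four terms are handled as follows.

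The time-derivative term becomes $\int u_{\alpha,c}\partial_t(h\log u_{\alpha,c})$. Using $s\,\partial_t\log s=\partial_t s$ and integrating by parts in $t$, this reduces to $\int(\partial_t h)(u_{\alpha,c}\log u_{\alpha,c}-u_{\alpha,c})$ plus a $\Gamma_\alpha$-commutator term. All of these stay bounded as $c\to0$ because $s\log s$ is bounded near $0$.

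The drift term is $\int h^2\log(hu+c)\langle b-\tilde b,\nabla u\rangle$. This again carries the problematic $\log$ factor. I would bound it by $\varepsilon\int h^3\|\nabla u\|^2/(hu+c)+C_\varepsilon\int h\,(hu+c)\log^2(hu+c)\|b-\tilde b\|^2$. The first part is absorbed into the left-hand side, using $h^3\|\nabla u\|^2\lesssim h\|\nabla(hu)\|^2+u^2h\|\nabla h\|^2$. The second part is finite by \eqref{eq:b da in L2 loc duplicate}, since $s\log^2 s$ is locally bounded.

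The two terms involving $\nabla h$ are treated with the same absorption trick.

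The main obstacle is precisely these logarithmic factors, which in the case $\gamma<1$ were harmless powers. The absorption and Young-inequality arrangement has to be organized so that every leftover term involves either $s\log s$ or $s\log^2 s$, both of which are bounded near $0$, or the absorbed quantity $\int h\|\nabla u_c\|^2/u_c$. Finiteness of that absorbed quantity for fixed $c>0$ is clear, so the absorption is legitimate. As an alternative, if this bookkeeping fails, one could try to obtain bounds for \Cref{prop:u gamma} that are uniform in $\gamma\uparrow1$ and pass to the limit by Fatou.
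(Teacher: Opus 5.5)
Your argument is correct. Its skeleton is the paper's: Fatou, ellipticity, $v_c=\log(hu+c)$, testing \eqref{eq:LFP-H^-1 gamma} with $\Gamma_\alpha(hv_{\alpha,c})$, and the four-term analysis. The difference is in how the logarithmic factors are controlled.

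\textbf{The paper's route.} It bootstraps from \Cref{prop:u gamma}. Each term of the form $\int h^2\log(hu+c)\langle\,\cdot\,,\nabla u\rangle$ is bounded by Cauchy--Schwarz against $\int h^2\log^2(hu+c)\|\nabla(hu)\|^2$. This in turn is at most $\sup_{c}\|h(hu)^\gamma\log^2(hu+c)\|_\infty\int h\|\nabla(hu)\|^2/(hu)^\gamma$, which is finite by the $\gamma<1$ result. The cutoff term $\int hv_c\langle\nabla h,A\nabla u_c\rangle$ is handled by integrating by parts, so that the derivative falls on $v_c$. This produces $\frac{hu}{hu+c}\nabla(hu)$ and $hu\log(hu+c)\,{\sf div}(hA\nabla h)$, both of which are bounded.

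\textbf{Your route.} You split each such term by Young's inequality with weights $(hu+c)^{\pm1/2}$. One piece is $\varepsilon$ times the target $X_c=\int h\|\nabla u_c\|^2/u_c$; the rest contains only $s\log s$ or $s\log^2 s$ evaluated at $hu+c$, which are bounded uniformly in $c$. You then absorb the $\varepsilon X_c$ piece. This is legitimate: for each fixed $c$ we have $X_c\le c^{-1}\|h\|_\infty\|\nabla(hu)\|_{L^2}^2<\infty$. The bound you need, $\|\nabla h\|^2\lesssim h$, holds for $h=\tilde h^2$, and indeed for any smooth $h\ge 0$.

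\textbf{What each buys.} Your argument is self-contained: it does not use \Cref{prop:u gamma}, and in fact implies it, since $(hu)^{1-\gamma}$ is bounded. It also never puts a derivative on $A\nabla h$ in the cutoff term. The paper avoids the absorption bookkeeping by reusing the $\gamma<1$ computation, where all factors are harmless powers.

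\textbf{Two small corrections.} First, your fourth term needs no absorption. Expand $\nabla(hv_c)=v_c\nabla h+h\nabla(hu)/(hu+c)$, then use $\|\nabla h\|^2\lesssim h$ together with $|x\log(x+c)|\le|x\log x|+c$. Second, your final estimate carries the weight $h$: with $h=\tilde h^2$ it controls only $\int\tilde h^2\|\nabla(\tilde h\sqrt u)\|^2$. To conclude $\tilde h\sqrt u\in L^2(H^1)$, apply the estimate with a cutoff equal to $1$ on a neighbourhood of ${\sf supp}(\tilde h)$, rather than with $\tilde h^2$ itself.
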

\begin{proof}
Let $0\leq h\in C_c^\infty((0,N)\times \mathbb{R}^d)$ and set
\begin{equation*}
    u_{c}:= hu+c,\quad v_c:=\log(hu+c),\quad c>0.
\end{equation*}
By Fatou's lemma we have
\begin{equation*}
    \int_0^N\int_{\mathbb{R}^d}h^2\frac{\|\nabla (hu)\|^2}{hu}\;dxdt
    \leq \liminf_{c\to 0}\int_0^N\int_{\mathbb{R}^d}h^2\frac{\|\nabla u_c\|^2}{u_c}\;dxdt.
\end{equation*}
We proceed with the key fact that
\begin{align*}
    \int_0^N\int_{\mathbb{R}^d}h^2\frac{\|\nabla u_c\|^2}{u_c}\;dxdt
    &\leq C\int_0^N\int_{\mathbb{R}^d}\frac{h^2}{u_{c}}\langle A\nabla u_{c},\nabla u_{c}\rangle\;dxdt
    =\frac{C}{1-\gamma}\int_0^N\int_{\mathbb{R}^d}h^2\langle \nabla v_{c},A\nabla u_{c}\rangle\;dxdt\\
    &=\frac{C}{2(1-\gamma)}\int_0^N\int_{\mathbb{R}^d}\langle \nabla (h^2v_{c}),A\nabla u_{c}\rangle\;dxdt
    -\frac{C}{2(1-\gamma)}\int_0^N\int_{\mathbb{R}^d}\langle hv_{c}\nabla h,A\nabla u_{c}\rangle\;dxdt.
\end{align*}
Further, note that for $0<c\leq 1$
Note that
\begin{align*}
    &\left|\int_0^N\int_{\mathbb{R}^d}\langle hv_{c}\nabla h,A\nabla u_{c}\rangle\;dxdt\right|\\
    &= \left|\int_0^N\int_{\mathbb{R}^d}\langle hv_{c}A\nabla h,\nabla (hu)\rangle\;dxdt\right|
    =\left|\int_0^N\int_{\mathbb{R}^d} {\sf div} \left(hv_{c}A\nabla h\right) hu\;dxdt\right|\\
    &=\left|\int_0^N\int_{\mathbb{R}^d} \langle \nabla (hu),hA\nabla h\rangle \frac{hu}{hu+c}\;dxdt
    +\int_0^N\int_{\mathbb{R}^d} {\sf div} \left(hA\nabla h\right) hu\log(hu+c)\;dxdt\right|\\
    &\leq \int_0^N\int_{\mathbb{R}^d} \left|\langle h\nabla u,hA\nabla h\rangle\right| \;dxdt
    +\int_0^N\int_{\mathbb{R}^d} \left|{\sf div} \left(hA\nabla h\right)\right| |hu\log(hu+c)|\;dxdt\\
    &\leq \int_0^N\int_{\mathbb{R}^d} \left|\langle \nabla (hu),hA\nabla h\rangle\right| \;dxdt
    +\int_0^N\int_{\mathbb{R}^d} \left|{\sf div} \left(hA\nabla h\right)\right| \left[hu\log(hu+1)+e^{-1}+1\right]\;dxdt\\
    &<\infty \mbox{ by } \eqref{eq:b da in L2 loc duplicate}, \eqref{eq:ellipticity_duplicate}, \mbox{ and } \eqref{eq:hu in H-1},
\end{align*}
where we have also used
\begin{equation}\label{eq:xlogx}
    |x\log(x+c)|\leq |x\log(x)|+c, \quad |x\log(x)|\leq e^{-1}+x\log(x+1), \quad x\geq 0,c>0.
\end{equation}

It remains to show that
\begin{equation}\label{eq:lim_inf_c is finte log}
    \liminf_{c\to 0}\int_0^N\int_{\mathbb{R}^d}\langle \nabla (h^2v_{c}),A\nabla u_{c}\rangle\;dxdt<\infty.
\end{equation}

Let us define
\begin{equation*}
u_{\alpha,c}:=\Gamma_\alpha u_c, \;v_{\alpha,c}:= \log(u_{\alpha,c}),\quad \alpha,c>0,
\end{equation*}
so that, by \Cref{lem:hu in H-1} and \eqref{eq:hu in H-1},
\begin{equation*}
\Gamma_\alpha (hv_{\alpha,c})\in W_0^{1,2}((0,N); L^2(\mathbb{R}^d))\cap L^2((0,N);H^1(\mathbb{R}^d))\cap L^\infty((0,N)\times \mathbb{R}^d), \quad \alpha, c>0.
\end{equation*}
Plugging $\Gamma_\alpha (hv_{\alpha,c})$ instead of $f$ in \eqref{eq:LFP-H^-1 gamma} we obtain
\begin{align*}
&\int_0^N\int_{\mathbb{R}^d}
    \langle \nabla \Gamma_\alpha (h^2v_{\alpha,c}), A\nabla (hu)\rangle\;dxdt\\
    &=\int_0^N\int_{\mathbb{R}^d}u\frac{d}{dt}(h\Gamma_\alpha (h^2v_{\alpha,c}))
    +h\Gamma_\alpha (h^2v_{\alpha,c})\langle b-\tilde{b},\nabla u\rangle\\
    &\phantom{=\int_0^N\int_{\mathbb{R}^d}u\frac{d}{dt}(h\Gamma_\alpha (h^2v_{\alpha,c}))} 
    -\langle \Gamma_\alpha (h^2v_{\alpha,c})\nabla h,A\nabla u\rangle
    +\langle \nabla \Gamma_\alpha (h^2v_{\alpha,c}),uA\nabla h\rangle\;dxdt.
\end{align*}
Note that by \eqref{eq:Gamma-convergence}, \eqref{eq:hu in H-1}, and the fact that $\nabla$ and $\Gamma_\alpha$ commute, and $\Gamma_\alpha$ is $L^2(\mathbb{R}^d)$-symmetric, we have
\begin{align*}
\lim_{\alpha\to\infty}\int_0^N\int_{\mathbb{R}^d}
    \langle \nabla (\Gamma_\alpha (h^2v_{\alpha,c})), A\nabla (hu)\rangle\;dxdt
    &=\lim_{\alpha\to\infty}\int_0^N\int_{\mathbb{R}^d}
    \langle \nabla ( hv_{\alpha,c}), \Gamma_\alpha(A\nabla (hu)\rangle)\;dxdt\\
    &=\int_0^N\int_{\mathbb{R}^d}
    \langle \nabla (h^2v_{c}), A\nabla (hu)\rangle\;dxdt,
\end{align*}
hence showing \eqref{eq:lim_inf_c is finte log} boils down to showing
\begin{align}\label{eq:lim_inf_c 3 log}
    \liminf_{c\to 0} \lim_{\alpha\to\infty}
    &\left|\int_0^N\int_{\mathbb{R}^d}u\frac{d}{dt}(h\Gamma_\alpha (h^2v_{\alpha,c}))
    +h(\Gamma_\alpha (h^2v_{\alpha,c}))\langle b-\tilde{b},\nabla u\rangle\right.\\
    &\left.\phantom{\left(\frac{d}{dt}(h\Gamma_\alpha (h^2v_{\alpha,c}))\right)u} 
    -\langle (\Gamma_\alpha (h^2v_{\alpha,c}))\nabla h,A\nabla u\rangle
    +\langle \nabla (\Gamma_\alpha (h^2v_{\alpha,c})),uA\nabla h\rangle\;dxdt\right|<\infty.
\end{align}
We treat all the four terms in the last integral, separately:
For the first term, since $\frac{d}{dt}$ and $\Gamma_\alpha$ commute, and $\Gamma_\alpha$ is $L^2(\mathbb{R}^d)$-symmetric, we have
\begin{align*}
    \int_0^N\int_{\mathbb{R}^d}
    &u\frac{d}{dt}\left(h\Gamma_\alpha (h^2v_{\alpha,c}) \right)\;dxdt\\
    &=\int_0^N\int_{\mathbb{R}^d}u\Gamma_\alpha (h^2v_{\alpha,c})\frac{d}{dt}h
    +u_{\alpha,c}\frac{d}{dt}\left( h^2v_{\alpha,c} \right)\;dxdt\\
    &=\int_0^N\int_{\mathbb{R}^d}h^2v_{\alpha,c}\Gamma_\alpha\left(u\frac{d}{dt}h\right) 
    +\left(\frac{d}{dt}h^2\right)u_{\alpha,c}\log(u_{\alpha,c})
    +h^2\frac{d}{dt}u_{\alpha,c}\;dxdt\\
    &=\int_0^N\int_{\mathbb{R}^d}h^2v_{\alpha,c}\Gamma_\alpha\left(u\frac{d}{dt}h\right)
    +\left(\frac{d}{dt}h^2\right)u_{\alpha,c}\log(u_{\alpha,c})
    -\left(\frac{d}{dt}h^2\right)u_{\alpha,c}\;dxdt\\
    &\mathop{\longrightarrow}_{\alpha\to\infty}\int_0^N\int_{\mathbb{R}^d}h^2v_{c}u\frac{d}{dt}h
    +\left(\frac{d}{dt}h^2\right)u_{c}\log(u_{c})
    -\left(\frac{d}{dt}h^2\right)u_{c}\;dxdt\\
    &=\int_0^N\int_{\mathbb{R}^d}h^2u\log(hu+c)\frac{d}{dt}h
    +\left(\frac{d}{dt}h^2\right)(hu+c)\log(hu+c)
    -\left(\frac{d}{dt}h^2\right)(hu+c)\;dxdt\\
    &\mathop{\longrightarrow}_{c\to 0}\int_0^N\int_{\mathbb{R}^d}h^2u\log(hu)\frac{d}{dt}h
    +\left(\frac{d}{dt}h^2\right)hu\log(hu)
    -\left(\frac{d}{dt}h^2\right)hu\;dxdt.
\end{align*}

Proceeding to the second term in \eqref{eq:lim_inf_c 3 log}, we have
\begin{align} 
&\lim_{\alpha\to\infty}
\left|\int_0^N\int_{\mathbb{R}^d}h\Gamma_\alpha (h^2v_{\alpha,c})\langle b-\tilde{b},\nabla u\rangle\;dxdt\right|\nonumber\\
&=\left|\lim_{\alpha\to\infty}
\int_0^N\int_{\mathbb{R}^d} h^2v_{\alpha,c}\Gamma_\alpha(h\langle b-\tilde{b},\nabla u)\rangle\;dxdt\right|
=
\left|\int_0^N\int_{\mathbb{R}^d}  h^3\log(hu+c)\langle b-\tilde{b},\nabla u\rangle\;dxdt\right|\label{eq:treat b-btilde}\\
&\leq\int_0^N\int_{\mathbb{R}^d}  |h^2\log(hu+c)\langle b-\tilde{b},\nabla (hu)\rangle|
+|h^2u\log(hu+c)\langle b-\tilde{b},\nabla h\rangle|\;dxdt\nonumber\\
&\leq \int_0^N\int_{\mathbb{R}^d}  \|h\log(hu+c)\nabla (hu)\|\|h(b-\tilde{b})\|
+|h^2u\log(hu+c)|\| b-\tilde{b}\|\|\nabla h\|\;dxdt\nonumber\\
&\leq\left(\int_0^N\int_{\mathbb{R}^d}  h^2\left(\|\log(hu+c)\nabla (hu)\|\right)^2 \;dxdt\right)^{1/2}\left(\int_0^N\int_{\mathbb{R}^d}  h^2\|(b-\tilde{b})\|^2 \;dxdt\right)^{1/2}\nonumber\\
&\quad+\int_0^N\int_{\mathbb{R}^d}  |h^2u\log(hu+c)\langle b-\tilde{b},\nabla h\rangle|\;dxdt.\nonumber
\end{align}
Now, let us note that
\begin{align*}
    &\int_0^N\int_{\mathbb{R}^d} h^2\|b-\tilde{b}\|^2\;dxdt<\infty, \quad \mbox{ by } \eqref{eq:b da in L2 loc duplicate}\\
    &\int_0^N\int_{\mathbb{R}^d}  |h^2u\log(hu+c)\langle b-\tilde{b},\nabla h\rangle|\;dxdt\mathop{\rightarrow}\limits_{c\to 0}\int_0^N\int_{\mathbb{R}^d}  |h^2u\log(hu)\langle b-\tilde{b},\nabla h\rangle|\;dxdt<\infty\quad \mbox{ by } \eqref{eq:b da in L2 loc duplicate}, \eqref{eq:hu in H-1}.
\end{align*}
Furthermore, by fixing $\gamma\in(0,1)$,
\begin{align*}
\int_0^N\int_{\mathbb{R}^d} h^2|\log(hu+c)|^2\|\nabla (hu)\|^2\;dxdt
&=\int_0^N\int_{\mathbb{R}^d} h^2(hu)^\gamma|\log(hu+c)|^2\frac{\|\nabla (hu)\|^2}{(hu)^\gamma}\;dxdt\\
&\leq \|h(hu)^\gamma|\log(hu+c)|^2\|_{L^\infty((0,N)\times \mathbb{R}^d)}\int_0^N\int_{\mathbb{R}^d} h\frac{\|\nabla (uh)\|^2}{(hu)^\gamma}\;dxdt.
\end{align*}
Finally, 
\begin{equation*}
    \sup_{c\in (0,1)}\|h(hu)^\gamma|\log(u+c)|^2\|_{L^\infty((0,N)\times \mathbb{R}^d)}<\infty \quad \mbox{by using } \eqref{eq:hu in H-1},
\end{equation*}
whilst
\begin{equation*}
    \int_0^N\int_{\mathbb{R}^d} h\frac{\|\nabla (hu)\|^2}{(hu)^\gamma}\;dxdt<\infty \quad \mbox{ by } \Cref{prop:u gamma}.
\end{equation*}

For the third term in \eqref{eq:lim_inf_c 3 log}, we have
\begin{align*}
&\int_0^N\int_{\mathbb{R}^d}\langle \Gamma_\alpha (h^2v_{\alpha,c})\nabla h,A\nabla u\rangle\;dxdt\\
&=\int_0^N\int_{\mathbb{R}^d}\Gamma_\alpha (h^2v_{\alpha,c})\langle \nabla h,A\nabla u\rangle\;dxdt
=\int_0^N\int_{\mathbb{R}^d}h^2v_{\alpha,c}\Gamma_\alpha \left(\langle \nabla h,A\nabla u\rangle\right)\;dxdt\\
&\mathop{\longrightarrow}_{\alpha\to \infty}\int_0^N\int_{\mathbb{R}^d}h^2\log(hu+c)\langle A\nabla h,\nabla u\rangle\;dxdt,
\end{align*}
and this last term can be further treated precisely like $\int_0^N\int_{\mathbb{R}^d}  h^3\log(hu+c)\langle b-\tilde{b},\nabla u\rangle\;dxdt$ in \eqref{eq:treat b-btilde}.

For the last term in \eqref{eq:lim_inf_c 3 log}, we have
\begin{align*}
    &\int_0^N\int_{\mathbb{R}^d}
    \langle \nabla \Gamma_\alpha (h^2v_{\alpha,c}),uA\nabla h\rangle\;dxdt\\
    &=\int_0^N\int_{\mathbb{R}^d}
    \langle \nabla  (h^2v_{\alpha,c}),\Gamma_\alpha(uA\nabla h)\rangle\;dxdt
    =\mathop{\longrightarrow}_{\alpha\to \infty}\int_0^N\int_{\mathbb{R}^d}
    \langle  \nabla(h^2\log(hu+c)),uA\nabla h\rangle\;dxdt
    \\
    &=-\int_0^N\int_{\mathbb{R}^d}
    h^2\log(hu+c){\sf div}(uA\nabla h)\;dxdt
    =-\int_0^N\int_{\mathbb{R}^d}
    h^2\log(hu+c)[\langle\nabla u,A\nabla h\rangle+u{\sf div}(A\nabla h)]\;dxdt.
\end{align*}
Now, the integral
\begin{equation*}
    \int_0^N\int_{\mathbb{R}^d}
    h^2\log(hu+c)\langle\nabla u,A\nabla h\rangle \; dx dt
\end{equation*}
can be further treated precisely like $\int_0^N\int_{\mathbb{R}^d}  h^3\log(hu+c)\langle b-\tilde{b},\nabla u\rangle\;dxdt$ in \eqref{eq:treat b-btilde},
    whilst
    \begin{align*}
    &\int_0^N\int_{\mathbb{R}^d}
    h^2\log(hu+c)u{\sf div}(A\nabla h)\;dxdt\\
    &\leq \sup_{c\in (0,1)}\|h^2u\log(hu+c)\|_{L^\infty((0,N)\times \mathbb{R}^d)}\int_0^N\int_{\mathbb{R}^d}
    |{\sf div}(A\nabla h)|\;dxdt<\infty \mbox{ by } \eqref{eq:b da in L2 loc duplicate}, \eqref{eq:hu in H-1}.
    \end{align*}
Thus, the proof is finished.
\end{proof}

If conditions \eqref{eq:b da in L2 loc duplicate}, \eqref{eq:ellipticity_duplicate}, and \eqref{eq:hu in H-1} are satisfied globally in space, then, by following essentially the same steps as above, the results from \Cref{lem:hu in H-1} and \Cref{coro: sqrt} are satisfied globally in space as well.
More precisely, we have the following.
\begin{prop}\label{prop:global_root_reg}
Assume the following global in space versions of \eqref{eq:b da in L2 loc duplicate}, \eqref{eq:ellipticity_duplicate}, and \eqref{eq:hu in H-1}:
\begin{align*}
{\scalebox{0.7}{$\bullet$}\quad}
&b_i \mbox{ and } \partial_{x_j} a_{i,j} \mbox{ belong to }L^2\left([0,N)\times \mathbb{R}^d\right) , \quad 1\leq i,j\leq d,\\
{\scalebox{0.7}{$\bullet$}\quad}
&C^{-1}\|\xi\|^2\leq \langle A(t,x) \xi, \xi\rangle \leq C \|\xi\|^2, \quad \xi\in \mathbb{R}^d, (t,x)\in [0,N)\times \mathbb{R}^d\\
{\scalebox{0.7}{$\bullet$}\quad}
& u\in L^\infty([0,N]\times \mathbb{R}^d; \mathbb{R}) \quad \mbox{and} \quad \nabla u\in L^2([0,N)\times \mathbb{R}^d;\mathbb{R}^d). 
\end{align*}      
Then
\begin{equation*}
    u\in W^{1,2}([0,N];H^{-1}(\mathbb{R}^d)), \mbox{ hence } u\in C([0,T];L^2(\mathbb{R}^d)),\quad\mbox{and}\quad
    \sqrt{u}
    \in L^2([0,T];H^1(\mathbb{R}^d)).
\end{equation*}
\end{prop}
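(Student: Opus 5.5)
The plan is to repeat the proofs of \Cref{lem:hu in H-1}, \Cref{prop:u gamma} and \Cref{coro: sqrt} with the localizing cut-off $h$ removed in the spatial variable. Since every assumption now holds on all of $\mathbb{R}^d$, every integral that was finite only on ${\sf supp}(h)$ is globally finite.

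\textbf{Step 1.} To show $u\in W^{1,2}([0,N];H^{-1}(\mathbb{R}^d))$, take $f\in C_c^\infty((0,N)\times\mathbb{R}^d)$ and write the equation in divergence form, exactly as in the proof of \Cref{lem:hu in H-1} with $h\equiv 1$:
\begin{equation*}
\int u\,\partial_t f=-\int u\langle b-\tilde b,\nabla f\rangle+\int\langle A\nabla u,\nabla f\rangle .
\end{equation*}
Cauchy--Schwarz bounds the right-hand side by
\begin{equation*}
\big(\|u\|_\infty\|b-\tilde b\|_{L^2}+C\|\nabla u\|_{L^2}\big)\,\|f\|_{L^2(H^1)},
\end{equation*}
using the global $L^2$ hypotheses on $b$ and $\partial_j a_{ij}$ and the global bound on $A$. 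Hence $\partial_t u\in L^2(H^{-1})$. Moreover $u\in L^2(H^1)$: indeed $u\in L^\infty\cap L^1$ (probability densities), so $u\in L^2$, and $\nabla u\in L^2$ by assumption. The Lions--Magenes lemma then gives $u\in C([0,N];L^2)$.

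\textbf{Step 2.} This is the $\gamma$-estimate analogue of \Cref{prop:u gamma}, run with a time cut-off only. Fix $0\le g\in C_c^\infty((0,N))$ and test the equation with the regularized functions $\Gamma_\alpha(g\,(\Gamma_\alpha u+c)^{1-\gamma})$. Bounding $\int g\,\|\nabla u_c\|^2/u_c^\gamma$ through the ellipticity, the four terms split as in that proof. The terms containing $\nabla h$ vanish. The time-derivative term produces $\int g'\,u_c^{2-\gamma}$. Here one difficulty is that the constant $c$ is not integrable on $\mathbb{R}^d$. I would avoid this by replacing $u+c$ with $u+c\,e^{-|x|^2}$ (or by working with $\phi(u)$ for $\phi$ concave with $\phi(0)=0$, e.g.\ $\phi_c(r)=(r+c)^{1-\gamma}-c^{1-\gamma}$). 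With that choice every quantity lies in $L^1\cap L^\infty$ and the limits $\alpha\to\infty$, $c\to 0$ go through as before, the drift term being controlled by $\|b-\tilde b\|_{L^2}\|\nabla u\|_{L^2}\|u\|_\infty^{1-\gamma}$.

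\textbf{Step 3.} This is the $\log$ version as in \Cref{coro: sqrt}, giving $\int g\,\|\nabla u\|^2/u<\infty$, i.e.\ $\nabla\sqrt u\in L^2$ locally in time. I expect the main obstacle here to be the global integrability of the $u\log u$ and $\log(u+c)$ terms. Near $u=0$, $|u\log u|$ is not integrable from $u\in L^1\cap L^\infty$ alone. To handle this I would use the shifted function $\phi_c(r)=\log(r+c)-\log c$, which makes the test function vanish where $u=0$, and use the entropy-type bound $u|\log u|\le C u^{1-\varepsilon}+u^2$. I would also control $\int u^{1-\varepsilon}$ via tightness or a moment bound; if that fails, I would settle for the weighted estimate from Step 2 with $\gamma$ close to $1$ plus Fatou. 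The term $\int\log(u+c)^2\|\nabla u\|^2$ is handled as in the local proof via Step 2 and the uniform bound on $u^\gamma\log^2(u+c)$.

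\textbf{Step 4.} Finally, remove the time cut-off. I would choose $g\uparrow 1_{(0,N)}$ and check that the boundary contributions $\int g'\,\Phi(u)$ are controlled by $\sup_t\int\Phi(u(t))$ with $\Phi(r)=r\log r$. These stay bounded since $u\in C([0,N];L^2)\cap L^\infty$, which concludes $\sqrt u\in L^2([0,N];H^1)$.
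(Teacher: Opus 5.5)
Your Step 1 is correct; it is \Cref{lem:hu in H-1} with $h\equiv1$. The $\gamma$-estimate of Step 2 also globalizes, since for a probability density $\int u^{2-\gamma}dx\le\|u\|_\infty^{1-\gamma}$.

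The gap is in Steps 3--4, and it is not a technicality. Those steps rest on a lower bound for $\int u(t)\log u(t)\,dx$. In the local proof of \Cref{coro: sqrt} this comes for free, because $r\log r\ge -e^{-1}$ is integrated over a compact set (cf. \eqref{eq:xlogx}). On $\mathbb{R}^d$, however, the hypotheses bound the entropy only from above, and none of your fallbacks supplies the missing lower bound:
\begin{enumerate}
\item Tightness does not give $\int u^{1-\varepsilon}<\infty$. A single bounded density with tail $|x|^{-d}\log^{-2}|x|$ has $\int u^{1-\varepsilon}\,dx=\infty$ and even $\int u|\log u|\,dx=\infty$.
\item No moment bound is assumed.
\item The shift $\log(r+c)-\log c$ only adds a term $\log(1/c)\int u$. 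It cancels by mass conservation and leaves the entropy again.
\item The Step 2 constants behave like $(1-\gamma)^{-1}$. After subtracting the conserved mass, the $\gamma\to1$ limit of the bound is once more the entropy, so Fatou gives nothing.
\item The claim in Step 4 that $\sup_t|\int u\log u|$ is controlled by $u\in C([0,N];L^2)\cap L^\infty$ is false.
\end{enumerate}

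In fact the second conclusion of the statement fails. So neither your plan nor the paper's indication that the local steps carry over can deliver it. Here is a counterexample.
\begin{enumerate}
\item Take $a\equiv I$, $b\equiv0$, and weights $p_k>0$ with $\sum_kp_k=1$ and $\sum_kp_k\log(1/p_k)=\infty$, e.g. $p_k\propto(k\log^2k)^{-1}$.
\item Take disjoint balls $B_k$ with $|B_k|=p_k$ whose centres escape to infinity fast enough, and set $u_0=\sum_k1_{B_k}$.
\item Then $0\le u\le1$ and $\int_0^N\|\nabla u_t\|_2^2dt\le\|u_0\|_2^2=1$, so all hypotheses hold, and $\int u_0\log u_0=0$.
\item For $0<T\le1$ the bumps spread to width $\sqrt T$ essentially without interacting. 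Hence $u_T\le 2p_k(2\pi T)^{-d/2}$ on a fixed neighbourhood of the $k$-th centre carrying mass $\ge p_k/2$. Since also $0\le u_T\le1$, summing over $k$ gives $\int u_T\log u_T=-\infty$.
\item With $\psi_c(r)=(r+c)\log(r+c)-c\log c$, the following identity is rigorous for $c>0$:
\[
\frac12\int_0^T\!\!\int\frac{|\nabla u|^2}{u+c}\,dx\,dt=\int\psi_c(u_0)\,dx-\int\psi_c(u_T)\,dx .
\]
\item Using $\int\psi_c(u_0)=\psi_c(1)\to0$ and $\psi_c(r)\le r+r\log(r+c)$, letting $c\downarrow0$ gives $\int_0^T\!\int|\nabla u|^2/u\,dx\,dt=\infty$ for every $T>0$. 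Heuristically, each bump dissipates about $2p_k\log(1/p_k)$ near $t=0$.
\end{enumerate}

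So only $u\in W^{1,2}([0,N];H^{-1})\cap C([0,N];L^2)$ is provable as stated. Under an extra assumption such as $\sup_{t\le T}\int|x|^2u_t\,dx<\infty$, one gets $\inf_{t\le T}\int u_t\log u_t>-\infty$. Then the analogous identity for general coefficients yields $\sqrt u\in L^2([0,T];H^1(\mathbb{R}^d))$ directly, with no time cut-off. In that identity the left side carries $\langle A\nabla u,\nabla u\rangle$, and the extra drift term is bounded by $\|b-\tilde b\|_{L^2}\|\nabla u\|_{L^2}$.
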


\subsection{Trevisan's conditions}\label{ss:trevisan}
Let  us consider the diffusion operator $\left({\sf L}_t\right)_{t\in [0,\infty)}$ with coefficients $a,b$ given by \eqref{eq:L_t}-\eqref{eq:a,b}, such that the well-posedeness conditions in \cite{Tr16} for the elliptic case are satisfied for every $N>0$, namely:
\begin{align}
    &(T1)\quad\exists C=C(N)>0 : \langle A(t,x)\xi,\xi\rangle\geq C \|\xi\|^2, \quad t\in[0,N],x,\xi\in \mathbb{R}^d,\label{eq:T1}\\
    &(T2)\quad a\in L^\infty([0,N]\times \mathbb{R}^d),\quad \frac{d}{dt}a\in L^{\infty}([0,N)\times\mathbb{R}^d),\label{eq:T2}\\
    &(T3)\quad a\in L^1([0,N);W^{1,p}(\mathbb{R}^d)) \mbox{ for some } p\in[2,\infty],\quad \left(\sum_{i,j} \frac{\partial^2}{\partial x_i\partial x_j}a_{i,j}\right)^+\in L^1([0,N];L^\infty(\mathbb{R}^d))\label{eq:T3}\\
    &(T4)\quad b\in L^1([0,N];L^\infty(\mathbb{R}^d)).\label{eq:T4}
\end{align}
Then, by \cite[Theorem 3.3]{Tr16} and its proof (see also the comm):

{\it For every $\mu_0=u_0\;dx\in \mathcal{P}(\mathbb{R}^d)$ with $u_0\in L^r(\mathbb{R}^d)$, $r\geq 2p/(p-2)$, and for every $N>0$, there exists a unique weakly continuous solution $\mu:=\left(\mu(t)=u(t,x) \;dx\right)_{t\in [0,N]}\subset \mathcal{P}(\mathbb{R}^d)$ of the linear Fokker-Planck equation \eqref{eq:LFPshort}, with initial condition $\mu_0$ and such that $u\in L^\infty([0,N];L^r(\mathbb{R}^d))$.
Moreover, it satisfies
\begin{equation}\label{eq:extra_regularity}
    u\in L^2([0,N];W^{1,2}(\mathbb{R}^d)), \quad \mbox{for every } N>0.
\end{equation}
}
\begin{rem}\label{rem:well-posedeness T}
    \begin{enumerate}
        \item[(i)] Note that in \cite[Theorem 3.3]{Tr16}, the uniqueness part is apparently claimed in the class of weakly continuous solutions which belong to $L^\infty([0,N];L^r(\mathbb{R}^d))$ and are necessarily probability solutions.
        However, by inspecting the proof of \cite[Theorem 3.3]{Tr16}, one can see that uniqueness holds among all weakly continuous solutions to \eqref{eq:LFPshort} whose densities belongs to $L^\infty([0,N];L^r(\mathbb{R}^d))$. 
        \item[(ii)] We emphasize that the uniqueness is guaranteed in the space $u\in L^\infty([0,N];L^r(\mathbb{R}^d))$, whilst \eqref{eq:extra_regularity} is a byproduct of the construction of the solution; see \cite[Subsection 3.3]{Tr16} and the discussion on page 20.
    \end{enumerate}
\end{rem}

We also need the following slight modifications of (T3) and (T4):

\begin{enumerate}
    \item[$(T3')$] $ \partial_{x_j} a_{i,j}\in L^2_{\sf loc}([0,\infty)\times \mathbb{R}^d),\quad 1\leq i,j\leq d$
    \item[$(T4')$] $b_i\in L^2_{\sf loc}([0,\infty)\times \mathbb{R}^d),\quad 1\leq i\leq d$
\end{enumerate}

\begin{coro}
    Assume that conditions (T1)-(T4), and (T3')-(T4') are satisfied.
    Further, let $u_0(x)dx\in \mathcal{P}(\mathbb{R}^d)$ be such that $u_0\in L^\infty(\mathbb{R}^d)$, and consider $(\mu(t)=u(t,x)dx)_{t\in [0,T]}\subset \mathcal{P}(\mathbb{R}^d)$ the unique weakly continuous solution to the linear Fokker-Planck equation \eqref{eq:LFPshort}, with initial condition $u_0(x)dx$, and such that $u\in L^\infty([0,N]\times\mathbb{R}^d)$.
    Then the conditions $\mathbf{H_{a,b}^{\sqrt{u}}}$, $\mathbf{H_{\sf \lesssim u}}$, and $\mathbf{H^{\sf TV}_0}$ are satisfied, hence the conclusion of \Cref{thm:main0} \& \Cref{thm:main1} hold.
\end{coro}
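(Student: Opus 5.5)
We verify the three hypotheses separately, working on each horizon $[0,N]$ and using the Trevisan well-posedness result quoted above (with $r=\infty$, which is admissible since $u_0\in L^\infty\cap L^1$ gives $u_0\in L^r$ for all $r\ge 1$) together with the square-root regularity of \Cref{coro: sqrt}.

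\emph{Step 1: $\mathbf{H_{a,b}^{\sqrt{u}}}$.}
\begin{itemize}
\item $\mathbf{H_a}$: the ellipticity follows from (T1) and the bound $a\in L^\infty$ in (T2). The condition $\partial_{x_j}a_{ij}\in L^2([0,N)\times B(0,r);\overline{\mu}_N)$ follows from (T3') because $u\in L^\infty([0,N]\times\mathbb{R}^d)$, so $L^2_{\sf loc}(dtdx)\subset L^2_{\sf loc}(\overline{\mu}_N)$.
\item $\mathbf{H_b}$: this follows in the same way from (T4').
\item $\mathbf{H^{\sqrt{u}}}$: we apply \Cref{coro: sqrt}. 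Its assumptions \eqref{eq:b da in L2 loc duplicate} and \eqref{eq:ellipticity_duplicate} are (T1)--(T2) and (T3')--(T4'). Assumption \eqref{eq:hu in H-1} follows from $u\in L^\infty$ and \eqref{eq:extra_regularity}, which gives $\nabla u\in L^2([0,N)\times\mathbb{R}^d)$.
\end{itemize}
The solution with $u\in L^\infty$ coincides with the one constructed in \cite{Tr16}. The reason is that Trevisan's solution for $u_0\in L^\infty\cap L^1$ belongs to $L^\infty([0,N];L^r)$ for each finite $r\ge 2p/(p-2)$, and uniqueness in that class identifies it with $u$; hence \eqref{eq:extra_regularity} applies to $u$.

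\emph{Step 2: $\mathbf{H_{\lesssim u}}$ and $\mathbf{H_{\leq u}}$.} Let $v\le c\,u$ be a solution on $[s,N)$. Then $v\in L^\infty([s,N];L^1\cap L^\infty)\subset L^\infty([s,N];L^r)$. By \Cref{rem:well-posedeness T}(i), uniqueness holds among all weakly continuous solutions with densities in $L^\infty L^r$, and this after the time shift $t\mapsto t-s$; the coefficient conditions are time-translation invariant. Hence two solutions with the same initial datum coincide, which gives $\mathbf{H_{\lesssim u}}$.

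For $\mathbf{H_{\leq u}}$ the solutions are only weakly right-continuous and vaguely left-continuous sub-probabilities. We plan to show they are in fact weakly continuous, in one of two ways:
\begin{itemize}
\item by \Cref{rem:Trevisan_narrow} after a $dt$-version argument, or
\item by noting that $v\le u$ with $(u(t)dx)_t$ tight, so vague continuity upgrades to weak continuity, since the masses are controlled by tightness.
\end{itemize}
Once weak continuity is established, the linear uniqueness argument of \cite{Tr16} (a duality argument, which is insensitive to the total mass) applies as before.

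\emph{Step 3: $\mathbf{H_0^{TV}}$.} Part (i) is \Cref{rem: implies TV}. For part (ii), take $\mathcal{A}_0$ to be the probability densities $v_0=\phi\,u_0/\int\phi u_0$ with $\phi\in C_c^\infty$, $\phi\ge0$, $\int\phi u_0>0$; this family separates measures on ${\sf supp}(\mu_0)$. Then $\nu_0\le c\mu_0$. By \Cref{eq:uniqueness eta} and Step 2, $\eta^{1,\nu_0}$ is the Trevisan solution started from $v_0\in L^1\cap L^\infty$, which has $\nabla v\in L^2$. Applying \Cref{lem:hu in H-1} and \Cref{rem: implies TV} to this solution gives $L^1$-continuity at $t=0$.

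The main obstacle is the regularity bookkeeping: confirming that Trevisan's construction gives \eqref{eq:extra_regularity} for each $L^\infty$ initial datum, and closing the continuity gap in the $\mathbf{H_{\leq u}}$ uniqueness. We would treat both by the tightness/dominance argument $v\le cu$ described in Step 2.
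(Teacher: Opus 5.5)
Your proposal is correct and follows the paper's own proof essentially step by step: $\mathbf{H_a}$ and $\mathbf{H_b}$ come from (T1)--(T2), (T3')--(T4') and the boundedness of $u$, $\mathbf{H^{\sqrt{u}}}$ from \Cref{coro: sqrt} together with \eqref{eq:extra_regularity}, $\mathbf{H_{\lesssim u}}$ from Trevisan's uniqueness and \Cref{rem:well-posedeness T}(i), and $\mathbf{H_0^{TV}}$ from \Cref{rem: implies TV} and \Cref{eq:uniqueness eta}. You also supply details the paper leaves implicit, and your arguments for them are sound: you upgrade the vaguely left-continuous sub-probability solutions in $\mathbf{H_{\leq u}}$ to weakly continuous ones via $v_t\le\mu_t$ and tightness, and you identify $\eta^{1,\nu_0}$ with the regular Trevisan solution, which is cleanest phrased as $\eta^{1,\nu_0}_t\le c\mu_t$ placing it in Trevisan's uniqueness class.
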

\begin{proof}
First, by (T1),(T2), (T3'), and the fact that $u$ is bounded,  we get that $\mathbf{H_a}$ is fulfilled. 

Further, $\mathbf{H_b}$ follows from (T4') and the fact that $u$ is bounded.

Condition $\mathbf{H^{\sqrt{u}}}$ is ensured by \Cref{coro: sqrt} since $u$ is bounded and \eqref{eq:extra_regularity} holds.

Further, $\mathbf{H_{\sf \lesssim u}}$ follows from \cite[Theorem 3.3]{Tr16} presented above, and \Cref{rem:well-posedeness T}, (i).

Finally, $\mathbf{H^{\sf TV}_0}$ follows by \Cref{rem: implies TV} and \Cref{eq:uniqueness eta}.
\end{proof}

\section[Regularization of the superposition principle for nonlinear FPE]{Regularization of the superposition principle for nonlinear time-dependent Fokker-Planck equations}\label{s:nonlinear_superposition}

In this part we extend to nonlinear Fokker-Planck equations the results presented above for the linear case. 
We construct Choquet capacities for such nonlinear equations by probabilistic means and we investigate explicit sufficient conditions on the coefficients for a class of generalized porous media equations to which our theory applies.

\subsection{The strong Markov property and a Choquet capacity for the solution of the corresponding McKean-Vlasov equation}
In this section we start from a general nonlinear Fokker-Planck equation, namely
we consider the $(t, \mu)$-dependent diffusion operator
\begin{equation}\label{eq:L_t,nu}
    {\sf L}_{t,\nu}f(t,x):=b(t,x,\nu)\nabla_x f(t,x) + \frac{1}{2}\sum\limits_{i,j=1}^d a_{i,j}(t,x,\nu)\frac{\partial^2}{\partial x_i \partial x_j} f(t,x), \quad (t,x,\nu) \in (0,\infty)\times \mathbb{R}^d\times \mathcal{P}(\mathbb{R}^d),  
\end{equation}
that acts on test functions $f\in C_c^\infty((0,\infty)\times \mathbb{R}^d)$ which are twice continuously differentiable in the second argument, whilst $b$ and $a$ are functions between the following spaces
\begin{equation*}
    b:[0,\infty)\times \mathbb{R}^d\times \mathcal{P}(\mathbb{R}^d) \rightarrow \mathbb{R}^d \quad \mbox{ and } \quad a:[0,\infty)\times\mathbb{R}^d\times \mathcal{P}(\mathbb{R}^d)\rightarrow \mathbb{R}^{d\times d}_{{\sf sym}+}.
\end{equation*}
\begin{rem}
Note that above we did not assume any kind of regularity/measurability for the coefficients $b$ and $a$, the reason being that our aim is to cover irregular coefficients, like those from below which depend on point-wise evaluations of the density of $\nu$ assuming it exists, also called Nemytskii-type operators.
This is in fact the setting adopted in \cite{BaRo20}, to which we refer for more details and examples. 
\end{rem}

Let us now extend the notion of solution to the Fokker-Planck equation from the linear case (see \Cref{def:solution}) to the nonlinear one:
\begin{defi} \label{def:Nsolution}
Let $0\leq s< T<\infty$. 
\begin{enumerate}
    \item[(i)] A family $\left(\mu_t\right)_{t\in (s,T)}\subset \mathcal{M}(\mathbb{R}^d)$ is called a (weak, or distributional) solution on $(s,T)$ to the nonlinear Fokker-Planck equation  \begin{equation}\label{eq:NLFPshort}
        \frac{d}{dt} \mu_t = {\sf L}^\ast_{t,\mu_t} \mu_t, \quad t\in (s,T),
    \end{equation}
    if $\left(\mu_t\right)_{ t\in (s,T)}$ is a Borel curve in $\mathcal{M}(\mathbb{R}^d)$,
    \begin{align}
        \scalebox{0.7}{$\bullet\quad$}
        &[0,\infty)\times\mathbb{R}^d\ni(t,x)\mapsto b(t,x,\mu_t)\in \mathbb{R}^d \mbox{ and } (t,x)\mapsto a(t,x,\mu_t)\in \mathbb{R}^{d\times d} \mbox{ are Borel measurable},\nonumber\\
        \scalebox{0.7}{$\bullet\quad$}
        &\int_{(s,T)\times B(0,R)} \|b(t,x,\mu_t)\|+\|A(t,x,\mu_t)\| \; \mu_t(dx) dt <\infty, \quad R>0 \label{eq:Na,b}\\
        \scalebox{0.7}{$\bullet\quad$}
        &\int_{(s,T)\times \mathbb{R}^d} \left[\frac{d}{dt}f(t,x)+ {\sf L}_{t,\mu_t} f(t,x)  \right] \;\mu_t(dx) dt = 0, \quad f\in C_c^\infty \left((s,T)\times \mathbb{R}^d\right).\label{eq:NLFPmu}
    \end{align}
\item[(ii)] We say that $\left(\mu_t\right)_{t\in [s,T)}\subset \mathcal{M}(\mathbb{R}^d)$ is a solution to the nonlinear Fokker-Planck equation \eqref{eq:NLFPshort} with initial condition $\mu_s$ if $\left(\mu_t\right)_{t\in (s,T)}$ is a solution on $(s,T)$ as in (i), and $\lim\limits_{t\searrow s} \mu_t=\mu_s$ weakly.
\item[(iii)] We say that $\left(\mu_t\right)_{t\in [s,T)}\subset \mathcal{M}(\mathbb{R}^d)$ is a weakly continuous solution to the nonlinear Fokker-Planck equation \eqref{eq:NLFPshort} if it is a solution on $(s,T)$ in the sense of (i), and the curve of measures $[s,T)\ni t\mapsto \mu_t\in \mathcal{M}(\mathbb{R}^d)$ is weakly continuous; this definition is obviously extended when $(\mu_t)_{t\in [s,T)]}$ is merely weakly right-continuous. 
\end{enumerate}
\end{defi}

Further, if $\mu=(\mu_t=u(t,x)dx)_{t\geq 0}\subset \mathcal{P}(\mathbb{R}^d)$ is a Borel curve, we consider the following "linearized" objects:
\begin{align*}
    b^u(t,x)
    &:=b(t,x,\mu_t),\quad a^u_{ij}(t,x):=a_{ij}(t,x,\mu_t),\quad (t,x)\in [0,\infty)\times\mathbb{R}^d\\
    {\sf L}^uf(t,x)
    &:=b^u(t,x)\nabla_x f(t,x) + \frac{1}{2}\sum\limits_{i,j=1}^d a^\mu_{ij}(t,x)\frac{\partial^2}{\partial x_i \partial x_j} f(t,x)\\
    &\phantom{:}={\sf L}_{t,\mu_t}f(t,x),\quad f\in C_c^\infty((0,\infty)\times\mathbb{R}^d),\quad (t,x)\in [0,\infty)\times\mathbb{R}^d.
\end{align*}

Let us now recall the counterpart of the nonlinear Fokker-Planck equations considered above to the theory of McKean-Vlasov SDEs, also called distribution dependent SDEs. 
First, assume that 
\begin{equation*}
    \sigma:[0,\infty)\times\mathbb{R}^d\times \mathcal{P}(\mathbb{R}^d)\rightarrow \mathbb{R}^{d\times d} \quad\mbox{is such that}\quad a=\sigma\sigma^\ast.
\end{equation*}
\begin{defi}\label{defi:sol MV}
    Let $\left(\Omega,\mathcal{F},\left(\mathcal{F}(t)\right)_{t\geq 0},\mathbb{P}\right)$ be a filtered probability space. 
    An $\mathcal{F}(t)$-adapted and $\mathbb{P}$-a.s. path-continuous stochastic process $X=(X(t))_{t\geq 0}$ with values in $\mathbb{R}^d$ is called a {\rm weak solution} to the McKean-Vlasov SDE
\begin{equation}\label{eq:MV SDE}
    dX(t)=b\left(t,x,\mathcal{L}_{X(t)}\right)dt+\sigma\left(t,x,\mathcal{L}_{X(t)}\right)dW(t), \quad X(0)\sim \mu_0\in\mathcal{P}(\mathbb{R}^d)
\end{equation}
where $W$ is a $d$-dimensional standard Brownian motion defined on $\left(\Omega,\mathcal{F},\left(\mathcal{F}(t)\right)_{t\geq 0},\mathbb{P}\right)$, whilst $\mathcal{L}_{X(t)}:=\mathbb{P}\circ \left(X(t)\right)^{-1}, \;t\geq 0$,
if
\begin{equation*}
    \int_0^N\int_{B(0,R)} \left(\|b(t,x,\mathcal{L}_{X(s)})\|+\|a(t,x,\mathcal{L}_{X(s)})\|\right) \;\mathcal{L}_{X(s)}(dx) dt <\infty, \quad N>0,R>0,
\end{equation*}
and
\begin{equation*}
    M(t):=f(t,X(t))-f(0,X(0))-\int_0^t\left(\frac{d}{ds}f+{\sf L}_{s,\mathcal{L}_{X(s)}}\right)(s,X(s))\;ds,\quad t\geq 0
\end{equation*}
is an $\mathcal{F}(t)$-martingale for every $f\in C_c^\infty([0,\infty)\times\mathbb{R}^d)$.
\end{defi}

In what follows, conditions $\mathbf{H_{\sf \lesssim u}}, \quad \mathbf{H^{\sf TV}_0}$ introduced in \Cref{ss:main results} and related to a general (time-dependent) linear Kolmogorov operator ${\sf L}$, are going to be employed for the linearized operator ${\sf L}^u$ obtained from the nonlinear one ${\sf L}_{t,\nu}$ from \eqref{eq:L_t,nu}, by freezing the particular solution $u$ in the coefficients; to stress this, we shall write $\mathbf{H_{\sf \lesssim u}}({\sf L}^u)$ and $\mathbf{H^{\sf TV}_0}({\sf L}^u)$, instead of $\mathbf{H_{\sf \lesssim u}}$ and  $ \quad \mathbf{H^{\sf TV}_0}$, respectively.

\begin{thm}\label{thm:MVE}
Let $(\mu_t)_{t\geq 0}\subset \mathcal{P}(\mathbb{R}^d)$ be a weakly continuous solution to the nonlinear Fokker-Planck equation \eqref{eq:NLFPshort} on $(0,\infty)$, such that $\mu_t:= u(t,x) dx,\; t\geq 0$ for a jointly measurable density
$u:[0,\infty)\times \mathbb{R}^d\rightarrow \mathbb{R}_+$.  
Furthermore, assume that the "linearized" conditions
\begin{equation*}
\mathbf{H_{a^u,b^u}^{\sqrt{u}}},\quad \mathbf{H_{\sf \lesssim u}}({\sf L}^u), \quad \mathbf{H^{\sf TV}_0}({\sf L}^u) \quad \mbox{ are fulfilled}.
\end{equation*}
Then there exists a set $E\in \mathcal{B}([0,\infty)\times \mathbb{R}^d)$ and a conservative right (hence strong Markov) process on $E$
\begin{equation*}
X=\left(\Omega,\mathcal{F},\mathcal{F}(t),X(t),\mathbb{P}_{s,x}, (s,x)\in E, t\geq 0\right), \quad \mbox{with transition function } (P_t)_{t\geq 0},
\end{equation*} 
such that the following assertions hold:
\begin{enumerate}
            \item[(i)] $\overline{\mu}([0,\infty)\times \mathbb{R}^d\setminus E)=0$ and $\mu_s\left(\left\{x\in \mathbb{R}^d, (s,x)\in E\right\}\right)=1$, $s\geq 0$.
            \item[(ii)] For $(s,x)\in E$ we have
            \begin{equation*}
                \mathbb{P}_{s,x}\left( [0,\infty)\ni t\mapsto X(t) \in E\subset [0,\infty)\times \mathbb{R}^d \mbox{ is continuous }  \right)=1.
            \end{equation*}
            \item[(iii)] If we set $X(t)=(X_1(t),X_2(t)), t\geq 0$, then for $(s,x)\in E$ we have
            \begin{equation*}
            \mathbb{P}_{s,x}\left( X_1(t)= t+s; 0\leq t<\infty  \right)=1.
            \end{equation*}
            \item[(iv)] The process $(X_2(t))_{t\geq 0}$ has the following properties:
            \begin{enumerate}
                \item[(iv.1)] For every $s,t\geq 0$
                \begin{equation*}
                \mathbb{P}_{s,\mu_s}\circ \left(X_2(t)\right)^{-1}=\mu_{t+s}.
                \end{equation*}
            \item[(iv.2)] Regarded on the filtered probability space $\left(\Omega, \mathcal{F},\mathcal{F}(t),\mathbb{P}_{0,\mu_0}\right)$, the process $(X_2(t))_{t\geq 0}$ is the unique weak solution to the McKean-Vlasov SDE \eqref{eq:MV SDE}, with initial condition $\mathcal{L}_{X_2(0)}=\mu_0$ and such that $\mathcal{L}_{X_2(t)}\leq c \mu_t, \;t\geq 0$ for some constant $c\in (0,\infty)$.
            \item[(iv.3)] Regarded on the filtered probability space $\left(\Omega, \mathcal{F},\mathcal{F}(t),\mathbb{P}_{0,\mu_0}\right)$ the process $(X_2(t))_{t\geq 0}$ is strong Markov, in the sense that for every finite $\mathcal{F}(t)$-stopping times $\tau$ we have that
            \begin{equation*}
                \mathbb{E}_{\delta_0\otimes \mu_0}\left\{ f(t+\tau,X_2(t+\tau)) \;\big|\;\mathcal{F}(\tau) \right\}=P_tf(\tau,X_2(\tau)),\quad t\geq 0, f\in b\mathcal{B}([0,\infty)\times \mathbb{R}^d).
            \end{equation*}
            \end{enumerate}
\end{enumerate}
\end{thm}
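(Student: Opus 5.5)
The plan is to reduce everything to the linear theory by freezing the given solution in the coefficients, in the spirit of \cite{BaRo20}. Since $(\mu_t)_{t\geq 0}$ solves \eqref{eq:NLFPshort}, it is by definition a weakly continuous solution of the \emph{linear} Fokker-Planck equation \eqref{eq:LFPshort} driven by the linearized operator ${\sf L}^u$ with coefficients $b^u,a^u$. These coefficients are Borel by \Cref{def:Nsolution}, (i), and satisfy \eqref{eq:bau} by \eqref{eq:Na,b}. Hence $u$ is a reference density for ${\sf L}^u$ in the sense of \eqref{eq:exists_u}-\eqref{eq:LFP}. By hypothesis $\mathbf{H_{a^u,b^u}^{\sqrt{u}}}$, $\mathbf{H_{\sf \lesssim u}}({\sf L}^u)$ and $\mathbf{H^{\sf TV}_0}({\sf L}^u)$ hold, so \Cref{thm:main0} and \Cref{thm:main1} apply verbatim to ${\sf L}^u$. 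I take $X$ and $E$ to be the conservative right process provided there.

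Assertions (i), (ii), (iii) and (iv.1) are then exactly (ii.1'), (ii.3), (ii.4') of \Cref{thm:main1} and \Cref{thm:main0}. For (iv.2) I work under $\mathbb{P}_{0,\mu_0}:=\mathbb{P}_{\delta_0\otimes\mu_0}$.

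\emph{Existence.} By (iv.1), $\mathcal{L}_{X_2(t)}=\mu_t$, so ${\sf L}_{t,\mathcal{L}_{X_2(t)}}={\sf L}^u_t$. Using $X_1(t)=t$, the martingale statement (ii.5), extended to $s=0$ as in Step OP2.9, gives that
\[
f(t,X_2(t))-f(0,X_2(0))-\int_0^t\Big(\tfrac{d}{dr}f+{\sf L}_{r,\mathcal{L}_{X_2(r)}}f\Big)(r,X_2(r))\,dr
\]
is an $\mathcal{F}(t)$-martingale for all $f$. The integrability requirement in \Cref{defi:sol MV} is \eqref{eq:Na,b}. Hence $X_2$ is a weak solution with $\mathcal{L}_{X_2(t)}\le \mu_t$, so $c=1$ works.

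\emph{Uniqueness.} Let $Y$ be another weak solution with $\mathcal{L}_{Y(0)}=\mu_0$ and $\nu_t:=\mathcal{L}_{Y(t)}\leq c\mu_t$. Taking expectations (as in \Cref{rem:mp_LFP}) shows that $(\nu_t)$ solves the nonlinear FPE, i.e. the linear equation with coefficients $b(t,x,\nu_t)$, $a(t,x,\nu_t)$. This is the main obstacle: without any continuity of $b,a$ in the measure variable, I cannot directly compare these coefficients with those of ${\sf L}^u$. The approach I would try is to interpret uniqueness as in \cite{BaRo20}, i.e. uniqueness of the path law among solutions of the linearized martingale problem whose marginals are dominated by $c\mu_t$. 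Concretely, I would first show $\nu_t=\mu_t$, which turns $Y$ into a solution of the martingale problem for ${\sf L}^u$, and then invoke the uniqueness part of \Cref{thm:main1}, (ii.6') (equivalently \Cref{prop:uniqueness_MP} on each $[0,N]$, followed by the projective limit as in Step OP2.1) to identify the laws. The first step, $\nu_t=\mu_t$, is not automatic for Nemytskii coefficients. If it cannot be derived from the hypotheses, I would state it as the restriction understood in the uniqueness class, namely solutions whose marginals solve the linearized equation. Within that class, $\mathbf{H_{\sf \lesssim u}}({\sf L}^u)$ together with $\nu_0=\mu_0$ forces $\nu_t=\mu_t$.

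For (iv.3), $X$ is a right process on $E$, hence strong Markov with respect to its usual-augmented filtration, and $\mathbb{P}_{0,\mu_0}$-a.s. $X$ starts in $E$. For a finite stopping time $\tau$, the strong Markov property gives
\[
\mathbb{E}_{\delta_0\otimes\mu_0}\{g(X(t+\tau))\,|\,\mathcal{F}(\tau)\}=P_tg(X(\tau)).
\]
Taking $g=f|_E$ and using $X(\tau)=(\tau,X_2(\tau))$ by (iii) yields exactly the stated identity.
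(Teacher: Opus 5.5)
Your reduction is exactly the paper's proof, which consists of the single sentence that \Cref{thm:main0} and \Cref{thm:main1} apply with $a,b$ replaced by $a^u,b^u$. Your derivations of (i)--(iii), (iv.1), the existence half of (iv.2), and (iv.3) from (ii.1'), (ii.3), (ii.4'), (ii.5) and the strong Markov property of $X$ are correct, and more explicit than what the paper writes.

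The step you leave open in the uniqueness part of (iv.2) is a genuine problem. It lies in the statement rather than in your approach, and the paper's one-line proof does not address it either.

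\emph{What the reduction gives.} The linearized theory yields only the uniqueness in (ii.6') of \Cref{thm:main1}. That is uniqueness among solutions of the martingale problem for ${\sf L}^u$ whose marginals are dominated by $c\mu_t$. To pass from an arbitrary weak solution $Y$ of \eqref{eq:MV SDE} with $\nu_t:=\mathcal{L}_{Y(t)}\le c\mu_t$ to $\nu=\mu$, you need uniqueness for the \emph{nonlinear} equation \eqref{eq:NLFPshort} in that class. This cannot be extracted from the hypotheses: they only involve $b(t,x,\nu)$ and $a(t,x,\nu)$ along $\nu=\mu_t$, and no regularity in $\nu$ is assumed.

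\emph{A counterexample to the literal claim.} Let $a\equiv I$ and $\mu_t:=N(0,(1+t)I)$. Take $\phi\in C_c^\infty((t_1,t_2)\times\mathbb{R}^d)$ with $0<t_1<t_2$, such that $\phi(t,\cdot)\not\equiv 0$ and $\int\phi(t,\cdot)\,d\mu_t=0$ for $t\in(t_1,t_2)$. For small $\varepsilon>0$ set $\hat\nu_t:=(1+\varepsilon\phi(t,\cdot))\mu_t\le\frac32\mu_t$.
\begin{itemize}
\item Since $\frac12\Delta\hat\nu_t-\partial_t\hat\nu_t$ has compact support and zero mass, it equals ${\sf div}(\hat b\,\hat\nu_t)$ for a bounded smooth $\hat b$ vanishing outside $(t_1,t_2)$.
\item Put $b(t,x,\nu):=\hat b(t,x)$ if $\nu=\hat\nu_t\neq\mu_t$, and $b:=0$ otherwise.
\item Then $b(t,x,\mu_t)=0$, so ${\sf L}^u=\frac12\Delta$ and all linearized hypotheses hold.
\item Yet the superposition of $\hat\nu$ (\Cref{rem:Trevisan_superposition}) is a weak solution of \eqref{eq:MV SDE} with initial law $\mu_0$ and marginals $\hat\nu_t\le\frac32\mu_t$. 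These marginals differ from those of $X_2$.
\end{itemize}

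So your fallback formulation, uniqueness among solutions whose marginals solve the linearized equation, is the correct content of (iv.2). The literal claim does hold if one adds the assumption that $\mu$ is the only weakly continuous probability solution of \eqref{eq:NLFPshort} from $\mu_0$ with $\nu_t\le c\mu_t$. Under that assumption your two-step argument completes the proof: first $\nu=\mu$, then \Cref{prop:uniqueness_MP} on each $[0,N]$ followed by the projective limit.
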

\begin{proof}
The existence of $X$ with all the desired properties follows by the main results \Cref{thm:main0} and \Cref{thm:main1}, with $a,b$ replaced by $a^u,b^u$. 
\end{proof}

\begin{rem}
Let $(X_2(t))_{t\geq 0}$ be the weak solution constructed in \Cref{thm:MVE}, and define for every $A\in \mathcal{B}(\mathbb{R}^d)$
\begin{equation*}
    \tau^{(2)}_A:=\inf\left\{t> 0 : X_2(t)\in A\right\} \mbox{ (hitting time)}, \quad D^{(2)}_A:=\inf\left\{t\geq 0 : X_2(t)\in A\right\} \mbox{ (entry time)}.
\end{equation*}
Then both $\tau^{(2)}_A$ and $D^{(2)}_A$ are $\left(\mathcal{F}(t)\right)$-stoping times.
\end{rem}

\begin{coro}\label{coro: capacity}
Let $\mu:=(\mu_t)_{t\geq 0}\subset \mathcal{P}(\mathbb{R}^d)$ be a weakly continuous solution to the nonlinear Fokker-Planck equation \eqref{eq:NLFPshort}, such that $\mu_t:= u(t,x) dx,\; t\geq 0$ for a jointly measurable density
$u:[0,\infty)\times \mathbb{R}^d\rightarrow \mathbb{R}_+$.  
Furthermore, assume that the "linearized" conditions
\begin{equation*}
\mathbf{H_{a^u,b^u}^{\sqrt{u}}},\quad \mathbf{H_{\sf \lesssim u}}({\sf L^u}), \quad \mathbf{H^{\sf TV}_0}({\sf L^u}) \quad \mbox{ are fulfilled}.
\end{equation*}
Let $X$ be the process provided by \Cref{thm:MVE}, $\alpha>0$, and consider the mapping
\begin{equation*}
     \mathbb{R}^d\supset A\longmapsto {\sf Cap}^\alpha_{(\mu_t)}(A):= \inf\left\{\mathbb{E}_{\delta_0\otimes \mu_0}\left\{e^{-\alpha D^{(2)}_{G}}\right\} : A\subset G, \; G\subset \mathbb{R}^d \mbox{ open}\right\}.
\end{equation*}
Then ${\sf Cap}_{(\mu_t)}^\alpha$ is a Choquet capacity on $\mathbb{R}^d$ endowed with the norm topology.
Moreover, ${\sf Cap}_{(\mu_t)}^\alpha$ is tight.
\end{coro}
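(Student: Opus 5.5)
The plan is to reduce the statement to the classical fact that, for a right process with a natural (here norm) topology, the hitting/entry capacity $A\mapsto \inf\{\mathbb{E}_\nu e^{-\alpha D_G}: G\supset A \mbox{ open}\}$ is a Choquet capacity; see the potential-theoretic tools collected in \Cref{Appendix} and \cite{BeBo04a}. We apply this to the space–time process $X$ of \Cref{thm:MVE} with initial law $\nu:=\delta_0\otimes\mu_0$, and we transfer it to $\mathbb{R}^d$ through cylinder sets.

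For $A\subset\mathbb{R}^d$ set $\tilde A:=([0,\infty)\times A)\cap E$. Then $D^{(2)}_A=D_{\tilde A}$, the entry time of $X$ into $\tilde A$. If $G\subset\mathbb{R}^d$ is open, then $\tilde G$ is relatively open in $E$. The function
\[
B^{\tilde G}_\alpha 1(s,x)=\mathbb{E}_{s,x}\, e^{-\alpha D_{\tilde G}}
\]
is $\alpha$-excessive, and ${\sf Cap}^\alpha_{(\mu_t)}(A)=\inf_{G\supset A}\nu(B^{\tilde G}_\alpha 1)$.

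\textbf{Monotonicity.} This is immediate from the definition, since $D_{\tilde G}$ decreases as $G$ increases.

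\textbf{Continuity along increasing sequences.} Take arbitrary $A_n\nearrow A$. Here I would use the standard argument. By the Hunt/Dellacherie theory, for each $A_n$ there is a minimal element, namely the $\nu$-reduced function $R^{\tilde A_n}$, i.e. the infimum over open supersets. The $\alpha$-excessive regularizations increase to an $\alpha$-excessive function dominating $R^{\tilde A}$ q.e. on $\tilde A$ (every set differs from its fine-open approximation by a semipolar/polar piece). Then
\[
\nu(R^{\tilde A})\le \sup_n \nu(R^{\tilde A_n}).
\]
Doing this rigorously requires outer regularity of $A\mapsto \nu(R^{\tilde A})$ in the fine topology, which holds for right processes by \Cref{thm:saturation}-type results together with the fact that $\nu\circ U_\alpha\ll \overline{\mu}_\alpha$ (\Cref{prop:mu-V}(ii)). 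This last fact ensures $\nu$ does not charge semipolar sets after time $0$.

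\textbf{Continuity along decreasing compacts.} Take compacts $K_n\searrow K$. Path continuity ((ii) of \Cref{thm:MVE}) gives $D_{\tilde K_n}\nearrow D_{\tilde K}$ a.s., so the capacities converge by dominated convergence. The approximation by open sets containing $K_n$ is handled by choosing $G_n=\{d(\cdot,K_n)<1/n\}$; then $D_{\tilde G_n}\to D_{\tilde K}$, again by continuity of paths and closedness of $[0,\infty)\times K$.

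\textbf{Tightness.} Choose $G_R=\{\|x\|>R\}$. Since $X_2$ is continuous and conservative, $D_{\tilde G_R}\to\infty$ $\mathbb{P}_\nu$-a.s. as $R\to\infty$, so $\mathbb{E}_\nu e^{-\alpha D_{G_R}}\to0$. Hence for each $\varepsilon>0$ there is a compact $\overline{B(0,R)}$ whose complement has capacity less than $\varepsilon$.

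The main obstacle is the increasing-sequence step. Here one must justify that capacity computed with open supersets agrees with the probabilistic quantity for arbitrary (even non-Borel) $A$. I would address it by invoking the general result that for a right process the map $A\mapsto \inf\{\nu(B^{G}_\alpha 1): G \mbox{ open}\}$ is a Choquet capacity on $E$. The needed topological input is that the norm topology is natural (\Cref{thm 4.6}). The result then pulls back along the continuous map $x\mapsto [0,\infty)\times\{x\}$: cylinder sets preserve open sets, compact-approximation in $\mathbb{R}^d$, and increasing unions.
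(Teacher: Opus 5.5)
Your proposal is correct, and its load-bearing step is exactly the paper's proof. The paper takes the general (tight) Choquet capacity \eqref{eq:capacity} of the path-continuous right process $X$ on $E$, with the norm topology as $\mathcal{T}$ (natural by \Cref{thm 4.6}) and $\nu=\delta_0\otimes\mu_0$, and identifies ${\sf Cap}^\alpha_{(\mu_t)}(A)$ with its value on the cylinder $[0,\infty)\times A$, as in your final paragraph.

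Your direct checks for decreasing compacts and for tightness via path continuity are fine; the compact case is in fact automatic for any capacity defined through open supersets. The heuristic sketch for increasing sequences (semipolar pieces, $\nu\circ U_\alpha\ll\overline{\mu}_\alpha$, \Cref{thm:saturation}) is inaccurate, but you do not rely on it. It is also unnecessary: that property follows elementarily from $D^{(2)}_{G_n}\searrow D^{(2)}_G$ for open $G_n\nearrow G$, together with the pathwise strong subadditivity $e^{-\alpha D^{(2)}_{G_1\cup G_2}}+e^{-\alpha D^{(2)}_{G_1\cap G_2}}\le e^{-\alpha D^{(2)}_{G_1}}+e^{-\alpha D^{(2)}_{G_2}}$ and Choquet's standard outer-extension argument.
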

\begin{proof}
Recall that $X$ is a path-continuous right process  on $E$.
Therefore, we can consider the (tight) Choquet capacity ${\sf Cap}^\alpha_{\delta_0\otimes\mu_0}$ given by \eqref{eq:capacity}, with   $\mathcal{T}$ being the norm topology on $E$, $\nu=\delta_0\otimes\mu_0$, and $f=1/\alpha$.
Then it is straightforward to check that
\begin{equation*}
    {\sf Cap}_{(\mu_t)}^\alpha(A)={\sf Cap}^\alpha_{\delta_0\otimes\mu_0}([0,\infty)\times A), \quad A\subset \mathbb{R}^d.
\end{equation*}
is a Choquet capacity on $\mathbb{R}^d$ endowed with the norm topology, which is also tight.
\end{proof}

\begin{rem}
    The capacity ${\sf Cap}^{\alpha}_{\mu_0}$ constructed in \Cref{coro: capacity} depends only on the given solution $\mu=(\mu_t)_{t\geq 0}$ of the nonlinear Fokker-Planck equation \eqref{eq:NLFPshort}, through the corresponding superposition on the path space $C([0,\infty); \mathbb{R}^d)$ provided by \Cref{rem:Trevisan_superposition}. 
\end{rem}

\subsection{Example: Generalized porous media equations}\label{ss:example_nonlinear}

We place in the framework of \cite{BaRo23} and consider the nonlinear Fokker-Planck equation
\begin{align}\label{eq:BR-FP}
\frac{d}{dt}u(t,x;u_0)
&=\Delta \beta(x,u(t,x;u_0))-{\sf div} (D(x)b(u(t,x;u_0))u(t,x;u_0)), 
\; t>0, x\in \mathbb{R}^d\\
u(0,\cdot)
&=u_0(\cdot).\nonumber
\end{align}
As we will see, like in \Cref{s:construction}, in order to apply our results to \eqref{eq:BR-FP}, we have to construct a reference density satisfying the linearized conditions $\mathbf{H_{a^u,b^u}^{\sqrt{u}}},\quad \mathbf{H_{\sf \lesssim u}}({\sf L^u}), \quad \mathbf{H^{\sf TV}_0}({\sf L^u})$.

To this end, we consider the following set of assumptions on $(\beta,D,b)$, most of them being taken over from \cite{BaRo23}: 
\begin{equation*}
\beta:\mathbb{R}^2\times\mathbb{R}\rightarrow\mathbb{R} \quad \mbox{and} \quad D:\mathbb{R}^d\times\mathbb{R}\rightarrow \mathbb{R}, \quad b:\mathbb{R}\rightarrow\mathbb{R}
\end{equation*}
satisfy the following:
\begin{enumerate}
    \item[($H_\beta$)] $\beta\in C^2(\mathbb{R}^d\times \mathbb{R};\mathbb{R})$ is such that:
    \begin{enumerate}
        \item[($H_{\beta,1}$)] 
        $
            \beta_r(x,r):=\frac{d}{dr}\beta(x,r)\geq \alpha_0>0,\;(x,r)\in \mathbb{R}^{d+1}, \quad \beta_{r}\in L^\infty\left(\mathbb{R}^{d+1}\right),\quad \nabla_x\beta_r, \beta_{rr}\in L^\infty(\mathbb{R}^d\times [0,N]),\quad N>0,\\[2mm] 
            \beta(x,0)=0,\quad x\in \mathbb{R}^d.
        $
        \item[($H_{\beta,2}$)] 
        $
            \int_{\mathbb{R}^d}\sup\limits_{|r|\leq N}|\Delta_x \beta(x,r)| \;dx<\infty, \quad N>0.
        $
    \end{enumerate}
    \item[($H_D$)] For $m>d/2$ if $d\geq 2$, and $m=1$  if $d=1$,
    \begin{equation*}
        D\in L^\infty(\mathbb{R}^d;\mathbb{R}^d), \quad {\sf div} D\in L^m_{\sf loc}(\mathbb{R}^d), \quad \left({\sf div} D\right)^-\in L^\infty(\mathbb{R}^d).
    \end{equation*}
    \item[($H_b$)] 
    $ 
    b\in C^1(\mathbb{R})\cap C_b(\mathbb{R}) \quad \mbox{and} \quad b\geq 0.
    $
    \item[($H_{b,\beta}$)]
    $
        |b(r)r-b(s)s|\leq c |\beta(x,r)-\beta(x,s)|,\quad x\in \mathbb{R}^d,s,t\in\mathbb{R}, \quad \mbox{ for some } c\in (0,\infty).
    $
\end{enumerate}

The following well posedness result follows from \cite[Theorem 6.1]{BaRo23}:
{\it For every $u_0\in L^1(\mathbb{R}^d)$ there exists a unique mild solution $u(\cdot,\cdot;u_0)\in C([0,\infty);L^1(\mathbb{R}^d))$ to the nonlinear Fokker-Planck equation \eqref{eq:BR-FP};}
see \cite[Definition 1.1]{BaRo23} for details on the notion of solution.

In fact, the above mentioned well-posedness result is valid under more general assumptions on the coefficients, see \cite[Theorem 2.1 \& Theorem 6.1]{BaRo23}.
The role of the assumptions $(H_\beta), (H_D), (H_b), (H_{b,\beta})$ from above, besides the well-posedness part, is that they also guarantee the following key properties of the solution, according to \cite{BaRo23}.
\begin{enumerate}
    \item[(1)] Cf. \cite[(1.16) \& (2.4)]{BaRo23}:
    
    $u(t+s,\cdot;u_0)=u(t,\cdot;u(s,\cdot;u_0)), \quad t,s\geq 0.
    $
    \item[(2)] Cf. \cite[(1.17) \& (2.4)]{BaRo23}:
    
    $
        \|u(t,\cdot;u_0)-u(t,\cdot;v_0)\|_{L^1(\mathbb{R}^d)}\leq  \|u_0-v_0\|_{L^1(\mathbb{R}^d)}, \quad t\geq0,\; u_0,v_0\in L^1(\mathbb{R}^d).
    $
    \item[(3)] Cf. \cite[Theorem 6.1]{BaRo23}: 
    
    If $u_0(x)dx \in \mathcal{P}(\mathbb{R}^d)$ then $u_t(x)dx\in \mathcal{P}(\mathbb{R}^d)$ for every $t>0$.
    \item[(4)] Cf. \cite[Theorem 6.1]{BaRo23}:
    
    If $u_0\in L^1(\mathbb{R}^d)\cap L^\infty(\mathbb{R}^d)$ then $u(\cdot,\cdot;u_0)\in L^\infty((0,N)\times \mathbb{R}^d)$, for every $N>0$.
    \item[(5)] Cf. \cite[Theorems 2.2 \& 5.2 \& Section 6]{BaRo23}: 
    
    If $u_0\in L^1(\mathbb{R}^d)\cap L^2(\mathbb{R}^d)$ then 
    \begin{align*}
        &(5.1)\quad  \beta(\cdot,u(\cdot,\cdot;u_0))\in L^2((0,N);H^1(\mathbb{R}^d))\cap L^\infty((0,N);L^2(\mathbb{R}^d))\\[2mm]
        &(5.2)\quad \frac{d}{dt}u(t,\cdot;u_0)\in L^2((0,N);H^{-1}(\mathbb{R}^d)), \quad N>0.
    \end{align*}
\end{enumerate}

\begin{lem}\label{lem:beta_hat}
Under $(H_\beta)$ there exists a function $\hat{\beta}$
\begin{equation*}
    \hat{\beta}:\mathbb{R}^d\times\mathbb{R}\rightarrow\mathbb{R},\quad \beta(x,\hat{\beta}(x,r))=r=\hat{\beta}(x,\beta(x,r)), \quad (x,r)\in \mathbb{R}^d\times \mathbb{R},
\end{equation*}
such that the following properties hold
\begin{equation}\label{eq:beta_hat_prop}
    \hat{\beta}\in C^1(\mathbb{R}^d\times\mathbb{R}),\quad \hat{\beta_{r}},\; \nabla_x\hat{\beta} \in L^\infty(\mathbb{R}^d\times [0,N]),\quad N>0
\end{equation}
\end{lem}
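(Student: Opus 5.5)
The plan is to obtain $\hat{\beta}$ by inverting $\beta(x,\cdot)$ for each fixed $x$, and then to read off the regularity of $\hat{\beta}$ from the implicit function theorem.

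First, fix $x\in\mathbb{R}^d$. By $(H_{\beta,1})$ the map $r\mapsto\beta(x,r)$ is $C^2$ with $\beta_r(x,r)\ge\alpha_0>0$, so it is strictly increasing. Moreover $\beta(x,0)=0$ and $\beta(x,r)\ge \alpha_0 r$ for $r\ge0$, while $\beta(x,r)\le\alpha_0 r$ for $r\le0$. Hence $\beta(x,\cdot)$ is a bijection of $\mathbb{R}$ onto $\mathbb{R}$. We define $\hat{\beta}(x,\cdot)$ as its inverse, which gives the two identities $\beta(x,\hat{\beta}(x,r))=r=\hat{\beta}(x,\beta(x,r))$ immediately.

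Next we get $C^1$ regularity. Consider $G(x,r,s):=\beta(x,s)-r$. It is $C^1$ (in fact $C^2$) on $\mathbb{R}^d\times\mathbb{R}\times\mathbb{R}$, and $\partial_sG=\beta_r(x,s)\ge\alpha_0\neq0$. The implicit function theorem, combined with the global uniqueness of the zero $s=\hat{\beta}(x,r)$, shows that $\hat{\beta}$ is $C^1$ on $\mathbb{R}^d\times\mathbb{R}$. Differentiating the identity $\beta(x,\hat{\beta}(x,r))=r$ gives
\begin{equation*}
\hat{\beta}_r(x,r)=\frac{1}{\beta_r(x,\hat{\beta}(x,r))},\qquad \nabla_x\hat{\beta}(x,r)=-\frac{\nabla_x\beta(x,\hat{\beta}(x,r))}{\beta_r(x,\hat{\beta}(x,r))}.
\end{equation*}

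Finally we verify the bounds. The first formula gives $0<\hat{\beta}_r\le 1/\alpha_0$ globally. For the second, note that $|\hat{\beta}(x,r)|\le |r|/\alpha_0$, by monotonicity and $\beta(x,r)\ge\alpha_0 r$ on $r\ge0$ (and symmetrically for $r\le0$). So for $r\in[0,N]$ we have $\hat{\beta}(x,r)\in[0,N/\alpha_0]$, and it suffices to bound $\nabla_x\beta$ on $\mathbb{R}^d\times[0,N/\alpha_0]$. Since $\beta(x,0)=0$, we have $\nabla_x\beta(x,0)=0$, and therefore
\[
\nabla_x\beta(x,s)=\int_0^s\nabla_x\beta_r(x,\sigma)\,d\sigma .
\]
Hence $|\nabla_x\beta(x,s)|\le s\,\|\nabla_x\beta_r\|_{L^\infty(\mathbb{R}^d\times[0,N/\alpha_0])}$, which is finite by $(H_{\beta,1})$. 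Dividing by $\beta_r\ge\alpha_0$ then yields \eqref{eq:beta_hat_prop}.

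The only mildly delicate point is this last bound: $(H_{\beta,1})$ controls only $\nabla_x\beta_r$, not $\nabla_x\beta$. The fix is to integrate in $r$ starting from the normalisation $\beta(x,0)=0$, which is what makes $\nabla_x\beta$ bounded on $\mathbb{R}^d\times[0,M]$ for every $M$.
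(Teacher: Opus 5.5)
Your proof is correct and follows essentially the same route as the paper: invert $\beta(x,\cdot)$ using $\beta_r\ge\alpha_0$, differentiate the identity $\beta(x,\hat\beta(x,r))=r$, bound $\hat\beta$ via $\alpha_0\hat\beta(x,r)\le r$, and bound $\nabla_x\beta$ by integrating $\nabla_x\beta_r$ from the normalisation $\beta(x,0)=0$. Two small improvements over the paper: applying the implicit function theorem jointly to $G(x,r,s)=\beta(x,s)-r$, together with global uniqueness of the zero, gives joint $C^1$ regularity more directly than the paper's separate-variable argument followed by local Lipschitz continuity; and you make explicit two points the paper leaves implicit, namely surjectivity of $\beta(x,\cdot)$ and the need to control $\nabla_x\beta$ on $\mathbb{R}^d\times[0,N/\alpha_0]$ rather than on $\mathbb{R}^d\times[0,N]$.
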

\begin{proof}
First of all, since $\beta \in C^2$, from $(H_{\beta,1})$ we deduce that for every $x\in\mathbb{R}^d$ there is a $C^1(\mathbb{R})$-function $\hat{\beta}(x,\cdot)$ such that
\begin{equation*}
    \beta(x,\hat{\beta}(x,r))=r=\hat{\beta}(x,\beta(x,r)),\quad r\in \mathbb{R}.
\end{equation*}
Moreover, by the implicit function theorem and the injectivity of $\beta(x,\cdot)$ for each $x\in \mathbb{R}^d$ we deduce that $\hat{\beta}(\cdot,r)$ is $C^1$  for every $r\in \mathbb{R}$; thus, $\hat{\beta}$ is $C^1$ in each variable.

Furthermore, from $(H_{\beta,1})$ we get
\begin{equation*}
    \nabla_x\beta(x,t)=\int_0^t\nabla_x\beta_r(x,r)\;dr, \;t\in \mathbb{R},\quad \mbox{hence}\quad \|\nabla_x\beta\|_{L^{\infty}(\mathbb{R}^d\times[0,N])}\leq N \|\nabla_x\beta_r\|_{L^{\infty}(\mathbb{R}^d\times[0,N])}<\infty,\quad N>0,
\end{equation*}
as well as
\begin{equation*}
    r=\beta(x,\widehat{\beta}(x,r))\geq \alpha_0 \widehat{\beta}(x,r), \quad \mbox{hence}\quad \widehat{\beta}\in L^\infty(\mathbb{R}^d\times[0,N]),\quad N>0.
\end{equation*}
Now, since
\begin{equation*}
    \nabla_x\hat{\beta}(x,r)=\frac{-\nabla_x\beta(x,t)|_{t=\hat{\beta}(x,r)}}{\beta_r(x,t)|_{t=\hat{\beta}(x,r)}}, \quad \hat{\beta}_r(x,r)=\frac{1}{\beta_r(x,t)|_{t=\hat{\beta}(x,r)}}, \quad (x,r)\in \mathbb{R}^d\times\mathbb{R}.
\end{equation*}
the boundedness properties in \eqref{eq:beta_hat_prop} follow immediately, and these prove in particular that $\hat{\beta}$ is jointly continuous (being locally Lipschitz). 
Thus, $\hat{\beta}$ is jointly $C^1$, and the proof is complete.
\end{proof}

\begin{prop}\label{prop:u_H1}
    Assume that $\beta,D,b$ satisfy conditions $(H_\beta), (H_D), (H_b), (H_{b,\beta})$, let $u_0\in L^1(\mathbb{R}^d)\cap L^\infty(\mathbb{R}^d)$ and consider $u(\cdot,\cdot):=u(\cdot,\cdot;u_0)$ to be the solution to \eqref{eq:BR-FP}. 
    Then
    \begin{equation*}
        \nabla_x u(\cdot,\cdot)\in L^\infty([0,T]\times \mathbb{R}^d)+L^2([0,T];L^2(\mathbb{R}^d))\subset L^2([0,T];H^1_{\sf loc}(\mathbb{R}^d)), \quad T>0.
    \end{equation*}
\end{prop}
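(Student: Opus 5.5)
We prove the claim by writing $u=\hat{\beta}(x,\beta(x,u))$ with the inverse function from \Cref{lem:beta_hat} and differentiating through this identity. By property (4), $u\in L^\infty((0,T)\times\mathbb{R}^d)$; fix $N\ge \|u\|_{L^\infty((0,T)\times\mathbb{R}^d)}$. Because $u_0\in L^1\cap L^\infty\subset L^2$, property (5.1) gives $w:=\beta(\cdot,u)\in L^2((0,T);H^1(\mathbb{R}^d))$. The bounds $\alpha_0\le\beta_r$ and $\beta_r\in L^\infty$ imply $|w|\le \|\beta_r\|_\infty |u|$, so $w$ is bounded by some $N'$ as well. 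This lets us use the bounds from \eqref{eq:beta_hat_prop} on $\mathbb{R}^d\times[0,N']$, after symmetrizing to $[-N',N']$ if needed, since $u\ge 0$ implies $w\ge0$.

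\textbf{Chain rule.} For a.e.\ $t$, the function $w(t,\cdot)$ lies in $H^1(\mathbb{R}^d)\cap L^\infty$, and $\hat\beta$ is $C^1$ with bounded partial derivatives on $\mathbb{R}^d\times[0,N']$. The chain rule for Sobolev functions composed with a $C^1$ map having bounded derivatives in the range of $w$ then yields
\begin{equation*}
\nabla_x u(t,x)=(\nabla_x\hat\beta)(x,w(t,x))+\hat\beta_r(x,w(t,x))\,\nabla_x w(t,x).
\end{equation*}
To justify this rigorously, we would mollify $w(t,\cdot)$ in $x$, apply the classical chain rule to $\hat\beta(x,w_\varepsilon)$, and pass to the limit. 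Convergence uses dominated convergence, the continuity of $\nabla_x\hat\beta$ and $\hat\beta_r$, the boundedness of these derivatives on the relevant range, and a.e.\ convergence of a subsequence of $w_\varepsilon$ together with $\nabla w_\varepsilon\to\nabla w$ in $L^2$. Measurability in $t$ is inherited from $w$.

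\textbf{Bounds on each term.} The first term satisfies $\|(\nabla_x\hat\beta)(\cdot,w)\|_\infty\le\|\nabla_x\hat\beta\|_{L^\infty(\mathbb{R}^d\times[0,N'])}$, so it lies in $L^\infty([0,T]\times\mathbb{R}^d)$. The second term is bounded pointwise by $\|\hat\beta_r\|_{L^\infty}\,|\nabla_x w|$, so it lies in $L^2([0,T];L^2(\mathbb{R}^d))$. The claimed inclusion in $L^2([0,T];H^1_{\sf loc})$ follows because $L^\infty$ restricted to $[0,T]\times B(0,R)$ lies in $L^2$, and $u\in L^\infty\cap L^1$ gives $u\in L^2_{\sf loc}$.

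\textbf{Main obstacle.} The hard part is the chain rule with $x$-dependence of $\hat\beta$ under only $C^1$ regularity, since the derivatives of $\hat\beta$ are bounded only on bounded $r$-ranges. This is handled by the a priori $L^\infty$ bound on $w$ combined with the mollification argument above. An alternative route would be to work instead with a truncated modification of $\hat\beta$ that is globally Lipschitz.
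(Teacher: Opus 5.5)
Your proposal is correct and follows the paper's proof: you write $u=\hat\beta(x,\beta(x,u))$, apply the chain rule, and then use \eqref{eq:beta_hat_prop} and property (5.1) to bound $\nabla_x\hat\beta(x,\beta(x,u))$ in $L^\infty$ and $\hat\beta_r(x,\beta(x,u))\nabla_x\beta(x,u)$ in $L^2([0,T];L^2(\mathbb{R}^d))$. You are more careful than the paper, which applies the chain rule directly and uses the $L^\infty$ bound on $\beta(x,u)$ (needed for the local bounds of Lemma \ref{lem:beta_hat}) only implicitly; note that, like the paper, you tacitly need $u\ge 0$ (i.e.\ $u_0\ge 0$, as in all later applications), because $(H_{\beta,1})$ controls $\nabla_x\beta_r$ only for $r\ge 0$.
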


\begin{proof}
Let $\hat{\beta}$ be the function provided by \Cref{lem:beta_hat}, so that
\begin{equation*}
    u(t,x)=\hat{\beta}(x,\beta(x,u(t,x))), \quad x,t\in \mathbb{R}^d\times\mathbb{R} \mbox{ a.e.} 
\end{equation*}
Consequently, the statement follows if we show that 
\begin{equation*}
    (t,x)\mapsto\nabla_x\left[\hat{\beta}(x,\beta(x,u(t,x)))\right]\in L^2([0,T];L^2(\mathbb{R}^d)).
\end{equation*}
To this end, by \eqref{eq:beta_hat_prop} and the fact that we already know that $\beta(u)\in L^2([0,T];H^1(\mathbb{R}^d))$, we have
\begin{align*}
(t,x)\mapsto\nabla_x\left[\hat{\beta}(x,\beta(x,u(t,x)))\right]
&=
\underbrace{\nabla_x\hat{\beta}(x,r)|_{r=\beta(x,u(t,x))}}_{\in L^\infty([0,T]\times\mathbb{R}^d)}
+\underbrace{\hat{\beta}_r(x,\beta(x,u(t,x)))}_{\in L^\infty([0,T]\times\mathbb{R}^d)}\underbrace{\nabla_x\left[\beta(x,u(t,x))\right]}_{\in L^2([0,T];L^2(\mathbb{R}^d))}\\
&\in L^\infty([0,T]\times \mathbb{R}^d)+L^2([0,T];L^2(\mathbb{R}^d)).
\end{align*}
\end{proof}

Let $0\leq u_0\in L^\infty(\mathbb{R}^d)$ such that $\int_{\mathbb{R}^d}u_0(x)\;dx=1$, and let $u(\cdot,\cdot;u_0)$ be the corresponding solution presented above.
If we set $\mu_t(dx):=u(t,x;u_0)dx$, $t\geq 0$, then \eqref{eq:BR-FP} recasts as: 

For every $f\in C_c^\infty((0,\infty)\times \mathbb{R}^d)$
\begin{align}\label{eq:NL-ab}
        &\int_0^\infty\int_{\mathbb{R}^d} \frac{d}{dt}f(t,x)+ \underbrace{\frac{\beta(x,u(t,x))}{u(t,x)}}_{=:a(x,\mu_t)}\Delta f(t,x)
        +\langle \underbrace{D(x)b(u(t,x))}_{=:b(x,\mu_t)},\nabla f(t,x)\rangle \; \mu_t(dx)\;dt
        =0,\\
        &u(0,\cdot)
        =u_0(\cdot).
\end{align}
Since $\mu:=(\mu_t)_{t\geq 0}\subset \mathcal{P}(\mathbb{R}^d)$ is also continuous in the total variation norm, we thus have that $\mu$ is a weakly continuous solution (in the sense of \Cref{def:Nsolution}) to the non-linear Fokker-Planck equation 
\begin{align}
&\frac{d}{dt} \mu_t 
= {\sf L}^\ast_{t,\mu_t} \mu_t,\label{eq:a^ub^u}\\ 
&\underbrace{{\sf L}_{t,\mu_t}}_{{\sf L}^u_t}f(t,x)
:=\underbrace{b(x,\mu_t)}_{=:b^u(t,x)}\nabla_x f(t,x) + \frac{1}{2} \underbrace{2a(x,\mu_t)}_{=:a^u(t,x)}\Delta_xf(t,x), \quad (t,x) \in (0,\infty)\times \mathbb{R}^d, \nonumber
\end{align}
where $b(x,\mu_t)$ and $a(x,\mu_t)$ are given in \eqref{eq:NL-ab}.

Now, we need a convenient well-posedness result for the {\it linearized} Fokker-Planck equation
\begin{equation}\label{eq:linearizedFPE}
    \frac{d}{dt} \nu_t = \left({\sf L}^{u}_{t}\right)^\ast \nu_t, \quad \mbox{where } {\sf L}^{u}_{t} \mbox{ is defined in } \eqref{eq:a^ub^u} \mbox{ with } u \mbox{ being fixed in the coefficients}.
\end{equation}

\begin{prop}\label{prop:BrisLyonsBaRo}
Assume that $\beta,D,b$ satisfy conditions $(H_\beta), (H_D), (H_b), (H_{b,\beta})$, let $0\leq u_0\in L^\infty(\mathbb{R}^d)$ such that $u_0(x)dx\in \mathcal{P}(\mathbb{R}^d)$, and let $u(\cdot,\cdot;u_0),\; \mu_t(dx):=u(t,x;u_0)dx, t\geq 0$ be the solution to \eqref{eq:BR-FP}/\eqref{eq:NL-ab}.
The following assertions hold
\begin{enumerate}
    \item[(i)] Let $0\leq s<T$ and $\nu_s=v_s(x)dx$ such that $0\leq v_s\in L^1(\mathbb{R}^d)\cap L^\infty(\mathbb{R}^d)$.
    
    \noindent{}If $\nu^{(1)}:=\left(\nu^{(1)}_t=v^{(1)}(t,x)dx\right)_{t\in [s,T]}$ and $\nu_t^{(2)}:=\left(\nu^{(2)}_t=v^{(2)}(t,x)dx\right)_{t\in [s,T]}$ are two right weakly-continuous solutions to the linearized Fokker-Planck equation \eqref{eq:linearizedFPE}, such that $v^{(1)},v^{(2)}\in L^\infty([s,T]\times \mathbb{R}^d)$, sharing the same initial condition $v_s(x)dx$, then
    \begin{equation*}
       \nu^{(1)}=\nu^{(2)}.
    \end{equation*}
    \item[(ii)] 
    Let $\nu_0=v_0(x)dx$ such that $0\leq v_0\in L^1(\mathbb{R}^d)\cap L^\infty(\mathbb{R}^d)$.
    Then there exists a solution $\left(\nu_t=v(t,x)dx\right)_{t\in [0,T]}$ to the linearized Fokker-Planck equation \eqref{eq:linearizedFPE} such that
    \begin{equation*}
        v\in L^\infty([0,T];L^1\cap L^\infty(\mathbb{R}^d)) \quad \mbox{and} \quad \sqrt{a^u}\nabla v\in L^2([0,T];L^2(\mathbb{R}^d)).
    \end{equation*}
    Moreover, it follows that
    \begin{equation}\label{eq:vinCloc}
        v\in C([0,T];L^2_{\sf loc}(\mathbb{R}^d)), \quad T>0,
    \end{equation}
    and if $v_0\leq c u_0$ for some $c\in (0,\infty)$, then 
    \begin{equation}\label{eq:vinC}
        v\in C([0,T];L^2(\mathbb{R}^d)), \quad T>0.
    \end{equation}
\end{enumerate}
\end{prop}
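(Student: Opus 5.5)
The plan is to treat the linearized equation \eqref{eq:linearizedFPE} as a linear parabolic equation of the form
\[
\partial_t v=\Delta(a v)-{\sf div}(b^u v),
\]
with frozen coefficients. In this form $a v=\frac{\beta(x,u)}{u}v$ is uniformly elliptic, since $\beta(x,r)/r\in[\alpha_0,\|\beta_r\|_\infty]$ by $(H_{\beta,1})$ and $\beta(x,0)=0$. The drift $b^u=D\,b(u)$ is bounded by $(H_D)$ and $(H_b)$. The two assertions are then handled by two different tools.

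For uniqueness (i), I would follow the $L^\infty$-duality / Holmgren-type argument used by Barbu--R\"ockner (the ``BrisLyons'' reference in the label). The difference $w=v^{(1)}-v^{(2)}\in L^\infty([s,T]\times\mathbb{R}^d)$ solves the equation with $w(s)=0$. I would test it against solutions $\varphi$ of the backward dual problem
\[
\partial_t\varphi+a^u\Delta\varphi+b^u\cdot\nabla\varphi=g,\qquad \varphi(T)=0,
\]
with $g\in C_c^\infty$. These are constructed after regularizing $a^u,b^u$ by mollification: $\varphi_\varepsilon$ is smooth, and one needs uniform bounds on $\varphi_\varepsilon$ in $L^\infty$, on $\nabla\varphi_\varepsilon$ in $L^2$, and, crucially, on $\sqrt{a^u}\,\Delta\varphi_\varepsilon$ in $L^2$. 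The latter follows from the standard energy estimate obtained by multiplying by $\Delta\varphi_\varepsilon$, which uses nondegeneracy and the boundedness of $b^u$. Pairing $w$ with $\varphi_\varepsilon$, the error terms are
\[
\int w\,(a^u_\varepsilon-a^u)\Delta\varphi_\varepsilon
\quad\text{and}\quad
\int w\,(b^u_\varepsilon-b^u)\cdot\nabla\varphi_\varepsilon .
\]
These vanish as $\varepsilon\to0$ because $w$ is bounded and integrable and the coefficient differences go to $0$ in $L^2_{\rm loc}$. A cutoff in space is needed to handle $w\in L^1$ globally; the tails are controlled via $w\in L^\infty(L^1)$. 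It follows that $\int w g=0$ for all $g$, hence $w=0$. The main obstacle is this step: the coefficient $a^u$ is only measurable in $x$ (through $u$), so the second-derivative error term must be controlled by the weighted $L^2$ bound on $\Delta\varphi_\varepsilon$. Right-continuity is used only to identify $w(s)=0$ in the weak formulation.

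For existence (ii), I would mollify the coefficients and the initial datum, solve the resulting smooth linear problems classically, and derive a priori estimates. Mass conservation together with positivity gives the $L^\infty(L^1)$ bound. For the $L^\infty$ bound I would multiply by $p\,v^{p-1}$ and use that $\nabla\cdot(\text{drift})$ contains ${\sf div}D$ with $({\sf div}D)^-\in L^\infty$, together with the $\nabla_x a^u$ terms, which are controlled through \Cref{prop:u_H1} and $(H_{\beta,1})$; Moser iteration, or directly $p\to\infty$, then yields the bound. The energy bound $\sqrt{a^u}\nabla v\in L^2$ comes from testing with $v$. I would pass to the limit by weak-$*$ compactness; the terms $a_\varepsilon v_\varepsilon$ converge by Aubin--Lions, using $\partial_t v_\varepsilon\in L^2(H^{-1})$.

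Finally, \eqref{eq:vinCloc} follows as in \Cref{lem:hu in H-1}. Since $hv\in L^2(H^1)$ and $\partial_t(hv)\in L^2(H^{-1})$, the Lions--Magenes lemma gives $hv\in C([0,T];L^2)$. For \eqref{eq:vinC}, uniqueness from (i) together with a comparison/maximum principle applied to the linear equation (of which $u$ itself is a solution) gives $v\le c\,u$ whenever $v_0\le c\,u_0$. Combined with the global $L^2$-continuity of $u$ (from (5.1), (5.2) and \Cref{prop:global_root_reg}-type arguments), tightness, and the local continuity, this yields global $L^2$-continuity.
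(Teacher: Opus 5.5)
For (i) you take a genuinely different route. The paper simply invokes Corollary~3.5 of Barbu--R\"ockner, whereas you give a self-contained Holmgren-type duality argument. That argument works here because the diffusion part is scalar, $\frac12 a^u\Delta$ with $a^u$ bounded above and below, and because $b^u=D\,b(u)$ is bounded. Multiplying the mollified backward problem by $\Delta\varphi_\varepsilon$ then gives the uniform bound on $\int\!\!\int a^u_\varepsilon|\Delta\varphi_\varepsilon|^2$ that you need to kill the commutator term $\int\!\!\int w(a^u_\varepsilon-a^u)\Delta\varphi_\varepsilon$. The price is the routine justification of $\varphi_\varepsilon$ as a test function: a spatial cutoff, and the behaviour at $t=s$ and $t=T$.

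The gap is in (ii), at the $L^\infty$ bound. Your energy estimate, the compactness step, the $L^2(H^{-1})$ bound on $\partial_t(hv)$ behind \eqref{eq:vinCloc}, and the use of (i) for \eqref{eq:vinC} all rest on it, and the route you indicate does not deliver it. With $a=\beta(x,u)/u$ the equation reads $\partial_t v={\sf div}\big(a\nabla v-(b^u-\nabla a)v\big)$. Testing with $p\,v^{p-1}$ therefore produces one of two terms. The first option is $-(p-1)\int v^p\,{\sf div}(b^u-\nabla a)$, which contains $\Delta a$ (second derivatives of $u$) and $b'(u)D\cdot\nabla u$; the hypothesis $({\sf div}D)^-\in L^\infty$ only controls $b(u)\,{\sf div}D$. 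The second option, via Young's inequality, is a term of order $p^2\int v^p|b^u-\nabla a|^2/a$. However, \Cref{prop:u_H1} together with $(H_{\beta,1})$ gives only $\nabla a\in L^\infty+L^2((0,T);L^2)$, i.e.\ $|\nabla a|^2\in L^\infty+L^1_{t,x}$. This closes neither a Gronwall argument, which needs $L^1(0,T;L^\infty)$, nor a Moser iteration, which needs $L^q_{t,x}$ with $q>(d+2)/2$. Likewise, testing with $v$ leaves $\int\!\!\int v^2|\nabla a|^2$, which you can control only if $v$ is already known to be bounded.

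The paper derives no such estimates. It rewrites the equation as $\partial_t v+{\sf div}(\sqrt{a^u}\,\theta v)-\frac12{\sf div}(a^u\nabla v)=0$ with $\theta=b^u/\sqrt{a^u}-\nabla a^u/(2\sqrt{a^u})$. It then takes existence in $L^\infty(L^1\cap L^\infty)$ with $\sqrt{a^u}\nabla v\in L^2$ from the Girsanov-type result of Le Bris--Lions. Those hypotheses are $\sqrt{a^u}\in L^2(W^{1,2}_{\sf loc})$, and $\sqrt{a^u}/(1+\|x\|)$ and $\theta$ in $L^2(L^2+L^\infty)$; they need exactly the first-order information that \Cref{prop:u_H1} supplies.

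For \eqref{eq:vinC} the paper also avoids a comparison principle. If $v_0\le cu_0$, the superposition principle (\Cref{rem:Trevisan_superposition}) applied to ${\sf L}^u$ directly gives a probability solution $\eta^{\tilde\nu_0}_t\le c'\mu_t$, which is therefore bounded. Part (i) then identifies it with $\nu_t/\nu_0(\mathbb{R}^d)$, and tightness together with \eqref{eq:vinCloc} yields \eqref{eq:vinC}.
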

\begin{proof}
(i) It follows directly from \cite[Corollary 3.5]{BaRo23} as well as the last line on page 34 from \cite{BaRo23}.

\medskip
(ii). 
Fix $T>0$. 
The idea is to rely on the results \cite[Proposition 2 \& Proposition 3]{BrLi08}, so let us note that  $\left(\nu_t=v(t,x)dx\right)_{t\in [s,T]}$ being a solution to \eqref{eq:linearizedFPE}, becomes, in divergence form \cite[(5.8)]{{BrLi08}},
    \begin{equation*}
        \frac{d}{dt} v + {\sf div}\left(\left(b^u-\frac{1}{2}\nabla a^u\right)v\right)-\frac{1}{2}{\sf div}(a^u \nabla v)=0 
    \end{equation*}
In fact, it turns out that the Girsanov-type result \cite[Proposition 3]{BrLi08} is much more suitable to our context, so we go on and rewrite the above equation in the special form
    \begin{equation*}
       \frac{d}{dt} v + {\sf div}\left(\sqrt{a^u}\theta v\right)-\frac{1}{2}{\sf div}(a^u \nabla v)=0, \quad \mbox{where } \theta:= \frac{b^u}{\sqrt{a^u}}-\frac{\nabla a^u}{2\sqrt{a^u}}.
   \end{equation*}
Now, assertion (i) follows by \cite[Proposition 4]{BrLi08} as soon as we check the following conditions (see \cite[(6.12)-(6.13)]{BrLi08}), 
\begin{enumerate}
    \item[(ii.1)] $\sqrt{a^u}\in L^2([0,T];W^{1,2}_{\sf loc}(\mathbb{R}^d))$, $\quad \frac{\sqrt{a^u}}{1+\|x\|}\in L^2([0,T];L^2+L^\infty(\mathbb{R}^d))$
    \item[(ii.2)] $\theta \in L^2([0,T];L^2+L^\infty(\mathbb{R}^d))$.
\end{enumerate}

Let us check (ii.1) first.
Note that by condition $(H_{\beta})$ and the fact that $u\in L^\infty([0,T];\mathbb{R}^d)$ we have
\begin{equation}\label{eq:beta_bounds}
    \mbox{On }[0,T]\times\mathbb{R}^d, \;\exists c\in (0,\infty):\quad a^u=\beta(u)/u\leq c,\quad \frac{\left|\beta_r(u)u-\beta(u)\right|}{u^2}\leq c, \quad \frac{\left|\nabla_x\beta(x,u)\right|}{u}\leq c.
\end{equation}
Now, by the first bound in \eqref{eq:beta_bounds}, the second part of (ii.1) is immediate.

\noindent{}Regarding the first part of (ii.1), note that by $(H_{\beta})$ we have 
\begin{equation}\label{eq:beta_low_bound}
     a^u=\frac{\beta(u)}{u}\geq \alpha_0 \quad \mbox{on } [0,\infty)\times \mathbb{R}^d. 
\end{equation}
Consequently, using also \eqref{eq:beta_low_bound}, \eqref{eq:beta_bounds}, we have
\begin{align*}
\left|\nabla\sqrt{a^u}\right|
&=\frac{\left|\nabla a^u\right|}{\sqrt{a^u}}
\leq\frac{\left|\nabla a^u\right|}{\sqrt{\alpha_0}}
\leq \frac{1}{\sqrt{\alpha_0}}|\nabla a^u|\\
&\leq \frac{1}{\sqrt{\alpha_0}}|\nabla u|\frac{\left|\nabla_x\beta(x,u)u+\beta_r(u)u-\beta(u)\right|}{u^2}\\
&\leq \frac{1}{\sqrt{\alpha_0}}|\nabla u|\left(\frac{\left|\nabla_x\beta(x,u)\right|}{u}+\frac{\left|\beta_r(u)u-\beta(u)\right|}{u^2}\right)\\
&\leq \frac{2c}{\sqrt{\alpha_0}}|\nabla u|\in L^\infty([0,T]\times \mathbb{R}^d)+L^2([0,T];L^2(\mathbb{R}^d)),\quad \mbox{ due to } \Cref{prop:u_H1}.
\end{align*}
Thus, assertion (ii.1) is completely proved.
Assertion (i.2) is also clear now, by \eqref{eq:beta_low_bound}, the fact that $b\in C_b(\mathbb{R})$ according to $(H_b)$, and the property $\nabla \sqrt{ a^u}\in L^\infty([0,T]\times\mathbb{R}^d)+L^2([0,T]; L^2(\mathbb{R}^d))$ which was proved above.

In order to completely prove assertion (ii), we need to justify that $v\in C([0,T];L^2(\mathbb{R}^d))$.
To this end, note that by the first part of (ii) proved above, using also \eqref{eq:beta_low_bound}, it follows that $v\in L^2([0,T];H^1(\mathbb{R}^d))$.
To conclude, we can now apply \Cref{lem:hu in H-1} for ${\sf L}_t^u$ instead of ${\sf L}_t$ to conclude that $hv\in C([0,T];L^2(\mathbb{R}^d)), h\in C_c^\infty(\mathbb{R}^d)$, that is,
\begin{equation*}
    v\in C([0,T];L^2_{\sf loc}(\mathbb{R}^d)).
\end{equation*}
Finally, if $v_0\leq c u_0$, and if $\eta$ is given by \Cref{rem:Trevisan_superposition} applied to ${\sf L^u}$ and the fixed solution $(\mu_t)_{t\geq 0}$, and if we set $\tilde\nu_0:=\nu_0/\nu_0(\mathbb{R}^d)$ then
$(\eta^{\tilde\nu_0}_t)_{t\in [0,T]}$ gives a weakly continuous probability solution to the linearized Fokker-Planck equation \eqref{eq:linearizedFPE}, which satisfies $\eta^{\tilde\nu_0}_t\leq c \mu_t,t\geq 0$, hence it admits a density in $L^\infty([0,T]\times \mathbb{R}^d)$. 
Consequently, by (i) we get
\begin{equation*}
    (\eta^{\tilde\nu_0}_t)_{t\in [0,T]}=\left(\tilde\nu_t\right)_{t\in [0,T]},\quad \mbox{where}\quad \tilde\nu_t:=\frac{\nu_t}{\nu_0(\mathbb{R}^d)}, t\in [0,T].
\end{equation*}
Thus, $(\nu_t)_{t\in [0,T]}$ is a tight family of finite measures, and corroborating with \eqref{eq:vinCloc} in the same spirit as in \Cref{rem: implies TV}, we get \eqref{eq:vinC}.
\end{proof}

\begin{thm}
Assume that $\beta,D,b$ satisfy conditions (i)-(iv), let $0\leq u_0\in L^\infty(\mathbb{R}^d)$ such that $\int_{\mathbb{R}^d}u_0(x)\;dx=1$, and let $u(\cdot,\cdot;u_0),\; \mu_t(dx):=u(t,x;u_0)dx, t\geq 0$ be the solution to \eqref{eq:BR-FP}/\eqref{eq:NL-ab}.
Then the "linearized" conditions
\begin{equation*}
\mathbf{H_{a^u,b^u}^{\sqrt{u}}},\quad \mathbf{H_{\sf \lesssim u}}({\sf L^u}), \quad \mathbf{H^{\sf TV}_0}({\sf L^u})
\end{equation*}
are fulfilled, where $a^u, b^u,$ and ${\sf L^u}$ are given in \eqref{eq:a^ub^u}.
Consequently, the conclusions of \Cref{thm:MVE} and \Cref{coro: capacity} are valid in this situation.
\end{thm}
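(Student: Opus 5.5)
The plan is to verify the three linearized hypotheses one by one, using the properties (1)–(5) of the Barbu–Röckner solution together with \Cref{prop:u_H1}, \Cref{prop:BrisLyonsBaRo}, and the square-root regularity results of \Cref{s:construction}. Throughout we fix $N>0$ and work on $[0,N]$. We use that $u\in L^\infty([0,N]\times\mathbb{R}^d)$ by (4), and that $u(t,\cdot)dx\in\mathcal{P}(\mathbb{R}^d)$ by (3). The coefficients are $a^u=2\beta(x,u)/u$ (times the identity) and $b^u=D(x)b(u)$.

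\emph{Step 1: $\mathbf{H_{a^u,b^u}^{\sqrt{u}}}$.}
\begin{itemize}
\item Ellipticity in $\mathbf{H_a}$: this follows from \eqref{eq:beta_low_bound} and \eqref{eq:beta_bounds}, which give $2\alpha_0\le a^u\le 2c$.
\item The $L^2$ bound on $\partial_{x_j}a^u$: the estimate in the proof of \Cref{prop:BrisLyonsBaRo} gives $|\nabla a^u|\le C|\nabla u|$ on $[0,N]\times\mathbb{R}^d$. By \Cref{prop:u_H1}, $\nabla u\in L^\infty+L^2([0,N];L^2)$, which lies in $L^2_{\sf loc}$. Since $u$ is bounded, $L^2_{\sf loc}(dt\,dx)\subset L^2_{\sf loc}(\overline{\mu}_N)$, so $\partial_{x_j}a^u\in L^2([0,N)\times B(0,r);\overline{\mu}_N)$.
\item $\mathbf{H_b}$: immediate, since $D\in L^\infty$ and $b\in C_b$.
\item $\mathbf{H^{\sqrt{u}}}$: apply \Cref{coro: sqrt} to the linearized operator ${\sf L}^u$. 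Conditions \eqref{eq:b da in L2 loc duplicate} and \eqref{eq:ellipticity_duplicate} were just verified. For \eqref{eq:hu in H-1}, $hu\in L^\infty$ holds by (4). For $h\nabla u\in L^2$, multiplying by the compactly supported $h$ turns the $L^\infty$ part of $\nabla u$ into an $L^2$ function, so \Cref{prop:u_H1} suffices.
\end{itemize}

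\emph{Step 2: $\mathbf{H_{\sf \lesssim u}}({\sf L}^u)$.} Both parts of this hypothesis (including $\mathbf{H_{\sf \leq u}}$) reduce to \Cref{prop:BrisLyonsBaRo}(i).
\begin{itemize}
\item Any solution with density $v\in\mathcal{A}^{s,N}_{\lesssim u}$ is essentially bounded, because $v\le cu$ and $u$ is bounded.
\item The initial densities $u(s,\cdot)$ (respectively $v(s,\cdot)\le cu(s,\cdot)$) lie in $L^1\cap L^\infty$.
\item Hence uniqueness among bounded right weakly continuous solutions applies directly, to probability solutions as well as to the sub-probability solutions in $\mathbf{H_{\sf\le u}}$.
\end{itemize}
The one detail to check is that in $\mathbf{H_{\sf\le u}}$ the solutions are only weakly right-continuous and vaguely left-continuous, while \Cref{prop:BrisLyonsBaRo}(i) asks for right weak continuity. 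This matches, so no extra argument is needed.

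\emph{Step 3: $\mathbf{H^{\sf TV}_0}({\sf L}^u)$.}
\begin{itemize}
\item Part (i): $u\in C([0,\infty);L^1(\mathbb{R}^d))$ for the mild solution, so $u(t,\cdot)\to u(0,\cdot)$ in $L^1$.
\item Part (ii): take $\mathcal{A}_0:=\{v_0 : v_0\,dx\in\mathcal{P}(\mathbb{R}^d),\ v_0\le c\,u_0 \text{ for some } c\}$. Following \Cref{eq:uniqueness eta} and the last paragraph of the proof of \Cref{prop:BrisLyonsBaRo}, $\eta^{1,\nu_0}_t$ coincides with the solution $v$ from \Cref{prop:BrisLyonsBaRo}(ii). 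That solution satisfies $v\in C([0,T];L^2)$ by \eqref{eq:vinC}.
\item $L^2$-continuity together with tightness of $(\eta_t^{1,\nu_0})$ (bounded by $c\mu_t$, which is tight) gives $L^1$-continuity at $t=0$. This is the argument of \Cref{rem: implies TV}: $L^2$ convergence on balls plus uniformly small tails.
\end{itemize}

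\emph{Main obstacle.} I expect the hardest point to be the separation property of $\mathcal{A}_0$ on ${\sf supp}(\mu_0)$. Densities of the form $u_0\,1_B/\mu_0(B)$, for balls $B$ with $\mu_0(B)>0$, are bounded by a multiple of $u_0$. Measures on ${\sf supp}(\mu_0)$ that are absolutely continuous with respect to $\mu_0$ are determined by their integrals over such balls, which is the sense in which separation is used in Step OP2.5. If a stronger separation is required, I would instead use $u_0\,\varphi/\int u_0\varphi$ with $\varphi$ ranging over a countable dense subset of nonnegative functions in $C_c$.

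Finally, having verified all three hypotheses, \Cref{thm:MVE} and \Cref{coro: capacity} apply directly.
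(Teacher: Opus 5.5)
Your proposal is correct and follows the paper's own proof essentially step by step. You obtain $\mathbf{H_{a^u,b^u}^{\sqrt{u}}}$ via \eqref{eq:beta_bounds}, \eqref{eq:beta_low_bound}, \Cref{prop:u_H1} and \Cref{coro: sqrt}; $\mathbf{H_{\sf \lesssim u}}({\sf L}^u)$ via \Cref{prop:BrisLyonsBaRo}(i); and $\mathbf{H^{\sf TV}_0}({\sf L}^u)$ via \Cref{prop:BrisLyonsBaRo}(ii), the $L^1$-continuity of $u$, and a tightness argument. Since $\beta$ depends explicitly on $x$, the pointwise bound is really $|\nabla a^u|\le C(1+|\nabla u|)$ rather than $C|\nabla u|$, which is harmless for the local $L^2$ integrability you need. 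The separation property you flag as the main obstacle is immediate, because your $\mathcal{A}_0$ contains $u_0 1_B/\mu_0(B)$ for every Borel set $B$ with $\mu_0(B)>0$.
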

\begin{proof}
Condition $\mathbf{H_{a^u}}$ is fulfilled by e.g. (ii.1) from the proof of \Cref{prop:BrisLyonsBaRo}, \eqref{eq:beta_bounds}, \eqref{eq:beta_low_bound}, and the fact that $u\in L^\infty([0,T]\times \mathbb{R}^d), T>0$.
Condition $\mathbf{H_{b^u}}$ follows from $(H_b)$ and $(H_D)$.
Condition $\mathbf{H^{\sqrt{u}}}$ follow from \Cref{coro: sqrt}.
Thus, condition $\mathbf{H_{a^u,b^u}^{\sqrt{u}}}$ is fulfilled.

Furthermore, it is straightforward to see that condition $\mathbf{H_{\sf \lesssim u}}({\sf L^u})$ is fulfilled by \Cref{prop:BrisLyonsBaRo}, (i).

Finally, condition $\mathbf{H^{\sf TV}_0}({\sf L^u})$ is also satisfied due to \Cref{prop:BrisLyonsBaRo}, (ii), the dominated convergence, and using that $u\in C([0,\infty);L^1(\mathbb{R}^d))$.
\end{proof}

\section{Elements of right processes}\label{Appendix}

Throughout this section we follow mainly the terminology of \cite{BlGe68}, \cite{Sh88}, and \cite{BeBo04a}. 
The aim here is two-fold: 

\medskip
\noindent{$\bullet\quad$} We present a concise introduction to the theory of probabilistic potential theory, with special emphasis on those tools that are fundamentally used in order to derive the main results from \Cref{s:reg_superposition_linear} and \Cref{s:consequences}. 

\medskip
\noindent{$\bullet\quad$} We prove new  results on right processes that are of general nature and which are, as well, needed in \Cref{s:reg_superposition_linear} and \Cref{s:consequences}; see e.g. \Cref{lem:B_is_finely_feller}, \Cref{prop:B-boundary}, \Cref{prop:continuous_modification_special}, \Cref{Borel-measurab}, \Cref{prop:martingale_absorbing}, as well as those from \Cref{ss:adding jumps}.

\subsection{Fundamentals on right processes and their potential theory}
Let $(E, \mathcal{B})$ be a Lusin measurable space, i.e. it is measurably isomorphic to a Borel subset of a compact metric space endowed with the corresponding Borel $\sigma$-algebra.
We denote by $(b)p\mathcal{B}$ the set of all numerical, (bounded) positive $\mathcal{B}$-measurable functions on $E$.
Throughout, by $\mathcal{U}=(U_\alpha)_{\alpha>0}$ we denote a resolvent family of (sub-)Markovian kernels on $(E, \mathcal{B})$; see e.g. \cite[Subsection 1.1]{BeBo04a}.
If $\beta>0$, we set $\mathcal{U}_\beta:=(U_{\beta+\alpha})_{\alpha>0}$.

$A\subset E$ is called {\it universally} $\mathcal{B}$-measurable if for every (positive) finite measure $\mu$ on $(E,\mathcal{B})$ there exist $A_1,A_2\in \mathcal{B}$ such that $A_1\subset A\subset A_2$ and $\mu(A_2\setminus A_1)=0$.
By $\mathcal{B}^u$ we denote the $\sigma$-algebra of all universally $\mathcal{B}$-measurable sets in $E$.

\begin{defi} \label{defi 4.1}
A universally $\mathcal{B}$-measurable function $v:E\rightarrow \overline{\mathbb{R}}_+$ is called 
$\mathcal{U}$--{\rm excessive} 
provided that  
$\alpha U_\alpha v \leq v$ for all $\alpha >0$ and $\mathop{\sup}\limits_\alpha \alpha U_\alpha v =v $ point-wise; by $\mathcal{E(\mathcal{U})}$ we denote the convex cone of all $\mathcal{B}$-measurable 
 $\mathcal{U}$--excessive functions.
\end{defi}

\begin{rem} \label{rem:U_exc}
Recall that if $f\in p\mathcal{B}$, then $U_\alpha f$ is $\mathcal{U}_\alpha$-excessive.
\end{rem}

If a universally $\mathcal{B}$-measurable function $w: E \rightarrow \overline{\mathbb{R}}_+$ is merely $\mathcal{U}_\beta$-supermedian (i.e. $\alpha U_{\beta+\alpha} w \leq w$ for all $\alpha > 0$), then its $\mathcal{U}_\beta$--{\it excessive regularization} 
is defined as
$\widehat{w}:=\mathop{\sup}\limits_\alpha \alpha U_{\beta +\alpha}w$. 
The function $\widehat{w}$ is 
$\mathcal{U}_\beta$--excessive and 
$\widehat{w} \in \mathcal{E(U_\beta)}$ provided that 
$w$ is  in addition $\mathcal{B}$-measurable.

\begin{defi} \label{defi: fine_topology}
The {\it fine topology} on $E$ (associated with $\mathcal{U}$) is the coarsest topology on $E$ 
such that every $\mathcal{U}_\alpha$-excessive function is continuous for some (hence all) $\alpha >0$.
\end{defi}

\begin{prop}\label{prop:zero_open}
    If $u$ is $\mathcal{U}_\beta$-excessive for some $\beta>0$, then the set $[u=0]$ is finely open.
\end{prop}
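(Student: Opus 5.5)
The plan is to show that every point of $[u=0]$ is a fine interior point. I will use the standard characterization of fine neighbourhoods: a set $G$ is finely open if and only if for every $x\in G$ there is a finely open set $V$ with $x\in V\subset G$. Equivalently, and more directly, the fine topology is by definition the coarsest topology making every $\mathcal{U}_\alpha$-excessive function continuous, for some (hence all) $\alpha>0$. So the cleanest route is to exhibit $[u=0]$ as the preimage of an open set under a finely continuous function.

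The first step is to observe that $u$ is $\mathcal{U}_\beta$-excessive, so by \Cref{defi: fine_topology} (the choice of $\alpha=\beta$ being allowed by the ``some (hence all)'' clause) $u$ is finely continuous as a map into $\overline{\mathbb{R}}_+$. The preimage of an open set under $u$ is then finely open. However, $\{0\}$ is closed, not open, in $\overline{\mathbb{R}}_+$, so this alone only yields that $[u=0]$ is finely closed. The real content must therefore come from an additional property of excessive functions at their zeros.

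The key step, which I expect to be the main obstacle, is the ``minimum principle''. If $u(x)=0$, then for every $\alpha>0$ we have $\alpha U_{\beta+\alpha}u(x)\le u(x)=0$. Hence $U_{\beta+\alpha}(x,[u>0])=0$ for all $\alpha$, and so $U_\gamma(x,[u>0])=0$ for all $\gamma>\beta$. By the resolvent equation this extends to all $\gamma>0$.

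I would then produce an excessive function that separates $x$ from $[u>0]$. Set $w:=U_\beta 1_{[u>0]}$. By \Cref{rem:U_exc}, $w$ is $\mathcal{U}_\beta$-excessive, hence finely continuous. It also vanishes at $x$.

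Next, I claim that $w>0$ wherever $u>0$. Indeed $u=\sup_\alpha\alpha U_{\beta+\alpha}u$. If $u(y)>0$, then some $U_{\beta+\alpha}(y,\cdot)$ charges $[u>0]$, and the resolvent equation gives $U_\beta\ge U_{\beta+\alpha}$ pointwise. So $w(y)>0$.

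Consequently $[u=0]\supset[w=0]\ni x$. But $[w=0]$ has the same closedness issue as $[u=0]$, so this comparison by itself does not settle openness.

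What I would actually aim to show is the equality $[u=0]=[\,u\wedge 1<\varepsilon\,]$ up to a fine neighbourhood. That is, I would argue that $u$ cannot take arbitrarily small positive values finely near $x$, using $u(X_t)$ along the path as a right-continuous supermartingale which stays at $0$ after hitting $0$.

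If the process-free route resists, I would instead invoke the probabilistic characterization of fine neighbourhoods. $G$ is a fine neighbourhood of $x$ if and only if $\mathbb{P}_x(T_{E\setminus G}>0)=1$. Since $e^{-\beta t}u(X_t)$ is a nonnegative right-continuous supermartingale starting at $0$ under $\mathbb{P}_x$, it is identically $0$. Hence $X$ remains in $[u=0]$ for all $t$, the exit time of $[u=0]$ is $+\infty$, and $[u=0]$ is a fine neighbourhood of each of its points.
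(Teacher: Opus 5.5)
Your last paragraph is a correct proof. For $x\in[u=0]$, \Cref{thm 4.6}(ii) makes $e^{-\beta t}u(X(t))$ a nonnegative right-continuous $\mathbb{P}_x$-supermartingale that vanishes at $t=0$. It therefore vanishes identically, so $\mathbb{P}_x(T_{E\setminus[u=0]}=\infty)=1$, and \Cref{rem:finely open prob} applies; note $[u=0]\in\mathcal{B}$ for $u\in\mathcal{E}(\mathcal{U}_\beta)$.

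The paper states this proposition without proof. What you establish is exactly that $[u=0]$ is absorbing in the sense of \Cref{rem8.25}(ii). That is the mechanism the paper uses elsewhere: zero sets of strongly supermedian functions are absorbing, and absorbing sets are finely open.

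Everything before that paragraph can be dropped. The function $w=U_\beta 1_{[u>0]}$ has at best the same zero set as $u$, so it gains nothing. Your claim that $U_\gamma(x,[u>0])=0$ extends from $\gamma>\beta$ to all $\gamma>0$ is true, but it does not follow from one application of the resolvent equation, since each form of that identity contains the unknown kernel. It needs something like real-analyticity of $\gamma\mapsto U_\gamma 1_{[u>0]}(x)$ on $(0,\infty)$.

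Your route has a cost. It requires a right process with resolvent $\mathcal{U}$ together with the probabilistic description of the fine topology. The proposition, however, sits in the analytic part of the appendix, where only a resolvent is given. Under the standing hypotheses you can repair this through the saturation (\Cref{thm:saturation}): the $\mathcal{U}_\beta$-excessive functions are exactly the restrictions to $E$ of the $\mathcal{U}^1_\beta$-excessive ones, so the fine topology on $E$ is the trace of the one on $E_1$.

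There is a simpler analytic fix for the ``closedness issue'' you raised in your second paragraph. Replace $u$ by $\tilde u:=\sup_n nu$. It is $\mathcal{U}_\beta$-excessive, being an increasing limit of excessive functions, and it takes only the values $0$ and $+\infty$. Hence $[u=0]=[\tilde u<1]$ is the preimage of the open set $[0,1)\subset\overline{\mathbb{R}}_+$ under a finely continuous function. This uses only \Cref{defi 4.1}, in particular that excessive functions may take the value $+\infty$, and the definition of the fine topology. It therefore holds for an arbitrary sub-Markovian resolvent, with no process at all.
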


\begin{defi} \label{defi:natual_topology}
A topology $\tau$ on $E$ is called {\rm natural} if it is a Lusin topology (i.e. $(E,\tau)$ is homeomorphic to a Borel subset of a compact metrizable space) which is coarser than the fine topology, and whose Borel $\sigma$-algebra is $\mathcal{B}$.    
\end{defi}

\begin{rem} \label{rem 4.3}
\begin{enumerate}[(i)]
    \item The necessity of considering natural topologies comes from the fact that, in general, the fine topology is neither metrizable, nor countably generated.
    \item Let $(E,\mathcal{B})$ be a Lusin measurable space, and $(f_n)_{n\geq 1}\subset \mathcal{B}$ be such that it separates the points of $E$. 
    Then the topology on $E$ generated by $(f_n)_{n\geq 1}$ is a Lusin topology; see e.g. \cite[Theorem A2.12]{Sh88} for the similar Radonian case.
\end{enumerate}
\end{rem}

There is a convenient class of natural topologies to work with (as we do in Section 2), especially when the aim is to construct a right process associated with  $\mathcal{U}$ (see Definition \ref{defi 4.4}). These topologies are called Ray topologies, and are defined as follows.

\begin{defi} \label{defi 4.5}
\begin{enumerate}[(i)]
\item If $\beta >0$ then a Ray cone associated with $\mathcal{U}_\beta$ is a convex cone $\mathcal{R}$ of bounded, $\cb$-measurable $\mathcal{U}_\beta$-excessive functions which is separable in the supremum norm, min-stable, contains the constant function $1$, generates $\mathcal{B}$, $U_\alpha(\mathcal{R}) \subset \mathcal{R}$ for all $\alpha > 0$, and $U_\beta((\mathcal{R}-\mathcal{R})_+) \subset \mathcal{R}$.
\item A {\it Ray topology} on $E$ is a topology generated by a Ray cone $\mathcal{R}$, and it is denoted by $\tau_{\mathcal{R}}$.
\end{enumerate} 
\end{defi}

In order to ensure the existence of a Ray topology, as well as many other useful properties related to the fine topology and excessive functions, we need the following condition:\\

\noindent
$\mathbf{(H_{\mathcal{C}})}.$ $\mathcal{C}$ is a min-stable convex cone of non-negative $\mathcal{B}$-measurable functions such that
\begin{enumerate}[(i)]
    \item $1\in \mathcal{C}$ and there exists a countable subset of $\mathcal{C}$ which separates the points of $E$,
    \item $U_\alpha(\mathcal{C})\subset \mathcal{C}$ for all $f\in \mathcal{C}$ and $\alpha>0$,
    \item $\lim\limits_{\alpha\to\infty}\alpha U_{\alpha}f=f$ pointwise on $E$ for all $f\in \mathcal{C}$.
\end{enumerate}

The following result is basically Corollary 2.3 from \cite{BeRo11a}:
\begin{prop} \label{prop 2.5}
The following assertions are equivalent.
\begin{enumerate}[(i)]
    \item There exists a cone $\mathcal{C}$ such that $\mathbf{(H_{\mathcal{C}})}$ is fulfilled.
    \item $\mathcal{E}(\mathcal{U}_\beta)$ is min stable, contains the constant functions, and generates $\mathcal{B}$ for one (hence all) $\beta >0$.
    \item For any $\beta>0$ there exists a Ray cone associated with $\mathcal{U}_\beta$. 
\end{enumerate}
\end{prop}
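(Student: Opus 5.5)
The plan is to prove the cycle (i)$\Rightarrow$(ii)$\Rightarrow$(iii)$\Rightarrow$(i), using the standard machinery of excessive functions for a resolvent of (sub-)Markovian kernels on a Lusin space. As noted before the statement, this is essentially \cite[Corollary 2.3]{BeRo11a}, so the argument mainly organises the known steps.

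\textbf{(iii)$\Rightarrow$(i).} Take $\mathcal{C}:=\mathcal{R}$, a Ray cone associated with $\mathcal{U}_\beta$. It is min-stable, contains $1$, and is norm separable while generating $\mathcal{B}$. Hence a countable norm-dense subset generates $\mathcal{B}$, and since $(E,\mathcal{B})$ is Lusin (countably separated), that subset separates points. The definition of a Ray cone gives $U_\alpha(\mathcal{R})\subset\mathcal{R}$. Finally, each $f\in\mathcal{R}$ is $\mathcal{U}_\beta$-excessive, so $\alpha U_{\beta+\alpha}f\nearrow f$. The resolvent identity $U_\alpha-U_{\beta+\alpha}=\beta U_\alpha U_{\beta+\alpha}$ together with boundedness of $f$ then yields $\alpha U_\alpha f\to f$ pointwise.

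\textbf{(i)$\Rightarrow$(ii).} Given $\mathcal{C}$, every $U_\beta f$ with $f\in\mathcal{C}$ is $\mathcal{U}_\beta$-excessive, and $f=\lim_\alpha \alpha U_\alpha f$. So the countable separating family in $\mathcal{C}$ is a pointwise limit of $\mathcal{E}(\mathcal{U}_\beta)$-measurable functions. Consequently $\sigma(\mathcal{E}(\mathcal{U}_\beta))$ contains a countable separating family of Borel functions, and by the Lusin/Blackwell theorem it equals $\mathcal{B}$. Constants belong to $\mathcal{E}(\mathcal{U}_\beta)$ because the kernels are sub-Markovian.

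The real content of this step is min-stability. For $u,v\in\mathcal{E}(\mathcal{U}_\beta)$, the function $u\wedge v$ is always supermedian, and one must show $\widehat{u\wedge v}=u\wedge v$, i.e.\ that there are no branch points. I would follow \cite{BeBo04a}:
\begin{itemize}
\item First, using (ii)--(iii) of $\mathbf{(H_{\mathcal{C}})}$, show that every $\mathcal{U}_\beta$-excessive function is an increasing limit of functions $U_\beta g$ with $g$ in the vector lattice generated by $\mathcal{C}$.
\item Then use the min-stability of $\mathcal{C}$ and condition (iii) of $\mathbf{(H_{\mathcal{C}})}$ to get $\lim_\alpha \alpha U_{\beta+\alpha}(U_\beta f\wedge U_\beta g)=U_\beta f\wedge U_\beta g$.
\end{itemize}
I expect this no-branch-point argument to be the main obstacle.

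\textbf{(ii)$\Rightarrow$(iii).} Build the Ray cone as an increasing union. Choose a countable family $\mathcal{A}_0\subset bp\mathcal{E}(\mathcal{U}_\beta)$ that generates $\mathcal{B}$ and contains $1$; this is possible since $\mathcal{E}(\mathcal{U}_\beta)$ generates a countably generated $\sigma$-algebra. Then set $\mathcal{A}_{n+1}$ to be the rational convex cone generated by
\[
\mathcal{A}_n,\quad U_\alpha(\mathcal{A}_n)\ (\alpha\in\mathbb{Q}_+),\quad U_\beta((\mathcal{A}_n-\mathcal{A}_n)_+),\quad \text{finite minima of elements of }\mathcal{A}_n.
\]
Each $\mathcal{A}_n$ stays inside $\mathcal{E}(\mathcal{U}_\beta)$: minima by the min-stability in (ii), and $U_\alpha(\mathcal{A}_n)$ because $U_\alpha$ maps excessive functions to excessive ones via the resolvent equation. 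Let $\mathcal{R}$ be the uniform closure of $\bigcup_n\mathcal{A}_n$. It is separable, and continuity in $\alpha$ in the sup norm extends the stability under $U_\alpha$ to all $\alpha>0$. This yields a Ray cone and completes the cycle.
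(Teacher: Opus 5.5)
The cycle is the right structure, and your steps (iii)$\Rightarrow$(i), (ii)$\Rightarrow$(iii), and the generation part of (i)$\Rightarrow$(ii) are fine. The gap is the min-stability of $\mathcal{E}(\mathcal{U}_\beta)$, which you rightly call the real content. Your first bullet routes it through a claim that is false in general: that every $\mathcal{U}_\beta$-excessive function is an increasing limit of potentials $U_\beta g$ with $g$ in the vector lattice generated by $\mathcal{C}$.

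For a counterexample, take $E=[0,1]$, $U_\alpha f=f/\alpha$ (the process that stays put), and $\mathcal{C}=\{f\in C([0,1]): f\geq 0\}$. Then $\mathbf{(H_{\mathcal{C}})}$ holds, and every nonnegative Borel function, e.g.\ $1_{\{1/2\}}$, is $\mathcal{U}_\beta$-excessive. But increasing limits of $U_\beta g=g/\beta$ with $g\in C([0,1])$ are lower semicontinuous, so $1_{\{1/2\}}$ is not one of them. Potentials of $\mathcal{C}$-functions therefore cannot carry the no-branch-point identity from $\mathcal{C}$ to arbitrary excessive functions.

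What works instead is a monotone class argument in the densities $f,g$. This is also the mechanism the paper itself uses in Step OP2.5 via \cite[Lemma 1.2.10]{BeBo04a}. Since $U_\beta h$ is $\mathcal{U}_\beta$-supermedian for $h\geq 0$, for $f,g,f',g'\in bp\mathcal{B}$ one has
\begin{equation*}
\alpha U_{\beta+\alpha}\bigl|U_\beta f\wedge U_\beta g-U_\beta f'\wedge U_\beta g'\bigr|\leq U_\beta\bigl(|f-f'|+|g-g'|\bigr).
\end{equation*}
Hence the pointwise identity $\lim_{\alpha}\alpha U_{\beta+\alpha}(U_\beta f\wedge U_\beta g)=U_\beta f\wedge U_\beta g$ survives bounded pointwise limits in $f$ and in $g$ separately.

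The identity holds for $f,g\geq 0$ in $\mathcal{L}:=\mathcal{C}_b-\mathcal{C}_b$, where $\mathcal{C}_b$ denotes the bounded elements of $\mathcal{C}$. Indeed, $\mathcal{C}$ is a min-stable convex cone, so $\mathcal{L}$ is a vector lattice containing $1$ with $U_\beta(\mathcal{L})\subset\mathcal{L}$. Thus $U_\beta f\wedge U_\beta g\in\mathcal{L}$, where condition (iii) applies by linearity. Since $\sigma(\mathcal{L})=\mathcal{B}$, a two-step monotone class argument (first in $f$, then in $g$) extends the identity to all $f,g\in bp\mathcal{B}$; that is, there are no branch points.

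Min-stability then follows. Write bounded excessive $u$ as $\sup_n U_\beta\bigl(n(u-nU_{\beta+n}u)\bigr)$, and increasing limits of excessive functions remain excessive. Unbounded $u$ reduce to this case via $u=\sup_m\widehat{u\wedge m}$.

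A smaller slip in the same step: sub-Markovianity only makes constants supermedian. That $1\in\mathcal{E}(\mathcal{U}_\beta)$ comes from $1\in\mathcal{C}$ together with condition (iii).
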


For the rest of this section we assume that one (hence all) of the assertions from \ref{prop 2.5} is valid.
This is always satisfied when there is a right Markov process with resolvent $\mathcal{U}$, whose definition is given below.

\begin{rem} \label{rem: ray_topology}
\begin{enumerate}[(i)]
\item It is clear that any Ray topology is a natural topology.
A useful converse is true: For any natural topology there exists a finer Ray topology; see Proposition 2.1 from \cite{BeBo05}.
In fact, a key ingredient here is that given a countable family of finite $\mathcal{U}_\beta$-excessive functions, one can always find a Ray cone that contains it; we shall use this fact later on.
\item As a matter of fact, if $(\mathbf{H_\mathcal{C}})$ holds, then one can explicitly construct plenty of Ray topologies, following e.g. \cite{BeBo04a}, Proposition 1.5.1, or \cite{BeRo11a}, Proposition 2.2.
As we shall need such a construction later on, let us detail it here.
Let $\mathcal{A}_0\subset bp\mathcal{B}$ be countable and such that it separates the set of finite measures on $(E,\mathcal{B})$.
The {\rm Ray cone generated by $\mathcal{A}_0$} and associated with  $\mathcal{U}_\beta$ is defined inductively as follows: 
\begin{align*}
&\mathcal{R}_0:=U_\beta (\mathcal{A}_0)\cup \mathbb{Q}_+,\\
&\mathcal{R}_{n+1}:=\mathbb{Q}_+\cdot \mathcal{R}_n \cup \left(\mathop{\sum}\limits_f\mathcal{R}_n \right)\cup \left(\mathop{\bigwedge}\limits_f \mathcal{R}_n\right) \cup \left(\mathop{\bigcup}\limits_{\alpha \in \mathbb{Q}_+}U_\alpha(\mathcal{R}_n)\right)\cup U_\beta ((\mathcal{R}_n-\mathcal{R}_n)_+),
\end{align*}
where by $\mathop{\sum}\limits_f \mathcal{R}_n$ resp. $\mathop{\bigwedge}\limits_f\mathcal{R}_n$ we denote the space of all finite sums (resp. infima) of elements from $\mathcal{R}_n$.
Then, the Ray cone $\mathcal{R}_{\mathcal{A}_0}$ generated by $\mathcal{A}_0$ is obtained by taking the closure of $\bigcup\limits_n \mathcal{R}_n$ w.r.t. the supremum norm.
\end{enumerate}
\end{rem}

\noindent
{\bf Right processes.} 
Let now $X=(\Omega, \mathcal{F}, \mathcal{F}(t) , X(t), \theta(t) , \mathbb{P}_x)$ be a normal Markov process with state space $E$, shift operators $\theta(t):\Omega\rightarrow \Omega, \; t\geq 0$, and lifetime $\zeta$; if $\Delta$ denotes the cemetery point attached to $E$, then any numerical function $f$ on $E$ shall be extended to $\Delta$ by setting $f(\Delta)=0$. 

 We assume that $X$ has the resolvent $\mathcal{U}$ fixed above, i.e. for all $f\in b\mathcal{B}$ and $\alpha >0$
$$
U_\alpha f(x)=\mathbb{E}_{x} \! \int_0^{\infty}e^{-\alpha t}f(X(t))dt,\quad  x\in E.
$$

To each probability measure $\mu$ on $(E, \mathcal{B})$ we associate the probability 
$$\mathbb{P}^\mu (A):=\mathop{\int} \mathbb{P}^x(A)\; \mu(dx)$$
for all $A \in \mathcal{F}$, and we consider the following enlarged filtration
$$
\widetilde{\mathcal{F}}(t):= \bigcap\limits_\mu \mathcal{F}^\mu(t), \; \; \widetilde{\mathcal{F}}:= \bigcap\limits_\mu \mathcal{F}^\mu,
$$
where $\mathcal{F}^\mu$ is the completion of $\mathcal{F}$ under $\mathbb{P}^\mu$, and $\mathcal{F}^\mu(t)$ is the completion of $\mathcal{F}(t)$ in $\mathcal{F}^\mu$ w.r.t. $\mathbb{P}^\mu$; in particular, $(x,A)\mapsto\mathbb{P}^{x}(A)$ is assumed to be a kernel from $(E,\mathcal{B}^{u})$ to $(\Omega, \mathcal{F})$, where $\mathcal{B}^{u}$ denotes the $\sigma$-algebra of all universally measurable subsets of $E$.

\begin{defi} \label{defi 4.4}
The Markov process $X$ is called a right (Markov) process if the following additional hypotheses are satisfied:
\begin{enumerate}[(i)]
\item The filtration $(\mathcal{F}(t))_{t\geq 0}$ is right continuous and $\mathcal{F}(t)=\widetilde{\mathcal{F}}(t), t\geq 0$.
\item For one (hence all) $\alpha>0$ and for each $f \in \mathcal{E}(\mathcal{U}_\alpha)$ the process $f(X)$ has right continuous paths $\mathbb{P}^{x}$-a.s. for all $x\in E$.
\item There exists a natural topology on $E$ with respect to which the paths of $X$ are $\mathbb{P}^{x}$-a.s. right continuous for all $x\in E$.
\end{enumerate}
\end{defi}

\begin{rem}[Strong Markov property]\label{rem:strong Markov}
    Recall that if $X$ is a right process on $(E,\mathcal{B})$ with transition function $(P_t)_{t\geq 0}$, then it is a strong Markov process, i.e. for any $\mu\in \mathcal{P}(E)$, $t\geq 0$, and $\tau$ an $(\mathcal{F}(t))$-stopping time we have
    \begin{equation*}
        \mathbb{E}_\mu\left\{f(X(t+\tau)) \mid \mathcal{F}_\tau\right\}=P_tf(X(\tau)) \quad \mathbb{P}_\mu\mbox{-a.s.}, \quad f\in b\mathcal{B}(E); \mbox{ recall that } f(\Delta):=0.
    \end{equation*}
    Moreover, for any $\mu, \tau$ as above and any $Y\in b\mathcal{F}$, the strong Markov property with shifts holds, namely
    \begin{equation*}
        \mathbb{E}_\mu\left\{Y\circ \theta(\tau) \mid \mathcal{F}_\tau\right\}=\mathbb{E}_{X(\tau)}\left\{Y\right\} \quad \mathbb{P}_\mu\mbox{-a.s.}, \quad Y\in b\mathcal{F}; \mbox{ here, } Y\circ \theta(\infty):=0.
    \end{equation*}
\end{rem}

Sometimes, for shortness and if there is no risk of confusion, we use $\left(X, \mathbb{P}_x,x\in E\right)$ to denote a right process $X$, instead of specifying the full tuple $X=(\Omega, \mathcal{F}, \mathcal{F}_t , X(t), \theta(t) , \mathbb{P}^x)$. 

\begin{rem}
It is worth to mention that if $X=(\Omega, \mathcal{F}, \mathcal{F}(t) , X(t), \theta(t) , \mathbb{P}_x)$ is a right process as defined above, and $\tau$ is a given natural topology on $E$, then then one can construct another right process $\tilde{X}=(\tilde{\Omega}, \tilde{\mathcal{F}}, \tilde{\mathcal{F}}(t), \tilde{X}(t), \tilde{\theta}(t), \tilde{\mathbb{P}}_x)$ sharing the same resolvent (or transition function) as $X$, i.e. $X$ and $\tilde{X}$ are {\rm equivalent} in the sense of \cite[Definition 4.1]{BlGe68}, such that assertion (iii) from \Cref{defi 4.4} can be replaced by a stronger one, namely:
\begin{enumerate}
    \item[(iii')] The path $[0,\infty)\ni t\rightarrow \tilde{X}(t,\omega)\in E$ is right-continuous with respect to $\tau$ for every $\omega \in \tilde{\Omega}$.
\end{enumerate}
Indeed, one can construct such $\tilde{X}$ by considering first a second process equivalent to $X$ which is of {\rm function space type}, according to \cite[Definition 4.2 and Theorem 4.3]{BlGe68}, and then simply deleting from the path space of this second process the set $\Lambda$ of those paths that are not right-continuous with respect to $\tau$.
Such a removal is allowed since $\Lambda$ is universally measurable according to \cite[Ch. IV, Theorem 34]{DeMe78}.
\end{rem}

The probabilistic description of the fine topology is given by the following key result, according to \cite[Chapter II, Theorem 4.8]{BlGe68}, or \cite[Proposition 10.8 and Exercise 10.18]{Sh88}, and \cite[Corollary A.10]{BeCiRo20}. 
The second assertion states  the relation between excessive functions and right-continuous supermartingales (cf. e.g.,  
\cite[Proposition 1]{BeCi18a}, see also \cite{BeCi18}). 

\begin{thm} \label{thm 4.6} Let  $X$ be a right process and $f$ a universally $\mathcal{B}$-measurable function. 
Then the following assertions hold.

$(i)$ The function $f$ is finely continuous if and only if $(f(X(t)))_{t\geq 0}$ has $\mathbb{P}^{x}$-a.s. right continuous paths for all $x\in E$.
In particular, $X$ has a.s. right continuous paths in any natural topology on $E$.

$(ii)$ If $\beta >0$ then the  function $f$ is 
$\mathcal{U}_\beta$-excessive if and only if $(e^{-\beta t} f(X(t)))_{t\geq 0}$ is 
a right continuous 
$\mathcal{F}_t$-supermartingale w.r.t. $\mathbb{P}^{x}$ for all  $x\in E$.
\end{thm}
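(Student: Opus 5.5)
The statement to be proved is \Cref{thm 4.6}, i.e. the classical characterisation of fine continuity and of $\mathcal{U}_\beta$-excessive functions for a right process. I will prove the two parts separately, using throughout the Ray cone machinery of \Cref{prop 2.5} and \Cref{rem: ray_topology}, together with \Cref{defi 4.4}.

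For (i), the "only if" direction comes first. By \Cref{rem: ray_topology}(i), given $f$ finely continuous I will approximate $f$ in the fine topology by functions of excessive type. Concretely, the fine topology is generated by $\mathcal{U}_\alpha$-excessive functions, so sets of the form $[v<c]$ and $[v>c]$ with $v\in\mathcal{E}(\mathcal{U}_\alpha)$ form a subbase. By \Cref{defi 4.4}(ii), $v(X)$ is a.s. right continuous. Hence for a fixed starting point $x$, $X$ is a.s. right continuous in the fine topology. Here I must handle the fact that the fine topology is not countably generated, and I expect this to be the main obstacle.

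I plan to handle it as follows. Fix $x$ and the countable family of excessive functions $U_\beta(\mathcal{A}_0)$. The issue is that right continuity of $f(X)$ only requires, at each time, finitely many excessive functions near $X(t)$, but these depend on $t$. I will therefore use the standard argument that $t\mapsto f(X(t))$ is optional, and invoke the section theorem. Suppose $\{t: f(X)$ not right continuous at $t\}$ had positive probability. Then there would be a stopping time $T$ at which right continuity fails with positive probability. The strong Markov property (\Cref{rem:strong Markov}) reduces this to time $0$ under $\mathbb{P}_{\mu}$ with $\mu=\mathbb{P}_x\circ X(T)^{-1}$. At time $0$, fine continuity of $f$ at $y$ means that $\{z:|f(z)-f(y)|<\varepsilon\}$ contains a fine neighbourhood of $y$, i.e. a finely open set. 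Since $X$ starting from $y$ stays for a positive time in any fine neighbourhood of $y$ (Blumenthal 0--1 law plus right continuity of $v(X)$ for the finitely many $v$ defining the neighbourhood), right continuity at $0$ holds $\mathbb{P}_y$-a.s. for every $y$. This is the contradiction.

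For the converse of (i), assume $f(X)$ is a.s. right continuous and suppose $f$ is not finely continuous at $y$. Take the set $A=\{|f-f(y)|\geq\varepsilon\}$ which is not a fine neighbourhood complement, i.e. $y$ is finely regular for $A$ in the sense of the hitting-time characterisation $\mathbb{P}_y(T_A=0)=1$. This characterisation I will justify via the excessive function $\mathbb{E}_\cdot e^{-T_A}$, whose fine continuity gives that $y$ belongs to the fine closure of $A$ exactly when the probability is one. Then right continuity of $f(X)$ at $0$ is contradicted. The last sentence of (i) follows by applying (i) to the countable generating family of continuous functions of a natural topology, all finely continuous by definition.

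For (ii), the "if" direction is immediate: taking expectations in the supermartingale inequality gives $e^{-\beta t}P_tf\le f$, hence $\alpha U_{\alpha+\beta}f\le f$. Right continuity at $0$ with Fatou's lemma gives $\sup_\alpha \alpha U_{\alpha+\beta}f=f$. For the "only if" direction, the Markov property together with $e^{-\beta t}P_tf\le f$ yields the supermartingale property. Right continuity of the paths then follows from (i), since excessive functions are finely continuous by \Cref{defi: fine_topology}.
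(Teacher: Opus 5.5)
The paper gives no proof of this theorem. It quotes it from Blumenthal--Getoor (Ch.~II, Thm.~4.8), Sharpe (Prop.~10.8, Ex.~10.18), Beznea--C\^impean--R\"ockner (Cor.~A.10, for merely universally measurable $f$), and Beznea--C\^impean for (ii). As a self-contained proof, your treatment of (ii) and of the last sentence of (i) is fine once (i) is known. Both halves of (i), however, have genuine gaps.

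\emph{The ``only if'' half.} Your key step is: if $f(X)$ fails to be right continuous on a non-evanescent set $\Gamma$ of times, then some stopping time sees a failure with positive probability. That step needs a section theorem for $\Gamma$, and none applies. Whether $t\in\Gamma$ depends on the path immediately after $t$, so $\Gamma$ is at best progressive, not optional. For example, take one-dimensional Brownian motion and $f=1_{(0,\infty)}$. Then $\Gamma$ is the zero set minus the left endpoints of the negative excursion intervals. The removed set is a.s.\ nonempty but charged by no stopping time, so $\Gamma$ is not optional, and progressive sets admit no section theorem. Optionality of $f(X)$ does not help. The d\'ebut of $\Gamma$ is a stopping time but need not lie in $\Gamma$, so your reduction to time $0$ has nothing to act on.

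The missing idea is to replace fine continuity by a property that survives right limits along paths, using an excessive function.
\begin{itemize}
\item For rationals $a<b$, the set $A=[a<f<b]$ is finely open. Put $\phi_A:=\mathbb{E}_{\cdot}\{e^{-T_{E\setminus A}}\}$. It is $1$-excessive and satisfies $\phi_A<1$ on $A$; your time-$0$ argument proves exactly this.
\item Let $T$ be the d\'ebut of $\{t: \phi_A(X(t))\leq 1-\delta \mbox{ and } t \mbox{ is a right accumulation point of } \{s: X(s)\notin A\}\}$.
\item Whether or not the infimum is attained, right continuity of $\phi_A(X)$ gives $\phi_A(X(T))\leq 1-\delta$ and $T_{E\setminus A}\circ\theta(T)=0$ on $[T<\infty]$. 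The strong Markov property and the $0$--$1$ law rule this out.
\item Letting $a,b,\delta$ run through the rationals gives right continuity of $f(X)$.
\end{itemize}
So it is the fixed $f$, not the fine topology, that supplies the countable family you were worried about. This works as stated for nearly Borel $f$. For merely universally measurable $f$, the measurability of $A$, $T_{E\setminus A}$ and $f(X)$ is a separate issue, which is what the paper's last reference covers.

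\emph{The ``if'' half.} Right continuity of $f(X)$ at $0$ only gives $\mathbb{P}_y(T_A>0)=1$ for $A=[|f-f(y)|\geq\varepsilon]$. You must then show that $E\setminus A$ is a neighbourhood of $y$ in the fine topology as defined here, namely the initial topology of the excessive functions. Fine continuity of $\phi_A$ does not give this. The set $[\phi_A<1]$ is a fine neighbourhood of $y$ that excludes the points regular for $A$, but not the irregular points of $A$. Nothing in your argument stops those irregular points from accumulating finely at $y$. What you need is that the fine closure of $A$ is $A$ together with its regular points. Equivalently, you need that this analytic fine topology coincides with the probabilistic one, where $G$ is finely open iff $\mathbb{P}_x(T_{E\setminus G}>0)=1$ for all $x\in G$. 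That is a genuine theorem (Blumenthal--Getoor, Ch.~II, \S 4) and must be proved or cited, not read off from fine continuity of $\phi_A$.
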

\begin{rem}\label{rem:fine_continuity_0}
    Inspecting the proof of  \cite[Chapter II, Theorem 4.8]{BlGe68}, we see that if $f$ is universally $\mathcal{B}$-measurable and the mapping
    $[0,\infty)\ni t\mapsto f(X(t))\in \mathbb{R}$ is right continuous merely at $t=0$ $\mathbb{P}^{x}$-a.s. for all $x\in E$, then $f$ is finely continuous.
\end{rem}

\begin{rem}[{$P_t$ is finely Feller}]\label{rem:finely feller}
A remarkable property which has been relatively overlooked in the literature is that given a right process $X$ on a general Lusin measurable space $(E,\mathcal{B})$, the operators $P_t$ have the property that $P_tf$ is finely continuous whenever $f$ is finely continuous, bounded, and $\mathcal{B}^u$-measurable; see e.g. \cite[Exercise 10.24]{Sh88}, \cite[Exercise 4.14]{BlGe68}, or \cite[Lemma 2.8]{BeCi18} for a proof of this result.
In other words, $P_t$ is Feller with respect to the fine topology. 
\end{rem}

\begin{prop}\label{prop:tau-identification}
Let $X=(\Omega, \mathcal{F}, \mathcal{F}(t) , X(t), \theta(t) , \mathbb{P}_x), x\in E$ and $\tilde{X}=(\tilde{\Omega}, \tilde{\mathcal{F}}, \tilde{\mathcal{F}}(t), \tilde{X}(t), \tilde{\theta}(t), \tilde{\mathbb{P}}_x, x\in E)$ be two right processes on a Lusin measurable space $E$ sharing the same resolvent $\mathcal{U}$. 
Further, let $\tau$ be a natural (hence Lusin) topology on $E$ (with respect to $\mathcal{U}$), such that $X$ has continuous paths with respect to $\tau$ $\mathbb{P}_\mu$-a.s. for some $\mu\in \mathcal{P}$.
Then
\begin{equation}\label{eq:laws=}
    \mathbb{P}_\mu\circ \left(X\right)^{-1} = \tilde{\mathbb{P}}^\mu\circ \left(\tilde{X}\right)^{-1} \quad \mbox{as laws on the path-space }  C([0,\infty);E,\tau), 
\end{equation}
where $C([0,\infty);E,\tau)$ denotes the space of continuous functions defined on $[0,\infty)$ with values in $E$ endowed with the topology $\tau$.
\end{prop}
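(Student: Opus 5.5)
The plan is to reduce \eqref{eq:laws=} to a statement about finite-dimensional distributions, and then pass to path-space laws using the continuity of paths. Both processes share the resolvent $\mathcal{U}$. By uniqueness of the Laplace transform and right continuity of $t\mapsto P_tf(x)$ for $f$ bounded and finely continuous (\Cref{thm 4.6}, (i)), they also share the transition function $(P_t)_{t\geq 0}$. The simple Markov property then gives, for $0\le t_1<\dots<t_n$ and $f_1,\dots,f_n\in b\mathcal{B}$,
\begin{equation*}
\mathbb{E}_\mu\Big\{\prod_{i=1}^n f_i(X(t_i))\Big\}=\mu\big(P_{t_1}(f_1P_{t_2-t_1}(f_2\cdots P_{t_n-t_{n-1}}f_n))\big)=\tilde{\mathbb{E}}_\mu\Big\{\prod_{i=1}^n f_i(\tilde{X}(t_i))\Big\},
\end{equation*}
with the convention $f(\Delta)=0$, which handles the lifetime. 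So the two processes have the same finite-dimensional distributions under $\mathbb{P}_\mu$ and $\tilde{\mathbb{P}}_\mu$.

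Next I show that $\tilde X$ also has $\tau$-continuous paths $\tilde{\mathbb{P}}_\mu$-a.s. Since $\tau$ is natural, both processes have $\tau$-right-continuous paths a.s. by \Cref{thm 4.6}, (i). Fix a countable family $(g_k)_k$ of bounded $\tau$-continuous functions that generates $\tau$; this is possible because $(E,\tau)$ is Lusin, hence metrizable and separable. A right-continuous path $\omega$ is $\tau$-continuous on $[0,\infty)$ if and only if, for every $k$ and every $T\in\mathbb{Q}_+$, the function $t\mapsto g_k(\omega(t))$ restricted to $\mathbb{Q}\cap[0,T]$ extends continuously to $[0,T]$, with no oscillation at rational dyadic scales. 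On right-continuous paths this is a countable condition on the values at rational times, so it is a measurable event determined by the finite-dimensional distributions. Since this event has $\mathbb{P}_\mu$-probability one, it also has $\tilde{\mathbb{P}}_\mu$-probability one.

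The one point requiring care is the lifetime and the cemetery $\Delta$. A continuous path in $E$ must have infinite lifetime, and the event $\{\zeta>t\}=\{X(t)\in E\}$ at rational times is again determined by the finite-dimensional distributions. Its $\mathbb{P}_\mu$-probability one therefore transfers to $\tilde X$, and right continuity then gives $\tilde\zeta=\infty$ a.s.

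Finally, both laws are probability measures on $C([0,\infty);E,\tau)$. Its Borel $\sigma$-algebra, for $\tau$ Lusin metrizable, is generated by the coordinate maps at rational times. The two laws agree on a $\pi$-system generating this $\sigma$-algebra, so they coincide by Dynkin's theorem, which proves \eqref{eq:laws=}. I expect the main obstacle to be the second step: making rigorous that path continuity is measurable and identified through rational-time marginals, given that the topology is only Lusin and the fine topology plays a role.
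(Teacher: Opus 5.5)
Your proposal is correct and follows essentially the paper's proof. Both arguments get equal finite-dimensional laws from the common resolvent and the Markov property, transfer path continuity through an event that depends only on countably many times, and use the right continuity from \Cref{thm 4.6} to upgrade this to genuine $\tau$-continuity of $\tilde X$. The law on $C([0,\infty);E,\tau)$ is then fixed by its finite-dimensional marginals. The only difference is the measurability step: the paper cites the universal measurability (Dellacherie--Meyer) of the set of restrictions to the dyadics of $\tau$-continuous paths, while you build a Borel event from a countable family $(g_k)$ generating $\tau$ and then use a.s. right continuity. Your version is slightly more elementary, and it also handles the lifetime explicitly.
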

\begin{proof}
Let $D$ be the countable set of dyadics in $[0,\infty)$, and denote by $W^E$ the space of the restrictions to $D$ of all paths from $[0,\infty)$ to $E$ which are $\tau$-continuous.
Also, consider the product $E^D$ endowed with the canonical $\sigma$-algebra, and consider the laws
$$
\nu:=\mathbb{P}_\nu\circ (X)^{-1} \quad \mbox{and} \quad \tilde{\nu}:=\tilde{\mathbb{P}}_\mu\circ (\tilde{X})^{-1}
$$
of the two processes $X$ and $\tilde{X}$ on $E^D$
Now, on the one hand any distribution on ${E}^D$ is uniquely determined by its finite dimensional marginals, hence, by the Markov property, $\nu$ and $\tilde{\nu}$ are uniquely determined by their one-dimensional marginals. 
The latter marginals are in turn uniquely determined by their Laplace transforms, and since $X$ and $\tilde{X}$ share the same resolvent $\mathcal{U}$, we deduce that
\begin{equation*}
    \nu=\tilde{\nu}.
\end{equation*}
Next, by \cite[Chapter IV, Theorem 34; see also pages 91-92]{DeMe78}, we have that $W^E$ is a universally measurable subset of ${E}^D$. 
Consequently, we have 
$$
\nu(W^E)=\tilde{\nu}(W^E)=1,
$$
so, on the one hand, the paths of $\tilde{X}$ are restrictions to $D$ of $\tau$-continuous paths in $E$ $\tilde{\mathbb{P}}_\mu$-a.s. 
On the other hand, by \Cref{thm 4.6} it follows that $\tilde{X}$ has right-continuous paths with respect to $\tau$ $\mathbb{P}_x$-a.s. for every $x\in E$. 
This means that the right process $\tilde{X}$ has $E$ and has $\tau$-continuous paths in $E$ $\tilde{\mathbb{P}}_\mu$-a.s., and clearly \eqref{eq:laws=} is satisfied since a probability measure on $C([0,\infty);E,\tau)$ is uniquely determined by its finite dimensional time-marginals.
\end{proof}

We end this paragraph by recalling the analytic and probabilistic descriptions of {\it polar} (and other "small") sets.
\begin{defi}\label{defi:balayage}
Let $\alpha\geq 0$
\begin{enumerate}[(i)]
\item If $u\in \mathcal{E}(\mathcal{U}_\alpha)$ and $A \in \mathcal{B}$, then the $\alpha$-order reduced function of $u$ on $A$ is given by
$$
R_\alpha^A u = \inf \{ v \in \mathcal{E}(\mathcal{U}_\alpha): \, v\geqslant u \mbox{ on } A \}.
$$
$R_\alpha^A u$ is merely 
$\mathcal{U}_\alpha$-supermedian  and we denote by $B_\alpha^A u :=\widehat{R_\alpha^A u}$ its 
$\mathcal{U}_\alpha$--excessive regularization, called the {\it balayage} of $u$ on $A$.

One has 
$B_\alpha^A u = 
R_\alpha^A u$ 
on $E\setminus A$.
If in addition $A$ is finely open then $B_\alpha^A u=R_\alpha^A u\in\mathcal{E}(\mathcal{U_\alpha})$.

\item A set $A\in \mathcal{B}$ is called polar if $B_\alpha^A 1\equiv 0$.
\end{enumerate}
When $\alpha =0$ we drop the index from notation and we simply write $R^A$ and $B^A$, respectively.
\end{defi}

\begin{defi}\label{defi:smallsets} 
Let $m$ be a $\sigma$-finite measure on $E$.
\begin{enumerate}[(i)]
\item A set $A\in \mathcal{B}$ is called
\begin{enumerate}
\item[\scalebox{0.7}{$\circ$}] \say{ $\mathcal{U}$-negligible} if $U_q (1_A)\equiv 0$ for one (hence all) $q> 0$.
\item[\scalebox{0.7}{$\circ$}] \say{ polar}  if $B_\alpha^A 1\equiv 0$ for some $\alpha>0$.
\item[\scalebox{0.7}{$\circ$}] \say{ $m$-polar}  if $B_\alpha^A 1=0$ $m$-a.e. for some $\alpha>0$. 
\item[\scalebox{0.7}{$\circ$}] \say{ $m$-inessential}  provided that it is $m$-negligible and $R_\alpha^{A} 1(x)=0$ for all $x\in E\setminus A$, for some $\alpha>0$.
\end{enumerate}
\item A property is said to hold $m${\it -quasi-everywhere} w.r.t. $\mathcal{U}$ (resp. $\mathcal{U}$-a.e.), if there exists an $m$-inessential (resp. a $\mathcal{U}$-negligible) set $N$ such that the property holds for all $x\in E\setminus N$; on short, we write $m${\it -q.e.} instead of $m$-quasi-everywhere.
\end{enumerate}
\end{defi}

\begin{rem} \label{rem:polar_characterization}
\begin{enumerate}
    \item[(i)] It is well known that if $V\in \mathcal{E}(\mathcal{U}_\alpha)$ for some $\alpha\geq 0$ such that $U_1\left(1_{[V=\infty]}\right)\equiv 0$, then the set $[V=\infty]$ is polar.
    \item[(ii)] We have the following probabilistic characterization due to G.A. Hunt holds (see e.g. \cite{DeMe78}): If $X$ is a right process with resolvent $\mathcal{U}$, then for all $\alpha>0$, $u\in \mathcal{E}(\mathcal{U}_\alpha)$, $A\in\mathcal{B}$,  and $x\in E$
    \begin{equation} \label{Hunt-th}
        R_\alpha^A u(x)
        =\mathbb{E}_x\{e^{-\alpha D_A} u (X(D_A))\}\, ,\, 
        B_\alpha^A u(x) 
        =\mathbb{E}_x\{e^{-\alpha T_A} u (X(T_A))\},   
    \end{equation}
    where 
    $D_A:=\inf \{ t\geq 0 :  X(t)\in A \}$ is the {\rm entry time} of $A$ and 
    $T_A:=\inf \{ t>0 :  X(t)\in A \}$ is the {\rm hitting time} of $A$. 
    In particular, $A$ is polar if and only if $\mathbb{P}_{x}(T_A<\infty)=0$ for all $x\in E$.
    Furthermore, $A$ is $m$-polar if and only if $\mathbb{P}_{x}(T_A<\infty)=0$ $m$-a.e. $x\in E$.

    A similar interpretation holds for $m$-inessential sets: $A\in \mathcal{B}$ is $m$-inessential if and only if $m(A)=0$ and $\mathbb{P}_x(T_A<\infty)=0$ for every $x\in E\setminus A$.
\end{enumerate}
\end{rem}

\begin{rem}[Probabilistic description of finely open sets]\label{rem:finely open prob}
    Recall that a set $D\in \mathcal{B}$ is finely open if and only if 
    \begin{equation*}
        \mathbb{P}_x(T_{E\setminus D}>0)=1,\quad x\in D;
    \end{equation*}
    see, e.g. \cite[p. 52-53]{Sh88}.
\end{rem}

\begin{defi}[The kernels $R^A_\alpha$ and $B^A_\alpha$]\label{defi:balayage_kernel}
Let $A\in \cb$. 
We consider the extension of the reduced function, 
$\mathcal{E}(\cu_\alpha)\ni u\mapsto R^A_\alpha u$ to a sub-Markovian kernel on $\cb^u$ and clearly, by  (\ref{Hunt-th})
we have
\begin{equation*}
R_\alpha^A f(x)=\mathbb{E}_x\{e^{-\alpha D_A} f (X(D_A)), \quad f\in p\cb^u.
\end{equation*}
Analogously, $B_\alpha^A$ is extended to a sub-Markovian kernel on $\cb^u$.
\end{defi} 

\begin{rem} \label{rem 2.2} 
For the reader convenience, let us recall several potential theoretic facts;
cf. \cite{BeBo04a}, \cite{BeRo11a}, and \cite{BeCoRo11}. 
\begin{enumerate}[(i)]
\item If $A \in \mathcal{B}$ is finely open and $\mathcal{U}$-negligible, then $A=\emptyset$.
\item If $m$ is a $\sigma$-finite measure on $E$ such that 
$m(A)=0$ implies $U_1(1_A)=0$ $m$-a.e. for all $A\in \mathcal{B}$, then any finely open and $m$-negligible set $A\in \mathcal{B}$ is $m$-polar. 
\item Any $m$-inessential set is $m$-polar and $m$-negligible. 
The following converse holds (see e.g. \cite{BeBo04a}, page 168): 
{\rm A set which is $m$-polar and $m$-negligible is the subset of an $m$-inessential set.} 
For the reader's convenience we present here a sketch of the proof of this sentence. 
Let $A\in \mathcal{B}$. 
Assume that $A$ is
$m$-polar and $m$-negligible, or equivalently,
 $m(R^A_\alpha 1)=0$, 
since $R^A_\alpha 1=B^A_\alpha 1$ on $E\setminus A$.
We may suppose that $m$ is a finite measure and further we argue as in the first part of the proof 
of Theorem 1.7.29 from \cite{BeBo04a}.
Let $(v_n)_n$ be a decreasing sequence of bounded 
$\mathcal{U}_\alpha$-excessive functions such that
$v_n\geq R^A_\alpha 1$ for all $n$ and
$\inf_n v_n= R^A_\alpha 1$ $m$-a.e.
Let $A_o:= \{\inf_n v_n >0\}$.
Then $A\subset A_o$ and 
$R^{A_o}_\alpha 1=0$ on $E\setminus A_o$.
Because $m(\inf_n v_n)=0$ it follows that $m(A_o)=0$, hence $A_o$ is $m$-inessential.  

\item If $u\in b\mathcal{B}$ and $v$ is $\mathcal{B}$-measurable such that $v=u$ q.e., then $B^A_{1}v$ is well defined and equal to $B^A_{1}u$ $m$-a.e.
\end{enumerate}
\end{rem}

The following two results generalize several classical tools due to Dynkin \cite[Chapter XII, Sections 4 \and 5]{Dy65II} that concern the balayage operator and are key ingredients in solving the {\it stochastic parabolic Dirichlet problem} in \Cref{ss:parabolic Dirichlet}.

\begin{lem}  \label{lem:B_is_finely_feller}
Let $X$ be a right process on $E$, $u\in b\mathcal{B}$, $A \in \mathcal{B}$ and $\alpha\geq 0$. 
The following assertions hold:
\begin{enumerate}
    \item[(i)] The function $B_{\alpha}^A u$ is finely continuous on $({E\setminus A})^{\circ_f}$, where $B^{\circ_f}$ denotes the interior of a set $B\subset E$ with respect to the fine topology.
    \item[(ii)] If $u$ is finely continuous then $B_{\alpha}^A u$ is finely continuous.
\end{enumerate}
\end{lem}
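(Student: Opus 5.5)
The plan is to use the probabilistic representation \eqref{Hunt-th}, $B_\alpha^A u(x)=\mathbb{E}_x\{e^{-\alpha T_A}u(X(T_A))\}$ (extended to bounded $u$ via \Cref{defi:balayage_kernel}). Then reduce fine continuity to right continuity along paths, using \Cref{thm 4.6} and \Cref{rem:fine_continuity_0}. By \Cref{rem:fine_continuity_0}, a universally measurable $f$ is finely continuous at the points of a finely open set $G$ as soon as $t\mapsto f(X(t))$ is right continuous at $t=0$, $\mathbb{P}_x$-a.s. for every $x\in G$. This localized version follows by the same inspection of the proof of \cite[Chapter II, Theorem 4.8]{BlGe68}. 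Universal measurability of $B^A_\alpha u$ is clear from the kernel property. By linearity ($u=u^+-u^-$, then add a constant), we may assume $u\geq 0$ bounded.

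For (i), fix $x\in G:=(E\setminus A)^{\circ_f}$. By \Cref{rem:finely open prob}, $T_{E\setminus G}>0$ holds $\mathbb{P}_x$-a.s., hence $T_A>0$ a.s. The key step is the terminal time identity: on $\{t<T_A\}$ we have $T_A=t+T_A\circ\theta(t)$. Combined with the Markov property, this gives
\begin{equation*}
M(t):=e^{-\alpha (t\wedge T_A)}B_\alpha^A u\bigl(X(t\wedge T_A)\bigr)\quad\text{on } \{t<T_A\}
\end{equation*}
agreeing with the martingale $\mathbb{E}_x\{e^{-\alpha T_A}u(X(T_A))\mid\mathcal{F}(t)\}$, since $e^{-\alpha t}B^A_\alpha u(X(t))=\mathbb{E}_x\{e^{-\alpha T_A}u(X(T_A))\mid \mathcal{F}(t)\}$ on $\{t<T_A\}$. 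The filtration satisfies the usual hypotheses (\Cref{defi 4.4}(i)), so this martingale admits a right continuous version $Y$. We then get $e^{-\alpha t}B^A_\alpha u(X(t))=Y_t$ a.s. for each fixed $t$ on $\{t<T_A\}$.

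The main obstacle is upgrading ``for each $t$'' to ``for all $t$ simultaneously near $0$''. Without this, right continuity of $t\mapsto B^A_\alpha u(X(t))$ at $0$ does not follow. The first approach I would try is to write $B^A_\alpha u$ as a difference of $\mathcal{U}_\alpha$-excessive functions on $G$. Since $u\geq0$, $B^A_\alpha u$ itself is $\mathcal{U}_\alpha$-excessive only when $u$ is, so instead I approximate $u$ monotonically by potentials $U_\beta g$. For such excessive $u$, $B^A_\alpha u$ is excessive, and \Cref{thm 4.6}(ii) gives right continuity of $e^{-\alpha t}B^A_\alpha u(X(t))$ for all $t$, hence fine continuity everywhere.

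For general bounded $u$, the fallback is to use the optional section theorem. If the two optional processes $e^{-\alpha t}B^A_\alpha u(X(t))1_{\{t<T_A\}}$ and $Y_t1_{\{t<T_A\}}$ disagreed on a non-evanescent set, there would be a stopping time $S<T_A$ on which they differ with positive probability. This contradicts the strong Markov property (\Cref{rem:strong Markov}) applied at $S$ together with the terminal time identity $T_A=S+T_A\circ\theta(S)$ on $\{S<T_A\}$. Optionality of $t\mapsto B^A_\alpha u(X(t))$ holds because $B^A_\alpha u$ is universally measurable and $X$ is right continuous in a natural topology. Hence the two processes are indistinguishable on $[0,T_A)$, and since $T_A>0$ a.s., we get right continuity at $0$. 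This proves (i).

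For (ii), points of $E\setminus G$ remain, namely points of $A$ or fine boundary points. There I split into two cases. If $x$ is regular for $A$, i.e. $T_A=0$ $\mathbb{P}_x$-a.s. by Blumenthal's $0$–$1$ law, then $B_\alpha^A u(x)=u(x)$. In that case, for small $t$, $B^A_\alpha u(X(t))$ equals $Y_t$ on $\{t<T_A\}$, while on $\{t\geq T_A\}$ it stays near $u(X(t))$. More precisely, I would use the right continuous martingale $Y_t=\mathbb{E}_x\{e^{-\alpha T_A}u(X(T_A))\mid\mathcal{F}(t)\}$ directly. By the strong Markov property, $e^{-\alpha t}B^A_\alpha u(X(t))$ equals $\mathbb{E}\{e^{-\alpha(t+T_A\circ\theta_t)}u(X(t+T_A\circ\theta_t))\mid\mathcal{F}(t)\}$. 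The terminal times $t+T_A\circ\theta_t$ decrease to $T_A=0$ as $t\downarrow0$. Right continuity of $u(X(\cdot))$, which holds by \Cref{thm 4.6}(i) since $u$ is finely continuous, together with a Hunt-type lemma for conditional expectations with moving integrands, gives convergence to $u(x)$. If $x$ is irregular, then $T_A>0$ a.s. and the argument of (i) applies verbatim, since only $T_A>0$ was used there.
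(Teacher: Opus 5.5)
Your argument for (i), and for the points of (ii) that are irregular for $A$, is correct and takes a genuinely different route from the paper. The paper never identifies $B^A_\alpha u(X)$ with a martingale. It argues by contradiction at the stopping times where a fine discontinuity would be detected. If $x$ were regular for $V=\{B^A_\alpha u\geq B^A_\alpha u(x)+\varepsilon\}$, it takes finely closed $V_n\subset V$ with $T_{V_n}\searrow T_V=0$, so that $B^A_\alpha u(X(T_{V_n}))\geq B^A_\alpha u(x)+\varepsilon$. It then rewrites $\mathbb{E}_x\{B^A_\alpha u(X(T_{V_n}))\}$ by the strong Markov property at $T_{V_n}$. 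Finally it uses $T_{V_n}+T_A\circ\theta(T_{V_n})\searrow T_A$ with dominated convergence (eventual equality with $T_A$ when $T_A>0$, right continuity of $u(X)$ when $u$ is finely continuous) to get convergence to $B^A_\alpha u(x)$, a contradiction. This handles (i), (ii), regular and irregular points in one stroke, and its only general-theory input is the inner approximation of $T_V$.

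Your route costs the optional section theorem but gives more than fine continuity. Up to evanescence, $e^{-\alpha t}B^A_\alpha u(X(t))$ coincides on $[0,T_A)$ with a right-continuous martingale, which is the kind of path regularity the paper needs later in \Cref{prop:B-boundary}. One correction: universal measurability of $B^A_\alpha u$ does not make $B^A_\alpha u(X)$ optional. What you need is that $B^A_\alpha u$ is nearly Borel, which is true by a monotone class argument from the case of excessive $u$. Then $B^A_\alpha u(X)$ is $\mathbb{P}_x$-indistinguishable from an optional process.

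The regular case of (ii), as written, has a genuine gap. There $T_A=0$ $\mathbb{P}_x$-a.s., so $[0,T_A)$ is empty and the mechanism of (i) gives nothing. Set $R_t:=t+T_A\circ\theta(t)$ and $W_t:=e^{-\alpha R_t}u(X(R_t))$. The identity $e^{-\alpha t}B^A_\alpha u(X(t))=\mathbb{E}_x\{W_t\mid\mathcal{F}(t)\}$ holds a.s. only for each fixed $t$. The classical Hunt lemma then controls only deterministic sequences $t_n\downarrow 0$, which is exactly the ``fixed $t$ versus all $t$'' obstacle you identified in (i).

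That is not enough. Take planar Brownian motion, a line $L$ through $x$, and $f:=1_{L\setminus\{x\}}$. Then $f(X(t_n))\to 0=f(x)$ a.s. along every deterministic sequence, yet $x$ is regular for $L\setminus\{x\}$, so $f$ is not finely continuous at $x$. Your remark that $B^A_\alpha u(X(t))$ stays near $u(X(t))$ on $\{t\geq T_A\}$ is also unfounded, since $B^A_\alpha u\neq u$ at points irregular for $A$.

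The fix needs the section theorem once more.
\begin{itemize}
\item $W$ is bounded and right continuous, because $R$ is nondecreasing and right continuous and $u(X)$ is right continuous.
\item The strong Markov property at stopping times shows that $e^{-\alpha t}B^A_\alpha u(X(t))$ is the optional projection of $W$.
\item The optional projection of a bounded right-continuous process is right continuous. This is the continuous-parameter Hunt lemma (Dellacherie--Meyer, Ch.~VI), proved from Hunt's lemma along decreasing stopping times plus the section theorem.
\end{itemize}
This gives right continuity of $B^A_\alpha u(X)$ on all of $[0,\infty)$ under every $\mathbb{P}_x$. Hence (ii) follows directly from \Cref{thm 4.6}(i), and the regular/irregular split becomes unnecessary.

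A more economical alternative is to apply your strong Markov identity only along the stopping times $T_{V_n}\searrow 0$ of the paper's argument. That is precisely where the paper's proof does its work.
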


\begin{proof}
Let $A \in \mathcal{B}$ and set $f := B_{\alpha}^A u $.
In order to show that $f$ is finely continuous (resp. on $({E\setminus A})^{\circ_f}$) it is sufficient to prove that if 
$\varepsilon > 0$ then $x$ is irregular for 
$V := f^{-1}([f(x) + \varepsilon, \infty))$ and for $ W:=f^{-1}((-\infty, f(x) - \varepsilon])$, for every $x\in E$ (resp. $x\in ({E\setminus A})^{\circ_f}$). 
We treat only the case of $V$, the one for $W$ being similar.
Fix $x\in E$.
Let $(V_n)_n$ be an increasing sequence of finely closed sets such that 
$T_{V_n} \searrow T_V \; \mathbb{P}_x$-a.s., which exists by e.g. \cite[Chapter I, Exercise (10.30)]{Sh88} \cite[Theorem 10.19]{BlGe68}.
By the zero-one law (\cite[Chapter I, Corollary (3.11)]{Sh88} or \cite[Proposition 5.17]{BlGe68}), $x$ is either regular for $V$, i.e. $\mathbb{P}_x([T_V = 0])=1$, or irregular for $V$, i.e. $\mathbb{P}_x([T_V = 0])=0$. 

Assume that $x$ is regular.
By the strong Markov property and recalling that $u(\Delta)=0$ we have
\begin{align*}
f(x) + \varepsilon
&\leq\mathbb{E}_x\{f(X(T_{V_n}))\} \\
&= \mathbb{E}_x\{\mathbb{E}_{X({T_{V_n}})}\{e^{-\alpha T_{A}}u(X_{T_A});T_A<\infty\}\}\\
&=\mathbb{E}_x\{\mathbb{E}_x\{e^{-\alpha T_A\circ \theta_{T_{V_n}}}u(X({T_{V_n}+T_A\circ \theta_{T_{V_n}}})) ;T_A\circ \theta_{T_{V_n}}<\infty \; | \; \mathcal{F}_{T_{V_n}}\}\} \\
&=\mathbb{E}_x\{\mathbb{E}_x\{e^{-\alpha T_A\circ \theta_{T_{V_n}}}u(X({T_{V_n}+T_A\circ \theta_{T_{V_n}}})) ;T_A\circ \theta_{T_{V_n}}<\infty \; | \; \mathcal{F}_{T_{V_n}}\}\} \\
&=\mathbb{E}_x\{e^{-\alpha T_A\circ \theta_{T_{V_n}}}u(X({T_{V_n}+T_A\circ \theta_{T_{V_n}}}));T_A\circ \theta_{T_{V_n}}<\infty \}.
\end{align*}
Also, note that for every $x\in E$,
\begin{equation}\label{eq:T_nto0}
  \mathop{\lim}\limits_{n}T_{V_n}=T_V=0, \quad  \mathbb{P}_{x}\mbox{-a.s.},
\end{equation}
and also that $T_{V_n}+T_A\circ \theta_{T_{V_n}}\searrow_n T_A$ $\mathbb{P}_x$-a.s., see e.g. \cite[p. 65]{Sh88} or \cite[(10.3)]{BlGe68}.

Now, if $u$ is finely continuous, by \Cref{thm 4.6} it follows that the trajectories $t \mapsto u(X_t)$ are right continuous $\mathbb{P}_{x}$-a.s for all $x\in E$.
Thus, by dominated convergence, $\lim\limits_n\mathbb{E}_x\{f(X(T_{V_n}))\}=f(x)$, which is a contradiction with the inequality from above, hence (ii) is proved.

Let us now prove (i). 
To this end, let $x\in ({E\setminus A})^{\circ_f}$, so that by \Cref{rem:finely open prob} we have $\mathbb{P}_x(T_A>0)=1$.
Using again that $T_A$ is a terminal time (\cite[p. 65]{Sh88}) and \eqref{eq:T_nto0}, we get 
\begin{equation*}
    \lim\limits_nu(X({T_{V_n}+T_A\circ \theta_{T_{V_n}}}))=u(X(T_A)), \quad \mathbb{P}_{x}\mbox{-a.s.}
\end{equation*}
Now, by the same argument as in the proof of (ii) we arrive immediately at a contradiction.
\end{proof}

Let $(E,\mathcal{B})$ be endowed with a topology $\tau$ such that $\sigma(\tau)=\mathcal{B}$.
Recall that a right process $X$ on $E$ is a {\it Hunt process} if for $\mu\in \mathcal{P}(E)$, $X$ has left $\tau$-limits in $E$ $\mathbb{P}_\mu$-a.s., and for every every sequence of $\mathcal{F}(t)$-stopping times $(T_n)_{n\geq 1}$ such that $T_n \nearrow_n T$ we have
\begin{equation*} 
 \tau-\lim_{n}X({T_n}) = X(T) \mbox{ on } [T<\infty] \quad \mathbb{P}_\mu\mbox{-a.s.}
\end{equation*}

\begin{prop}\label{prop:B-boundary}
Let $A\in \mathcal{B}$, $u:E\rightarrow \mathbb{R}$ be $\mathcal{B}$-measurable, $\alpha\geq 0$, and $x\in E$ such that
\begin{equation}\label{eq:resolutive}
    \mathbb{E}_x\left\{e^{-\alpha T_A}|u|(X(T_A))\right\}<\infty.
\end{equation}
Further, consider the process
\begin{equation*}
    M=(M(t))_{t\geq 0}:=\left(e^{-\alpha (t\wedge T_A)}B_\alpha^Au(X(t\wedge T_A))\right)_{t\geq 0}.
\end{equation*}
The following assertions hold:
\begin{enumerate}
    \item[(i)] $M$ is an $\left(\mathcal{F}(t\wedge T_A)\right)$-martingale under $\mathbb{P}_x$ which is closed by $e^{-\alpha T_A}u(X(T_A))$.
    \item[(ii)] If $E\setminus A$ is finely open and $x\in E\setminus A$ then $M$ is c\`adl\`ag $\mathbb{P}_x$-a.s.
    \item[(iii)] If $u$ is finely continuous, then $M$ is c\`adl\`ag $\mathbb{P}_x$-a.s.
    \item[(iv)] If $X$ is a Hunt process with respect to a given topology $\tau$ on $E$ such that $\sigma(\tau)=\mathcal{B}$, then for every increasing sequence of $\left(\mathcal{F}(t)\right)$-stopping times $(\tau_k)_{k\geq 1}$ such that $\lim\limits_k \tau_k=T_A$ $\mathbb{P}_x$-a.s. we have
    \begin{equation*}
        \lim\limits_{k\to \infty} M(\tau_k)=e^{-\alpha  T_A}u(X( T_A))\quad \mathbb{P}_x\mbox{-a.s. and in } L^1(\mathbb{P}_x). 
    \end{equation*}
    \item[(v)] If $X$ is a Hunt process, in addition to \eqref{eq:resolutive} we have that either $u$ is finely continuous or $E\setminus A$ is finely open and $x\in E\setminus A$, whilst $T_A$ is predictable under $\mathbb{P}_x$, then
    \begin{equation}\label{eq:boundary convergence}
        \lim\limits_{t\nearrow T_A} M(t)=e^{-\alpha  T_A}B_\alpha^Au(X( T_A))\quad \mathbb{P}_x\mbox{-a.s.}
    \end{equation}
\end{enumerate}
\end{prop}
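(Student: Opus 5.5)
The core of the argument is Hunt's formula \eqref{Hunt-th} combined with the strong Markov property at the stopping time $t\wedge T_A$, together with the terminal-time identity $T_A = t + T_A\circ\theta(t)$ on $[t<T_A]$. I will set $Y:=e^{-\alpha T_A}u(X(T_A))$, which lies in $L^1(\mathbb{P}_x)$ by \eqref{eq:resolutive}, with $Y=0$ on $[T_A=\infty]$ since $u(\Delta)=0$.

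For (i), the plan is to show that
\[
M(t)=\mathbb{E}_x\{Y\mid\mathcal{F}(t\wedge T_A)\},
\]
which is enough. On $[t<T_A]$, the terminal property gives $Y=e^{-\alpha t}\,(e^{-\alpha T_A}u(X(T_A)))\circ\theta(t)$. By the strong Markov property with shifts (\Cref{rem:strong Markov}), the conditional expectation there equals $e^{-\alpha t}B^A_\alpha u(X(t))$, using \eqref{Hunt-th} extended to $u$ via \Cref{defi:balayage_kernel}. Integrability of $|u|$ along the shifted path comes from the same computation applied to $|u|$. On $[T_A\le t]$, $Y$ is $\mathcal{F}(T_A)$-measurable. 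It therefore remains to check that $B^A_\alpha u(X(T_A))=u(X(T_A))$ a.s. on $[T_A<\infty]$. This is the one genuinely delicate point, since in general $B^A_\alpha u=u$ holds only at regular points of $A$. I will handle it by noting that $X(T_A)$ lies in the set of regular points of $A$ a.s. (a standard fact for right processes, which follows from right continuity in the fine topology). At such points $T_A=0$, so $B^A_\alpha u=u$ there.

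For (ii) and (iii), the task is to produce a càdlàg version. Since $M$ is a uniformly integrable martingale, it is enough that $t\mapsto M(t)$ is right continuous a.s. Indeed, with the filtration satisfying the usual hypotheses (\Cref{defi 4.4}(i)), a right-continuous martingale automatically has left limits. Right continuity follows from \Cref{lem:B_is_finely_feller} combined with \Cref{thm 4.6}(i). In case (iii), $B^A_\alpha u$ is finely continuous on all of $E$. In case (ii), it is finely continuous on $E\setminus A=(E\setminus A)^{\circ_f}$, and before $T_A$ the path stays in $E\setminus A$, so $t\mapsto B^A_\alpha u(X(t))$ is right continuous on $[0,T_A)$; after $T_A$ the process is frozen. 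One technical detail is that \Cref{lem:B_is_finely_feller} is stated for bounded $u$. I would remove this by truncating $u^\pm\wedge n$ and passing to the limit via the martingale property, using Doob's maximal inequality to get a.s. uniform convergence on compacts.

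For (iv), the increasing sequence of stopping times $\tau_k\wedge T_A$ gives $M(\tau_k)=\mathbb{E}_x\{Y\mid\mathcal{F}(\tau_k)\}$ by optional sampling, which is available since $M$ is closed. By martingale convergence, $M(\tau_k)$ then converges a.s. and in $L^1$ to $\mathbb{E}_x\{Y\mid\bigvee_k\mathcal{F}(\tau_k)\}$. The Hunt property gives $X(\tau_k)\to X(T_A)$ on $[T_A<\infty]$, so $X(T_A)$ is $\bigvee_k\mathcal{F}(\tau_k)$-measurable, and $T_A=\lim_k\tau_k$ is as well. Hence $Y$ is measurable with respect to this $\sigma$-algebra, and the limit equals $Y$.

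For (v), take $\tau_k\nearrow T_A$ with $\tau_k<T_A$ announcing $T_A$, which exists since $T_A$ is predictable. By (ii) or (iii), $M$ is càdlàg, so $\lim_{t\nearrow T_A}M(t)=M(T_A-)$ exists. Along the announcing sequence this limit equals $\lim_k M(\tau_k)=Y$ by (iv). By the regular-point fact from (i), this coincides with $e^{-\alpha T_A}B^A_\alpha u(X(T_A))$.

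I expect the main obstacle to be the identification $B^A_\alpha u(X(T_A))=u(X(T_A))$ in (i), since it relies on regularity of the hitting position. The truncation needed for unbounded $u$ in (ii) and (iii) is the secondary difficulty.
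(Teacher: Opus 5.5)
\noindent The step that fails is your ``standard fact'' that $X(T_A)$ is $\mathbb{P}_x$-a.s.\ regular for $A$ on $[T_A<\infty]$. Right continuity in the fine topology only gives $X(T_A)\in A\cup A^r$, the fine closure of $A$, where $A^r:=\{y:\mathbb{P}_y(T_A=0)=1\}$. The set $A\setminus A^r$ is semipolar, and semipolar sets need not be polar. Hunt's hypothesis that they are fails for space-time processes like the one in this paper: $\{s\}\times\mathbb{R}^d$ is totally thin there, but it is hit.

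A concrete counterexample: take the uniform motion $X(t)=x+t$ on $E=\mathbb{R}$ (a Hunt process with continuous paths), with $A=\{1\}$, $x=0$, $u\equiv 1$ and $\alpha=1$. Then $T_A=1$ $\mathbb{P}_0$-a.s., but $A^r=\emptyset$. By \eqref{Hunt-th}, $B^A_1u(y)=e^{-(1-y)}$ for $y<1$ and $B^A_1u(1)=0$. Hence $M(t)=e^{-1}1_{[0,1)}(t)$, while $Y=e^{-T_A}u(X(T_A))=e^{-1}$. So $M$ is neither a martingale nor closed by $Y$, even though $u$ is continuous, $E\setminus A$ is open, $x\notin A$, $X$ is Hunt and $T_A$ is predictable. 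In the same example, (iv) fails for $\tau_k\equiv T_A$, and $\lim_{t\nearrow T_A}M(t)=e^{-1}$ rather than $e^{-\alpha T_A}B^A_\alpha u(X(T_A))=0$. This gap therefore cannot be closed. Statements (i), (iv) and (v) need an extra hypothesis, for instance that $X(T_A)\in A^r$ $\mathbb{P}_x$-a.s.\ on $[T_A<\infty]$, i.e.\ $T_A\circ\theta(T_A)=0$ there.

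You were right to isolate this point. The paper's proof uses exactly this identity without comment: it applies the terminal-time relation $t\wedge T_A+T_A\circ\theta(t\wedge T_A)=T_A$ also on $[T_A\le t]$. Without the extra hypothesis, your argument still proves the following.
\begin{enumerate}
\item[(a)] $M(t)=\mathbb{E}_x\{Y\mid\mathcal{F}(t)\}$ on $[t<T_A]$.
\item[(b)] Statements (ii) and (iii). Left limits up to and including $T_A$ come from (a), and $M$ is constant after $T_A$, so base these on (a) rather than on (i). The same applies to your Doob-maximal-inequality truncation, which is a valid alternative to the paper's monotone limit of c\`adl\`ag supermartingales.
\item[(c)] Statement (iv) for sequences with $\tau_k<T_A$ on $[T_A>0]$.
\item[(d)] Statement (v) with $u$ in place of $B^A_\alpha u$ in the limit. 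This is the form actually needed for $\lim_{t\nearrow\tau}\rho(X(t))=F(X(\tau))$ in \Cref{coro:BKE}.
\end{enumerate}
One smaller correction: in (iv) you invoke optional sampling at $\tau_k$, but $M$ is not assumed right continuous there. Use instead the strong Markov computation at $\tau_k$, as in (i); this is what the paper does. Under the additional regularity hypothesis, the rest of your proof is sound.
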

\begin{proof}
(i). First of all, recall that by \Cref{rem:strong Markov}, or by \cite[Corollary (8.6)]{BlGe68}, for all $x\in E$, $Y\in b\mathcal{F}$, and $\tau$ and $\left(\mathcal{F}(t)\right)$-stopping time, we have
\begin{equation}\label{eq:strong Markov resolutive}
    \mathbb{E}_x\left\{Y\circ \theta(\tau) | \mathcal{F}(\tau) \right\}
    =E^{X(\tau)}\left\{Y\right\}\quad \mathbb{P}_x\mbox{-a.s.} 
\end{equation}
As it easily follows by dominated convergence, \eqref{eq:strong Markov resolutive} also holds for $x\in E$ and $Y\in \mathcal{F}$ for which
\begin{equation*}
    \mathbb{E}_x\left\{|Y|\circ \theta(\tau)\right\}<\infty.
\end{equation*}
Now, let us notice that since \eqref{eq:resolutive} is in force and using the fact that $T_A$ is a terminal stopping time, we get
\begin{align*}
\mathbb{E}_x\left\{\left[e^{-\alpha T_A}|u|(X(T_A))\right]\circ \theta(t\wedge T_A)\right\}
&=\mathbb{E}_x\left\{e^{-\alpha (T_A\circ \theta(t\wedge T_A))}|u|(X(t\wedge T_A+T_A\circ \theta(t\wedge T_A)))\circ \theta(t\wedge T_A)\right\}\\
&=\mathbb{E}_x\left\{e^{-\alpha (T_A-t\wedge T_A)}|u|(X(T_A)\right\}\\
&\leq e^{\alpha t}\mathbb{E}_x\left\{e^{-\alpha T_A}|u|(X(T_A)\right\}<\infty.
\end{align*}
Consequently, by choosing $Y:=e^{-\alpha T_A}u(X(T_A))$ and $\tau:=t\wedge T_A$ we can apply \eqref{eq:strong Markov resolutive} to deduce that
\begin{align*}
     \mathbb{E}_x\left\{\left[e^{-\alpha T_A}u(X(T_A))\right]\circ \theta(t\wedge T_A) | \mathcal{F}(t\wedge T_A) \right\}
    &=E^{X(t\wedge T_A)}\left\{e^{-\alpha T_A}u(X(T_A))\right\}\\
    &=B_\alpha^Au(X(t\wedge T_A))\quad \mathbb{P}_x\mbox{-a.s.} 
\end{align*}
But, using again that $T_A$ is a terminal time and by similar computations as above, the left hand side of the above inequality also satisfies
\begin{equation*}
\mathbb{E}_x\left\{\left[e^{-\alpha T_A}u(X(T_A))\right]\circ \theta(t\wedge T_A) | \mathcal{F}(t\wedge T_A) \right\}
=e^{\alpha \left(t\wedge T_A\right)}\mathbb{E}_x\left\{e^{-\alpha T_A}u(X(T_A)) | \mathcal{F}(t\wedge T_A) \right\}.
\end{equation*}
Thus,
\begin{equation}\label{eq:M is closed}
    M(t)
    =e^{-\alpha (t\wedge T_A)}B_\alpha^Au(X(t\wedge T_A))
    =\mathbb{E}_x\left\{e^{-\alpha T_A}u(X(T_A)) | \mathcal{F}(t\wedge T_A) \right\},\quad t\geq 0,
\end{equation}
which proves that $M$ is a closed (hence uniformly integrable) $\left(\mathcal{F}(t\wedge T_A)\right)$-martingale under $\mathbb{P}_x$.

\medskip
(ii)\&(iii). Clearly, it is enough to consider the case $u\in \mathcal{B}$ satisfies $u\geq 0$.
Moreover, if we consider $u_n:=u\wedge n, n\geq 1$, by the first part of the proof we get that the processes
\begin{equation*}
    M_n=\left(M_n(t)\right)_{t\geq 0}:=\left(e^{-\alpha (t\wedge T_A)}B_\alpha^Au_n(X(t\wedge T_A))\right)_{t\geq 0}, \quad n\geq 1
\end{equation*}
are $\left(\mathcal{F}(t\wedge T_A)\right)$-martingales under $\mathbb{P}_x$.
Moreover, under the hypotheses in (ii) or (iii), by \Cref{lem:B_is_finely_feller} and \Cref{thm 4.6}-\Cref{rem:finely continuous on D}, they are all right-continuous, hence c\`adl\`ag $\mathbb{P}_x$-a.s.
Since $\lim\limits_nu_n=u$ increasingly, we get that by monotone convergence that $B_\alpha^A u_n\nearrow_n B_\alpha^A u$ and thus
\begin{equation*}
    M_n(t)\nearrow_n M(t)\quad \mbox{for every } t\geq 0 \quad\mathbb{P}_x\mbox{-a.s.} 
\end{equation*}
The fact that $M$ is c\`adl\`ag $\mathbb{P}_x$-a.s. now follows from a standard result of convergence for increasing families of c\`adl\`ag supermartingales, see e.g. \cite[Chapter VI, Theorem 18]{DeMe78} or \cite[Part II, Chapter IV, Section 4]{Do84}.

\medskip
(iv). As in the proof of (i), by the strong Markov property we deduce that $\left(M(\tau_k)\right)_{k\geq 1}$ is a (discrete time) $\left(\mathcal{F}(\tau_k)\right)$-martingale closed by $e^{-\alpha T_A}u(X(T_A))$ under $\mathbb{P}_x$.
Since $X$ is a Hunt process and $\tau_k\nearrow_kT_A$ $\mathbb{P}_x$-a.s., by arguing as in \cite[(4.1)-(4.2)]{BlGe68}, we get that $\mathbb{P}_x$-a.s.
\begin{equation*}
    X(T_A)=\lim\limits_k X(\tau_k)1_{[T_A<\infty]}+\Delta1_{[T_A=\infty]},\quad [T_A<\infty]=\mathop{\cup}\limits_{k\geq 1}\mathop{\cap}\limits_{n\geq k} [\tau_n\leq k],
\end{equation*}
and thus 
\begin{equation*}
    X(T_A)\in \sigma\left(\bigcup\limits_{k\geq 1} \mathcal{F}(\tau_k)\right),
\end{equation*}
so now (iv) follows by L\'evy's convergence theorem.

\medskip
(v). Let the assumptions in (v) be satisfied.
Then by (i), (ii), and (iii), we have that $M$ is a c\`adl\`ag $\left(\mathcal{F}(t\wedge T_A)\right)$-martingale under $\mathbb{P}_x$ which is closed by $M(\infty):=e^{-\alpha T_A}u(X(T_A))$. 
Let $(\tau_k)_{k\geq 1}$ be an increasing sequence of finite $\left(\mathcal{F}(t)\right)$-stopping times such that $\tau_k<T_A, k\geq 1$ on $[T_A>0]$ and $\lim\limits\tau_k=T_A$ $\mathbb{P}_x$-a.s.
To prove (iv) it is clearly enough to show
\begin{equation*}
    \lim\limits_k\sup\limits_{t\in[\tau_k,\infty)}\left|M(t)-M(\infty)\right|=0 \quad \mathbb{P}_x\mbox{-a.s.}
\end{equation*}
Note that the above limit is decreasing with respect to $k$, so the required a.s. convergence is in fact equivalent with the convergence in probability under $\mathbb{P}_x$. 
Thus, by introducing the processes
\begin{equation*}
    M_k(t):=M(\tau_k+t)-M(\tau_k),\quad t\geq 0, \quad k\geq 1,
\end{equation*}
the previous convergence boils down to showing
\begin{equation*}
    \lim\limits_k\sup\limits_{t\geq 0}\left|M_k(t)-M_k(\infty)\right|=0 \quad \mbox{in probability with respect to }\mathbb{P}_x.
\end{equation*}
By (iv) we have that $\lim\limits_kM_k(\infty)=0$ $\mathbb{P}_x$-a.s. and in $L^1(\mathbb{P}_x)$, hence the above convergence, thus it remains to show that
\begin{equation*}
    \lim\limits_k\sup\limits_{t\geq 0}\left|M_k(t)\right|=0 \quad \mbox{in probability with respect to }\mathbb{P}_x.
\end{equation*}
Now, let us note that by Doob's stopping theorem we have that for every $k\geq 1$ the process $(M_k(t))_{t\geq 0}$ is a c\`adl\`ag $\left(\mathcal{F}((\tau_k+t)\wedge T_A)\right)$-martingale under $\mathbb{P}_x$ which is closed by $M_k(\infty)$.
Consequently, by Doob's maximal inequality for (continuous-time) right-continuous martingales, and using again (iv), we get
\begin{equation*}
    \mathbb{P}_x\left(\sup\limits_{t\geq 0}\left|M_k(t)\right|\geq \varepsilon\right)
    \leq\frac{1}{\varepsilon}\mathbb{E}_x\left(\left|M_k(\infty)\right|\right)\mathop{\rightarrow}\limits_k 0,
\end{equation*}
which finishes the proof.
\end{proof}

\paragraph{Existence of a right process on a larger space.}

Condition $\mathbf{(H_{\mathcal{C}})}$, although necessary, is not sufficient to guarantee the existence of a right process with the given resolvent $\mathcal{U}$, even if the resolvent is strong Feller; see \cite{BeCiRo24}. 
However, under $\mathbf{(H_{\mathcal{C}})}$, there is always a larger space on which an associated right process exists.

We denote by $Exc(\mathcal{U}_\beta)$ the set of all $\mathcal{U}_\beta$-excessive measures:
$\xi \in Exc(\mathcal{U}_\beta)$ if and only if $\xi$ is a $\sigma$-finite measure on $E$ and $\xi \circ \alpha U_{\beta+\alpha} \leq \xi$ for all $\alpha >0$.

\begin{defi}[The saturation of $(E,\mathcal{U})$ w.r.t. $\mathcal{U}_\beta$] \label{defi:saturation}
Let $\beta >0$.
\begin{enumerate}[(i)]
\item The {\it energy functional} associated with $\mathcal{U}_\beta$ is $L^{\beta}: Exc(\mathcal{U}_\beta)\times \mathcal{E}(\mathcal{U}_\beta) \rightarrow \overline{\mathbb{R}}_+$ given by
\begin{align*}
L^{\beta}(\xi,v):&=\sup\{\mu(v) \; : \; \mu \mbox{ is a } \sigma\mbox{- finite measure, } \mu \circ U_\beta \leq \xi\}\\
&=\sup\{\xi(f) \; : \; f\geq 0 \mbox{ is a } \mathcal{B}\mbox{-measurable function such that } U_\beta f \leq v\}.
\end{align*}
\item The {\it saturation} of $E$ (with respect to $\mathcal{U}_\beta$) is the set $E_1$ of all extreme points of the set $\{\xi \in Exc(\mathcal{U}_\beta)\; : \; L^{\beta}(\xi,1)=1\}$.
\item The map $E\ni x \mapsto \delta_x \circ U_\beta \in Exc(\mathcal{U}_\beta)$ is a measurable embedding of $E$ into $E_1$ and 
every $\mathcal{U}_\beta$-excessive function $v$ has an extension $v_1$ to $E_1$, defined as $v_1(\xi):=L^{\beta}(\xi,v)$.
The set $E_1$ is endowed with the $\sigma$-algebra $\mathcal{B}_1$ generated by the family $\{v_1: \; v\in \mathcal{E}(\mathcal{U}_\beta)\}$.
In addition, as in \cite[Sections 1.1 and 1.2]{BeBoRo06}, there exists a unique resolvent of kernels $\mathcal{U}^{1}=(U^{1}_\alpha)_{\alpha>0}$ on $(E_1, \mathcal{B}_1)$ 
which is an {\it extension} of $\mathcal{U}$ in the sense that
\begin{enumerate}
    \item[(iii.1)] $U^1{1_{E_1\setminus E}}=0$ on $E_1$,
    \item[(iii.2)] $(U^1f)|_E=U(f|_E)$ for all $f \in b\mathcal{B}_1$.
\end{enumerate}
More precisely, it is given by
\begin{align} \label{eq 4.1}
U^{1}_\alpha f(\xi)&=L^{\beta}(\xi, U_\alpha (f|_E))\nonumber\\
&=L^{\beta}(\xi,U_\beta(f|_E+(\beta-\alpha)U_\alpha))\\
&=\xi(f+(\beta-\alpha)U_\alpha f) 
\mbox{ for all } f \in bp\mathcal{B}_1, \xi\in E_1, \alpha >0.
\end{align}
\end{enumerate}
\end{defi}

\begin{rem}\label{rem:saturation}
Note that $(E_1,\mathcal{B}_1)$ is a Lusin measurable space, the map $x \mapsto\delta_x \circ U_\beta$ identifies $E$ with a subset of $E_1$, $E\in \mathcal{B}_1$,  and $\mathcal{B}=\mathcal{B}_1|_E$. 
Furthermore, $E$ is dense in $E_1$ with respect to the fine topology on $E_1$ associated with $\mathcal{U}^1$. 
Hence, if $\alpha>0$ and $v$ is a $\mathcal{U}_\alpha$-excessive function then its $\mathcal{U}^1_\alpha$-excessive extension $v_1$ given by \Cref{defi:saturation} is the unique extension of $v$ from $E$ to $E_1$ by fine continuity.
\end{rem}

The existence of a right process on a larger space, more precisely on $E_1$, is given by the following result, for which we refer to \cite[(2.3)]{BeRo11a}, in \cite[Sections 1.7 and 1.8]{BeBo04a},   \cite[Theorem 1.3]{BeBoRo06}, and \cite[Section 3]{BeBoRo06a}; see also
\cite{St89}.

\begin{thm} \label{thm:saturation}
There is always a right process $X^1$ on the saturation $(E_1,\mathcal{B}_1)$, associated with $\mathcal{U}^{1}$. Moreover, the following assertions are equivalent:
\begin{enumerate}[(i)]
\item There exists a right process on $E$ associated with $\mathcal{U}$.
\item The set $E_1\setminus E$ is polar (w.r.t. $\mathcal{U}^1$).
\end{enumerate}
\end{thm}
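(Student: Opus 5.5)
The plan is to obtain the process on $E_1$ through a Ray compactification, and then to compare it with a given right process on $E$ through hitting times of $E_1\setminus E$.

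\emph{Step 1: transfer a Ray cone to $E_1$.} By \Cref{prop 2.5} I fix a Ray cone $\mathcal{R}$ associated with $\mathcal{U}_\beta$ on $E$. For $v\in\mathcal{R}$ let $v_1(\xi):=L^\beta(\xi,v)$ be its extension. I check that $\mathcal{R}_1:=\{v_1: v\in\mathcal{R}\}$ is a Ray cone for $\mathcal{U}^1_\beta$ on $(E_1,\mathcal{B}_1)$.
\begin{itemize}
\item Since $v\mapsto v_1$ is additive and positively homogeneous on $\mathcal{E}(\mathcal{U}_\beta)$, $\mathcal{R}_1$ is a convex cone.
\item Min-stability: fine continuity together with \Cref{rem:saturation} gives $(v\wedge w)_1=v_1\wedge w_1$.
\item Invariance: \eqref{eq 4.1} gives $U^1_\alpha v_1=(U_\alpha v)_1$, and similarly $U^1_\beta(v_1-w_1)_+=(U_\beta(v-w)_+)_1$.
\item Separability and $1\in\mathcal{R}_1$ are immediate.
\item $\mathcal{R}_1$ generates $\mathcal{B}_1$, because $\mathcal{R}$ is uniformly dense among the relevant excessive functions and the $v_1$ generate $\mathcal{B}_1$ by definition.
\end{itemize}

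\emph{Step 2: construct the right process $X^1$.} Let $K$ be the Ray compactification of $E_1$ with respect to $\mathcal{R}_1$. This is a compact metrizable space, and $\mathcal{U}^1$ extends to a Ray resolvent on $K$. Ray's theorem then yields a c\`adl\`ag strong Markov (Ray) process on $K$. I identify $E_1$ with the set of non-branch points $D$ of $K$. An extreme point $\xi$ of $\{\xi: L^\beta(\xi,1)=1\}$ is exactly a point at which the kernel $\alpha U^1_{\beta+\alpha}(\xi,\cdot)\to\delta_\xi$ in the Ray sense. Conversely, a non-branch point $y$ defines the excessive measure $\delta_y\circ U_\beta$, which is extremal.

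The Ray process started in $D$ stays in $D$ at all times a.s. (branch points are never visited). Restricting it to $D=E_1$ therefore gives a right process $X^1$ with resolvent $\mathcal{U}^1$, where the Ray topology plays the role of the natural topology. I expect this step, namely the identification of $E_1$ with the non-branch points together with the measurability of $D$, to be the main obstacle. I would follow the extreme-point and energy arguments of \cite{BeBo04a} at this point.

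\emph{Step 3: (ii)$\Rightarrow$(i).} If $E_1\setminus E$ is polar, then by \eqref{Hunt-th} we have $\mathbb{P}^1_x(T_{E_1\setminus E}<\infty)=0$ for every $x\in E$. Moreover $U^1 1_{E_1\setminus E}=0$, so $E$ is absorbing for $X^1$. The restriction of $X^1$ to $E$ is then a right process, and by (iii.2) of \Cref{defi:saturation} its resolvent is $\mathcal{U}$.

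\emph{Step 4: (i)$\Rightarrow$(ii).} Let $X$ be a right process on $E$ with resolvent $\mathcal{U}$.
\begin{enumerate}
\item The trace of $\tau_{\mathcal{R}_1}$ on $E$ is $\tau_{\mathcal{R}}$, which is a natural topology for $\mathcal{U}$ (\Cref{rem: ray_topology}). Hence $X$ has right-continuous paths in this topology by \Cref{thm 4.6}.
\item For $x\in E$, the processes $X$ and $X^1$ under $\mathbb{P}_x$ have the same finite-dimensional laws, because their resolvents agree on $E$. Using right continuity, as in \Cref{prop:tau-identification} with c\`adl\`ag instead of continuous paths, $X^1$ started at $x$ lives in $E$ at all times a.s. Therefore $\mathbb{P}^1_x(T_{E_1\setminus E}<\infty)=0$ for all $x\in E$.
\item For $\xi\in E_1\setminus E$, property (iii.1) of \Cref{defi:saturation} shows that $P^1_t(\xi,E_1\setminus E)=0$ for a.e.\ $t$, and hence for a dense set $t_n\searrow0$.
\item By the Markov property at $t_n$,
\[
\mathbb{P}^1_\xi\bigl(\exists\, s\geq t_n: X^1(s)\in E_1\setminus E\bigr)=\int_E P^1_{t_n}(\xi,dy)\,\mathbb{P}^1_y\bigl(T_{E_1\setminus E}<\infty\bigr)=0,
\]
where I use that $\mathbb{P}^1_y(D_{E_1\setminus E}<\infty)=\mathbb{P}^1_y(T_{E_1\setminus E}<\infty)=0$ for $y\in E$.
\item Taking the union over $n$ gives $\mathbb{P}^1_\xi(T_{E_1\setminus E}<\infty)=0$. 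Thus $B^{E_1\setminus E}_\alpha 1\equiv0$, that is, $E_1\setminus E$ is polar.
\end{enumerate}
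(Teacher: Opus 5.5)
\textbf{Gap in Step 2: $E_1$ is not the set of non-branch points.} Steps 3 and 4 are essentially sound. The real problem is Step 2, which carries the whole first assertion of the theorem; the paper does not prove this assertion but defers it to \cite{BeBo04a} and \cite{St89}. You identify $E_1$ with the set $D$ of non-branch points of the Ray compactification $K$, and this is false in general. Only the inclusion $E_1\subset D$ holds: each $v_1\in\mathcal{R}_1$ is $\mathcal{U}^1_\beta$-excessive, and $\mathcal{R}_1-\mathcal{R}_1$ is dense in $C(K)$. Your ``converse'' fails because a non-branch point $y$ may satisfy $\overline{U}_\beta(y,K\setminus E)>0$. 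Then $\delta_y\circ\overline{U}_\beta$ is not a normalized $\mathcal{U}_\beta$-excessive measure on $E$ at all.

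\emph{Example.} Take Brownian motion on $\mathbb{R}^d$ and a Ray cone contained in $C_0(\mathbb{R}^d)\oplus\mathbb{R}$. Then $K$ is the one-point compactification and $\overline{U}_\alpha(\infty,\cdot)=\alpha^{-1}\delta_\infty$. So $\infty$ is a non-branch trap whose potential vanishes on $E$, and $\infty\in D\setminus E_1$.

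Consequently, ``the Ray process started in $D$ stays in $D$'' does not produce a process on $E_1$. What must be shown is that the Ray process started at points of $E_1$ never visits $D\setminus E_1$. That is exactly the nontrivial content you have skipped.

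\textbf{The missing argument.} It has two parts.
\begin{enumerate}
\item Put $u:=\overline{U}_\beta 1_{K\setminus E}$. It is $\beta$-excessive for the Ray resolvent, and it vanishes on $E_1$, because for $\xi\in E_1$ the measure $\overline{U}_\beta(\xi,\cdot)$ is the image of $U^1_\beta(\xi,\cdot)$, which does not charge $E_1\setminus E$ by property (iii.1). Under $\mathbb{P}_\xi$, $\xi\in E_1$, the process $e^{-\beta t}u(\overline{X}(t))$ is a nonnegative right-continuous supermartingale starting at $0$. Hence it stays at $0$, so the process lives in $D\cap[u=0]$.
\item One needs $D\cap[u=0]=E_1$. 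For such $y$ the measure $\delta_y\circ\overline{U}_\beta|_E$ is $\mathcal{U}_\beta$-excessive. One must show, via the energy functional (for instance $L^\beta(\delta_y\circ\overline{U}_\beta|_E,U_\beta f)=\overline{U}_\beta f(y)$), that it is an extreme point of $\{L^\beta(\cdot,1)=1\}$. This is where the fact that $E_1$ contains \emph{all} such extreme points is used. Then $y$ coincides with that point of $E_1$, because $\mathcal{R}_1$ separates the points of $K$.
\end{enumerate}
Neither part appears in your proposal.

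\textbf{Smaller points.}
\begin{itemize}
\item Step 1: the claim that $\mathcal{R}_1$ generates $\mathcal{B}_1$ should not rest on uniform density of the separable cone $\mathcal{R}$ in $\mathcal{E}(\mathcal{U}_\beta)$, which fails in general. Instead choose $\mathcal{R}\supset U_\beta(\mathcal{A}_0)$ and use $L^\beta(\xi,U_\beta f)=\xi(f)$, a monotone class argument, and monotone continuity of $L^\beta(\xi,\cdot)$.
\item Step 4: in $\tau_{\mathcal{R}}$ you only have right-continuous paths, not c\`adl\`ag ones; right-continuity suffices for the dyadic argument.
\item Step 4: the equality $P^1_t(x,\cdot)=P_t(x,\cdot)$ for \emph{every} $t$, in particular $P^1_t(x,E_1\setminus E)=0$ for $x\in E$, follows from right-continuity of $t\mapsto P^1_t v_1(x)$ for $v_1\in\mathcal{R}_1$. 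Agreement of the resolvents alone does not give it.
\end{itemize}
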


\subsection{Choquet capacity}
Let $X=(\Omega, \mathcal{F}, \mathcal{F}(t) , X(t), \theta(t) , \mathbb{P}_x), x\in E$ be a right process on a Lusin measurable space $(E,\mathcal{B})$, and $\mathcal{T}$ be a natural topology on $E$.
It is well known that for every $\nu\in \mathcal{P}(E)$, $\alpha>0$ and $f\in b\mathcal{B}, f>0$, the mapping
\begin{equation}\label{eq:capacity}
     \mathbb{R}^d\supset A\longmapsto {\sf Cap}^\alpha_{\nu}(A):= \inf\left\{\nu\left(R^G_\alpha(U_\alpha f)\right) : A\subset G, \; G\in \mathcal{T}\right\},
\end{equation}
is a Choquet capacity on $(E,\mathcal{T})$. 
Moreover, if $X$ has left limits in $(E,\mathcal{T})$ $\mathbb{P}_\nu$-a.s., then the capacity ${\sf Cap}^\alpha_{\nu}$ is tight; cf. 
\cite{LyRo92}, \cite{BeBo05}, and \cite{BeRo11b}. 

For convenience, let us briefly recall the notion of capacity (for details see e.g.   \cite[Ch. III]{DeMe78} and \cite[Appendix A.1]{BeBo04a}):
\begin{defi}[Choquet capacity] \label{defi:capacity}
    Let $\mathcal{K}$ be the family of al $\mathcal{T}$-compact subsets of $E$.
A map 
\begin{equation*}
    {\sf Cap}: \mathcal{P}(E) \rightarrow \overline{\mathbb{R}}_+, \quad \mathcal{P}(E):=\{A : A\subset E\},
\end{equation*}
is called {\rm Choquet capacity} on $E$ provided that:
\begin{enumerate}
    \item[(i)] ${\sf Cap}(K) < \infty$ for every $K\in \mathcal{K}$;
    \item[(ii)] ${\sf Cap}(A_1)\leq 
{\sf Cap} (A_2)$  if $A_1\subset A_2\in \mathcal{P}(E)$;
    \item[(iii)] If $(A_n)_{n\geq 1} \subset \mathcal{P}(E)$ is such that $A_n\nearrow_n A \in \mathcal{P}(E)$, then
${\sf Cap} (A_n)\nearrow {\sf Cap} (A)$;
    \item[(iv)] If $(K_n)_{n\geq 1} \subset \mathcal{K}$ is such that $K_n\searrow_n A$, then ${\sf Cap} (K_n)\searrow {\sf Cap} (A)$.
\end{enumerate}
Also, recall that a Choquet capacity {\sf Cap} is {\rm tight} provided that there exists an increasing sequence $(K_n)_n\subset \mathcal{K}$ such that
\begin{equation*}
   \inf\limits_{n} {\sf Cap}(E\setminus K_n)=0.
\end{equation*}
\end{defi}

\subsection{Basic operations on right processes}
\subsubsection{Trivial modification and restriction of the resolvent}
Let $M\in\cb$ be such that $U_\alpha(1_M)=0$ on
$E\setminus M$ for one (and therefore for all) $\alpha>0$.
Following \cite{BeBoRo06a} and 
\cite{BeTr11} we can define the trivial modification of $\cu$ on $M$ and the restriction of $\cu$ to $E\setminus M$ as  follows.

\begin{defi}[Trivial modification]\label{defi:trivial modification}
For all $\alpha>0$ we define the kernel
\begin{equation}\label{trivial-mod}
  U'_\alpha f
  = 1_{E\setminus M} U_\alpha f
    + \frac{1}{\alpha} 1_M f\;,
  \quad f\in p\cb.
\end{equation}
The family $\cu'=(U'_\alpha)_{\alpha>0}$ is also a
sub-Markovian resolvent of kernels on $(E,\cb)$ 
and  there exists a cone $\mathcal{C}$ such that $\mathbf{(H_{\mathcal{C}})}$ is fulfilled.  
$\cu'$ is called the \emph{trivial modification} of the resolvent $\cu$
on $M$. 
\end{defi}


\begin{defi}[Restriction]\label{defi:restriction}
The family $\cu|_{E\setminus M}=$
$( U_\alpha|_{E\setminus M} )_{\alpha>0}$
is a sub-Markovian resolvent of kernels on
$( E\setminus M, \cb|_{E\setminus M})$,
called the \emph{restriction} of $\cu$ to $E\setminus M$.
Moreover, $\cu|_{E\setminus M}$
satisfies condition 
$\mathbf{(H_{\mathcal{C}})}$ 
on the measurable space
$( E\setminus M, \cb|_{E\setminus M})$.
\end{defi}

A function $u \in p\cb|_{E\setminus M}$ is
$\cu_{\beta}|_{E\setminus M}$-excessive  if and only if there exists a function
$\bar{u}$ which is $\cu_{\beta}$-excessive, such that $u=\bar{u}|_{E\setminus M}$.
Consequently, the fine topology on $E\setminus M$ with respect to 
the restriction of $\cu$ to $E\setminus M$  is the trace on $E\setminus M$ of
the fine topology on $E$ with respect to $\cu$. 
A function $v\in p\cb$ will be $\cu'_\beta$-excessive if and only
if $v|_{E\setminus M}$ is $\cu_\beta|_{E\setminus M}$-excessive.

\begin{rem}\label{rem:raycones} 
If $M\in\cb$ is a set such that  $U_\alpha(1_M)=0$ on $E\setminus M$
then the following assertions hold.\\
$(i)$ Let $\crr$ be a Ray cone associated with  $\cu_\beta$ and
$\cu'$ be the trivial modification of $\cu$ on $M$.
Then there exists a Ray cone $\crr'$ with respect to $\cu'_\beta$ such that
$\crr\subset \crr'$.\\
$(ii)$ If ${\crr}^\circ$ is  a Ray cone associated with  $\cu_\beta|_{E\setminus M}$
then there exists a Ray cone $\crr$ with respect to $\cu_\beta$ such that
${\crr}|_{E \setminus M}={\crr}^\circ$.
\end{rem}

\subsubsection{Restriction, trivial extension, and trivial modification of the process}\label{ss:restriction_process}  

\noindent
{\bf Strongly supermedian functions.} Recall first that a positive numerical function $f$ on $E$  is called {\it nearly Borel}  
provided that for any finite measure $\lambda$ on $E$ there exist two positive 
numerical Borelian functions $g$ and  $h$ on $E$ such that $g \leq f \leq h$ and the
set $[g < h]$ is $\lambda$-polar
and $\lambda$-negligible.
It is known that any $U_\alpha$-excessive
function
on $E$, $\alpha>0$, is nearly Borel.
Let $\mathcal{B}^n$ denote the $\sigma$-algebra of all nearly Borel sets.

A positive numerical function $f$ on $E$ is called 
{\it strongly supermedian} ({\it with respect
to $\mathcal{U}_\alpha$}) if it is nearly Borel and for any two finite measures $\mu$ and  $\nu$ on $E$ we have
$$
\mu\circ U_\alpha \leq 
\nu\circ U_\alpha \Longrightarrow 
\mu(f)\leq \nu(f).
$$

We denote by $\mathcal{S}_\alpha$ the set of all strongly supermedian functions on $E$ ({\it with respect
to $\mathcal{U}_\alpha$}).
Further, we present several basic properties of the strongly supermedian functions, cf. e.g.
\cite{BeBo99}, \cite{BeBo04a}, \cite{BeBo04b},  
and 
\cite{BeDeLu15}.

$\mathcal{S}_\alpha$ is a convex cone. 
By \cite{BeBo99}, assertion (1) of the Remark at page 369, it follows that 
a function $f\in p\mathcal{B}^n$ is strongly supermedian (with respect
to $\mathcal{U}_\alpha$) if and only if
$R^M_\alpha f\leq f$ for any $M\in\cb$.
Let $(v_n)_n$ be  a sequence in
$\mathcal{S}_\alpha$, then $\inf_n v_n\in \mathcal{S}_\alpha$. 
If in addition $(v_n)_n$ is increasing then $\sup_n v_n\in \mathcal{S}_\alpha$.
For all $v\in \mathcal{S}_\alpha$ we have
$v=\inf\{ u:  u\in \mathcal{E(\mathcal{U}_\alpha}), u\geq v\}$.
In particular, every strongly supermedian function is finely upper semicontinuous.

If $u\in \mathcal{E}(\mathcal{U}_\alpha)$ and $A \in \mathcal{B}$, then $R^A_\alpha u$, the $\alpha$-order reduced function of $u$ on $A$,  is a strongly supermedian function, particularly, it is nearly Borel measurable. 
More generally, the following assertions are equivalent for a 
positive  nearly Borel measurable function $v$ on $E$: 
\begin{enumerate}
    \item[(i)] The function $v$ is strongly supermedian with respect to $\cu_\alpha$.
    \item[(ii)] There exists a family $\cg$ of  $\cu_\alpha$-excessive functions such that $v=\inf \cg$.
    \item[(iii)] We have $v=\inf \{ u\in \ce(\cu_\alpha):\, u{\geqslant}v \}.$
\end{enumerate}

Following \cite{BeRo11a}, we set
\begin{equation}\label{eq: absorbing}
\ca(\cu):=\left\{A\in{\cb} : {R}^{E \setminus{A}}_{\alpha}1=0 \mbox{ on } A \mbox{ for some } \alpha>0\right\}    
\end{equation}
An element $A\in \ca(\cu)$ is called an {\it absorbing set}. 
An absorbing set is finely open and in what follows we shall recall several key facts concerning the restriction of a right process to an absorbing set.

\begin{rem} \label{rem8.25}
By \cite[A.1.2]{BeDeLu15} the following assertions are equivalent for a set $A\in \cb$.
\begin{enumerate}
    \item[(i)] The set $A$ belongs to $\ca(\cu)$.
    \item[(ii)] We have $\mathbb{P}_x$-a.s. $D_{E\setminus A}= \infty$ for every $x\in A$.
    \item[(iii)] There exists a strongly supermedian function $v$ with respect to  $\cu_\alpha$ such that $A=[v=0]$.
\end{enumerate}
\end{rem}

\begin{rem}\label{rem:exc_absorbing}
If $v\in \mathcal{S}_\alpha\cap p\cb$, then
\begin{equation*}
    [v<\infty],\;[v=0] \in \ca(\cu).
\end{equation*}
The fact that the set $[v=0]$ belongs 
to $\ca(\cu)$ follows from the above considerations. 
To show that $[v<\infty]\in \ca(\cu)$, 
we complete the arguments from \cite{BeRo11a} and A.1.4 from \cite{BeDeLu15}, where the function $v$ was supposed
to be $\cu_\alpha$-excessive. 
If $n\geq 1$ then we have
$1\leq \frac{1}{n} v$ on $[v= \infty]$. 
Let $u\in \ce(\cu_\alpha)$, $u\geq v$.
Then 
$R^{[v=\infty]}_\alpha 1\leq \frac{1}{n} u$ and consequently 
\begin{equation*}
    R^{[v=\infty]}_\alpha 1\leq v/n
    =\inf \{ u\in \mathcal{E(\mathcal{U}_\alpha}):  u\geq v\}/n.
\end{equation*} 
We conclude that
$R^{[v=\infty]}_\alpha 1=0$ on $[v <\infty]$, 
hence $[v <\infty]\in \ca(\cu)$.
\end{rem}

\begin{rem}\label{rem:absorbing_intersection}
    \begin{enumerate}
        \item[(i)] If $(A_n)_{n\geq 1}\subset \mathcal{A}(\mathcal{U})$, then $\cap_n A_n\in \mathcal{A}(\mathcal{U})$.
        \item[(ii)] A set $A\in \mathcal{B}$ is $\mu$-inessential (see \Cref{defi:smallsets}), if and only if $A\in \mathcal{A}(\mathcal{U})$ and $\mu(A)=0$.
        In particular, a countable intersection of $\mu$-inessential sets is again $\mu$-inessential.
    \end{enumerate}
\end{rem}

If $A\in \ca(\cu)$ then $U_{\beta}(1_{E\setminus{A}})=0$ on $A$
and  therefore we may consider the {restriction}  
$\cu|_A$
{of}  $\cu$ {\rm to} $A$.
In
particular, the resolvent 
$\cu|_A=(U_{\alpha}|_A)_{\alpha>0}$
satisfies condition 
$\mathbf{(H_{\mathcal{C}})}$ 
on the measurable space $(A,\cb|_{A})$ and we already observed that the fine topology on $A$ with respect to 
the restriction of $\cu$ to $A$  is the trace on $A$ of
the fine topology on $E$ with respect to $\cu$. 
If $\mathcal{R}$ is a Ray cone associated with
$\cu_{\beta}$ then $\mathcal{R}|_{A}$
is a Ray cone associated with $\cu_{\beta}|_A$.






\begin{prop}[cf. {\cite[Corollary 3.2]{BeRo11a}}]\label{prop:restriction}
If $\cu$ is the resolvent of  a
right process with state space $E$  and  $A \in \ca(\cu)$,  then the
restriction of $\cu$ to $A$ is the resolvent of a right process
with state space $A$.
\end{prop}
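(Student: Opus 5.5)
The plan is to construct the process on $A$ by restricting the given right process $X$ on $E$, rather than by building one from the resolvent $\cu|_A$ alone. Since $A\in\ca(\cu)$, \Cref{rem8.25} shows that $\mathbb{P}_x(D_{E\setminus A}=\infty)=1$ for every $x\in A$. Started in $A$, the process therefore never enters $E\setminus A$.

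First, I set $\Omega_A:=\{\omega\in\Omega: X(t,\omega)\in A\cup\{\Delta\}\ \mbox{for all } t\geq 0\}$. Up to $\mathbb{P}_x$-null sets, $\Omega_A$ is $[D_{E\setminus A}=\infty]$, so it is universally measurable and has full $\mathbb{P}_x$-measure for each $x\in A$. It is also invariant under the shifts $\theta(t)$.

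Next, I restrict everything to $\Omega_A$: the $\sigma$-algebras $\mathcal{F}$ and $\mathcal{F}(t)$ (traces on $\Omega_A$), the shifts, the paths, and the measures $\mathbb{P}_x$ for $x\in A$. The resolvent of this restricted family is $U_\alpha f(x)=\mathbb{E}_x\int_0^\infty e^{-\alpha t}f(X(t))\,dt$ for $x\in A$. Because paths stay in $A$, only $f|_A$ matters, so this resolvent is exactly $\cu|_A$.

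Then I check the right-process axioms of \Cref{defi 4.4} on $A$.

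For (i), the completed filtration does not change when we pass to a full-measure trace: every measure on $A$ is also a measure on $E$, and $\mathbb{P}_\mu(\Omega_A)=1$. Right continuity and the usual hypotheses therefore transfer directly.

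For (ii), every $\cu_\beta|_A$-excessive function is the restriction $\bar u|_A$ of some $\cu_\beta$-excessive function $\bar u$, as recalled after \Cref{defi:restriction}. Right continuity of $t\mapsto \bar u(X(t))$ then gives right continuity of $t\mapsto u(X(t))$.

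For (iii), take a Ray cone $\crr$ for $\cu_\beta$ whose topology makes the paths right continuous (this exists by \Cref{rem: ray_topology} and \Cref{thm 4.6}). Then $\crr|_A$ is a Ray cone for $\cu_\beta|_A$, and its topology is the trace topology on $A$. Paths that are right continuous in $E$ and take values in $A$ are right continuous in the trace topology, and this topology is natural on $A$.

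I expect the main obstacle to be the measurability bookkeeping. One must check that $\Omega_A$ (or a suitable full-measure subset) is measurable with respect to the completed $\sigma$-algebras. One must also check that $x\mapsto\mathbb{P}_x(\Lambda)$ remains a kernel on $(A,\cb^u|_A)$, using $\cb^u|_A=(\cb|_A)^u$ since $A\in\cb$. Finally, the fact that $\Omega_A$ is a null-set modification uniformly over all initial laws needs \Cref{rem8.25}(ii) for every $x\in A$, not just almost every $x$. Given that, the remaining verifications are routine transfers.
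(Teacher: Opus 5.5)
Your proposal is correct and takes essentially the paper's approach. The paper states the proposition with a reference and no proof, and \Cref{defi:restr-process} realizes the restricted process exactly as you do: pass to the paths that stay in $A$ before the lifetime, restrict $\mathbb{P}_x$ for $x\in A$ (full measure coming from \Cref{rem8.25}), and check the axioms of \Cref{defi 4.4}. One point in your step (i) should be tightened: the augmentation over initial laws on $A$ can be strictly larger than the trace of the augmentation over laws on $E$, so the filtration does not literally ``not change''; take $\bigcap_{\mu\in\mathcal{P}(A)}\mathcal{F}^\mu(t)$, traced on $\Omega_A$, as the new filtration, which is right continuous because each completion $\mathcal{F}^\mu(t)$ of the right-continuous $\mathcal{F}(t)$ is right continuous and $\Omega_A\in\mathcal{F}^\mu(0)$.
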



\begin{defi}[Restriction of the process]\label{defi:restr-process}
If $X$ is a right process with resolvent $\mathcal{U}$, and $A\in \mathcal{A}(\mathcal{U})$, then one can construct a right process  $\widetilde{X}$ associated to the restriction of $\mathcal{U}$ to $A$ as in
\Cref{prop:restriction}, by setting
\begin{align*}
\widetilde{\Omega}
&:= \{ \omega \in
\Omega : \, {X}_t(\omega)\in A, \mbox{ for all }0\leq t < \zeta (\omega) \}\\
\widetilde{\mathbb{P}}^x
&:= \mathbb{P}_x|_{\widetilde{\Omega}}, x\in A, 
\quad \widetilde{X}_t(\omega):= {X}_t(\omega), \omega\in
\widetilde{\Omega}.
\end{align*}
The right process $\widetilde{X}$ defined above is called the {\rm restriction of $X$ to $A$}; for more details, see \cite{Sh88}.
\end{defi}

\begin{prop}\label{prop:continuous_modification}
    Let $\left(X, \mathbb{P}_x,x\in E\right)$ be a right process associated to the resolvent $\mathcal{U}$ on the Lusin measurable space $(E,\mathcal{B})$. 
    Further, let $m$ be a $\sigma$-finite measure on $E$, and $\tau$ be a natural topology on $E$.
    Furthermore, suppose that
    \begin{equation*}
        \mathbb{P}_m\left([0,\infty)\ni t\mapsto X(t)\in E \mbox{ is } \tau\mbox{-continuous} \right)=1.
    \end{equation*}
    Then there exists a set $E_0\subset E$ such that $E\setminus E_0$ is $m$-inessential and
    \begin{equation}\label{eq:continuous_modification}
        \mathbb{P}_x\left([0,\infty)\ni t\mapsto X(t)\in E_0 \mbox{ is } \tau\mbox{-continuous} \right)=1 \quad \mbox{for all } x\in E_0.
    \end{equation}
    Consequently, one can consider the restriction $\widetilde{X}$ of $X$ from $E$ to $E_0$, so that $\widetilde{X}$ is a right process with a.s. $\tau$-continuous paths in $E_0$; in particular, $\tau$ is a natural topology on $E_0$.
    Furthermore, the above assertion remains true if instead of "$\tau$-continuous" we use "$\tau$-right-continuous" or "$\tau$-c\`adl\`ag".
\end{prop}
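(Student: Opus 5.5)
We want an absorbing set $E_0$ with $E\setminus E_0$ $m$-inessential on which every path stays in $E_0$ and is $\tau$-continuous. Define
\begin{equation*}
    \varphi(x):=\mathbb{P}_x\left(\Lambda^c\right),\quad \Lambda:=\left\{\omega : [0,\infty)\ni t\mapsto X(t,\omega)\in E \mbox{ is } \tau\mbox{-continuous}\right\},
\end{equation*}
where continuity is understood on $[0,\zeta)$ if the process is not conservative. The first task is measurability: $\Lambda$ is universally measurable (as in the proof of \Cref{prop:tau-identification}, via \cite[Ch. IV, Theorem 34]{DeMe78}, reducing to dyadic restrictions using the right continuity from \Cref{thm 4.6}). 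Hence $\varphi$ is $\mathcal{B}^u$-measurable, and the hypothesis gives $m(\varphi)=0$.

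The key observation is that $\Lambda$ is shift-invariant in the weak sense $\Lambda\subset\theta(t)^{-1}\Lambda$, and $\Lambda^c\supset\theta(t)^{-1}(\Lambda^c)$. By the Markov property, $P_t\varphi(x)=\mathbb{P}_x(\theta(t)^{-1}\Lambda^c)\le\varphi(x)$, so $\varphi$ is supermedian, and $e^{-\alpha t}P_t\varphi\le\varphi$. Let $\hat\varphi:=\sup_\alpha \alpha U_{\alpha}\varphi$ be its excessive regularization; since $\hat\varphi\le\varphi$, we get $m(\hat\varphi)=0$. More usefully, $\mathbb{P}_x(\Lambda^c\cap\{\omega: \mbox{path continuous on }[s,\infty)\mbox{ for every } s>0\})$ accounts for the discrepancy between $\varphi$ and $\hat\varphi$; since $\hat\varphi(x)=\lim_{s\searrow0}P_s\varphi(x)=\mathbb{P}_x(\mbox{path not continuous on }(0,\infty))$, I would set
\begin{equation*}
    E_0':=[\hat\varphi=0]\cap[\varphi=0].
\end{equation*}
By \Cref{prop:zero_open} and \Cref{rem:exc_absorbing}, $[\hat\varphi=0]\in\mathcal{A}(\mathcal{U})$, as $\hat\varphi$ is excessive hence strongly supermedian. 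The set $[\varphi=0]$ however need not be absorbing, so instead I take the largest absorbing subset: points of $[\hat\varphi=0]$ have paths continuous on $(0,\infty)$ a.s.; discontinuity can only be at time $0$, i.e. $X(t)\not\to X(0)$ in $\tau$. By \Cref{thm 4.6}(i), since $\tau$ is natural, paths are $\tau$-right-continuous at $0$ $\mathbb{P}_x$-a.s. for every $x$, so in fact $\varphi=\hat\varphi$ everywhere. Hence $E_0:=[\hat\varphi=0]$ is absorbing, satisfies $m(E\setminus E_0)\le m([\hat\varphi>0])=0$, and by \Cref{rem:absorbing_intersection}(ii) $E\setminus E_0$ is $m$-inessential. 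Absorption gives $X(t)\in E_0$ for all $t$ a.s. (\Cref{rem8.25}), yielding \eqref{eq:continuous_modification}.

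Then I restrict via \Cref{prop:restriction} and \Cref{defi:restr-process}; the trace of $\tau$ remains natural on $E_0$ since the fine topology of the restriction is the trace of the fine topology. The càdlàg and right-continuous variants go verbatim, only replacing $\Lambda$. The main obstacle is the measurability of $\Lambda$ and the identity $\hat\varphi=\varphi$ via right-continuity at $0$. If that identity fails in the non-conservative setting (behaviour at $\zeta$), I would instead apply the inessential-set construction of \Cref{rem 2.2}(iii) to the $m$-negligible set $[\hat\varphi>0]$.
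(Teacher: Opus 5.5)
Your argument is correct and is essentially the paper's route. The paper defers to \cite[Proposition 5.1]{BeRo11a}, and the same template is written out in the proof of \Cref{prop:continuous_modification_special}. In both, $\varphi(x)=\mathbb{P}_x(\Lambda^c)$ is supermedian because $\theta(t)^{-1}(\Lambda^c)\subset\Lambda^c$. Right-continuity at $t=0$ in the natural topology $\tau$ upgrades this to $\varphi=\hat\varphi$ being excessive, which is exactly the point stressed in the remark after the proposition. Then $[\varphi=0]$ is an absorbing set with $m$-negligible complement.

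The only detail to tighten is that $E\setminus E_0$ must lie in $\mathcal{B}$ to be $m$-inessential in the sense of \Cref{defi:smallsets}; universal measurability of $\varphi$ is not enough. As in the proof of \Cref{prop:continuous_modification_special}, replace $\Lambda$ by an $\mathcal{F}^0$-measurable set that agrees with it $\mathbb{P}_x$-a.s. for every $x$. This is possible because paths are a.s. $\tau$-right-continuous, so continuity becomes a countable condition. Then $\varphi$ is Borel by \cite[Theorem (3.6)]{BlGe68}.
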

\begin{proof}
    The proof of \eqref{eq:continuous_modification} follows by the same arguments as in \cite[Proposition 5.1]{BeRo11a}, so we skip it.   
    The fact that $\tau$ is a natural topology on $E_0$ follows immediately from \Cref{thm 4.6}, (i).
\end{proof}

\begin{rem}
    We would like to stress that in \Cref{prop:continuous_modification}, the assumption that $\tau$ is a natural topology is crucial. 
    Indeed, the proof of \cite[Proposition 5.1]{BeRo11a}, realies heavily on the fact that $X$ is apriori a.s. right-continuous at $t=0$ with respect to $\tau$, and this is automatically satisfied if $\tau$ is a natural topology, by \Cref{thm 4.6}. 
    In the next result, which is needed in the main body of the paper, we drop the assumption that $\tau$ is a natural topology; however, we need to assume (i) below, instead.
\end{rem}

\begin{prop}\label{prop:continuous_modification_special}
    Let $\left(X, \mathbb{P}_x,x\in E\right)$ be a right process associated to the resolvent $\mathcal{U}$ on the Lusin measurable space $(E,\mathcal{B})$. 
    Further, let $\tau$ be a Lusin topology on $E$ whose Borel $\sigma$-algerba is precisely $\mathcal{B}$.
    Furthermore, suppose that
    \begin{enumerate}
        \item[(i)] For every $x\in E$ we have 
        \begin{equation*}
            \mathbb{P}_x\left((0,\infty)\ni t\mapsto X(t)\in E \mbox{ is } \tau\mbox{-continuous} \right)=1.
        \end{equation*}
        \item[(ii)] $m$ is a $\sigma$-finite measure on $E$ such that
        \begin{equation*}
        \mathbb{P}_m\left([0,\infty)\ni t\mapsto X(t)\in E \mbox{ is } \tau\mbox{-continuous} \right)=1.
        \end{equation*}
    \end{enumerate}
    Further, set
    \begin{equation*}
        E_0:=\left\{x\in E : \mathbb{P}_x\left([0,\infty)\ni t\mapsto X(t)\in E \mbox{ is } \tau\mbox{-continuous} \right)=1\right\}.
    \end{equation*}
    Then $E\setminus E_0$ is $m$-inessential.
    
    Consequently, one can consider the restriction $\widetilde{X}$ of $X$ from $E$ to $E_0$, so that $\widetilde{X}$ is a right process with a.s. $\tau$-continuous paths in $E_0$; in particular, $\tau$ is a natural topology on $E_0$.
    Furthermore, the above assertion remains true if instead of "$\tau$-continuous" we use "$\tau$-right-continuous" or "$\tau$-c\`adl\`ag".
\end{prop}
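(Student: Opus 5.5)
The plan is to show directly that $E\setminus E_0$ is $m$-negligible and that $E_0$ is an absorbing set, i.e. $E_0\in\mathcal{A}(\mathcal{U})$. Then \Cref{rem:absorbing_intersection}, (ii), gives that $E\setminus E_0$ is $m$-inessential. After that, \Cref{prop:restriction} and \Cref{defi:restr-process} provide the restricted right process $\widetilde X$, and \Cref{thm 4.6}, (i), shows that $\tau$ is natural on $E_0$.

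\textbf{Measurability and negligibility.} First I check that $E_0$ is universally measurable, hence after a Borel modification $\mathcal{B}$-measurable up to an $m$-null set. The set $\Lambda$ of $\tau$-continuous paths is universally measurable in path space, by \cite[Ch. IV, Theorem 34]{DeMe78}, as used in \Cref{prop:tau-identification}. Since $x\mapsto\mathbb{P}_x(\Lambda)$ is a kernel on $\mathcal{B}^u$, the set $E_0=[\mathbb{P}_\cdot(\Lambda)=1]$ is universally measurable. Hypothesis (ii) gives $\int\mathbb{P}_x(\Lambda^c)\,m(dx)=0$, so $m(E\setminus E_0)=0$.

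\textbf{Absorbing property.} Fix $x\in E$ and $\omega\in\Lambda'$, where $\Lambda'$ is the set of paths that are $\tau$-continuous on $(0,\infty)$; by (i), $\mathbb{P}_x(\Lambda')=1$. For each $s>0$, the shifted path $\theta(s)\omega$ is $\tau$-continuous on $[0,\infty)$. The Markov property then gives
\[
\mathbb{E}_x\bigl\{1-\mathbb{P}_{X(s)}(\Lambda)\bigr\}=\mathbb{P}_x\bigl(\theta(s)^{-1}\Lambda^c\bigr)\le \mathbb{P}_x(\Lambda'^c)=0 .
\]
Hence $\mathbb{P}_x(X(s)\in E\setminus E_0)=0$ for every $s>0$ and every $x\in E$, so $U_\alpha 1_{E\setminus E_0}\equiv0$ on all of $E$.

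Negligibility alone is not enough: I need $\mathbb{P}_x(D_{E\setminus E_0}<\infty)=0$ for every $x\in E_0$, so that \Cref{rem8.25} applies. To get this, I take $x\in E_0$, so the path is $\tau$-continuous on $[0,\infty)$ almost surely. On $\{X(t)\in E\setminus E_0\}$ the future path $\theta(t)\omega$ is still $\tau$-continuous. Taking $s=t$ in the argument above, for any stopping time $T$ one has $\mathbb{P}_x\bigl(X(T)\in E\setminus E_0,\,T<\infty\bigr)=\mathbb{E}_x\{\mathbb{P}_{X(T)}(\Lambda^c);T<\infty\}$, and this vanishes by the strong Markov property (\Cref{rem:strong Markov}) because $\theta(T)\omega\in\Lambda$.

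\textbf{Main obstacle.} The delicate step is passing from these stopping-time estimates to the entry time $D=D_{E\setminus E_0}$. I must ensure $X(D)\in E\setminus E_0$ on $\{D<\infty\}$, or else approximate $D$ from above by hitting times of sets along which the previous identity applies. I will argue through the function $v:=\mathbb{P}_\cdot(\Lambda^c)$. First, $v$ is strongly supermedian: since $\Lambda$ is shift-invariant in the sense that $\omega\in\Lambda\Rightarrow\theta(T)\omega\in\Lambda$, we get $R^M_\alpha v\le v$ for all $M$. Second, $E_0=[v=0]$. By \Cref{rem8.25}, (iii), the set $[v=0]$ is absorbing. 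This sidesteps the path-regularity issue at $D$, which is where I expect any difficulty. The same argument works verbatim with "right-continuous" or "c\`adl\`ag" in place of "continuous".
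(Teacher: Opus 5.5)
Your decisive step is the paper's. You set $V(x):=\mathbb{P}_x(\Lambda^c)$ in your notation. The inclusion $\theta(T)^{-1}(\Lambda^c)\subset\Lambda^c$, together with \eqref{Hunt-th} and the strong Markov property, gives $R^M_\alpha V\le V$. \Cref{rem8.25}(iii) then makes $E_0=[V=0]$ absorbing, and (ii) gives $m(V)=0$. Your intermediate observation $P_t1_{E\setminus E_0}\equiv 0$ for $t>0$ is correct but not needed.

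What is missing is the measurability of $V$, and this is not a formality. The criterion ``$R^M_\alpha f\le f$ for all $M\in\mathcal{B}$ implies $f$ strongly supermedian'' is only available for $f\in p\mathcal{B}^n$. Moreover, $E_0$ itself must belong to $\mathcal{B}$, both in \Cref{defi:smallsets} and for the restriction in \Cref{defi:restr-process}.

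Your route does not deliver this. \cite[Ch. IV, Theorem 34]{DeMe78} concerns subsets of $E^D$ with $D$ countable. The event ``$\tau$-continuous on $[0,\infty)$'' involves uncountably many times. Since $\tau$ is not assumed natural, you have no a priori path regularity that would reduce it to countably many times.

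Even granting universal measurability, a universally measurable function need not be nearly Borel. Your ``Borel modification up to an $m$-null set'' changes the very set $E_0$ whose complement must be shown $m$-inessential, and it breaks $E_0=[V=0]$. Tellingly, hypothesis (i) plays no role in your final argument.

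That is exactly where the paper uses (i). Let $\Omega_o$ be the set of paths that are $\tau$-continuous on $(0,\infty)$; by (i) it has full $\mathbb{P}_x$-measure for every $x\in E$. Fix a metric $\rho$ compatible with $\tau$. On $\Omega_o$, continuity on $(0,\infty)$ lets you replace suprema over $(1/n,t]$ by suprema over rationals. So continuity at $0$ is equivalent to membership in
\[
\Lambda_1:=\bigcap_{\varepsilon\in\mathbb{Q}_+}\bigcup_{t\in\mathbb{Q}_+}\bigcap_{n\geq 1}\Bigl\{\sup_{s\in(1/n,t]\cap\mathbb{Q}}\rho\bigl(X(s),X(0)\bigr)\le\varepsilon\Bigr\}\in\mathcal{F}^0:=\sigma\bigl(X(t):t\geq 0\bigr).
\]
Hence $V=1-\mathbb{P}_\cdot(\Lambda_1)$ is Borel by \cite[Theorem (3.6)]{BlGe68}. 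It follows that $E_0=[V=0]\in\mathcal{B}$ and $V\in p\mathcal{B}\subset p\mathcal{B}^n$, and the rest of your argument goes through unchanged. The right-continuous and c\`adl\`ag variants need the analogous countable reduction.
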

\begin{proof}

Let 
$$
\Omega_o:=\{\omega \in \Omega :(0,\infty)\ni t\mapsto X(t)\in E \mbox{ is } \tau\mbox{-continuous}\}.
$$
By assumption $(i)$ we have $\mathbb{P}(\Omega_o)=1.$
Let
$$
\Lambda:=\{\omega \in\Omega : [0,\infty)\ni t\mapsto X(t, \omega)\in E \mbox{ is not } \tau\mbox{-continuous}\}
$$ 
and
$$
\Lambda_o:=
\{\omega \in\Omega : [0,\infty)\ni t\mapsto X(t, \omega)\in E \mbox{ is not } \tau\mbox{-continuous in zero}\}
$$
Clearly we have 
$\Omega_o\cap\Lambda=
\Omega_o\cap\Lambda_o$ and
\begin{align*}
\Lambda_o^c
&=\{\omega \in \Omega: X(t,\omega) \rightarrow X(0,\omega) \mbox{ when }t \to 0 \}\\ 
&=
\bigcap_{\varepsilon\in \mathbb{Q}_+} \bigcup_{t\in\mathbb{Q}_+}
\{w\in \Omega: \sup_{0<s\leq t} | X(s, \omega)- X(0, \omega)|\leq \varepsilon\}\\
&=\bigcap_{\varepsilon\in \mathbb{Q}_+} \bigcup_{t\in\mathbb{Q}_+}\bigcap_n\{w\in \Omega: \sup_{\frac{1}{n}<s\leq t} | X(s, \omega)- X(0, \omega)|\leq \varepsilon \}.    
\end{align*}
Further, let
\begin{equation*}
    \Lambda_1:=
\bigcap_{\varepsilon\in \mathbb{Q}_+} \bigcup_{t\in\mathbb{Q}_+}\bigcap_n
\{ w\in \Omega: \sup_{\frac{1}{n}<s\leq t, s\in \mathbb{Q}} | X(s, \omega)- X(0, \omega)|\leq \varepsilon \},
\end{equation*}
and $\mathcal{F}^0:= \sigma(X(t): t \geq 0)$.
Then $\Lambda_1\in \mathcal{F}^0$ and
$\Omega_o\cap \Lambda_o^c= 
\Omega_o\cap \Lambda_1$ since 
the trajectories from $\Omega_o$ are $\tau$-continuous on $(0, \infty)$.

Define the function $V$ on $E$ as
$$
V(x):=\mathbb{P}_x(\Lambda), \ x \in E.
$$
Let us show that $V$ is a Borel function on $E$.
Indeed, recall that by \cite[Theorem (3.6)]{BlGe68} we have that
$E\ni x \mapsto \mathbb{P}_x(\Gamma)$ is a Borel function on $E$ for every $\Gamma$ in $\mathcal{F}^0.$
Because $\mathbb{P}_x(\Omega_o)=1$ we get that
$V(x)=\mathbb{P}_x (\Lambda_o)=1-\mathbb{P}_x(\Omega_o\cap\Lambda_o^c)=
1- \mathbb{P}_x(\Lambda_1).$

We show that $V$ is a strongly supermedian function w.r.t. $\mathcal{U}_\alpha$
for some $\alpha>0$. 
It is sufficient to prove that $R^K_\alpha V\leq V$ for any Ray compact subset $K$ of $E$.
Using the probabilistic characterization of the reduced function due to G.A. Hunt, see (\ref{Hunt-th}), we have for every $x\in E$: 
\begin{align*}
R^K_\alpha V(x)
&=\mathbb{E}_x (e^{-\alpha D_K} V(X(D_K)))
=\mathbb{E}_x (e^{-\alpha D_K} \mathbb{P}_{X(D_K)}(\Lambda))\\
&=
\mathbb{E}_x (e^{-\alpha D_K} 1_\Lambda \circ \theta(D_K))
=
\mathbb{E}_x \left\{e^{-\alpha D_K} 1_{[\theta(D_K)]^{-1}(\Lambda)}\right\}
\leq 
\mathbb{P}_x(\Lambda)\\
&=V(x),
\end{align*}
where the last inequality holds because $\theta_T^{-1}(\Lambda)\subset \Lambda$. 
By Remark \ref{rem8.25} it follows that the set 
$E_0=
\{x \in E:  \mathbb{P}_x(\Lambda)=0\}$ belongs to $\ca(\cu)$.
By assumption $(ii)$ we have $m(V)=0$ and therefore 
$m(E\setminus E_0)=m([V>0])=0$.
We conclude that $E\setminus E_0$ is $m$-inessential, completing the proof.
\end{proof}

Let $M\in \cb$, $F:= E\setminus M$
and assume that 
$\cu=(U_\alpha)_{\alpha>0}$ is a sub-Markovian resolvent of kernels on 
$(F, \cb|_F)$,
the 
resolvent of a right process with state space $F$.
For all $\alpha>0$ define the kernel
\[
  U'_\alpha f
  = 1_F U_\alpha f
    + \frac{1}{\alpha} 1_M f, \;
 f\in p\cb.
\]
Then, as in Definition \ref{defi:trivial modification},
the  family $\cu'=(U'_\alpha)_{\alpha>0}$ is a
sub-Markovian resolvent of kernels on $(E,\cb)$. 
We can expose now the trivial extension of a right process to a larger set.

\begin{prop} \label{prop-trivial}
Assume that $\cu=(U_\alpha)_{\alpha>0}$ 
is the 
resolvent on $F$ of a right process $X$ with state space $F$.
Then $\cu'=(U'_\alpha)_{\alpha>0}$ 
is the resolvent of a right process $X'=(\Omega', \cf', X'_t, \mathbb{P}'^x , x\in E)$ with state space $E$ and 
each point of $M$ is a trap for $X'$, that is, 
$
\mathbb{P}'^x[X'_t=x \, \mbox{ for all }\,  t\geq 0]=1\,  \mbox{ for all }\, x\in M.$
Moreover, $X$ is the restriction of $X'$ from $E$ to $F$ in the sense of \Cref{defi:restr-process}.
\end{prop}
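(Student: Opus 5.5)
We would take $X'$ to be the trivial extension of $X$ to $E$, and check that its resolvent is $\cu'$, that it is a right process, and that restricting it back to $F$ gives $X$.

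\emph{Construction.} Let $\Omega^M := \{ \omega_x : x\in M\}$, where $\omega_x(t)=x$ for all $t\geq 0$. Set $\Omega' := \Omega \sqcup \Omega^M$ and $X'_t := X_t$ on $\Omega$, $X'_t(\omega_x):=x$. For $x\in F$ put $\mathbb{P}'^x := \mathbb{P}_x$ (carried by $\Omega$), and for $x\in M$ put $\mathbb{P}'^x := \delta_{\omega_x}$. The shifts act as $\theta(t)$ on $\Omega$ and trivially on $\Omega^M$. The $\sigma$-algebras are generated by the traces on $\Omega$ and by the canonical map on $\Omega^M$, and are then completed as in the definition of right processes. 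With these choices, a direct computation gives
\[
\mathbb{E}'^x\!\int_0^\infty e^{-\alpha t} f(X'_t)\,dt = 1_F(x)\, U_\alpha f(x) + 1_M(x)\,\frac{f(x)}{\alpha},
\]
which is exactly $\cu'$. The trap property is immediate, and the simple Markov property reduces separately to each of the two pieces.

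\emph{Right-process axioms.} The fine topology for $\cu'$ is the disjoint union of the fine topology of $F$ and the discrete topology on $M$: a function $v$ is $\cu'_\beta$-excessive iff $v|_F$ is $\cu_\beta$-excessive, with arbitrary values on $M$. For a natural topology we take a Ray topology of $\cu'$. By Remark \ref{rem:raycones}(i), a Ray cone $\crr$ of $\cu$ is contained in a Ray cone $\crr'$ of $\cu'$, and the traces of $\crr'$ on $F$ remain $\cu_\beta$-excessive.

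We then check the conditions of Definition \ref{defi 4.4}.
\begin{enumerate}
\item Right continuity of $v(X')$ for $v\in\mathcal{E}(\cu'_\beta)$: on $\Omega^M$ the path is constant, and on $\Omega$ it follows from $X$ being right.
\item Right continuity of paths in the Ray topology: this comes from Theorem \ref{thm 4.6}(i) applied to $X$, together with the traces of $\crr'$ being $\cu_\beta$-excessive, hence finely continuous along $X$.
\item The usual hypotheses on the filtration: since $\Omega$ and $\Omega^M$ are disjoint and each $\mathbb{P}'^x$ charges only one of them, completing under $\mathbb{P}'^\mu$ splits as a mixture. Right continuity of $\mathcal{F}'(t)$ is inherited from $\mathcal{F}(t)$ on $\Omega$ and is trivial on $\Omega^M$.
\end{enumerate}

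\emph{Main obstacle.} The delicate step is the bookkeeping behind the universal completions: $\mathcal{F}'(t)=\widetilde{\mathcal{F}'}(t)$, and $x\mapsto \mathbb{P}'^x(A)$ must be $\mathcal{B}^u$-measurable. To handle this, we would write any initial law as $\mu=\mu|_F+\mu|_M$ and note that $\mathbb{P}'^\mu=\mathbb{P}_{\mu|_F}+\int_M\delta_{\omega_x}\,\mu(dx)$, so both completions are handled piece by piece. If this becomes cumbersome, the fallback is to invoke the general existence result: $\cu'$ satisfies $\mathbf{(H_{\mathcal{C}})}$ (Definition \ref{defi:trivial modification}), so Theorem \ref{thm:saturation} applies, and the saturation points not in $E$ can be shown to be polar, because the resolvent is already realized on $F$ and the points of $M$ are traps.

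\emph{Restriction.} Finally, $F\in\ca(\cu')$, since $U'_\alpha 1_M = 0$ on $F$. Applying Definition \ref{defi:restr-process} gives $\widetilde\Omega\supset\Omega$ carrying $\mathbb{P}'^x=\mathbb{P}_x$ for $x\in F$, so the restriction of $X'$ to $F$ is $X$.
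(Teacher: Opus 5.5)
Your proposal is correct and follows essentially the paper's route: the paper only cites \cite[p.~118]{MaRo92} for this trivial extension, which is built the same way, by adjoining for each $x\in M$ the constant path $\omega_x$ with $\mathbb{P}'^x=\delta_{\omega_x}$. Two small points to tidy. First, $F\in\ca(\cu')$ should be justified by $D_M=\infty$ $\mathbb{P}'^x$-a.s.\ for $x\in F$ (paths started in $F$ lie in $\Omega$ and never enter $M$), since $U'_\alpha 1_M=0$ on $F$ alone does not yield $R^M_\alpha 1=0$ on $F$ in general; second, Remark~\ref{rem:raycones}(i) is formulated for a resolvent already living on $E$, but you do not need it, because any Ray cone of $\cu'_\beta$ (which exists since $\cu'$ satisfies $\mathbf{(H_{\mathcal{C}})}$) has $\cu_\beta$-excessive traces on $F$.
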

For the construction of the process $X'$, the extension of $X$ from $F$ to $E$, we refer to \cite[p. 118]{MaRo92}.

\begin{defi}[Trivial extension of the process]\label{defi:trivial}
The process $X'$ from Proposition 
\ref{prop-trivial} is called the {\rm trivial extension} of the process $X$ from $F$ to $E$.
\end{defi}

\begin{prop}\label{prop-trivial-process}
Let $X$ be a right process with state space $E$ and resolvent family 
$\cu=(U_\alpha)_{\alpha>0}$ and  
 let $M\in \cb$ be such that
$R^M_\beta 1=0$ on $E\setminus M$.
Consider the resolvent $\cu'=(U'_\alpha)_{\alpha>0}$ on $E$ defined by (\ref{trivial-mod}).
Then $\cu'=(U'_\alpha)_{\alpha>0}$ 
is the resolvent of a right process $X'$  with state space $E$ and 
each point of $M$ is a trap for $X'$.
\end{prop}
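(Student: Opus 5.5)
The plan is to reduce the statement to the two results that precede it: first restrict $X$ to the complement of $M$ (\Cref{prop:restriction}), then trivially extend back to $E$ (\Cref{prop-trivial}). Set $F:=E\setminus M$.

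\emph{Step 1: $F$ is absorbing.} By \eqref{eq: absorbing}, $F\in\ca(\cu)$ means that $R^{E\setminus F}_\beta 1=R^M_\beta 1=0$ on $F$ for some $\beta>0$. This is exactly the hypothesis. By Hunt's formula \eqref{Hunt-th}, it also says that $\mathbb{P}_x(D_M<\infty)=0$ for every $x\in F$, so that
\[
U_\alpha 1_M(x)=\mathbb{E}_x\int_0^\infty e^{-\alpha t}1_M(X(t))\,dt=0,\qquad x\in F,\ \alpha>0 .
\]
Hence the trivial modification \eqref{trivial-mod} is well defined. Moreover, for $x\in F$ and $f\in p\cb$ we have $U_\alpha f(x)=U_\alpha(f1_F)(x)=(U_\alpha|_F)(f|_F)(x)$.

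\emph{Step 2: restriction.} By \Cref{prop:restriction} and \Cref{defi:restr-process}, the restriction $\widetilde X$ of $X$ to $F$ is a right process on $(F,\cb|_F)$ whose resolvent is $\cu|_F$.

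\emph{Step 3: trivial extension.} Apply \Cref{prop-trivial} to $\widetilde X$ on $F$, with $M=E\setminus F$. This produces a right process $X'$ on $E$ in which every point of $M$ is a trap. Its resolvent is
\[
1_F\,(U_\alpha|_F)(f|_F)+\tfrac1\alpha 1_M f .
\]
By the identity at the end of Step 1, this coincides with $U'_\alpha f$ from \eqref{trivial-mod}, for all $f\in p\cb$ and $\alpha>0$. Therefore $\cu'$ is the resolvent of the right process $X'$, and the points of $M$ are traps, which is the claim.

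Every step is a direct appeal to an earlier result. The only point needing care is Step 1: one must check that the hypothesis ``$R^M_\beta 1=0$ on $E\setminus M$'' is the absorbing-set condition itself (rather than merely $U_\alpha 1_M=0$ there), because \Cref{prop:restriction} requires $F\in\ca(\cu)$. Once that is in place, matching the resolvent formulas in Step 3 is immediate.
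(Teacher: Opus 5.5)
Your proposal is correct and follows the paper's own proof. The paper likewise observes that $E\setminus M\in\ca(\cu)$, restricts $X$ to $E\setminus M$ via \Cref{prop:restriction}, and then applies \Cref{prop-trivial} to obtain $X'$; your two extra checks fill in details the paper leaves implicit, namely that $U_\alpha 1_M=0$ on $E\setminus M$ (so that \eqref{trivial-mod} is well defined) and that the resolvent of the trivial extension coincides with $U'_\alpha$.
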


\begin{proof}
Because 
$E\setminus M\in \ca(\cu)$, we can 
apply Proposition \ref{prop:restriction}, to the  restrict of $X$ to $E\setminus M$.
The requested process $X'$ on $E$ is obtained now applying 
Proposition \ref{prop-trivial} to this restriction.
\end{proof}

\begin{defi}[Trivial modification of the process]\label{defi:trivial-process}
The process $X'$ from Proposition 
\ref{prop-trivial-process} is called the {\rm trivial modification of the process} $X$ on $M$.
\end{defi}

\subsubsection{The killed process}
In this part we assume that $X$ is a right process on $(E,\mathcal{B})$ with transition semigroup $(P_t)_{t\geq 0}$ and resolvent $\mathcal{U}=(U_\alpha)_{\alpha>0}$.

\begin{prop}\label{Borel-measurab}
Let $F$ be a  closed set in a natural topology.
Then $R_\alpha^F$ and $B^F_\alpha$ are Borel kernels on $E$, that is, if 
$g\in p\cb$ then 
$R_\alpha^F g$ and 
$B_\alpha^F g$
are Borel functions on $E$.
\end{prop}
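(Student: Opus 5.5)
We must show that for $F$ closed in a natural topology $\tau$, the kernels $R^F_\alpha$ and $B^F_\alpha$ map $p\mathcal{B}$ into Borel (not merely universally measurable, nor merely nearly Borel) functions. The plan is to use Hunt's formula \eqref{Hunt-th} together with \Cref{defi:balayage_kernel}: $R^F_\alpha g(x)=\mathbb{E}_x\{e^{-\alpha D_F}g(X(D_F))\}$ and $B^F_\alpha g(x)=\mathbb{E}_x\{e^{-\alpha T_F}g(X(T_F))\}$. By \cite[Theorem (3.6)]{BlGe68}, $x\mapsto \mathbb{E}_x\{Y\}$ is Borel whenever $Y$ is bounded, non-negative and $\mathcal{F}^0=\sigma(X(t):t\geq 0)$-measurable. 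So, after a monotone class reduction to bounded $g$, in fact to $g$ in a countable $\tau$-continuous, measure-separating family, it suffices to show that the random variables $e^{-\alpha D_F}g(X(D_F))$ and $e^{-\alpha T_F}g(X(T_F))$ coincide $\mathbb{P}_x$-a.s., for every $x$, with $\mathcal{F}^0$-measurable ones.

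\textbf{Step 1: the hitting times are $\mathcal{F}^0$-measurable.} Since $\tau$ is natural, the paths are $\tau$-right-continuous $\mathbb{P}_x$-a.s. for all $x$, by \Cref{thm 4.6}(i). To exploit this, I would pass to an equivalent version of $X$ with \emph{every} path $\tau$-right-continuous; this is possible, as remarked after \Cref{defi 4.4}. Fix a metric $d$ compatible with $\tau$. Because $F$ is $\tau$-closed and the paths are right-continuous, for every $t$ we have
\begin{equation*}
[D_F\le t]=\bigcap_{k}\bigcup_{s\in(\mathbb{Q}\cap[0,t])\cup\{t\}}\bigl[d(X(s),F)<1/k\bigr].
\end{equation*}
This formula is where I expect the main obstacle. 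For right-continuous paths it is not exact: a path may approach $F$ only along left limits, so that $d(X(s),F)\to0$ while $X$ never enters $F$. I would therefore rather use
\begin{equation*}
[D_F<t]=\bigcup_{s\in\mathbb{Q}\cap[0,t)}[X(s)\in F]\;\cup\;\{\text{right-limit entries}\},
\end{equation*}
and handle the right-limit entries as follows. If $X(u)\in F$, then right-continuity only gives $X(s)\to X(u)\in F$ as $s\downarrow u$, hence $d(X(s),F)\to 0$, and not $X(s)\in F$. The correct characterization is therefore:
\begin{equation*}
D_F<t \quad\Longleftrightarrow\quad \exists\,u<t \text{ with } \liminf_{s\downarrow u,\ s\in\mathbb{Q}} d(X(s),F)=0 \text{ and } X(u)\in F.
\end{equation*}
To avoid this subtlety, I would approximate $F$ by the $\tau$-open sets $G_k=[d(\cdot,F)<1/k]$. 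For open $G$, right-continuity gives exactly $[D_G<t]=\bigcup_{s\in\mathbb{Q}\cap[0,t)}[X(s)\in G]\in\mathcal{F}^0$. Moreover, $D_{G_k}\nearrow$ some $S\le D_F$.

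\textbf{Step 2: identifying the limit.} It remains to show that $S=D_F$ and $X(D_{G_k})\to X(D_F)$ $\mathbb{P}_x$-a.s. This is the quasi-left-continuity issue, and right processes need not have it. So instead of pathwise identification, I would argue potential-theoretically. The function $R^{G_k}_\alpha u$ is Borel by Step 1 for $u$ bounded $\tau$-continuous and $\mathcal{U}_\alpha$-excessive, since $X(D_{G_k})$ is an $\mathcal{F}^0$-measurable limit along the rationals by right-continuity. The sequence $R^{G_k}_\alpha u$ decreases in $k$. I would then invoke the known fact (\cite{BeBo04a}, cf. \cite[(10.30)]{Sh88}) that for $\tau$-closed $F$ and a Ray topology one has $\inf_k R^{\overline{G_k}}_\alpha u=R^F_\alpha u$ pointwise, after passing to a finer Ray topology if necessary, by \Cref{rem: ray_topology}(i). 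An infimum of Borel functions is Borel. A monotone class argument over $u$ in a Ray cone then extends the conclusion to $R^F_\alpha g$ for $g\in p\mathcal{B}$.

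\textbf{Step 3: the balayage.} For $B^F_\alpha$, I would use $B^F_\alpha g=\lim_{n\to\infty} e^{-\alpha/n}P_{1/n}R^F_\alpha g$, which follows from the terminal time identity $T_F=\lim_n\bigl(1/n+D_F\circ\theta(1/n)\bigr)$. Each $P_{1/n}$ preserves Borel functions, so $B^F_\alpha g$ is a pointwise limit of Borel functions and is therefore Borel. The main obstacle, as indicated, is the pointwise (not merely quasi-everywhere) convergence $R^{G_k}_\alpha\to R^F_\alpha$ in Step 2.
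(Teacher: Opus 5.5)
The gap is in Step~2. Borel measurability of $R^{G_k}_\alpha u$ for the open sets $G_k$ is fine. The identity $\inf_k R^{G_k}_\alpha u=R^F_\alpha u$ at \emph{every} point of $E$, however, is not a fact you can quote. It amounts to $D_{G_k}\nearrow D_F$ and $X(D_{G_k})\to X(D_F)$, $\mathbb{P}_x$-a.s. for all $x$. That is exactly the quasi-left-continuity you rightly said right processes may lack. The approximation result from \cite{Sh88} used in the proof of \Cref{lem:B_is_finely_feller} does not cover it: it approximates $T_V$ through an \emph{increasing} sequence of finely closed subsets of $V$, not through shrinking neighbourhoods.

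With your $G_k$ the identity is in fact false. On $E=\mathbb{R}$, let $X(t)=x+t$ for $x\geq 0$. From $x<0$ the path moves right at unit speed and, each time it would reach $0$, is placed at $-1$. This deterministic flow is a right process, and the Euclidean topology is natural because the paths are Euclidean right-continuous (cf. \Cref{thm 4.6}). Take $F=\{0\}$ and $G_k=(-1/k,1/k)$. Then $R^F_\alpha 1(-1/2)=0$, but $R^{G_k}_\alpha 1(-1/2)=e^{-\alpha(1/2-1/k)}\to e^{-\alpha/2}$.

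Switching to a finer Ray topology does not help as long as the $G_k$ come from $\tau$. If you take Ray neighbourhoods instead, the Ray--Knight argument places $\lim_k X(D_{G_k})$ only in the closure of $F$ in the Ray compactification. That identifies the limit with $X(D_F)$ when $F$ is Ray-compact, but not for a merely closed $F$, where the limit may lie outside $E$. Borel measurability needs the identity everywhere; a quasi-everywhere version would not do. So the core of your proof is missing.

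For comparison, the paper makes no approximation at all. It asserts directly that $D_F$ is $\mathcal{F}^0$-measurable, writing $[D_F>s]$ through the events $[X(t)\in E\setminus F]$ at rational $t$, using right continuity and openness of $E\setminus F$. Then $R^F_\alpha g(x)=\mathbb{E}_xY$ with $Y\in b\mathcal{F}^0$ is Borel by \cite[Theorem (3.6)]{BlGe68}. Next, $B^F_\alpha g=\widehat{R^F_\alpha g}$ is Borel for bounded $\mathcal{U}_\alpha$-excessive $g$, and a monotone class argument finishes.

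Your Step~3 is a workable alternative to this last part, but only for finely continuous $g$. On $[T_F=0]$ you need $g(X(1/n+D_F\circ\theta(1/n)))\to g(X(0))$, which your preliminary reduction to a countable $\tau$-continuous family allows.

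Your hesitation in Step~1 is justified and bears on the paper's step as well. A rational-time description of this type is correct for open sets. For closed $F$, even a continuous path can miss $F$ at every rational time and still hit it, e.g. $X(t)=t-\sqrt{2}$ with $F=\{0\}$. So the $\mathcal{F}^0$-measurability of $D_F$ must be argued differently. For continuous paths, $[D_F\leq t]=\bigcap_k\bigcup_{q\in(\mathbb{Q}\cap[0,t])\cup\{t\}}[d(X(q),F)<1/k]$ works. For merely right-continuous paths, this is precisely the point your proposal leaves open.
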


\begin{proof}
Let $\cf^0:=\sigma(X(t), t\geq 0)$.
The main observation is that 
$\Omega\ni\omega \mapsto D_F(\omega)$
is $\cf^0$-measurable, which follows arguing as in $(6.16)$ from \cite{BlGe68}, page 35.
Indeed, if $s\geq 0$ then we have
\begin{align*}
[D_F>s]
&=\{\omega \in \Omega:\mbox{there exists } \varepsilon \in \mathbb{Q}_+^*
\mbox{ such that } X(t)(\omega)\in E\setminus F \mbox{ for all } t\in[s, s+\varepsilon]\}\\
&=\bigcup_{\varepsilon \in \mathbb{Q}_+^*}\bigcap_{t\in[s, s+\varepsilon]\cap\mathbb{Q}} X(t)^{-1}(E\setminus F),   
\end{align*} 
where for the second equality we  used the right continuity of the paths of $X$ (in the natural topology) and the fact that $E\setminus F$ is an open set.
Since clearly $X(t)^{-1}(E\setminus F)\in \cf^0$, we conclude that also $[D_F>s]\in \cf^0$.
The process  $X$ is progressively measurable 
and consequently $(t, \omega)\mapsto g(X(t)(\omega))$
is 
$\cb(\mathbb{R}_+)\otimes \cf^0$-measurable.
It follows that
\begin{equation*}
    \Omega\ni\omega\mapsto Y(\omega):= e^{-\alpha D_F(\omega)} g (X(D_F(\omega))(\omega)) \mbox{ is } \cf^0\mbox{-measurable}.
\end{equation*}
Hence $R^F_\alpha g(x)=
\mathbb{E}_x (Y)$, with $Y\in b\cf^0$.
On the other hand, by
\cite[Theorem (3.6)]{BlGe68}, 
the function
$E\ni x\mapsto \mathbb{E}_x (Y)$
is $\cb$-measurable for all $Y\in b\cf^0$, since the transition semigroup  consists of Borel kernels on $E$.
Consequently, $R^F_\alpha g\in p\cb$.
If $g\in b\ce(\cu_\alpha)$ then
$B^F_\alpha g=\widehat{R^F_\alpha g}\in 
\ce(\cu_\alpha)$, so, it is also a  Borel function on $E$.
By a monotone class argument it results that $B^F_\alpha g\in p\cb$ for all $g\in p\cb$.
\end{proof}

Let $D\in \cb$ be a finely open subset of $E$.
For every $\alpha > 0$ we denote by $U^D_\alpha$
the kernel on  $(E, \cb^n)$  given by 
\begin{equation*}
    U^D_\alpha f := U_\alpha f - B_\alpha^{E\setminus D}U_\alpha f, \quad f \in bp\cb^n.
\end{equation*}
Using (\ref{Hunt-th}) one can see that
\begin{equation*}
U^D_\alpha f(x)
=\mathbb{E}_x \left\{\int_0^{T_D}e^{-\alpha t} f(X(t))\;dt\right\} \mbox{ for } f \in bp\cb^n, x\in E, \mbox{ and } \alpha>0.
\end{equation*}
Then the family 
$\mathcal{U}^D=(U^D_\alpha)_{\alpha>0}$
is a sub-Markovian resolvent of kernels on $(E, \cb^n)$,
$U^D_\alpha\leq U_\alpha$, and 
$U^D_\alpha(1_{E\setminus D})=0$ for all $\alpha>0$; cf. e.g. \cite{BeBo96} and \cite{BeBo04a}. 
In particular, we may consider the restriction of $\mathcal{U}^D$ to $D$, it is a sub-Markovian resolvent of kernels 
on  $(D, \cb^n)$, also denoted by $\mathcal{U}^D$.

Let
\begin{equation*}
    \widetilde{D}:= \left\{ x\in E: \exists\; u\in \ce(\cu_\alpha)  \mbox{ such that } B_\alpha^{E\setminus D} u(x) < u(x)\right\}. 
\end{equation*}
Then by \cite{BeBo96} and 
Section 3.6 in \cite{BeBo04a} the set $\widetilde{D}$ is finely open,
$\widetilde{D}\in \cb^n$, and
$D\subset \widetilde{D}$.

\begin{prop} \label{killed-process}
Let $D\in \cb$ be a finely open subset of $E$. 
Then the following assertions hold.
\begin{enumerate}
    \item[(i)] There exists $\sigma$-algebra $\widetilde{\cb}$  on $\widetilde{D}$ 
    such that 
    $(\widetilde{D}, \widetilde{\cb})$ 
    is a Radon measurable space,  
    $\cb|_{\widetilde{D}}
    \subset \widetilde{\cb}
    \subset \cb^n|_{\widetilde{D}}$,
    and the family 
    $\mathcal{U}^D=(\mathcal{U}^D_\alpha)_{\alpha>0}$ 
    is a sub-Markovian resolvent of kernels on $(\widetilde{D}, \widetilde{\cb})$ 
    such that
    $\ce(\cu^D_\alpha)$ generates $\widetilde{\cb}$.
    \item[(ii)]  Let $X^D$ denote the process $X$ killed at time $T_{E\setminus D}$, that is,
    \begin{equation*}
        X^D(t)(\omega):=
        \begin{cases}
            
    {X}(t)(\omega), \quad &t<T_{E\setminus D}(\omega)\\ 
    \Delta,\quad & 
    t\geq T_{E\setminus D}(\omega)
        \end{cases},
        \quad \omega\in\Omega.
    \end{equation*}
    Then $X^D$ is a right process with state space $\widetilde{D}$ and life time $T_{E\setminus D}$, 
    and $\cu^D$ is its associated resolvent.
    In addition, the set $\widetilde{D}\setminus D$ is polar with respect to $X^D$.
    \item[(iii)] If $D$ is an open set in a natural topology then $\cu^D$ is a resolvent of kernels on $(D, \cb|_D)$ and 
    $X^D$ is a right process with state space $D$ and resolvent $\cu^D$.
\end{enumerate}
\end{prop}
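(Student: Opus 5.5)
Throughout, the process is $X^D$, i.e. $X$ killed at $T:=T_{E\setminus D}$, and the plan is to work directly with it and read off its resolvent from Hunt's formula \eqref{Hunt-th}. Since $D$ is finely open, $\mathbb{P}_x(T>0)=1$ for $x\in D$ by \Cref{rem:finely open prob}. The strong Markov property at $T$ (\Cref{rem:strong Markov}) gives
\begin{equation*}
U_\alpha f(x)-\mathbb{E}_x\left\{\int_0^{T}e^{-\alpha t}f(X(t))\,dt\right\}=\mathbb{E}_x\{e^{-\alpha T}U_\alpha f(X(T))\}=B^{E\setminus D}_\alpha U_\alpha f(x).
\end{equation*}
Hence $\cu^D$ is the resolvent of $X^D$ pointwise on $E$. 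The resolvent equation and sub-Markovianity of $\cu^D$ follow from those of $X^D$. The fact that $U^D_\alpha(1_{E\setminus D})=0$ on $D$ follows from path right continuity in a natural topology together with $T>0$.

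For (i), I would take $\widetilde{\cb}$ to be the $\sigma$-algebra on $\widetilde D$ generated by $\ce(\cu^D_\alpha)|_{\widetilde D}$ together with $\cb|_{\widetilde D}$. The inclusion $\widetilde{\cb}\subset\cb^n|_{\widetilde D}$ holds because the functions $U_\alpha f-B^{E\setminus D}_\alpha U_\alpha f$ are differences of nearly Borel functions, excessive functions and reduced functions being strongly supermedian. The Radon property would be obtained by the saturation/Ray-cone construction (\Cref{prop 2.5}, \Cref{rem: ray_topology}) applied to $\cu^D$. The points of $\widetilde D$ are exactly those where $\cu^D$ separates, by the definition of $\widetilde D$ via $B^{E\setminus D}_\alpha u<u$. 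Thus $(\mathbf{H_{\mathcal{C}}})$ holds with $\mathcal{C}=\ce(\cu^D_\alpha)$, and one can cite \cite{BeBo96} and \cite[Section 3.6]{BeBo04a} for the detailed verification.

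For (ii), I would verify the items of \Cref{defi 4.4} for $X^D$ on $\widetilde D$. The filtration is inherited from $X$; killing at a terminal time preserves the usual hypotheses and the shift identity $T=t+T\circ\theta(t)$ on $[t<T]$. For right continuity of $f(X^D)$ with $f\in\ce(\cu^D_\alpha)$, I would write $f$ as a limit of $U^D_\alpha g=U_\alpha g-B^{E\setminus D}_\alpha U_\alpha g$. Along the paths, $e^{-\alpha t}U^D_\alpha g(X^D(t))$ equals a right-continuous martingale minus the integral term, using \Cref{prop:B-boundary} (i)--(ii) with $A=E\setminus D$. This makes it right continuous on $[0,T)$, and \Cref{thm 4.6} (ii) then yields the claim. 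A natural topology on $\widetilde D$ is the Ray topology of $\cu^D$, and it is compared with the trace of one for $X$ via \Cref{rem: ray_topology}. Polarity of $\widetilde D\setminus D$ follows because $X^D(t)\in D$ for $t<T$. Indeed, on $[t<T]$ the path has not entered $E\setminus D$, so $\mathbb{P}_x(X^D \text{ hits }\widetilde D\setminus D)=0$ for all $x$, and \Cref{rem:polar_characterization} applies.

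For (iii), if $D$ is open in a natural topology, then $D\in\cb$ and for $x\in D$ the points are separated already by $\cb|_D$. Restricting to the absorbing set $D$ (absorbing for $X^D$ by (ii)) via \Cref{prop:restriction} gives a right process on $(D,\cb|_D)$. Borel measurability of $U^D_\alpha f$ uses \Cref{Borel-measurab} with the natural-closed set $F=E\setminus D$. The main obstacle is (i): pinning down the measurable structure on $\widetilde D$ and showing that $\ce(\cu^D_\alpha)$ separates points and is min-stable. For this I would lean directly on the balayage theory of \cite{BeBo04a} rather than reprove it.
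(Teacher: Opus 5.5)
Your outline reaches the statement, but for (ii) it takes a different route from the paper, and one step of your hands-on verification is circular. The paper argues purely by citation:
(i) and the polarity of $\widetilde D\setminus D$ come from \cite[Proposition 3.6.7]{BeBo04a};
(ii) is Sharpe's result on killing a right process at an exact terminal time \cite[Corollary 12.24]{Sh88}, hitting times being exact;
(iii) uses \Cref{Borel-measurab} just as you do.

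Since you also defer the core of (i) to \cite{BeBo04a}, the real difference is twofold. You check the right-process axioms for $X^D$ by hand. You also obtain the polarity of $\widetilde D\setminus D$ directly from $X(t)\in D$ for $0<t<T:=T_{E\setminus D}$, which is legitimate once (ii) is in place, via \Cref{rem:polar_characterization}. This is more transparent. Sharpe's theorem, however, settles in one stroke two points you gloss over:
the filtration must be re-completed with respect to initial laws on $\widetilde D$, not simply inherited;
$(\widetilde D,\widetilde\cb)$ is in general only Radon, so a natural topology in the Lusin sense of \Cref{defi:natual_topology} need not exist, and (ii) has to be read in Sharpe's framework.

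Replace the inaccurate remark that $\widetilde D$ is ``where $\cu^D$ separates'' with the identification $\widetilde D=\{x:\mathbb{P}_x(T>0)=1\}$. This follows from \eqref{Hunt-th} and the zero-one law. It is the set of permanent points of the multiplicative functional $1_{[0,T)}$, which is the natural state space of the killed process in \cite{Sh88}.

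Two steps need repair.

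First, your right-continuity argument is circular as written. \Cref{thm 4.6} (ii) can be applied to $X^D$ only after $X^D$ is known to be a right process. Also, \Cref{prop:B-boundary} (ii) is proved in the paper through \Cref{rem:finely continuous on D}, which itself rests on the present proposition. The fix:
for $g\in bp\cb$, both $U_\alpha g$ and $B^{E\setminus D}_\alpha U_\alpha g$ are $\cu_\alpha$-excessive, so $U^D_\alpha g(X)$ is right continuous by \Cref{thm 4.6} (ii) applied to $X$ itself;
write a general $f\in\ce(\cu^D_\alpha)$ as an increasing limit of such potentials;
use the Markov property of $X^D$ (from the terminal-time identity $T=t+T\circ\theta(t)$ on $[t<T]$) to see that the $e^{-\alpha t}U^D_\alpha g(X^D(t))$ are supermartingales;
apply the theorem on increasing limits of right-continuous supermartingales.

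Second, by putting $\cb|_{\widetilde D}$ among the generators of $\widetilde\cb$, you turn ``$\ce(\cu^D_\alpha)$ generates $\widetilde\cb$'' into an unproved claim. The claim does hold. For $x\in\widetilde D$ we have $T>0$ $\mathbb{P}_x$-a.s., so fine continuity of $U_\beta g$ gives $\alpha U^D_\alpha U_\beta g(x)\to U_\beta g(x)$ as $\alpha\to\infty$. Functions of the form $U_\beta g$ generate $\cb$, and the same limit also separates the points of $\widetilde D$.
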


\begin{proof}
Assertion $(i)$ and the fact that the set $\widetilde{D}\setminus D$ is polar follow from Proposition 3.6.7 from \cite{BeBo04a}.

Assertion $(ii)$ is a consequence of 
Corollary 12.24 from \cite{Sh88}, since $T_{E\setminus D}$ is an exact terminal time.

To prove assertion $(iii)$, observe first that using Proposition \ref{Borel-measurab}, $U^D_\alpha$ is a kernel on $(D, \cb|_D)$ for each  $\alpha>0$.
It follows that $X^D$ is a right process with state space $D$.
\end{proof}

\begin{rem}
\begin{enumerate}
    \item[(i)] Because the set $\widetilde{D}\setminus D$ is polar (according to assertion $(ii)$ of Proposition \ref{killed-process}), we can always restrict the process $\overline{X}$ from $\widetilde{D}$ to $D$, not only in the case when $D$ is open in a natural topology, as in assertion $(iii)$ of the above proposition. 
    So, we can regard $X^D$ as a right process on $D$, but in a wider sense, having the transition semigroup only universally $\cb|_D$-measurable.
    \item[(ii)] The following properties are equivalent for a subset $D$ of $E$.
    \begin{enumerate}
        \item[(ii.1)] $D$ is open in a natural topology.
        \item[(ii.2)] $D$ is open in a Ray topology.
        \item[(ii.3)] $D$ is open in the topology generated by a countable family of $\alpha$-excessive functions for some $\alpha>0$.
    \end{enumerate}
\end{enumerate}
\end{rem}

\begin{defi}[Killing]\label{defi:killing}
We say that the process $X^D$ from Proposition \ref{killed-process} is obtained by
{\rm killing $X$ at the first entry time in $E\setminus D$}.
\end{defi}

\begin{rem}\label{rem:finely continuous on D}
Let $f:E\rightarrow \mathbb{R}$ be universally $\mathcal{B}$-measurable and $D\in \mathcal{B}$ be finely open.
Then, it follows from \Cref{killed-process} and \Cref{thm 4.6} that $f$ is finely continuous on $D$ if and only if $(f(X(t)))_{0\leq t<T_{E\setminus D}}$ has $\mathbb{P}^{x}$-a.s. right continuous paths for all $x\in D$.
\end{rem}

\subsubsection{The natural extension}

In this paragraph we recall one of the key tools (see \Cref{thm:natural} below) that we employ in the proof of \Cref{thm:main1} for solving the open problem OP2.
\begin{defi}[Natural extension]\label{defi:natural}
Let $\left(E,\mathcal{B}\right)$ and $\left(\overline{E},\overline{\mathcal{B}}\right)$ be two Lusin measurable spaces such that $E\in \overline{\mathcal{B}}$ and $\mathcal{B}=\overline{\mathcal{B}}|_{\mathcal{B}}$.
\begin{enumerate}
    \item[(i)] Let $\mathcal{U}$ be a sub-Markovian resolvent of kernels on $E$.
    A second sub-Markovian resolvent of kernels $\overline{\mathcal{U}}:=(\overline{U}_\alpha)_{\alpha>0}$ on $\left(\overline{E}, \overline{B}\right)$  is called an extension of $\mathcal{U}$ if:
     \begin{enumerate}
     \item[(i.1)] $\overline{U}_\alpha (1_{\overline{E} \setminus E})=0$.
     \item[(i.2)] $(\overline{U}_\alpha f)|_E=U_\alpha (f|_E) $ (on $E$) for all $\alpha>0$ and $f \in b\overline{\mathcal{B}}$.
     \end{enumerate} 
     In this case, for shortness, we write  \begin{equation}\label{eq:not_resolvent_extension}
         \left(E,\mathcal{U}\right)\subset \left(\overline{E},\overline{\mathcal{U}}\right)
     \end{equation}
     \item[(ii)] Given a right process $X$ on $E$, we say that a second Markov process $\overline{X}=(\overline{\Omega}, \overline{\mathcal{F}}, \overline{\mathcal{F}}_t, \overline{X}(t), \overline{\theta}(t), \overline{\mathbb{P}}^x)$, with state space $\overline{E}$,  is a {\it natural extension} of $X$ if the following conditions are fulfilled:
    \begin{enumerate}
    \item[(ii.1)] $\overline{X}$ is a right process.
    \item[(ii.2)] The processes $(({X}(t))_{t \geq 0}, \mathbb{P}_{x})$ and $((\overline{X}(t))_{t\geq 0}, \overline{\mathbb{P}}^{x})$ are equal in distribution for all $x\in E$;
    \item[(ii.3)] The set $\overline{E}\setminus E$ is polar with respect to (the resolvent of) $\overline{X}$.
    \end{enumerate}
    In this case, again for shortness, we write
     \begin{equation}\label{eq:not_process_extension}
         \left(E,X\right)\subset \left(\overline{E},\overline{X}\right).
     \end{equation}
\end{enumerate}
\end{defi}

\begin{prop} \label{prop 2.3}
If $\overline{X}$ is a natural extension of $X$, then its resolvent denoted by $\overline{\mathcal{U}}$ is an extension of $\mathcal{U}$.
\end{prop}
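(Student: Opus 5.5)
The plan is to check the two defining conditions (i.1) and (i.2) of \Cref{defi:natural}, (i), for the resolvent $\overline{\mathcal{U}}$ of $\overline{X}$. Both should follow from the Laplace-transform representation of the resolvent of a right process. For $\alpha>0$ and $f\in bp\overline{\mathcal{B}}$ we write
\begin{equation*}
\overline{U}_\alpha f(x)=\overline{\mathbb{E}}^x\left\{\int_0^\infty e^{-\alpha t} f(\overline{X}(t))\,dt\right\},\quad x\in \overline{E},
\end{equation*}
and we read both conditions off this formula together with (ii.2) and (ii.3) of \Cref{defi:natural}.

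For (i.2), fix $x\in E$ and $f\in b\overline{\mathcal{B}}$. By (ii.2), the processes $((X(t))_{t\geq0},\mathbb{P}_x)$ and $((\overline{X}(t))_{t\geq 0},\overline{\mathbb{P}}^x)$ have the same one-dimensional distributions. Since $X$ lives in $E$ (up to the cemetery $\Delta$, where $f(\Delta)=0$), we get
\begin{equation*}
\overline{\mathbb{E}}^x\{f(\overline{X}(t))\}=\mathbb{E}_x\{f|_E(X(t))\},\quad t\geq 0.
\end{equation*}
Multiplying by $e^{-\alpha t}$ and integrating in $t$, using Fubini and the joint measurability of right processes, gives $(\overline{U}_\alpha f)|_E=U_\alpha(f|_E)$.

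For (i.1), set $M:=\overline{E}\setminus E$, which is polar for $\overline{X}$ by (ii.3). By \Cref{rem:polar_characterization}, (ii), this means $\overline{\mathbb{P}}^x(T_M<\infty)=0$ for every $x\in\overline{E}$. Hence for every $x$, $\overline{\mathbb{P}}^x$-a.s. $\overline{X}(t)\notin M$ for all $t>0$. Since the set $\{0\}$ has Lebesgue measure zero,
\begin{equation*}
\overline{U}_\alpha 1_M(x)=\overline{\mathbb{E}}^x\int_0^\infty e^{-\alpha t}1_M(\overline{X}(t))\,dt=0,\quad x\in\overline{E}.
\end{equation*}
This holds even at starting points $x\in M$, because only times $t>0$ contribute.

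The only delicate point is measurability: $\{T_M<\infty\}$ and the time integral must be measurable under each $\overline{\mathbb{P}}^x$. For a right process and $M\in\overline{\mathcal{B}}$, the hitting time $T_M$ is a stopping time with respect to the completed filtration, which settles this. I expect no real obstacle; the argument is bookkeeping with (ii.2) and (ii.3).
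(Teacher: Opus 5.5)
Your proof is correct. Condition (i.2) follows from the equality of one-dimensional marginals in (ii.2), using that $X$ lives in $E\cup\{\Delta\}$ with $f(\Delta)=0$, followed by a Laplace transform in $t$. Condition (i.1) follows from the polarity of $\overline{E}\setminus E$ via Hunt's characterization $\overline{\mathbb{P}}^x(T_{\overline{E}\setminus E}<\infty)=0$, which makes the occupation integral vanish for every starting point.

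The paper records this proposition without proof, as a known fact from the cited literature, and your argument is the standard verification. One small refinement: on $E$, condition (i.1) already follows from (i.2), since $U_\alpha(1_{\overline{E}\setminus E}|_E)=U_\alpha 0=0$. So polarity is genuinely needed only at starting points in $\overline{E}\setminus E$.
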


\begin{thm}[{cf \cite[Theorem 1.10]{BeCiRo20}}] \label{thm:natural}
Let $\overline{\mathcal{U}}$ be an extension of $\mathcal{U}$.  Then there exists a natural extension $\overline{X}$ of $X$, with resolvent $\overline{\mathcal{U}}$, if and only if $\mathbf{(H_{\mathcal{C}})}$ is satisfied.
\end{thm}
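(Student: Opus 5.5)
The proof reduces the existence of $\overline{X}$ to the existence of a right process on a larger space. By \Cref{thm:saturation}, such a process lives on the saturation of $(\overline{E},\overline{\mathcal{U}})$. The task is then to show that the added points $\overline{E}\setminus E$ are polar and that $X$ is recovered by restriction. Necessity is immediate: if a natural extension $\overline{X}$ with resolvent $\overline{\mathcal{U}}$ exists, then $\overline{\mathcal{U}}$ is the resolvent of a right process. Hence by \Cref{prop 2.5} (and the remark following it) $\mathcal{E}(\overline{\mathcal{U}}_\beta)$ is min-stable, contains the constants and generates $\overline{\mathcal{B}}$, so $\mathbf{(H_{\mathcal{C}})}$ holds for $\overline{\mathcal{U}}$. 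Here I read the condition $\mathbf{(H_{\mathcal{C}})}$ in the statement as referring to $\overline{\mathcal{U}}$ on $(\overline{E},\overline{\mathcal{B}})$.

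For sufficiency, assume $\mathbf{(H_{\mathcal{C}})}$ for $\overline{\mathcal{U}}$. Let $(\overline{E}_1,\overline{\mathcal{U}}^1)$ be the saturation of $\overline{E}$ with respect to $\overline{\mathcal{U}}_\beta$. \Cref{thm:saturation} provides a right process $\overline{X}^1$ on $\overline{E}_1$ with resolvent $\overline{\mathcal{U}}^1$. By \Cref{thm:saturation} again, it suffices to prove that $\overline{E}_1\setminus\overline{E}$ is polar. We already know $E_1\setminus E$ is polar for the saturation $E_1$ of $E$, since $X$ is a right process on $E$. So the plan has three steps.
\begin{enumerate}
\item Identify the saturations. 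Because $\overline{U}_\alpha 1_{\overline{E}\setminus E}=0$ and $\overline{U}_\alpha f|_E=U_\alpha(f|_E)$, every $\overline{\mathcal{U}}_\beta$-excessive measure charging only $E$ is a $\mathcal{U}_\beta$-excessive measure. Using the representation $\xi\mapsto L^\beta(\xi,\cdot)$, I expect to identify $E_1$ measurably with a subset of $\overline{E}_1$ on which $\overline{\mathcal{U}}^1$ restricts to $\mathcal{U}^1$.
\item Show $\overline{U}^1_\alpha(1_{\overline{E}_1\setminus E_1})=0$, which follows from (i.1). This gives that $\overline{E}_1\setminus E_1$ is negligible.
\item Upgrade negligibility to polarity of $\overline{E}_1\setminus E$.
\end{enumerate}

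I expect the main obstacle to be step 3, and specifically the points of $\overline{E}\setminus E$ themselves, which are not reached from $E$. For these, I would first try the balayage $B_\alpha^{\overline{E}_1\setminus E}1$. By Hunt's formula \eqref{Hunt-th} it equals $\mathbb{P}_\xi(T<\infty)$-type quantities. Starting from a point of $\overline{E}_1$, the process spends zero time outside $E$ by (i.1), so after any positive time it is a.s. in $E_1$ (via the fine density of the relevant part). From there it never hits $E_1\setminus E$, by polarity for $X^1$. The remaining issue is the hitting of $\overline{E}\setminus E$ at small times. Here I would use that the map $t\mapsto \overline{X}^1(t)$ lies in $E$ for a.e. $t$, together with right continuity in a Ray topology. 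This should show the set $\{t>0:\overline{X}^1(t)\notin E\}$ is empty a.s., via a Markov property argument at rational times $\varepsilon\downarrow 0$. If this direct route is delicate, I would instead appeal to the construction in \cite[Theorem 1.10]{BeCiRo20}, which carries it out via Ray compactification.

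Finally, I restrict $\overline{X}^1$ to $\overline{E}$ by \Cref{prop:restriction}, since the complement is polar and hence the restriction is well defined. This yields $\overline{X}$, a right process with resolvent $\overline{\mathcal{U}}$ (condition (ii.1)), with $\overline{E}\setminus E$ polar (condition (ii.3)). Condition (ii.2) follows from (i.2): the two processes have equal resolvents on $E$, hence equal finite-dimensional laws by the Markov property and Laplace inversion, as in \Cref{prop:tau-identification}.
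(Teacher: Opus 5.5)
Your necessity argument and the overall architecture (saturate $\overline{E}$, use \Cref{thm:saturation}, restrict, identify laws through the resolvents) are fine. Step~3, however, carries all the content, and it is not a proof. Zero occupation time of $\overline{E}\setminus E$ together with right continuity in a Ray topology does not yield polarity. For instance, uniform motion to the right on $\mathbb{R}$ spends zero time in $\{1\}$ and still hits it. A Markov-property argument at rational times cannot detect a single hitting time either. In fact (i.1) gives $\overline{X}^1(t)\in E$ only for a.e.\ $t$, not for each fixed $t>0$, so your claim that after any positive time the process is a.s.\ in $E_1$ is also unjustified. Appealing to polarity for $X^1$ presupposes that $\overline{X}^1$ started in $E_1$ behaves like $X^1$, i.e.\ that $E_1$ is absorbing for $\overline{\mathcal{U}}^1$, and that is what step~3 is meant to prove. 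The fallback to \cite[Theorem 1.10]{BeCiRo20} is circular, since that is the statement itself.

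The missing idea is that steps~1--2 should be sharpened to an equality: $\overline{E}_1=E_1$, with the same $\sigma$-algebra and $\overline{\mathcal{U}}^1=\mathcal{U}^1$.
\begin{itemize}
\item By (i.1), every $w\in\mathcal{E}(\overline{\mathcal{U}}_\beta)$ satisfies $w=\sup_\alpha\alpha\overline{U}_{\beta+\alpha}(1_E w)$. So $w$ is determined by $w|_E$, and inequalities between such functions on $E$ propagate to $\overline{E}$; e.g.\ $U_\beta g\le 1$ on $E$ gives $\overline{U}_\beta(1_E g)\le 1$ on $\overline{E}$. Also $w\mapsto w|_E$ is a bijection onto $\mathcal{E}(\mathcal{U}_\beta)$.
\item Every $\xi\in Exc(\overline{\mathcal{U}}_\beta)$ is the increasing limit of $\xi\circ\alpha\overline{U}_{\beta+\alpha}$ (standard under $\mathbf{(H_{\mathcal{C}})}$). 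Hence $\xi$ does not charge $\overline{E}\setminus E$.
\item Therefore $\xi\mapsto\xi|_E$ is an affine bijection $Exc(\overline{\mathcal{U}}_\beta)\to Exc(\mathcal{U}_\beta)$ with $\overline{L}^\beta(\xi,w)=L^\beta(\xi|_E,w|_E)$. The extreme points of the normalized sets correspond, and by \eqref{eq 4.1} so do the resolvents.
\end{itemize}
In particular each $x\in\overline{E}\setminus E$, embedded as $\delta_x\circ\overline{U}_\beta$, is already a point of $E_1\setminus E$. Your belief that these points lie outside the saturation of $E$ is exactly what creates the spurious obstacle. Since $X$ is a right process on $E$, \Cref{thm:saturation} makes $E_1\setminus E=\overline{E}_1\setminus E$ polar. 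This gives at once a right process on $\overline{E}$ with resolvent $\overline{\mathcal{U}}$ (restrict $X^1$ to the absorbing set $\overline{E}$, \Cref{prop:restriction}) and condition (ii.3), with no path analysis.
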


\begin{rem}\label{rem:maximal}
As already stated in \cite{BeCiRo24a}, the following assertion can be easily deduced from \Cref{thm:natural}:
Let $X$ be a right process on $E$. The right process $X^1$ on the saturation $E^1$ given by \Cref{thm:saturation} is maximal, in the sense that $X^1$ is a natural extension of any natural extension $\widetilde{X}$ of $X$.
In particular, for any such natural extension $\widetilde{X}$ on $\widetilde{E}$, we have that $\widetilde{E}\setminus E$ is polar.
\end{rem}

\begin{lem}[{cf \cite[Lemma 2.1]{BeBoRo06}}] \label{lem:kernel_trick}
Let $K$ be a kernel and $\nu$ be a $\sigma$-finite measure on a measurable space $(E,\mathcal{B})$, such that if $B\in \mathcal{B}$ and $\nu(B)=0$, then $K1_B = 0$ $\nu$-a.e. 
If $E_0\in \mathcal{B}$ satisfies $\nu(E\setminus E_0)$, then there exits $F\in \mathcal{B}, F\subset E_0$, such that $\nu(E\setminus F)=0$ and $K1_{E\setminus F}=0$ on $F$.
\end{lem}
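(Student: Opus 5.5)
The plan is to construct $F$ by a countable decreasing iteration that removes, at each stage, the points from which the kernel charges the current exceptional set. Set $F_0:=E_0$ and, recursively,
\begin{equation*}
    F_{n+1}:=F_n\cap\left[K1_{E\setminus F_n}=0\right],\quad n\geq 0,
\end{equation*}
and finally $F:=\bigcap_{n\geq 0}F_n$. Since $K$ is a kernel, $K1_{E\setminus F_n}$ is $\mathcal{B}$-measurable whenever $F_n\in\mathcal{B}$. Hence, by induction, every $F_n$ belongs to $\mathcal{B}$, and so does $F$. Moreover $F\subset F_0=E_0$.

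The first step is to show by induction that $\nu(E\setminus F_n)=0$ for all $n$. For $n=0$ this is the hypothesis on $E_0$. If $\nu(E\setminus F_n)=0$, the absolute continuity assumption on $K$ applied to $B:=E\setminus F_n$ gives $K1_{E\setminus F_n}=0$ $\nu$-a.e. Therefore
\begin{equation*}
    \nu(E\setminus F_{n+1})\leq \nu(E\setminus F_n)+\nu\left(\left[K1_{E\setminus F_n}>0\right]\right)=0.
\end{equation*}
Consequently $\nu(E\setminus F)\leq\sum_n\nu(E\setminus F_n)=0$.

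The second step is the invariance property. Let $x\in F$. Then $x\in F_{n+1}$ for every $n$, so $K1_{E\setminus F_n}(x)=0$ for all $n$. The sets $E\setminus F_n$ increase to $E\setminus F$. Since $K(x,\cdot)$ is a measure, monotone convergence (i.e. countable additivity of $K(x,\cdot)$) yields
\begin{equation*}
    K1_{E\setminus F}(x)=\lim_n K1_{E\setminus F_n}(x)=0.
\end{equation*}
This holds for every $x\in F$, which is the claim.

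I do not expect a genuine obstacle here. The only point needing care is that one pass of removing the bad set is not enough: after passing from $F_n$ to $F_{n+1}$ the exceptional set grows, and points of $F_{n+1}$ may charge it. This is why the countable iteration and the limit argument in the second step are needed. The $\sigma$-finiteness of $\nu$ is used only to make sense of null sets, and the argument is otherwise purely measure theoretic.
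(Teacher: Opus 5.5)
Your proof is correct and complete. The decreasing iteration $F_{n+1}=F_n\cap[K1_{E\setminus F_n}=0]$ with $F=\bigcap_n F_n$, combined with countable additivity of $K(x,\cdot)$ along $E\setminus F_n\nearrow E\setminus F$, is the standard argument for the cited lemma; the paper only cites that lemma and gives no proof of its own, and you rightly read the garbled hypothesis as $\nu(E\setminus E_0)=0$. The same iteration, intersecting at each stage over countably many kernels, also gives the simultaneous version for $\alpha\tilde{U}_\alpha$, $\alpha\in\mathbb{Q}_+$, that the paper actually uses in Steps OP1.4--OP1.5.
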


\subsection{On martingale additive functionals}\label{ss:MAF}
Let $X=(\Omega, \mathcal{F}, \mathcal{F}(t) , X(t), \theta(t) , \mathbb{P}_x, t\geq 0, x\in E)$ be a right process on a Lusin measurable space $(E,\mathcal{B})$, with transition $(P_t)_{t\geq 0}$ and resolvent $\mathcal{U}=(U_\alpha)_{\alpha>0}$.
Further, let $\mu\in \mathcal{P}(E)$ and $f,g:E\rightarrow \mathbb{R}$ be $\mathcal{B}$-measurable such that
\begin{equation}\label{eq:f g mu}
    f \mbox{ is bounded and finely continuous} \quad \mbox{and} \quad \mu\left(U_{\alpha_0}|g|\right)<\infty \quad \mbox{for some } \alpha_0>0.
\end{equation}
Consequently, the process $M=(M(t))_{t\geq 0}$ given by
\begin{equation}\label{eq: process M}
    M(t):=f(X(t))-f(X(0))-\int_0^tg(X(r))dt, \quad t\geq 0,
\end{equation}
is well defined $\mathbb{P}_\mu$-a.e. and from $L^1(\mathbb{P}_\mu)$.
Furthermore, it is easily seen that $M$ is a right-continuous {\it additive functional} under $\mathbb{P}_\mu$, namely
\begin{equation*}
     [0,\infty)\ni r\mapsto M(r) \mbox{ is right-continuous and } M(t+s)=M(s)+M(t)\circ\theta(s)\quad \mathbb{P}_\mu\mbox{-a.s.}, \quad t,s\geq 0.
\end{equation*}
As a consequence, we deduce:
\begin{lem}\label{lem:martingale 1}
    The process $M$ given by \eqref{eq: process M} is a right-continuous $(\mathcal{F}_t)$-martingale under $\mathbb{P}_\mu$ if and only if $\mathbb{E}_\mu\left\{M(t)\right\}=0$ for every $t> 0$.
\end{lem}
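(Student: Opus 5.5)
The forward implication is immediate: if $M$ is an $(\mathcal{F}_t)$-martingale under $\mathbb{P}_\mu$, then $\mathbb{E}_\mu\{M(t)\}=\mathbb{E}_\mu\{M(0)\}=0$, since $M(0)=0$. Right-continuity of $M$ is already known from the discussion preceding the lemma. It uses the fine continuity of $f$ via \Cref{thm 4.6}, (i), and the integrability in \eqref{eq:f g mu}. So all the work is in the converse.

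For the converse I would fix $0\le s<t$ and $A\in\mathcal{F}_s$, and aim to show $\mathbb{E}_\mu\{(M(t)-M(s))1_A\}=0$. First, by the additive-functional identity, $M(t)-M(s)=M(t-s)\circ\theta(s)$ $\mathbb{P}_\mu$-a.s. Second, the Markov property with shifts (\Cref{rem:strong Markov} with the deterministic time $s$) gives
\begin{equation*}
\mathbb{E}_\mu\{M(t-s)\circ\theta(s)\mid\mathcal{F}_s\}=h_{t-s}(X(s)),\qquad h_r(x):=\mathbb{E}_x\{M(r)\}=P_rf(x)-f(x)-\int_0^rP_vg(x)\,dv .
\end{equation*}
Before this step I would justify integrability of $|M(t-s)|\circ\theta(s)$. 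This follows from $\mu(U_{\alpha_0}|g|)<\infty$, which gives $\mathbb{E}_\mu\int_0^T|g|(X(r))\,dr\le e^{\alpha_0T}\mu(U_{\alpha_0}|g|)$, together with the boundedness of $f$. The conditional identity then extends from bounded to integrable functionals by truncation and dominated convergence. Consequently $M$ is a martingale as soon as $h_{t-s}=0$ holds $\mu P_s$-a.e. for all $0\le s<t$.

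This last step is where I expect the real difficulty, because the hypothesis gives less. It only yields
\begin{equation*}
(\mu P_s)(h_{t-s})=\mathbb{E}_\mu\{M(t)\}-\mathbb{E}_\mu\{M(s)\}=0,
\end{equation*}
which is the vanishing of an integral, not of $h_{t-s}$ itself. With one fixed law $\mu$ I do not see how to upgrade this, and in fact I believe the converse fails as worded. Take $\mu$ invariant, $f\equiv0$, and $g$ bounded with $\mu(g)=0$ but $g\not\equiv0$ $\mu$-a.e. Then $\mathbb{E}_\mu\{M(t)\}=-t\mu(g)=0$ for all $t$, yet $M(t)=-\int_0^tg(X(r))\,dr$ is a continuous finite-variation process that is not $\mathbb{P}_\mu$-a.s. constant. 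Hence it is not a martingale.

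So my plan is to prove the forward direction and the reduction above exactly as stated. I will then record that the converse holds if and only if $h_r=0$ $\mu P_s$-a.e. for all $r,s\ge0$, which is implied by the paper's usage: in Step OP1.16 and \Cref{prop:martingale_absorbing}, zero mean is obtained for every law $\tilde\nu\le c\,\overline{\mu}_1$, and hence $h_r=0$ pointwise a.e. Concretely, I would state the lemma's converse under the proviso "$\mathbb{E}_{\nu}\{M(t)\}=0$ for all $\nu\in\{\varphi\cdot\mu P_s : 0\le\varphi\le1\}$". Under that proviso, taking $\nu=\mathbb{P}_\mu(X(s)\in\cdot\,;A')$ with $A'\in\sigma(X(s))$ gives the needed a.e. vanishing. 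The general $A\in\mathcal{F}_s$ is then covered by the conditional identity above. Finally, the counterexample would be flagged as showing that the single-$\mu$ formulation needs this strengthening.
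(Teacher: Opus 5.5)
Your analysis is correct: read literally, with one fixed $\mu$, the converse is false, so this is a defect in the statement and not a gap in your argument. Your counterexample works. Its simplest instance is the constant process on $E=\{a,b\}$ with $\mu=\frac12(\delta_a+\delta_b)$, $f\equiv 0$ and $g(a)=1=-g(b)$. There $\mathbb{E}_\mu\{M(t)\}=0$ for all $t$, while $\mathbb{E}_\mu\{M(t)\mid\mathcal{F}_0\}=-t\,g(X(0))\neq 0=M(0)$.

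The paper's only justification is ``as a consequence'' of the additive-functional identity $M(t+s)=M(s)+M(t)\circ\theta(s)$, which is exactly your first reduction. The step it passes over is that $\mathbb{E}_\mu\{M(t)\circ\theta(s)\mid\mathcal{F}_s\}=\mathbb{E}_{X(s)}\{M(t)\}$ must vanish almost surely, i.e.\ $h_t=0$ $\mu P_s$-a.e. A single mean-zero condition under $\mathbb{P}_\mu$ cannot give this. Your characterization is the correct statement: $M$ is a martingale if and only if $h_r=0$ $\mu P_s$-a.e.\ for all $r,s\geq 0$. With that hypothesis, your conditional-expectation argument is a complete proof.

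The same defect affects the pointwise version asserted inside the proof of \Cref{prop:martingale_absorbing} (``for $x\in E_0$, $M$ is a $\mathbb{P}_x$-martingale iff $\mathbb{E}_x\{M(t)\}=0$ for all $t$''). Take a chain that leaves a point $z$ after an exponential time and jumps with probability $\frac12$ to each of two traps $a,b$, with $f\equiv 0$, $g(z)=0$ and $g(a)=1=-g(b)$. Then $\mathbb{E}_z\{M(t)\}=0$ for all $t$, but $M$ is not a $\mathbb{P}_z$-martingale.

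Each place where the lemma is invoked does supply your stronger hypothesis, so these uses go through once the lemma is restated that way:
\begin{itemize}
\item In \Cref{prop:connection} the mean vanishes under every $\mathbb{P}_x$, $x\in E$.
\item At the end of \Cref{prop:martingale_absorbing} it vanishes at every point of the absorbing set $E'$, which the process never leaves.
\item In Step OP2.9, and in its analogue for $X^K$, it vanishes at the starting point and at all points with positive time coordinate, which is where $X(s)$ lies for $s>0$.
\item In the $\overline{\mu}$-a.e.\ applications, sub-invariance of $\overline{\mu}$ gives $\nu P_s\leq c\,\overline{\mu}$ whenever $\nu\leq c\,\overline{\mu}$.
\end{itemize}
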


Another straightforward fact is the following:
\begin{lem}\label{lem:martingale 2}
If the process $M$ given by \eqref{eq: process M} is a well-defined and right-continuous $(\mathcal{F}_t)$-martingale under $\mathbb{P}_x$ for $x\in E$ $\mu$-a.e., then $M$ is a well-defined and right-continuous $(\mathcal{F}_t)$-martingale under $\mathbb{P}_\nu$ for every $\nu \in \mathcal{P}$ for which there exists a constant $c>0$ such that $\nu\leq c\mu$.
\end{lem}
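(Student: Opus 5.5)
The statement is \Cref{lem:martingale 2}. The plan is to reduce the $\mathbb{P}_\nu$-martingale property to \Cref{lem:martingale 1}, i.e.\ to three things under $\mathbb{P}_\nu$: $M$ is well defined and integrable, it is right-continuous, and $\mathbb{E}_\nu\{M(t)\}=0$ for every $t>0$.

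First, the null set transfers from $\mu$ to $\nu$. Let $N\in\mathcal{B}$ be a $\mu$-null set off which $M$ is a well-defined, right-continuous martingale under $\mathbb{P}_x$. Since $\nu\leq c\mu$, we have $\nu(N)=0$. Hence, for $\nu$-a.e.\ $x$, the path functional $\omega\mapsto M(\cdot,\omega)$ is defined and right-continuous $\mathbb{P}_x$-a.s., with $\mathbb{E}_x\{M(t)\}=0$. Integrating in $x$ against $\nu$, the events "$M$ well defined" and "$M$ right-continuous" have $\mathbb{P}_\nu$-probability one.

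Second, integrability comes from \eqref{eq:f g mu}. We have $|M(t)|\leq 2\|f\|_\infty+\int_0^t|g|(X(r))\,dr$ and $\mathbb{E}_x\int_0^t|g|(X(r))\,dr\leq e^{\alpha_0 t}U_{\alpha_0}|g|(x)$, so
\begin{equation*}
\mathbb{E}_\nu|M(t)|\leq 2\|f\|_\infty+c\,e^{\alpha_0 t}\mu\left(U_{\alpha_0}|g|\right)<\infty .
\end{equation*}
The same domination, together with Fubini, gives
\begin{equation*}
\mathbb{E}_\nu\{M(t)\}=\int\mathbb{E}_x\{M(t)\}\,\nu(dx)=0 .
\end{equation*}

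Third, we conclude via the additive-functional structure: $M(t+s)=M(s)+M(t)\circ\theta(s)$ holds $\mathbb{P}_\nu$-a.s. This is the setup of \Cref{lem:martingale 1}, with $\mu$ replaced by $\nu$ (now $\nu(U_{\alpha_0}|g|)<\infty$). Alternatively, one can conclude directly: for $F\in\mathcal{F}_s$, $\mathbb{E}_\nu\{(M(t)-M(s))1_F\}=\int\mathbb{E}_x\{(M(t)-M(s))1_F\}\,\nu(dx)=0$, since each integrand vanishes $\nu$-a.e.

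The only delicate point is measurability of $x\mapsto\mathbb{E}_x\{\cdot\}$ and of the exceptional set. This is handled because $\mathbb{P}_x$ is a kernel from $(E,\mathcal{B}^u)$ and $\nu$ is a finite measure, so universally measurable null sets suffice.
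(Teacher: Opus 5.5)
Your proof is correct. The paper gives no proof (it calls the fact straightforward), and your argument is the natural one: $\nu\le c\mu$ carries over the $\mu$-null exceptional set and, via \eqref{eq:f g mu}, the bound $\mathbb{E}_\nu|M(t)|\le 2\|f\|_\infty+c\,e^{\alpha_0 t}\mu(U_{\alpha_0}|g|)$, and then the identities $\mathbb{E}_x\{(M(t)-M(s))1_F\}=0$, $F\in\mathcal{F}_s$, are integrated against $\nu$. Rest the conclusion on this direct route, not on \Cref{lem:martingale 1}: $\mathbb{E}_\nu\{M(t)\}=0$ for all $t$ does not by itself give the martingale property (take $g=0$, $\nu$ invariant, $f$ not harmonic), because the additive-functional argument needs $\mathbb{E}_{X(s)}\{M(t)\}=0$ $\mathbb{P}_\nu$-a.s., and that is supplied exactly by the $\mathbb{P}_x$-martingale property for $\nu$-a.e.\ $x$.
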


The following fine converse of \Cref{lem:martingale 2} is useful:
\begin{prop}\label{prop:martingale_absorbing}
   Let $X$ be a right process on a Lusin measurable space $(E,\mathcal{B})$, with transition $(P_t)_{t\geq 0}$ and resolvent $\mathcal{U}=(U_\alpha)_{\alpha>0}$.
   Let $\mu\in \mathcal{P}(E)$ such that $\mu\circ U_\alpha<<\mu$ for one (hence all) $\alpha>0$, $f,g:E\rightarrow \mathbb{R}$ be $\mathcal{B}$-measurable such that \eqref{eq:f g mu} holds, and moreover, assume that the process $M$ given by \eqref{eq: process M} is an $(\mathcal{F}(t))$-martingale under $\mathbb{P}_\nu$ for all $\nu\in \mathcal{P}(E)$ for which there exists a constant $c>0$ such that $\nu\leq c \mu$.
   Then there exists a set $E'\in \mathcal{B}$ such that $E\setminus E'$ is $\mu$-inessential and $M$ is a well-defined right-continuous $(\mathcal{F}_t)$-martingale under $\mathbb{P}_x$ for every $x\in E'$.
\end{prop}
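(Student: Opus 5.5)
The plan is to find a strongly supermedian function $V$ whose zero set is the set of good starting points, and to show it vanishes $\mu$-a.e.; \Cref{rem8.25} and \Cref{rem:absorbing_intersection} then give the result. For each $x$, let $h_t(x):=\mathbb{E}_x\{M(t)\}$, which is well defined whenever $\mathbb{E}_x\int_0^t|g|(X(r))dr<\infty$.

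The first step is to get the right exceptional sets out of the way. Since $\mu(U_{\alpha_0}|g|)<\infty$, $U_{\alpha_0}|g|$ is a $\mathcal{U}_{\alpha_0}$-excessive function that is finite $\mu$-a.e. By \Cref{rem:exc_absorbing} the set $E_1:=[U_{\alpha_0}|g|<\infty]$ is absorbing. Because $\mu\circ U_\alpha\ll\mu$, its complement is $\mu$-negligible, so $E\setminus E_1$ is $\mu$-inessential. For $x\in E_1$ we have $\mathbb{E}_x\int_0^t|g|(X(r))dr\le e^{\alpha_0 t}U_{\alpha_0}|g|(x)<\infty$. Hence on $E_1$ the process $M$ is well defined, integrable and right-continuous, since $f$ is finely continuous and we can use \Cref{thm 4.6}. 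It is also an additive functional under every $\mathbb{P}_x$.

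The second step reduces the martingale property to a mean-zero statement. By \Cref{lem:martingale 1} applied with $\mu=\delta_x$, $M$ is a martingale under $\mathbb{P}_x$ if and only if $h_t(x)=0$ for all $t>0$. By right-continuity of $M$ in $L^1(\mathbb{P}_x)$ (dominated via $\sup_{r\le t}|f|$ and $\int_0^t|g|$), it suffices to check this for rational $t$. The hypothesis gives $\nu(h_t)=0$ for every $\nu\le c\mu$, so $h_t=0$ $\mu$-a.e. for each $t$, and hence $\mu(N)=0$ where $N:=E_1\cap\bigcup_{t\in\mathbb{Q}_+}[h_t\neq0]$. This set is Borel, since $x\mapsto\mathbb{E}_x Y$ is Borel for $\mathcal{F}^0$-measurable $Y$.

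The main obstacle is to show that $N$ is contained in a $\mu$-inessential set, i.e. that starting from a good point one never hits bad points. My plan is to use the Markov property. If $x\in E_1\setminus N$, then $\mathbb{E}_x\{M(t+s)-M(s)\,|\,\mathcal{F}(s)\}=h_t(X(s))$ by additivity, and $M$ being a martingale under $\mathbb{P}_x$ forces $h_t(X(s))=0$ $\mathbb{P}_x$-a.s. for every rational $t$ and every $s$. This gives $P_s1_N(x)=0$ for all $s$, hence $U_\alpha 1_N=0$ on $E_1\setminus N$. That alone is not yet inessentiality, which needs $R^N_\alpha1=0$ off $N$, i.e. no hitting at random times.

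To fix this, I would instead consider $V(x):=\mathbb{P}_x(\Lambda)$ with $\Lambda:=\{M$ is not a martingale on $[0,\infty)$ with respect to its own shifts$\}$. Concretely, take
\[
\Lambda:=\bigcup_{s\ge0}\theta(s)^{-1}\{\omega: \text{conditional mean-zero fails}\},
\]
encoded through the function $x\mapsto\sup_{t\in\mathbb{Q}_+}|h_t|(x)\wedge1$. Then I would show, as in \Cref{prop:continuous_modification_special}, that $V$ satisfies $R^K_\alpha V\le V$ by the strong Markov property and shift invariance $\theta(T)^{-1}\Lambda\subset\Lambda$. Alternatively, and more cleanly, I would take the $\mathcal{U}_1$-excessive function $w:=U_1(1_N)$, which vanishes on $E_1\setminus N$ by the above. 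Then $E':=E_1\cap[w=0]\setminus N$; $[w=0]$ is absorbing by \Cref{rem8.25}, and $N\cap[w=0]$ being finely open and $\mu$-negligible makes it $\mu$-polar by \Cref{rem 2.2}. Finally, \Cref{rem 2.2}(iii) enlarges the $\mu$-polar, $\mu$-negligible set $E\setminus E'$ to a $\mu$-inessential one, whose complement is the required $E'$.
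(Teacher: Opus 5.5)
Everything up to $U_\alpha 1_N=0$ on $E_1\setminus N$ is correct, and you rightly note that this does not yet give inessentiality. \textbf{The gap is in the step meant to close this:} you assert that $N\cap[w=0]$ is finely open, but nothing you have proved gives that. In your setting the assertion is in fact equivalent to the conclusion. Take $x\in N\cap[w=0]$. Then $U_1 1_N(x)=0$ and $E_1$ is absorbing, so $\mathbb{P}_x$-a.s.\ $X(s)\in E_1\setminus N$ for Lebesgue-a.e.\ $s$. Hence $T_{E\setminus N}=0$, and a fortiori $T_{E\setminus(N\cap[w=0])}=0$, $\mathbb{P}_x$-a.s. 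By \Cref{rem:finely open prob}, $N\cap[w=0]$ can be finely open only if it is empty. That would mean $E_1\setminus N=E_1\cap[w=0]$ is already absorbing, which is exactly what has to be shown.

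Your route (a) has the same hole. The failure of a conditional-mean identity is not an event of a single path. If you take $\Lambda=\{D_N<\infty\}$ instead, then $\mu(V)=0$ is precisely the $\mu$-polarity of $N$, and it does not come from the hypotheses the way hypothesis (ii) supplied it in \Cref{prop:continuous_modification_special}. The missing ingredient is a regularity property of $x\mapsto\mathbb{E}_xM(t)$ along paths: a point from which the process immediately moves into good points must itself be good.

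The paper obtains this from fine continuity. For $x\in E_0=[U_{\alpha_0}|g|<\infty]$ one has $\mathbb{E}_xM(t)=P_tf(x)-f(x)-\int_0^tP_rg(x)\,dr$. A Laplace transform shows that $\mathbb{E}_xM(t)=0$ for all $t$ if and only if $u_\alpha(x):=\alpha U_\alpha f(x)-f(x)-U_\alpha g(x)=0$ for all rational $\alpha>\alpha_0$. Each $u_\alpha$ is finely continuous on $E_0$, so $[u_\alpha\neq 0]$ is finely open and $\mu$-null. It is therefore $\mu$-polar by \Cref{rem 2.2}(ii), and this is where $\mu\circ U_\alpha\ll\mu$ is used. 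Your Step 1 does not need that hypothesis, since $\mu(E\setminus E_1)=0$ already follows from $\mu(U_{\alpha_0}|g|)<\infty$. Finally, \Cref{rem 2.2}(iii) places $[u_\alpha\neq0]$ inside a $\mu$-inessential set.

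Alternatively, you can repair your own argument directly. For $x\in E_1\cap[w=0]$ and rational $t$, the observation above gives $\mathbb{E}_x h_t(X(s))=0$ for a.e.\ $s>0$. Additivity and the Markov property then give $\mathbb{E}_xM(t+s)=\mathbb{E}_xM(s)$ for a.e.\ $s>0$. Let $s\downarrow 0$ along such $s$, using the right-continuity of $r\mapsto\mathbb{E}_xM(r)$ from your second step; this yields $h_t(x)=0$. Hence $N\cap[w=0]=\emptyset$, so $E_1\setminus N=E_1\cap[w=0]$ is absorbing by \Cref{rem:absorbing_intersection}. Its complement is $\mu$-null, hence $\mu$-inessential.
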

\begin{proof}
Let us consider 
$
    E_0:=\left[U_{\alpha_0} |g|<\infty\right]
$,
so that, by \Cref{rem:exc_absorbing}, $E_0\in \mathcal{A}(\mathcal{U})$.
Since $\mu(E\setminus E_0)=0$ we deduce that $E\setminus E_0$ is  $\mu$-inessential.

Consequently, we can restrict the process $X$ (see \Cref{defi:restr-process}), the measure $\mu$, as well as the functions $f$ and $g$, from $E$ to $E_0$.
Note that by \eqref{eq:f g mu} we have that $M(t)$ is well defined $\mathbb{P}_x$-a.s. for every $x\in E_0$ and $t\geq 0$.
Moreover, 
\begin{equation*}
    M(t)\in L^1(\mathbb{P}_x)\cap L^1(\mathbb{P}_\mu), \quad t\geq 0,x\in E_0,
\end{equation*}
and it is easy to see that $M$ is a right-continuous {\it additive functional} on $E_0$, namely
\begin{equation*}
    [0,\infty)\ni r\mapsto M(r) \mbox{ is right-continuous and } M(t+s)=M(s)+M(t)\circ\theta(s)\quad \mathbb{P}_x\mbox{-a.s.},
\end{equation*}
for all $t,s\geq 0, x\in E_0$.
As a consequence, by \Cref{lem:martingale 1} we deduce: {\it If $x\in E_0$, then the process $M$ given by \eqref{eq: process M} is a right-continuous $(\mathcal{F}_t)$-martingale under $\mathbb{P}_x$ if and only if $\mathbb{E}_x\left\{M(t)\right\}=0$ for every $t\geq 0$.}

Note that
$\mathbb{E}_\nu\left\{M(t)\right\}=0$ for every $t\geq 0$ and $\nu$ as in the statement.
Thus,
\begin{equation*}
    \mathbb{E}_x\left\{M(t)\right\}=0, \quad x\in E_0\; \mu\mbox{-a.e. }, \quad t\geq 0.
\end{equation*}
Furthermore, since for $x\in E_0$ we have that $M$ is right-continuous $\mathbb{P}_x$-a.s., we deduce on the one hand that
\begin{equation}\label{eq:E(M)=0 mu-a.e.}
    \mathbb{E}_x\left\{M(t)\right\}=0, \quad t\geq 0, \quad x\in E_0\; \mu\mbox{-a.e. }
\end{equation}
We also have
\begin{equation*}
    \mathbb{E}_x\{M(t)\}=P_tf(x)-f(x)-\int_0^tP_rg(x)\;dr,\quad t\geq 0, x\in E_0.
\end{equation*}
Noticing that 
\begin{align*}
    e^{-\alpha t}\int_0^tP_rg(x)\;dr
    &= e^{-(\alpha-\alpha_0) t}\int_0^te^{-\alpha_0t}P_rg(x)\;dr
    \leq  e^{-(\alpha-\alpha_0) t}\int_0^te^{-\alpha_0r}P_rg(x)\;dr\\
    &\leq e^{-(\alpha-\alpha_0) t} U_{\alpha_0}g(x) \mathop{\longrightarrow}\limits_{t\to \infty} 0 \quad \mbox{ for } \alpha>\alpha_0,
\end{align*}
by taking the Laplace transform and using integration by parts we obtain, on the other hand, that for $x\in E_0$
\begin{equation*}
    \mathbb{E}_x\left\{M(t)\right\}=0, \quad t\geq 0 \Longleftrightarrow \alpha U_\alpha f(x)-f(x)-U_\alpha g(x)=0, \quad \alpha_0<\alpha\in \mathbb{Q}_+.
\end{equation*}
Note that since $f$ is finely continuous and using also \Cref{rem:U_exc}, we deduce that 
\begin{equation*}
    u_\alpha:=\alpha U_\alpha f-f-U_\alpha g \quad \mbox{is finely continuous on }  E_0, \quad \alpha>\alpha_0.
\end{equation*}
From \eqref{eq:E(M)=0 mu-a.e.}, the fact that $u_\alpha$ is finely continuous, and \Cref{rem 2.2} we deduce
\begin{equation*}
    u_\alpha=0 \;\mu\mbox{-a.e. on }E_0, \quad \mbox{hence} \quad E_\alpha:=\{x\in E_0 : u_\alpha(x)=0\} \mbox{ is } \mu\mbox{-polar}.
\end{equation*}
Consequently, by the same \Cref{rem 2.2} we deduce that there exists a $\mu$-inessential set $E'_\alpha$ such that $E'_\alpha\supset E_\alpha$, $\alpha>\alpha_0$.
Finally, we consider
\begin{equation*}
E':=\mathop{\bigcap}\limits_{\alpha_0<\alpha\in \mathbb{Q}} E'_\alpha.
\end{equation*}
We can now conclude since by \Cref{rem:absorbing_intersection} we get that $E'$ is $\mu$-inessential, and for every $x\in E'$ we have that $\mathbb{E}_x\left\{ M(t)\right\}=0, t\geq 0$, hence $M$ is an $(\mathcal{F}_t)$-martingale under $\mathbb{P}_x$.
\end{proof}

\subsection{Adding jumps by perturbation with kernels}\label{ss:adding jumps}

Let $X$ be a right process on a Lusin measurable space $(E,\mathcal{B})$ which for simplity is considered to be conservative, i.e. it has a.s. infinite lifetime. 
Let $(P_t)_{t\geq 0}$ denote its transition function, with resolvent $\mathcal{U}=(U_\alpha)_{\alpha>0}$.
In the sequel  $(\mathcal{L}, \mathcal{D}_b(\mathcal{L}))$ will be the generator (on $b\cb$) 
of $X$ in the following sense: 
\begin{equation}\label{eq:D(L)_weak}
    \mathcal{D}_b(\mathcal{L}):= U_\alpha (b\cb)\quad \mbox{and}\quad \mathcal{L}f:= \alpha f -g, \; f=U_\alpha g, \;g\in b\cb.
\end{equation}

The following connection between $(\mathcal{L}, \mathcal{D}_b(\mathcal{L}))$ and the martingale problem associated to $X$ is known.  
We present its proof for completeness.

\begin{prop}\label{prop:connection}
The following assertions are equivalent for
$f,g\in b\mathcal{B}$.
\begin{enumerate}
    \item[(i)] $f\in \mathcal{D}_b(\mathcal{L})$ and $\mathcal{L}f=g$.
    \item[(ii)] 
    The process
    $
        \left(M_f(t):=f(X(t))-f(X(0))-\int_0^t g(X(s))\;ds\right)_{t\geq 0}
    $
    is a c\`adl\`ag $(\mathcal{F}(t))$-martingale under $\mathbb{P}_x$ for every $x\in E$.
\end{enumerate}
\end{prop}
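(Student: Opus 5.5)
We prove the two implications separately. In both directions the tools are the Markov property of $X$, the resolvent representation $U_\alpha h(x)=\mathbb{E}_x\int_0^\infty e^{-\alpha t}h(X(t))\,dt$, and \Cref{lem:martingale 1} (applied with $\mu=\delta_x$). That lemma reduces the martingale property of the additive functional $M_f$ to the condition $\mathbb{E}_x\{M_f(t)\}=0$ for all $t$.

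\emph{(i)$\Rightarrow$(ii).} Write $f=U_\alpha h$ with $h\in b\mathcal{B}$, so that $g=\alpha f-h$. By \Cref{rem:U_exc}, $f$ is the difference of two bounded $\mathcal{U}_\alpha$-excessive functions, namely $U_\alpha h^+$ and $U_\alpha h^-$. Hence $f$ is finely continuous, and by \Cref{thm 4.6} the process $t\mapsto f(X(t))$ is right-continuous $\mathbb{P}_x$-a.s. Since $M_f$ is a right-continuous additive functional and $g$ is bounded, \Cref{lem:martingale 1} shows it suffices to verify
\[P_tf-f-\int_0^tP_rg\,dr=0\quad\text{pointwise on } E.\]
By the Markov property, $P_tU_\alpha h=e^{\alpha t}\int_t^\infty e^{-\alpha r}P_rh\,dr$. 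Differentiating in $t$ gives $\frac{d}{dt}P_tf=\alpha P_tf-P_th=P_t g$ for a.e. $t$. Since $t\mapsto P_tf(x)$ is absolutely continuous, integrating gives the identity.

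For the càdlàg property, $f(X(t))=U_\alpha h^+(X(t))-U_\alpha h^-(X(t))$. By \Cref{thm 4.6}(ii), each term is $e^{\alpha t}$ times a right-continuous supermartingale, so each term has left limits a.s. by standard supermartingale regularity. The integral term is continuous. Therefore $M_f$ is càdlàg.

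\emph{(ii)$\Rightarrow$(i).} Taking expectations in (ii) gives $P_tf(x)=f(x)+\int_0^tP_rg(x)\,dr$ for every $x$ and $t$. Multiply by $\alpha e^{-\alpha t}$ and integrate over $t\in(0,\infty)$. Using Fubini together with the boundedness of $f$ and $g$, we obtain
\[\alpha U_\alpha f=f+U_\alpha g,\]
that is, $f=U_\alpha(\alpha f-g)$ with $\alpha f-g\in b\mathcal{B}$. By the definition \eqref{eq:D(L)_weak}, this means $f\in\mathcal{D}_b(\mathcal{L})$ and $\mathcal{L}f=\alpha f-(\alpha f-g)=g$.

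One point needs checking: \eqref{eq:D(L)_weak} implicitly uses that $\mathcal{D}_b(\mathcal{L})$ and $\mathcal{L}$ do not depend on $\alpha$. This follows from the resolvent equation, since $U_\alpha h=U_\beta\bigl(h+(\beta-\alpha)U_\alpha h\bigr)$, and the resulting $\mathcal{L}f$ is the same for every choice of $\alpha$.

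The main obstacle is the path regularity in (i)$\Rightarrow$(ii): right-continuity and left limits of $f(X)$ for $f=U_\alpha h$ with $h$ merely bounded and measurable. This is exactly where the fine topology is needed, via \Cref{rem:U_exc} and \Cref{thm 4.6}. The remaining steps are routine Laplace-transform calculations.
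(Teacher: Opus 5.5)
Your proposal is correct and follows the same route as the paper. For (i)$\Rightarrow$(ii) you verify $P_tf=f+\int_0^tP_sg\,ds$ pointwise and invoke \Cref{lem:martingale 1}; for (ii)$\Rightarrow$(i) you take expectations and a Laplace transform to get $f=U_\alpha(\alpha f-g)$. The extra details you supply are all sound and are what the paper leaves implicit: the computation of $P_tU_\alpha h$, the fine continuity of $f=U_\alpha h^+-U_\alpha h^-$ needed for that lemma, the existence of left limits, and the independence of $\alpha$.
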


\begin{proof}
Assume (i). 
Since it is easy to see that
\begin{equation*}
P_tf=f+\int_0^t   P_sg\;ds, \quad t\geq 0, 
\end{equation*}
it follows that $\mathbb{E}_x\left\{f(X(t))-f(X(0))-\int_0^t g(X(s))\right\}=0$ for every $t\geq 0$.
Assertion (ii) now follows by \Cref{lem:martingale 1}.

Assume now that (ii) holds. 
Then we clearly have
\begin{equation*}
    \mathbb{E}_x\left\{ M_f(t)\right\}=0,\;  t \geq 0, 
\end{equation*}
hence
\begin{equation*}
    \int_0^\infty e^{-\alpha t}P_tf\;dt=\frac{1}{\alpha}f+\int_0^\infty e^{-\alpha t}\int_0^t P_sg\;ds\;dt, \, \alpha> 0,
\end{equation*}
which means that
\begin{equation*}
    f=U_\alpha \left(\alpha f-g\right)\in \mathcal{D}_b(\mathcal{L}), \quad \mbox{and } \quad \mathcal{L}f=\alpha f-(\alpha-g)=g.
\end{equation*}
\end{proof}

\begin{rem} 
Assertion (ii) in Proposition \ref{prop:connection}
is precisely the property  of the function $f\in b\cb$ to belong to the domain of the extended generator considered in \cite{HiYo12}; see also Remark 3.3 from \cite{BeBezzCi24}. 
Consequently, Proposition \ref{prop:connection} shows that
$\mathcal{D}_b(\mathcal{L})$ coincides with the domain from \cite{HiYo12} of the extended generator acting on bounded functions.

\end{rem}

Let $K$ be a bounded kernel on $(E, \cb)$,   
 $(P'_t)_{t\geqslant 0}$ be the transition function of the process $X$ killed with the multiplicative functional induced by $K1$
(cf. Ch. III from \cite{BlGe68}),   
 and let $(\mathcal{L}-K1, \mathcal{D}_b(\mathcal{L}-K1))$
be its generator. 
Applying Lemma 1.4 from \cite{BeSt93} on $b\cb$
it follows that
\begin{equation*}
    \mathcal{D}_b(\mathcal{L}-K1)= \mathcal{D}_b(\mathcal{L}) \quad \mbox{and} \quad (\mathcal{L}-K1)u=\mathcal{L}u-K1 u \quad \mbox{for every}\quad u\in\mathcal{D}_b(\mathcal{L}).
\end{equation*}

The next proposition is a version of Proposition 4.5 from 
\cite{BeLu16}; 
see also Proposition 3.4 from \cite{BeLuVr20}.

\begin{prop}\label{prop:perturbation}
Let $K$ be a bounded kernel on $(E, \cb)$.
Then the following assertions hold.

\begin{enumerate}
    \item[(i)] There exists a right process $X^K=(\Omega^K, \mathcal{F}^K, \mathcal{F}^K_t , X^K(t), \theta^K(t) , \mathbb{P}^{K}_{x})$ with state space $E$, having the generator
    \begin{equation*}
        (\mathcal{L}-K1+ K, \mathcal{D}_b(\mathcal{L}-K1+K)) \quad \mbox{and} \quad \mathcal{D}_b(\mathcal{L}-K1+K)= \mathcal{D}_b(\mathcal{L}).
    \end{equation*}
    Moreover, the fine topology induced by (the resolvent of) $X^K$ coincides with that induced by $X$.
    \item[(ii)] 
    The process $X^K$ solves the martingale problem for $(\mathcal{L}-K1+ K, \mathcal{D}_b(\mathcal{L})))$ under $\mathbb{P}^{K}_x$. 
    More precisely, for every $f\in \mathcal{D}_b(\mathcal{L})$ and $x\in E$ the process
    \begin{equation}\label{eq:martingale_weak}
        \left(f(X^K(t))-  f(X^K (0)) -\int_0^t (\mathcal{L}f-K1 f + Kf)(X^K (s)) ds\right)_{t\geqslant 0}
    \end{equation}
    is a c\`adl\`ag $(\mathcal{F}^K_t)$-martingale under $\mathbb{P}^{K}_{x}$.
\end{enumerate} 
\end{prop}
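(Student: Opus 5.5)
The plan is to construct $X^K$ as the classical Ikeda--Nagasawa--Watanabe concatenation of killed copies of $X$, and to read off both the resolvent identity and the generator from it. Put $c:=\sup_x K1(x)<\infty$ and let $X'$ be $X$ killed by the multiplicative functional $\exp(-\int_0^t K1(X(s))\,ds)$. Then $X'$ is a right process, with transition function $(P'_t)_{t\geq0}$ and resolvent $\mathcal{U}'=(U'_\alpha)_{\alpha>0}$. Its generator is $\mathcal{L}-K1$ on $\mathcal{D}_b(\mathcal{L})$, as recalled before the statement via \cite[Lemma 1.4]{BeSt93}.

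When $X'$ dies at a point $y$, we restart it from the probability $K(y,\cdot)/K1(y)$ on $\{K1>0\}$; the revival kernel $K/K1$ is Markovian there. By the revival/concatenation theorem for right processes (Meyer's renaissance, as in \cite[Proposition 4.5]{BeLu16}), the pieced-together process $X^K$ is again a right process. It is conservative, since $X$ is conservative and jumps accumulate at most like a Poisson process of rate $c$. Its resolvent satisfies
\begin{equation*}
U^K_\alpha=U'_\alpha+U'_\alpha K U^K_\alpha,
\end{equation*}
which we obtain by conditioning on the first revival time via the strong Markov property. Because $\|\alpha U'_\alpha\|\leq 1$ and $K$ is bounded, the Neumann series $U^K_\alpha=\sum_{n\geq0}U'_\alpha(KU'_\alpha)^n$ converges for $\alpha>c$, and for all $\alpha>0$ by the resolvent equation.

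For the generator we apply $(\alpha-(\mathcal{L}-K1))$ to the resolvent identity:
\begin{equation*}
(\alpha-\mathcal{L}+K1)U^K_\alpha g=g+KU^K_\alpha g .
\end{equation*}
Hence $U^K_\alpha(b\mathcal{B})\subset\mathcal{D}_b(\mathcal{L})$ and $(\mathcal{L}-K1+K)U^K_\alpha g=\alpha U^K_\alpha g-g$, i.e.\ the generator of $X^K$ in the sense of \eqref{eq:D(L)_weak} is $\mathcal{L}-K1+K$. For the reverse inclusion, take $f=U'_\alpha h\in\mathcal{D}_b(\mathcal{L})$ and write $f=U^K_\alpha(h-KU'_\alpha h)$, which is again a consequence of the identity above. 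This gives $\mathcal{D}_b(\mathcal{L}-K1+K)=\mathcal{D}_b(\mathcal{L})$. Since the same identity makes $\mathcal{D}_b(\mathcal{L})$ the common range, we get:
\begin{itemize}
\item every $\mathcal{U}^K_\beta$-excessive function is a countable combination of ranges of $\mathcal{U}_\beta'$, hence finely continuous for $X'$;
\item conversely, bounded $U'_\beta$-potentials $U'_\beta h=U^K_\beta(h-KU'_\beta h)$, once made positive by adding a constant multiple of $U^K_\beta 1$, are differences of $\mathcal{U}^K$-potentials.
\end{itemize}
The fine topologies of $X$, $X'$ and $X^K$ therefore coincide (killing by a bounded rate does not change the fine topology). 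This fine-topology identification is the delicate point; I expect to handle it by working with the Ray cones of \Cref{rem: ray_topology} and checking that a Ray cone for $\mathcal{U}_\beta$ with $\beta>c$ is stable under $U'$ and $K$-perturbation.

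Assertion (ii) then follows directly from \Cref{prop:connection} applied to the right process $X^K$. For $f\in\mathcal{D}_b(\mathcal{L})=\mathcal{D}_b(\mathcal{L}^K)$ we have $g:=\mathcal{L}f-K1f+Kf\in b\mathcal{B}$ and $f$ lies in the bounded generator domain of $X^K$ with image $g$. \Cref{prop:connection} (i)$\Rightarrow$(ii) therefore yields that \eqref{eq:martingale_weak} is a c\`adl\`ag martingale under $\mathbb{P}^K_x$ for every $x$. The c\`adl\`ag property uses that $f$ is finely continuous for $X^K$, by the fine-topology statement in (i), together with \Cref{thm 4.6}.

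The main obstacle is the existence step, namely that the concatenation is a genuine right process satisfying the hypotheses of \Cref{defi 4.4}. I would not reprove this, but cite the renaissance construction \cite{BeLu16} and verify $\mathbf{(H_{\mathcal{C}})}$ for $\mathcal{U}^K$ through the Neumann series.
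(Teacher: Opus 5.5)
Your overall route is the paper's: existence from \cite[Proposition 4.5]{BeLu16} with revival kernel $K/K1$, the identity $U^K_\beta=U'_\beta+U'_\beta KU^K_\beta$, the domain identification (which you check by hand instead of quoting \cite[Lemma 1.4]{BeSt93}), and (ii) via \Cref{prop:connection}. The genuine gap is in the fine-topology claim.

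Your second bullet only shows that bounded $\mathcal{U}'_\beta$-\emph{potentials} are finely continuous for $X^K$. To reach an arbitrary $v\in b\mathcal{E}(\mathcal{U}'_\beta)$ you would have to take increasing limits of potentials, and that yields only fine lower semicontinuity. Bounded potentials do not generate the fine topology in general: for Brownian motion in $\mathbb{R}^3$ they are all Euclidean-continuous, while the fine topology is strictly finer. The Ray-cone plan does not repair this, since a Ray topology is coarser than the fine topology. Your first bullet has the same formal defect ("countable combination of ranges, hence finely continuous"), but there the limit is harmless. For $g\in bp\mathcal{B}$, $U^K_\beta g=U'_\beta(g+KU^K_\beta g)$ is $\mathcal{U}'_\beta$-excessive, and increasing limits of excessive functions are excessive, so $\mathcal{E}(\mathcal{U}^K_\beta)\subset\mathcal{E}(\mathcal{U}'_\beta)$.

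What is missing is the converse at the level of excessive functions, not potentials. For $\beta>\sup K1$ your Neumann series also yields the other-sided identity $U^K_\beta=U'_\beta+U^K_\beta KU'_\beta$. Given $v\in b\mathcal{E}(\mathcal{U}'_\beta)$, choose $h_n\in bp\mathcal{B}$ with $U'_\beta h_n\nearrow v$; for instance $h_n=n(v-nU'_{\beta+n}v)$ works. Then $U^K_\beta h_n=U'_\beta h_n+U^K_\beta KU'_\beta h_n\nearrow v+U^K_\beta Kv$. Hence $v+U^K_\beta Kv\in b\mathcal{E}(\mathcal{U}^K_\beta)$, and $v=(v+U^K_\beta Kv)-U^K_\beta Kv$ is a difference of two bounded $\mathcal{U}^K_\beta$-excessive functions. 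Combined with the first inclusion this gives $[b\mathcal{E}(\mathcal{U}^K_\beta)]=[b\mathcal{E}(\mathcal{U}'_\beta)]$, which is exactly what the paper records. The same argument with $K$ replaced by multiplication by $K1$ gives $[b\mathcal{E}(\mathcal{U}'_\beta)]=[b\mathcal{E}(\mathcal{U}_\beta)]$. Since the fine topology does not depend on $\beta$, the three fine topologies coincide.

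Two smaller points. First, in your reverse domain inclusion, $U'_\alpha h=U^K_\alpha(h-KU'_\alpha h)$ follows from this other-sided identity, not from the one you display. Second, (ii) does not need the fine-topology statement at all: $f\in\mathcal{D}_b(\mathcal{L}^K)=U^K_\alpha(b\mathcal{B})$ is a difference of two bounded $\mathcal{U}^K_\alpha$-potentials, hence directly finely continuous for $X^K$, and \Cref{prop:connection} applied to $X^K$ gives the c\`adl\`ag martingale.
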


\begin{proof}
$(i)$ We apply  \cite[Proposition 4.5]{BeLu16} for $c:=K1$ and for the 
sub-Markovian kernel $\frac{1}{c} K$,
where the measure $\frac{1}{c} K(x,dy)$ is zero provided that $c(x)=0$. 
It follows that for any $f \in bp\mathcal{B}$ the equation
$$
r_t(x)=P'_t f (x)+\int_{0}^t P'_{t-u}(K{r_u})(x) du,\; t\geqslant  0,\; x\in E,
$$
has a unique solution $Q_t f  \in bp \mathcal{B}$,  the function  
$(t,x)\longmapsto Q_t f (x)$ is measurable, 
the family $(Q_t)_{t\geqslant  0}$ is a  semigroup of sub-Markovian kernels on $(E, \mathcal{B})$
and it is the transition function of a right process 
$X^K=(\Omega^K,\mathcal{F}^K, \mathcal{F}^K_t, X^K(t), \mathbb{P}^K_x)$
with state space $E$.
Let $\cu^K=(U^K_\alpha)_{\alpha>0}$ on $(E,\cb)$ be 
the resolvent of kernels induced by $(Q_t)_{t\geqslant  0}$, 
$U^K_\alpha =\int_0^\infty e^{-\alpha t} Q_t dt$, $\alpha>0$. 
Then 
\begin{equation} \label{Stoica}
U^K_\beta =U'_\beta+ U'_\beta  KU^K_\beta \quad \mbox{ for all } \beta >0,
\end{equation}
where $\cu'=(U'_\alpha)_{\alpha >0}$ is the resolvent of kernels induced by $(P'_t)_{t\geqslant 0}$.
We  have
$\ce(\cu^K_\beta)\subset  \ce(\cu'_\beta)$ and
$[b\ce(\cu^K_\beta)]=  [b\ce(\cu'_\beta)]=[b\ce(\cu_\beta)]$.
In particular, the fine topology induced by $X^K$ and the fine topology of $X$ coincide. 

Let $(\mathcal{L}^K, \mathcal{D}_b(\mathcal{L}^K))$ be the generator of $X^K$. 
By \cite[Lemma 1.4]{BeSt93} applied for the Banach space $b\cb$, and using also $\eqref{Stoica}$, it follows that
$\mathcal{D}_b(\mathcal{L}^K)=\mathcal{D}_b(\mathcal{L})$ and 
$\mathcal{L}^K= \mathcal{L}-K1 + K$.
Hence 
$(\mathcal{L}-K1+ K, \mathcal{D}_b(\mathcal{L}-K1+K))$
is the generator of $X^K$ 
and we have 
$\mathcal{D}_b(\mathcal{L}-K1+ K)= \mathcal{D}_b(\mathcal{L})$ as claimed.

The proof of assertion $(ii)$ is standard and we omit it; see e.g. \Cref{ss:MAF}. 
The right continuity of the martingale follows from \Cref{thm 4.6} since $f\in \mathcal{D}_b(\mathcal{L})$, hence $f$ is finely continuous; by the general theory, the martingale is in fact c\`adl\`ag.
\end{proof}

\paragraph{Example: adding jumps to a path-continuous Markov process.} 
The perturbation with a kernel of the generator was used in \cite{Bass79} and
\cite{BeSt93} in order to modify the jumps in the time evolution of a Markov process. 
However, the basic reference here is the pioneering 
articles \cite{IkNaWa68} and \cite{Me75}; see also \cite{KrMa25} for a very recent related work.

\begin{prop}[cf. e.g. \cite{Me75}, \cite{BeSt93}]\label{prop:adding jumps diffusion}
Assume that $(E,\tau)$ is locally compact with countable basis and $\mathcal{B}$ is the Borel $\sigma$-algebra.
Let $X$ be the right process on $E$ fixed above which is in addition assumed to have $\tau$-continuous paths.
Further, let $K$ be a bounded kernel on $(E, \cb)$. 
Then the right process $X^K$ provided by \Cref{prop:perturbation} is a Hunt process on $(E,\tau)$.
Moreover, for every $f\in pb\mathcal{B}\otimes\mathcal{B}, f(y,y)=0, y\in E$, we have
\begin{equation}\label{eq:jumps}
\mathbb{E}_x
\left\{\sum_{s\leqslant t}
f(X^K({s-}), X^K(s))\right\}
=
\mathbb{E}_x
\left\{\int_0^t \int_E 
f(X^K(s), y) K(X^K(s), dy) ds\right\}, \quad t\geq 0.
\end{equation}
\end{prop}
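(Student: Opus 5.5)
The plan is to split the statement into three parts: the Hunt property, then the jump-compensator identity \eqref{eq:jumps}, and finally the identification of left limits with the pre-jump positions.

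Throughout I will use the explicit pathwise construction of $X^K$ obtained by concatenation, as in \cite{IkNaWa68}, \cite{Me75}. This is equivalent in law to the process of \Cref{prop:perturbation}, since both have the same resolvent given by \eqref{Stoica}, and right processes with the same resolvent have the same finite-dimensional laws.

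\textbf{Construction.} Kill $X$ with the multiplicative functional $\exp\bigl(-\int_0^t K1(X(s))\,ds\bigr)$; on the enlarged probability space this means killing at the time
\begin{equation*}
\zeta':=\inf\Bigl\{t:\int_0^tK1(X(s))\,ds>\mathbf{e}\Bigr\},
\end{equation*}
where $\mathbf{e}$ is an independent standard exponential variable. At time $\zeta'$, restart the process from a point $y$ distributed according to $K(X(\zeta'-),dy)/K1(X(\zeta'-))$, and iterate. Since $K1$ is bounded, the number of restarts in $[0,t]$ is dominated by a Poisson variable with rate $\|K1\|_\infty$, so it is finite almost surely and the concatenated process has infinite lifetime. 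Between restarts the paths are $\tau$-continuous, and $\zeta'$ is totally inaccessible: it is the first jump of a Cox-type counting process with bounded intensity. Hence $X^K$ is c\`adl\`ag and jumps only at restart times.

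\textbf{Hunt property.} Let $T_n\nearrow T$ be stopping times with $T<\infty$. I will show $X^K(T_n)\to X^K(T)$ as follows.
\begin{itemize}
\item[] On the event that no restart occurs at $T$, this follows from path continuity between restarts together with local compactness.
\item[] A restart at $T$ is impossible with positive probability, because restart times are totally inaccessible with respect to the filtration. Their compensator $\int_0^t K1(X^K(s))\,ds$ is continuous, so the restart times cannot be announced by $T_n$; this is the standard Meyer argument.
\end{itemize}
Quasi-left continuity, combined with the existence of left limits, gives the Hunt property.

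\textbf{The identity \eqref{eq:jumps}.} Let $N$ be the counting process of restarts. By construction, its compensator given the past is $K1(X^K(s-))\,ds$, and the conditional law of the post-jump position is $K(X^K(s-),\cdot)/K1(X^K(s-))$. Hence for predictable integrands the random measure $\sum_s\delta_{(s,X^K(s))}$ restricted to jump times has compensator $ds\,K(X^K(s-),dy)$. Applying this to $f(X^K(s-),y)$ and taking expectations gives \eqref{eq:jumps}; since $X^K(s-)=X^K(s)$ for Lebesgue-almost every $s$, the left limit in the integrand may be replaced by $X^K(s)$.

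The restriction $f(y,y)=0$ is used here: a restart landing at the same point, $y=X^K(s-)$, is not a genuine jump, and $f$ assigns it weight zero, so both sides agree. Infinite sums are handled by monotone convergence, starting from bounded $f$ and integrating over $[0,t]$, which is finite because $K$ is bounded.

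\textbf{Main obstacle.} The main difficulty is identifying the left limits $X^K(\zeta'-)$ with $X(\zeta'-)$ for the continuous piece. This is immediate from continuity of $X$, but it must be matched with the abstract right-process version from \Cref{prop:perturbation}. I would handle this by realizing both processes on path space via \Cref{prop:tau-identification}, which transfers path regularity between right processes sharing the same resolvent.
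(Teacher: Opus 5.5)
Your proposal is correct and follows the paper's route. The paper likewise kills $X$ by the multiplicative functional of $K1$ and resurrects it with the kernel $K(X(\zeta'-),dy)/K1(X(\zeta'-))$ (Meyer's piecing-out); it then cites \cite[Theorem 2.5]{IkNaWa68} for the Hunt property and leaves \eqref{eq:jumps} to \cite{Me75}, \cite{BeSt93}, whereas you unpack both steps via total inaccessibility of the restart times and the compensator $ds\,K(X^K(s-),dy)$. One adjustment: \Cref{prop:tau-identification} is stated only for continuous paths, so transferring left limits and quasi-left continuity from your concatenated process to the abstract version of $X^K$ needs its routine c\`adl\`ag analogue, with both processes identified through their common transition function $(Q_t)_{t\geq 0}$.
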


\begin{proof}
Let $X'=(\Omega',\mathcal{F}', \mathcal{F}'_t, X'(t), \mathbb{P}'^x)$
be the right process with state space $E$ and lifetime $\zeta'$, obtained by killing $X$  with the multiplicative functional induced by $K1$. 
We consider 
(using the terminology from \cite{Me75} and 
\cite{BeSt93}) 
the (simple) Markov process 
$X^K$ with $\tau$-c\`adl\`ag paths 
obtained  by resurrecting $X'$ with the kernel $R$ defined as
$$
R(\omega', dy)=
\frac{1_{[\zeta'< \infty]}(\omega')}{K1(X'({\zeta'-})(\omega'))}
K(X'({\zeta'-})(\omega'), dy) + 
1_{[\zeta'=\infty]}(\omega')\delta_\Delta (dy), \quad \omega' \in \Omega'.
$$
The proof now finishes by noticing that $X^K$ corresponds to the process constructed in \cite[Theorem 2.5]{IkNaWa68}, hence it is in fact a Hunt process. 

\end{proof}

\begin{rem}\label{rem:bounded_coefficients}
Note that $f\in D(\mathcal{L})$ used as test function for the martingale problem \eqref{eq:martingale_weak} automatically satisfies the restriction $\mathcal{L}f\in b\mathcal{B}$. 
This, of course, fits very well with generators $L$ in differential form that have locally bounded coefficients, which is the case of \Cref{lem:b locally bounded} and \Cref{prop: b locally bounded} in \Cref{ss:jumps}, or the case of the linearized operator ${\sf L}_t^u$ given by \eqref{eq:a^ub^u} in \Cref{ss:example_nonlinear}. 
However, in \Cref{s:reg_superposition_linear} we are interested to cover more general coefficients,
so in what follows we aim to present an $L^1$-approach that allows to solve the perturbed martingale problem \eqref{eq:martingale_weak} for discontinuous diffusion coefficients and drift coefficients that are merely in $L^2_{\sf loc}$.
\end{rem}

\paragraph{The $L^1$-approach.}
In this paragraph we assume that there exists a $\sigma$-finite measure $\mu$ on $E$ such that
\begin{equation}\label{eq:L^1(mu)}
    (P_t)_{t\geq 0} \mbox{ can be extended to a } C_0\mbox{-semigroup on } L^1(\mu),
\end{equation}
where recall that $(P_t)_{t\geq 0}$ is the transition function of the right process $X$ fixed in the beginning of this part. 
\begin{rem}
Notice that, by \cite[Proposition 2.1]{BeCiRo18} it follows that $(\ref{eq:L^1(mu)})$ holds provided that
$\mu$ is an excessive measure w.r.t. $X$,
i.e., $\mu\circ P_t\leqslant \mu$ for all $t>0$.
The converse also holds: If  a sub-Markovian $C_0$-resolvent of contractions on $L^1(\mu)$ is given, then it is associated to a right process, however, one has to enlarge $E$ by a zero set; cf. Theorem 2.2 from \cite{BeBoRo06}.
One can show that under certain additional conditions the above right process lives on $E$;  cf. \cite{BeBoRo06a}.    
\end{rem}

We further denote by $\left(L_1, D(L_1)\right)$ the generator induced by $(P_t)_{t\geq 0}$ on $L^1(\mu)$.

Let $K$ be a bounded kernel on $(E,\mathcal{B})$ as above, and consider the following assumption on $K$:

\medskip
\noindent{$\bf (H_K^\mu)$} $K$ induces a bounded operator on $L^1(\mu)$, namely
\begin{equation*}
    \exists\; c\in (0,\infty): \quad \int_E |Kf(x)| \;\mu(dx)\leq c \int_E |f(x)| \;\mu(dx), \quad f\in L^1(\mu).
\end{equation*}

\begin{rem}
Let $v\in bp\mathcal{B}$.
Then the multiplication by $v$ is trivially a bounded operator on $L^p(\mu)$. 
Moreover, if $K$ satisfies $\bf (H_K^\mu)$, then $vK$ trivially satisfies $\bf (H_{vK}^\mu)$ for any $v\in pb\mathcal{B}$, in particular for $v=K1$.
\end{rem}

\begin{prop}\label{prop:L1_approach}
    Let $X$ be the right process fixed above with its transition function satisfying \eqref{eq:L^1(mu)}, and $K$ be a bounded kernel on $(E,\mathcal{B})$ which satisfies $\bf (H_K^\mu)$, and set $c:=K1$. 
    Let $X^K$ be the right process provided by \Cref{prop:perturbation}.
    Then the following assertions hold.
    \begin{enumerate}
        \item[(i)] The transition function of $X^K$ extends to a $C_0$-semigroup on $L^1(\mu)$, whose generator is 
        \begin{equation*}
            L_1-c+K, \quad D(L_1-c+K)=D(L_1).
        \end{equation*}
        \item[(ii)] Let $f\in D(L_1)\cap b\mathcal{B}$ be finely continuous.
        Then there exists a set $\tilde{E}\in\mathcal{B}$ such that $E\setminus \tilde{E}$ is $\mu$-inessential and for every $x\in \tilde{E}$
        \begin{equation*}
        \mathbb{E}^K_{x}\left\{\int_0^{t} \left|L_1f-cf+Kf\right|(X^K(r))\; dr
               \right\}<\infty, \quad t\geq 0,
        \end{equation*}
        and the process
        \begin{equation}\label{eq:martingale_mu}
        \left(f(X^K(t))-  f(X^K(0)) -\int_0^t (L_1f-cf + Kf)(X^K(s)) ds\right)_{t\geqslant 0}
    \end{equation}
    is a c\`adl\`ag $(\mathcal{F}^K_t)$-martingale under $\mathbb{P}^{K}_{x}$.
    \end{enumerate}
\end{prop}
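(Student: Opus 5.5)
For (i) the plan is to go through bounded perturbation theory on $L^1(\mu)$, and then identify the resulting semigroup with the transition function of $X^K$. By \eqref{eq:L^1(mu)}, $(L_1,D(L_1))$ generates a $C_0$-semigroup of contractions (sub-Markovian) on $L^1(\mu)$. By $\bf (H_K^\mu)$ and the remark preceding the statement, both $K$ and the multiplication by $c=K1$ are bounded operators on $L^1(\mu)$. The classical bounded perturbation theorem (Phillips) then shows that $L_1-c$ and $L_1-c+K$, both with domain $D(L_1)$, generate $C_0$-semigroups on $L^1(\mu)$.

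The semigroup generated by $L_1-c$ is the $L^1(\mu)$-extension of the killed transition function $(P'_t)_{t\geq 0}$. This follows from the Feynman–Kac representation $P'_tf=\mathbb{E}_\cdot\{e^{-\int_0^tc(X(s))ds}f(X(t))\}$ together with the Dyson–Phillips (Duhamel) formula $P'_t=P_t-\int_0^tP_{t-s}cP'_s\,ds$. This formula holds pointwise and hence in $L^1(\mu)$; uniqueness of mild solutions then identifies the two semigroups.

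Next I use the resolvent identity \eqref{Stoica}, $U^K_\beta=U'_\beta+U'_\beta KU^K_\beta$, from the proof of \Cref{prop:perturbation}. Take $\beta$ large, namely $\beta>\|K\|_{L^1\to L^1}+\|c\|_\infty$ plus the growth bound. Then the Neumann series $U^K_\beta=\sum_{n\geq0}U'_\beta(KU'_\beta)^n$ converges both pointwise on $b\mathcal{B}$ and in $L^1(\mu)$. The $L^1$ limit is exactly $(\beta-(L_1-c+K))^{-1}$, so the pointwise resolvent of $X^K$ is a $\mu$-version of the $L^1$ resolvent. By the resolvent equation this extends to all $\beta>0$, and by Laplace inversion $(Q_t)$ extends to the $C_0$-semigroup on $L^1(\mu)$ with generator $L_1-c+K$.

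For (ii), fix $f\in D(L_1)\cap b\mathcal{B}$ finely continuous, and choose a $\mathcal{B}$-measurable version $g$ of $L_1f-cf+Kf$. By (i), $Q_tf=f+\int_0^tQ_sg\,ds$ holds $\mu$-a.e. for all $t$; both sides are measurable in $t$, and the right-hand side is continuous in $t$. I then aim to apply \Cref{prop:martingale_absorbing} to $X^K$, with the fine topology of $X^K$ equal to that of $X$ by \Cref{prop:perturbation}(i), so $f$ is finely continuous for $X^K$.

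This requires checking three hypotheses of \Cref{prop:martingale_absorbing}:

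(a) $\mu\circ U^K_\alpha\ll\mu$. This follows since $U^K_\alpha$ acts boundedly on $L^1(\mu)$, hence sends $\mu$-null sets to $\mu$-a.e. zero functions. If $\mu$ is only $\sigma$-finite, first pass to an equivalent finite measure $\mu'\sim\mu$.

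(b) $\mu(U^K_{\alpha_0}|g|)<\infty$. This follows from $|g|\in L^1(\mu)$ and $L^1$-boundedness of $U^K_{\alpha_0}$. If $\mu'$ was introduced, take $\mu'=h\mu$ with $h$ bounded and chosen so that $\mu'(U^K_{\alpha_0}|g|)<\infty$, for instance using $\mu(U^K_{\alpha_0}|g|)<\infty$ directly when $\mu$ is finite.

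(c) $M$ is a $\mathbb{P}^K_\nu$-martingale for $\nu\leq C\mu'$. Integrating the $\mu$-a.e. Cauchy identity against $\nu$ gives $\mathbb{E}^K_\nu M(t)=0$. Then \Cref{lem:martingale 1}, using the additive functional property and right continuity from fine continuity of $f$ via \Cref{thm 4.6}, yields the martingale property.

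\Cref{prop:martingale_absorbing} then produces $E'$ with $E\setminus E'$ $\mu$-inessential and $M$ a right-continuous martingale under every $\mathbb{P}^K_x$, $x\in E'$. Intersecting with the absorbing set $[U^K_{\alpha_0}|g|<\infty]$ (\Cref{rem:exc_absorbing}, \Cref{rem:absorbing_intersection}) gives $\tilde E$ and the integrability bound, since $\mathbb{E}^K_x\int_0^t|g|(X^K(r))dr\leq e^{\alpha_0t}U^K_{\alpha_0}|g|(x)$. Càdlàg paths follow from right-continuous martingale regularity.

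The main obstacle is the identification in (i) of the pointwise kernel resolvent of $X^K$ with the $L^1$ resolvent. In particular, one must ensure the Neumann series and the killed semigroup are consistent $\mu$-a.e. and not merely formally.
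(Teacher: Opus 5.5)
Your proposal is correct and follows essentially the paper's route: bounded perturbation (first by $-c$, then by $K$) of the $L^1(\mu)$-generator, identification of the resulting $C_0$-semigroup with the transition function of $X^K$ through the perturbation identity behind \Cref{prop:perturbation}, and then \Cref{prop:martingale_absorbing} (after showing $\mathbb{E}^K_\nu M(t)=0$ for $\nu\leq C\mu$) for (ii). The differences are only in detail. You make the identification via the resolvent Neumann series from \eqref{Stoica} rather than the variation-of-constants formula, and you explicitly check the hypotheses of \Cref{prop:martingale_absorbing}, including passing to a finite measure equivalent to $\mu$; the paper leaves these steps implicit.
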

    \begin{proof}
        (i). First of all, we can apply \cite[Chapter III, 1.3]{EnNa00} two times in a row, first for the perturbation $L_1-c$ and then for the perturbation $L_1-c+K$, to ensure that $ \left(L_1-c+K, \;D(L_1)\right)$ generates a $C_0$-semigroup on $L^1(\mu)$, which we denote by $(S_t)_{t\geq 0}$.
        Furthermore, by \cite[Chapter III, 1.7]{EnNa00} we have
        \begin{equation*}
            S_t=P_t^c+\int_0^t P_{t-s}^ccK S_s\;ds \quad \mbox{as operators on } L^1(\mu),\quad t\geq 0.
        \end{equation*}
        It is then straightforward to deduce that 
        \begin{equation*}
            S_tf=Q_tf \quad \mu\mbox{-a.e.}, \quad f\in L^1(\mu)\cap b\mathcal{B}, \quad t\geq 0,
        \end{equation*}
        where recall that $(Q_t)_{t\geq 0}$ is the transition function of $X^K$.
        Thus, assertion (i) is proved.

\medskip
(ii). Let $f\in L^1(\mu)\cap b\mathcal{B}$ be finely continuous and consider
\begin{equation*}
    M(t):=f(X^K(t))-f(X^K(0))-\int_0^t (L_1-c+K)u(X^K(r))dr, \quad t\geq 0,
\end{equation*}
which is well defined $\mathbb{P}_\mu$-a.e. and from $L^1(\mathbb{P}^{K}_{\mu})$.
Since $(Q_t)_{t\geq 0}$ is a $C_0$-semigroup on $L^1(\mu)$ with generator $L_1-c+K$, we get that
$\mathbb{E}^{K}_{\nu} \left\{M(t)\right\}=0$, $t\geq 0$ for every $\nu\in \mathcal{P}(E)$ which satisfy $\nu\leq \delta \mu$ for some constant $\delta\in (0,\infty)$.
Now, assertion (ii) is entailed by \Cref{prop:martingale_absorbing}.
\end{proof}

\medskip

\paragraph{The Lyapunov approach.} 
In this paragraph we let $X$ be the right process fixed above with transition function $(P_t)_{t\geq 0}$ and resolvent $\mathcal{U}:=(U_\alpha)_{\alpha>0}$,
and assume
\begin{equation}\label{eq:V}
    \exists\; V:E\rightarrow [0,\infty)\; \mathcal{B}\mbox{-measurable} \quad \mbox{and} \quad \omega>0 \mbox{ such that} \quad P_tV\leq e^{\omega t} V,\;t\geq 0.
\end{equation}
That is, using the terminology from \Cref{defi 4.1}, $V$ is $\mathcal{U}_\omega$-supermedian.

Furthermore, we consider
\begin{equation*}
    \mathcal{B}_{\lesssim V}:=\left\{f\in \mathcal{B} : \mbox{there exists } \delta\in (0,\infty) \mbox{ such that } |f|\leq \delta V\right\},
\end{equation*}
and set 
\begin{equation}\label{eq:D(L)_V}
    \mathcal{D}_V(L):= U_{\alpha} (\mathcal{B}_{\lesssim V}), \alpha>\omega, \quad \mbox{and}\quad \mathcal{L}f:= \alpha f -g, \; f=U_\alpha g, \;g\in \mathcal{B}_{\lesssim V}.
\end{equation}
\begin{rem}
Note that $\mathcal{D}_V(\mathcal{L})$ is independent of $\alpha>\omega$.  
\end{rem}

Let us endow $\mathcal{B}_{\lesssim V}$ with the following weighted supremum norm
\begin{equation*}
    |f|_{V}:=\sup_{x\in E}\frac{|f(x)|}{1+V(x)}, \quad f\in \mathcal{B}_{\lesssim V}.
\end{equation*}
It is easy to check that $(\mathcal{B}_{\lesssim V}, |\cdot|_V)$ is a Banach space.

Further, we consider the following condition on $K$:

\medskip
\noindent{$\bf (H_K^V)$} $K$ is a bounded kernel on $(E,\mathcal{B})$ and 
\begin{equation*}
    \exists\; \delta\in(0,\infty):\quad KV\leq \delta V.
\end{equation*}

\begin{prop}
    Let $X$ be the right process fixed above with its transition function satisfying \eqref{eq:L^1(mu)}, $K$ be a bounded kernel on $(E,\mathcal{B})$ which satisfies $\bf (H_K^V)$, and set $c:=K1$. 
    Let $X^K$ be the right process provided by \Cref{prop:perturbation}, with transition function $(Q_t)_{t\geq 0}$ and resolvent $\mathcal{U}^K:=(U_{\alpha}^K)_{\alpha>0}$.
    Then the following assertions hold.
    \begin{enumerate}
        \item[(i)] We have 
        $
            \alpha U^K_{\alpha+\delta+\omega}V\leq V, \quad \alpha> 0,
        $
        and if we set
        \begin{equation*}
           \mathcal{D}_V(\mathcal{L}-c+K):=U^K_{\alpha}(\mathcal{B}_{\lesssim V}), \alpha>\delta+\omega, \quad (\mathcal{L}-c+K) f:= \alpha f+g, \quad f=U^K_{\alpha}g,\;g\in\mathcal{B}_{\lesssim V},
        \end{equation*}
        then  
        \begin{equation*}
             \mathcal{D}_V(\mathcal{L}-c+K)= \mathcal{D}_V(\mathcal{L}).
        \end{equation*}
        \item[(ii)] For every $f\in \mathcal{D}_V(\mathcal{L})$ and every $x\in E$, the process
    \begin{equation*}
        \left(f(X^K(t))-  f(X^K(0)) -\int_0^t (\mathcal{L}f-cf + Kf)(X^K(s)) ds\right)_{t\geqslant 0}
    \end{equation*}
    is a c\`adl\`ag $(\mathcal{F}^K_t)$-martingale under $\mathbb{P}^K_x$.
    \end{enumerate}
\end{prop}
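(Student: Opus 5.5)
Part (i) is a Neumann-series estimate on the perturbation formula \eqref{Stoica}, and part (ii) is Proposition \ref{prop:connection} transferred from the bounded to the weighted space $(\mathcal{B}_{\lesssim V},|\cdot|_V)$.

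\textbf{Part (i).} The perturbation formula for $X^K$ reads $U^K_\beta=U'_\beta+U'_\beta K U^K_\beta$. Iterating it gives the Neumann expansion
\[
U^K_\beta=\sum_{n\geq 0}U'_\beta (KU'_\beta)^n
\]
on $p\mathcal{B}$, by monotone convergence since all terms are positive. Here $U'$ is the resolvent of $X$ killed by $K1$, so $U'_\beta\leq U_\beta$. By \eqref{eq:V}, $V$ is $\mathcal{U}_\omega$-supermedian, whence $U_\beta V\leq V/(\beta-\omega)$ for $\beta>\omega$. Combined with $\bf (H_K^V)$ this gives
\[
KU'_\beta V\leq \frac{\delta}{\beta-\omega}\,V .
\]
Taking $\beta=\alpha+\delta+\omega$, the geometric series yields
\[
U^K_\beta V\leq \frac{1}{\alpha+\delta}\sum_{n\geq 0}\Bigl(\frac{\delta}{\alpha+\delta}\Bigr)^n V=\frac{V}{\alpha},
\]
which is the first claim.

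In particular $U^K_\alpha$ and $U_\alpha$ are bounded operators on $(\mathcal{B}_{\lesssim V},|\cdot|_V)$ for $\alpha>\delta+\omega$. Multiplication by $c$ and the kernel $K$ are also bounded there, since $c\le\|K1\|_\infty$ and $|Kf|\le \|f\|_V K(1+V)\le \|f\|_V(\|K1\|_\infty+\delta V)$. This lets me apply \cite[Lemma 1.4]{BeSt93} on the Banach space $\mathcal{B}_{\lesssim V}$, exactly as in the proof of Proposition \ref{prop:perturbation}, but with $b\mathcal{B}$ replaced by $\mathcal{B}_{\lesssim V}$. Concretely, for $f=U'_\alpha g$ I will check that $g+Kf\in\mathcal{B}_{\lesssim V}$, set $f=U^K_\alpha(g')$ with $g'$ obtained from the resolvent identity, and run the converse direction symmetrically. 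This gives $\mathcal{D}_V(\mathcal{L}-c+K)=\mathcal{D}_V(\mathcal{L}-c)=\mathcal{D}_V(\mathcal{L})$, together with the identification of the operator.

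\textbf{Part (ii).} I will follow Proposition \ref{prop:connection}. Write $f=U^K_\alpha g$ with $g\in\mathcal{B}_{\lesssim V}$ and set $h:=(\mathcal{L}-c+K)f\in\mathcal{B}_{\lesssim V}$.
\begin{enumerate}
\item First I need integrability. From (i), $Q_tV\leq e^{(\delta+\omega)t}V$ after inverting the Laplace transform, using supermedianity. Hence
\[
\mathbb{E}^K_x\int_0^t |h|(X^K(r))\,dr<\infty \quad\mbox{and}\quad f(X^K(t))\in L^1(\mathbb{P}^K_x)\quad\mbox{for every } x.
\]
\item Next, the resolvent identity gives $Q_tf=f+\int_0^tQ_sh\,ds$. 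Thus $M$ is an additive functional with zero mean under every $\mathbb{P}^K_x$, and Lemma \ref{lem:martingale 1} makes it a martingale.
\item Lemma \ref{lem:martingale 1} is stated for bounded finely continuous $f$ and for $g$ with $U|g|<\infty$, so I must verify its hypotheses in the present setting. The point is right-continuity of $f(X^K)$. For this I will write $f=U^K_\alpha g^+-U^K_\alpha g^-$, a difference of finite $\mathcal{U}^K_\alpha$-excessive functions. By Theorem \ref{thm 4.6} each term is finely continuous where finite, and the c\`adl\`ag property follows from the supermartingale regularity.
\end{enumerate}

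\textbf{Main obstacle.} I expect the hardest step to be the unbounded setting in step 3, namely justifying the regularity of $f(X^K)$ and the additivity/integrability used by Lemma \ref{lem:martingale 1} when $f$ is only controlled by $V$. My first attempt will be to truncate, $g_n:=(g\wedge nV)\vee(-n)$, apply Proposition \ref{prop:connection} to the resulting bounded pieces, and pass to the limit in $L^1(\mathbb{P}^K_x)$ using the $V$-bound. The c\`adl\`ag property would then come from monotone limits of c\`adl\`ag supermartingales, as in the proof of Proposition \ref{prop:B-boundary}.
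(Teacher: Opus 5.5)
Your proposal is correct and follows the paper's proof: the Neumann expansion of \eqref{Stoica}, combined with $U'_\beta V\le V/(\beta-\omega)$ and $KV\le\delta V$, gives $U^K_{\alpha+\delta+\omega}V\le V/\alpha$; the domain identity is \cite[Lemma 1.4]{BeSt93} on $(\mathcal{B}_{\lesssim V},|\cdot|_V)$; and (ii) is the additive-functional argument of \Cref{ss:MAF} with integrability supplied by (i). The one step to tighten is your integrability claim, since resolvent supermedianity does not by itself yield $Q_tV\le e^{(\delta+\omega)t}V$ by Laplace inversion. You do not need that bound, however, because $|f|\le U^K_\alpha|g|$ is a finite $\mathcal{U}^K_\alpha$-excessive function, so $Q_t|f|\le e^{\alpha t}U^K_\alpha|g|$ and $\int_0^tQ_r|h|\,dr\le e^{\alpha t}U^K_\alpha|h|$; the same excessive decomposition $f=U^K_\alpha g^+-U^K_\alpha g^-$ already gives the c\`adl\`ag property, so the truncation fallback (which as written does not produce bounded pieces) is unnecessary.
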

\begin{proof}
(i). Note that by \eqref{eq:V} we deduce
\begin{equation*}
    \alpha U^c_{\alpha+\omega}V\leq \alpha U_{\alpha+\omega}V\leq V, \quad \alpha>0
\end{equation*}
Also, iteration \eqref{Stoica} we have 
\begin{equation*}
U^K_\alpha 
=  U^c_\alpha\left(  \sum_{n\geq 0}(KU^c_\alpha)^n\right) \quad \mbox{ for all } \alpha >0.
\end{equation*}
Moreover,
\begin{equation*}
    K U_{\alpha+\omega}^c V\leq \frac{\delta}{\alpha} V, \quad \alpha>0.
\end{equation*}
By setting $V_n:=V\wedge n, \quad n\geq 1$, we have 
\begin{align*}
U^K_{\alpha+\delta+\omega} V 
&=\sup_n U^K_{\alpha+\delta+\omega} V_n
\leq 
\sup_n U^c_{\alpha+\delta+\omega}\left(  \sum_{l\geq 0}(KU^c_{\alpha+\delta+\omega})^lV_n\right)\\
&\leq
U^c_{\alpha+\delta+\omega}\left(  \sum_{l\geq 0}(KU^c_{\alpha+\delta+\omega})^lV\right)
\leq
U^c_{\alpha+\delta+\omega}\left(V  \sum_{l\geq 0}\left(\frac{\delta}{\alpha+\delta}\right)^l\right)\\
&=
\frac{1}{\alpha}V
\quad \mbox{ for all } \alpha >0.
\end{align*}
In order to prove the rest of assertion (i) we simply apply \cite[Lemma 1.4]{BeSt93} for the resolvents $(U_{\alpha+\delta+\omega})_{\alpha>0}$, $(U^K_{\alpha+\delta+\omega})_{\alpha>0}$, and the Banach space $\left(\mathcal{B}_{\lesssim V}, |\cdot|_V\right)$.

\medskip
(ii). This is now standard, see e.g. \Cref{ss:MAF}, noticing that the integrability of the martingale is ensured by (i).
\end{proof}

\paragraph{Acknowledgements.} 
{\rm 
We gratefully acknowledge financial support by the Deutsche Forschungsgemeinschaft (DFG, German Research Foundation) – Project-ID 317210226 – SFB 1283. 
I.C. was partially supported by a grant of the Ministry of Research, Innovation and Digitization, CNCS-UEFISCDI, project number PN-IV-P2-2.1-T-TE-2023-0219, within PNCDI IV.
}

\end{document}